\documentclass[reqno,a4paper]{amsart}
\usepackage[all,cmtip]{xy}
\usepackage{amsmath}
\usepackage{amssymb}
\usepackage{amsthm}
\usepackage{amscd}
\usepackage{stmaryrd}
\usepackage{amsfonts}
\usepackage{enumitem}
\usepackage{latexsym}
\usepackage{mathabx}
\usepackage{amscd}
\usepackage{graphicx}
\usepackage{tikz, tikz-cd}
\usepackage{setspace}
\usepackage{todonotes}
\usepackage{mathtools}
\usepackage{stackengine}
\usepackage{etoolbox}
\usepackage{enumitem}
\usepackage[colorlinks=true, linkcolor=blue, citecolor=purple, urlcolor=blue, breaklinks=true]{hyperref}

\usetikzlibrary{decorations.pathmorphing}
\tikzset{anchorbase/.style={baseline={([yshift=-0.5ex]current bounding box.center)}}}
\tikzset{snake it/.style={decorate, decoration=zigzag}}

\setcounter{tocdepth}{1}

\hfuzz 5pt
\vfuzz 2pt
\textheight 226mm
\textwidth 134mm
\raggedbottom

\makeatletter

\newcommand{\arxiv}[1]{\href{http://arxiv.org/abs/#1}{\tt
    arXiv:\nolinkurl{#1}}}

\def\ocirc#1{\stackrel{_{\,\circ}}{#1}}
\def\ind{\operatorname{ind}}
\def\res{\operatorname{res}}

\def\cotimes{\boxempty}
\def\mh{\mathfrak{h}}
\def\mg{\mathfrak{g}}
\def\la{\lambda}
\def\asc{{\operatorname{asc}}}
\def\desc{{\operatorname{dsc}}}
\def\uu{^{\shortuparrow\!\shortuparrow}}
\def\u{^\shortuparrow}
\def\d{^\shortdownarrow}
\def\star{\coasterisk}

\def\im{\operatorname{im}}
\def\pd{\operatorname{pd}}
\def\injd{\operatorname{id}}
\def\coim{\operatorname{coim}}
\def\soc{\operatorname{soc}\,}
\def\hd{\operatorname{hd}\,}
\def\F{F}
\def\G{G}
\def\omegadual{{\scriptscriptstyle\bigcirc\hspace{-1.9mm}\omega\hspace{.1mm}}}
\def\taudual{{\scriptscriptstyle\bigcirc\hspace{-1.9mm}\tau\hspace{.1mm}}}
\def\sigmadual{{\scriptscriptstyle\bigcirc\hspace{-1.9mm}\sigma\hspace{.1mm}}}
\def\op{{\operatorname{op}}}
\def\cop{{\operatorname{cop}}}

\newtheorem{theorem}{Theorem}[section]
\newtheorem{lemma}[theorem]{Lemma}

\newtheorem{corollary}[theorem]{Corollary} 
\theoremstyle{definition}  
\newtheorem{definition}[theorem]{Definition}
\newtheorem{example}[theorem]{Example}
\newtheorem{conjecture}[theorem]{Conjecture}  

\newtheorem{remark}[theorem]{Remark}

\@addtoreset{equation}{section}

\def\Rep{\mathcal{R}ep}
\def\Vec{\mathcal{V}ec}
\def\fVec{\mathcal{V}ec_{\operatorname{fd}}}
\def\R{\mathcal R}
\def\Tilt{\mathcal{T}ilt}
\def\Ocat{\mathcal O}
\def\Ccat{\mathcal C}
\def\Acat{\mathcal A}
\def\TL{\mathcal TL}
\def\OB{\mathcal OB}

\def\bI{\text{\boldmath$I$}}
\def\bi{\text{\boldmath$i$}}
\def\bj{\text{\boldmath$j$}}
\def\br{\text{\boldmath$r$}}
\def\bs{\text{\boldmath$s$}}

\def\rcomod{\operatorname{comod}\!\operatorname{-\!}}

\def\fdrcomod{\operatorname{comod}_{\operatorname{fd}}\!\operatorname{-}\!}
\def\fdlcomod{\!\operatorname{-}\!\operatorname{comod}_{\operatorname{fd}}}
\def\rmod{\operatorname{mod}\!\operatorname{-}\hspace{-.8mm}}
\def\lmod{\operatorname{\!-\!}\operatorname{mod}}
\def\lgmod{\operatorname{\!-\!}\operatorname{grmod}}
\def\lfdrmod{\operatorname{mod}\hspace{-.35mm}\operatorname{_{lfd}-\hspace{-.8mm}}}
\def\fgrmod{\operatorname{mod}\hspace{-.35mm}\operatorname{_{fg}-\hspace{-.8mm}}}

\def\fprmod{\operatorname{mod}\hspace{-.35mm}\operatorname{_{fp}-\hspace{-.8mm}}}
\def\fdrmod{\operatorname{mod}\hspace{-.35mm}\operatorname{_{fd}-\hspace{-.8mm}}}
\def\fglmod{\operatorname{\!-\!}\operatorname{mod}_{\operatorname{fg}}}

\def\fplmod{\operatorname{\!-\!}\operatorname{mod}_{\operatorname{fp}}}
\def\lfdlmod{\operatorname{\!-\!}\operatorname{mod}_{\operatorname{lfd}}}
\def\fdrmod{\operatorname{mod}\hspace{-.35mm}\operatorname{_{fd}-\hspace{-.8mm}}}
\def\pclmod{\operatorname{\!-\!}\operatorname{mod}_{\operatorname{pc}}}

\def\pcrmod{\operatorname{mod}\hspace{-.35mm}\operatorname{_{pc}-\hspace{-.8mm}}}
\def\dslmod{\operatorname{\!-\!}\operatorname{mod}_{\operatorname{ds}}}
\def\fdlmod{\operatorname{\!-\!}\operatorname{mod}_{\operatorname{fd}}}

\def\res{{\operatorname{res}}}
\newcommand{\der}{\mathbb}
\newcommand{\Coend}{\operatorname{Coend}}
\newcommand{\Cohom}{\operatorname{Cohom}}
\newcommand{\End}{\operatorname{End}}

\newcommand{\Hom}{\operatorname{Hom}} 
\newcommand{\Ext}{\operatorname{Ext}} 
\newcommand{\id}{\operatorname{id}}
\newcommand{\Id}{\operatorname{Id}}

\newcommand{\Z}{\mathbb{Z}}
\newcommand{\N}{\mathbb{N}}
\newcommand{\B}{\mathbf{B}}
\newcommand{\BS}{\mathbf{S}}
\newcommand{\Q}{\mathbb{Q}}
\newcommand{\K}{\mathbb{K}}
\renewcommand{\k}{\Bbbk}
\newcommand{\eps}{\varepsilon}

\newcommand{\unit}{\mathbf{1}}
\newcommand{\rad}{{\operatorname{rad}\,}}

\def\Loc{\operatorname{Env}}
\def\Fin{\operatorname{Fin}}

\def\Ind{\operatorname{Ind}}

\begin{document}

\title[Semi-infinite highest weight categories]{Semi-infinite highest
  weight categories}

\author[J. Brundan]{Jonathan Brundan}
\address{Department of Mathematics,
University of Oregon, Eugene, OR 97403, USA}
\email{brundan@uoregon.edu}

\author[C. Stroppel]{Catharina Stroppel}
\address{Department of Mathematics, University of Bonn, 53115 Bonn, Germany}
\email{stroppel@math.uni-bonn.de}

\dedicatory{Dedicated to Jens Carsten Jantzen on the occasion of his
  70th birthday.}
\thanks{
This article is based upon work done
 while the authors were in
residence at the Mathematical Sciences Research Institute in Berkeley,
California during the Spring 2018 semester.
It was supported by the National Science
Foundation grant DMS-1700905 and by the HCM in Bonn.
}

\begin{abstract}
We develop axiomatics of highest weight 
categories and quasi-hereditary algebras in order to
incorporate two semi-infinite situations which are in Ringel duality
with each other; the underlying posets 
are either
 {\em upper finite} or {\em lower finite}. 
We also consider various more general sorts of stratified categories.
In the upper finite cases, we give an alternative
characterization of these categories
in terms of based quasi-hereditary algebras and based stratified
algebras, which are certain locally unital algebras
possessing triangular bases. 
\end{abstract}

\maketitle
\vspace{-0.5cm}
\tableofcontents
\vspace{-1cm}

\section{Introduction}

Highest weight categories were introduced by Cline, Parshall and Scott
\cite{CPS} in order to provide an axiomatic framework
encompassing a number of important examples which had previously arisen in
representation theory.
In the first part of this article, we give a detailed exposition of 
two semi-infinite variants, 
which we call {\em lower finite}
and {\em upper finite highest weight categories}. 
Lower finite highest weight categories
were already included in
the original work of Cline, Parshall and Scott, although they did not use this language. Well-known
examples include the category $\Rep(G)$ of finite-dimensional
rational representations of a (connected) reductive algebraic group.
On the other hand, the upper finite highest weight categories studied here do not fit into the locally Artinian framework of \cite{CPS}. Nevertheless, there are many examples of upper finite highest weight categories
 already in the literature, often of a diagrammatic nature, and an appropriate axiomatic framework was sketched out by Elias and Losev in \cite[$\S$6.1.2]{EL}.
There are plenty of subtleties, so a full treatment seems desirable.

Then, in the next part, we
extend Ringel duality to the semi-infinite setting:
\begin{eqnarray*}
\left\{
\begin{array}{ccc}\text{lower finite} \\ 
\text{highest weight categories}
\end{array}
\right\}
&\xleftrightarrow{\text{Ringel duality}}&
\left\{
\begin{array}{ccc}
\text{upper finite}\\
\text{highest weight categories}
\end{array}
\right\}.
\end{eqnarray*}
Other approaches to ``semi-infinite Ringel duality'' exist
in the literature, 
but these typically require the existence of a $\Z$-grading; e.g., see
\cite{SoergelKac} (in a Lie algebra setting)
and also \cite{Maz2}.
We avoid this by working 
with finite-dimensional comodules over a coalgebra in the
lower finite case,
 and with
locally finite-dimensional modules over
a locally finite-dimensional locally unital algebra in the upper finite case.
Another approach to semi-infinite Ringel duality
based around pseudo-compact topological algebras 
was initiated by Marko and Zubkov \cite{MZ}. However, their theory requires some additional finiteness assumptions which are not satisfied
in important examples including all non-semisimple categories of the form $\Rep(G)$ for a reductive group $G$; see Corollary~\ref{finalcorollary}, Remark~\ref{finalremark} and 
 Remark~\ref{penultimateremark}.

Finally, as an application of semi-infinite Ringel duality, we give
an elementary algebraic characterization
of upper finite highest weight categories, showing that any such
category is equivalent to the category of locally finite-dimensional
modules over an {\em upper finite based quasi-hereditary algebra}.
This is an algebraic formulation of the notion of object-adapted
cellular category from \cite[Def.~2.1]{ELauda}, and a
generalization of the
based quasi-hereditary algebras of \cite[Def.~2.4]{KM}.
As well as Ringel duality, the proof of this characterization
uses a construction from \cite{AST} to construct bases for endomorphism algebras of tilting
objects. The observation that the bases
arising from \cite{AST} are
object-adapted cellular bases was made already by Elias and several others, and 
appears in recent work of
Andersen \cite{And}.

Throughout the article, we systematically develop the entire theory in the more
general setting of what we call
{\em $\eps$-stratified categories}. The 
idea of this definition is due to \'Agoston, Dlab and Luk\'acs:
in \cite[Def.~1.3]{ADL} one finds the notion
of a {stratified algebra of type $\eps$}; the category of
finite-dimensional left modules over such a finite-dimensional 
algebra is an example of a 
$\eps$-stratified category in our sense. 
The various other generalizations of highest weight category
that have been considered in existing literature 
fit naturally into our $\eps$-stratified framework.

\vspace{2mm}

To explain the contents of the paper in more detail, we start by
explaining our precise setup in the finite-dimensional case, since
even here it does not seem to have appeared explicitly
elsewhere in the literature.
 Consider a {\em finite Abelian category}, that is, a category $\R$
equivalent to the category $A\fdlmod$ of finite-dimensional left
$A$-modules for some finite-dimensional algebra $A$ over an
algebraically closed field $\k$.
A {\em stratification} of $\R$ is a quintuple
$(\B,L,\rho,\Lambda,\leq)$
consisting of a set $\B$,
a {\em labelling function} $L$ such that $\{L(b)\:|\:b \in \B\}$
is a full set of pairwise inequivalent irreducible objects of $\R$,
and
a {\em stratification function}
$\rho:\B \rightarrow \Lambda$
for a poset $(\Lambda, \leq)$.

Given a stratification,
let $P(b)$ (resp., $I(b)$) be a projective cover (resp., injective
hull) of $L(b)$.
For $\lambda \in \Lambda$, let 
$\R_{\leq\lambda}$ (resp., $\R_{<\lambda}$) be the Serre subcategory of $\R$ generated by the
irreducibles
$L(b)$ for $b \in \B$ with $\rho(b) \leq\lambda$ (resp., $\rho(b) < \lambda$).
Define the
{\em stratum} $\R_\lambda$ to be the Serre quotient $\R_{\leq
  \lambda} / \R_{<\lambda}$ with quotient functor $j^\lambda:\R_{\leq\lambda} \rightarrow
\R_\lambda$. For $b \in \B_\lambda := \rho^{-1}(\lambda)$, let 
$L_\lambda(b) := j^\lambda L(b)$. 
These give a full set of pairwise inequivalent irreducible objects in $\R_\lambda$.
Still for $b \in \B_\lambda$, let $P_\lambda(b)$ (resp., $I_\lambda(b)$) be a projective cover (resp., injective hull) of $L_\lambda(b)$ in $\R_\lambda$.

The functor $j^\lambda$
has a left adjoint $j^\lambda_!$ and a right adjoint $j^\lambda_*$.
We refer to these as the {\em standardization}
and {\em costandardization functors}, respectively, following the
language of \cite[$\S$2]{LW}.
Then we introduce the {\em standard, proper standard, costandard} and
{\em proper costandard objects} of $\R$ for $\lambda \in \Lambda$ and 
$b \in \B_\lambda$:
\begin{align}\label{rumble}
\Delta(b) &:= j^\lambda_! P_\lambda(b),
&
\bar\Delta(b) &:= j^\lambda_! L_\lambda(b),
&
\nabla(b) &:= j^\lambda_* I_\lambda(b),
&\bar\nabla(b) &:= j^\lambda_* L_\lambda(b).
\end{align}
Equivalently, $\Delta(b)$ (resp., $\nabla(b)$) is the largest quotient of
$P(b)$ (resp., the largest subobject of
$I(b)$)
that belongs to $\R_{\leq\lambda}$, and
$\bar\Delta(b)$ (resp., $\bar\nabla(b)$) is the largest quotient
of $\Delta(b)$
(resp., the largest subobject of $\nabla(b)$)
such that  
all composition factors apart from its irreducible
head (resp., its irreducible socle) belong to $\R_{<\lambda}$.

Fix a {\em sign function} $\eps:\Lambda
\rightarrow \{\pm\}$ and define
the {\em $\eps$-standard} and {\em $\eps$-costandard objects}
\begin{align}\label{device}
\Delta_\eps(b) &:= \left\{
\begin{array}{ll}
\Delta(b)&\text{if $\eps(\rho(b)) = +$}\\
\bar\Delta(b)&\text{if $\eps(\rho(b)) = -$}
\end{array}
\right.,&
\nabla_\eps(b) &:= \left\{
\begin{array}{ll}
\bar\nabla(b)&\text{if $\eps(\rho(b)) = +$}\\
\nabla(b)&\text{if $\eps(\rho(b)) = -$}
\end{array}
\right..
\end{align}
By a {\em $\Delta_\eps$-flag} (resp., a {\em $\nabla_\eps$-flag})
of an object of $\R$,
we mean a (necessarily finite)
filtration whose
  sections are of the form $\Delta_\eps(b)$ 
(resp., $\nabla_\eps(b)$)
for $b \in \B$.
Then
we call $\R$ an {\em $\eps$-stratified category} if one of the following
equivalent properties holds:
\begin{enumerate}
\item[($P\Delta_\eps$)]
For every $b \in \B$, the projective object $P(b)$ has a 
$\Delta_\eps$-flag with sections
$\Delta_\eps(c)$
for $c\in\B$ with $\rho(c) \geq \rho(b)$.
\item[($I\nabla_\eps$)]
For every $b \in \B$, the injective object $I(b)$ has a
$\nabla_\eps$-flag
with sections
$\nabla_\eps(c)$ for $c\in\B$ with $\rho(c) \geq \rho(b)$.
\end{enumerate}
The fact that these two properties are indeed equivalent 
was established in \cite[Th.~2.2]{ADL} (under slightly more
restrictive hypotheses than here),
extending the earlier work of Dlab \cite{Dlab}.
We give a self-contained proof in Theorem~\ref{fund};
see also $\S$\ref{eg1} for some elementary examples.
An equivalent statement is as follows.

\begin{theorem}[Dlab,\dots]\label{aone}
Let $\R$ be a finite Abelian category equipped with a stratification $(\B,L,\rho,\Lambda,\leq)$
and $\eps:\Lambda\rightarrow\{\pm\}$ be a sign function. 
Then
$\R$ is $\eps$-stratified if and only if $\R^\op$ is
$(-\eps)$-stratified.
\end{theorem}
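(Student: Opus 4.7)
The plan is to exploit the duality $\R \leftrightarrow \R^{\op}$, which systematically swaps projectives with injectives, standards with costandards, and (after flipping the sign $\eps \mapsto -\eps$) the $\eps$-standard objects with the $(-\eps)$-costandard objects in the opposite category. Given this dictionary, the two conditions $(P\Delta_{-\eps})$ in $\R^{\op}$ and $(I\nabla_\eps)$ in $\R$ become literally the same statement, after which one invokes the equivalence of $(P\Delta_\eps)$ and $(I\nabla_\eps)$ (which is Theorem~\ref{fund}, stated just above the theorem we are proving).

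In detail, I would first observe that $\R^{\op}$ is again a finite Abelian category, the same set $\B$ indexes its irreducibles via the same objects $L(b)$, and the same function $\rho:\B\to\Lambda$ is a stratification. Under this identification, $P^{\op}(b)= I(b)$ and $I^{\op}(b)= P(b)$; the Serre subcategories $\R_{\leq\lambda}$ and $\R_{<\lambda}$ coincide with their counterparts in $\R^{\op}$; the quotient functor $j^\lambda$ in $\R^{\op}$ is the opposite of $j^\lambda$ in $\R$; and the swap of left and right adjoints under passage to the opposite gives $(j^\lambda_!)^{\op}= j^\lambda_*$ and $(j^\lambda_*)^{\op}= j^\lambda_!$. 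Substituting into (\ref{rumble}) then yields
$$
\Delta^{\op}(b)=\nabla(b),\quad \bar\Delta^{\op}(b)=\bar\nabla(b),\quad \nabla^{\op}(b)=\Delta(b),\quad \bar\nabla^{\op}(b)=\bar\Delta(b).
$$

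Next I would check, directly from (\ref{device}) and a case split on the two possible values of $\eps(\rho(b))$, that these identifications upgrade to
$$
\Delta^{\op}_{-\eps}(b)=\nabla_\eps(b),\qquad \nabla^{\op}_{-\eps}(b)=\Delta_\eps(b).
$$
Since a filtration in $\R^{\op}$ is the same data as a filtration in $\R$ with the order of sections reversed, a $\Delta^{\op}_{-\eps}$-flag of an object $X$ in $\R^{\op}$ with sections $\Delta^{\op}_{-\eps}(c)$ is precisely a $\nabla_\eps$-flag of $X$ in $\R$ with sections $\nabla_\eps(c)$, and the set of indices appearing is unchanged.

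Putting everything together, the condition $(P\Delta_{-\eps})$ in $\R^{\op}$—namely that for every $b\in\B$ the object $P^{\op}(b)= I(b)$ admits a $\Delta^{\op}_{-\eps}$-flag with sections $\Delta^{\op}_{-\eps}(c)=\nabla_\eps(c)$ for $c\in\B$ with $\rho(c)\geq\rho(b)$—is literally the condition $(I\nabla_\eps)$ in $\R$. By Theorem~\ref{fund}, the latter is equivalent to $(P\Delta_\eps)$, that is, to $\R$ being $\eps$-stratified; and the former is by definition the condition that $\R^{\op}$ is $(-\eps)$-stratified. This gives the desired equivalence. The main obstacle is simply the bookkeeping in the second paragraph: one has to be careful that each of the four objects in (\ref{rumble}), and then the sign-dependent choices in (\ref{device}), transform correctly under the opposite-category operation, and that the poset condition $\rho(c)\geq\rho(b)$ (rather than $\leq$) is preserved — which it is, because nothing in the dictionary reverses the order on $\Lambda$.
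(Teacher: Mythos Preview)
Your proposal is correct and follows essentially the same approach as the paper: the paper establishes the duality dictionary (projectives and injectives swap, the $(-\eps)$-standard and $(-\eps)$-costandard objects in $\R^{\op}$ are $\nabla_\eps(b)$ and $\Delta_\eps(b)$ respectively) and then reinterprets Theorem~\ref{fund} (the equivalence of $(P\Delta_\eps)$ and $(I\nabla_\eps)$) as the desired statement. Your careful bookkeeping in the second paragraph is exactly what the paper summarizes in the paragraph preceding Theorem~\ref{opstrat}.
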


If the stratification function $\rho:\B\rightarrow \Lambda$ is a bijection,
i.e., each stratum $\R_\lambda$ has a unique irreducible object (up to
isomorphism), then we can use $\rho$ to identify $\B$ with $\Lambda$, and
denote the various distinguished objects simply by $L(\lambda),
P(\lambda), \Delta_\eps(\lambda),$ \dots for 
$\lambda \in \Lambda$
instead of by $L(b), P(b), \Delta_\eps(b),$ \dots for $b \in \B$.
When ($P\Delta_\eps$)--($I\nabla_\eps$) hold in this situation, we
instead call
$\R$ an {\em $\eps$-highest weight category} with {\em weight poset}
$(\Lambda,\leq)$ and {\em labelling function} $L$.
The notion of $\eps$-highest weight category generalizes the original notion of highest weight category from \cite{CPS}:
a (finite) {\em highest weight category} in the sense of {\em loc. cit.} is an
$\eps$-stratified category in which each stratum $\R_\lambda$ is actually
simple, i.e., equivalent to $\fVec$.
This stronger assumption means not only that $\rho$ is a bijection but also that
$L_\lambda(\lambda) = P_\lambda(\lambda)=I_\lambda(\lambda)$, hence,
 $\Delta(\lambda) = \bar\Delta(\lambda)$ and
$\nabla(\lambda)=\bar\nabla(\lambda)$ for each $\lambda \in \Lambda$.
Consequently, in highest weight categories,
the sign function $\eps$ plays no role 
and may be omitted entirely,
and the above properties simplify to the following:
\begin{enumerate}
\item[\qquad($P\Delta$)] 
Each $P(\lambda)$ has a $\Delta$-flag with 
sections $\Delta(\mu)$ for $\mu \geq \lambda$.
\item[($I\nabla$)] Each $I(\lambda)$ has a $\nabla$-flag with 
sections $\nabla(\mu)$ for $\mu \geq \lambda$.
\end{enumerate}
In fact, in this context, the equivalence of $(P\Delta)$ and
$(I\nabla)$ was established already in \cite{CPS}. Moreover, in {\em
  loc. cit.}, it is shown that $A\fdlmod$ is a highest weight category
if and only if $A$ is a {quasi-hereditary algebra}.

The next important special cases arise when 
$\eps$ is the constant function $+$ or $-$.
The idea of a $+$-stratified
category originated in the work of Dlab \cite{Dlab} 
already mentioned, and in another work of Cline, Parshall and Scott \cite{CPS2}. 
In particular, the ``standardly stratified
categories'' of \cite[Def.~2.2.1]{CPS2}
are $+$-stratified
categories.

We say that a finite Abelian category $\R$ 
equipped with a stratification $(\B,L,\rho,\Lambda,\leq)$
is a {\em fully stratified category} if it is both
a $+$-stratified category and a  $-$-stratified category; in that case, it is
$\eps$-stratified for
all choices of the sign function $\eps:\Lambda\rightarrow\{\pm\}$.
Such categories arise as 
categories of modules over the
{fully stratified algebras} introduced in a 
remark after \cite[Def.~1.3]{ADL}.
In fact, these sorts of algebras and categories 
have appeared several times elsewhere in the literature
but under different names: they are called
``weakly properly stratified'' in \cite{F1}, ``exactly properly
  stratified'' in \cite{CZ}, and  ``standardly stratified'' in \cite{LW}.
The latter seems a particularly confusing choice since it clashes
with the established notion from \cite{CPS2} but we completely agree
with the sentiment of \cite[Rem.~2.2]{LW}: fully stratified
categories have a well-behaved structure theory.
One reason for this is that
all of the standardization and costandardization functors in a fully
stratified category are exact.
We note also that any $\eps$-stratified category with 
duality is
automatically fully stratified; see Corollary~\ref{sun} for a precise statement.

We use the language {\em fibered highest weight category} in place
of fully stratified category when the stratification function $\rho$ is
a bijection. Equivalently, a fibered highest weight category is a category which is $\eps$-highest weight for all choices of the sign function $\eps$.
Such categories arise as the categories of finite-dimensional modules over the {\em properly stratified 
algebras} introduced in \cite{Dlabproper}.
It is perhaps worth pointing out that any finite Abelian category can
be given the structure of a fully stratified category in a trivial way
taking the poset $\Lambda$ to be a singleton. Fibered
highest weight categories are at the other extreme with $\Lambda$
being as big as possible.

Table~\ref{tabzero} gives a dictionary between the various different types of finite Abelian category $\R$ discussed so far and the language we adopt for the underlying 
finite-dimensional algebras $A$ such that $\R$ is equivalent to $A\fdlmod$. 
Some of this language is non-standard; see Remark~\ref{czdiscussion} for further discussion.

\begin{table}
$$
\begin{array}{|l|l|}
\hline
\text{Finite-dimensional algebra $A$}&\text{Finite Abelian category $A\fdlmod$}\\
\hline
\text{Quasi-hereditary algebra}&\text{Highest weight category}\\
\text{$\eps$-Quasi-hereditary algebra}&\text{$\eps$-Highest weight category}\\
\text{Properly stratified algebra}&\text{Fibered highest weight category}\\
\text{$\eps$-Stratified algebra}&\text{$\eps$-Stratified category}\\
\text{Stratified algebra}&\text{Fully stratified category}\\
\hline
\end{array}
$$
\caption{Dictionary between algebras and categories}\label{tabzero}
\end{table}

There are many classical examples of highest weight categories,
including blocks of the BGG category $\Ocat$ for a semisimple Lie
algebra, the classical Schur algebra and Donkin's generalized Schur
algebras introduced in \cite{Dschur}, and many more
examples arising from categories of perverse sheaves with
stratifications of geometric origin \cite{BBD}.
Further examples of fully stratified categories and fibered highest
weight categories which are not highest weight 
arise in the context of categorification.  
This includes the pioneering examples of categorified
tensor products of finite dimensional 
irreducible representations for the quantum group attached to
$\mathfrak{sl}_k$ from \cite{FKS} (in particular Remark 2.5 therein),
and 
the categorified induced cell modules for Hecke algebras from
\cite[6.5]{MS}. 
Building on these examples and the subsequent work of Webster
\cite{W1}, \cite{W2}, 
Losev and Webster \cite{LW} formulated the important
axiomatic definition of a {\it tensor product categorification}.
These are
fully stratified categories
which have been used to give a categorical
interpretation of Lusztig's construction of
tensor product of based modules for a quantum group.

The device of incorporating the 
sign function $\eps$ into the definition of $\eps$-stratified or
$\eps$-highest weight category seems to be quite
convenient as it streamlines
many of the subsequent definitions and proofs.
It also leads to some interesting new possibilities when
it comes to the ``tilting theory'' which we discuss next.

\vspace{2mm}

Assume $\R$ is an $\eps$-stratified category as above.
An {\em $\eps$-tilting object} is an object of $\R$ which has both a
$\Delta_\eps$-flag and a $\nabla_\eps$-flag.
Isomorphism classes of 
{indecomposable} $\eps$-tilting objects are parametrized in a canonical way by
the set $\B$; see Theorem~\ref{gin}.
The construction of these objects is a non-trivial generalization of
Ringel's classical construction via
iterated extensions of standard objects: in general one takes a mixture
of extensions
of standard objects on the top for positive strata and extensions of
costandard objects on 
the bottom for negative strata.
We denote the indecomposable $\eps$-tilting objects by $\{T_\eps(b)\:|\:b \in \B\}$.

Now let $T$ be an {\em $\eps$-tilting generator},
i.e., an $\eps$-tilting object in which every $T_\eps(b)$ appears at least once as a summand.
If $\eps=+$ or $-$ (the constant functions) then $T$ is a {\em tilting} or {\em cotilting module}, respectively, for the underlying finite-dimensional
algebra in the general sense of tilting theory; 
for more general $\eps$, $T$ is an example of a {\em Wakamatsu tilting module} 
as defined in \cite[$\S$4.1]{Rei}.
The {\em Ringel dual} of $\R$ relative to $T$ is the category
$\R' := 
B\fdlmod$ where $B := \End_\R(T)^\op$ (so that $T$ is a right $B$-module).
The isomorphism classes of 
irreducible objects in 
$\R'$ are in natural bijection with the
isomorphism classes of indecomposable summands of $T$, hence, they may
be
indexed
by the same set $\B$ that labels the irreducibles in $\R$.
We denote them by $\{L'(b)\:|\:b \in \B\}$.
Let
\begin{align*}
\F := \Hom_{\R}(T,?)&:\R \rightarrow \R',\\
G := \Cohom_\R(T,?) = \Hom_\R(?,T)^*&:
\R \rightarrow \R'.
\end{align*}
These are the {\em Ringel duality functors}.
The following theorem is well known for highest weight categories
(where it is due
to Ringel \cite{R} and Happel \cite{H})  and for $+$- and $-$-stratified
categories (where it is developed in the framework of
standardly stratified algebras in \cite{AHLU}). We prove it for
general $\eps$-stratified categories in Theorem~\ref{Creek}.

\begin{theorem}[Ringel, Happel, \dots]\label{honey}
Let $\R'$ be the 
Ringel dual of $\R$ relative to an $\eps$-tilting generator $T$ as
above. Let $-\eps:\Lambda\rightarrow \{\pm\}$ be the negation of the
original sign function $\eps$.
\begin{enumerate}
\item
  The quintuple $(\B,L',\rho,\Lambda, \geq)$
  is a stratification
  of $\R'$ making it into a
  $(-\eps)$-stratified category with weight poset $(\Lambda, \geq)$,
  that is, the opposite of the poset used for $\R$.
Moreover, each stratum $\R'_\lambda := \R'_{\geq\lambda} / \R'_{>\lambda}$ of $\R'$ is
equivalent to the corresponding stratum $\R_\lambda : =\R_{\leq\lambda} / \R_{<\lambda}$ of $\R$.
\item
The functor
$\F$
defines an equivalence of categories between the category of
$\nabla_\eps$-filtered objects in $\R$ and
the category of $\Delta_{-\eps}$-filtered objects in $\R'$.
It sends $\eps$-tilting objects (resp., injective objects) in $\R$ to
projective objects
(resp., $(-\eps)$-tilting objects) in $\R'$.
\item
The functor
$G$
defines an equivalence of categories between the category of
$\Delta_\eps$-filtered objects in $\R$ and
the category of $\nabla_{-\eps}$-filtered objects in $\R'$.
It sends $\eps$-tilting objects (resp., projective objects) in $\R$ to
injective objects
(resp., $(-\eps)$-tilting objects) in $\R'$.
\item
Assume that $\R_\lambda$ is 
of finite global dimension for 
all strata $\lambda$ with $\eps(\lambda)=-$ (resp., $\eps(\lambda)=+$). Then the total 
derived functor $\der{R} \F:D^b(\R) \rightarrow
D^b(\R')$
(resp., $\der{L} \G:D^b(\R) \rightarrow
D^b(\R')$) is an equivalence
between the bounded derived categories.
\end{enumerate}
\end{theorem}

In the setup of the theorem, let $P$ be a projective generator
for $\R$. Then $T' := G P$ is a $(-\eps)$-tilting generator for
$\R'$ such that
$A := \End_\R(P)^\op \cong \End_{\R'}(T')^\op$.
Since $\R$ is equivalent to $A\fdlmod$, this shows that $\R$
is equivalent to the Ringel
dual $(\R')'$ of $\R'$ relative to $T'$. Thus,
the original category $\R$ can be recovered from its Ringel
dual $\R'$.
This statement 
can be interpreted as a {\em double centralizer property}:
starting from $\R = A\fdlmod$ so that $T$ is an $(A,B)$-bimodule,
and taking the projective generator $P$ to be 
the left regular $A$-module so that $A \cong \End_A(P)^\op$, the $(B,A)$-bimodule
$T' = GP$ is isomorphic to the dual $T^*$ of $T$. Now Theorem~\ref{honey}(3) implies that $A \cong \End_{B}(T^*)^\op$.

We do not consider here derived equivalences in the case of infinite
global dimension, but instead refer to \cite{PS}, where this and
involved $t$-structures are treated in detail by generalizing the
classical theory of co(resolving) subcategories. This requires the use
of certain coderived and contraderived categories
in place of ordinary derived categories.

\vspace{2mm}

Now we shift our attention to
{the semi-infinite case}, which is really
the main topic of the article.
Following \cite{EGNO},
a {\em locally finite Abelian category} is a category that is
equivalent to the category $\fdrcomod C$ of finite-dimensional right
comodules over some coalgebra $C$.
Let $\R$ be such a category.
A {\em lower finite stratification} of $\R$ is a quintuple
$(\B,L,\rho,\Lambda,\leq)$
consisting of a set $\B$, 
a function $L$ labelling
a full set $\{L(b)\:|\:b \in \B\}$
of pairwise inequivalent irreducible objects,
a stratification function $\rho:\B\rightarrow \Lambda$ required now to
have finite fibers $\B_\lambda
:= \rho^{-1}(\lambda)$, and 
a lower finite poset $(\Lambda, \leq)$
(i.e., the intervals
$(-\infty,\mu]$ are finite for all $\mu \in \Lambda$).
Fix also a sign function $\eps:\Lambda\rightarrow\{\pm\}$.
For any lower set (i.e., ideal of the poset) $\Lambda\d$ in $\Lambda$, we can consider the Serre
subcategory $\R\d$ of $\R$ generated by the objects
$\{L(b)\:|\:b \in \B\d\}$
where $\B\d:=\rho^{-1}(\Lambda\d)$.
The restriction of the stratification of $\R$
gives a stratification
$(\B\d,L,\rho,\Lambda,\leq)$ of $\R\d$.
We say that $\R$ is a {\em lower finite $\eps$-stratified category}
if $\R\d$ is a finite Abelian category that is $\eps$-stratified in
the earlier sense
for every finite lower set $\Lambda\d$ of $\Lambda$;
cf. Definition~\ref{newlfd}.
By the same procedure one also obtains definitions of lower finite
$\eps$-highest weight, lower finite fully stratified, 
lower finite fibered highest weight, and lower finite highest weight
categories.

In a lower finite $\eps$-stratified category $\R$, there are $\eps$-standard and
$\eps$-costandard objects $\Delta_\eps(b)$ and $\nabla_\eps(b)$;
they are the same as the $\eps$-standard and $\eps$-costandard objects
of the Serre subcategory 
$\R\d$ defined from any finite lower set $\Lambda\d$ containing
$\rho(b)$. 
As well as (finite) $\Delta_\eps$- and
$\nabla_\eps$-flags, one can consider certain infinite
$\nabla_\eps$-flags in objects of the ind-competion $\Ind(\R)$ (which
is the category $\rcomod C$ of all right $C$-comodules in the case
that $\R = \fdrcomod C$).
We refer to these as {\em ascending $\nabla_\eps$-flags}; see
Definition~\ref{good} 
for the precise formulation.
Theorem~\ref{thethmnew} establishes a homological criterion for an
object to possess an
ascending $\nabla_\eps$-flag similar to the
well-known criterion 
for good filtrations in rational representations
of reductive groups
\cite[Prop.~II.4.16]{J}.
From this, it follows that the injective hull $I(b)$ of $L(b)$ in
$\Ind(\R)$ has an ascending $\nabla_\eps$-flag. Moreover, the
multiplicity
of $\nabla_\eps(c)$ as a section of such a
flag satisfies
$$
(I(b):\nabla_\eps(c)) = [\Delta_\eps(c):L(b)],
$$
generalizing BGG reciprocity.
This leads to alternative ``global''  characterizations of lower
finite $\eps$-stratified and fully stratified categories;
see Theorems~\ref{globalchar} and \ref{anotherchar}.
The latter involves an $\Ext^2$-vanishing condition which
first appeared in work of Dlab and Ringel \cite{DlabRingel}.
\vspace{2mm}

In a lower finite $\eps$-stratified category, there are also
$\eps$-tilting objects. Isomorphism classes of the 
indecomposable ones are labelled by $\B$ just like in the finite
case. In fact, 
for $b \in \B$ the corresponding indecomposable $\eps$-tilting
object of $\R$ is the same as the object
$T_\eps(b)$ 
of the Serre subcategory
$\R\d$ defined from any finite lower set $\Lambda\d$
containing $\rho(b)$.
By an {\em $\eps$-tilting generator} for $\R$, we now mean an
object $T = \bigoplus_{i \in I} T_i \in \Ind(\R)$
with a given decomposition as a direct sum of
$\eps$-tilting objects $T_i \in \R$ such 
that each $T_\eps(b)$ appears at least once as a summand of $T$.
Then the {\em Ringel dual} $\R'$ of $\R$ relative to
$T$ is the category 
$A\lfdlmod$ of
{locally finite-dimensional left modules} over the 
locally finite-dimensional locally unital algebra 
$$
A = \Big(\bigoplus_{i,j \in I} \Hom_{\R}(T_i,T_j)\Big)^\op,
$$
where the op denotes that multiplication in $A$ is the opposite of composition in $\R$;
see Definition~\ref{rd1}.
Saying that $A$ is {\em locally unital} means that $A =
\bigoplus_{i,j \in I} e_i A e_j$ where $\{e_i\:|\:i \in I\}$ are the
mutually orthogonal idempotents defined by the identity endomorphisms of
each $T_i$, and {\em locally finite-dimensional} means that $\dim e_i A
e_j < \infty$ for all $i,j\in I$.
A {\em locally finite-dimensional module} is an $A$-module $V 
= \bigoplus_{i \in I} e_i V$ with $\dim e_i V < \infty$
for each $i$.
As $e_i A e_j = \Hom_{\R}(T_i, T_j)$ is finite-dimensional, 
each left ideal $A e_j$ is a locally finite-dimensional projective module.

This brings us to the notion of an {\em upper finite $\eps$-stratified category},
whose definition may be discovered by considering the nature of
the categories $\R'$ that can arise as Ringel duals of
lower finite $\eps$-stratified categories.
We refer to Definition~\ref{ufc} for the intrinsic formulation;
there are also upper finite counterparts of  $\eps$-highest
weight, fully stratified, fibered highest weight and highest weight categories.
Starting from $\R$
that is a lower finite $\eps$-stratified category as above, the Ringel dual
$\R'$ comes equipped with an {\em upper finite stratification}
$(\B,L',\rho,\Lambda,\geq)$
making it into an
upper finite $(-\eps)$-stratified category; see
Theorem~\ref{rt1} which extends parts (1) and (2) of
Theorem~\ref{honey}.

In general,
in an upper finite $\eps$-stratified category,
the underlying poset is required to be upper finite, i.e., 
all of the intervals $[\lambda,\infty)$ are finite. 
There are $\eps$-standard and
$\eps$-costandard objects, but now these can have infinite length
(although composition multiplicities in such objects are finite). On
the other hand, the indecomposable projectives and injectives do still have
finite $\Delta_\eps$-flags and $\nabla_\eps$-flags, exactly like in
$(P\Delta_\eps)$ and $(I\nabla_\eps)$. Perhaps the most interesting feature 
is that one can still make sense of $\eps$-tilting
objects. These are objects possessing certain infinite flags: 
both an {\em ascending $\Delta_\eps$-flag} and a {\em descending
  $\nabla_\eps$-flag};
see Definition~\ref{tpc}.
This allows us to define the {\it Ringel dual} of an upper finite
$\eps$-stratified category relative to an $\eps$-tilting generator $T$: it is the category $\fdrcomod C$ for the
coalgebra $C:=\Coend_\R(T)$
that is the continuous dual
of the opposite endomorphism algebra
$B:=\End_\R(T)^\op$; see Theorem~\ref{rt2} which extends parts (1) and (3) of Theorem~\ref{honey}.
This makes sense because $B$ is a pseudo-compact topological algebra;
see Lemma~\ref{prof}.

Again there are double centralizer properties.
For $\R'$ arising as the Ringel dual of a lower finite
$\eps$-stratified category $\R$ relative to 
$T=\bigoplus_{i \in I} T_i$, the
indecomposable $(-\eps)$-tilting
objects in $\R'$
are the images of the indecomposable injective objects of $\R$
under
$$
F := \bigoplus_{i \in I}
\Hom_{\R}(T_i,?) : \R \rightarrow \R'
$$ 
and, given a $(-\eps)$-tilting generator $T'$ for $\R'$, the
Ringel dual $(\R')'$ of $\R'$ relative to $T'$
is equivalent
to the original 
category $\R$; see Corollary~\ref{mustard1} and also 
$\S$\ref{eg1a} for an explicit example.
Similarly, for $\R'$ arising as the Ringel dual of an upper finite $\eps$-stratified category relative to $T$, the indecomposable $(-\eps)$-tilting objects of $\R'$ are the images of the indecomposable projective objects of $\R$ under 
$G := \Cohom_\R(T,?)$ and, given a $(-\eps)$-tilting generator 
$T' =\bigoplus_{i \in I} T_i'$ for $\R'$,
the Ringel dual $(\R')'$ of $\R'$ relative to
$T'$ is equivalent to $\R$; see Corollary~\ref{mustard2}.

\vspace{2mm}

In $\S$\ref{newsec1}, we apply semi-infinite Ringel
duality together with arguments from \cite{AST} to give
an elementary algebraic characterization of upper finite highest weight
categories in terms of upper finite {\em
based quasi-hereditary algebras}. In the finite-dimensional setting,
these are the {based quasi-hereditary algebras} 
defined by Kleshchev and Muth in \cite{KM}, who 
proved that their definition of based quasi-hereditary algebra 
is equivalent to the original definition of
quasi-hereditary algebra from \cite{CPS}; we have streamlined the definition a little further here.
Our more general algebras are locally finite-dimensional locally unital
algebras rather than unital algebras. Viewing them instead as
{finite-dimensional categories}, that is,
small $\k$-linear categories
with finite-dimensional morphism spaces, the definition translates
into something equivalent to the notion of an {\em object-adapted cellular category} which
was introduced already by Elias and Lauda \cite[Def.~2.1]{ELauda}. (In turn, the
Elias-Lauda definition evolved from
work of Westbury \cite{Westbury}, who extended the definition of cellular algebra
due to Graham and Lehrer \cite{GL} 
from finite-dimensional algebras to finite-dimensional categories.)


We say that a fully stratified category is {\em tilting-rigid} if
there is a bijection $\nu:\B\rightarrow \B$ such that
$T_+(b) \cong T_-(\nu(b))$ for all $b \in \B$; see Definition~\ref{tiltingrigiddef}.
In the finite case, $\R$ is tilting-rigid if and only if it is Gorenstein with strata that are 
quasi-Frobenius (then $\nu$ encodes their Nakayama permutations); see Theorem~\ref{goren} which generalizes 
\cite[Th.~2.2]{CM}.
The situation is even better if in addition all of the strata are
symmetric, since in that case the tilting objects $T_\eps(b)$ are  isomorphic for all choices of
the sign function $\eps$ so that they may all be denoted by
$T(b)$.
Most of the naturally-occurring examples of fully stratified
categories are tilting-rigid with symmetric strata, 
including the tensor product categorifications from
\cite{LW} mentioned earlier.
For us, the key point about the tilting-rigid hypothesis is that
the Ringel dual of a tilting-rigid fully stratified
category is again a tilting-rigid fully stratified category; see Theorem~\ref{tiltingrigiddual}.
This is important in $\S$\ref{newsec3},
when we introduce notions of
{\em based stratified algebras} 
and {\em based properly stratified algebras}; see Definitions~\ref{strawberries} and \ref{strawberries2}.
These
have a similar flavor to the fibered object-adapted cellular
categories of \cite[Def.~2.17]{ELauda}. We show that the category of locally
finite-dimensional modules over an upper finite based stratified algebra
(resp., upper finite based properly stratified algebra) is an upper finite 
fully stratified (resp., fibered highest weight) category, and
conversely any such category 
which is also tilting-rigid with symmetric strata 
can be realized in this way.

The definition of an upper finite based stratified algebra $A$
involves certain {basic} finite-dimensional algebras
$A_\lambda\:(\lambda \in \Lambda)$ which provide explicit realizations 
of the strata. Their direct sum
$\bigoplus_{\lambda \in \Lambda} A_\lambda$ is a locally unital algebra which plays the role of ``Cartan subalgebra'', although in general it is not a subalgebra of $A$.
The assumption that the algebras $A_\lambda$ are basic can in fact be dropped entirely.
On doing that one obtains a weaker notion which we 
call an algebra with a {\em triangular basis}; see Definition~\ref{raspberries}.
Our understanding of this definition was 
influenced by the recent preprint \cite{GRS} in which the authors introduce the closely-related notion of an
algebra with a {\em weak triangular decomposition}; up
to a choice of basis, this
is the same as an algebra with a
triangular basis in our sense 
in which all distinguished idempotents are special.
It is still the case that the category of locally finite-dimensional modules over
such an algebra is an upper finite fully stratified
category, just like
for based stratified algebras. This observation is due to Gao, Rui and
Song \cite[Th.~3.5]{GRS}; we give a slightly different proof in
Theorem~\ref{grs}.
Gao, Rui and Song also discuss some interesting examples arising from
cyclotomic quotients of the affine Brauer and 
oriented Brauer categories and their $q$-analogs.

For many of the naturally occurring algebras $A$ with a triangular
basis,
the upper and lower halves of the basis span a pair of {\em opposite Borel subalgebras} $A^\flat$ and $A^\sharp$;
this includes all of the level one cyclotomic quotients from
\cite{GRS} but not the ones of higher level.
In Definition~\ref{prankster}, we formalize this idea with the final notion of an algebra with a {\em triangular decomposition}.
The first author came upon essentially
this definition originally from considerations involving the oriented Brauer category 
and its $q$-analog; see \cite{Reynolds}, \cite{Bskein} and also \cite{RS}, 
which applies a similar approach in the context of the Brauer category.
A closely related notion of {\em triangular category} was developed
independently by Sam and Snowden \cite{SS} in order to study these and
other examples; see also \cite{CZ}.
In the presence of a triangular decomposition, the ``Cartan subalgebra"
$\bigoplus_{\lambda \in \Lambda} A_\lambda$
may be identified with $A^\circ := A^\flat \cap A^\sharp$, so that now it is actually a subalgebra of $A$, and the standardardization/costandardization functors can be realized as parabolic induction/coinduction functors. 
In Theorem~\ref{myconstruction}, 
we explain a general construction to make any algebra with a triangular decomposition into a based stratified algebra. If $A^\circ$ is semisimple, as is the case for the examples arising from the (oriented) Brauer category in characteristic zero but not in positive characteristic, this produces a based quasi-hereditary algebra.
There are other advantages to having a triangular decomposition rather than merely a triangular basis, e.g., see \cite{SS} where triangular decompositions are used to show that many of the motivating examples are Noetherian.

\begin{table}
  $$
  \begin{tikzpicture}
\node[align=center,draw,text width=3.9cm,fill=lime,rounded corners] at (-1.8,.8) (a) {Upper finite highest weight categories};
  \node[align=center,draw,fill=lime,text width=3.3cm,rounded corners] at (6.4,.8) (b) {Upper finite fully stratified categories};
\node[align=center,fill=yellow,draw,text width=3.3cm,rounded corners] at (6.4,-3.5) (c) {Upper finite based stratified algebras};
\node[align=center,fill=yellow,draw,text width=3.9cm,rounded corners] at (-1.8,-3.5) (d) {Upper finite based quasi-hereditary algebras};
\node[align=center,fill=yellow,draw, text width=3.8cm,rounded corners] at (2.5,-.9) (e) {Algebras with an upper finite triangular basis};
\node[align=center,draw,fill=yellow,text width=4.6cm,rounded corners] at (2.5,-5) (f) {Algebras with an upper finite triangular decomposition};
\draw ([yshift=-1.5mm]a.east) edge[->,bend right=15] ([yshift=-1.5mm]b.west);
\draw ([yshift=1.5mm]b.west) edge[->,dashed,bend right=15] node [above,midway] {\tiny$+$highest weight strata (\ref{eve})} ([yshift=1.5mm]a.east);
\draw [<-] (b) edge[bend right = 20] node [left] {\tiny (\ref{twoway})}(c);
\draw [->,dashed] (b) edge[bend left = 20] node [right,xshift=-1mm] {\tiny$+$tilting-rigid (\ref{TBA3})} (c);
\draw [->] (a) edge[bend right = 20] node [left] {\tiny(\ref{TBA})} (d);
\draw [<-] (a) edge[bend left = 20]  node [right] {\tiny(\ref{oneway})}(d);
\draw [->,dashed] (e) edge[bend left = 20] node[text width=1cm,right,xshift=-1mm,yshift=-6mm] {\tiny$+$opposite\\\vspace{-1.2mm} Borels} (f);
\draw [<-,dashed] (e) edge[bend right = 20] node[text width = 2.5cm,left,yshift=-6.3mm,xshift=9mm] {\tiny$+\B$-free (\ref{iflocallyfree})} (f);
\draw [dashed, <-] (d) edge[bend left = 20] node [right,text width=2.3cm,xshift=-2mm,yshift=-3mm] {\tiny$+$basic semisimple\\\vspace{-1.2mm}  Cartan}(e);
\draw[<-] (e) edge[bend left=20] (d);
\draw [<-,dashed] (c) edge[bend right = 20] node [left,yshift=-2mm,xshift=2mm] {\tiny$+$basic Cartan}(e);
\draw [->] (c) edge[bend left = 20](e);
\draw [->] (e) edge[bend right = 20] node [left,yshift=1mm,xshift=.5mm] {\tiny(\ref{grs})} (b);
\draw [dashed,->] (f) edge[bend left = 20]node [text width=3cm,left,xshift=1.4cm,yshift=-.5mm] {\tiny$+$semisimple\\\vspace{-1.2mm}Cartan (\ref{myconstructioncor})}(d);
\draw [->] (f) edge[bend right = 20] node[right] {\tiny(\ref{myconstruction})}(c);
\draw [dashed,<-] (a) edge[bend right = 20]node [right,xshift=-5mm,yshift=4mm] 
{\tiny$+$quasi-hereditary Cartan (\ref{laughter})}(e);
\end{tikzpicture}
$$
\caption{Upper finite algebras and categories}\label{yellowgreentable}
\end{table}

Table \ref{yellowgreentable} summarizes some of the connections established between these various types of algebras and their module categories.
 In the main body of the text, we also discuss a parallel situation
involving
{\em essentially finite} rather than upper finite algebras and
categories. 
For example, the finite-dimensional graded algebras with a triangular
decomposition studied in \cite{HN}, \cite{BT} fit naturally into our more
general framework of algebras with an essentially finite
triangular decomposition; see Remark~\ref{triangularhistory1}.

As we have already mentioned, the category
$\R := \Rep(G)$ for a reductive group $G$ is the archetypical example of a lower finite
highest weight category. Its Ringel dual $\R'$ is an upper finite
highest weight category. 
This case has been studied in particular
by Donkin
(e.g., see \cite{Dschur}, \cite{Dtilt}), but Donkin's approach 
involves truncating 
to a finite-dimensional algebra from the outset.
The double centralizer property allowing $\R$ to be reconstructed
from $\R'$ in this case can be interpreted as a shadow of the Tannakian formalism; see Theorem~\ref{rdt}.
Other important examples of semi-infinite Ringel duality come from blocks of 
category $\Ocat$ over an affine
Lie algebra: in negative levels one obtains lower finite highest weight
categories, while positive levels produce the upper finite ones which
are their Ringel duals.
These and several
other prominent examples are outlined in $\S\S$\ref{eg2}--\ref{eg5}.

We would finally like to remark that our semi-infinite versions of highest weight categories should not be confused with the affine highest weight categories of \cite{Klhw}, and
our based quasi-hereditary algebras are not 
affine quasi-hereditary algebras in the sense of \cite{Klhw}. 
The latter are special examples of affine cellular algebras introduced in \cite{Xi}, \cite{KX}. They are not covered by out setup since we require that strata can be realized by finite-dimensional algebras over an algebraically closed field. To incorporate them, one would need to develop the theory here over more general commutative ground rings as suggested in Remark~\ref{oo}.

\vspace{2mm}

\noindent
{\em Acknowledgements.}
The first author would like to thank Ben Elias, Alexander Kleshchev 
and Ivan Losev for many illuminating
discussions. In particular, the fact that Ringel duality could be
extended to upper finite highest weight categories was originally explained to
this author by Losev. 
The second author would like to thank Henning Andersen, 
Shrawan Kumar and Wolfgang Soergel for several useful
discussions on topics related to this paper. The authors also thank
Tomoyuki Arakawa, Peter Fiebig and Julian K{\"u}lshammer for helpful
comments, and Kevin Coulembier for pointing out a mistake in the
treatment of ind-completions in $\S$\ref{errorhere} of the first
version of this article.

\section{Some finiteness properties on Abelian categories}

We fix an algebraically closed field $\k$.
All algebras, categories, functors, etc. will be assumed to be linear
over $\k$.
We write $\otimes$ for $\otimes_\k$.
The naive terms {\em direct limit} and {\em inverse limit} will be used for
small 
filtered colimits and limits, respectively.
We begin by introducing some language for Abelian
categories with various finiteness properties; see Table \ref{pinktable}.

\subsection{Finite and locally finite Abelian categories}\label{pct}
According to \cite[Def.~1.8.5]{EGNO}, a {\em finite Abelian
  category} is a category that is 
equivalent to the category $A\fdlmod$ of finite-dimensional (left)
modules over some finite-dimensional algebra $A$.
We refer to a choice for the algebra $A$ here as an {\em algebra realization} of $\R$.
Note that the opposite category is also a finite Abelian category
as it is equivalent to the category $A^\op \fdlmod = \fdrmod A$ 
due to the existence of the contravariant equivalence
\begin{equation}
?^*:A\fdlmod \rightarrow \fdrmod A
\end{equation}
taking a finite-dimensional 
left $A$-module to the linear dual viewed as a right
$A$-module in the natural way.

\begin{table}
  $$
\begin{tikzpicture}
\node [align=center,draw,fill=pink,rounded corners,text width=4cm] (a) {Finite Abelian categories};
\node [align=center,draw,text width=3cm,fill=pink,rounded corners,below= of a] (b) {Essentially finite Abelian categories};
\node [align=center,draw,text width=1.8cm,fill=pink,below right = of b,rounded corners] (d) {Schurian categories};
\node [align=center,draw,text width=2.9cm,below left= of b,fill=pink,rounded corners] (c) {Locally finite Abelian categories};
\draw (a) edge[->] node [left]{\tiny(\ref{kitchen})} (b);
\draw (b) edge[->] [bend left = 20] node [right]{\tiny (\ref{vera})} (c);
\draw (b) edge[->,dashed] [bend right = 20] node [left]{\tiny (\ref{coffee})} (d);
\draw (a) edge[->] [bend right = 20]node [left]{\tiny (\ref{kitchen})}  (c);
\draw (a) edge[->] [bend left = 20] node [right]{\tiny (\ref{kitchen})}(d);
\end{tikzpicture}
$$
\caption{Finiteness properties}\label{pinktable}\end{table}

A finite Abelian category can also be characterized as
a category which is equivalent to the category $\fdrcomod
C$ of finite-dimensional (right) comodules over some
finite-dimensional coalgebra $C$.
To explain this in more detail,
recall that the dual $A:=C^*$ of a 
finite-dimensional 
coalgebra $C$ has a natural algebra structure with multiplication
$A \otimes A \rightarrow A$ that is the dual of the
comultiplication $C\rightarrow C \otimes C$;
for this, one needs to use the canonical isomorphism
\begin{equation}\label{basic}
C^* \otimes C^* \rightarrow (C \otimes C)^*,
\qquad
f \otimes g \mapsto (v \otimes w \mapsto f(v) g(w))
\end{equation}
to identify $C^* \otimes C^*$ with $(C \otimes
C)^*$.
Then any right $C$-comodule can be viewed as a left $A$-module with
action defined from
$a v := \sum_{i=1}^n a(c_i) v_i$
assuming here that the structure map
$\eta:V \rightarrow V \otimes C$ sends
$v\mapsto \sum_{i=1}^n v_i \otimes c_i$.
Conversely, the $C$-comodule structure on $V$ can be recovered uniquely from the
action of $A$.
Thus, the categories
$\fdrcomod C$ and $A \fdlmod$ are isomorphic.

A {\em locally finite Abelian category} 
is a category $\R$ that is equivalent
to $\fdrcomod C$ for a (not
necessarily finite-dimensional) coalgebra $C$.
We refer to a choice of $C$ as a {\em coalgebra realization} of $\R$.
The following result of Takeuchi
gives an intrinsic characterization of locally finite Abelian
categories; see \cite{T} and
\cite[Th.~1.9.15]{EGNO}. It is a version of \cite[Th.~IV.4]{Gab} adapted to our situation.
Note Takeuchi's original paper uses the language ``locally finite
Abelian'' slightly differently (following \cite{Gab})
but his formulation of the result is equivalent to
the one here (which follows \cite[Def.~1.8.1]{EGNO}).
In {\em loc. cit.} 
it is shown moreover that $C$ 
can be chosen so that it is {\em  pointed}, i.e., all of
its irreducible comodules are one-dimensional; in that case, $C$ is unique up to isomorphism.

\begin{lemma}\label{char}
An essentially small category $\R$ is a locally finite Abelian
category 
if and only if it is Abelian, all of its objects are of finite
length, and all of its morphism spaces are
finite-dimensional.
\end{lemma}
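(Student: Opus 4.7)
The ``only if'' direction is routine. Suppose $\R \simeq \fdrcomod C$. Kernels and cokernels of comodule maps are computed on underlying vector spaces, so $\R$ is Abelian. The Fundamental Theorem of Comodules --- every element of a $C$-comodule lies in a finite-dimensional subcomodule --- implies simple comodules are finite-dimensional, so every finite-dimensional comodule has finite length (by induction on dimension). Morphism spaces $\Hom_C(V,W)$ embed into $\Hom_\k(V,W)$ and are therefore finite-dimensional, and $\fdrcomod C$ clearly has a small skeleton.

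For the converse, I would follow Takeuchi's coefficient-coalgebra approach. Replace $\R$ by a small skeleton and fix representatives $\{L(b) : b \in \B\}$ of the isomorphism classes of simple objects. Pass to the ind-completion $\Ind(\R)$, which is a Grothendieck category in which each $L(b)$ admits an injective hull $I(b)$, realizable as the directed union of its finite-length (hence $\R$-)subobjects. The hypotheses force $\dim_\k \Hom_{\Ind(\R)}(X, I(b)) < \infty$ for every $X \in \R$, since any such morphism factors through a finite-length subobject, and $\bigoplus_{b \in \B} I(b)$ is an injective cogenerator of $\Ind(\R)$. Define a $\k$-linear functor $F : \R \to \fVec$ by
$$
F(X) := \bigoplus_{b \in \B} \Hom_{\Ind(\R)}(X, I(b))^*,
$$
noting that the direct sum is finite because only those $b$ for which $L(b)$ is a composition factor of $X$ contribute. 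Then $F$ is exact (by injectivity of each $I(b)$ combined with exactness of finite-dimensional linear duality) and faithful (by the cogenerator property).

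Now set $C := \Coend(F)$, the coefficient coalgebra of $F$, equipped with its canonical comultiplication and counit. By its universal property, $F$ factors as $F = \mathrm{forget} \circ \Phi$ for a $\k$-linear exact functor $\Phi : \R \to \fdrcomod C$, and $\Phi$ is automatically fully faithful since $C$ is constructed precisely to make natural transformations $F \to F$ correspond to $C$-comodule maps.

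The main obstacle is essential surjectivity of $\Phi$. Every object of $\fdrcomod C$ has finite length (as already noted), so it suffices to show that every simple $C$-comodule is isomorphic to some $\Phi(L(b))$ and then lift extensions by induction on length using exactness and fullness of $\Phi$. For the simple case, I would describe the simple subcoalgebras of $C$ explicitly in terms of the matrix coefficients coming from the socle $L(b)$ of each $I(b)$, thereby establishing a bijection between the simple subcoalgebras of $C$ and $\B$ matching $\Phi(L(b))$ with the corresponding simple $C$-comodule. This last identification is the technically delicate step, as it requires analyzing the structure of $C$ as a direct sum indexed by $\B$ using the block decomposition coming from the $I(b)$.
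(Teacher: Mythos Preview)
The paper does not actually prove this lemma; it states it as ``a result of Takeuchi'' and refers to \cite{T} and \cite[Theorem 1.9.15]{EGNO} (with a pointer to \cite[IV, Theorem 4]{Gab}). So there is no in-paper argument to compare against beyond the attribution.

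Your proposal follows exactly the Tannakian reconstruction route that Takeuchi's paper and EGNO use: build a faithful exact fiber functor to $\fVec$, take its coend to obtain the coalgebra, and then identify $\R$ with $\fdrcomod C$. The ``only if'' direction and the construction of $F$ are fine, and your choice of $F(X)=\bigoplus_b \Hom(X,I(b))^*$ is a legitimate fiber functor. The one place your write-up is genuinely incomplete is the essential surjectivity of $\Phi$: the sentence ``lift extensions by induction on length using exactness and fullness of $\Phi$'' hides the fact that fullness on $\Hom$ does not by itself give surjectivity on $\Ext^1$, which is what you need to realize an arbitrary extension in $\fdrcomod C$ by an object of $\R$. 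The standard fix is either to show directly that the essential image is closed under extensions (using that $C$ is the union of the coefficient subcoalgebras of objects in the image), or to argue via the explicit description of $C$ as $\bigoplus_b I(b)^{\oplus \dim L(b)}$ and match subcoalgebras. You flag this yourself as ``the technically delicate step,'' and that is accurate; the rest of the outline is sound and in line with the cited proofs.
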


In view of Lemma~\ref{char}, one could also define a locally finite Abelian category to be a category that is equivalent to $A\fdlmod$ for a (not necessarily finite-dimensional) unital algebra $A$, but 
we prefer to work in terms of comodules since this language facilitates the passage to the ind-completion.
To explain this in more detail, consider the locally finite Abelian
category
$$
\R = \fdrcomod C.
$$
Fix a full set of pairwise inequivalent irreducible objects
$\{L(b)\:|\:b \in \B\}$ in $\R$.
By Schur's Lemma, we have that $\End_\R(L(b)) = \k$ for each $b \in \B$.
Note that the opposite
category $\R^\op$ is again a locally finite Abelian category, and
a coalgebra realization for it is given by 
the opposite coalgebra $C^\cop$.
This follows because there is a
contravariant equivalence
\begin{equation}\label{aduality}
?^*:\fdrcomod C \rightarrow C \fdlcomod
\end{equation}
sending a finite-dimensional right comodule to the dual
vector space 
viewed as a left comodule in the natural way:
if $v_1,\dots,v_n$ is a basis for $V$,  with dual basis $f_1,\dots,f_n$ for $V^*$,
and the structure map $V \rightarrow V \otimes C$ sends $v_j
\mapsto \sum_{i=1}^n v_i \otimes c_{i,j}$
then the dual's structure map
$V^* \rightarrow C \otimes V^*$
sends $f_i \mapsto \sum_{i=1}^n c_{i,j} \otimes f_j$.
Since we have that 
$C\fdlcomod \cong \fdrcomod C^\cop$, we deduce that
$\R^\op$ is equivalent to $\fdrcomod C^\cop$.

In general, $\R$ need not have
enough injectives or projectives. To get injectives, 
we pass to the
{\em ind-completion} $\Ind(\R)$;
see e.g. \cite[$\S$6.1]{KS}. 
For $V, W \in \Ind(\R)$, we write 
$\Ext^n_\R(V,W)$, or sometimes $\Ext^n_C(V,W)$, for 
$\Ext^n_{\Ind(\R)}(V,W)$; it may be computed
via an injective resolution of $W$ in the ind-completion.
This convention 
is unambiguous due to \cite[Th.~15.3.1]{KS}; see also
\cite[Th.~2.2.1]{C2}.
One can also consider the right derived functors
$\der{R}^n F$ of a left exact functor $F:\Ind(\R) \rightarrow \R'$ 
to an Abelian category $\R'$.

Let $\rcomod C$ be the category of all right $C$-comodules.
Every comodule is
the union (hence, the direct limit) of its finite-dimensional
subcomodules.
Moreover,
a comodule $V$ is compact, i.e., the functor $\Hom_C(V,?)$ commutes
with direct limits, if and only if it is finite-dimensional.
Using this, 
\cite[Cor.~6.3.5]{KS} implies that
the canonical functor $\Ind(\R)\rightarrow \rcomod C$
is an equivalence of categories.
This means that one can work with $\rcomod C$ in place of
$\Ind(\R)$, as we do in the next few paragraphs.

The category $\rcomod C$
is a Grothendieck
category: it is Abelian, it possesses all small coproducts,
direct colimits of monomorphisms are monomorphisms,
and there is a generator. A generating family may be obtained by
choosing representatives
for the isomorphism classes of finite-dimensional $C$-comodules.
By the general theory of Grothendieck categories, every $C$-comodule
has an injective hull. 
We use the notation $I(b)$ to denote an injective hull of $L(b)$.
The right regular comodule
decomposes as
\begin{equation}
C \cong \bigoplus_{b \in \B} I(b)^{\oplus \dim L(b)}.
\end{equation}
By Baer's criterion for Grothendieck categories
(e.g., see \cite[Prop.~8.4.7]{KS}), arbitrary direct sums of injectives
are injective. It follows that an injective hull of $V \in \rcomod
C$ 
comes from an injective hull of its socle: 
 if $\soc V \cong
\bigoplus_{s \in S} L(b_s)$ then
$\bigoplus_{s \in S} I(b_s)$ is an injective hull of $V$.

In any Abelian category, we write $[V:L]$ for the {\em composition
  multiplicity} of an irreducible object $L$ in an object $V$. By
definition, this is the supremum of sizes of the sets $\{i=1,\dots,n\:|\:V_i/V_{i-1}
\cong L\}$ over all finite filtrations $0 = V_0 < V_1 < \cdots < V_n =
V$; possibly, $[V:L] = \infty$.
Composition multiplicity is additive on short exact sequences.
For any right $C$-comodule $V$, we have by Schur's Lemma that
\begin{equation}
[V:L(b)] = \dim \Hom_{C}(V, I(b)).
\end{equation}

When $C$ is infinite-dimensional,
the map (\ref{basic}) is not an isomorphism, but one can still use it 
to make the dual vector space $B := C^*$ 
into a unital algebra.
Since $C$ is the union of its finite-dimensional subcoalgebras, the
algebra $B$ is the inverse limit of its finite-dimensional quotients,
i.e., the canonical homomorphism $B \rightarrow\varprojlim (B / J)$ is an
isomorphism where the 
limit is
over all two-sided ideals $J$ of $B$ of finite codimension.
These two-sided ideals $J$ 
form a base of neighborhoods of $0$ making
$B$ into a {\em pseudo-compact topological algebra}; see
\cite[Ch.~IV]{Gab} or \cite[Def.~2.4]{Simson}.
We refer to the topology on $B$ defined in this way as the {\em
  profinite topology}.
The coalgebra $C$ can be recovered from $B$ as
the {\em
  continuous dual}
\begin{multline}\label{cd}
B^\star := \left\{f \in B^*\:\big|\:\text{$f$ vanishes on some two-sided
  ideal $J$ of finite codimension}\right\}.
\end{multline}
It has a natural coalgebra structure dual to the algebra
structure on $B$. This is discussed further in \cite[$\S$3]{Simson}; see also
\cite[$\S$1.12]{EGNO} where $B^\star$ is called the {\em finite dual}.
We note that any left ideal $I$ of $B$ of finite
codimension contains a two-sided ideal $J$ of finite codimension,
namely,
$J:=\operatorname{Ann}_B(B/I)$. So, in the definition (\ref{cd}) of
continuous dual, ``two-sided ideal $J$ of finite
codimension'' can be replaced by ``left ideal $I$
of finite codimension''. Similarly for right ideals.

Any right $C$-comodule $V$ is naturally a left $B$-module by the same
construction as in the finite-dimensional case.
We deduce that the category
$\rcomod C$ of all right $C$-comodules 
is isomorphic to the full subcategory 
$B\dslmod$ of $B\lmod$ consisting
of all {\em discrete} left $B$-modules,
that is, all $B$-modules which are the 
unions of their finite-dimensional submodules.
In particular, $\fdrcomod C$ and $B\fdlmod$ are identified under this
construction.
This means that any locally finite Abelian category may be realized
as the category of 
finite-dimensional modules over an algebra 
which is pseudo-compact with
respect to the profinite topology; see also \cite[$\S$3]{Simson}.

The definition of the left $C$-comodule structure on the linear dual $V^*$ of
a right $C$-comodule $V$ in (\ref{aduality}) 
required $V$ to be finite-dimensional in
order for it to make sense.
If $V$ is an infinite-dimensional right $C$-comodule, it can be viewed
equivalently as a discrete left module
over the dual algebra $B := C^*$.
Then its dual $V^*$ is a 
{\em pseudo-compact right $B$-module}, 
that is, a $B$-module
isomorphic to the inverse limit of
its finite-dimensional quotients.
Viewing pseudo-compact modules as topological $B$-modules with respect to
the profinite topology (i.e., submodules of finite codimension form a basis
of neighborhoods of $0$), we obtain
the category $\pcrmod B$ of all pseudo-compact right
$B$-modules and continuous $B$-module homomorphisms. The
functor (\ref{aduality}) extends to
\begin{equation}\label{boring1}
?^*:B \dslmod \rightarrow \pcrmod B.
\end{equation}
This is a contravariant equivalence with quasi-inverse given by the functor
\begin{equation}\label{boring2}
?^\star: \pcrmod B \rightarrow B \dslmod
\end{equation}
taking $V \in \pcrmod B$ to its {\em continuous dual} 
\begin{equation*}
V^\star := \left\{f \in V^*\:\big|\:\text{$f$ vanishes on some
    submodule of $V$ of
  finite codimension}\right\}.
\end{equation*}
We are using subtlely different notation here ($?^*$ vs. $?^\star$), but confusion seldom
arises due to context.

We record one more basic lemma about comodules over a coalgebra.

\begin{lemma}\label{dumb}
Suppose that $C$ is a coalgebra and $B := C^*$ is its dual algebra.
For any right $C$-comodule $V$, composing with the counit $\epsilon:C\rightarrow\k$
defines an isomorphism of left $B$-modules $
\alpha_V:\Hom_C(V,C)
\stackrel{\sim}{\rightarrow} V^*$.
When $V = C$, the right regular comodule, this map
gives an algebra isomorphism
$\End_C(C)^\op \cong B$.
\end{lemma}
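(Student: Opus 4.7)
The plan is to produce an explicit two-sided inverse $\beta_V \colon V^* \to \Hom_C(V, C)$ to $\alpha_V$ and then read off the algebra statement from the $V = C$ case. Writing the coaction on $V$ in Sweedler notation as $\eta(v) = \sum v_{[0]} \otimes v_{[1]}$, I would define
\[
\beta_V(g)(v) := \sum g(v_{[0]})\, v_{[1]}
\]
for $g \in V^*$ and $v \in V$.

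The verification that $\beta_V(g)$ is a right $C$-comodule map is a one-step consequence of coassociativity $(\eta \otimes \id)\circ\eta = (\id \otimes \Delta)\circ\eta$, as both $\Delta(\beta_V(g)(v))$ and $(\beta_V(g)\otimes\id)\eta(v)$ reduce to $\sum g(v_{[0]})\, v_{[1],(1)} \otimes v_{[1],(2)}$. The identities $\alpha_V\beta_V = \id$ and $\beta_V\alpha_V = \id$ each reduce to a single counit application: the first is the immediate $\sum g(v_{[0]})\epsilon(v_{[1]}) = g(v)$; for the second, the comodule-map property $(f \otimes \id)\eta(v) = \Delta f(v)$ lets me rewrite $\sum \epsilon(f(v_{[0]}))\, v_{[1]} = (\epsilon \otimes \id)\Delta f(v) = f(v)$. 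That $\alpha_V$ intertwines the natural $B$-module structures on both sides is then a similar short Sweedler/counit check, using the $(B,B)$-bimodule structure on $C$ coming from its $(C,C)$-bicomodule structure.

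Specializing to $V = C$, the identity $\beta_C\alpha_C = \id$ yields the key formula $\phi(c) = \sum \alpha_C(\phi)(c_{(1)})\, c_{(2)}$ for any $\phi \in \End_C(C)$. Substituting this into $(\phi \circ \psi)(c)$ twice, applying $\epsilon$, and contracting via the counit axiom gives
\[
\alpha_C(\phi \circ \psi)(c) = \epsilon\bigl(\phi(\psi(c))\bigr) = \sum \alpha_C(\psi)(c_{(1)})\, \alpha_C(\phi)(c_{(2)}),
\]
which is precisely $\alpha_C(\psi) \cdot \alpha_C(\phi)$ under the convolution product defining the multiplication in $B$. Hence $\alpha_C$ is an anti-homomorphism of algebras, so $\alpha_C \colon \End_C(C)^{\op} \to B$ is the claimed algebra isomorphism. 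The only real obstacle is keeping the Sweedler-notation bookkeeping straight; no ingredient beyond coassociativity and the counit axioms is required.
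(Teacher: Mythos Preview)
Your proof is correct and follows essentially the same approach as the paper's. The paper defines the identical inverse $\beta_V(f) = (f\bar\otimes\id)\circ\eta$ (your Sweedler formula), and for the algebra statement it computes $\alpha_C(\beta_C(g)\circ\beta_C(f)) = fg$ directly, which is the same computation as yours with $\phi = \beta_C(g)$, $\psi = \beta_C(f)$; your version is slightly more detailed where the paper is terse and cites \cite{Simson} for the inverse check.
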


\begin{proof}
Let $\eta:V \rightarrow V \otimes C$ be the
comodule structure map.
To show that $\alpha_V$ is an isomorphism, one 
checks that the map
$
\beta_V:V^* \rightarrow \Hom_C(V,C),
f \mapsto (f\bar\otimes
\id)\circ \eta
$
is its two-sided inverse; cf. \cite[Lem.~4.9]{Simson}.
It remains to show 
that $\alpha_C: \End_C(C)^\op \stackrel{\sim}{\rightarrow} B$ is an
algebra homomorphism: for $f,g \in B$ we have that
\begin{align*}
\alpha_C(\beta_C(g)\circ\beta_C(f)) &= 
\epsilon \circ (g\bar\otimes \id) \circ
\eta
\circ (f \bar\otimes \id) \circ \eta
\\&=
(g \bar\otimes \id) \circ (\id \otimes \epsilon) \circ \eta \circ (f
\bar\otimes \id) \circ \eta
=
g \circ (f \bar\otimes \id)\circ \eta
= fg.\qedhere
\end{align*}
\end{proof}

\subsection{Locally unital algebras}
We are going to work with certain Abelian categories which are not locally
finite, but which nevertheless have some well-behaved finiteness
properties. We will define these in the next subsection.
First we must review some basic notions about
{locally unital algebras}. These ideas originate in the work
of Mitchell \cite{Mitchell}.

A {\em locally unital algebra} is an associative (but not necessarily unital)
algebra $A$ equipped with a distinguished system 
$\{e_i\:|\:i \in I\}$ 
of mutually orthogonal idempotents 
such that $$
A =
\bigoplus_{i,j \in I} e_i A e_j.
$$ 
We say $A$ is {\em locally finite-dimensional}
if each subspace $e_i A e_j$
is finite-dimensional.

A {\em locally unital homomorphism} (resp., {\em isomorphism}) 
between
two locally unital algebras $A$ and $B$ is an algebra homomorphism
(resp., isomorphism) which takes 
distinguished idempotents to 
distinguished idempotents.
We say that $A$ is an {\em idempotent contraction} of $B$, or $B$ is an {\em
  idempotent expansion} of $A$,
if there is an algebra isomorphism $A\stackrel{\sim}{\rightarrow} B$ 
sending each distinguished idempotent in $A$ to
a sum of distinguished idempotents in $B$.
Usually when we use this language it will be the case that $B = A$ and
the isomorphism $A \rightarrow B$ is the identity function; then $A= \bigoplus_{i,j \in \hat I} \hat e_i A \hat e_j$ is an
idempotent expansion of $A = \bigoplus_{i,j \in I} e_i A e_j$ if each
of the idempotents $e_i\:(i \in I)$ is a finite sum of the
idempotents $\hat e_j\:(j \in \hat I)$.

For a locally unital algebra $A$, an {\em $A$-module} means a left module $V$ as usual such that
$V = \bigoplus_{i \in I} e_i V$. 
A vector $v \in V$ is {\em homogeneous} if $v \in e_i V$ for some $i \in I$.
A module $V$ is
\begin{itemize}
\item
{\em locally finite-dimensional} if $\dim e_i V < \infty$ for all $i \in I$;
\item 
{\em finitely generated} if $V= A v_1+\cdots+A v_n$
for vectors $v_1,\dots,v_n \in V$ (which may be assumed to be
homogeneous)
or, equivalently, it is a quotient of the finitely generated
projective $A$-module $A e_{i_1}\oplus\cdots \oplus A e_{i_n}$
for $i_1,\dots,i_n \in I$ and $n \in \N$;
\item
{\em finitely presented}
if there is an exact sequence $$
A e_{j_1} \oplus\cdots\oplus A e_{j_m}
\longrightarrow
A e_{i_1}\oplus\cdots\oplus A e_{i_n}
\longrightarrow V
\longrightarrow 0
$$
for $i_1,\dots,i_n,j_1,\dots,j_m \in I$
and $m,n \in \N$.
\end{itemize}
Let $A\lmod$ (resp., $A\lfdlmod$, resp., $A\fglmod$, resp., $A\fplmod$) 
be the category of all $A$-modules (resp., the locally
finite-dimensional ones, resp., the finitely generated ones, 
resp., the finitely presented ones).
Similarly, we define the categories $\rmod A$, $\lfdrmod A$,
$\fgrmod A$ and $\fprmod A$ of right modules.

\begin{remark}\label{dataof}
Any locally unital algebra
$A = \bigoplus_{i,j \in I} e_i A
e_j$ can be viewed as a category with object set $I$ and
$\Hom_{\Acat}(j,i) = e_i A e_j$, with the
idempotent $e_i \in A$ corresponding to
the identity endomorphism $1_i \in \End_\Acat(i)$.
Conversely, any small category $\Acat$ ($\k$-linear, of course) gives rise to a
corresponding locally unital algebra $A$ which we call the {\em
 path algebra} of $\Acat$.
In these terms, locally finite-dimensional
locally unital algebras correspond to {\em finite-dimensional
  categories}, that is, small categories all of whose morphism spaces are
finite-dimensional. The notion of idempotent
expansion of the algebra $A$ becomes the notion of {\em thickening} of
the category $\Acat$, which is a sort of ``partial Karoubi envelope''.
Also, a left $A$-module (resp., a locally finite-dimensional
left $A$-module) is the same as a $\k$-linear functor from $\Acat$ to the
category $\Vec$ (resp., $\fVec$) of vector spaces
(resp., finite-dimensional vector spaces); right $A$-modules are functors to 
$\Vec^\op$.
\end{remark}

\begin{lemma}\label{mor}
An essentially small category $\R$ is equivalent to 
$A \lmod$ for some locally unital algebra $A$ if and
only if $\R$ is Abelian, it possesses all small coproducts, and it has a projective
generating family, i.e., there is a family $(P_i)_{i \in I}$ of
compact projective objects such that
$V \neq 0\Rightarrow \Hom_\R(P_i, V) \neq 0$ for some $i \in
I$.
\end{lemma}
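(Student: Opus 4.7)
The forward direction is routine: given $\R \simeq A\lmod$, take $P_i := Ae_i$. Each $P_i$ is a direct summand of $A$, hence projective; it is compact since $\Hom_A(Ae_i,-)$ is naturally isomorphic to $V \mapsto e_iV$, which commutes with arbitrary colimits; and the family $(P_i)_{i \in I}$ generates because $V = \bigoplus_i e_iV$ forces $\Hom_A(Ae_i,V) = e_iV$ to be nonzero for some $i$ whenever $V \neq 0$. Abelianness and existence of small coproducts in $A\lmod$ are standard.

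For the reverse direction, I would define
\[
A := \Big(\bigoplus_{i,j \in I} \Hom_\R(P_i,P_j)\Big)^{\op},
\]
a locally unital algebra with distinguished idempotents $e_i := \id_{P_i}$ (so that $e_iAe_j = \Hom_\R(P_j,P_i)$), together with the candidate equivalence
\[
F : \R \rightarrow A\lmod, \qquad F(V) := \bigoplus_{i \in I}\Hom_\R(P_i,V),
\]
on which $A$ acts by precomposition; the ``op'' in the definition of $A$ is precisely what converts precomposition into a \emph{left} action. The two key properties of $F$ are exactness (because each $P_i$ is projective, so each summand $\Hom_\R(P_i,-)$ is exact) and preservation of small coproducts (by compactness of each $P_i$). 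Via Yoneda, $F(P_i) \cong Ae_i$ canonically.

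To conclude that $F$ is an equivalence, I would use presentations. The generating hypothesis combined with projectivity produces, for any $V \in \R$, an epimorphism $\bigoplus_\alpha P_{i_\alpha} \twoheadrightarrow V$ (sum over all pairs $(i,f)$ with $f \in \Hom_\R(P_i,V)$ nonzero; the cokernel admits no nonzero morphism from any $P_i$, hence vanishes by the generating hypothesis); iterating on the kernel yields a two-term presentation of $V$ by coproducts of the $P_i$. Applying $F$ gives a parallel presentation of $F(V)$ by modules $Ae_i$, and full faithfulness of $F$ then reduces, via the five lemma, to the Yoneda identity $\Hom_A(Ae_i,F(W)) = e_iF(W) = \Hom_\R(P_i,W)$. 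The main obstacle is essential surjectivity: given $M \in A\lmod$, one builds a presentation $\bigoplus_\beta Ae_{j_\beta} \xrightarrow{\psi} \bigoplus_\alpha Ae_{i_\alpha} \twoheadrightarrow M$ in $A\lmod$, lifts $\psi$ to a morphism $\tilde\psi$ between the corresponding coproducts of $P_i$'s in $\R$ via full faithfulness on those coproducts, and takes $V := \operatorname{coker}\tilde\psi$; then $F(V) \cong M$ by exactness of $F$. The delicate points throughout are the careful op-convention bookkeeping and confirming that compactness really does let $F$ interchange with the (possibly large) coproducts appearing in these presentations.
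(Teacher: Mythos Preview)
Your approach is essentially the same as the paper's: both construct $A$ as the locally unital algebra with $e_iAe_j = \Hom_\R(P_i,P_j)$ and multiplication opposite to composition, and take $F = \bigoplus_i \Hom_\R(P_i,-)$ as the equivalence (the paper simply cites Freyd's exercise rather than spelling out the presentation argument). One small bookkeeping slip: with your definition $A = \big(\bigoplus_{i,j}\Hom_\R(P_i,P_j)\big)^{\op}$ and $e_i = \id_{P_i}$, one actually gets $e_iAe_j = \Hom_\R(P_i,P_j)$, not $\Hom_\R(P_j,P_i)$---in the non-op algebra $e_i \circ f \circ e_j$ picks out $f:P_j\to P_i$, and passing to the opposite swaps this.
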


\begin{proof}
This is similar to \cite[Ex.~5.F]{F}. One
shows 
that $\R$ is equivalent to $A\lmod$ for the
locally unital algebra $A = \bigoplus_{i,j \in I} e_i A e_j$ defined
by setting $e_i A e_j := \Hom_{\R}(P_i, P_j)$
with multiplication that is the opposite of composition in $\R$.
The canonical equivalence $\R \rightarrow A\lmod$ is given by the functor
$\bigoplus_{i \in I} \Hom_{\R}(P_i,?)$.
\end{proof}

\begin{lemma}\label{kevin1}
Let $A$ be a locally unital algebra. 
An $A$-module $V$ is compact if and only if it is finitely presented.
Also, for projective modules, the notions of finitely presented and finitely
generated coincide.
\end{lemma}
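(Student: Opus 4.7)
The plan is to prove each direction separately, with the key building blocks being the projective modules $Ae_i$, which are compact because $\Hom_A(Ae_i,-)\cong e_i(-)$ and evaluating at an idempotent preserves direct limits (these are computed at the level of underlying vector spaces, on which direct limits are exact). Finite direct sums of compact modules are compact, since $\Hom_A$ carries finite coproducts to finite products and these commute with direct limits componentwise.

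For ``finitely presented $\Rightarrow$ compact,'' I would start from a presentation $P_1\to P_0\to V\to 0$ with $P_0,P_1$ finitely generated projective (hence compact by the preceding remark). For any direct system $(W_\alpha)$, apply $\Hom_A(-,\varinjlim W_\alpha)$ and $\varinjlim\Hom_A(-,W_\alpha)$ to this sequence and compare the two resulting left-exact sequences. Since direct limits are exact in the Grothendieck category $A\lmod$, the second sequence is again left exact; compactness of $P_0$ and $P_1$ identifies the corresponding $\Hom$ terms; a routine diagram chase then yields $\Hom_A(V,\varinjlim W_\alpha)\cong \varinjlim\Hom_A(V,W_\alpha)$.

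For the converse, my plan is in two stages. First, write $V=\varinjlim V_\alpha$ as the direct union of its finitely generated submodules; compactness of $V$ forces $\id_V$ to factor through some inclusion $V_\alpha\hookrightarrow V$, so this inclusion admits a right inverse and must be an isomorphism, proving $V$ is finitely generated. Second, choose a surjection $\pi\colon P\twoheadrightarrow V$ with $P=Ae_{i_1}\oplus\cdots\oplus Ae_{i_n}$ and set $K:=\ker\pi$. Writing $K=\bigcup_\alpha K_\alpha$ as the direct union of its finitely generated submodules, one has $V\cong\varinjlim P/K_\alpha$ canonically, so compactness of $V$ supplies a section $s\colon V\to P/K_\alpha$ of the quotient $q\colon P/K_\alpha\twoheadrightarrow V$ for some $\alpha$. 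This yields $P/K_\alpha\cong V\oplus\ker q\cong V\oplus K/K_\alpha$, so $K/K_\alpha$ is finitely generated as a direct summand of the finitely generated module $P/K_\alpha$, and then $K$ is finitely generated as an extension of the two finitely generated modules $K_\alpha$ and $K/K_\alpha$. Hence $V$ is finitely presented.

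For the final assertion, if $V$ is projective and finitely generated, I would choose a surjection $\pi\colon P\twoheadrightarrow V$ with $P$ finitely generated free; projectivity splits $\pi$, so $\ker\pi$ is a direct summand of $P$ and in particular finitely generated, exhibiting $V$ as finitely presented. The main obstacle I anticipate lies in the retract step of the second direction: extracting finite generation of $K/K_\alpha$ from the fact that $V$ is only a retract (not necessarily a submodule) of $P/K_\alpha$ requires careful bookkeeping, but once the decomposition $P/K_\alpha\cong V\oplus K/K_\alpha$ is in hand, everything else is formal.
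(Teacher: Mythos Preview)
Your proposal is correct and spells out precisely the standard argument that the paper alludes to but does not write down (the paper's proof consists solely of the sentence ``This is well known for modules over a ring, and the usual proof in that setting carries over almost unchanged to the locally unital case''). Your concern about the retract step is unfounded: once compactness produces $s\colon V\to P/K_\alpha$ with $q\circ s=\id_V$, the splitting $P/K_\alpha\cong V\oplus K/K_\alpha$ is immediate, and the rest is as you say.
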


\begin{proof}
This is well known for modules over a ring, and the usual proof in
that setting
carries over almost unchanged to the locally unital case.
\end{proof}

\begin{lemma}\label{kevin2}
Let $A$ be a locally unital algebra. 
Any $A$-module is isomorphic to a direct limit of finitely presented $A$-modules.
\end{lemma}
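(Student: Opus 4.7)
The plan is to realize $V$ as a filtered colimit of finitely presented modules obtained by approximating a free presentation of $V$. First, I would choose a set $\{v_s\:|\:s \in S\}$ of homogeneous generators for $V$ with $v_s \in e_{i_s}V$ (for example, a basis of each subspace $e_i V$), producing an epimorphism $\pi : F \twoheadrightarrow V$ from the free $A$-module $F := \bigoplus_{s \in S} A e_{i_s}$, and set $K := \ker \pi$.

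Next, I would approximate both $F$ and $K$ by finite pieces. For each finite subset $T \subseteq S$ define $F_T := \bigoplus_{s \in T} A e_{i_s}$, which is finitely generated free and hence finitely presented; clearly $F = \bigcup_T F_T$. For each such $T$, write the submodule $K \cap F_T$ as the directed union $\bigcup_\alpha K_{T,\alpha}$ of its finitely generated submodules. Then each $V_{T,\alpha} := F_T / K_{T,\alpha}$ is finitely presented, and these modules form a filtered system indexed by pairs $(T,\alpha)$ ordered by $(T,\alpha) \leq (T',\alpha')$ when $T \subseteq T'$ and the inclusion $F_T \hookrightarrow F_{T'}$ carries $K_{T,\alpha}$ into $K_{T',\alpha'}$. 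The restrictions of $\pi$ induce a compatible family of maps $V_{T,\alpha} \to V$ and hence a canonical map $\varphi: \varinjlim_{(T,\alpha)} V_{T,\alpha} \to V$.

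The final task is to verify that $\varphi$ is an isomorphism. Surjectivity is immediate, since every homogeneous vector of $V$ already lies in the image of some $F_T$. For injectivity, any element of the colimit is represented by some $x \in F_T$; if its image in $V$ is zero, then $x \in K \cap F_T = \bigcup_\alpha K_{T,\alpha}$, so $x \in K_{T,\alpha}$ for some $\alpha$, making its class vanish in $V_{T,\alpha}$. The only real obstacle is the bookkeeping needed to confirm that the indexing set is filtered: given $(T,\alpha)$ and $(T',\alpha')$, one enlarges to $(T \cup T', \alpha'')$ where $K_{T\cup T',\alpha''}$ is a finitely generated submodule of $K \cap F_{T\cup T'}$ containing the images of $K_{T,\alpha}$ and $K_{T',\alpha'}$, which exists because any two finitely generated submodules are contained in a common finitely generated one (their sum).
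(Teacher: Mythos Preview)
Your proof is correct and follows essentially the same idea as the paper's. The paper organizes the argument as a two-step reduction: first observe that any module is the union (hence direct limit) of its finitely generated submodules, then show that each finitely generated module $V$ is itself a direct limit of finitely presented ones by writing $V \cong P/K$ with $P = A e_{i_1}\oplus\cdots\oplus A e_{i_n}$ finitely generated free and approximating $K$ by its finitely generated submodules $W$, so that $V \cong \varinjlim P/W$. Your version simply carries out both approximations---of the free module by its finite-rank summands $F_T$ and of the kernel by finitely generated pieces $K_{T,\alpha}$---simultaneously in a single filtered system. This costs a little extra bookkeeping (verifying the index set is filtered), but avoids nesting one direct limit inside another; the underlying content is the same.
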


\begin{proof}
As any $A$-module is the union of its finitely generated submodules,
it suffices to show that any finitely generated $A$-module $V$ is isomorphic to a direct limit of finitely presented modules.
But then $V$
is a quotient of $P = A e_{i_1}\oplus\cdots\oplus A e_{i_n}$ by a submodule.
This submodule is the union of its finitely generated
submodules $W$, so 
we have that $V \cong P / \varinjlim W \cong \varinjlim P / W$.
This is a direct limit of finitely presented modules.
\end{proof}

The following lemma is fundamental. It is the analog of ``adjointness
of tensor and hom'' in the locally unital setting; see e.g. 
\cite[$\S$2.1]{BD} for a fuller discussion.

\begin{lemma}\label{adjointness}
Let $A = \bigoplus_{i,j \in I} e_i A e_j$ and $B = \bigoplus_{i,j \in
  J} f_i B f_j$ be locally unital algebras, and let $M =
\bigoplus_{i \in I, j \in J} e_i M f_j$ be an
$(A,B)$-bimodule.
\begin{enumerate}
\item
The functor $M\otimes_B ?:B\lmod \rightarrow A\lmod$
is left adjoint to
$\bigoplus_{j \in J} \Hom_A(Mf_j,?)$.
\item
The functor $?\otimes_A M:\rmod A \rightarrow \rmod B$
is left adjoint to
$\bigoplus_{i \in I} \Hom_B(e_i M,?)$.
\end{enumerate}
\end{lemma}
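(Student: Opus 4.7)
The plan is to mimic the classical tensor-hom adjunction, taking care to use the locally unital versions of the hom functors. For part~(1), first I would verify that the putative right adjoint $H(-) := \bigoplus_{j \in J} \Hom_A(Mf_j,-)$ actually takes values in $B\lmod$. Given $V \in A\lmod$ and $b \in f_k B f_l$, right-multiplication by $b$ is an $A$-module homomorphism $Mf_k \to Mf_l$, $m \mapsto mb$, so precomposition gives a linear map $\Hom_A(Mf_l, V) \to \Hom_A(Mf_k, V)$. Summing these defines a left $B$-action on $H(V)$; the decomposition $H(V) = \bigoplus_j f_j H(V)$ is built in, so $H(V)$ is automatically locally unital, and $H$ is evidently functorial.

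Next, for $W \in B\lmod$ and $V \in A\lmod$ I would construct explicit mutually inverse natural maps
\[
\Phi : \Hom_A(M \otimes_B W, V) \longleftrightarrow \Hom_B\bigl(W, H(V)\bigr) : \Psi
\]
by the formulas $\Phi(\phi)(w)(m) := \phi(m \otimes w)$ and $\Psi(\psi)(m \otimes w) := \psi(w)(m)$, where $w \in f_j W$ and $m \in Mf_j$. The required verifications are entirely routine: (i) $\Phi(\phi)(w)$ really lies in $\Hom_A(Mf_j, V)$ and $\Phi(\phi)$ is $B$-linear; (ii) $\Psi(\psi)$ satisfies the balancing identity $\Psi(\psi)(mb \otimes w) = \Psi(\psi)(m \otimes bw)$ (this is where the $B$-linearity of $\psi$ is used, and it is the one place where the locally unital $B$-module structure on $H(V)$ matters), hence descends to the tensor product and is $A$-linear; (iii) $\Phi$ and $\Psi$ are mutually inverse; (iv) both constructions are natural in $W$ and $V$.

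Part~(2) then follows from part~(1) by passing to opposite algebras: a right $A$-module is a left $A^{\op}$-module, the bimodule $M$ becomes an $(B^{\op}, A^{\op})$-bimodule, and the statement in (2) translates directly into an instance of (1). The only genuinely subtle point --- and hence the main place one could go wrong --- is the identification of the correct right adjoint. In the unital case one would simply write $\Hom_A(M,-)$, but in the locally unital setting this functor is too big to land in $B\lmod$ (it would produce infinite products indexed by $J$); one must instead restrict to the locally unital subfunctor, which is exactly $\bigoplus_{j \in J} \Hom_A(Mf_j,-) = \bigoplus_{j \in J} f_j \Hom_A(M,-)$. Once this choice is made, the proof is standard bookkeeping.
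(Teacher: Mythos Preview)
Your proposal is correct and is exactly the standard argument. The paper itself does not prove this lemma at all: it merely states it, calling it ``the analog of `adjointness of tensor and hom' in the locally unital setting'' and referring to \cite[\S2.1]{BD} for a fuller discussion. So there is nothing to compare against; your explicit construction of $\Phi$ and $\Psi$, together with the observation that the right adjoint must be the locally unital subfunctor $\bigoplus_j \Hom_A(Mf_j,-)$ rather than the full $\Hom_A(M,-)$, is precisely the content that the paper leaves to the reader or to the cited reference.
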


For any locally unital algebra $A$, there is a contravariant equivalence
\begin{equation}\label{cdua}
?^\circledast:
A\lfdlmod \rightarrow \lfdrmod A
\end{equation}
sending a left module $V$ to 
$V^\circledast := \bigoplus_{i \in I} (e_i V)^*$, viewed as a right
module in the obvious way.
The analogous functor
$?^\circledast:\lfdrmod A \rightarrow A \lfdlmod$ gives a quasi-inverse.
The contravariant functor (\ref{cdua}) also makes sense on arbitrary left (or right)
$A$-modules. It is no longer an equivalence, but we still have that 
\begin{equation}\label{imp}
\Hom_{A}(V, W^{\circledast}) \cong \Hom_A(W, V^\circledast)
\end{equation}
for any $V \in A\lmod$ and $W \in \rmod A$.
To prove this, apply Lemma~\ref{adjointness}(1) to the
$(\k,A)$-bimodule $W$ to show that
$\Hom_A(V,W^\circledast) \cong (W \otimes_A V)^*$,
then apply Lemma~\ref{adjointness}(2) to the $(A, \k)$-bimodule $V$ to show that
$(W\otimes_A V)^* \cong \Hom_A(W, V^\circledast)$.

\begin{lemma}\label{trimp}
The dual $V^\circledast$ of a projective (left or right) $A$-module is an
injective (right or left) $A$-module.
\end{lemma}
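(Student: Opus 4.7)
The plan is to reduce injectivity of $V^\circledast$ to projectivity of $V$ by exploiting the hom-duality identity \eqref{imp}.

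First, I would check that the contravariant functor $\circledast$ is exact on (arbitrary) left or right $A$-modules. This is immediate: multiplication by an idempotent $e_i$ is exact, so a short exact sequence $0\to V_1\to V_2\to V_3\to 0$ restricts to short exact sequences on each weight space $e_iV_\bullet$, and dualization of vector spaces is exact; taking the direct sum over $i\in I$ yields exactness of $0\to V_3^\circledast\to V_2^\circledast\to V_1^\circledast\to 0$.

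Next, suppose $V$ is a projective left $A$-module and let $0\to W_1\to W_2\to W_3\to 0$ be a short exact sequence of right $A$-modules. Applying $\circledast$ gives an exact sequence $0\to W_3^\circledast\to W_2^\circledast\to W_1^\circledast\to 0$ of left $A$-modules, and applying $\Hom_A(V,-)$ preserves exactness since $V$ is projective. Using \eqref{imp} to rewrite $\Hom_A(V,W_k^\circledast)\cong\Hom_A(W_k,V^\circledast)$ naturally in $k$, we conclude that $\Hom_A(-,V^\circledast)$ is exact on right $A$-modules, which is exactly the statement that $V^\circledast$ is injective as a right $A$-module. The argument for projective right modules is symmetric, swapping the two sides of \eqref{imp}.

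There is no real obstacle here since \eqref{imp} has already been established above using Lemma~\ref{adjointness}; the only thing to verify separately is exactness of $\circledast$, which is a one-line check. The essential content of the lemma is simply that $\circledast$ is an exact contravariant functor that converts $\Hom$ into $\Hom$ via \eqref{imp}, and any such functor sends projectives to injectives.
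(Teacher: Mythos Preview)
Your proof is correct and is essentially the same argument as the paper's, just unpacked explicitly: the paper phrases it by saying that \eqref{imp} exhibits the covariant functor $\circledast:A\lmod\to(\rmod A)^{\op}$ as left adjoint to the exact covariant functor $\circledast:(\rmod A)^{\op}\to A\lmod$, whence it sends projectives to projectives (i.e., injectives in $\rmod A$). Your direct verification that $\Hom_A(-,V^\circledast)$ is exact via the chain $\Hom_A(W_k,V^\circledast)\cong\Hom_A(V,W_k^\circledast)$ and exactness of $\circledast$ is precisely what that adjoint-functor statement unwinds to.
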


\begin{proof}
Just like in 
the classic treatment of duality for vector spaces
from \cite[IV.2]{Maclane}, (\ref{imp}) shows that the covariant functor
$?^\circledast:A\lmod \rightarrow (\rmod A)^\op$ is left adjoint to
the exact covariant functor
$?^\circledast:(\rmod A)^\op
\rightarrow A\lmod$.
So it sends projective left $A$-modules to projectives in $(\rmod
A)^\op$, which are injective right $A$-modules.
\end{proof}

Now we assume that $A$ is a locally unital algebra and $T \in
A\lfdlmod$. We are going to give a self-contained account of the
construction of a coalgebra
$\Coend_A(T)$
which is the continuous dual of the endomorphism algebra $\End_A(T)^\op$.
This is the {\em coend construction} which is an 
essential ingredient in the proof of Lemma~\ref{char} 
as discussed for example in \cite[$\S$1.10]{EGNO}, although as usual
we are using the language of algebras and modules rather than the
language of categories and functors used there. 
To start with, let 
\begin{equation}
B:=\End_A(T)^\op,
\end{equation}
which is a unital algebra.
Then $T$ is an $(A,B)$-bimodule and the dual $T^\circledast$ is a
$(B,A)$-bimodule.
Let $T_i := e_i T$, so that $T = \bigoplus_{i \in I} T_i$ and 
$T^\circledast = \bigoplus_{i \in I} T_i^*$.

\begin{lemma}\label{cts}
Suppose that $T =\bigoplus_{i \in I} T_i \in A\lfdlmod$
and $B := \End_A(T)^\op$ are as above.
For any $V \in A \lmod$,
there is a natural
isomorphism of right $B$-modules
\begin{equation}\label{nasty}
\Hom_A(V, T) \stackrel{\sim}{\rightarrow} (T^\circledast \otimes_A
V)^*,\qquad
\theta \mapsto (f \otimes v \mapsto f(\theta(v))).
\end{equation}
In particular, taking $V = T$, we get 
that
$(T^\circledast \otimes_A T)^* \cong B$
as $(B,B)$-bimodules.
\end{lemma}

\begin{proof}
By Lemma~\ref{adjointness}
applied to the $(A,\k)$-bimodule $T^\circledast$,
the functor $T^\circledast \otimes_A ?$ is left adjoint to
$\bigoplus_{i \in I}\Hom_\k(T_i^*,?)$.
Hence,
$$
(T^\circledast \otimes_A V)^*
= \Hom_\k(T^\circledast \otimes_A V, \k)
\cong \Hom_A\Big(V, \bigoplus_{i \in I}
\Hom_\k(T_i^*, \k)\Big)
\cong \Hom_A(V, T).
$$
This is the natural isomorphism in the statement of the lemma.
We leave it to the reader to check that it is a $B$-module homomorphism.
\end{proof}

Continuing with this setup, let
\begin{equation}\label{C}
C := T^\circledast \otimes_A T.
\end{equation}
There is a unique way to make this into a coalgebra so that
the bimodule isomorphism 
$B\stackrel{\sim}{\rightarrow} C^*$
from Lemma~\ref{cts}
is actually an algebra isomorphism
(viewing the dual
$C^*$ of a coalgebra as an algebra as in the previous subsection).
Explicitly, let $u^{(i)}_1,\dots,u^{(i)}_{d(i)}$
be a basis for $T_i$ and $v^{(i)}_1,\dots,v^{(i)}_{d(i)}$ be the dual
basis for $T_i^*$.
Let $c_{r,s}^{(i)} := v^{(i)}_s \otimes u^{(i)}_r \in C$.
Then the comultiplication $\delta:C \rightarrow C \otimes C$ and
counit $\epsilon:C \rightarrow \k$ 
satisfy
\begin{align}\label{pens}
\delta\big(c_{r,s}^{(i)}\big) &= \sum_{t=1}^{d(i)} c_{r,t}^{(i)}
         \otimes c_{t,s}^{(i)},&
\epsilon\big(c_{r,s}^{(i)}\big)&= \delta_{r,s}
\end{align}
for each $i \in I$ and $1 \leq r,s \leq d(i)$.
For the next lemma, recall the definition of 
continuous dual of a pseudo-compact topological algebra from (\ref{cd}).

\begin{lemma}\label{prof}
The endomorphism algebra $B = \End_A(T)^\op$ of 
$T \in A\lfdlmod$ is a
pseudo-compact topological algebra with respect to the profinite
topology, i.e., $B$ is isomorphic to $\varprojlim B / J$ where the inverse
limit is over all two-sided ideals $J$ of finite codimension.
Moreover, the coalgebra $C$ from (\ref{C}) may be identified with
the continuous dual $B^\star$.
\end{lemma}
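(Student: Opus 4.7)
The plan is to reduce both claims to general facts about coalgebra duality via Lemma~\ref{cts}. Applying that lemma in the special case $V = U$ produces an algebra isomorphism $B \stackrel{\sim}{\to} C^*$, where $C = U^\circledast \otimes_A U$ is the coalgebra defined by (\ref{C})--(\ref{pens}). Once this is in hand, everything asserted about $B$ becomes a general statement about the dual algebra of an arbitrary coalgebra: namely, that $C^*$ is pseudocompact in the profinite topology, and that its continuous dual $(C^*)^\star$ is canonically identified with $C$, as already sketched in \S\ref{pct}.

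To justify this general statement, I would invoke the fundamental theorem of coalgebras: every finite subset of $C$ lies in a finite-dimensional subcoalgebra, so $C = \varinjlim_\alpha C_\alpha$ as a directed union. Dualizing gives $B \cong C^* \cong \varprojlim_\alpha C_\alpha^*$, and each kernel $J_\alpha := C_\alpha^\perp \subseteq C^*$ is a two-sided ideal of $B$ of finite codimension $\dim C_\alpha$. Pseudocompactness of $B$ then reduces to the cofinality assertion that every two-sided ideal $J$ of $B$ of finite codimension contains some $J_\alpha$. For this, set $J^\perp := \{c \in C \mid f(c) = 0 \text{ for all } f \in J\}$; the pairing $C \otimes B \to \k$ gives an injection $J^\perp \hookrightarrow (B/J)^*$, so $J^\perp$ is finite-dimensional, and the two-sidedness of $J$ forces $J^\perp$ to be stable under the comultiplication, hence to be a subcoalgebra. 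Thus $J^\perp = C_\alpha$ for some $\alpha$, and $J \supseteq J_\alpha$.

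For the identification $B^\star \cong C$, the natural map $C \to B^\star$ sending $c$ to the evaluation functional $\operatorname{ev}_c \colon B \to \k$ is well defined, since any $c \in C_\alpha$ annihilates $J_\alpha$. It is injective by the nondegeneracy of the pairing $C \otimes C^* \to \k$. Surjectivity follows from the cofinality above: given $f \in B^\star$ vanishing on some finite-codimensional $J$, after passing to a $J_\alpha \subseteq J$ we find $f \in (B/J_\alpha)^* \cong C_\alpha$, exhibiting $f$ as $\operatorname{ev}_c$ for a unique $c \in C_\alpha \subseteq C$. Compatibility with the coalgebra structures is then routine from the explicit formulas (\ref{pens}).

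The principal technical point is the cofinality claim, where two-sidedness is essential: for a merely one-sided ideal of finite codimension, $J^\perp$ would only be a subcomodule on the appropriate side rather than a subcoalgebra, so the argument would not conclude. The parenthetical remark in \S\ref{pct} (that ``two-sided ideal of finite codimension'' may be replaced by ``left or right ideal of finite codimension'' in (\ref{cd})) shows nonetheless that the resulting continuous dual $B^\star$ is insensitive to this choice.
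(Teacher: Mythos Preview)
Your proposal is correct and follows precisely the same route as the paper: reduce to the algebra isomorphism $B \cong C^*$ via Lemma~\ref{cts} and the discussion around (\ref{C})--(\ref{pens}), then invoke the general coalgebra duality from \S\ref{pct}. The paper's proof is a single line (``This follows because $B \cong C^*$ as algebras''), relying on the facts about $C^*$ already recorded in \S\ref{pct}; you have additionally spelled out those facts (the cofinality argument via $J^\perp$ being a finite-dimensional subcoalgebra, and the explicit inverse to $C \to (C^*)^\star$), which is helpful but not strictly required here since the paper treats them as established background.
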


\begin{proof}
 This
follows because $B \cong C^*$ as algebras.
\end{proof}

Thus, the coalgebra $C$ defined by (\ref{C})
is identified with the continuous dual
\begin{equation}\label{coend}
\Coend_A(T) := \left(\End_A(T)^\op\right)^\star
\end{equation}
of $B$. Explicitly, 
using the formula (\ref{nasty}), the element
$c_{r,s}^{(i)} = v_s^{(i)} \otimes u_r^{(i)} \in C$
is identified with the function sending $\theta \in \End_A(T)$ to
$v_s(\theta(u_r))$.

Now consider the functor
$T^\circledast\otimes_A ?:A\lmod \rightarrow B\lmod$.
Since $T$ is locally finite-dimensional, 
it takes finitely generated $A$-modules to finite-dimensional
$B$-modules. Any $A$-module $V$ is the union of its finitely generated
submodules, and $T^\circledast\otimes_A ?$ commutes with direct limits, 
so we see that $T^\circledast \otimes_A V$ is actually a discrete
$B$-module.
Since $B \cong C^*$, the category $B\dslmod$ is isomorphic
to $\rcomod C$. So we have constructed a functor
\begin{equation}\label{tf}
T^\circledast\otimes_A ?:A\lmod \rightarrow 
\rcomod C.
\end{equation}
For $V \in A\lmod$, the comodule structure map on 
$T^\circledast\otimes_A V$
is given explicitly by the formula
\begin{equation}\label{puns}
  \eta:T^\circledast\otimes_A V \rightarrow T^\circledast \otimes_A V
\otimes C,
\qquad v_s^{(i)} \otimes v \mapsto \sum_{r=1}^{d(i)} v_r^{(i)} \otimes v
\otimes c_{r,s}^{(i)}.
\end{equation}
Recall the definition of the functor $?^\star$
from (\ref{boring2}).

\begin{lemma}\label{ha}
Suppose that 
$T =\bigoplus_{i \in I} T_i\in A\lfdlmod$, $B := \End_A(T)^\op$
and $C \cong B^\star$ are as above.
The functor $T^\circledast \otimes_A ?$ just constructed is 
isomorphic to
\begin{equation}\label{tf1}
G = \Cohom_A(T,?) := \Hom_A(?,T)^\star:A\lmod \rightarrow \rcomod C,
\end{equation}
and it is
left
adjoint to the functor
\begin{equation}\label{tf2}
G_*= \bigoplus_{i \in I} \Hom_C(T_i^*,?):\rcomod C
  \rightarrow A\lmod.
\end{equation}
Thus, $(G,G_*)$ is an adjoint pair.
\end{lemma}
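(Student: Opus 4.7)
The plan is to deduce both assertions from Lemma~\ref{cts} and Lemma~\ref{adjointness}(1), making essential use of the equivalence of categories $*:B\lmodright \rightarrow \rmodleft B$ and $\star:\rmodleft B \rightarrow B\lmodright$ from \eqref{boring1}--\eqref{boring2}.

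For the natural isomorphism, I would start from Lemma~\ref{cts}, which supplies a natural isomorphism of right $B$-modules
$$
\Hom_A(V,U)\;\cong\;(U^\circledast\otimes_A V)^*.
$$
The right-hand side is the linear dual of the discrete left $B$-module $U^\circledast\otimes_A V$ of \eqref{tf}, and as such carries the canonical weak-$*$ pseudocompact topology in which annihilators of finite-dimensional subspaces form a neighborhood basis of $0$. Transporting this structure along the displayed isomorphism equips $\Hom_A(V,U)$ with a natural pseudocompact $B$-module structure, which is precisely what is needed for $\star\circ\Hom_A(-,U)$ to be defined as a functor $A\lmod \rightarrow B\lmodright$. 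Applying $\star$ to the displayed isomorphism and invoking the fact that $\star\circ *$ is naturally isomorphic to the identity on $B\lmodright$ (content of the equivalence \eqref{boring1}--\eqref{boring2}), I recover a natural isomorphism $\star\,\Hom_A(V,U) \cong U^\circledast\otimes_A V$, establishing the asserted isomorphism of functors.

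For the adjunction, I would apply Lemma~\ref{adjointness}(1) to the $(B,A)$-bimodule $U^\circledast$, viewing $B$ as a locally unital algebra with singleton idempotent system $\{1_B\}$ while retaining the decomposition $A=\bigoplus_{i,j\in I}e_i A e_j$. Since $U^\circledast e_i = (e_i U)^* = U_i^*$, that lemma yields that $U^\circledast\otimes_A -:A\lmod\rightarrow B\lmod$ is left adjoint to $\bigoplus_{i\in I}\Hom_B(U_i^*, -):B\lmod\rightarrow A\lmod$. The discussion culminating in \eqref{tf} shows that $U^\circledast\otimes_A -$ factors through the full subcategory $B\lmodright\subset B\lmod$, so the adjunction restricts along this inclusion to yield the desired one between $A\lmod$ and $B\lmodright$. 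The only real subtlety is verifying that the pseudocompact topology transported onto $\Hom_A(V,U)$ is natural in $V$ and compatible with the right $B$-action, but this is forced by the naturality clause in Lemma~\ref{cts}, so the remaining verifications are formal.
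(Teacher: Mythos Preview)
Your proof is correct and follows essentially the same approach as the paper: both parts are deduced from Lemma~\ref{cts} and Lemma~\ref{adjointness}(1) respectively, using the quasi-inverse equivalences $*$ and $\star$ from \eqref{boring1}--\eqref{boring2} to pass between $(U^\circledast\otimes_A V)^*$ and $U^\circledast\otimes_A V$. The extra care you take in noting that the adjunction restricts along the full inclusion $B\lmodright\subset B\lmod$ and that the pseudocompact topology on $\Hom_A(V,U)$ is natural in $V$ fills in details the paper leaves implicit.
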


\begin{proof}
The fact that (\ref{tf}) is left adjoint to (\ref{tf2})
follows by Lemma~\ref{adjointness}.
To see that it is isomorphic to (\ref{tf1}), take $V \in A\lmod$ and
consider the natural isomorphism
$\Hom_A(V,T) \cong (T^\circledast\otimes_A V)^*$ of right $B$-modules
from Lemma~\ref{cts}.
As $T^\circledast \otimes_A V$ is discrete, its dual is
a pseudo-compact left $B$-module, hence, $\Hom_A(V,T)$ is pseudo-compact
too.
Then we apply ${\star}$, using that it is quasi-inverse to $*$,
to get that $\Hom_A(V,T)^\star  \in B\dslmod$ is naturally
isomorphic to $T^\circledast\otimes_A V$.
\end{proof}

\subsection{Schurian categories}\label{errorhere}
By a {\em Schurian category}, we mean a category $\R$ that is
equivalent to $A\lfdlmod$ for a locally finite-dimensional locally
unital algebra $A$. 
This non-standard terminology is considerably more restrictive than other usage of the same term elsewhere in the literature, where ``Schurian category" is typically used to
indicate a $\k$-linear category in which the endomorphism algebras of the indecomposable objects are one-dimensional\footnote{Note also that the present usage is different
 from several recent papers of the first author:
in \cite{BD}, the phrase ``locally Schurian''
was used to describe the categories we now call ``Schurian";
more precisely, in \cite{BD}, a locally
Schurian category
referred to a category of the form $A\lmod$ (rather than
$A\lfdlmod$) for locally finite-dimensional 
locally unital algebras $A$.
We could not use the phrase ``Schurian'' in {\em loc. cit.} since that 
was reserved for a more restrictive 
notion defined in \cite[$\S$2.1]{BLW}; this more restrictive notion
will be discussed in the next subsection, again using different language.} (e.g., see work of Roiter).

By an {\em algebra realization} of a Schurian category $\R$, we mean
a locally finite-dimensional locally unital algebra $A$ (together with the set $I$
indexing 
its distinguished idempotents) such that $\R$ is equivalent to
$A\lfdlmod$.
Now we assume that
$$
\R
=A\lfdlmod
$$
and proceed to summarize some of the basic properties of such
categories, referring to \cite[$\S$2]{BD} for a more detailed treatment.
Let
$\{L(b)\:|\:b \in \B\}$ be a full set of pairwise inequivalent
irreducible objects of $\R$.
Schur's Lemma holds: we have that $\End_\R(L(b)) = \k$ for each $b \in \B$.
Note that the opposite 
category $\R^\op$ is also Schurian,
and $A^\op$ gives an algebra realization for 
it.
This follows because $\R^\op = (A\lfdlmod)^\op$
is equivalent to $\lfdrmod A \cong (A^\op)\lfdlmod$ using (\ref{cdua}).

Let $\R_c$ be the (not necessarily Abelian) full subcategory of $\R$ consisting of
all compact objects, and $\Ind(\R_c)$ be its ind-completion.
The canonical functor $\Ind(\R_c) \rightarrow A\lmod$
is an equivalence of categories. 
To see this, we note that all finitely generated $A$-modules are 
locally finite-dimensional as $A$ itself 
is locally finite-dimensional. Hence, finitely presented $A$-modules
are locally finite-dimensional too, i.e, $A\fglmod$ is a subcategory
of $A\lfdlmod$.
In view of Lemma~\ref{kevin1}, this is the category $\R_c$.
It just remains to apply \cite[Cor.~6.3.5]{KS}, using
Lemma~\ref{kevin2} when checking the required hypotheses.

The category $A\lmod$ is a Grothendieck category. In particular, this
means that
every $A$-module has an injective hull in $A\lmod$.
Since every
$A$-module is a quotient of a direct sum of projective $A$-modules
of the form $A e_i$,
the category $A\lmod$ also has enough projectives.
It is {\em not} true that an
arbitrary $A$-module has a projective cover, but we will see in
Lemma~\ref{enough} below
that finitely generated $A$-modules do.

Like we did in $\S\ref{pct}$, we write 
$\Ext^n_\R(V,W)$, or sometimes $\Ext^n_A(V,W)$, in place of $\Ext^n_{\Ind(\R_c)}(V,W)$
for any $V, W \in \Ind(\R_c)$. This can be computed
either from a projective resolution of $V$ or from an
injective resolution of $W$.
We can also consider both right derived functors $\mathbb{R}^n F$ 
of a left exact functor
$F:\Ind(\R_c)\rightarrow \R'$
and left
derived functors $\der{L}_n G$ of 
a right exact functor
$G:\Ind(\R_c)\rightarrow \R'$.
We provide an elementary proof of the following, but note it also follows
from \cite[Th.~15.3.1]{KS}.

\begin{lemma}\label{skyfall}
For $V, W \in \R$ and
$n \geq 0$,
there is a natural isomorphism
$$
\Ext^n_{\R}(V,W) \cong \Ext^n_{\R^\op}(W, V).
$$
\end{lemma}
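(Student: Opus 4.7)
The plan is to exploit the exact contravariant functor $\circledast: A\lmod \to \rmod A$ in order to convert a projective resolution of $V$ into an injective coresolution of $V^\circledast$, and then identify Ext on one side with Ext on the other.

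First I would pick a projective resolution $\cdots \to P_1 \to P_0 \to V \to 0$ in $A\lmod = \Ind(\R_c)$, which exists because $A\lmod$ has enough projectives. The functor $\circledast$ is exact on all of $A\lmod$ (for each $i$ the vector-space dual $(e_i M)^*$ is exact in $M$, and direct sums of exact functors are exact), so applying $\circledast$ yields an exact sequence $0 \to V^\circledast \to P_0^\circledast \to P_1^\circledast \to \cdots$ in $\rmod A$. By Lemma~\ref{trimp} each $P_i^\circledast$ is injective, so this is an injective coresolution of $V^\circledast$. Now $\circledast$ restricts to a contravariant equivalence $\R \to \lfdrmod A$, equivalently a covariant equivalence $\R^{\op} \simeq \lfdrmod A$, which extends to an equivalence of ind-completions $\Ind((\R^{\op})_c) \simeq \rmod A$ sending $W$ to $W^\circledast$ and $V$ to $V^\circledast$. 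Hence
$$\Ext^n_{\R^{\op}}(W,V) \;\cong\; \Ext^n_{\rmod A}(W^\circledast, V^\circledast) \;=\; H^n\bigl(\Hom_{\rmod A}(W^\circledast, P_\bullet^\circledast)\bigr).$$

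Finally I would apply~(\ref{imp}) term-by-term to rewrite $\Hom_{\rmod A}(W^\circledast, P_i^\circledast) \cong \Hom_A(P_i, W^{\circledast\circledast})$, and use that $W \in \R$ implies the canonical map $W \to W^{\circledast\circledast}$ is an isomorphism (since $\circledast$ is a contravariant equivalence on locally finite-dimensional modules). This yields an isomorphism of complexes $\Hom_{\rmod A}(W^\circledast, P_\bullet^\circledast) \cong \Hom_A(P_\bullet, W)$, whose $n$th cohomology is $\Ext^n_\R(V,W)$ by definition, and naturality in $V$ and $W$ is transparent from the construction. The main delicate point is verifying that the contravariant equivalence $\R \simeq (\lfdrmod A)^{\op}$ extends correctly to ind-completions so that $\Ext_{\R^{\op}}$ genuinely transports to $\Ext_{\rmod A}$; this is exactly the kind of ind-completion bookkeeping that the Acknowledgements flag as having caused trouble in an earlier version, so some care is warranted, but once it is set up, exactness of $\circledast$ on $A\lmod$ together with Lemma~\ref{trimp} does the rest of the work.
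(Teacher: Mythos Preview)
Your proof is correct and follows essentially the same approach as the paper: take a projective resolution of $V$ in $A\lmod$, apply the exact functor $\circledast$ together with Lemma~\ref{trimp} to obtain an injective resolution of $V^\circledast$ in $\rmod A$, and then use~(\ref{imp}) to identify the Hom complexes. The paper states the identification $\Hom_A(P_i,W)\cong\Hom_A(W^\circledast,P_i^\circledast)$ directly (implicitly using $W\cong W^{\circledast\circledast}$ for locally finite-dimensional $W$), whereas you spell out the passage through $W^{\circledast\circledast}$; your worry about transporting Ext across the ind-completions is already handled by the earlier discussion identifying $\R^{\op}$ with $\lfdrmod A$ and its ind-completion with $\rmod A$.
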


\begin{proof}
Using 
(\ref{cdua}),
we must show that 
$\Ext^n_A(V, W) \cong \Ext^n_{A}(W^\circledast,V^\circledast)$ for
locally finite-dimensional
$A$-modules $V$ and $W$. To compute $\Ext^n_A(V,W)$, take a projective resolution 
$$\cdots\longrightarrow P_1\longrightarrow P_0 \longrightarrow V \longrightarrow 0
$$ 
of $V$ in $A\lmod$.
By Lemma~\ref{trimp},
on applying the exact functor $\circledast$, we get an injective
resolution $$
0 \longrightarrow
V^\circledast \longrightarrow P_0^\circledast \longrightarrow
P_1^\circledast\longrightarrow \cdots
$$ of
$V^\circledast$ in $\rmod A$.
Since $W$ is locally finite-dimensional, we can use (\ref{imp}) 
to see that 
$\Hom_{A}(P_i, W) \cong \Hom_A(W^\circledast, P_i^\circledast)$ for
each $i$.
So $\Ext^n_A(V,W) \cong \Ext^n_{A}(W^\circledast, V^\circledast)$.
\end{proof}

Let $I(b)$ be an injective hull of $L(b)$ in $A\lmod$.
The dual $(e_i A)^\circledast$ of the projective
right $A$-module $e_i A$ is injective in 
$A\lmod$.
Since $\End_A((e_i A)^\circledast)^\op \cong \End_A(e_i A) \cong e_i
A e_i$, which is finite-dimensional, the injective module $(e_i A)^\circledast$
can be written as a finite direct sum of indecomposable injectives.
To determine which ones, we compute its socle: we have that
$\Hom_A(L(b), (e_i
A)^\circledast)
\cong
\Hom_A(e_i A, L(b)^\circledast) \cong (L(b)^\circledast) e_i
= (e_i L(b))^*$,
hence,
\begin{equation}\label{iid}
(e_i A)^\circledast \cong \bigoplus_{b \in \B} I(b)^{\oplus \dim e_i
  L(b)},
\end{equation}
with all but finitely many summands on the right hand side being zero.
In particular, this shows for fixed $i$ that $\dim e_i L(b) = 0$ for all but finitely
many $b \in \B$. Conversely, for fixed $b \in \B$, we can always choose $i \in I$
so that $e_i L(b) \neq 0$, and deduce that $I(b)$ is a summand of
$(e_i A)^\circledast$. This means that
each indecomposable injective $I(b)$ is a locally finite-dimensional
left $A$-module. 

Let $P(b)$ be the dual of the injective hull of
the irreducible right $A$-module $L(b)^\circledast$.
By dualizing the right module analog of the decomposition (\ref{iid}),
we get also that
\begin{equation}\label{pid}
A e_i \cong \bigoplus_{b \in \B} P(b)^{\oplus \dim e_i L(b)},
\end{equation}
with all but finitely many summands being zero.
In particular, $P(b)$ is projective in $A\lmod$, hence, it is a
projective cover of $L(b)$ in $A\lmod$.
The composition multiplicities of any $A$-module satisfy
\begin{equation}\label{compmults}
[V:L(b)] = \dim \Hom_A(V, I(b)) = \dim \Hom_A(P(b), V).
\end{equation}

\begin{lemma}
For $A$ as above,
left $A$-module $V$ is locally finite-dimensional if and only if $[V:L(b)] <
\infty$ for all $b \in \B$. 
\end{lemma}

\begin{proof}
Note that $V$
is locally finite-dimensional if and only if $\dim \Hom_A(A e_i, V) <
\infty$ for each $i \in I$. Using the decompositon (\ref{pid}), this
is if and only if $\dim \Hom_A(P(b), V) < \infty$ for each $b \in \B$.
\end{proof}

There is a little more to be said about finitely generated modules.
Recall from the previous subsection that a module is
finitely generated if $V = A v_1+\cdots+A v_n$
for homogeneous vectors $v_1,\dots,v_n \in V$.
We say that $V$ is {\em finitely cogenerated} if its dual is finitely generated.
It is obvious from these definitions that
$\Hom_A(V,W)$ is finite-dimensional
either
if $V$ is finitely generated and $W$ is locally finite-dimensional, or if
$V$ is locally finite-dimensional and $W$ is finitely cogenerated.
The following checks that our naive definitions
are consistent with the usual notions
of finitely generated and cogenerated objects of
Grothendieck categories.

\begin{lemma}\label{enough}
For $V
\in A\lmod$,
the following properties are equivalent:
\begin{itemize}
\item[(i)] $V$ is finitely generated;
\item[(ii)]
the radical $\rad V$, i.e., the sum of its maximal proper submodules, is a superfluous
submodule and $\hd V := V / \rad V$ is of finite length;
\item[(iii)] $V$ is a quotient of a finite direct sum of the modules
  $P(b)$ for $b \in \B$.
\end{itemize}
Moreover, any finitely generated $V$ has a projective cover.
\end{lemma}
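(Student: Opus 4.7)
The plan is to establish the equivalences via the cycle $(i) \Rightarrow (iii) \Rightarrow (ii) \Rightarrow (i)$ and then deduce existence of projective covers from $(ii)$. The crucial preliminary observation is that every proper submodule of a finitely generated module is contained in a maximal submodule. Indeed, if $V = Av_1 + \cdots + Av_n$ with $v_k$ homogeneous, then a submodule of $V$ is proper precisely when it omits some $v_k$; since there are only finitely many $v_k$, a chain of proper submodules has a common omitted $v_k$ and thus a proper union, so Zorn's lemma applies. It follows that $\rad V$, defined as the intersection of maximal submodules, is superfluous in any finitely generated $V$: if $N + \rad V = V$ with $N$ proper, then $N \subseteq M$ for some maximal $M$, and since $\rad V \subseteq M$ one would get $V \subseteq M$, a contradiction.

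Two structural facts about $P(b)$ are needed. \emph{Cyclicity:} choosing $i \in I$ with $e_i L(b) \neq 0$, the decomposition (\ref{pid}) realizes $P(b)$ as a direct summand of the cyclic module $Ae_i$, and writing $e_i = p + q$ for the associated idempotent decomposition gives $P(b) = Ap$. \emph{Unique simple quotient:} by construction $P(b)^\circledast$ is the injective hull of $L(b)^\circledast$ in $\rmod A$, so $L(b)^\circledast \hookrightarrow P(b)^\circledast$ is essential; applying the contravariant equivalence $\circledast$ yields that every nonzero quotient of $P(b)$ admits $L(b)$ as a further quotient, so $L(b)$ is the unique irreducible quotient of $P(b)$. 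Combined with the previous paragraph, $\rad P(b)$ is then the unique maximal submodule of $P(b)$.

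The three equivalences now follow quickly. For $(i) \Rightarrow (iii)$, writing $V = Av_1 + \cdots + Av_n$ with $v_k \in e_{i_k}V$ exhibits $V$ as a quotient of $\bigoplus_k Ae_{i_k}$, and (\ref{pid}) decomposes each $Ae_{i_k}$ as a finite direct sum of indecomposable projectives $P(b)$. The implication $(iii) \Rightarrow (i)$ is immediate from cyclicity of each $P(b)$. For $(iii) \Rightarrow (ii)$, a quotient $V$ of $P := \bigoplus_k P(b_k)$ is finitely generated, whence $\rad V$ is superfluous, and $\hd V$ is a quotient of the finite length module $\bigoplus_k L(b_k)$. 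For $(ii) \Rightarrow (i)$, the finite length module $\hd V$ is finitely generated, so lifting a finite generating set back to $v_1, \ldots, v_n \in V$ yields $Av_1 + \cdots + Av_n + \rad V = V$, and superfluousness of $\rad V$ then forces $V = Av_1 + \cdots + Av_n$.

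For the projective cover, given $V$ finitely generated with $\hd V \cong \bigoplus_k L(b_k)^{m_k}$, set $P := \bigoplus_k P(b_k)^{m_k}$ so that $\hd P \cong \hd V$ canonically; using projectivity of $P$, lift the composition $P \twoheadrightarrow \hd P \cong \hd V$ through $V \twoheadrightarrow \hd V$ to a map $\phi \colon P \to V$. Then $\phi(P) + \rad V = V$, so $\phi$ is surjective by superfluousness from $(ii)$. To see that $\ker \phi$ is superfluous in $P$, suppose $N + \ker \phi = P$; then $\phi(N) = V$ surjects onto $\hd V \cong \hd P$, so $N + \rad P = P$, and since $P$ is finitely generated the preliminary lemma yields $N = P$. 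The main subtlety throughout is to adapt the Zorn's lemma argument for existence of maximal submodules to the locally unital setting, where no single element of $A$ acts as a global identity, by exploiting the finiteness of a homogeneous generating set.
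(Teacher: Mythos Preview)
Your proof is correct and is precisely the kind of ``standard argument'' the paper gestures at: the paper's own proof consists of a single sentence invoking the decomposition (\ref{pid}) and the fact that $P(b)$ is a projective cover of $L(b)$, leaving the reader to fill in exactly the details you have supplied. One small remark: you define $\rad V$ as the \emph{intersection} of maximal submodules, which is the standard definition and the one that makes the lemma true, whereas the paper's parenthetical says ``sum'' --- this appears to be a slip in the paper, and your (implicit) correction is right.
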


\begin{proof}
We have already observed that $P(b)$ is a projective cover of
$L(b)$. The lemma follows by standard arguments given this and the decomposition
(\ref{pid}).
\end{proof}

\begin{corollary}\label{fccor}
For $V \in A\lmod$, the following properties are equivalent:
\begin{itemize}
\item[(i)] $V$ is finitely cogenerated;
\item[(ii)]
$\soc V$ is an
essential submodule of finite length;
\item[(iii)] $V$ is isomorphic to a submodule of a finite direct sum
  of modules $I(b)$ for $b \in \B$.
\end{itemize}
\end{corollary}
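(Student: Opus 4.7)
The plan is to deduce this corollary as the dual of Lemma~\ref{enough} via the contravariant equivalence $\circledast\colon A\lfdlmod \to \lfdrmod A$ of (\ref{cdua}).

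First I would check that each of (i)--(iii) forces $V$ to be locally finite-dimensional, so that $V \cong V^{\circledast\circledast}$ and the duality can be applied throughout. For (i), if $V^\circledast$ is generated as a right $A$-module by homogeneous vectors $w_k \in V^\circledast e_{i_k}$, then $V^\circledast e_j = \sum_k e_j A e_{i_k}\, w_k$ is finite-dimensional since $A$ is locally finite-dimensional, and the identification $(e_j V)^* = V^\circledast e_j$ then gives $\dim e_j V < \infty$. Condition (iii) is immediate since each $I(b)$ is locally finite-dimensional, while (ii) reduces to (iii) by extending the inclusion $\soc V \hookrightarrow V$ to a map $V \to \bigoplus_k I(b_k)$ (with the $L(b_k)$ the simple summands of $\soc V$), which is monic by essentiality.

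Next I would set up the dictionary between submodules of $V$ and submodules of $V^\circledast$ provided by $\circledast$: the assignment $U \mapsto U^{\perp} := (V/U)^\circledast \subseteq V^\circledast$ is an order-reversing bijection interchanging sums and intersections and preserving both length and semisimplicity. From this one extracts the identifications $(\rad V^\circledast)^\perp = \soc V$, so that $\soc V \cong (\hd V^\circledast)^\circledast$ is of finite length iff $\hd V^\circledast$ is; and $\soc V$ is essential in $V$ iff $\rad V^\circledast$ is superfluous in $V^\circledast$, since $U \cap \soc V = 0$ translates into $U^\perp + \rad V^\circledast = V^\circledast$ while $U = 0$ translates into $U^\perp = V^\circledast$. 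Finally, the projective cover of the simple right module $L(b)^\circledast$ in $\rmod A$ is $I(b)^\circledast$ (its dual being the injective hull $I(b)$ of $L(b)$), so any surjection $\bigoplus_k I(b_k)^\circledast \twoheadrightarrow V^\circledast$ dualises to an embedding $V \hookrightarrow \bigoplus_k I(b_k)$.

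With this dictionary in hand, Lemma~\ref{enough} applied to $V^\circledast$ (regarded as a left $A^{\op}$-module) immediately delivers the equivalences (i)$\Leftrightarrow$(ii)$\Leftrightarrow$(iii). The main obstacle is the preliminary reduction to the locally finite-dimensional case, without which the contravariant equivalence $\circledast$ cannot be freely applied; the rest of the dualisations are routine, although the essentiality/superfluity translation requires some care in the locally unital setting.
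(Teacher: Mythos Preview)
Your proposal is correct and follows the approach the paper intends: the corollary is stated without proof immediately after Lemma~\ref{enough}, so the reader is meant to dualise via $\circledast$. Your preliminary reduction showing that each of (i)--(iii) forces $V\in A\lfdlmod$ is a genuine point that the paper leaves implicit, and your dictionary between essentiality of $\soc V$ and superfluity of $\rad V^{\circledast}$ is exactly the right translation; the rest is the expected routine dualisation.
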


We say that a locally finite-dimensional locally unital algebra $A =
\bigoplus_{i,j \in I} e_i A e_j$ is {\em pointed} if $A$ is a basic algebra, i.e., all of its irreducible modules are one-dimensional, and all of its distinguished idempotents 
$\{e_i\:|\:i \in I\}$ are primitive.

\begin{lemma}\label{willthisneverend}
  Let $A = \bigoplus_{i,j \in I} e_i A e_j$ be a locally
  finite-dimensional locally unital algebra.
  Pick an idempotent expansion $A = \bigoplus_{i,j \in \hat I} \hat
  e_i A \hat e_j$ such that for some subset $\B\subseteq\hat I$
  the set
  $\{\hat e_b\:|\:b \in \B\}$
is a complete set of pairwise non-conjugate primitive
  idempotents in $A$.
  Let $B := \bigoplus_{a,b \in \B} \hat e_a A \hat e_b$.
  Then $B$ is a pointed locally unital algebra that is Morita
  equivalent to $A$, and any such pointed locally unital algebra is isomorphic to $B$.
  \end{lemma}

  \begin{proof}
    It is clear that $B$ is pointed. To see that $A$ and $B$ are
    Morita equivalent, note that the functor $A\lmod \rightarrow
    B\lmod,
V \mapsto \bigoplus_{b \in \B} \hat e_b V$ is an equivalence of
categories with quasi-inverse given by the functor $\big(\bigoplus_{b \in
  \B} A \hat e_b\big) \otimes_B ?$.
Finally if $B'$ another pointed locally unital algebra that is Morita
equivalent to $A$, let $F:A\lmod \rightarrow B'\lmod$ be an equivalence
of categories.
Then we have that
$B' = \bigoplus_{b \in \B} B'_b$ for left ideals $B'_b \cong F (A \hat
e_b)$. So $$
B' \cong \left(\bigoplus_{a,b\in\B} \Hom_{B'}(B'_a,B'_b)\right)^{\op}
\cong \bigoplus_{a,b \in \B} \Hom_A(A \hat e_a, A \hat e_b) =
\bigoplus_{a,b \in \B} \hat e_a A \hat e_b = B.
$$
This proves the uniqueness.
\end{proof}

Finally, we introduce some terminology which will not be neeeded until
$\S$\ref{std}.

\begin{definition}\label{locfreedef}
Let $A = \bigoplus_{i,j \in I} e_i A e_j$ be a locally
finite-dimensional locally unital algebra.
Let $\BS \subseteq I$ be a subset.
We say that a left $A$-module $V$ is {\em $\BS$-free}
if there is a subset $X = \bigsqcup_{s \in \BS} X(s) \subset V$ such that
the following properties hold:
\begin{itemize}
\item[(LF1)] $V =\bigoplus_{x \in X} Ax$.
\item[(LF2)] The homomorphism
  $Ae_s \rightarrow Ax, a \mapsto ax$ is an isomorphism
  for $x \in X(s)$.
\end{itemize}
Equivalently, there is a $\K$-submodule $U$ of $eV := \bigoplus_{s \in
  \BS} e_s V$ such that the
multiplication map
$A e \otimes_\K U\rightarrow V$ is an isomorphism, where
$Ae := \bigoplus_{s \in \BS} A e_s$ and $\K := \bigoplus_{s \in \BS} \k e_s$.
\end{definition}

\begin{lemma}\label{locfreelem}
  Suppose that $A = \bigoplus_{i,j \in I} e_i A e_j$ is a locally
  finite-dimensional locally unital algebra and
$\{e_b\:|\:b \in \B\}$ is a full set of pairwise
  non-conjugate primitive idempotents in $A$ for some subset
  $\B\subseteq I$.
  Then every finitely generated projective left $A$-module is $\B$-free.
  \end{lemma}

\begin{proof}
Any finitely generated projective left $A$-module $V$  decomposes as
a finite direct direct sum of indecomposable projectives, and any indecomposable
projective is isomorphic to $A e_b$ for some $b \in \B$.
Hence, we can pick a finite subset $X = \bigsqcup_{b \in \B} X(b)$ so that $V
= \bigoplus_{x\in X} A x$ with $Ax \cong A e_b$ for $x \in X(b)$.
\end{proof}

There are obvious right module analogs of these notions. 

\subsection{Essentially finite Abelian categories}\label{pups}
We say that a locally unital algebra $A = \bigoplus_{i,j \in
  I} e_i A e_j$ is {\em essentially finite-dimensional} if each right
ideal $e_i A$ and each left ideal $A e_j$ is finite-dimensional.
By an {\em essentially finite Abelian category}, we mean
a category $\R$ 
that is equivalent to $A\fdlmod$ for such an $A$.
In that case, we refer to $A$ as an {\em algebra realization} of $\R$. Note 
that $\R$ is essentially finite Abelian if and only if $\R^\op$ is
essentially finite Abelian. Moreover, if $A$ is an algebra realization for $\R$ then $A^\op$ is
one for $\R^\op$ by the obvious
contravariant equivalence $?^*:A\fdlmod \rightarrow \fdrmod A$.

\begin{lemma}\label{veral}
An essentially small category $\R$ is equivalent to $A\fdlmod$ for a
locally unital algebra $A
= \bigoplus_{i,j \in I} e_i A e_j$ such that each left ideal
$A e_j$ (resp., each right ideal $e_i A$) is finite-dimensional if and
only if $\R$ 
is a locally finite
Abelian category with enough projectives (resp., enough injectives).
\end{lemma}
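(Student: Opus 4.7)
The plan is to prove the statement about left ideals $Ae_j$ (and enough projectives); the parenthetical version then follows by applying the result to $\R^{\op}$ and $A^{\op}$, using that $A^{\op} e_j = e_j A$ and that enough projectives in $\R^{\op}$ is enough injectives in $\R$, together with the fact (noted in \S\ref{errorhere}) that $\R^{\op}$ is of the same type as $\R$.

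For the forward direction, assume $\R = A\fdlmod$ with each $Ae_j$ finite-dimensional. Any $V \in \R$ is finite-dimensional, hence of finite length and with finite-dimensional Hom spaces, so $\R$ is locally finite Abelian by Lemma~\ref{char}. Each $Ae_j$ is a finite-dimensional projective in $\R$, and any $V \in \R$ is finitely generated (pick a $\k$-basis consisting of homogeneous vectors) and so admits a surjection from a finite direct sum of the $Ae_j$'s, giving enough projectives.

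For the converse, assume $\R$ is locally finite Abelian with enough projectives. Since objects have finite length and morphism spaces are finite-dimensional, endomorphism rings of indecomposables are local and Krull--Schmidt holds, so for each irreducible $L(b)$ one produces a projective cover $P(b) \twoheadrightarrow L(b)$ (by decomposing any projective surjection and picking a summand whose image generates $L(b)$). Set
\[
A := \bigoplus_{b,c \in \B} e_b A e_c,\qquad e_b A e_c := \Hom_\R(P(b), P(c)),
\]
made into a locally unital algebra using composition opposite to that in $\R$, with distinguished idempotents $e_b := \id_{P(b)}$, and define
\[
F : \R \longrightarrow A\lmod, \qquad F(V) := \bigoplus_{b \in \B} \Hom_\R(P(b), V),
\]
with the $A$-action given by pre-composition. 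Then $F(P(c)) \cong Ae_c$ and, since $P(b) \twoheadrightarrow L(b)$ is a projective cover with $\End_\R(L(b)) = \k$, each $F(L(b))$ is one-dimensional, supported at $e_b$, and naturally isomorphic to the simple $A$-module $L(b)$. As each $\Hom_\R(P(b), -)$ is exact, so is $F$, and induction on composition length shows that $\dim F(V) = \operatorname{length}_\R(V) < \infty$ for every $V \in \R$. In particular $\dim Ae_c = \operatorname{length}_\R P(c) < \infty$, giving the required finiteness of $Ae_j$, and $F$ takes values in $A\fdlmod$.

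It remains to check that $F : \R \to A\fdlmod$ is an equivalence. The identity $\Hom_A(Ae_b, F(W)) = e_b F(W) = \Hom_\R(P(b), W)$ shows $F$ is fully faithful on direct sums of $P(b)$'s; since projective covers of finite-length objects provide finite projective presentations $Q_1 \to Q_0 \to V \to 0$ in $\R$ with each $Q_i$ a finite direct sum of $P(b)$'s, a five-lemma argument extends full faithfulness to all of $\R$. For essential surjectivity, any $V \in A\fdlmod$ is finite-dimensional hence finitely presented, so there is an exact sequence $Ae_{j_1} \oplus \cdots \oplus Ae_{j_m} \xrightarrow{\phi} Ae_{i_1} \oplus \cdots \oplus Ae_{i_n} \to V \to 0$; full faithfulness on projectives lifts $\phi$ to a map in $\R$, whose cokernel $X$ satisfies $F(X) \cong V$ by exactness. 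The main delicate point is the finite-dimensionality of $Ae_c$, which rests on the key computation that $F$ sends each simple $L(b)$ to a one-dimensional module, so that $\dim F(P(c))$ matches the finite length of $P(c)$ in $\R$; once this is in place, the rest of the argument is a routine Morita-style reconstruction.
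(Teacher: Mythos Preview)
Your proof is correct and follows essentially the same approach as the paper: reduce the injective/right-ideal case to the projective/left-ideal case via $\R^{\op}$ and $A^{\op}$, handle the forward direction directly, and for the converse build $A$ from the projective covers $P(b)$ and use the functor $\bigoplus_b \Hom_\R(P(b),-)$ to realize $\R$ as $A\fdlmod$. The paper's version is more compressed, invoking Lemma~\ref{mor} for the equivalence and observing that $Ae_b = F(P(b))$ with $P(b)$ of finite length, whereas you spell out full faithfulness, essential surjectivity, and the one-dimensionality of $F(L(b))$ explicitly; but the content is the same.
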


\begin{proof}
We just prove the result for left ideals and projectives; the
parenthesized statement for right ideals and injectives follows by
replacing $\R$ and $A$ with $\R^\op$ and $A^\op$.

Suppose first that $A = \bigoplus_{i,j \in I} e_i A e_j$ is a locally
unital algebra such that each left ideal $A e_j$ is
finite-dimensional. 
Then $A\fdlmod$ is a locally finite Abelian category.
It has enough projectives because the left ideals $A e_j$ are
finite-dimensional.

Conversely, suppose $\R$ is a locally finite Abelian category with
enough projectives.
Let $\{L(b)\:|\:b \in \B\}$ be a
full set of pairwise inequivalent irreducible objects, and $P(b) \in
\R$ a projective cover of $L(b)$. Define
$A$ to be the locally unital algebra
$A = \bigoplus_{a,b \in \B} e_a A e_b$
where $e_a A e_b :=\Hom_{\R}(P(a),P(b))$
with multiplication that is the opposite of composition in $\R$. This is a pointed locally finite-dimensional locally unital algebra.
As in the proof of 
Lemma~\ref{mor}, the functor $\bigoplus_{b \in \B} \Hom_\R(P(b),?)$ 
defines an equivalence
$\R \rightarrow A\fdlmod$.
It remains to note that
the ideals $A e_b$ are finite-dimensional since they are the 
images under this functor of the projectives $P(b)$, which are of
finite length.
\end{proof}

\begin{corollary}\label{vera}
An essentially small category $\R$ is an 
essentially finite Abelian category if and only if it is 
a locally finite Abelian category with enough injectives and projectives.
\end{corollary}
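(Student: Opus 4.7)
The plan is to treat the two implications separately: the forward direction is essentially a double application of Lemma~\ref{veral}, while the reverse direction requires a short reciprocity computation to upgrade finite-dimensionality of left ideals to finite-dimensionality of right ideals in the algebra produced by that lemma. For the forward direction, if $\R \simeq A\fdlmod$ for a locally unital $A$ with every $Ae_j$ and every $e_i A$ finite-dimensional, then the ``if'' half of Lemma~\ref{veral} (in its original form) gives that $\R$ is locally finite Abelian with enough projectives, and the parenthesized version applied to the same $A$ gives enough injectives.

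For the reverse direction, assume $\R$ is locally finite Abelian with enough projectives and enough injectives. Let $\{L(b)\}_{b \in \B}$ be representatives of the simples, with projective covers $P(b)$ and injective hulls $I(b)$. I would first invoke the ``only if'' half of Lemma~\ref{veral} to produce the pointed algebra $A = \bigoplus_{a,b \in \B} e_a A e_b$ with $e_a A e_b := \Hom_\R(P(a), P(b))$ (multiplication opposite to composition), so that $\R \simeq A\fdlmod$; the left ideals $Ae_b$ correspond to $P(b)$, hence are finite-dimensional since $P(b)$ has finite length. It remains to verify that every right ideal $e_a A = \bigoplus_{b \in \B} \Hom_\R(P(a), P(b))$ is finite-dimensional. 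Since $\End_\R(L(a)) = \k$ by Schur's lemma, a standard induction on length yields $\dim \Hom_\R(P(a), V) = [V : L(a)]$ for $V$ of finite length, so the task reduces to showing $\sum_{b \in \B} [P(b) : L(a)] < \infty$.

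The key step is a reciprocity identity. The two length formulas
\[
\dim \Hom_\R(P(b), V) = [V : L(b)], \qquad \dim \Hom_\R(V, I(b)) = [V : L(b)],
\]
valid for any $V$ of finite length, are each proved by a short-exact-sequence induction once one knows $\hd P(b) = L(b)$ and $\soc I(b) = L(b)$; these in turn follow from the essentiality of the defining projective cover and injective hull embeddings together with $\End_\R L(b) = \k$. Applying both formulas to the finite-length object $V = I(a)$ (which lies in $\R$ by Lemma~\ref{char}) and comparing yields the reciprocity
\[
[P(b) : L(a)] \;=\; \dim \Hom_\R(P(b), I(a)) \;=\; [I(a) : L(b)].
\]
Summing over $b$ then gives $\dim e_a A = \sum_{b \in \B} [I(a) : L(b)]$, which is the total composition length of $I(a)$ and hence finite.

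The main obstacle is the reciprocity $[P(b) : L(a)] = [I(a) : L(b)]$: it looks superficially like BGG reciprocity but here rests on nothing beyond Schur's lemma and the identifications $\hd P(b) = L(b)$, $\soc I(b) = L(b)$. Once that is in place, the passage from ``left ideals finite-dimensional'' to ``right ideals finite-dimensional'' in the algebra $A$ is automatic and the corollary follows.
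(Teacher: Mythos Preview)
Your proof is correct. The paper states the result as a corollary of Lemma~\ref{veral} without giving any further details, so there is no explicit ``paper's proof'' to compare against; your argument is precisely the natural way to fill in the reverse direction. The forward implication is indeed immediate from applying both the original and parenthesized forms of Lemma~\ref{veral} to the same $A$. For the reverse implication, the subtlety you correctly identify is that Lemma~\ref{veral} applied to $\R$ produces an algebra $A$ with finite-dimensional \emph{left} ideals, and one must still check the right ideals; your reciprocity computation
\[
\dim e_a A \;=\; \sum_{b\in\B}\dim\Hom_\R(P(a),P(b)) \;=\; \sum_{b\in\B}[P(b):L(a)] \;=\; \sum_{b\in\B}[I(a):L(b)] \;=\; \text{length of }I(a)
\]
via the common value $\dim\Hom_\R(P(b),I(a))$ is exactly the right argument and uses only that $\R$ has enough injectives with $I(a)$ of finite length. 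An alternative route the paper's phrasing might suggest is to apply Lemma~\ref{veral} separately to $\R$ and to $\R^{\op}$ and then argue that the two pointed algebras obtained are opposite to one another, but making that identification rigorous ultimately comes down to the same reciprocity, so your direct approach is cleaner.
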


Essentially finite Abelian categories are almost as convenient to work with as finite
Abelian categories since one can perform all of the usual
constructions of homological algebra without needing to pass to the
ind-completion.

\begin{lemma}\label{kitchen}
For a category $\R$, the following are equivalent:
\begin{itemize}
\item[(i)] $\R$ is a finite Abelian category;
\item[(ii)] $\R$ is a Schurian category with only finitely many
  isomorphism classes of irreducible objects;
\item[(iii)] $\R$ is an essentially finite Abelian category with only finitely many
  isomorphism classes of irreducible objects;
\item[(iv)] $\R$ is a locally finite Abelian category with only finitely many
  isomorphism classes of irreducible objects and either enough
  projectives or enough injectives;
\item[(v)] $\R$ is both a locally finite Abelian category and a Schurian
  category.
\end{itemize}
\end{lemma}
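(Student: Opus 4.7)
The plan is to prove the equivalence by showing (i) implies each of (ii)--(v) (which is routine), and then showing each of (ii)--(v) implies (i). The unifying tool is the pointed algebra realization $A=\bigoplus_{a,b\in\B}e_aAe_b$ of a Schurian category $\R$ described in \eqref{athink}: here the indexing set $I$ of distinguished idempotents coincides with the set $\B$ of isomorphism classes of irreducibles, and $L(b)$ is one-dimensional with $e_aL(b)=\delta_{a,b}L(b)$. For the implications out of (i), if $\R\simeq A\fdlmod$ for a finite-dimensional algebra $A$, then viewing $A$ as locally unital with its single distinguished idempotent equal to $1_A$ immediately makes $A\fdlmod=A\lfdlmod$ (giving (ii) and (v)) and essentially finite-dimensional (giving (iii)); then (iv) follows from (iii) via Corollary~\ref{vera}, while finiteness of $\B$ is automatic since $A$ is Artinian.

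For the implications (ii), (iii), (iv) $\Rightarrow$ (i), the plan is to pass in each case to the pointed realization, so that $|I|=|\B|$, and then to use the finite-irreducibles hypothesis to deduce $|I|<\infty$. Once $I$ is finite, $A=\bigoplus_{a,b\in\B}e_aAe_b$ becomes a finite direct sum of finite-dimensional subspaces: local finite-dimensionality furnishes this directly in case (ii); the finite-dimensionality of the ideals $Ae_b$ furnishes it in case (iii) (since $e_aAe_b\subseteq Ae_b$); and in case (iv) I invoke Lemma~\ref{veral} (or the opposite-category version for the ``enough injectives'' alternative) to reduce to case (iii). In all three cases $A$ becomes a finite-dimensional unital algebra and $A\lfdlmod=A\fdlmod$, yielding (i).

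The main obstacle is the implication (v) $\Rightarrow$ (i), because here the finiteness of $\B$ is not assumed and must be extracted from the combined hypotheses. The plan is to reduce (v) to (ii) by showing directly that $|\B|<\infty$. Fix a pointed algebra realization $A$ of $\R$ and consider $V:=\bigoplus_{b\in\B}L(b)$; pointedness gives $\dim e_aV=1$ for every $a\in\B$, so $V$ is locally finite-dimensional and therefore $V\in A\lfdlmod=\R$. The locally finite Abelian hypothesis then forces $V$ to be of finite length, which is possible only if $\B$ is finite. Case (ii) now applies and closes the cycle.
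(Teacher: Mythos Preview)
Your proof is correct and follows essentially the same approach as the paper: both handle (ii)--(iv)$\Rightarrow$(i) by passing to a pointed realization (so that $|I|=|\B|<\infty$ forces $A$ to be finite-dimensional), and both prove (v)$\Rightarrow$(ii) by observing that $\bigoplus_{b\in\B}L(b)$ lies in $\R$ yet can be of finite length only when $\B$ is finite. Your write-up is a bit more explicit than the paper's (which simply cites Lemma~\ref{veral} and the pointed realization without spelling out the dimension count), but the logical content is the same.
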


\begin{proof}
Clearly, (i) implies (ii) and (iii).
The implication (ii)$\Rightarrow$(i) follows on considering a pointed
algebra realization of $\R$.
The implication (iii)$\Rightarrow$(iv) follows from Corollary~\ref{vera}.
The implication (iv)$\Rightarrow$(i) follows from Lemma~\ref{veral}.
Clearly (ii) and (iv) together imply (v).
Finally, to see that (v) implies (ii), it suffices to note that a
Schurian category with infinitely many isomorphism classes of
irreducible objects cannot be locally finite Abelian:
the direct sum of infinitely many non-isomorphic irreducibles is a
well-defined object of $\R$ but it is 
not of finite length.
\end{proof}

Essentially finite Abelian categories with infinitely many isomorphism classes of
irreducible objects are {\em not} Schurian
categories. However they are closely related as we explain next.
\begin{itemize}
\item If
$\R$ is essentially finite Abelian, we define its {\em Schurian envelope} 
$\Loc(\R)$ to be the full subcategory
of $\Ind(\R)$ consisting of all objects that have finite composition
multiplicities. 
\item
If $\R$ is Schurian, let $\Fin(\R)$
be the full subcategory of $\R$ consisting of all objects of finite length.
\end{itemize}
We say that a Schurian category $\R$ is {\em Cartan-bounded}
if its Cartan matrix $C$
has only finitely many non-zero entries in every row and column,
where by
Cartan matrix we mean the matrix 
\begin{equation}\label{CM}
\left(\dim \Hom_\R(P(a), P(b)\right)_{a,b \in \B}
= \left(\dim \Hom_\R(I(a), I(b)\right)_{a,b \in \B},
\end{equation}
where $\B$ is labelling indecomposable projectives and injectives in the usual way.

\begin{lemma}\label{coffee}
If $\R$ is an essentially finite Abelian category then $\Loc(\R)$
is a Cartan-finite Schurian category,
and conversely if $\R$ is a Cartan-finite Schurian category then $\Fin(\R)$ is an essentially finite Abelian category.
Morever, $\Loc$ and $\Fin$ are inverses in the sense that
$\Fin(\Loc(\R))$ is equivalent to $\R$ for any essentially finite Abelian $\R$,
and $\Loc(\Fin(\R))$ is equivalent to $\R$ for any Cartan-finite Schurian $\R$:
\begin{equation*}
\left(
\begin{array}{l}
\text{Essentially finite}\\\text{Abelian categories}
\end{array}
\right)
\substack{\Loc\\\longrightarrow\\\longleftarrow\\\Fin}
\left(\begin{array}{l}
\text{Cartan-finite}\\
\text{Schurian categories}\end{array}
\right).
\end{equation*}
\end{lemma}

\begin{proof}
This is easy to see in terms of an algebra realization:
if $\R=A\fdlmod$ for an essentially
finite-dimensional locally unital algebra $A$ then $\Loc(\R)=
A\lfdlmod$,
so it is Schurian. Since the indecomposable injectives and
projectives in $\Loc(\R)$ 
are the same as in $\R$, they have finite length. 
Conversely, using Lemma~\ref{willthisneverend}, 
we may assume that $\R =A\lfdlmod$ 
for a {\em pointed} locally finite-dimensional
locally unital algebra, such that all of the indecomposable injectives and
projectives are of finite length.
Since $A$ is pointed, this means equivalently that all of the left
ideals $A e_i$ and right ideals $e_i A$ are finite-dimensional.
Hence, $A$ is essentially
finite-dimensional,
and $\Fin(\R) = A\fdlmod$ is essentially finite Abelian.
\end{proof}

\subsection{Recollement}
We conclude the section with some reminders about ``recollement'' in
our algebraic setting; see \cite[$\S$1.4]{BBD} or \cite[$\S$2]{CPS} for further background.
We need this here only for Abelian categories $\R$ satisfying
finiteness properties as developed above.
The recollement formalism provides us with
an adjoint triple of functors $(i^*,i,i^!)$ associated to the inclusion
$i:\R\d\rightarrow \R$ of a Serre
subcategory,
and an adjoint triple of functors $(j_!,j,j_*)$ associated to 
the projection $j:\R\rightarrow \R\u$ onto a Serre quotient category,
with the image of $i$ being the kernel of $j$.
These functors will play an essential role in all subsequent arguments.

First suppose that $\R$ is any Abelian category.
Assume that we are given a full set 
$\{L(b)\:|\:b \in \B\}$ 
 of pairwise inequivalent
irreducible objects.
Let $\B\d$ be a subset of $\B$ and
$\R\d$ be the full subcategory of $\R$ consisting of all the
objects $V$ such that
$[V:L(b)] \neq 0 \Rightarrow b \in \B\d.$
This is a Serre subcategory of $\R$
with irreducible objects $\{L\d(b) \:|\:b \in \B\d\}$ defined by $L\d(b) := L(b)$.

\begin{lemma}\label{recollementi}
In the above setup,
the inclusion functor $i:\R\d\rightarrow \R$ 
has a left adjoint $i^*$ and a right adjoint $i^!$:
\vspace{-2mm}
\begin{equation*}
\begin{tikzpicture}
\node(a) at (0,0) {$\R\d$};
\node(b) at (2,0) {$\R.$};
\draw[-latex] (a) -- (b);
\draw[bend right=40,-latex] (b.north west) to (a.north east);
\draw[bend left=40,-latex] (b.south west) to (a.south east);
\node at (1.0,0.2) {$\scriptstyle i$};
\node at (1.05,.75) {$\scriptstyle i^!$};
\node at (1.05,-.7) {$\scriptstyle i^*$};
\end{tikzpicture}
\end{equation*}
The counit of one of these adjunctions and the unit of the other give isomorphisms:
$$
i^* \circ i \stackrel{\sim}{\rightarrow} \Id_{\R\d}
\stackrel{\sim}{\rightarrow}
i^! \circ i.
$$
In particular, $i$ is fully faithful.
\end{lemma}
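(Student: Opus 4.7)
The plan is to construct $i^!$ and $i^*$ pointwise by sending an object $V \in \R$ to its largest subobject and its largest quotient lying in $\R\d$, respectively. The first step is to check that such a largest subobject and largest quotient exist in each of the four finiteness settings. In the finite Abelian and locally finite Abelian cases every object has finite length, so there is nothing to do. For the essentially finite and Schurian cases I would pass to a pointed algebra realization $\R = A\fdlmod$ or $\R = A\lfdlmod$, and use the identity $\dim e_a L(b) = \delta_{a,b}$ for a pointed algebra to identify $\R\d$ with the full subcategory of those modules $V$ satisfying $e_a V = 0$ for every $a \notin \B\d$. Under this identification, $i^! V$ can be taken to be the sum of all subobjects of $V$ lying in $\R\d$ (still a subobject of $V$, hence still locally finite-dimensional), while $i^* V$ can be presented explicitly as $V \big/ \sum_{a \notin \B\d} A e_a V$.

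Functoriality of $i^!$ and $i^*$ is then routine, and the adjunctions $i^* \dashv i \dashv i^!$ follow immediately from the defining universal properties. Any morphism $f: W \to V$ with $W \in \R\d$ has image a subobject of $V$ in $\R\d$, hence factors through $i^! V$, giving the natural isomorphism $\Hom_\R(iW, V) \cong \Hom_{\R\d}(W, i^! V)$. Dually, given $f: V \to W$ with $W \in \R\d$, the quotient $V/\ker f$ embeds into $W$ and so lies in $\R\d$; thus $\ker f$ contains the submodule used to define $i^* V$, and $f$ factors uniquely through $i^* V$. For $W \in \R\d$ one has $i^! i W = W = i^* i W$ trivially, since $W$ is its own largest subobject and largest quotient lying in $\R\d$; these equalities supply the unit of $i \dashv i^!$ and the counit of $i^* \dashv i$, as in the statement. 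Fully faithfulness of $i$ is then a formal consequence of either adjunction together with the fact that its unit (resp.\ counit) is an isomorphism.

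The one step requiring genuine care is the construction of $i^*$ in the Schurian case, where $V$ can have infinite length. The naive abstract recipe --- quotient by the intersection of kernels of all morphisms $V \to W$ with $W \in \R\d$ --- is awkward because an arbitrary product of objects in $\R\d$ need not land back inside $\R$. The explicit algebraic formula $V / \sum_{a \notin \B\d} A e_a V$ supplied by the pointed algebra realization is what resolves this difficulty and makes the construction uniform across all four settings.
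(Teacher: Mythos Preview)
Your proof is correct and takes essentially the same approach as the paper: define $i^!$ and $i^*$ as the largest subobject and largest quotient of $V$ lying in $\R\d$, and verify the adjunctions from the universal properties. The paper's own proof is a single sentence (``This is straightforward. Explicitly, $i^*$ (resp., $i^!$) sends an object of $\R$ to the largest quotient (resp., subobject) that belongs to $\R\d$''), so you have simply filled in the details that the authors left implicit, in particular the explicit formula $i^* V = V \big/ \sum_{a \notin \B\d} A e_a V$ in the Schurian case via a pointed algebra realization.
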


\begin{proof}
This is straightforward. Explicitly, $i^*$ (resp., $i^!$) sends an object of $\R$ to the largest
quotient (resp., subobject) that belongs to $\R\d$.
\end{proof}

Now we are going to pass to the {\em Serre quotient} $\R\u := \R /
\R\d$. This is an Abelian category 
equipped with an exact
{\em quotient functor}
$j:\R \rightarrow \R\u$ 
satisfying the
following universal property:
if $h:\R \rightarrow \Ccat$ is any exact functor to an
Abelian category $\Ccat$ with $h L(b) = 0$ for all $b \in
\B\d$, then there is a unique 
functor $\bar h: \R\u \rightarrow \Ccat$
such that $h = \bar h \circ j$.
The irreducible objects in $\R\u$ are 
$\{L\u(b) \:|\:b \in \B\u\}$ where $\B\u := \B \setminus \B\d$
and
$L\u(b):= j L(b)$.
For a fuller discussion of these statements, see e.g. \cite{Gab}.

The quotient functor $j$ need not have a left or a right adjoint in
general, so we need to impose some additional hypotheses.
We first assume that $\R$ is finite Abelian, essentially
finite Abelian or Schurian. Then one can understand $j$ rather
explicitly as an idempotent truncation functor and it always has both a left and right
adjoint:

\begin{lemma}\label{recollementj}
  Suppose that $\R$ is finite Abelian, essentially finite Abelian or
  Schurian, $\B = \B\d\sqcup\B\u$, and $i:\R\d\rightarrow \R$ and $j:\R\rightarrow\R\u = \R /
  \R\d$ are as above. Then $\R\d$ and $\R\u$ are of the same type
  (finite Abelian, essentially finite Abelian or Schurian) as
  $\R$. Moreover,  the quotient functor $j:\R\rightarrow \R\u$ 
has a left
adjoint $j_!$ and a right adjoint $j_*$:
\vspace{-2mm}
\begin{equation*}
\begin{tikzpicture}
\node(a) at (0,0) {$\R$};
\node(b) at (2,0) {$\R\u.$};
\draw[-latex] (a) -- (b);
\draw[bend right=40,-latex] (b.north west) to (a.north east);
\draw[bend left=40,-latex] (b.south west) to (a.south east);
\node at (.95,0.2) {$\scriptstyle j$};
\node at (1.0,.75) {$\scriptstyle j_*$};
\node at (1.0,-.7) {$\scriptstyle j_!$};
\end{tikzpicture}
\end{equation*}
The counit of one of the adjunctions and the unit of the other give isomorphisms:
$$
j \circ j_* \stackrel{\sim}{\rightarrow} \Id_{\R\u}
\stackrel{\sim}{\rightarrow} j \circ j_!.
$$
In particular, $j_!$ and $j_*$ are fully faithful.
\end{lemma}

\begin{proof}
Fix a pointed algebra
realization
$$
A = \bigoplus_{a,b\in \B} e_a A e_b
$$
of $\R$, so $A$ is finite-dimensional, essentially finite-dimensional
or
locally finite-dimensional according to whether $\R$ is finite
Abelian, essentially finite Abelian or Schurian.
Let
\begin{align*}
  A\d = \bigoplus_{a,b \in \B\d} \bar e_a A\d \bar e_b &:= A \big/ (e_c\:|\:c \in
                                             \B\u),
&
A\u &:= \bigoplus_{a,b
  \in \B\u} e_a A e_b,
\end{align*} 
where $\bar x$ denotes the canonical image of $x \in A$
under the quotient map $A \twoheadrightarrow A\d$.
Then it is clear that 
$\R\d$ is equivalent to $A\d\fdlmod$ in the finite Abelian or essentially finite Abelian
cases,
and to 
$A\d\lfdlmod$ in the Schurian case. 
As $A\d$ satisfies the same finiteness properties as $A$,
we deduce that $\R\d$ is of the same type as $\R$.

The quotient category $\R\u$ is
realized by the algebra $A\u$, and the quotient
functor $j$ becomes the functor that sends an $A$-module $V$ to 
\begin{equation}\label{guns1}
jV := \bigoplus_{a \in \B\u} e_a V
\end{equation}
with $A\u$ acting by restricting the action of $A$.
We deduce that $\R\u$ is again of the same type as $\R$. 
Since $j$ is isomorphic to
$\bigoplus_{b \in \B\u}\Hom_A(A e_b,
-)$,
it has the left adjoint 
\begin{equation}\label{guns2}
j_! := \Big(\bigoplus_{b \in \B\u} A
  e_b\Big) \otimes_{A\u}?:A\u\lmod \rightarrow A\lmod
\end{equation}
thanks to Lemma~\ref{adjointness}(1).
From this, it is clear that the unit of adjunction
$\Id_{\R\u} \rightarrow j \circ j_!$ is an
isomorphism.
On the other hand, $j$ is also isomorphic to the tensor functor
$\left(\bigoplus_{b \in \B\u} e_b A\right) \otimes_A ?$,
so Lemma~\ref{adjointness}(1) also gives that $j$ has the right adjoint
\begin{equation}\label{guns3}
j_* := \bigoplus_{a \in \B}\Hom_{A\u}\Big(\bigoplus_{b \in \B\u} e_b
  A e_a,?\Big):A\u\lmod\rightarrow A\lmod.
\end{equation}
Again using this we see that the counit $j \circ j_* \rightarrow
\Id_{\R\u}$ 
is an isomorphism.
\end{proof}

The situation when $\R$ is locally finite Abelian is more complicated.
Continuing with the above notation,
it follows immediately from
Lemma~\ref{char} that the Serre subcategory $\R\d$ and the quotient
category $\R\u$ are locally Schurian
too.
The following lemma explains how to obtain an explicit coalgebra realization of $\R\d$ starting from one for $\R$.

\begin{lemma}\label{crack}
Suppose that
  $\R = \fdrcomod C$ for a coalgebra $C$.
Let $C\d$ be the largest right coideal of $C$ belonging to $\R\d$.
Then $C\d$ is a subcoalgebra of $C$.
Moreover, $\R\d$ consists of all $V \in \fdrcomod C$ such that the image of the
structure map $\eta:V \rightarrow V\otimes C$ is contained in $V
\otimes C\d$, i.e., we have that $\R\d =
\fdrcomod C\d$.
\end{lemma}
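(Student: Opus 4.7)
The plan is to exploit the matrix coefficient subcoalgebra associated with any finite-dimensional right $C$-comodule. Given a finite-dimensional right $C$-comodule $V$ with basis $v_1,\dots,v_n$, its structure map takes the form $\eta(v_j) = \sum_i v_i \otimes c_{ij}$ for unique $c_{ij}\in C$, and the comodule axioms translate directly into the identities $\delta(c_{ij}) = \sum_k c_{ik} \otimes c_{kj}$ and $\epsilon(c_{ij}) = \delta_{ij}$. Hence $E := \operatorname{span}\{c_{ij}\} \subseteq C$ is a finite-dimensional subcoalgebra, the ``matrix coefficient coalgebra'' of $V$.

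The key observation I plan to establish is that $E$ and $V$ have the same simple composition factors when viewed as right $C$-comodules. For each $i$, I let $E^i := \operatorname{span}_j\{c_{ij}\}$; coassociativity shows $E^i$ is a right $C$-subcomodule of $E$, and the linear map $c_{ij} \mapsto v_j$ extends to a surjective right $C$-comodule homomorphism $\phi_i : E^i \twoheadrightarrow V$. The crucial point is the dimension count: since $\dim E^i \le n = \dim V$ and $\phi_i$ is surjective, $\phi_i$ is actually an isomorphism $E^i \cong V$. Because $E = \sum_i E^i$, every composition factor of $E$ is a composition factor of $V$, and in fact $V$ occurs as a subcomodule of $E$.

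With this in hand, the first assertion is short. Writing $C\d$ as the union of its finite-dimensional right subcomodules $D$, each such $D$ lies in $\R\d$, so the subcoalgebra $E$ generated by the matrix coefficients of $D$ also has composition factors in $\B\d$ and thus lies in $\R\d$. Maximality of $C\d$ then forces $E \subseteq C\d$, and so $\delta(D) \subseteq D \otimes E \subseteq C\d \otimes C\d$. Taking the union over all such $D$ yields $\delta(C\d) \subseteq C\d \otimes C\d$, so $C\d$ is a subcoalgebra of $C$ (the counit axiom being automatic).

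For the second assertion, both inclusions will follow from the same circle of ideas applied to an arbitrary $V \in \fdrcomod C$. If the matrix coefficients $c_{ij}$ of $V$ lie in $C\d$, then $V \cong E^i \subseteq C\d$ is in $\R\d$. Conversely, if $V \in \R\d$, then $E$ is a sum of copies of $V$ and so lies in $\R\d$, whence $E \subseteq C\d$ by maximality, placing the image of $\eta$ inside $V \otimes C\d$. I expect the only mildly fiddly point to be verifying that $\phi_i$ is a comodule map, but this is a routine diagrammatic check directly from $\eta(v_j) = \sum_i v_i \otimes c_{ij}$, so I do not anticipate any serious obstacle.
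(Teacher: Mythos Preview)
There is a genuine error in your construction of $\phi_i$. The assignment $c_{ij}\mapsto v_j$ need not be well-defined, because for fixed $i$ the elements $c_{i1},\dots,c_{in}$ can be linearly dependent. For instance, take $V=L\oplus L$ for a one-dimensional comodule $L$ with $\eta(v)=v\otimes g$: then $c_{11}=c_{22}=g$ and $c_{12}=c_{21}=0$, so $E^1=\k g$ is one-dimensional and your ``$\phi_1$'' would have to send $0=c_{12}$ to $v_2\neq 0$. The map that \emph{is} always well-defined goes in the opposite direction: $\psi_i:V\to C$, $v_j\mapsto c_{ij}$, is a right comodule homomorphism with image $E^i$, exhibiting $E^i$ as a \emph{quotient} of $V$ rather than $V$ as a quotient of $E^i$. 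Consequently your claim $V\cong E^i$ is false in general, and so is the assertion that $E$ is a sum of copies of $V$.

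With $\psi_i$ in place of $\phi_i$, you still get that every composition factor of $E$ is a composition factor of $V$, and this is exactly what is needed for the subcoalgebra statement and for the implication $V\in\R\d\Rightarrow\eta(V)\subseteq V\otimes C\d$. Those parts of your argument survive. What breaks is the reverse implication, where you invoke $V\cong E^i\subseteq C\d$. Here the paper's approach gives the cleanest repair: the structure map $\eta:V\to V\otimes C$ is itself an \emph{injective} right comodule homomorphism (when $V\otimes C$ carries the structure map $\id\otimes\delta$), so if $\eta(V)\subseteq V\otimes C\d$ then $V$ embeds as a subcomodule of $(\dim V)$ copies of $C\d$ and hence lies in $\R\d$. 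In fact the paper runs the entire proof through this single observation, bypassing matrix coefficients altogether: applying it with $V=C\d$ immediately yields $\delta(C\d)\subseteq C\d\otimes C\d$.
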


\begin{proof}
For a right comodule $V$ with structure map $\eta:V \rightarrow
V\otimes C$, we can consider $V \otimes C$ 
as a right comodule with structure map $\id \otimes \delta$.
The coassociative and counit axioms
imply that $\eta$ is an injective homomorphism of right comodules.
We deduce that all irreducible subquotients of $V$ belong to $\R\d$
if and only if $\eta(V) \subseteq V \otimes C\d$.
Applying this with $V = C\d$ shows that $C\d$ is a subcoalgebra.
Applying it to $V \in \R$
shows that
$V \in \R\d$ if and only if $\eta(V) \subseteq V \otimes C\d$.
\end{proof}

For locally finite Abelian $\R$, the quotient category $\R\u$ can also
be realized explicitly as a category of comodules: if $\R = \fdrcomod
C$ then $\R\u = \fdrcomod eCe$ for an idempotent $e \in C^*$ and the quotient functor
$j$ becomes the idempotent truncation functor defined by $e$. This
is reviewed in detail in \cite{Navarro}. It follows that the extension $j:\Ind(\R)\rightarrow
\Ind(\R\u)$ of $j$ to the
ind-completions always has a right adjoint $j_*$ with $j \circ j_*
\cong \Id_{\Ind(\R\u)}$. However, this adjoint does not necessarily take
objects of $\R\u$ to objects of $\R$, so that the original functor $j:\R\rightarrow\R\u$ need not have
a right adjoint itself. For left adjoints, the situation is even a bit
worse since one should really pass to the pro-completions.
For our purposes, though, it will always be
sufficient to impose the stronger condition 
from (i) of the following lemma; this ensures
that both adjoints exist without any need to pass to ind- or pro-completions.

\begin{lemma}\label{recollementk}
  Suppose that $\R$ is locally finite Abelian, and let
  $\B\u\subseteq\B$ and $j:\R\rightarrow\R\u$ be as above.
  Then the following are equivalent:
  \begin{itemize}
\item[(i)] $L(b)$ has an injective hull $I(b)$
  and a projective cover $P(b)$ in $\R$ for all $b \in \B\u$;
    \item[(ii)] $\R\u$ is essentially finite Abelian and
    the quotient functor $j:\R\rightarrow\R\u$ has a left adjoint 
$j_!$ and a right adjoint $j_*$:
\vspace{-2mm}
\begin{equation*}
\begin{tikzpicture}
\node(a) at (0,0) {$\R$};
\node(b) at (2,0) {$\R\u.$};
\draw[-latex] (a) -- (b);
\draw[bend right=40,-latex] (b.north west) to (a.north east);
\draw[bend left=40,-latex] (b.south west) to (a.south east);
\node at (.95,0.2) {$\scriptstyle j$};
\node at (1.0,.75) {$\scriptstyle j_*$};
\node at (1.0,-.7) {$\scriptstyle j_!$};
\end{tikzpicture}
\end{equation*}
\end{itemize}
When these properties hold, there are isomorphisms
$j \circ j_* \cong
\Id_{\R\u}
\cong j \circ j_!$ just like in Lemma~\ref{recollementj}.
 \end{lemma}

\begin{proof}
  (i)$\Rightarrow$(ii):
Let $j_*:\Ind(\R\u)\rightarrow \Ind(\R)$ be the right adjoint of
$j:\Ind(\R)\rightarrow\Ind(\R\u)$ as in \cite{Navarro}.
For $b \in \B\u$, let $I\u(b)$ be an injective hull of $L\u(b)$ in
$\Ind(\R\u)$.
By adjunction properties, $j_* I\u(b)$ is an injective hull of
$L(b)$ in $\Ind(\R)$, hence, $j_* I\u(b) \cong I(b)$ which has finite length by assumption. From $j \circ j_* \cong \Id_{\Ind(\R\u)}$,
we deduce that $I\u(b) \cong j I(b)$ is of
finite length too, so $I\u(b)\in\R\u$ and $\R\u$ has enough
injectives.
We have shown that $j_*$ takes $I\u(b)$ to $I(b) \in \R$, hence using left
exactness we deduce that it takes any object of finite length to an
object of finite length. This means that the restriction
$j_*:\R\u\rightarrow\R$ is well-defined and gives a right adjoint to
$j:\R\rightarrow\R\u$.
The dual argument shows that $\R\u$ has enough projectives
and that $j:\R\rightarrow\R\u$ has a left
adjoint
$j_!:\R\u\rightarrow\R$.
Finally we deduce that $\R\u$ is essentially finite Abelian due to Corollary~\ref{vera}.

\vspace{1.5mm}
\noindent
(ii)$\Rightarrow$(i):
We can take $I(b) := j_*I\u(b)$ and $P(b) := j_! P\u(b)$ where $I\u(b)$ is an
injective hull and $P\u(b)$ is a projective cover of $L\u(b)$ in $\R\u$.
\end{proof}

In the locally finite Abelian or Schurian cases, we may use the
same notations $i, i^*, i^!$ for the natural extensions of these
functors to the ind-completions $\Ind(\R), \Ind(\R\d)$ or
$\Ind(\R_c), \Ind(\R\d_c)$, respectively. Similarly, we will use the
notations $j, j_*, j_!$ for the extensions of these to the appropriate
ind-completions, assuming the equivalent conditions from
Lemma~\ref{recollementk} hold in the locally finite Abelian case.

\begin{lemma}\label{guns}
  Continuing with the above setup,
  assume either that $\R$ is finite Abelian, essentially finite
  Abelian, or Schurian, or that $\R$ is locally finite Abelian and the equivalent
  conditions from Lemma~\ref{recollementk} hold.
  For $b \in \B\u$, let $P(b)$ (resp., $I(b)$) and $P\u(b)$ (resp., $I\u(b)$)
be a projective cover (resp., injective hull) of $L(b)$ in $\R$
and a projective cover (resp., injective hull) of
$L\u(b)$ in $\R\u$. Then we have that
$$
j P(b) \cong P\u(b), 
\qquad
j I(b) \cong I\u(b),
\qquad
j_! P\u(b) \cong P(b), 
\qquad
j_* I\u(b) \cong I(b).
$$
Moreover, the adjunction gives isomorphisms
\begin{align}\label{thetrick}
\Hom_{\R}(P(b), j_* V) \cong \Hom_{\R\u}(P\u(b), V),
\quad
\Hom_{\R}(j_! V, I(b))
\cong
\Hom_{\R\u}(V, I\u(b))
\end{align} 
for $V \in \R\u$,
hence, $[V:L\u(b)] = [j_* V: L(b)] = [j_! V:L(b)]$ for all $b \in \B\u$.
\end{lemma}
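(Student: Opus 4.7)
My plan rests on two observations, both already available. First, since $j:\R\to\R\u$ is exact (being a Serre quotient), its left adjoint $j_!$ preserves projectives and its right adjoint $j_*$ preserves injectives (in the Schurian case, injectives should be taken in the ind-completion $\Ind(\R_c)=A\lmod$ where injective hulls exist). Second, by Lemma~\ref{recollementj} we already know that $j\circ j_!\cong\Id_{\R\u}\cong j\circ j_*$.

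The heart of the proof is the identification $j_!P\u(b)\cong P(b)$ for $b\in\B\u$. The quickest route is to fix a pointed algebra realization as in (\ref{late1})--(\ref{late2}), so that $P(b)=Ae_b$ and $P\u(b)=A\u e_b$; then the explicit formula (\ref{guns2}) for $j_!$ gives
$$
j_!P\u(b)=\Big(\bigoplus_{c\in\B\u}Ae_c\Big)\otimes_{A\u}A\u e_b\cong Ae_b=P(b).
$$
An equivalent abstract argument: $j_!P\u(b)$ is projective by the first observation above, it is finitely generated (as $P\u(b)$ is and $j_!$ is a colimit-preserving left adjoint sending the generating projectives $A\u e_c$ to $Ae_c$), and the adjunction $j_!\dashv j$ combined with $jL(c)=L\u(c)$ for $c\in\B\u$ and $jL(c)=0$ for $c\in\B\d$ gives
$$
\Hom_\R(j_!P\u(b),L(c))\cong\Hom_{\R\u}(P\u(b),jL(c))=\delta_{bc}\k,
$$
so $j_!P\u(b)$ has irreducible head $L(b)$ and is therefore a projective cover by Lemma~\ref{enough}. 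Applying $j$ and invoking the second observation then yields $jP(b)\cong j(j_!P\u(b))\cong P\u(b)$.

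The injective statements are proved by the dual argument: $j_*I\u(b)$ is injective, its socle is computed via $\Hom_\R(L(c),j_*I\u(b))\cong\Hom_{\R\u}(jL(c),I\u(b))$ to be $L(b)$, and hence $j_*I\u(b)\cong I(b)$; then $jI(b)\cong I\u(b)$ follows. The isomorphisms (\ref{thetrick}) are now simply the adjunctions $j_!\dashv j\dashv j_*$ combined with these identifications. Finally, for $V\in\R\u$ and $b\in\B\u$, (\ref{compmults}) together with (\ref{thetrick}) gives
$$
[j_*V:L(b)]=\dim\Hom_\R(P(b),j_*V)=\dim\Hom_{\R\u}(P\u(b),V)=[V:L\u(b)],
$$
and symmetrically $[j_!V:L(b)]=\dim\Hom_\R(j_!V,I(b))=\dim\Hom_{\R\u}(V,I\u(b))=[V:L\u(b)]$.

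The only real subtlety is bookkeeping across the three cases (finite Abelian, essentially finite Abelian, Schurian): in the Schurian case one must be careful to pass to the ind-completion when invoking existence of injective hulls, and to use Lemma~\ref{enough} to justify that a finitely generated projective with irreducible head $L(b)$ is a projective cover. Both points are already in place, so no genuinely new input is needed.
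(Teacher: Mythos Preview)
Your explicit computation $j_!P\u(b)=\big(\bigoplus_{c\in\B\u}Ae_c\big)\otimes_{A\u}A\u e_b\cong Ae_b=P(b)$ via (\ref{guns2}) is exactly the paper's approach: the paper says all four isomorphisms follow directly from the explicit realizations (\ref{guns1})--(\ref{guns3}), spelling out only the case $j_*I\u(b)\cong I(b)$ using (\ref{guns3}) together with (\ref{imp}). Your deduction of (\ref{thetrick}) and the multiplicity statement from adjunction is likewise identical to the paper's.

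However, your abstract alternative for the injective case has a gap in the Schurian setting. You argue that $j_*I\u(b)$ is injective in $A\lmod$ with $\Hom(L(c),j_*I\u(b))=\delta_{bc}\k$, ``hence'' $j_*I\u(b)\cong I(b)$. This inference would require that any injective complement $J$ to $I(b)$ inside $j_*I\u(b)$, which necessarily has $\soc J=0$, must vanish. But for general locally finite-dimensional $A$ this fails: for the path algebra of $0\to 1\to 2\to\cdots$, the module $Ae_0$ is injective in $A\lmod$ (Baer's criterion is easy since every submodule of $Ae_i$ is $Ae_j$ for some $j\ge i$) yet has zero socle. Your projective argument avoided the dual pitfall by also checking that $j_!P\u(b)$ is \emph{finitely generated} before invoking Lemma~\ref{enough}; the honest dual would require $j_*I\u(b)$ to be finitely cogenerated, which you do not verify. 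The cleanest fix is to recall from Lemma~\ref{recollementj} that $j_*$ is fully faithful, so $\End(j_*I\u(b))\cong\End(I\u(b))$ is local and $j_*I\u(b)$ is indecomposable; then injective, indecomposable, with $L(b)$ in its socle forces $j_*I\u(b)\cong I(b)$. Alternatively, interpret ``dual argument'' as the explicit computation with (\ref{guns3}), which is what the paper actually does.
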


\begin{proof}
  Take $b \in \B\u$.
  By adjunction properties, $j_! P\u(b)$ is a projective cover of
  $L(b)$ in  $\R$, so it is isomorphic to $P(b)$. Hence, $j (j_!
  P\u(b)) \cong P\u(b) \cong
  j P(b)$; similarly for injectives. The remaining assertions follow.
\end{proof}

\section{Generalizations of highest weight categories}

In this section, we define the various generalizations of highest weight
categories and derive some of their fundamental properties 
in the four settings of finite Abelian, essentially finite Abelian,
Schurian, and
locally finite Abelian categories.
The important definitions in the section are Definitions~\ref{vector},
\ref{ufc} and \ref{newlfd}.
 The reader new to these ideas 
may find it helpful to assume initially that all of the strata are simple 
in the sense of Lemma~\ref{allsimple}, when the definitions specialize
to the notions of
finite, essentially finite, upper finite and lower finite highest
weight categories, respectively.

\subsection{Stratifications and the associated standard and costandard objects}\label{fs}
Let $(\Lambda,\leq)$ be a poset.
It is {\em interval finite}
(resp., {\em upper finite}, resp., {\em lower finite}) if the interval
$[\lambda,\mu] := \{\nu \in \Lambda\:|\:\lambda \leq \nu\leq \mu\}$
(resp., $[\lambda,\infty) := \{\nu\in\Lambda\:|\:\lambda\leq\nu\}$,
resp., $(-\infty, \mu] := \{\nu\in\Lambda\:|\:\nu \leq \mu\}$)
is finite for all $\lambda,\mu
\in \Lambda$.
A {\em lower set} (resp., {\em upper set}) means a subset $\Lambda\d$
(resp., $\Lambda\u$) such that
$\mu \leq \lambda \in \Lambda\d \Rightarrow \mu \in \Lambda\d$
(resp., $\mu \geq \lambda \in \Lambda\u\Rightarrow \mu \in \Lambda\u$).

A {\em stratification function} $\rho:\B\rightarrow
\Lambda$
is a function
from a set $\B$ to a poset $(\Lambda,\leq)$
such that all of the fibers $\B_\lambda := \rho^{-1}(\lambda)$ are
finite. We often use other obvious notations like $\B_{\leq\lambda} :=
\bigcup_{\mu \leq \lambda} \B_\mu, \B_{<\lambda} := \bigcup_{\mu <
  \lambda} \B_\mu$, etc..

A {\em stratification} of an Abelian category $\R$ is
a quintuple $(\B,L,\rho,\Lambda,\leq)$
consisting of a set $\B$,
a function 
$L$ labelling a full set
$\{L(b)\:|\:b \in \B\}$ of pairwise
inequivalent irreducible objects in $\R$,
and a stratification function
$\rho:\B \rightarrow \Lambda$
for the poset $(\Lambda, \leq)$.
In the case that $\rho$ is a bijection, one can use it to identify $\B$ with $\Lambda$,
writing $L(\lambda)$ instead of $L(b)$; similarly for all of the other
families of objects indexed by the set $\B$ to be introduced shortly.

Given a stratification $(\B,L,\rho,\Lambda,\leq)$ of $\R$ and
$\lambda \in \Lambda$, let
$\R_{\leq\lambda}$ and $\R_{<\lambda}$ be the Serre subcategories of $\R$ 
associated to the subsets $\B_{\leq\lambda}$ and $\B_{< \lambda}$ of
$\B$, respectively.
We denote the inclusion functors by
\begin{equation}
  i_{\leq\lambda}:\R_{\leq\lambda} \rightarrow \R,
  \qquad
  i_{<\lambda}:\R_{<\lambda}\rightarrow \R,
\end{equation}
The left and right adjoints of $i_{\leq\lambda}$ are
$i_{\leq\lambda}^*$ and $i_{\leq\lambda}^!$ as in
Lemma~\ref{recollementi}. 
We say that the stratification is
\begin{itemize}
\item[(F)]
a {\em finite stratification} if $\R$ is a finite Abelian category
(so that $\B$ is a finite set);
\item[(EF)]
an  {\em essentially finite stratification} if $\R$ is an essentially
finite Abelian category and the poset $\Lambda$ is interval finite;
\item[(LF)]
 a {\em lower finite stratification} if $\R$ is a locally finite
 Abelian category and the poset $\Lambda$ is lower finite;
 \item[(UF)]
 an {\em upper finite stratification} if $\R$ is a Schurian category
  and the poset $\Lambda$ is upper finite.
\end{itemize}
In these four cases, the induced stratifications of the subcategories $\R_{<\lambda}$ and $\R_{\leq
  \lambda}$ are automatically of
the same type.

By an {\em admissible stratification}, we mean a stratification
of one of the above four types such that
the following axiom is satisfied when in type (LF) (it holds
automatically for the other types):
\begin{itemize}
\item[(A)]
  The irreducible object $L(b)$ has 
  both a projective cover and an injective hull in
  $\R_{\leq\rho(b)}$ for all $b \in \B$.
  \end{itemize}
  This is a significant restriction on the sorts of lower finite Abelian categories that can be considered; for example, the category
  $\Rep(\mathbb{G}_a)$ of rational representations of the additive group does not have this property.
  Using
Lemma~\ref{kitchen}
together with
Lemma~\ref{recollementk} in the lower finite case, we deduce for
$\lambda \in \Lambda$ that the quotient category
$\R_\lambda := \R_{\leq\lambda} / \R_{<
  \lambda}$ is finite Abelian in all cases. Let
\begin{equation}
j^\lambda:\R_{\leq\lambda} \rightarrow \R_\lambda
\end{equation}
be the
quotient functor.
The objects
\begin{equation}\label{sooften}
\left\{L_\lambda(b) := j^\lambda L(b) \:\big|\:b \in
  \B_\lambda\right\}\end{equation}
give a full set of
pairwise inequivalent irreducible objects in $\R_\lambda$.
Moreover, we are in a recollement situation as in
Lemmas~\ref{recollementi}, \ref{recollementj} and \ref{recollementk}:
\begin{equation}
\begin{tikzpicture}[anchorbase]
\node(a) at (0,0) {$\R_{<\lambda}$};
\node(b) at (2,0) {$\R_{\leq\lambda}$};
\draw[-latex] (a) -- (b);
\draw[bend right=40,-latex] (b.north west) to (a.north east);
\draw[bend left=40,-latex] (b.south west) to (a.south east);
\node at (1.0,0.2) {$\scriptstyle i_{<\lambda}$};
\node at (1.05,.75) {$\scriptstyle i_{<\lambda}^!$};
\node at (1.05,-.7) {$\scriptstyle i_{<\lambda}^*$};
\node(c) at (4,0) {$\R_\lambda$.};
\draw[-latex] (b) -- (c);
\draw[bend right=40,-latex] (c.north west) to (b.north east);
\draw[bend left=40,-latex] (c.south west) to (b.south east);
\node at (2.95,0.2) {$\scriptstyle j^\lambda$};
\node at (3,.75) {$\scriptstyle j^\lambda_*$};
\node at (3,-.7) {$\scriptstyle j^\lambda_!$};
\end{tikzpicture}
\label{rst}
\end{equation}
Let $P_\lambda(b)$ be a projective cover and $I_\lambda(b)$ be an
injective hull of $L_\lambda(b)$ in $\R_\lambda$. By Lemma~\ref{guns},
these are isomorphic to the images of the projective cover and injective
hull of $L(b)$ in $\R_{\leq\lambda}$, respectively.
Finally, define
{\em standard, costandard, proper standard} and {\em
  proper costandard objects}
$\Delta(b), \nabla(b), \bar\Delta(b)$ and $\bar\nabla(b)$
according to (\ref{rumble}).

\begin{lemma}\label{charac}
Suppose we are given an admissible stratification
$(\B,L,\rho,\Lambda,\leq)$ of $\R$.
  Take $b \in \B$ and set $\lambda := \rho(b)$.
  \begin{enumerate}
    \item The standard object $\Delta(b)$ is a projective cover of $L(b)$ in $\R_{\leq
        \lambda}$. The proper standard object
      $\bar\Delta(b)$ is the largest quotient
      of $\Delta(b)$ such that all composition factors of $\rad
      \bar\Delta(b)$
      are of the form $L(c)$ for $c \in \B_{<\lambda}$.
   \item The costandard object $\nabla(b)$
is an
injective hull of $L(b)$ in $\R_{\leq\lambda}$.
The proper costandard object $\bar\nabla(b)$ is the largest subobject
of $\nabla(b)$ such that all composition factors of $\bar\nabla(b) /
\soc \bar\nabla(b)$ are of the form $L(c)$ for $c \in \B_{<\lambda}$.
\end{enumerate}
\end{lemma}

\begin{proof}
  We just check (1) since (2) is similar.
  We have that $\Delta(b)$ is a projective cover of $L(b)$ in
  $\R_{\leq \lambda}$ by Lemma~\ref{guns}.
  It remains to prove the statement about $\bar\Delta(b)$.
Assume $[\bar\Delta(b):L(c)] \neq 0$. 
Since $\bar\Delta(b) \in
\R_{\leq\lambda}$, we have $\rho(c) \leq \rho(b)$. If $\rho(c) =
\rho(b)$ then $$[\bar\Delta(b):L(c)] =
[j^\lambda\bar\Delta(b):j^\lambda L(c)] = [L_\lambda(b):L_\lambda(c)] =
\delta_{b,c}.$$
Thus, $\bar\Delta(b)$ is such a quotient of $\Delta(b)$.
To show that it is the largest such quotient, it suffices to show that
the kernel $K$ of $\Delta(b) \twoheadrightarrow \bar\Delta(b)$
is finitely generated with head
that only involves irreducibles $L(c)$ with $\rho(c) =
\rho(b)$.
To see this, apply 
the right exact functor $j^\lambda_!$ to a short exact sequence
$0 \rightarrow \widehat{K} \rightarrow P_\lambda(b) \rightarrow L_\lambda(b)
\rightarrow 0$ to get an epimorphism
$j^\lambda_! \widehat{K} \twoheadrightarrow K$.
Then observe that $j^\lambda_! \widehat{K}$ is
finitely generated as $j^\lambda_!$ is a left adjoint, and its head
only involves irreducibles $L(c)$ with $\rho(c) =
\rho(b)$. The latter assertion follows because $\Hom_{\R}(j^\lambda_! \widehat{K}, L(c)) \cong
\Hom_{R_\lambda}(\widehat{K}, j^\lambda L(c))$ for $c \in \B_{\leq\lambda}$.
\end{proof}

\begin{corollary}\label{reallysilly}
We have that
$\dim \Hom_{\R}(\Delta(b), \bar\nabla(c)) = 
\dim \Hom_{\R}(\bar\Delta(b), \nabla(c)) = 
\delta_{b,c}$
for all $b, c \in \B$.
\end{corollary}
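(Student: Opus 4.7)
The plan is to exploit the recollement structure from (\ref{rst}) together with the characterizations of the four distinguished objects provided by Lemma~\ref{charac}. Write $\lambda := \rho(b)$ and $\mu := \rho(c)$, and let $i_{\leq\lambda},i_{\leq\mu}$ denote the inclusion functors of the corresponding Serre subcategories. The two equalities are dual to each other, so it suffices to treat $\dim\Hom_\R(\Delta(b),\bar\nabla(c))$ carefully and then indicate the mirror-image argument.

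For the first equality, I would begin by using the adjunction $(i_{\leq\lambda},i^!_{\leq\lambda})$. Since $\Delta(b)\in\R_{\leq\lambda}$, we have
$$
\Hom_\R(\Delta(b),\bar\nabla(c))\;\cong\;\Hom_{\R_{\leq\lambda}}(\Delta(b),i^!_{\leq\lambda}\bar\nabla(c)).
$$
By Lemma~\ref{charac}, $\Delta(b)$ is a projective cover of $L(b)$ in $\R_{\leq\lambda}$; combined with Schur's Lemma, this gives that the above hom space has dimension $[i^!_{\leq\lambda}\bar\nabla(c):L(b)]$. So the task reduces to computing this composition multiplicity.

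The computation splits on whether $\mu\leq\lambda$ or not. By Lemma~\ref{charac}, $\bar\nabla(c)$ is a nonzero subobject of the injective hull $\nabla(c)$ of $L(c)$ in $\R_{\leq\mu}$, so it has simple socle $L(c)$, and hence every nonzero subobject of $\bar\nabla(c)$ contains $L(c)$. If $\mu\not\leq\lambda$, then $L(c)\notin\R_{\leq\lambda}$, forcing $i^!_{\leq\lambda}\bar\nabla(c)=0$ and hence a multiplicity of zero. If $\mu\leq\lambda$, then all composition factors of $\bar\nabla(c)$ lie in $\R_{\leq\mu}\subseteq\R_{\leq\lambda}$, so $i^!_{\leq\lambda}\bar\nabla(c)=\bar\nabla(c)$; Lemma~\ref{charac} then gives $[\bar\nabla(c):L(b)]=\delta_{b,c}$, since the only composition factor of $\bar\nabla(c)$ in $\B_\mu$ is $L(c)$ (appearing once), while all others lie in $\B_{<\mu}$ and so cannot equal $L(b)$ unless $\lambda=\mu$ and $b=c$.

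For the second equality, I would apply the dual argument using the left adjoint $i^*_{\leq\mu}$: the adjunction $(i^*_{\leq\mu},i_{\leq\mu})$ gives $\Hom_\R(\bar\Delta(b),\nabla(c))\cong\Hom_{\R_{\leq\mu}}(i^*_{\leq\mu}\bar\Delta(b),\nabla(c))$, and since $\nabla(c)$ is the injective hull of $L(c)$ in $\R_{\leq\mu}$, this dimension equals $[i^*_{\leq\mu}\bar\Delta(b):L(c)]$. The object $\bar\Delta(b)$ has simple head $L(b)$ by Lemma~\ref{charac}, so $i^*_{\leq\mu}\bar\Delta(b)=0$ unless $\lambda\leq\mu$, and otherwise it equals $\bar\Delta(b)$, after which Lemma~\ref{charac} again pins the answer to $\delta_{b,c}$. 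The main difficulty is purely bookkeeping: weaving the two adjunctions together with the structural statements of Lemma~\ref{charac} in the right direction, and keeping the case distinctions $\mu\leq\lambda$ versus $\lambda\leq\mu$ cleanly separated. No further homological machinery is needed.
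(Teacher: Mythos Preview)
Your proof is correct and is precisely the natural unwinding of Lemma~\ref{charac} that the paper intends; the paper states this as an immediate corollary without writing out the argument. Your use of the adjunctions $(i_{\leq\lambda},i^!_{\leq\lambda})$ and $(i^*_{\leq\mu},i_{\leq\mu})$ to reduce to composition multiplicities, followed by the case split on $\mu\leq\lambda$ (resp.\ $\lambda\leq\mu$), is exactly the expected route.
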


\begin{lemma}\label{dualitylemma}
Suppose that we are given an admissible stratification
$(\B,L,\rho,\Lambda,\leq)$ of $\R$, 
and in addition that
$\R$ possesses a contravariant autoequivalence $?^\vee$ which preserves isomorphism classes of irreducibles.
Then we have that
$P(b)^\vee \cong I(b)$,
$I(b)^\vee \cong P(b)$, $\Delta(b)^\vee \cong \nabla(b)$,
$\bar\Delta(b)^\vee \cong \bar\nabla(b)$, 
$\nabla(b)^\vee \cong \Delta(b)$ and
$\bar\nabla(b)^\vee \cong \bar\Delta(b)$ for all $b \in \B$.
\end{lemma}

\begin{proof}
Since $L(b)^\vee \cong L(b)$, 
we have that $I(b)^\vee \cong P(b)$ and $P(b)^\vee \cong I(b)$. Then the statements about $\Delta(b)^\vee, \bar\Delta(b)^\vee, \bar\nabla(b)^\vee$ and $\nabla(b)^\vee$ follow using Lemma~\ref{charac}.
\end{proof}

For $\lambda \in \Lambda$, we say that the stratum $\R_\lambda$ is
{\em simple} if it is equivalent to the category $\Vec_{\operatorname{fd}}$ of
finite-dimensional vector spaces. 

\begin{lemma}\label{allsimple}
The following are equivalent:
\begin{itemize}
\item[(i)]
all of the strata are simple;
\item[(ii)] $\rho$ is a bijection and $\Delta(\lambda) =
  \bar\Delta(\lambda)$ 
  for all $\lambda\in\Lambda$;
  \item[(iii)] $\rho$ is a bijection and
    $\Hom_\R(\Delta(\lambda),\nabla(\lambda))$ is one-dimensional;
\item[(iv)] $\rho$ is a bijection
and 
$\nabla(\lambda) = \bar\nabla(\lambda)$ for all $\lambda\in\Lambda$.
\end{itemize}
\end{lemma}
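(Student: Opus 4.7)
The plan is to prove (i) $\Rightarrow$ (ii), (i) $\Rightarrow$ (iii) directly from the definitions, and then to prove (ii) $\Rightarrow$ (i) and (iii) $\Rightarrow$ (i) using the fully faithful property of the standardization and costandardization functors recorded in Lemma~\ref{recollementj}. Everything reduces to local information at each stratum.

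For (i) $\Rightarrow$ (ii) and (i) $\Rightarrow$ (iii): if every $\R_\lambda$ is equivalent to $\Vec_{\operatorname{fd}}$, then each $\R_\lambda$ has a unique isomorphism class of irreducibles, so $|\B_\lambda| = 1$ for every $\lambda \in \Lambda$, making $\rho:\B\rightarrow\Lambda$ a bijection. In $\Vec_{\operatorname{fd}}$ the unique irreducible coincides with its projective cover and with its injective hull, so $P_\lambda(\lambda) = L_\lambda(\lambda) = I_\lambda(\lambda)$. Applying $j^\lambda_!$ (resp.~$j^\lambda_*$) to these equalities and invoking the definitions in (\ref{rumble}) yields $\Delta(\lambda) = \bar\Delta(\lambda)$ and $\nabla(\lambda) = \bar\nabla(\lambda)$, giving (ii) and (iii).

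For (ii) $\Rightarrow$ (i): since $\rho$ is a bijection we again have $|\B_\lambda| = 1$, so the finite Abelian category $\R_\lambda$ has a unique irreducible $L_\lambda(\lambda)$. By Lemma~\ref{recollementj}, the counit gives $j^\lambda \circ j^\lambda_! \cong \Id_{\R_\lambda}$, so applying $j^\lambda$ to $\Delta(\lambda) = \bar\Delta(\lambda) = j^\lambda_! P_\lambda(\lambda) = j^\lambda_! L_\lambda(\lambda)$ yields $P_\lambda(\lambda) = L_\lambda(\lambda)$ in $\R_\lambda$. Thus the unique irreducible of $\R_\lambda$ is its own projective cover. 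It then follows that $\R_\lambda$ is semisimple: every object is of finite length with all composition factors isomorphic to $L_\lambda(\lambda)$, and since $L_\lambda(\lambda)$ is projective, all short exact sequences in $\R_\lambda$ split. Combining with Schur's lemma ($\End_{\R_\lambda}(L_\lambda(\lambda)) = \k$), the equivalence $\Hom_{\R_\lambda}(L_\lambda(\lambda),-):\R_\lambda \stackrel{\sim}{\longrightarrow}\Vec_{\operatorname{fd}}$ shows that $\R_\lambda$ is simple.

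Finally, (iii) $\Rightarrow$ (i) is proved by the dual argument: the unit $\Id_{\R_\lambda}\stackrel{\sim}{\rightarrow} j^\lambda\circ j^\lambda_*$ from Lemma~\ref{recollementj} lets us apply $j^\lambda$ to $\nabla(\lambda) = \bar\nabla(\lambda)$ to deduce $I_\lambda(\lambda) = L_\lambda(\lambda)$, so the unique irreducible is injective and the same semisimplicity argument gives $\R_\lambda\simeq\Vec_{\operatorname{fd}}$. (Alternatively, one may invoke the duality between $\R$ and $\R^\op$ explained after Lemma~\ref{veral} to transfer (iii)~$\Rightarrow$~(i) from (ii)~$\Rightarrow$~(i).) There is no substantial obstacle here; the only point requiring a moment's thought is the elementary step that a finite Abelian category with a unique irreducible which is both simple and projective is equivalent to $\Vec_{\operatorname{fd}}$, and this follows immediately from semisimplicity together with Schur's lemma.
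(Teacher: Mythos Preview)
Your proof is correct and follows essentially the same approach as the paper's own argument: both directions hinge on applying $j^\lambda$ to the equality $\Delta(\lambda)=\bar\Delta(\lambda)$ (using $j^\lambda\circ j^\lambda_!\cong\Id_{\R_\lambda}$) to deduce that the unique irreducible in $\R_\lambda$ is projective, and then concluding that $\R_\lambda$ is simple. Your version is slightly more explicit in spelling out why a finite Abelian category with a single irreducible that is projective must be equivalent to $\Vec_{\operatorname{fd}}$, whereas the paper simply asserts this; but the core logic is identical.
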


\begin{proof}
(i)$\Rightarrow$(ii): 
Take $\lambda \in \Lambda$.
As the stratum $\R_\lambda$ is simple, $\B_\lambda =
\{b_\lambda\}$ is a singleton and $P_\lambda(b_\lambda) =
L_\lambda(b_\lambda)$.
We deduce that 
$\rho$ is a bijection
and 
$\Delta(b_\lambda) = \bar\Delta(b_\lambda)$.

\vspace{1.5mm}\noindent
(ii)$\Rightarrow$(iii):
This follows because $\nabla(\lambda)$ is the injective hull of $L(\lambda)$ in
$\R_{\leq \lambda}$.

\vspace{1.5mm}\noindent
(iii)$\Rightarrow$(iv):
This follows because $\Delta(\lambda)$ is the projective cover of $L(\lambda)$ in
$\R_{\leq \lambda}$.

\vspace{1.5mm}\noindent
 (iv)$\Rightarrow$(i):
Take $\lambda \in \Lambda$.
Then $\R_\lambda$ has just one irreducible object (up to
isomorphism), namely, $j^\lambda \bar\nabla(\lambda)$.
Since this equals $j^\lambda
\nabla(\lambda)$, it is also projective.
Hence, 
$\R_\lambda$ is simple.
\end{proof}

Given a {\em sign function} $\eps:\Lambda \rightarrow \{\pm\}$,
we introduce 
the {\em $\eps$-standard} and {\em $\eps$-costandard objects}
$\Delta_\eps(b)$ and $\nabla_\eps(b)$ as in (\ref{device}).
Corollary~\ref{reallysilly} implies that
\begin{equation}\label{onedimensionality}
\dim \Hom_{\R}(\Delta_\eps(b), \nabla_\eps(c)) = 
\delta_{b,c}
\end{equation}
for $b, c \in \B$.
A {\em $\Delta_\eps$-flag} of $V \in \R$ means a finite filtration
$0 = V_0 < V_1 < \cdots < V_n = V$ 
with sections $V_m / V_{m-1}\cong
\Delta_\eps(b_m)$
for $b_m \in \B$. 
Similarly, we define $\nabla_\eps$-flags.
We denote the exact subcategories of $\R$
consisting of all objects with a $\Delta_\eps$-flag or a
$\nabla_\eps$-flag by $\Delta_\eps(\R)$ and $\nabla_\eps(\R)$,
respectively.

A {\em $\Delta$-flag} (resp., {\em $\bar\nabla$-flag}) is a $\Delta_\eps$-flag
(resp., $\nabla_\eps$-flag) in the special case that $\eps=+$.
A {\em $\bar\Delta$-flag} (resp., {\em $\nabla$-flag}) is a $\Delta_\eps$-flag
(resp., $\nabla_\eps$-flag) in the special case that $\eps=-$.
We denote the exact subcategories of $\R$ consisting of all objects
with a $\Delta$-flag, a $\bar\Delta$-flag, a $\nabla$-flag or a
$\bar\nabla$-flag
by $\Delta(\R)$, $\bar\Delta(\R)$, $\nabla(\R)$ and
$\bar\nabla(\R)$, respectively.

\subsection{\boldmath Finite and essentially finite $\eps$-stratified categories}\label{ssn}
Throughout this subsection, $\R$ is
a finite or essentially finite Abelian category
equipped with a finite or essentially
finite stratification $(\B,L,\rho,\Lambda,\leq)$.
Also $\eps:\Lambda \rightarrow \{\pm\}$ denotes a sign function.
Let $P(b)$ and $I(b)$ be a projective cover and an injective hull of
$L(b)$, respectively. We also need the 
objects from (\ref{rumble})--(\ref{device}).
Consider the following two properties:
\begin{enumerate}
\item[($\widehat{P\Delta}_\eps$)]
For each $b \in \B$, there exists a projective object
$P_b$ admitting a 
$\Delta_\eps$-flag with 
$\Delta_\eps(b)$ at the top and other sections 
$\Delta_\eps(c)$ for $c\in\B$ with $\rho(c) \geq \rho(b)$.
\item[($\widehat{I\nabla}_\eps$)]
For each $b \in \B$, there exists an injective object
$I_b$ admitting a 
$\nabla_\eps$-flag with $\nabla_\eps(b)$ at the bottom  and other sections
$\nabla_\eps(c)$ for $c\in\B$ with $\rho(c) \geq \rho(b)$.
\end{enumerate}
It is trivial to see that the property $(P\Delta_\eps)$ formulated in the
introduction implies 
$(\widehat{P\Delta}_\eps)$, and similarly $(I\nabla_\eps)$ implies
$(\widehat{I\nabla}_\eps)$. 
The seemingly weaker properties
$(\widehat{P\Delta}_\eps)$--$(\widehat{I\nabla}_\eps)$ are often easier to
check in concrete examples.
The essence of the following fundamental theorem appeared originally
in \cite{ADL}, extending earlier work of Dlab \cite{Dlab}.

\begin{theorem}\label{fund}
The four
properties 
($\widehat{P\Delta}_\eps$), $(\widehat{I\nabla}_\eps$),
($P\Delta_\eps$) and $(I\nabla_\eps$) are equivalent.
When these properties hold,
the standardization functor $j^\lambda_!$ is exact whenever
$\eps(\lambda)=-$, and
the costandardization functor $j^\lambda_*$ is exact whenever
$\eps(\lambda)=+$.
\end{theorem}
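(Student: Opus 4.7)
The implications $(P\Delta_\eps)\Rightarrow(\widehat{P\Delta}_\eps)$ and $(I\nabla_\eps)\Rightarrow(\widehat{I\nabla}_\eps)$ I would handle immediately by setting $P_b:=P(b)$ and $I_b:=I(b)$: since $\hd P(b)=L(b)=\hd\Delta_\eps(b)$, the top section of any $\Delta_\eps$-flag of $P(b)$ is forced to be $\Delta_\eps(b)$, and dually for $I(b)$. The converses will come from a single underlying Ext-vanishing statement, together with the duality $\circledast$ between $\R$ and $\R^{\op}$ (which swaps $\eps\leftrightarrow-\eps$ and exchanges $\Delta_\eps$-flags with $\nabla_{-\eps}$-flags; cf.\ Theorem~\ref{aone}) to pass between the $P\Delta$ and $I\nabla$ versions.

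\textbf{Key Ext-vanishing.} The heart of the argument is to establish, under $(\widehat{P\Delta}_\eps)$, that
\[
\Ext^k_\R(\Delta_\eps(b),\nabla_\eps(c))=0\quad\text{for all } b,c\in\B \text{ and all } k\ge 1.
\]
I would prove this by induction along a linear extension of the finite support of $\rho$ (with a secondary induction on $k$ in the higher degrees). The base case is $\rho(b)$ maximal: here the filtration of $P_b$ involves only sections $\Delta_\eps(c)$ with $\rho(c)=\rho(b)$, so passing through $j^{\rho(b)}$ reduces the computation to the finite Abelian stratum $\R_{\rho(b)}$, where the vanishing is immediate. For the inductive step, the short exact sequence $0\to K_b\to P_b\to\Delta_\eps(b)\to 0$ furnished by $(\widehat{P\Delta}_\eps)$ has $K_b$ filtered by $\Delta_\eps(d)$ with $\rho(d)>\rho(b)$; combining the long exact $\Ext$-sequence, the projectivity of $P_b$, the induction hypothesis applied to $K_b$, and the one-dimensional Hom formula~\eqref{onedimensionality} yields the desired vanishing at $b$, with a parallel multiplicity count on both sides of the sequence closing the residual case $\rho(c)>\rho(b)$.

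\textbf{Derivation of the remaining implications and exactness.} From the Ext-vanishing, a standard dévissage on filtration length gives the criterion that $V\in\Delta_\eps(\R)$ iff $\Ext^1_\R(V,\nabla_\eps(c))=0$ for all $c\in\B$; this criterion is stable under direct summands. Since $L(b)\in\hd P_b$, $P(b)$ is a direct summand of $P_b$, so $P(b)\in\Delta_\eps(\R)$, and the multiplicity identity $\dim\Hom_\R(P(b),\nabla_\eps(c))=[\nabla_\eps(c):L(b)]$ forces its sections to satisfy $\rho(c)\ge\rho(b)$, giving $(P\Delta_\eps)$. The dual Ext-criterion for $\nabla_\eps$-flags together with a BGG-reciprocity-style identity $(I(b):\nabla_\eps(c))=[\Delta_\eps(c):L(b)]$ yields $(I\nabla_\eps)$. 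For exactness of $j^\lambda_!$ when $\eps(\lambda)=-$: I would isolate the top portion of the $\Delta_\eps$-flag of each $P_b$ (for $b\in\B_\lambda$) consisting of sections $\bar\Delta(c)=j^\lambda_!L_\lambda(c)$; applying $j^\lambda$ recovers a projective resolution of $L_\lambda(b)$ in the finite Abelian stratum $\R_\lambda$ whose $j^\lambda_!$-image is the exact original flag, forcing $L_1 j^\lambda_!=0$. The case $\eps(\lambda)=+$ for $j^\lambda_*$ is symmetric.

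\textbf{Main obstacle.} The sharpest point is the case $\rho(c)>\rho(b)$ in the Ext-vanishing: the long exact sequence leaves an a priori discrepancy between $\dim\Hom_\R(P_b,\nabla_\eps(c))$ and $\dim\Hom_\R(K_b,\nabla_\eps(c))$, and closing the argument requires either a simultaneous multiplicity accounting (effectively bootstrapping BGG reciprocity within the induction) or a recollement reduction (Lemmas~\ref{recollementi}--\ref{recollementj}) to a subcategory $\R_{\leq\rho(c)}$ in which $\nabla_\eps(c)$ is essentially injective. The exactness extraction is a secondary technical point, requiring careful isolation of the ``lowest-$\rho$'' portion of the $\Delta_\eps$-flag so that applying $j^\lambda$ produces a well-defined resolution in the stratum.
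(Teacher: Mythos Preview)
Your overall plan—Ext-vanishing, then a filtration criterion, then duality—is the paper's, but two of your steps have gaps.

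\textbf{Ext-vanishing.} Axiom $(\widehat{P\Delta}_\eps)$ only says the other sections of $P_b$ are $\Delta_\eps(d)$ with $\rho(d)\ge\rho(b)$, not strictly greater; so $K_b$ can have sections at the same stratum and your downward induction on $\rho(b)$ loops. Your fallback suggestion of a recollement reduction to $\R_{\le\rho(c)}$ is indeed what the paper does (Lemma~\ref{anotherpar}, via Lemma~\ref{sub}(4)), but ``$\nabla_\eps(c)$ is essentially injective there'' is false precisely when $\eps(\rho(c))=+$, since then $\nabla_\eps(c)=\bar\nabla(c)$. That case is the real work: the paper isolates it as Lemma~\ref{quot}, proving $\Ext^1_\R(\Delta_\eps(b),j^\mu_*V)=0$ for maximal $\mu$ with $\eps(\mu)=+$ by a direct argument with the unit of adjunction, and then degree-shifts.

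\textbf{Which criterion, and exactness.} Under $(\widehat{P\Delta}_\eps)$ alone, the criterion one can establish is the $\nabla_\eps$-criterion (Theorem~\ref{gf1}: $V\in\nabla_\eps(\R)\Leftrightarrow\Ext^1_\R(\Delta_\eps(b),V)=0$ for all $b$), not the $\Delta_\eps$-criterion you invoke first. The hard direction of the $\Delta_\eps$-criterion is dual to Theorem~\ref{gf1} and needs $(\widehat{I\nabla}_\eps)$; ``standard d\'evissage'' does not supply it. So the direct route $(\widehat{P\Delta}_\eps)\Rightarrow(P\Delta_\eps)$ via summands does not close. What does work—and what your next sentence in fact says—is the paper's crossing move: apply the $\nabla_\eps$-criterion to the injective $I(b)$ to get $(\widehat{P\Delta}_\eps)\Rightarrow(I\nabla_\eps)$, then duality gives $(\widehat{I\nabla}_\eps)\Rightarrow(P\Delta_\eps)$. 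The exactness of $j^\lambda_*$ for $\eps(\lambda)=+$ is obtained \emph{inside} the proof of the $\nabla_\eps$-criterion (it is the content of Lemma~\ref{thesublemma}, by an induction on length in $\R_\lambda$), and $j^\lambda_!$ for $\eps(\lambda)=-$ then follows by duality; your extraction via ``a projective resolution in the stratum'' does not type-check, since $j^\lambda\bar\Delta(c)=L_\lambda(c)$ is not projective in $\R_\lambda$.
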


\begin{remark} When 
all strata are simple, 
the properties $(\widehat{P\Delta}_\eps)$--$(\widehat{I\nabla}_\eps)$ may be written more succinctly as the
following:
\begin{enumerate}
\item[($\widehat{P\Delta}$)]
For each $\lambda\in\Lambda$, there exists a projective object
$P_\lambda$ admitting a 
$\Delta$-flag with 
$\Delta(\lambda)$ at the top and other sections of the form
$\Delta(\mu)$ for $\mu\in\Lambda$ with $\mu\geq\lambda$.
\item[($\widehat{I\nabla}$)]
For each $\lambda \in \Lambda$, there exists an injective object
$I_\lambda$ 
admitting a 
$\nabla$-flag with $\nabla(\lambda)$ at the bottom and other sections
of the form
$\nabla(\mu)$ for $\mu\in\Lambda$ with $\mu\geq\lambda$.
\end{enumerate}
Theorem~\ref{fund} shows that these are equivalent to the properties
$(P\Delta)$--$(I\nabla)$ from the introduction, as
 was explained originally by Cline, Parshall and Scott in \cite{CPS}.
\end{remark}

We postpone the proof of Theorem~\ref{fund} until a little later in the
the subsection. It is important because it justifies the next key definition
($\eps$S) and its variations (FS), ($\eps$HW), (FHW) and (HW).

\begin{definition}\label{vector}
  Let $\R$ be an Abelian category
  equipped with a finite (resp., essentially finite) stratification
$(\B,L,\rho,\Lambda,\leq)$.
\begin{enumerate}
\item[($\eps$S)]
We say that $\R$ is a
{\em finite} (resp., {\em essentially finite}) {\em
  $\eps$-stratified category} if 
one of the equivalent properties 
$(\widehat{P\Delta}_\eps)$--$(\widehat{I\nabla}_\eps)$ holds
for a given choice of sign function
$\eps:\Lambda\rightarrow\{\pm\}$.
\item[(FS)]
We say $\R$ is a {\em finite} (resp., {\em essentially finite})
{\em fully stratified category} if 
one of these properties holds
for all choices of sign function
$\eps:\Lambda\rightarrow\{\pm\}$.
\item[($\eps$HW)]
We say $\R$ is a {\em finite} (resp., {\em essentially finite}) 
{\em $\eps$-highest weight category} if 
the stratification function $\rho$ is a bijection, i.e., 
each stratum has a unique irreducible object (up to isomorphism),
and
one of the equivalent properties
$(\widehat{P\Delta}_\eps)$--$(\widehat{I\nabla}_\eps)$ holds
for a given choice of sign function
$\eps:\Lambda\rightarrow\{\pm\}$.
\item[(FHW)]
We say $\R$ is a {\em finite} (resp.,
{\em essentially finite}) {\em fibered highest weight category}
if the stratification function $\rho$ is a bijection and one of these
properties holds for all choices of sign function.
\item[(HW)]
We say $\R$ is a {\em finite} (resp., {\em essentially finite})
{\em highest weight category}
if all of the strata are simple
(cf. Lemma~\ref{allsimple}) and one of the equivalent
properties $(\widehat{P\Delta})$--$(\widehat{I\nabla})$ holds.
\end{enumerate}
\end{definition}

\begin{remark}\label{czdiscussion}
The language ``fibered
highest weight'' in Definition~\ref{vector} is a
departure from the existing literature, where
such categories are usually referred to as
{\em properly stratified categories}; this terminology
goes back to the work of Dlab \cite{Dlabproper}.
A recent exposition which takes a more traditional viewpoint than here can be
found in \cite{CZ}. In particular, in \cite[Def.~2.7.4]{CZ}, one finds five types of
finite-dimensional algebra $A$ defined in terms of properties of the category
$A\fdlmod$, namely, standardly stratified algebras, exactly standardly stratified algebras, strongly
stratified algebras, properly stratified algebras, and
quasi-hereditary algebras.
In our preferred language, these are $+$-stratified algebras, stratified algebras, $+$-quasi-hereditary algebras, properly stratified algebras, and quasi-hereditary algebras, respectively, as in Table~\ref{tabzero} from the introduction.
For further reference to the original literature, \cite[$\S$A.2]{CZ}
is helpful.
\end{remark}

We can view 
$\{L(b)\:|\:b \in \B\}$ equivalently as a full set
of pairwise inequivalent irreducible objects in $\R^\op$.
The stratification of $\R$ is
also one of $\R^\op$. 
The indecomposable projectives and injectives in $\R^\op$ are $I(b)$ and
$P(b)$, while the
$(-\eps)$-standard and
$(-\eps)$-costandard objects in $\R^\op$ are $\nabla_\eps(b)$ and
$\Delta_\eps(b)$, respectively.
So we can reinterpret Theorem~\ref{fund} as the following.

\begin{theorem}\label{opstrat} $\R$ is $\eps$-stratified, fully stratified,
  $\eps$-highest weight, fibered highest weight or highest weight
if and only if $\R^\op$ is $(-\eps)$-stratified,
fully stratified, $(-\eps)$-highest weight, fibered highest weight or highest weight, respectively.
\end{theorem}

Now we must prepare for the proof Theorem~\ref{fund}.
The main step in the argument 
will be provided by the {\em homological criterion} for
$\nabla_\eps$-flags
from the next Theorem~\ref{gf1}. In turn, the proof of this criterion 
reduces to the following lemma which treats a key 
special case. The reader wanting to work
fully through the proofs should look also at this point at the lemmas
in $\S$\ref{ssp} below.

\begin{lemma}\label{thesublemma}
  Assume that $\R$ is an Abelian category
  equipped with a finite or essentially finite stratification $(\B,L,\rho,\Lambda,\leq)$ and sign
function
$\eps$, such that property
$(\widehat{P\Delta}_\eps)$ holds.
Let $\lambda$ be a maximal element of $\Lambda$ with respect to the ordering $\leq$, and
$V \in \R$ be an object satisfying the following properties:
\begin{itemize}
\item[(i)]
$\Ext^1_\R(\Delta_\eps(b), V) = 0$ for all $b \in \B$;
\item[(ii)]
$\soc V \cong L(b_1)\oplus\cdots\oplus L(b_n)$ for $b_1,\dots,b_n \in
\B_\lambda$.
\end{itemize}
Then $V$ belongs to
$\R_{\leq\lambda}$ (so that it makes sense to apply the functor $j^\lambda$ to
it), and
\begin{equation}
V \cong 
\begin{cases}
j^\lambda_* (j^\lambda V)&\text{if
  $\eps(\lambda)=+$,}\\
\nabla(b_1)\oplus\cdots\oplus\nabla(b_n)&
\text{if $\eps(\lambda)=-$.}
\end{cases}\label{preach}
\end{equation}
Moreover, in the case $\eps(\lambda)=+$,
the functor $j^\lambda_*$ is exact. Hence, 
in both cases, we have that $V \in \nabla_\eps(\R)$.
\end{lemma}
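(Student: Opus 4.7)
The plan has three steps: first show $V\in\R_{\leq\lambda}$, then identify $V$ with the claimed form, and finally deduce the $\nabla_\eps$-flag together with exactness of $j^\lambda_*$ in the $+$ case.

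\textbf{Step 1.} Suppose for contradiction that $V$ has a composition factor $L(c)$ with $\rho(c)\not\leq\lambda$. Invoke $(\widehat{P\Delta}_\eps)$ to choose a projective $P_c$ whose $\Delta_\eps$-flag has top $\Delta_\eps(c)$ and other sections $\Delta_\eps(d)$ with $\rho(d)\geq\rho(c)$. Since $L(c)$ sits in the head of $P_c$, the projective cover $P(c)$ is a summand of $P_c$, whence $\Hom_\R(P_c,V)\supseteq\Hom_\R(P(c),V)\neq 0$. On the other hand, hypothesis~(1) together with the flag yields
\[\dim\Hom_\R(P_c,V)=\sum_d(P_c:\Delta_\eps(d))\dim\Hom_\R(\Delta_\eps(d),V).\]
Each summand vanishes: the image of a nonzero map $\Delta_\eps(d)\to V$ has socle inside $\bigoplus L(b_i)$ (forcing $\lambda\leq\rho(d)$) while its head $L(d)$ satisfies $\rho(d)\not\leq\lambda$; combined with maximality of $\lambda$ this is contradictory. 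The contradiction gives $V\in\R_{\leq\lambda}$.

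\textbf{Step 2.} Hypothesis~(2) implies $i^!_{<\lambda}V=0$, so the adjunction unit $\eta_V\colon V\to j^\lambda_*j^\lambda V$ is injective with cokernel $C\in\R_{<\lambda}$. In the case $\eps(\lambda)=+$, I would apply $\Hom_\R(\Delta_\eps(b),-)$ to $0\to V\to j^\lambda_*j^\lambda V\to C\to 0$ and split by $b$: for $b\in\B_\lambda$ the head $L(b)$ cannot appear in $C\in\R_{<\lambda}$; for $b\in\B_{<\lambda}$, $j^\lambda\Delta_\eps(b)=0$ and so adjunction gives $\Hom(\Delta_\eps(b),j^\lambda_*j^\lambda V)=0$; for $\rho(b)\not\leq\lambda$ the argument of Step~1 applies. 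Hence $\Hom(\Delta_\eps(b),C)=0$ for every $b$. Any nonzero $C$ would admit a nonzero composite $\Delta_\eps(e)\twoheadrightarrow L(e)\hookrightarrow C$ for some $L(e)\in\soc C$, forcing $C=0$ and hence $V\cong j^\lambda_*j^\lambda V$. In the case $\eps(\lambda)=-$, Lemma~\ref{charac} identifies $\nabla(b_i)$ as the injective hull of $L(b_i)$ in $\R_{\leq\lambda}$, giving $V\hookrightarrow J:=\bigoplus\nabla(b_i)$; the analogous Hom-computation on $0\to V\to J\to C'\to 0$, using \eqref{onedimensionality} and the identification $\soc(j^\lambda V)=\bigoplus L_\lambda(b_i)$ (every simple subobject of $j^\lambda V$ lifts through the recollement to some $L(b_i)\subseteq\soc V$, since $i^!_{<\lambda}V=0$ rules out any non-simple lift), yields $C'=0$ and thus $V\cong\bigoplus\nabla(b_i)$.

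\textbf{Step 3.} In the $-$ case, $V=\bigoplus\nabla(b_i)$ is tautologically $\nabla_\eps$-filtered. In the $+$ case, exactness of $j^\lambda_*$ reduces by induction on length to showing that, for an SES $0\to U_1\to U_2\to L_\lambda(e)\to 0$ in $\R_\lambda$, the inclusion $Q:=j^\lambda_*U_2/j^\lambda_*U_1\hookrightarrow\bar\nabla(e)$ coming from left exactness is an isomorphism. I would verify hypotheses~(1) and~(2) for $Q$ and invoke Step~2: condition~(2) is inherited from the bound $\soc(j^\lambda_*U_k)\subseteq\bigoplus_{b\in\B_\lambda}L(b)$ coming from the adjunction; condition~(1) combines the projectivity of $\Delta(b)=j^\lambda_!P_\lambda(b)$ in $\R_{\leq\lambda}$ for $b\in\B_\lambda$, the vanishing of $j^\lambda\Delta_\eps(b)=0$ for $b\in\B_{<\lambda}$, the identification $\Ext^1_\R=\Ext^1_{\R_{\leq\lambda}}$ on objects of the Serre subcategory, and a direct analysis of the case $\rho(b)\not\leq\lambda$. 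Step~2 then gives $Q\cong j^\lambda_*j^\lambda Q=\bar\nabla(e)$. Iterating $j^\lambda_*$ along a composition series of $U:=j^\lambda V$ assembles the required $\bar\nabla$-flag of $V$.

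The main obstacle is Step~1, where the flag structure of $P_c$, the Ext-vanishing, and the socle condition must be orchestrated carefully around the maximality of $\lambda$ to close off the contradiction; a secondary technical subtlety is the higher Ext-comparison in Step~3 when verifying condition~(1) for~$Q$.
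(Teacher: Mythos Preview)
Your Steps 1 and 2 are sound and broadly match the paper's approach; your Step 1 is in fact a slightly cleaner uniform argument than the paper's case-by-case use of Lemma~\ref{maxcase} to place $V$ inside $\R_{\leq\lambda}$.

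The genuine gap is in Step 3, precisely at the point you flag as a ``secondary technical subtlety'': verifying $\Ext^1_\R(\Delta_\eps(b),Q)=0$ for $b\in\B_{<\lambda}$. Your proposed ingredient ``$j^\lambda\Delta_\eps(b)=0$'' does not deliver this. There is no derived adjunction $\Ext^1(\Delta_\eps(b),j^\lambda_*U_2)\cong\Ext^1(j^\lambda\Delta_\eps(b),U_2)$ available here, because $j^\lambda$ need not send projectives of $\R_{\leq\lambda}$ to projectives of $\R_\lambda$ (only those indexed by $\B_\lambda$ go to projectives). The long exact sequence for $0\to j^\lambda_*U_1\to j^\lambda_*U_2\to Q\to 0$ reduces condition~(1) to the two vanishings
\[
\Ext^1_\R(\Delta_\eps(b),j^\lambda_*U_2)=0
\qquad\text{and}\qquad
\Ext^2_\R(\Delta_\eps(b),j^\lambda_*U_1)=0.
\]
The second follows from the inductively established $\bar\nabla$-flag on $j^\lambda_*U_1$ together with Lemma~\ref{anotherpar}, which you do not invoke. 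The first is the content of Lemma~\ref{quot}, proved in the paper by a separate hands-on argument: embed $U_2$ into an injective $I$ of $\R_\lambda$, observe that every constituent of the socle of $j^\lambda_*I/j^\lambda_*U_2$ lies in $\B_\lambda$, and deduce that any extension $0\to j^\lambda_*U_2\to E\to\Delta_\eps(b)\to 0$ with $b\notin\B_\lambda$ admits a lift $E\to j^\lambda_*I$ whose image is forced to land back in $j^\lambda_*U_2$, splitting the extension. Without this input (or an equivalent), your induction does not close.
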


\begin{proof}[Proof (assuming lemmas from $\S$\ref{ssp} below)]
We first prove (\ref{preach}) in case $\eps(\lambda) = -$.
Let $W := \nabla(b_1) \oplus \cdots \oplus \nabla(b_n)$.
By the maximality of $\lambda$ and Lemma~\ref{maxcase}, this is an
injective hull of $\soc V$.
So there is a short exact sequence
$0 \rightarrow V \rightarrow W \rightarrow W/V \rightarrow 0$.
For any $a \in \B$, we apply $\Hom_{\R}(\Delta_\eps(a),?)$ and use
property (i) to get a short exact
sequence
\begin{equation}\label{salary}
0 \longrightarrow \Hom_{\R}(\Delta_\eps(a), V)
\stackrel{f}{\longrightarrow} \Hom_{\R}(\Delta_\eps(a), W)
\longrightarrow \Hom_{\R}(\Delta_\eps(a), W/V)
\longrightarrow 0.
\end{equation}
If $\rho(a) \neq \lambda$ then $\Hom_{\R}(\Delta_\eps(a), W) = 0$
as none of the
composition factors of $\Delta_\eps(a)$ are constituents of $\soc W$.
If $\rho(a) = \lambda$ then $\Delta_\eps(a) = \bar\Delta(a)$ and any
homomorphism $\bar\Delta(a) \rightarrow W$ 
must factor through the
unique irreducible quotient $L(a)$ of $\bar\Delta(a)$. So its image is
contained in $\soc W \subseteq V$, showing that $f$ is an isomorphism.
These arguments show
 that $\Hom_{\R}(\Delta_\eps(a), W/V) = 0$
for all $a \in \B$.
We deduce that $\soc (W / V) =0$, hence, $W/V = 0$, which is what we
needed.

Now consider (\ref{preach}) when $\eps(\lambda) = +$. 
By Lemma~\ref{maxcase} again, the injective hull of $V$ is 
$\nabla(b_1)\oplus\cdots\oplus \nabla(b_n)$, 
which is an object of $\R_{\leq\lambda}$.
Hence, $V \in \R_{\leq\lambda}$.
The unit of adjunction gives us a morphism $g:V \rightarrow
W := j^\lambda_* (j^\lambda V)$. Since $g$ becomes an isomorphism when we
apply $j^\lambda$, its kernel belongs to $\R_{<\lambda}$. In view of property
(2), we deduce that $\ker g = 0$, so $g$ is a monomorphism.
Hence, we can identify $V$ with a subobject of $W$.
To show that $g$ is an epimorphism as well, we apply
$\Hom_{\R}(\Delta_\eps(a),?)$ to $0 \rightarrow V \rightarrow W
\rightarrow W  / V \rightarrow 0$ to get the 
short exact sequence (\ref{salary}).
By adjunction, the middle morphism space is isomorphic to
$\Hom_{\R_\lambda}(j^\lambda \Delta_\eps(a), j^\lambda V)$, which is
zero if $\rho(a) < \lambda$. If $\rho(a) = \lambda$
then $\Delta_\eps(a) = \Delta(a)$ is the projective cover of $L(a)$ in
$\R$ by Lemma~\ref{maxcase}, and $j^\lambda \Delta_\eps(a)$ is the projective cover of
$L_\lambda(a)$ in $\R_\lambda$.
We deduce that both the first and second morphism spaces in
(\ref{salary}) are of the
same dimension $[V:L(a)] = [j^\lambda V:L_\lambda(a)]$, so $f$ must be
an isomorphism.
Therefore $\Hom_{\R}(\Delta_\eps(a), W / V) = 0$ for all $a \in \B$,
hence, $V = W$ and (\ref{preach}) is proved.

To complete the proof, we must show that $j^\lambda_*$ is exact when
$\eps(\lambda) = +$. 
For this, we use
induction on composition length to show that $j^\lambda_*$ is exact on any 
short exact sequence $0 \rightarrow K \rightarrow X \rightarrow Q
\rightarrow 0$ in $\R_\lambda$. For the induction step, suppose we are
given such an exact sequence with $K, Q \neq 0$.
By induction, $j^\lambda_* K$ and $j^\lambda_* Q$ both have
filtrations with sections $\bar\nabla(b)$ for $b \in
\B_\lambda$.
Hence, by Lemma~\ref{anotherpar}, we have that $\Ext^n_{\R}(\Delta_\eps(b),
j^\lambda_* K) = \Ext^n_{\R}(\Delta_\eps(b), j^\lambda_* Q) = 0$ for
all $n\geq 1$ and $b \in \B$.
As it is a right adjoint, $j^\lambda_*$ is left exact, so there is an
exact sequence
\begin{equation}\label{jones}
0 \longrightarrow j^\lambda_* K \longrightarrow j^\lambda_* X \longrightarrow
j^\lambda_* Q.
\end{equation}
Let $Y := j^\lambda_* X / j^\lambda_* K$, so that there is a short exact sequence
\begin{equation}\label{jones2}
0 \longrightarrow j^\lambda_* K \longrightarrow j^\lambda_* X \longrightarrow
Y \longrightarrow 0.
\end{equation}
To complete the argument, it suffices to show that $Y \cong
j^\lambda_* Q$. To establish this, we show that $Y$ satisfies
both of the properties (i) and (ii); then, by the previous paragraph 
and exactness of $j^\lambda$, we get that
$Y \cong j^\lambda_* (j^\lambda Y) 
\cong j_*^\lambda (X / K)\cong
j_*^\lambda Q$, and we are done.
To see that $Y$ satisfies (i), we apply $\Hom_{\R}(\Delta_\eps(b),?)$
to (\ref{jones2}) to get an exact sequence
$$
\Ext^1_{\R}(\Delta_\eps(b), j^\lambda_* X)
\longrightarrow
\Ext^1_{\R}(\Delta_\eps(b), Y)
\longrightarrow
\Ext^2_{\R}(\Delta_\eps(b), j^\lambda_* K).
$$
The first $\Ext^1$ is zero by Lemma~\ref{quot}. Since we already know 
that the $\Ext^2$ term is zero,
$\Ext^1_{\R}(\Delta_\eps(b), Y) =0$.
 To see that $Y$ satisfies (ii), note comparing
(\ref{jones})--(\ref{jones2}) that $Y \hookrightarrow j^\lambda_* Q$, and
$\soc j^\lambda_* Q$ 
is of the desired form by what we know about its
$\bar\nabla_\eps$-flag.
\end{proof}

 \begin{theorem}[Homological criterion for $\Delta_\eps$-flags]\label{gf1}
   Assume that $\R$ is an Abelian category
   equipped with a finite or essentially finite
   stratification $(\B,L,\rho,\Lambda,\leq)$ and sign
function
$\eps$, such that property
$(\widehat{P\Delta}_\eps)$ holds.
For $V \in \R$, the following properties are equivalent:
\begin{itemize}
\item[(i)] $V \in \nabla_\eps(\R)$;
\item[(ii)] $\Ext^1_\R(\Delta_\eps(b), V) = 0$ for all $b \in \B$;
\item[(iii)] $\Ext^n_\R(\Delta_\eps(b), V) = 0$ for all $b \in \B$ and $n
  \geq 1$.
\end{itemize}
If these properties hold, 
the multiplicity $(V:\nabla_\eps(b))$ of
$\nabla_\eps(b)$
as a section of a $\nabla_\eps$-flag of $V$ is well-defined  
independent of the choice of flag, as it equals 
$\dim \Hom_{\R}(\Delta_\eps(b), V)$.
\end{theorem}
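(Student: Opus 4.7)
The plan is to prove the cycle $(\mathrm{i}) \Rightarrow (\mathrm{iii}) \Rightarrow (\mathrm{ii}) \Rightarrow (\mathrm{i})$, and separately the multiplicity statement. The direction $(\mathrm{iii}) \Rightarrow (\mathrm{ii})$ is immediate. For $(\mathrm{i}) \Rightarrow (\mathrm{iii})$ I will induct on the length of a $\nabla_\eps$-flag: each short exact sequence $0 \to V_{m-1} \to V_m \to \nabla_\eps(b_m) \to 0$, combined with the vanishing $\Ext^n_\R(\Delta_\eps(b), \nabla_\eps(c)) = 0$ for all $n \geq 1$ (one of the basic orthogonality relations sitting among the lemmas of $\S$\ref{ssp}; the same vanishing is already used inside the proof of Lemma~\ref{thesublemma}), gives the inductive step via the long exact sequence.

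The serious work lies in $(\mathrm{ii}) \Rightarrow (\mathrm{i})$, which I will prove by induction on the composition length of $V$. If $V = 0$, there is nothing to do. Otherwise, let $\mu$ be a maximal element of the finite set $\{\rho(b) : b \in \B,\ [V:L(b)] \neq 0\}$. My goal is to construct a nonzero subobject $W \hookrightarrow V$ belonging to $\nabla_\eps(\R)$ whose sections are of the form $\nabla_\eps(c)$ with $c \in \B_\mu$, and then to show that $V/W$ still satisfies (ii); the inductive hypothesis applied to $V/W$ will supply a $\nabla_\eps$-flag that can be prepended with $W$. To produce $W$ I will pass to the Serre subcategory $\R'$ of $\R$ generated by $\{L(c) : \rho(c) \leq \mu\}$, inside which $\mu$ becomes maximal in the relevant poset and property $(\widehat{P\Delta}_\eps)$ is inherited from $\R$ by truncating the given projectives. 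The natural candidate for $W$ is $i^!_{\leq \mu} V$; the maximality of $\mu$ ensures that $\soc W$ is supported in $\B_\mu$, and condition (ii) for $V$ transfers to the required $\Ext^1$-vanishing needed to invoke Lemma~\ref{thesublemma} within $\R'$.

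To close the induction I will check that $V/W$ still satisfies (ii). Applying $\Hom_\R(\Delta_\eps(b),-)$ to $0 \to W \to V \to V/W \to 0$ gives a fragment
\[
\Ext^1_\R(\Delta_\eps(b), V) \longrightarrow \Ext^1_\R(\Delta_\eps(b), V/W) \longrightarrow \Ext^2_\R(\Delta_\eps(b), W),
\]
whose outer terms both vanish: the first by the standing assumption, the third by $(\mathrm{i}) \Rightarrow (\mathrm{iii})$ already established for the $\nabla_\eps$-filtered $W$. Since $V/W$ has strictly smaller composition length, the induction is complete.

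The main obstacle I anticipate is packaging the construction of $W$. Lemma~\ref{thesublemma} demands that the stratum be maximal in all of $\Lambda$ and that the socle be supported there, whereas $\mu$ is only maximal inside the support of $V$; one must therefore descend to the truncated Serre subcategory $\R'$ and verify that every lemma of $\S$\ref{ssp} used in the proof of Lemma~\ref{thesublemma} continues to apply there. Once all this is in place, the multiplicity formula is immediate: for a $\nabla_\eps$-flag $0 = V_0 \subset V_1 \subset \cdots \subset V_k = V$, successively apply $\Hom_\R(\Delta_\eps(b),-)$ to each $0 \to V_{m-1} \to V_m \to \nabla_\eps(b_m) \to 0$, and use (iii) for the $\nabla_\eps$-filtered $V_{m-1}$ together with the orthogonality $\dim \Hom_\R(\Delta_\eps(b), \nabla_\eps(c)) = \delta_{b,c}$ from \eqref{onedimensionality}; the short exact sequences telescope to yield $\dim \Hom_\R(\Delta_\eps(b), V) = \#\{m : b_m = b\}$, which is manifestly independent of the choice of flag and must therefore equal the well-defined multiplicity $(V : \nabla_\eps(b))$.
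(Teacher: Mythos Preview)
There is a genuine gap in your implication $(\mathrm{ii}) \Rightarrow (\mathrm{i})$: the claim that ``the maximality of $\mu$ ensures that $\soc W$ is supported in $\B_\mu$'' is false. Take any highest weight category with poset $\{1 < 2\}$ and let $V = \nabla(1) \oplus \nabla(2)$, which certainly satisfies (ii). You get $\mu = 2$ and $W = i^!_{\leq 2} V = V$, but $\soc W = L(1) \oplus L(2)$ (since $\nabla(1) = L(1)$ and $\soc \nabla(2) = L(2)$), so hypothesis~(2) of Lemma~\ref{thesublemma} fails and you cannot conclude $W \in \nabla_\eps(\R)$. Worse, since $V/W = 0$, your induction on composition length makes no progress at all.

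The paper repairs this by choosing instead the \emph{minimal} $\lambda$ in the set $\{\rho(b) : \Hom_\R(\Delta_\eps(b), V) \neq 0\}$ and setting $W := i^!_{\leq \lambda} V$. Minimality then really does force $\soc W$ into $\B_\lambda$: any constituent $L(c)$ of $\soc W$ yields a nonzero composite $\Delta_\eps(c) \twoheadrightarrow L(c) \hookrightarrow W \hookrightarrow V$, giving $\rho(c) \not< \lambda$, while $W \in \R_{\leq \lambda}$ gives $\rho(c) \leq \lambda$. Note that merely swapping ``maximal'' for ``minimal'' in your argument would still not suffice, because your set $\{\rho(b) : [V:L(b)] \neq 0\}$ is the wrong one: its minimum can lie strictly below every $\rho(c)$ with $L(c)$ in $\soc V$ (e.g.\ $V = \nabla(2)$ in a category where $[\nabla(2):L(1)] \neq 0$), and then $W = 0$. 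Your arguments for $(\mathrm{i}) \Rightarrow (\mathrm{iii})$ and for the multiplicity formula are correct and match the paper's use of Lemma~\ref{anotherpar}.
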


\begin{proof}[Proof (assuming lemmas from $\S$\ref{ssp} below)]
 (iii)$\Rightarrow$(ii): Trivial.

\vspace{1.5mm}\noindent
(i)$\Rightarrow$(iii) and the
final assertion of the lemma: These follow from
Lemma~\ref{anotherpar}.

\vspace{1.5mm}\noindent
(ii)$\Rightarrow$(i):
Assume that $V$ satisfies (ii). We prove that it has a $\nabla_\eps$-flag by induction on 
$$
d(V) := \sum_{b \in \B} \dim
\Hom_\R(\Delta_\eps(b), V) \in \N.
$$
The base case when $d(V) = 0$ is trivial as we have then that $V=0$.
For the induction step, let $\lambda \in \Lambda$ be minimal 
such that $\Hom_\R(\Delta_\eps(b),V) \neq 0$ for some $b \in \B$.
The Serre subcategory $\R_{\leq\lambda}$ with the induced
(finite or essentially finite) stratification also satisfies
($\widehat{P\Delta}_\eps$) thanks to Lemma~\ref{sub}(2).
Let $W := i_{\leq\lambda}^! V$.
Because $W$ is a subobject of $V$, 
we have by the minimality of $\lambda$ 
that $\Hom_{\R}(\Delta_\eps(b),
W) \neq 0$ only if $b \in \B_\lambda$. 
Hence, $\soc W \cong L(b_1)\oplus\cdots\oplus L(b_n)$ for
$b_1,\dots,b_n \in \B_\lambda$.
Thus, $W$ satisfies the 
hypothesis (ii) from Lemma~\ref{thesublemma} (with $V$ and
$\R$ there replaced by $W$ and $\R_{\leq\lambda}$).

Now let $Q := V / W$. Take any $b \in \B$
and apply $\Hom_{\R}(\Delta_\eps(b),?)$ to the short exact sequence
$0 \rightarrow W \rightarrow V \rightarrow Q \rightarrow 0$ to get
the exact sequence
\begin{multline*}
0\longrightarrow\Hom_{\R}(\Delta_\eps(b), W)
\longrightarrow
\Hom_{\R}(\Delta_\eps(b), V) \longrightarrow
\Hom_{\R}(\Delta_\eps(b), Q)\\
\longrightarrow
\Ext^1_{\R}(\Delta_\eps(b), W)
\longrightarrow 
0 \longrightarrow \Ext^1_{\R}(\Delta_\eps(b), Q)
\longrightarrow \Ext^2_{\R}(\Delta_\eps(b), W).
\end{multline*}
By the definition of $W$, the socle of $Q$
has no constituent $L(b)$ for $b \in \B_{\leq\lambda}$.
So, for $b \in \B_{\leq\lambda}$ the space
$\Hom_{\R}(\Delta_\eps(b), Q)$ is zero, and we get that
$\Ext^1_{\R_{\leq\lambda}}(\Delta_\eps(b), W) 
\cong \Ext^1_\R(\Delta_\eps(b), W) = 0$ for all such $b$.
This verifies hypothesis (i) from Lemma~\ref{thesublemma}.
So now we can appeal to the lemma to deduce that $W \in
\nabla_\eps(\R_{\leq\lambda})$. Hence, $W \in \nabla_\eps(\R)$.

In view of Lemma~\ref{anotherpar}, we get that
$\Ext^n_{\R}(\Delta_\eps(b), W) = 0$ for all $n \geq 1$ and $b \in
\B$.
So, by the above exact sequence again, we get that
$\Ext^1_{\R}(\Delta_\eps(b), Q)=0$ for all $b \in \B$, and moreover 
$d(Q) = d(V) - d(W) < d(V)$.
Finally we appeal to the
induction hypothesis to deduce that $Q \in \Delta_\eps(\R)$.
Since we already know that $W \in \Delta_\eps(\R)$,
this shows that $V \in \Delta_\eps(\R)$.
\end{proof}

\begin{corollary}
In the setup of Theorem~\ref{gf1}, multiplicities in
a $\nabla_\eps$-flag of $I(b)$ satisfy
$(I(b):\nabla_\eps(c)) = [\Delta_\eps(c):L(b)]$.
\end{corollary}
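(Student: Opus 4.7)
The plan is to read off this corollary directly from Theorem~\ref{gf1} together with the standard identification of composition multiplicities via Hom into injective hulls, since essentially all the work has already been done.

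First I would verify that $I(b)$ itself lies in $\nabla_\eps(\R)$. Because $I(b)$ is injective in $\R$, we have $\Ext^1_\R(-,I(b)) = 0$, and in particular $\Ext^1_\R(\Delta_\eps(c), I(b)) = 0$ for every $c \in \B$. Hence condition (ii) of Theorem~\ref{gf1} is satisfied, so $I(b)$ admits a $\nabla_\eps$-flag. The final assertion of Theorem~\ref{gf1} then tells us that the multiplicities of this flag are well-defined independently of the choice of flag, and that
\[
(I(b):\nabla_\eps(c)) \;=\; \dim \Hom_\R(\Delta_\eps(c), I(b)).
\]

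Next I would invoke the standard relationship between Hom into an injective hull and composition multiplicity. In the essentially finite Abelian setting every object has finite length, so $\Delta_\eps(c)$ has a composition series, and applying the exact functor $\Hom_\R(-,I(b))$ to successive quotients gives
\[
\dim \Hom_\R(\Delta_\eps(c), I(b)) \;=\; [\Delta_\eps(c) : L(b)],
\]
using $\dim \Hom_\R(L(a), I(b)) = \delta_{a,b}$ (Schur's lemma together with the fact that $I(b)$ is the injective hull of $L(b)$, so $\soc I(b) = L(b)$). This is exactly the formula recorded in (\ref{compmults}) in the Schurian case, and its proof is identical here.

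Combining the two displays gives $(I(b):\nabla_\eps(c)) = [\Delta_\eps(c):L(b)]$, as required. There is no real obstacle: once Theorem~\ref{gf1} is available, the corollary is essentially a two-line consequence, with the only subtlety being the reminder that $I(b)$ genuinely lies in $\R$ (not only in the ind-completion), which is ensured by axiom $(\rho 4)$ together with the essentially finite Abelian hypothesis.
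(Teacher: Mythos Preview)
Your proof is correct and matches the paper's approach exactly: the paper states this corollary without proof, but the argument is spelled out verbatim inside the proof of Theorem~\ref{fund}, where it is shown that injectivity of $I(b)$ gives condition (ii) of Theorem~\ref{gf1}, so $(I(b):\nabla_\eps(c))=\dim\Hom_\R(\Delta_\eps(c),I(b))=[\Delta_\eps(c):L(b)]$.
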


\begin{corollary}\label{hanks}For $\R$ as in Theorem~\ref{gf1},
let $0 \rightarrow U \rightarrow V \rightarrow W \rightarrow 0$ be a
short exact sequence. If $U$ and $V$ have
$\nabla_\eps$-flags
then so does $W$.
\end{corollary}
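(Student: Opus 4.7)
The plan is to deduce this immediately from the homological characterization in Theorem~\ref{gf1}. The key observation is that the equivalence (i)$\Leftrightarrow$(ii)$\Leftrightarrow$(iii) in that theorem turns the question of membership in $\nabla_\eps(\R)$ into a vanishing condition for Ext groups, and such vanishing conditions behave well in long exact sequences.

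Concretely, I would fix $b \in \B$ and apply the left exact functor $\Hom_{\R}(\Delta_\eps(b), -)$ to the short exact sequence $0 \to U \to V \to W \to 0$ to obtain the long exact sequence
\begin{equation*}
\cdots \to \Ext^1_\R(\Delta_\eps(b), V) \to \Ext^1_\R(\Delta_\eps(b), W) \to \Ext^2_\R(\Delta_\eps(b), U) \to \cdots
\end{equation*}
Since $U$ and $V$ both lie in $\nabla_\eps(\R)$, Theorem~\ref{gf1}(iii) gives that $\Ext^n_\R(\Delta_\eps(b), U) = 0$ and $\Ext^n_\R(\Delta_\eps(b), V) = 0$ for every $n \geq 1$. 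Consequently the outer two terms vanish, forcing $\Ext^1_\R(\Delta_\eps(b), W) = 0$.

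As this holds for every $b \in \B$, the implication (ii)$\Rightarrow$(i) of Theorem~\ref{gf1} tells us that $W \in \nabla_\eps(\R)$, i.e., $W$ admits a $\nabla_\eps$-flag. Moreover, additivity of $\dim \Hom_\R(\Delta_\eps(b), -)$ on the short exact sequence (which is actually short exact after applying $\Hom_\R(\Delta_\eps(b), -)$, since $\Ext^1_\R(\Delta_\eps(b), U) = 0$) combined with the multiplicity formula at the end of Theorem~\ref{gf1} yields the expected additivity $(V : \nabla_\eps(b)) = (U : \nabla_\eps(b)) + (W : \nabla_\eps(b))$, though this refinement is not needed for the stated conclusion.

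There is essentially no obstacle here: the whole content of the corollary is packaged into the equivalence of (i) and (ii) in Theorem~\ref{gf1}, and the proof is just a one-line diagram chase in the long exact sequence of Ext.
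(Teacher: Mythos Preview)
Your proof is correct and is exactly the intended argument: the corollary is stated without proof in the paper precisely because it follows immediately from the long exact sequence together with the equivalence (i)$\Leftrightarrow$(ii)$\Leftrightarrow$(iii) of Theorem~\ref{gf1}.
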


\begin{proof}[Proof of Theorem~\ref{fund}]
Suppose 
that $\R$ satisfies ($\widehat{P\Delta}_\eps$). 
Since $V=I(b)$ is injective, it 
satisfies the hypothesis of Theorem~\ref{gf1}(ii).
Hence, by that theorem,
$I(b)$ has a $\nabla_\eps$-flag
and the multiplicity $(I(b):\nabla_\eps(c))$ of $\nabla_\eps(c)$ as a
section of any such flag is given by
$$
(I(b):\nabla_\eps(c))= \dim \Hom_\R(\Delta_\eps(c), I(b)) =
[\Delta_\eps(c):L(b)].
$$
This is zero unless $\rho(b) \leq \rho(c)$.
Also the bottom section must be $\nabla_\eps(b)$ since $I(b)$ has
socle $L(b)$.
Thus, we have verified
that $\R$ satisfies ($I\nabla_\eps$).
Moreover, Lemma~\ref{thesublemma} shows 
that $j^\lambda_*$ is exact whenever $\eps(\lambda)=+$, giving half of 
final assertion made in the statement of the theorem we are trying to prove.

Repeating the arguments in the previous paragraph 
but with $\R$ replaced by $\R^\op$ and $\eps$ replaced
with $-\eps$ show that $(\widehat{I\nabla}_\eps)$ implies $(P\Delta_\eps)$
and that $j^\lambda_!$ is exact whenever $\eps(\lambda)=-$.
Since
$(P\Delta_\eps)\Rightarrow
(\widehat{P\Delta}_\eps)$ and
$(I\nabla_\eps)\Rightarrow (\widehat{I\nabla}_\eps)$,  
this completes the proof.
\end{proof}

So now Theorem~\ref{fund} is proved and Definition~\ref{vector} is in place.
In the remainder of the subsection, we are going to develop some further
fundamental properties of these sorts of category. We start off in the
most general setup with $\R$ being a finite or essentially finite
 $\eps$-stratified category. Again some of the proofs that follow invoke parts of the lemmas from
$\S$\ref{ssp}. From 
Lemma~\ref{against} and the dual statement, deduce that
\begin{equation}\label{repeatedly}
\Ext^1_{\R}(\Delta_\eps(b), \Delta_\eps(c)) = 
\Ext^1_{\R}(\nabla_\eps(c), \nabla_\eps(b)) = 0
\end{equation}
for $b,c \in \B$ with
$\rho(b)\not\leq\rho(c)$.
By ``dual statement'' here, we mean that one takes
Lemma~\ref{against} with
$\R$ replaced by $\R^\op$ and $\eps$ by $-\eps$,
which we may do due to Theorem~\ref{opstrat}
 and Lemma~\ref{skyfall},
then applies the contravariant isomorphism between $\R$ and $\R^\op$.
In a similar way, the following theorem follows immediately as it is the
dual statement to Theorem~\ref{gf1}.

\begin{theorem}[Homological criterion for $\nabla_\eps$-flags]\label{gf2}
Assume that $\R$ is a finite or essentially finite $\eps$-stratified category.
For $V \in \R$, the following properties are equivalent:
\begin{itemize}
\item[(i)] $V \in \Delta_\eps(\R)$;
\item[(ii)] $\Ext^1_\R(V,\nabla_\eps(b)) = 0$ for all $b \in \B$;
\item[(iii)] $\Ext^n_\R(V, \nabla_\eps(b)) = 0$ for all $b \in \B$ and $n
  \geq 1$.
\end{itemize}
Assuming that these properties hold, 
the multiplicity $(V:\Delta_\eps(b))$ of
$\Delta_\eps(b)$
as a section of a $\Delta_\eps$-flag of $V$ is well-defined
independent of the choice of flag, as it equals
$\dim \Hom_{\R}(V, \nabla_\eps(b))$.
\end{theorem}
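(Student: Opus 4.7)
The plan is to deduce Theorem~\ref{gf2} directly from Theorem~\ref{gf1} by applying that result to $\R^{\op}$ with the negated sign function $-\eps$, since the statement is the ``dual'' one and $\R^{\op}$ is precisely the kind of category to which Theorem~\ref{gf1} applies.

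First, by Theorem~\ref{opstrat}, $\R^{\op}$ is a finite or essentially finite $(-\eps)$-stratified category; in particular it satisfies $(\widehat{P\Delta}_{-\eps})$, which is the hypothesis needed for Theorem~\ref{gf1}. Next I would carefully work out the dictionary between the various standard and costandard objects in $\R$ and those in $\R^{\op}$. The Serre subcategories $(\R^{\op})_{\leq\lambda}$ coincide with $(\R_{\leq\lambda})^{\op}$, so projective covers in $\R_\lambda^{\op}$ are injective hulls in $\R_\lambda$. Chasing through the definition \eqref{rumble} one finds $\Delta^{\op}(b)=\nabla(b)$, $\bar\Delta^{\op}(b)=\bar\nabla(b)$, $\nabla^{\op}(b)=\Delta(b)$, and $\bar\nabla^{\op}(b)=\bar\Delta(b)$. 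Combining this with the sign flip using \eqref{device} gives the key identifications
\[
\Delta_{-\eps}^{\op}(b)=\nabla_\eps(b),\qquad \nabla_{-\eps}^{\op}(b)=\Delta_\eps(b).
\]
Moreover, reversing a filtration shows that a $\nabla_{-\eps}$-flag in $\R^{\op}$ is the same data as a $\Delta_\eps$-flag in $\R$.

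Now I would apply Theorem~\ref{gf1} to $V\in\R^{\op}$ with sign $-\eps$ and translate each conclusion back using these identifications and Lemma~\ref{skyfall}. Condition (i) there, $V\in\nabla_{-\eps}(\R^{\op})$, becomes $V\in\Delta_\eps(\R)$. For (ii) and (iii), Lemma~\ref{skyfall} gives $\Ext^n_{\R^{\op}}(\nabla_\eps(b),V)\cong\Ext^n_\R(V,\nabla_\eps(b))$, so the vanishing conditions $\Ext^n_{\R^{\op}}(\Delta_{-\eps}^{\op}(b),V)=0$ translate to $\Ext^n_\R(V,\nabla_\eps(b))=0$ as required. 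Finally the multiplicity formula of Theorem~\ref{gf1} becomes
\[
(V:\Delta_\eps(b))=(V:\nabla_{-\eps}^{\op}(b))=\dim\Hom_{\R^{\op}}(\Delta_{-\eps}^{\op}(b),V)=\dim\Hom_\R(V,\nabla_\eps(b)),
\]
which is the desired conclusion.

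The whole argument is purely formal once the dictionary between $\R$ and $\R^{\op}$ is in place; no new homological work is needed beyond what was already done for Theorem~\ref{gf1}. The only mild obstacle is bookkeeping, namely verifying unambiguously that reversing filtrations, swapping standard with costandard, and negating $\eps$ combine in a consistent way so that Theorem~\ref{gf1} in $\R^{\op}$ really produces the statement written here.
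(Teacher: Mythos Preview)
Your proposal is correct and matches the paper's approach exactly: the paper simply states that Theorem~\ref{gf2} ``follows immediately as it is the dual statement to Theorem~\ref{gf1},'' invoking Theorem~\ref{opstrat} and Lemma~\ref{skyfall} just as you do. You have spelled out the dictionary between $\R$ and $\R^{\op}$ more carefully than the paper bothers to, but the argument is the same.
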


\begin{corollary}\label{nattering}
$(P(b):\Delta_\eps(c)) = [\nabla_\eps(c):L(b)]$.
\end{corollary}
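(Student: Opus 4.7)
The plan is to apply Theorem~\ref{gf2} directly to the projective object $V := P(b)$. First, since $\R$ is a finite or essentially finite $\eps$-stratified category, property $(P\Delta_\eps)$ from Theorem~\ref{fund} holds, so $P(b)$ possesses a $\Delta_\eps$-flag, i.e. $P(b) \in \Delta_\eps(\R)$. Hence the hypotheses of Theorem~\ref{gf2} are satisfied, and the multiplicity $(P(b):\Delta_\eps(c))$ is a well-defined invariant computed by
\begin{equation*}
(P(b):\Delta_\eps(c)) = \dim \Hom_{\R}(P(b), \nabla_\eps(c)).
\end{equation*}

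Next, I would use the defining property of a projective cover: for any $V \in \R$, one has the standard identity $\dim \Hom_\R(P(b), V) = [V : L(b)]$, which holds in any locally finite Abelian category with enough projectives (and in particular in an essentially finite Abelian category) since $P(b)$ is the projective cover of the irreducible $L(b)$ and $\End_\R(L(b)) = \k$. Applied with $V = \nabla_\eps(c)$, this gives
\begin{equation*}
\dim \Hom_{\R}(P(b), \nabla_\eps(c)) = [\nabla_\eps(c) : L(b)].
\end{equation*}
Combining the two displayed equalities yields the desired identity $(P(b):\Delta_\eps(c)) = [\nabla_\eps(c):L(b)]$.

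There is no real obstacle here; the entire statement is a formal consequence of Theorem~\ref{gf2} together with the projectivity of $P(b)$. The only small thing to verify is that $P(b)$ indeed has a $\Delta_\eps$-flag, but this is exactly what property $(P\Delta_\eps)$ asserts, and that property is available because we are in the setting of an $\eps$-stratified category (Definition~\ref{vector} together with the equivalence of $(\widehat{P\Delta}_\eps)$ and $(P\Delta_\eps)$ established in Theorem~\ref{fund}).
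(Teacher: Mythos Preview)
Your proof is correct and matches the paper's intended approach: the paper states this as an immediate corollary of Theorem~\ref{gf2} without giving a proof, and the argument you supply---applying Theorem~\ref{gf2} to $V=P(b)$ (which lies in $\Delta_\eps(\R)$ by $(P\Delta_\eps)$) and then using that $\dim\Hom_\R(P(b),-)$ computes composition multiplicities $[- : L(b)]$---is exactly the intended one-line justification.
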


\begin{corollary}\label{shanks}
Let $0 \rightarrow U \rightarrow V \rightarrow W \rightarrow 0$ be a
short exact sequence in a finite or essentially finite
$\eps$-stratified 
category. If $V$ and $W$ have
$\Delta_\eps$-flags
then so does $U$.
\end{corollary}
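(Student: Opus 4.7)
The plan is to deduce this from the homological characterization of $\Delta_\eps$-flags in Theorem~\ref{gf2}, by an $\Ext$ long exact sequence argument. Specifically, I will show $\Ext^1_\R(U,\nabla_\eps(b))=0$ for every $b\in\B$, which by Theorem~\ref{gf2}(i)$\Leftrightarrow$(ii) gives $U\in\Delta_\eps(\R)$.

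Fix $b\in\B$ and apply the left-exact functor $\Hom_\R(-,\nabla_\eps(b))$ to the given short exact sequence $0\to U\to V\to W\to 0$. This yields a long exact sequence whose relevant portion is
\[
\Ext^1_\R(V,\nabla_\eps(b))\longrightarrow \Ext^1_\R(U,\nabla_\eps(b))\longrightarrow \Ext^2_\R(W,\nabla_\eps(b)).
\]
Since $V$ has a $\Delta_\eps$-flag, Theorem~\ref{gf2} (in the form (i)$\Rightarrow$(ii)) gives $\Ext^1_\R(V,\nabla_\eps(b))=0$. Since $W$ has a $\Delta_\eps$-flag, the stronger vanishing in Theorem~\ref{gf2} (in the form (i)$\Rightarrow$(iii)) gives $\Ext^n_\R(W,\nabla_\eps(b))=0$ for all $n\geq 1$; in particular $\Ext^2_\R(W,\nabla_\eps(b))=0$. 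Exactness then forces $\Ext^1_\R(U,\nabla_\eps(b))=0$.

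Since $b\in\B$ was arbitrary, we have $\Ext^1_\R(U,\nabla_\eps(b))=0$ for all $b$, and Theorem~\ref{gf2} (in the form (ii)$\Rightarrow$(i)) yields $U\in\Delta_\eps(\R)$, as required. There is no genuine obstacle here; the only point that requires a moment's thought is that one must use Theorem~\ref{gf2}(iii), not merely (ii), for the summand coming from $W$, in order to kill the $\Ext^2$-term. This is the exact dual of the proof of Corollary~\ref{hanks}, obtained by passing to $\R^{\op}$ and $-\eps$ via Theorem~\ref{opstrat} together with Lemma~\ref{skyfall}.
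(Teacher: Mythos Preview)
Your proof is correct and is exactly the argument the paper intends: Corollary~\ref{shanks} is stated without proof immediately after Theorem~\ref{gf2}, and the standard long exact sequence deduction from the homological criterion (together with the observation that it is dual to Corollary~\ref{hanks}) is precisely what is meant.
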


The following results about truncation to lower and upper sets are
extremely useful; some aspects of them 
were already used in the proof of Theorem~\ref{gf1}.

\begin{theorem}[Truncation to lower sets]\label{gf}
Assume that $\R$ is a finite or essentially finite $\eps$-stratified category.
Suppose that $\Lambda\d$ is a lower set in $\Lambda$.
Let
$\B\d := \rho^{-1}(\Lambda\d)$
and
 $i:\R\d\rightarrow \R$
be the corresponding Serre
subcategory of $\R$ with the induced stratification.
Then $\R\d$ is itself a finite or essentially finite $\eps$-stratified
category according to whether $\Lambda\d$ is finite or infinite.
Moreover:
\begin{enumerate}
\item
The distinguished
objects in $\R\d$ satisfy
$L\d(b) \cong L(b)$,
$P\d(b) \cong i^* P(b)$,
$I\d(b) \cong i^! I(b)$,
$\Delta\d(b) \cong \Delta(b)$,$\bar\Delta\d(b)\cong\bar\Delta(b)$,
$\nabla\d(b)\cong\nabla(b)$ and $\bar\nabla\d(b)\cong\bar
\nabla(b)$
for $b \in \B\d$.
\item 
$i^*$ sends short exact
sequences of objects in $\Delta_\eps(\R)$ to short exact
sequences of objects in $\Delta_\eps(\R\d)$, with
$i^* \Delta(b) \cong \Delta\d(b)$ and $i^* \bar\Delta(b) \cong \bar\Delta\d(b)$
for $b \in \B\d$
and $i^*\Delta(b) = i^*\bar\Delta(b) = 0$ for $b \notin \B\d$.
\item
$\Ext^n_{\R}(V, i W) \cong \Ext^n_{\R\d}(i^* V, W)$
for $V \in \Delta_\eps(\R)$, $W \in \R\d$
and all $n \geq 0$.
\item 
$i^!$ sends short exact sequences of objects in $\nabla_\eps(\R)$ to short exact sequences
of objects in $\nabla_\eps(\R\d)$, with 
$i^! \nabla(b) \cong \nabla\d(b)$ and $i^! \bar\nabla(b) \cong \bar\nabla\d(b)$
for $b \in \B\d$
and $i^!\nabla(b) = i^!\bar\nabla(b) = 0$ for $b \notin \B\d$.
\item
$\Ext^n_{\R}(i V, W) \cong \Ext^n_{\R\d}(V, i^! W)$
for $V \in \R\d, W \in \nabla_\eps(\R)$
and all $n \geq 0$.
\item
$\Ext^n_{\R}(i V, iW) \cong \Ext^n_{\R\d}(V, W)$
for $V, W \in \R\d$
and $n \geq 0$.
\end{enumerate}
\end{theorem}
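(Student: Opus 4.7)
My plan is to establish the six assertions in order: first that $\R\d$ inherits an $\eps$-stratified structure and part (1); then the computation of $i^*$ and $i^!$ on (co)standard objects together with the exactness assertions (2) and (4); and finally the Ext isomorphisms (3), (5), and (6). I would begin by checking that $\R\d$ carries the structure of an $\eps$-stratified category with the restricted stratification $\rho|_{\B\d}\colon\B\d\to\Lambda\d$. The essential observation is that for every $\lambda\in\Lambda\d$ the Serre subcategories $\R_{\leq\lambda}$ and $\R_{<\lambda}$ of $\R$ already lie inside $\R\d$ (since $\Lambda\d$ is lower-closed), so the strata $\R_\lambda$ and the standardization/costandardization functors coincide whether computed inside $\R\d$ or inside $\R$; this immediately yields the agreement of $\Delta\d, \bar\Delta\d, \nabla\d, \bar\nabla\d$ with their $\R$-versions. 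For $P\d(b)$: the functor $i^*$ sends $P(b)$ to an object of $\R\d$ that is projective (as $i^*$ is left adjoint to the exact functor $i$) and has simple head $L(b)=L\d(b)$ (verified by $\Hom_{\R\d}(i^*P(b),L\d(c))\cong\Hom_\R(P(b),L(c))$), hence equals $P\d(b)$; dually $I\d(b)\cong i^!I(b)$. Property $(\widehat{P\Delta}_\eps)$ for $\R\d$ then follows by applying $i^*$ to witness projectives in $\R$, using the computation below.

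The key computation for (2) is that $i^*\Delta_\eps(b)\cong\Delta_\eps(b)$ if $b\in\B\d$ (because every composition factor $L(c)$ of $\Delta_\eps(b)$ satisfies $\rho(c)\leq\rho(b)\in\Lambda\d$, so $c\in\B\d$ and $\Delta_\eps(b)\in\R\d$), while $i^*\Delta_\eps(b)=0$ if $b\notin\B\d$ (because any quotient in $\R\d$ must kill the unique irreducible head $L(b)\notin\R\d$, and then superfluity of the radical in the finitely generated object $\Delta_\eps(b)$ forces the whole object to be killed). To prove exactness of $i^*$ on $\Delta_\eps(\R)$, the main step is to show that any $\Delta_\eps$-flag of $V\in\Delta_\eps(\R)$ can be reordered so that sections labelled in $\B\d$ appear on top and those labelled outside $\B\d$ on the bottom. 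The reordering proceeds by swapping misordered adjacent pairs: if $\Delta_\eps(c)$ with $c\in\B\d$ lies directly below $\Delta_\eps(b)$ with $b\notin\B\d$, then by the lower-closedness of $\Lambda\d$ we have $\rho(b)\not\leq\rho(c)$, so \eqref{repeatedly} gives $\Ext^1_\R(\Delta_\eps(b),\Delta_\eps(c))=0$ and the two-step extension $0\to\Delta_\eps(c)\to V_i/V_{i-2}\to\Delta_\eps(b)\to 0$ splits. Writing $K\subset V$ for the bottom sub-object (with sections $\Delta_\eps(b), b\notin\B\d$) after sorting, a short induction using the long exact sequence for $\Hom(-,L(c))$ with $c\in\B\d$ shows that $\Hom_\R(K,L(c))=0$ for such $c$; hence $i^*K=0$, while $V/K\in\Delta_\eps\d(\R\d)$ by construction, so $i^*V\cong V/K$ with $(i^*V:\Delta_\eps\d(b))=(V:\Delta_\eps(b))$ for $b\in\B\d$. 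Applying this to a short exact sequence in $\Delta_\eps(\R)$, a composition-multiplicity count in the Grothendieck group of $\R\d$ forces the right-exact sequence obtained by applying $i^*$ to be fully exact. Part (4) is the dual statement, proved by sorting $\nabla_\eps$-flags with $\B\d$-sections at the bottom and identifying $i^!V$ with the bottom sub-flag.

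For (3), I would construct a projective resolution $\cdots\to P_1\to P_0\to V\to 0$ of $V\in\Delta_\eps(\R)$ with each $P_k\in\Delta_\eps(\R)$: projective covers of objects of $\Delta_\eps(\R)$ lie in $\Delta_\eps(\R)$ by $(P\Delta_\eps)$, and kernels of surjections between objects of $\Delta_\eps(\R)$ remain in $\Delta_\eps(\R)$ by Corollary~\ref{shanks}. Applying the now-exact functor $i^*$ yields a projective resolution of $i^*V$ in $\R\d$, and combining the adjunction isomorphism $\Hom_{\R\d}(i^*P_k,W)\cong\Hom_\R(P_k,iW)$ with cohomology gives (3). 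Part (5) is proved dually using injective resolutions in $\nabla_\eps(\R)$ and $i^!$. Finally, (6) follows by a Yoneda argument: any Yoneda $n$-extension of $iV$ by $iW$ in $\R$ has all intermediate terms in $\R\d$ by the Serre closure properties of $\R\d$, hence arises from a Yoneda $n$-extension in $\R\d$ applied by $i$, and conversely; since $\R$ and $\R\d$ both have enough projectives and injectives, Yoneda Ext agrees with derived Ext in each, giving the isomorphism.

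The main obstacle I foresee is the sorting step in the proof of (2) and its dual (4): the asymmetric $\Ext^1$-vanishing in \eqref{repeatedly} must be combined delicately with lower-closedness of $\Lambda\d$ to secure exactly the required swaps, and one must then correctly identify the resulting $K\subset V$ with $\ker(V\to i^*V)$ and the quotient $V/K$ with $i^*V$ itself (not merely with the associated graded of some filtration). Once these sorted filtrations and the attendant head-of-$K$ computation are in hand, the remaining homological statements are formal consequences.
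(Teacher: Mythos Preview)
Your treatment of parts (1)--(5) is essentially the paper's own argument (packaged there as Lemma~\ref{sub} and its dual): the sorting of $\Delta_\eps$-flags via the $\Ext^1$-vanishing of \eqref{repeatedly}, the identification $i^*V\cong V/K$, and the deduction of the $\Ext$-isomorphism from a projective resolution lying entirely in $\Delta_\eps(\R)$ are all correct and match the paper's strategy. Your argument for (3) via an explicit $\Delta_\eps$-resolution is a clean substitute for the Grothendieck spectral sequence the paper invokes; it is really the same vanishing $(\der{L}_n i^*)|_{\Delta_\eps(\R)}=0$ in disguise.

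The gap is in part (6). Your claim that ``any Yoneda $n$-extension of $iV$ by $iW$ in $\R$ has all intermediate terms in $\R\d$ by the Serre closure properties'' is false for $n\geq 2$: a Yoneda $n$-extension $0\to iW\to E_{n-1}\to\cdots\to E_0\to iV\to 0$ splices into short exact sequences $0\to Z_{k+1}\to E_k\to Z_k\to 0$ with unconstrained intermediate objects $Z_k$, and there is nothing forcing $Z_k$ (hence $E_k$) into $\R\d$. What is true---and is the content of (6)---is that every such extension is \emph{Yoneda-equivalent} to one with terms in $\R\d$, but this requires proof and does not follow from Serre-closure alone. The paper's argument is different: it upgrades the vanishing $(\der{L}_n i^*)V=0$ from $V\in\Delta_\eps(\R)$ (which you have via (2)) to all $V\in\R\d$. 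Since objects of $\R\d$ have finite length, it suffices to treat $V=L(b)$ for $b\in\B\d$; then the short exact sequence $0\to K\to\Delta_\eps(b)\to L(b)\to 0$ lies entirely in $\R\d$ (so $i^*$ acts as the identity on it), and combining this with $(\der{L}_n i^*)\Delta_\eps(b)=0$ forces $(\der{L}_1 i^*)L(b)=0$, with higher vanishing by dimension-shifting. Once $(\der{L}_n i^*)|_{\R\d}=0$, the same spectral-sequence (or resolution-transport) argument you used for (3) gives (6). You cannot simply reuse your proof of (3), because for $V\in\R\d\setminus\Delta_\eps(\R)$ the syzygies of a projective resolution in $\R$ need not lie in $\Delta_\eps(\R)$, so your exactness of $i^*$ on $\Delta_\eps(\R)$ does not directly apply.
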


\begin{proof}
Apart from (6), this follows by Lemma~\ref{sub} and its dual. 
To prove (6), by the same argument as used to prove Lemma~\ref{sub}(4),
it suffices to show that $(\der{L}_n i^*) V
= 0$ for $V \in \R\d$ and $n \geq 1$.
Since any such $V$ has finite length it suffices to consider an
irreducible object in $\R\d$, i.e., we must show that $(\der{L}_n i^*)
L(b) = 0$ for $b \in \B\d$
and $n \geq 1$.
Take 
a short exact sequence $0 \rightarrow K \rightarrow 
\Delta_\eps(b) \rightarrow L(b)
\rightarrow 0$ and apply $i^*$ and Lemma~\ref{sub}(3) to get
$$
0 \longrightarrow (\mathbb{L}_1 i^*) L(b) \longrightarrow i^* K \longrightarrow i^*
\Delta_\eps(b) \longrightarrow i^* L(b) \longrightarrow 0.
$$
But $K, \Delta_\eps(b)$ and $L(b)$ all lie in $\R\d$ so $i^*$ is the
identity on them. We deduce that $(\mathbb{L}_1 i^*) L(b) = 0$.
Degree shifting easily gives the result for $n > 1$.
\end{proof}

\begin{theorem}[Truncation to upper sets]\label{gf3}
Assume that $\R$ is a finite or essentially finite $\eps$-stratified category.
Suppose that $\Lambda\u$ is an upper set in $\Lambda$.
Let $\B\u := \rho^{-1}(\Lambda\u)$ and $j:\R\rightarrow \R\u$
be the corresponding Serre quotient category
of $\R$ with the induced stratification.
Then $\R\u$ is itself a finite or essentially finite $\eps$-stratified
category 
according to whether $\Lambda\u$ is finite or infinite.
Moreover:
\begin{enumerate}
\item
For $b \in \B\u$,
the distinguished objects
$L\u(b)$,
$P\u(b)$,
$I\u(b)$,
$\Delta\u(b)$, $\bar\Delta\u(b)$,
$\nabla\u(b)$ and $\bar\nabla\u(b)$
 in $\R\u$ are isomorphic to
the images under $j$ of the corresponding objects of
 $\R$.
\item We have that
$
j L(b) = j \Delta(b) = j \bar\Delta(b) = j \nabla(b) = j
\bar\nabla(b) = 0$ if $b \notin \B\u$.
\item
  $\Ext^n_\R(V, j_* W) \cong \Ext^n_{\R\u}(j V, W)$ for $V \in \R,
 W \in \nabla_\eps(\R\u)$ and all $n\geq 0$.
\item 
$j_*$ sends short exact sequences of objects in $\nabla_\eps(\R\u)$ to short exact sequences
of objects in $\nabla_\eps(\R)$, with
$j_* \nabla\u(b) \cong \nabla(b)$, $j_* \bar\nabla\u(b) \cong
\bar\nabla(b)$
and $j_* I\u(b) \cong I(b)$
for $b \in \B\u$.
\item
  $\Ext^n_\R(j_! V, W) \cong \Ext^n_{\R\u}(V, jW)$ for $V \in
  \Delta_\eps(\R\u)$, $W \in \R$ and all $n\geq 0$.
\item 
$j_!$ sends short exact sequences of objects in $\Delta_\eps(\R\u)$ to short exact sequences of objects in $\Delta_\eps(\R)$, with
$j_! \Delta\u(b) \cong \Delta(b)$, $j_! \bar\Delta\u(b) \cong
\bar\Delta(b)$
and $j_! P\u(b) = P(b)$
for $b \in \B\u$.
\end{enumerate}
\end{theorem}

\begin{proof}
Apart from (4) and (6), 
this follows from Lemma~\ref{sup} and its dual.
For (4) and (6), it suffices to prove (4), since (6) is the
equivalent dual statement.
The descriptions of 
$j_* \nabla\u(b)$, $j_*\bar\nabla\u(b)$ and $j_* I\u(b)$,
follow from Lemma~\ref{sup}(1).
It remains to prove the exactness. We can actually show slightly more,
namely, that $(\mathbb{R}^n j_*) V = 0$ for $V \in \nabla_\eps(\R\u)$ and $n
\geq 1$.
Take $V \in \nabla_\eps(\R\u)$. 
Consider a short exact sequence $0 \rightarrow V \rightarrow I
\rightarrow Q \rightarrow 0$ in $\R\u$ with $I$ injective.
Apply the left exact functor $j_*$ and consider the resulting long
exact sequence:
$$
0 \longrightarrow j_* V \longrightarrow j_* I \longrightarrow j_* Q \longrightarrow
 (\mathbb{R}^1 j_*) V \longrightarrow 0.
$$
As $V$ has a $\nabla_\eps$-flag, we can use (3) to see that
$\Hom_{\R}(\Delta_\eps(b), j_* V) \cong
\Hom_{\R\u}(j \Delta_\eps(b), V)$ and 
$\Ext^1_{\R}(\Delta_\eps(b), j_* V) \cong
\Ext^1_{\R\u}(j \Delta_\eps(b), V)$ for every $b \in \B$.
Hence, Theorem \ref{gf1}, $j_* V$ has a $\nabla_\eps$-flag with 
$$
(j_* V:\nabla_\eps(b)) = \dim \Hom_{\R}(j \Delta_\eps(b), V) = 
\left\{
\begin{array}{ll}
(V:\nabla_\eps\u(b))&\text{if $b \in \B\u$},\\
0&\text{otherwise.}
\end{array}\right.
$$
Both $I$ and $Q$ have
$\nabla_\eps$-flags too, so we get similar statements for $j_*I$ and
$j_* Q$.
Since 
$(I:\nabla_\eps\u(b)) =
(V:\nabla_\eps\u(b))+(Q:\nabla_\eps\u(b))$ by the exactness of the
original sequence, we deduce that
$0 \rightarrow j_* V \rightarrow j_* I \rightarrow j_* Q \rightarrow
0$ is exact.
Hence, $(\mathbb{R}^1 j_*) V = 0$.
This proves the result for $n=1$. The result for $n > 1$ follows by a
degree shifting argument.
\end{proof}

\begin{corollary}\label{iuseitoften}
Let notation be as in Theorem~\ref{gf3}
and set $\B\d := \B \setminus \B\u$.
\begin{enumerate}
\item
For $V \in \nabla_\eps(\R)$, there is a short exact sequence
$
0 \rightarrow K \rightarrow V \stackrel{\gamma}{\rightarrow} j_* (j V)
\rightarrow 0
$
where $\gamma$ comes from the unit of adjunction,
$j_* (j V)$ has a $\nabla_\eps$-flag with sections
$\nabla_\eps(b)$ for $b \in \B\u$, and $K$ has a
$\nabla_\eps$-flag with sections $\nabla_\eps(c)$ for $c
\in \B\d$.
\item
For $V \in \Delta_\eps(\R)$, there is a short exact sequence
$
0 \rightarrow j_! (j V) \stackrel{\delta}{\rightarrow} V \rightarrow Q
\rightarrow 0
$
where $\delta$ comes from the counit of adjunction,
$j_! (j V)$ has a $\Delta_\eps$-flag with sections
$\Delta_\eps(b)$ for $b \in \B\u$ and $Q$ has a
$\Delta_\eps$-flag with sections $\Delta_\eps(c)$ for $c
\in \B\d$.
\end{enumerate}
\end{corollary}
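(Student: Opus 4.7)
The plan is to prove (1) by explicitly identifying the kernel of $\gamma: V \rightarrow j_*(jV)$ and then establishing surjectivity via a $\nabla_\eps$-multiplicity count; part (2) will follow from the dual statement applied to $\R^{\op}$.

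Let $\Lambda\d := \Lambda \setminus \Lambda\u$ (a lower set), $\B\d := \rho^{-1}(\Lambda\d)$, and $i: \R\d \rightarrow \R$ the inclusion of the associated Serre subcategory. I claim $K := \ker\gamma$ coincides with $i(i^! V)$, the largest subobject of $V$ lying in $\R\d$. Indeed, applying the exact functor $j$ to $0 \rightarrow K \rightarrow V \stackrel{\gamma}{\rightarrow} j_*(jV)$ and using that $j\gamma$ is the inverse of the counit isomorphism $jj_* \stackrel{\sim}{\rightarrow} \Id_{\R\u}$ from Lemma~\ref{recollementj} gives $jK = 0$, whence $K \subseteq i(i^! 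V)$; conversely, $\Hom_\R(i(i^!V), j_*(jV)) \cong \Hom_{\R\u}(0, jV) = 0$ forces $i(i^!V) \subseteq \ker\gamma$. By Theorem~\ref{gf}(4), applying $i^!$ to a $\nabla_\eps$-flag of $V$ yields a $\nabla_\eps$-flag of $i^! V$ in $\R\d$ whose sections are precisely the $\nabla\d_\eps(c)$ arising from sections $\nabla_\eps(c)$ of $V$ with $c \in \B\d$, so $K = i(i^!V)$ has the required $\nabla_\eps$-flag in $\R$.

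Next, Theorem~\ref{gf3}(1),(2) implies that $jV \in \nabla_\eps(\R\u)$ with $(jV:\nabla\u_\eps(b)) = (V:\nabla_\eps(b))$ for $b \in \B\u$, and Theorem~\ref{gf3}(4) then gives $j_*(jV) \in \nabla_\eps(\R)$ with sections $\nabla_\eps(b)$ for $b \in \B\u$. It remains to show that the induced monomorphism $V/K \hookrightarrow j_*(jV)$ is an isomorphism, which I do by matching all $\nabla_\eps$-multiplicities on the two sides; this yields equal composition multiplicities and forces the cokernel to vanish. By Theorem~\ref{gf1} the relevant multiplicities equal $\dim \Hom_\R(\Delta_\eps(x), -)$. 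For $x \in \B\u$, applying $\Hom_\R(\Delta_\eps(x), -)$ to $0 \rightarrow K \rightarrow V \rightarrow V/K \rightarrow 0$ and using $\Hom_\R(\Delta_\eps(x), K) = 0$ (because the head $L(x) \in \B\u$ cannot appear in $K \in \R\d$) together with $\Ext^1_\R(\Delta_\eps(x), K) = 0$ (Theorem~\ref{gf1}) gives $\Hom_\R(\Delta_\eps(x), V/K) \cong \Hom_\R(\Delta_\eps(x), V)$, while $(j,j_*)$-adjunction combined with Theorem~\ref{gf3}(1) gives $\Hom_\R(\Delta_\eps(x), j_*(jV)) \cong \Hom_{\R\u}(\Delta\u_\eps(x), jV) \cong \Hom_\R(\Delta_\eps(x), V)$. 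For $x \in \B\d$, adjunction together with $j\Delta_\eps(x) = 0$ gives $\Hom_\R(\Delta_\eps(x), j_*(jV)) = 0$, and the embedding $V/K \hookrightarrow j_*(jV)$ then forces $\Hom_\R(\Delta_\eps(x), V/K) = 0$ as well.

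Part (2) follows by applying (1) in $\R^{\op}$ with sign function $-\eps$, which is permissible by Theorem~\ref{opstrat}: the opposite-category duality interchanges $j_!\leftrightarrow j_*$ and $\Delta_\eps\leftrightarrow\nabla_\eps$, so $\delta$ corresponds to the unit morphism in the opposite setting and the statement of (1) translates directly into (2). The main obstacle I anticipate is the multiplicity computation for $x \in \B\d$ in the surjectivity step: the vanishing of $\Hom_\R(\Delta_\eps(x), V/K)$ uses the embedding into $j_*(jV)$ rather than the short exact sequence for $V$, so one must be careful not to assume the surjectivity being proved; the rest is routine bookkeeping with the recollement adjunctions from Theorems~\ref{gf} and~\ref{gf3}.
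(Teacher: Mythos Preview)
Your proof is correct, but the paper takes a noticeably different and more direct route. The paper's argument avoids the multiplicity count entirely: it first rearranges the given $\nabla_\eps$-flag of $V$ using the $\Ext^1$-vanishing of (\ref{repeatedly}) so that all sections $\nabla_\eps(c)$ with $c\in\B\d$ sit at the bottom and all $\nabla_\eps(b)$ with $b\in\B\u$ at the top, yielding a short exact sequence $0\to K\to V\to Q\to 0$ with the required flags already in place. Then it identifies $Q$ with $j_*(jV)$ by checking that the unit $\nabla_\eps(b)\to j_*(j\nabla_\eps(b))$ is an isomorphism for each $b\in\B\u$ (Theorem~\ref{gf3}(4)) and invoking exactness of $j_*$ on $\nabla_\eps(\R\u)$ to assemble these into an isomorphism $Q\stackrel{\sim}{\to} j_*(jQ)\cong j_*(jV)$. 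Your approach instead pins down $K$ intrinsically as $i(i^!V)$ and deduces surjectivity from a Hom-dimension comparison; this is slightly heavier but has the advantage of identifying $K$ canonically rather than via a choice of flag ordering. One small gap worth patching: when you say ``matching all $\nabla_\eps$-multiplicities on the two sides'', you are implicitly using that $V/K$ lies in $\nabla_\eps(\R)$, which follows from Corollary~\ref{hanks} applied to $0\to K\to V\to V/K\to 0$; and for the isomorphism $\Hom_{\R\u}(\Delta\u_\eps(x),jV)\cong\Hom_\R(\Delta_\eps(x),V)$ you should cite Theorem~\ref{gf3}(6) (that $j_!\Delta\u_\eps(x)\cong\Delta_\eps(x)$) together with the $(j_!,j)$-adjunction, rather than Theorem~\ref{gf3}(1) alone.
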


\begin{proof}
We prove only (1), since (2) is just the dual statement. 
Using (\ref{repeatedly}), 
we can order the $\nabla_\eps$-flag of $V$ to get a short exact
sequence
$0 \rightarrow K \rightarrow V \rightarrow Q \rightarrow 0$ such that
$K$ has a $\nabla_\eps$-flag with sections $\nabla_\eps(b)$ for $b \in
\B\d$ and $Q$ has a $\nabla_\eps$-flag with sections
$\nabla_\eps(c)$ for $c \in \B\u$.
For each $b \in \B\u$, 
the unit of adjunction $\nabla_\eps(b) \rightarrow j_* 
(j \nabla_\eps(b))$ 
is an isomorphism; this follows from
Theorem~\ref{gf3}(4) using the observation that it becomes an isomorphism on
applying $j$.
Since $j_*$ sends short exact sequences of objects in $\nabla_\eps(\R\u)$ to short exact sequences, we deduce that the
the unit of adjunction 
$Q \rightarrow j_* (j Q)$ is an isomorphism too.
It remains to note that $jV \cong jQ$, hence, $j_* (j V) \cong j_* (j Q)$.
\end{proof}

We proceed to discuss some of the additional features which show up
when in one of the more refined settings
(FS), ($\eps$HW), (FHW) and (HW).
By Theorem~\ref{opstrat},
$\R$ is a fully stratified category 
(resp., fibered highest weight category) if and only if
so is $\R^\op$.
The following lemma shows that fully stratified categories in our terminology are
the same as the ``standardly stratified categories'' defined by Losev
and Webster in \cite[$\S$2]{LW}.

\begin{lemma}\label{rain}
  Given a stratification $(\B,L,\rho,\Lambda,\leq)$ of $\R$,
  the following are equivalent:
\begin{itemize}
\item[(i)] $\R$ is a fully stratified category;
\item[(ii)] $\R$ is $\eps$-stratified for every choice of sign
  function $\eps:\Lambda\rightarrow \{\pm\}$;
\item[(iii)] $\R$ is $\eps$-stratified and
  $(-\eps)$-stratified for some choice of sign function
  $\eps:\Lambda\rightarrow \{\pm\}$;
\item[(iv)] $\R$ is $\eps$-stratified
for some 
$\eps:\Lambda \rightarrow \{\pm\}$ and all of its standardization and
costandardization functors are exact;
\item[(v)] $\R$ is a $+$-stratified category and
each $\Delta(b)$ has a $\bar\Delta$-flag
with sections $\bar\Delta(c)$ for $c$ with $\rho(c) = \rho(b)$;
\item[(vi)] $\R$ is a $-$-stratified category
and each $\nabla(b)$ has a $\bar\nabla$-flag
with sections $\bar\nabla(c)$ for $c$ with $\rho(c) = \rho(b)$.
\end{itemize}
\end{lemma}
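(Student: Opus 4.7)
The plan is to close the cycle (i) $\Leftrightarrow$ (ii) $\Rightarrow$ (iii) $\Rightarrow$ (iv) $\Rightarrow$ (i), then prove (i) $\Leftrightarrow$ (v) and (i) $\Leftrightarrow$ (vi) by dualizing the same arguments. The equivalence (i) $\Leftrightarrow$ (ii) is literally Definition~\ref{vector}(FS), and (ii) $\Rightarrow$ (iii) is trivial. For (iii) $\Rightarrow$ (iv), I would apply the exactness half of Theorem~\ref{fund} to both the $\eps$- and $(-\eps)$-stratifications: the former gives exactness of $j^\lambda_*$ on the $+$-strata and of $j^\lambda_!$ on the $-$-strata of $\eps$, while the latter gives the reverse, so together every standardization and costandardization functor is exact.

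The heart of the proof is (iv) $\Rightarrow$ (i), which rests on one technical observation: whenever $j^\lambda_!$ is exact, applying it to a composition series of $P_\lambda(b) \in \R_\lambda$ produces a filtration of $\Delta(b) = j^\lambda_! P_\lambda(b)$ whose sections are the objects $j^\lambda_! L_\lambda(d) = \bar\Delta(d)$ for $d \in \B_\lambda$, and dually for $j^\lambda_*$, $\nabla(b)$, and $\bar\nabla(d)$. The naive attempt is to start from a $\Delta_{\eps_0}$-flag of $P(b)$ and convert each section to a $\Delta_\eps$-object, but sections of the form $\bar\Delta(c)$ cannot be enlarged to $\Delta(c)$'s. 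I would sidestep this asymmetry by first passing to a pure $\Delta$-flag of $P(b)$: from the $\nabla_{\eps_0}$-flag of $I(b)$, refining each $\nabla(c)$ section into $\bar\nabla(d)$'s via exactness of $j^{\rho(c)}_*$ produces a $\bar\nabla$-flag of $I(b)$. This is property $(I\nabla_+)$, so $\R$ is $+$-stratified and $P(b)$ admits a genuine $\Delta$-flag. From this $\Delta$-flag, the refinement observation lets me replace exactly those sections $\Delta(c)$ with $\eps(\rho(c)) = -$ by strings of $\bar\Delta(d)$'s with $\rho(d) = \rho(c)$, yielding a $\Delta_\eps$-flag whose sections still satisfy $\rho \geq \rho(b)$. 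Thus $(P\Delta_\eps)$ holds for every $\eps$.

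For (i) $\Rightarrow$ (v), once (iv) is known from the cycle, I apply the refinement observation directly to $\Delta(b)$ to produce the $\bar\Delta$-flag with sections in $\B_{\rho(b)}$. For (v) $\Rightarrow$ (iii), concatenating a $\Delta$-flag of $P(b)$ (coming from the assumed $+$-stratification) with the $\bar\Delta$-flags of each of its sections $\Delta(c)$ gives a $\bar\Delta$-flag of $P(b)$ whose sections $\bar\Delta(d)$ satisfy $\rho(d) = \rho(c) \geq \rho(b)$; this is $(P\Delta_-)$, so $\R$ is also $-$-stratified. The implications (i) $\Leftrightarrow$ (vi) are strictly dual, using $I(b)$, $\nabla$, $\bar\nabla$, and $j^\lambda_*$ throughout. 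The main obstacle is precisely the asymmetry highlighted above: because flag refinements go only in one direction (proper quotients, not extensions), upgrading from a single $\eps_0$-stratification to a $\Delta_\eps$-flag for arbitrary $\eps$ forces one to combine information from both the projective and injective sides via the full exactness in (iv).
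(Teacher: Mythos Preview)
Your proof is correct and uses the same essential ideas as the paper: the implication (iii) $\Rightarrow$ (iv) via Theorem~\ref{fund} applied to both $\eps$ and $-\eps$, and the key refinement observation that exactness of $j^\lambda_!$ (resp., $j^\lambda_*$) converts a composition series of $P_\lambda(b)$ (resp., $I_\lambda(b)$) into a $\bar\Delta$-flag of $\Delta(b)$ (resp., a $\bar\nabla$-flag of $\nabla(b)$). The only difference is organizational: the paper runs the cycle (i) $\Rightarrow$ (ii) $\Rightarrow$ (iii) $\Rightarrow$ (iv) $\Rightarrow$ (v) $\Rightarrow$ (i) and deduces (v) $\Leftrightarrow$ (vi) by duality at the end, whereas you close the loop through (iv) $\Rightarrow$ (i) directly (constructing $\Delta_\eps$-flags for \emph{every} $\eps$ in one go) and then treat (v) and (vi) as side equivalences; your route is arguably cleaner since the paper's step ``(v) $\Rightarrow$ (i)'' literally only establishes (iii), relying implicitly on the already-proven (iii) $\Rightarrow$ (iv) together with the refinement argument to reach (ii).
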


\begin{proof}
(i)$\Rightarrow$(ii)$\Rightarrow$(iii):
Obvious.

\vspace{1.5mm}\noindent
(iii)$\Rightarrow$(iv):
Take $\eps$ as in (iii) so that $\R$ is $\eps$-stratified.
The standardization functor $j^\lambda_!$ is exact when
$\eps(\lambda) = -$ by the last part of Theorem~\ref{fund}.
Also $\R$ is $(-\eps)$-stratified, so the same result gives that
$j^\lambda_!$ is exact when $\eps(\lambda)=+$.
Similarly,
all of the costandardization functors are exact too.

\vspace{1.5mm}\noindent
(iv)$\Rightarrow$(v):
Applying the exact standardization functor $j^\lambda_!$ 
to a composition series of $P_\lambda(b)$, we
deduce that $\Delta(b)$ has a $\bar\Delta$-flag with sections
$\bar\Delta(c)$ for $c$ with $\rho(c) = \rho(b)$.
Similarly, applying $j^\lambda_*$, we get that $\nabla(b)$ has a
$\bar\nabla$-flag with sections $\bar\nabla(c)$ for $c$ with $\rho(c)
= \rho(b)$.

To show that $\R$ is $+$-stratified, we check that
each $I(b)$ has a 
$\bar\nabla$-flag with sections
$\bar\nabla(c)$ for $c$ with $\rho(c) \geq \rho(b)$.
This is immediate if $\eps(b) = +$ since we are assuming that $\R$ is
$\eps$-stratified.
If $\eps(b) = -$ then $I(b)$ has a $\nabla$-flag with sections
$\nabla(c)$ for $c$  with $\rho(c) \geq \rho(b)$.
Hence, 
by the previous paragraph, it also has the required sort
of $\bar\nabla$-flag.

\vspace{1.5mm}\noindent
(v)$\Rightarrow$(i):
We just need to show that $\R$ is $-$-stratified.
We know that each $P(b)$ has a $\Delta$-flag with sections $\Delta(c)$
for $c$ with $\rho(c) \geq \rho(b)$.
Now use the given $\bar\Delta$-flags of each $\Delta(c)$
to see that each $P(b)$ also has the appropriate sort of
$\bar\Delta$-flag.

\vspace{1.5mm}\noindent
(v)$\Leftrightarrow$(vi):
This follows from the above using the observation made earlier that 
$\R$ is fully stratified if and only
if $\R^\op$ is fully stratified.
\end{proof}

\begin{corollary}\label{sun}
If $\R$ is an $\eps$-stratified
category with a contravariant autoequivalence which preserves isomorphism classes of irreducible objects, 
then $\R$ is fully stratified.
Moreover, if $\R$ is an $\eps$-highest weight
category with a contravariant autoequivalence preserving isomorphism classes of irreducible objects, then $\R$ is fibered highest weight.
\end{corollary}

\begin{proof}
Since $\R$ is $\eps$-stratified, $\R^\op$ is $(-\eps)$-stratified.
Using Lemma~\ref{dualitylemma}, 
we deduce that $\R$ is $(-\eps)$-stratified.
This verifies Lemma~\ref{rain}(iii) and the first claim follows. The second is then obvious.
\end{proof} 

\begin{lemma}\label{exts}
Suppose that $\R$ is a finite or essentially finite fully 
stratified category.
For $b, c \in \B$ and $n \geq 0$, we have that
$$
\Ext^n_{\R}(\bar\Delta(b), \bar\nabla(c))
\cong 
\left\{
\begin{array}{ll}
\Ext^n_{\R_\lambda}(L(b), L(c))
&\text{if $\lambda = \mu$}\\
0
&\text{otherwise,}
\end{array}
\right.
$$
where $\lambda := \rho(b)$ and $\mu := \rho(c)$.
\end{lemma}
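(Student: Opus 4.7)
The plan is to reduce the computation via two successive truncations — first to a quotient category, then to a Serre subcategory — down to an $\Ext$ group inside the stratum $\R_\lambda$.

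Let $\Lambda\u := [\lambda, \infty)$ (an upper set) and let $j : \R \to \R\u$ be the corresponding quotient functor. By Theorem~\ref{gf3}, $\R\u$ is again a fully stratified category, and $\lambda$ is a minimal weight in $\R\u$; hence $\bar\Delta\u(b) = L\u(b)$ and $j_! L\u(b) \cong \bar\Delta(b)$. Since $\R$ is fully stratified, Lemma~\ref{rain} lets me invoke Theorem~\ref{gf3}(5) with the constant sign function $\eps \equiv -$ (under which $\bar\Delta = \Delta_\eps$, so $L\u(b) \in \Delta_\eps(\R\u)$), giving the adjunction
$$
\Ext^n_\R(\bar\Delta(b),\bar\nabla(c)) \;\cong\; \Ext^n_{\R\u}\bigl(L\u(b),\, j\bar\nabla(c)\bigr).
$$
By parts (1)--(2) of Theorem~\ref{gf3}, $j\bar\nabla(c) = \bar\nabla\u(c)$ when $c \in \B\u$ (i.e.\ when $\mu \geq \lambda$), and $j\bar\nabla(c) = 0$ otherwise. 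So whenever $\mu \not\geq \lambda$ the right-hand side vanishes, which matches the statement because $\lambda \neq \mu$ in that case.

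In the remaining situation $\mu \geq \lambda$, I apply a second truncation, now to the lower set $\{\lambda\} \subseteq \Lambda\u$; the associated Serre subcategory of $\R\u$ is (equivalent to) the stratum $\R_\lambda$, with inclusion $i : \R_\lambda \hookrightarrow \R\u$. This time I take the sign function $\eps \equiv +$, under which $\bar\nabla = \nabla_\eps$ and $\bar\nabla\u(c) \in \nabla_\eps(\R\u)$ trivially. Theorem~\ref{gf}(5) then gives
$$
\Ext^n_{\R\u}\bigl(L\u(b),\, \bar\nabla\u(c)\bigr) \;\cong\; \Ext^n_{\R_\lambda}\bigl(L(b),\, i^! \bar\nabla\u(c)\bigr),
$$
and Theorem~\ref{gf}(4) identifies $i^! \bar\nabla\u(c)$ as $\bar\nabla_{\R_\lambda}(c) = L(c)$ when $\mu = \lambda$, and as $0$ when $\mu > \lambda$. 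Combining the cases produces the claimed formula.

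The crux of the argument — and the essential use of the \emph{fully} stratified hypothesis — is the need to invoke Theorems~\ref{gf3}(5) and~\ref{gf}(5) with \emph{opposite} sign functions: $\eps \equiv -$ to absorb the left-hand slot $\bar\Delta(b)$ into the adjunction, and $\eps \equiv +$ to absorb the right-hand slot $\bar\nabla(c)$. A merely $\eps$-stratified structure would force a single sign throughout and kill the argument; Lemma~\ref{rain} is precisely what permits this toggle.
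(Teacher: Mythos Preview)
Your proof is correct and follows essentially the same strategy as the paper's: two truncations with opposite sign functions, exploiting Lemma~\ref{rain} to switch between $\eps\equiv -$ (so that $\bar\Delta=\Delta_\eps$) and $\eps\equiv +$ (so that $\bar\nabla=\nabla_\eps$). The only difference is the order and direction of the truncations: the paper first passes to the Serre subcategory $\R_{\leq\mu}$ via Theorem~\ref{gf} and then quotients to the stratum $\R_\mu$ via Theorem~\ref{gf3}, whereas you first quotient to $\R\u$ for $\Lambda\u=[\lambda,\infty)$ via Theorem~\ref{gf3} and then pass to the Serre subcategory $\R_\lambda$ via Theorem~\ref{gf}. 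These are mirror images of one another, centered on $\mu$ versus $\lambda$ respectively; neither buys anything the other does not.
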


\begin{proof}
Choose $\eps$ so that $\eps(\lambda) = -$,
hence,
$\bar\Delta(b) = \Delta_\eps(b)$.
By Lemma~\ref{rain}, $\R$ is $\eps$-stratified, so we can apply
Theorem~\ref{gf}(4)
with $\R\d = \R_{\leq\mu}$ to deduce that
$$
\Ext^n_{\R}(\bar\Delta(b), \bar\nabla(c)) \cong \Ext^n_{\R_{\leq
    \mu}}(i_{\leq\mu}^* \bar\Delta(b), \bar\nabla(c)).
$$
This is zero unless $\lambda \leq \mu$.
If $\lambda \leq \mu$ it is 
$\Ext^n_{\R_{\leq
    \mu}}(\bar\Delta(b), \bar\nabla(c))$.
Now we change $\eps$ so that $\eps(\mu) = +$, hence, $\bar\nabla(c) =
\nabla_\eps(c)$.
Then by Theorem~\ref{gf3}(3) with $\R = \R_{\leq\mu}$ and $\R\u =
\R_\mu$ we get that
$
\Ext^n_{\R_{\leq
    \mu}}(\bar\Delta(b), \bar\nabla(c))
\cong
\Ext^n_{\R_\mu}(j^\mu \bar\Delta(b), L(c)).
$
This is zero unless $\lambda = \mu$, when $j^\mu \bar\Delta(b) = L(b)$
and we are done.
\end{proof}

The next results are concerned with global dimension.

\begin{lemma}\label{strata}
Let $\R$ be a finite $\eps$-stratified category.
\begin{enumerate}
\item
All $V \in \Delta_\eps(\R)$ are of finite projective 
dimension
if and only if all negative strata\footnote{We mean the strata $\R_\lambda$
for $\lambda \in \Lambda$ such that $\eps(\lambda) = -$.} have finite global dimension.
\item
All $V \in \nabla_\eps(\R)$ are of finite injective dimension
if and only if
all positive strata have finite global dimension.
\end{enumerate}
\end{lemma}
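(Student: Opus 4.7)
The plan is to prove part~(1) in detail and deduce part~(2) by the duality of Theorem~\ref{opstrat} and Lemma~\ref{skyfall}, which exchanges $\Delta_\eps$-flags in $\R$ with $\nabla_{-\eps}$-flags in $\R^{\op}$, projective dimensions with injective dimensions, and negative strata with positive strata.

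For part~(1), direction $(\Rightarrow)$: take $\lambda$ with $\eps(\lambda) = -$ and any $b \in \B_\lambda$. Then $\Delta_\eps(b) = \bar\Delta(b) = j^\lambda_! L_\lambda(b)$ has finite projective dimension in $\R$ by hypothesis, hence also in $\R_{\leq\lambda}$ by Theorem~\ref{gf}(6). Because $\eps(\lambda) = -$, Theorem~\ref{fund} makes $j^\lambda_!$ exact, and its right adjoint $j^\lambda$ then preserves injectives. The resulting derived adjunction
\[
\Ext^n_{\R_{\leq\lambda}}(j^\lambda_! L_\lambda(b), W) \;\cong\; \Ext^n_{\R_\lambda}(L_\lambda(b), j^\lambda W)
\]
combined with the essential surjectivity of $j^\lambda$ gives $\pd_{\R_\lambda} L_\lambda(b) \leq \pd_{\R_{\leq\lambda}} \bar\Delta(b) < \infty$; since $\B_\lambda$ is finite, $\R_\lambda$ has finite global dimension.

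For direction $(\Leftarrow)$, I will induct on $|\Lambda|$, proving that each $\Delta_\eps(b)$ has finite $\pd_\R$ (this suffices since a finite $\Delta_\eps$-flag and the standard long exact sequence bound the projective dimension of extensions). Picking $\lambda$ maximal in $\Lambda$, the key observation for $b \in \B_\lambda$ is that $\Delta(b) \cong P(b)$ in $\R$: from the BGG-type formula $(P(b):\Delta_\eps(c)) = [\nabla_\eps(c):L(b)]$ (the corollary to Theorem~\ref{gf2}), the sections of the $\Delta_\eps$-flag of $P(b)$ lie in $\B_\lambda$, and a short character computation (for $\eps(\lambda)=+$ the multiplicity is $\delta_{bc}$, for $\eps(\lambda)=-$ it equals $[I_\lambda(c):L_\lambda(b)] = [P_\lambda(b):L_\lambda(c)] = (\Delta(b):\bar\Delta(c))$ by Cartan-matrix symmetry in the finite Abelian $\R_\lambda$) identifies these with the multiplicities of $\Delta(b)$; both objects are indecomposable projectives in $\R_{\leq\lambda}$ with head $L(b)$, hence isomorphic. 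Thus $\Delta(b)$ is projective in $\R$. For $\eps(\lambda)=+$ this gives $\pd_\R \Delta_\eps(b) = 0$; for $\eps(\lambda)=-$ I apply the exact functor $j^\lambda_!$ (Theorem~\ref{fund}) to a finite projective resolution of $L_\lambda(b)$ in $\R_\lambda$ (finite by hypothesis) to obtain a finite resolution of $\bar\Delta(b) = \Delta_\eps(b)$ by $\Delta(c) = P(c)$'s, projective in $\R$.

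For $c \in \B\d := \B \setminus \B_\lambda$, the inductive hypothesis applied to $\R\d$ produces a finite resolution $0 \to P\d_n \to \cdots \to P\d_0 \to \Delta_\eps(c) \to 0$ in $\R\d$ with summands $P\d(a) = i^* P(a)$ for $a \in \B\d$; I must show each $P\d(a)$ has finite $\pd_\R$. Let $K'(a) \subseteq P(a)$ be the submodule assembled from the bottom $\B_\lambda$-indexed sections of a $\Delta_\eps$-flag of $P(a)$, so that $P(a)/K'(a) \in \R\d$ and $K(a) := \ker(P(a) \twoheadrightarrow P\d(a)) \subseteq K'(a)$. The main obstacle is the identity $K(a) = K'(a)$, which I will verify by applying the right-exact functor $i^*$ to $K'(a) \twoheadrightarrow K'(a)/K(a)$: on the target this returns $K'(a)/K(a)$ (as it lies in $\R\d$), but on the source each filtration section $\Delta_\eps(b)$ with $b \in \B_\lambda$ has $i^* \Delta_\eps(b) = 0$ by Theorem~\ref{gf}(2), and right exactness along the filtration then forces $i^* K'(a) = 0$, hence $K'(a)/K(a) = 0$. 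Therefore $K'(a)$, being an iterated extension of the $\Delta_\eps(b)$'s for $b \in \B_\lambda$ (each of finite $\pd_\R$ by the previous paragraph), has finite $\pd_\R$, and the short exact sequence $0 \to K'(a) \to P(a) \to P\d(a) \to 0$ with $P(a)$ projective forces $\pd_\R P\d(a) < \infty$, completing the induction.
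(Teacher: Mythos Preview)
Your proof is correct. Both directions work, though your argument is organized differently from the paper's.

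For the direction $(\Rightarrow)$, the paper reduces to the case where $\lambda$ is maximal in the ambient poset (via truncation by $i^*$ along $\Lambda\setminus(\lambda,\infty)$) and then applies $j^\lambda$ to a finite projective resolution of $\bar\Delta(b)$. Your route via Theorem~\ref{gf}(6) to pass to $\R_{\leq\lambda}$, followed by the derived adjunction $\Ext^n_{\R_{\leq\lambda}}(j^\lambda_! L_\lambda(b),W)\cong\Ext^n_{\R_\lambda}(L_\lambda(b),j^\lambda W)$ coming from exactness of $j^\lambda_!$, is more direct and sidesteps the question of whether $j^\lambda$ preserves arbitrary projectives.

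For the direction $(\Leftarrow)$, the paper runs a downward induction on the partial order \emph{within} $\R$: for $b\in\B_\lambda$ one uses the short exact sequence $0\to Q\to P(b)\to\Delta(b)\to 0$ with $Q\in\Delta_\eps(\R)$ having sections only from strata strictly above $\lambda$, so $\pd Q<\infty$ by induction and hence $\pd\Delta(b)<\infty$; the $\eps(\lambda)=-$ case then follows exactly as you do, by applying the exact $j^\lambda_!$ to a finite projective resolution in $\R_\lambda$. You instead induct on $|\Lambda|$, peel off a maximal $\lambda$, and push a projective resolution in $\R\d$ back into $\R$, which forces the extra computation $K(a)=K'(a)$ to identify $\ker(P(a)\twoheadrightarrow P\d(a))$ with the $\B_\lambda$-part of a rearranged $\Delta_\eps$-flag. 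Both inductions are fine; the paper's avoids the need to control $\pd_\R P\d(a)$ because it never leaves $\R$. Your identification $\Delta(b)\cong P(b)$ for $b\in\B_\lambda$ with $\lambda$ maximal is also Lemma~\ref{maxcase}, and the multiplicity computation you sketch is more than is needed: once you know $P(b)\in\R_{\leq\lambda}$, uniqueness of projective covers there gives it immediately. (Incidentally, the identity $[I_\lambda(c):L_\lambda(b)]=[P_\lambda(b):L_\lambda(c)]$ is the general fact $\dim\Hom(P(b),I(c))=[P(b):L(c)]=[I(c):L(b)]$, not symmetry of the Cartan matrix per se.)
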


\begin{proof}
As the two parts are dual statements, it suffices to prove (1).
Replacing $\Lambda$ by the finite set $\rho(\B)$ if necessary, we may assume 
that $|\Lambda|<\infty$.

First assume that all negative strata have finite global dimension.
By \cite[Ex.~4.1.2]{Wei}, it suffices to show that $\pd
\Delta_\eps(b) < \infty$ for each $b \in
\B$. 
We proceed by downwards induction on the partial order on the finite poset
$\Lambda$.
Take any $\lambda \in \Lambda$ and
consider $\Delta_\eps(b)$ for $b \in
\B_\lambda$, 
assuming that $\pd \Delta_\eps(c) < \infty$ for each $c \in \B_{>\lambda}$.
We first observe that 
there is a short exact sequence $0 \rightarrow Q \rightarrow P(b)
\rightarrow \Delta(b) \rightarrow 0$ such that $Q$ has a
$\Delta_\eps$-flag with sections $\Delta_\eps(c)$ for $c \in \B_{>
  \lambda}$. If $\eps(\lambda) = +$ this follows immediately from ($P\Delta_\eps$); if $\eps(\lambda)=-$ one also needs to use (\ref{repeatedly}) 
  to see that a $\Delta_\eps$-flag in $P(b)$ can be ordered so that the sections $\bar\Delta(c)$ with $c \in \B_\lambda$ appear above the sections
  with $c \in \B_{> \lambda}$.
By the induction hypothesis, $Q$ has finite
projective dimension, hence, so does $\Delta(b)$.
This verifies the induction step in the case that
$\eps(\lambda)=+$.
Instead, suppose that $\eps(\lambda) = -$, i.e., $\Delta_\eps(b) =
\bar\Delta(b)$. 
Let
$0 \rightarrow P_n \rightarrow \cdots \rightarrow P_0 \rightarrow
L_\lambda(b)\rightarrow 0$ be a finite projective resolution of
$L_\lambda(b)$ in the
stratum $\R_\lambda$.
Applying $j^\lambda_!$, which is exact thanks to Theorem~\ref{fund},
we obtain an exact sequence $0 \rightarrow V_n \rightarrow \cdots
\rightarrow V_0 \rightarrow \bar\Delta(b) \rightarrow 0$
such that each $V_m$ is a direct sum of standard objects $\Delta(c)$
for $c \in \B_\lambda$. The result already established plus
\cite[Ex.~4.1.3]{Wei} implies that $\pd V_m < \infty$ for each $m$.
Arguing like in the
proof of \cite[Th.~4.3.1]{Wei}, we deduce that $\pd \bar\Delta(b)
< \infty$ too.

Conversely,
suppose that $\pd \Delta_\eps(b) < \infty$ for all $b \in\B$. 
Take $\lambda \in \Lambda$ with $\eps(\lambda) = -$.
To show that $\R_\lambda$ has finite global dimension, it suffices to show that there is some $d(\lambda) \geq 0$
such that $\Ext^n_{\R_\lambda}(L_\lambda(b), W) = 0$
for all $n > d(\lambda), b \in \B_\lambda$ and $W \in \R_\lambda$.
By Theorems~\ref{gf3}(3) and \ref{gf}(3), we have that 
$$
\Ext^n_{\R_\lambda}(L_\lambda(b), W)
\cong 
\Ext^n_{\R_{\leq \lambda}}(\Delta_\eps(b), j^\lambda_* W)
\cong \Ext^n_\R(\Delta_\eps(b), i_{\leq \lambda} (j^\lambda_* W)).
$$
So we can take $d(\lambda) = \max\{\pd \Delta_\eps(b)\:|\:b \in \B_\lambda\}$.
\end{proof}
The case when all strata are positive (respectively negative) will be of great importance.  

\begin{corollary}\label{inparticulartiltings}
If $\R$ is a finite $+$-stratified (resp., $-$-stratified) category then all $V \in \Delta(\R)$ (resp., $V \in \nabla(\R)$)
are of finite projective (resp., injective) dimension.
\end{corollary}

\begin{corollary}
Suppose that $\R$ is a finite $\eps$-stratified category.
If $\R$ is 
of finite global dimension then
all of its strata are of finite global dimension too.
\end{corollary}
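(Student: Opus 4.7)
The plan is to derive this as a direct corollary of Lemma~\ref{strata}, combining both parts. The hypothesis that $\R$ has finite global dimension means, by definition, that there is a uniform bound $N$ such that $\Ext^n_{\R}(V,W) = 0$ for all $V, W \in \R$ and $n > N$; equivalently, every object in $\R$ has finite projective dimension (and also finite injective dimension, since for essentially finite Abelian categories global dimension can be measured on either side via the duality between $A\fdlmod$ and $\fdrmod A$).

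From this, I would observe first that every $V \in \Delta_\eps(\R)$ has finite projective dimension as an object of $\R$. Applying Lemma~\ref{strata}(1), this immediately forces every negative stratum $\R_\lambda$ (i.e., one with $\eps(\lambda) = -$) to have finite global dimension. Dually, every $V \in \nabla_\eps(\R)$ has finite injective dimension, so Lemma~\ref{strata}(2) forces every positive stratum $\R_\lambda$ (with $\eps(\lambda) = +$) to have finite global dimension. Since the sign function $\eps:\Lambda\rightarrow\{\pm\}$ partitions $\Lambda$ into positive and negative elements, these two applications together cover every stratum of $\R$.

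There is no real obstacle here: the work has already been done in Lemma~\ref{strata}. The only minor point to verify is the implication ``finite global dimension of $\R$'' $\Rightarrow$ ``every object has finite projective dimension and every object has finite injective dimension.'' This is standard for essentially finite Abelian categories; alternatively, one can bypass the injective statement entirely by using Theorem~\ref{opstrat} to replace $\R$ by $\R^{\op}$ and $\eps$ by $-\eps$, at which point the $\nabla_\eps$-flags in $\R$ become $\Delta_{-\eps}$-flags in $\R^{\op}$ (with the positive and negative strata swapping roles), and one needs only Lemma~\ref{strata}(1) applied twice.
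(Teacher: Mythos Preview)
Your argument is correct and is exactly the approach the paper intends: the corollary is stated without proof, as an immediate consequence of both parts of Lemma~\ref{strata}. Your observation that finite global dimension gives finite projective dimension for every object (hence for every $\Delta_\eps$-filtered object, handling the negative strata via part (1)) and finite injective dimension for every object (hence for every $\nabla_\eps$-filtered object, handling the positive strata via part (2)) is precisely what is needed.
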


\begin{proof}
Lemma~\ref{strata}(1) implies that all negative strata have finite global dimension,
and Lemma~\ref{strata}(2) implies that all positive strata have finite global dimension.
\end{proof}

\begin{corollary}\label{fgd}
Suppose that $\R$ is either a finite $+$-stratified category or
a finite $-$-stratified category.
If all of the strata are of finite global dimension then $\R$ is of
finite global dimension.
\end{corollary}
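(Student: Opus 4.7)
The plan is to treat the $+$-stratified case directly and reduce the $-$-stratified case to it by duality. Since $\R$ is finite Abelian, $\B$ is finite, so it suffices to bound the injective dimension of each simple object $L(b)$ uniformly in $b \in \B$; then taking the maximum over $b$ gives finite global dimension of $\R$.

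In the $+$-stratified case, the key input is Lemma~\ref{strata}(2) with $\eps = +$: every $\lambda \in \Lambda$ is positive, so by hypothesis every stratum $\R_\lambda$ has finite global dimension, and therefore every object of $\bar\nabla(\R) = \nabla_{+}(\R)$ has finite injective dimension. In particular, each $\bar\nabla(b)$ does. I will then induct on $\rho(b)$ within the finite poset $\rho(\B)$ to prove that $L(b)$ also has finite injective dimension. For minimal $\rho(b)$, one has $\R_{< \rho(b)} = 0$, so $\bar\nabla(b) = L(b)$ and the result is immediate. For the inductive step at $b$ with $\lambda := \rho(b)$, Lemma~\ref{charac} tells us that $Q := \bar\nabla(b)/L(b)$ has composition factors $L(c)$ only for $\rho(c) < \lambda$, so by the induction hypothesis $Q$ has finite injective dimension. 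The long exact sequence of $\Ext^*_{\R}(-,-)$ associated to
$$
0 \longrightarrow L(b) \longrightarrow \bar\nabla(b) \longrightarrow Q \longrightarrow 0
$$
then forces $\Ext^n_{\R}(-, L(b)) = 0$ for $n$ sufficiently large, completing the induction.

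For the $-$-stratified case, Theorem~\ref{opstrat} shows that $\R^{\op}$ is $+$-stratified, and its strata are the opposites of the strata of $\R$. Since taking opposites preserves finite global dimension of finite Abelian categories (a finite-dimensional algebra and its opposite have equal global dimension), all strata of $\R^{\op}$ still have finite global dimension, so the first case applies to $\R^{\op}$ and yields the conclusion for $\R$. The proof is essentially a bootstrap of Lemma~\ref{strata} from $\nabla_\eps$-filtered objects to arbitrary simples, and there is no substantial obstacle: the only point requiring care is well-foundedness of the induction, which is automatic because $\rho(\B)$ is finite.
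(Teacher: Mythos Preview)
Your proof is correct and takes essentially the same approach as the paper. The paper treats the $-$-stratified case directly using Lemma~\ref{strata}(1), the short exact sequence $0 \to K \to \bar\Delta(b) \to L(b) \to 0$, and ascending induction on projective dimension; your argument is the exact dual (Lemma~\ref{strata}(2), the sequence $0 \to L(b) \to \bar\nabla(b) \to Q \to 0$, induction on injective dimension), which is precisely what the paper means by ``similar'' for the $+$-stratified case, and your reduction of the $-$-case via $\R^{\op}$ and Theorem~\ref{opstrat} is a harmless stylistic variation on doing both cases directly.
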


\begin{proof}
We just explain this in the case that $\R$ is $-$-stratified; the
argument in the 
$+$-stratified case is similar.
Lemma~\ref{strata}(1) implies that $\bar\Delta(b)$ is of finite
projective dimension for each $b \in \B$.
Moreover, there is a short exact sequence $0 \rightarrow K \rightarrow
\bar\Delta(b) \rightarrow L(b) \rightarrow 0$ where all
composition factors of $K$ are of the form $L(c)$ for $c$ with
$\rho(c) < \rho(b)$.
Ascending induction on the partial order on the finite set $\rho(\B)
\subseteq \Lambda$ implies that each $L(b)$ has finite projective dimension.
\end{proof}

A special case of Corollary~\ref{fgd} recovers the following well-known
result, see e.g. \cite{CPS}.
For further detailed remarks about the history of this, and the general 
notion of highest weight category,  
we refer to \cite[$\S$A5]{Donkin} and \cite{DlabRingel}.

\begin{corollary} 
Finite highest weight categories are of finite
global dimension.
\end{corollary}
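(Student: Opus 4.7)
The plan is to reduce this immediately to Corollary~\ref{fgd}. Recall that a finite highest weight category is defined so that all of its strata are simple, meaning by Definition~\ref{simpstrat} that each $\R_\lambda$ is equivalent to $\Vec_{\operatorname{fd}}$. In particular, every stratum $\R_\lambda$ is semisimple, hence of global dimension $0$, which is certainly finite.

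Next I would observe that a highest weight category is automatically $\eps$-stratified for any sign function $\eps$. Indeed, by Lemma~\ref{allsimple}, when all strata are simple one has $\Delta(\lambda) = \bar\Delta(\lambda)$ and $\nabla(\lambda) = \bar\nabla(\lambda)$ for every $\lambda$, so that $\Delta_\eps(b) = \Delta(b)$ and $\nabla_\eps(b) = \nabla(b)$ independently of $\eps$. The conditions $(P\Delta)$ and $(I\nabla)$ in Definition~\ref{vector}(HW) therefore coincide with $(P\Delta_\eps)$ and $(I\nabla_\eps)$ for any choice of $\eps$; in particular, $\R$ is a finite $+$-stratified category (and equally a finite $-$-stratified category).

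Finally, I would apply Corollary~\ref{fgd} directly: since $\R$ is a finite $+$-stratified category all of whose strata are of finite (indeed zero) global dimension, $\R$ itself is of finite global dimension. There is no real obstacle here; the statement is essentially a corollary of a corollary, and the only point that needs to be made carefully is the observation that the highest weight hypothesis implies $+$-stratification with semisimple strata, which is immediate from the definitions.
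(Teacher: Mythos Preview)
Your proof is correct and is exactly the intended argument: the paper derives this corollary directly from Corollary~\ref{fgd}, using that in a finite highest weight category every stratum is simple (hence of global dimension zero) and that such a category is in particular $+$-stratified.
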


\begin{remark}
In the fully stratified case,
Lemma~\ref{exts} can be used
to give a precise bound on the global dimension of $\R$ in 
Corollary~\ref{fgd}.
Assuming $\Lambda$ is finite, let $$
|\lambda| :=
\sup\!\left\{\frac{\max\left(\operatorname{gl.dim}\R_{\lambda_1},\dots,\operatorname{gl.dim}\R_{\lambda_{n}}\right)}{2}+n-1\:\bigg|\:
\!\!\begin{array}{l}
n \geq 1\text{ and }\lambda_1,\dots,\lambda_{n} \in \Lambda
\\
\text{with }
\lambda_1 <
\dots < \lambda_{n} = \lambda
\end{array}
\!\!\right\}.
$$
By mimicking the
proof of \cite[Prop.~A2.3]{Donkin},
one shows
that $\Ext^i_{\R}(L(b), L(c)) = 0$ for $b,c \in \B$ and any $i >
|\rho(b)|+|\rho(c)|$.
Hence, $
\operatorname{gl.dim} \R \leq 2 \max\{|\lambda|\:|\:\lambda \in \Lambda\}.
$
For finite 
highest weight categories, this shows that
$\operatorname{gl.dim} \R \leq 2 (n-1)$ where $n$ is length of 
the longest chain of weights in the weight poset $\Lambda$.
\end{remark}

\begin{remark}
Outside of the highest weight case, {\em finitistic dimension}
is used as a replacement for global dimension.
In particular, finite fibered highest weight categories have 
finitistic dimension $\leq 2 (n-1)$ where $n$ is length of 
the longest chain of weights in the weight poset $\Lambda$; 
this can be proved following the argument of
\cite[Cor.~2.7]{AHLU}. For finite 
fully stratified categories, it should be possible to bound the finitistic dimension of $\R$ 
in terms of the finitistic dimensions of the strata and chains in the poset like in the previous remark.
\end{remark}

\begin{remark}\label{moremark}
Another remarkable result about global dimension of finite 
highest weight categories was obtained in \cite{MO}, \cite{MP} proving conjectures formulated in 
\cite{CaeZ}, \cite{EP}: if $\R$ is a finite highest weight category with duality, i.e., possessing a contravariant autoequivalence preserving isomorphism classes of irreducible objects, 
then the global dimension of $\R$ is equal to twice the projective dimension of a tilting generator (see Definition~\ref{thesetup} below).
More generally, Mazorchuk and Ovsienko show that
the finitisic dimension is equal to twice the projective dimension 
of a tilting generator in any finite fibered highest weight category with duality
which is also tilting-rigid in the sense of Definition~\ref{tiltingrigiddef} below.
Recently, 
Cruz and Marczinik \cite[Th.~2.2]{CM} (see also Corollary~\ref{gorencor} below) have shown 
that a finite fibered highest weight category $\R$ is tilting-rigid if and only if it is Gorenstein,
in which case the finitistic dimension of $\R$ coincides with its Gorenstein dimension (e.g., see \cite[Lem.~2.3.2]{Chen}).
\end{remark}

\subsection{\boldmath Upper finite $\eps$-stratified categories}\label{sst}
In this subsection we assume that $\R$ is a Schurian category equipped
with an upper finite stratification $(\B,L,\rho,\Lambda,\leq)$. 
Also $\eps:\Lambda\rightarrow\{\pm\}$ denotes a sign function.
Let $I(b)$ and $P(b)$ be an injective hull and a projective
cover of $L(b)$ in $\R$. Recall
(\ref{rumble})--(\ref{device}), the properties
$(P\Delta_\eps)$--$(I\nabla_\eps)$ 
and $(P\Delta)$--$(I\nabla)$ 
from the introduction, and the seemingly stronger 
properties $(\widehat{P\Delta}_\eps)$--$(\widehat{I\nabla}_\eps)$ 
and $(\widehat{P\Delta})$--$(\widehat{I\nabla})$ from the
previous subsection.

Before formulating the main definitions in the upper finite setting, 
we prove an analog of the homological 
criterion for $\nabla_\eps$-flags 
from Theorem~\ref{gf1}.
The proof depends on the lemmas proved in $\S$\ref{ssp} below,
which we used already in the previous subsection, together with the
following two technical lemmas, which we prove by truncating to
finite Abelian quotients.

\begin{lemma}\label{phase1}
Suppose that $\R$ is 
Schurian with upper finite stratification 
$(\B,L,\rho,\Lambda,\leq)$ and sign function $\eps$,
and assume that the
property $(\widehat{P\Delta}_\eps)$ holds in $\R$.
Let $\Lambda\u$ be a finite upper set in $\Lambda$, $\B\u :=
\rho^{-1}(\Lambda\u)$, and $j:\R\rightarrow \R\u$ be the corresponding
Serre quotient category with the induced stratification.
The functor $j_*$ sends short exact sequences of objects in $\nabla_\eps(\R\u)$ to short exact sequences of objects in $\nabla_\eps(\R)$.
\end{lemma}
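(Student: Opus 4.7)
The plan is to reduce to the essentially finite case. Realize $\R = A\lfdlmod$ for a pointed locally finite-dimensional locally unital algebra $A$ and set $e := \sum_{b \in \B\u} e_b$; finiteness of $\B\u$ (from $(\rho 3)$ and $\Lambda\u$ being a finite upper set) makes $eAe$ finite-dimensional, so $\R\u \cong eAe\fdlmod$ is a finite Abelian category and $j$ is the idempotent truncation $V \mapsto eV$. The upper-set property of $\Lambda\u$ is decisive: for $c \notin \B\u$, every composition factor of $\Delta_\eps(c)$ or $\nabla_\eps(c)$ has label outside $\B\u$, so $j\Delta_\eps(c) = j\nabla_\eps(c) = 0$; for $c \in \B\u$, standard recollement compatibility gives $jP(c) \cong P\u(c)$, $j\Delta_\eps(c) \cong \Delta_\eps\u(c)$, and $j_*\nabla_\eps\u(c) \cong \nabla_\eps(c)$, the last via the composition of adjoints along $\R \to \R\u \to \R_{\rho(c)}$.

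The main step is to upgrade $\R\u$ to a finite $\eps$-stratified category. For $b \in \B\u$, take the projective $P_b$ from $(\widehat{P\Delta}_\eps)$, whose $\Delta_\eps$-flag has all section indices $c_m \in \B\u$ by the upper-set property. The long exact sequences of $\Hom_\R(-, L(d))$ applied along this flag give the bound $\dim \Hom_\R(P_b, L(d)) \leq \sum_m \dim \Hom_\R(\Delta_\eps(c_m), L(d)) = \#\{m : c_m = d\}$, so any $L(d) \in \hd P_b$ must have $d \in \B\u$. Thus the indecomposable summands of $P_b$ are $P(d)$ with $d \in \B\u$, and $jP_b \cong \bigoplus jP(d) = \bigoplus P\u(d)$ is projective in $\R\u$. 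Applying the exact functor $j$ to the $\Delta_\eps$-flag of $P_b$ produces a $\Delta_\eps\u$-flag on $jP_b$ with $\Delta_\eps\u(b)$ at the top and remaining sections $\Delta_\eps\u(c)$ for $c \in \B\u$ with $\rho(c) \geq \rho(b)$. This verifies $(\widehat{P\Delta}_\eps)$ in $\R\u$, so by Theorem~\ref{fund}, $\R\u$ is a finite $\eps$-stratified category.

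Finally, I show $R^q j_* V = 0$ for $V \in \nabla_\eps(\R\u)$ and $q \geq 1$. The Grothendieck spectral sequence for the adjunction $(j, j_*)$, together with projectivity of each $Ae_c$ in $\R$, degenerates to give $\Hom_\R(Ae_c, R^q j_* V) \cong \Ext^q_{\R\u}(eAe_c, V)$. For $c \in \B\u$ the module $eAe_c = P\u(c)$ is projective in $\R\u$, killing the right-hand side. For $c \notin \B\u$, applying $j$ to the $\Delta_\eps$-flag of $P_c$ and discarding the vanishing sections $j\Delta_\eps(d) = 0$ for $d \notin \B\u$ produces a $\Delta_\eps\u$-flag on $jP_c$; as $eAe_c = jP(c)$ is a direct summand of $jP_c$, Theorem~\ref{gf2} in the now $\eps$-stratified category $\R\u$ places $eAe_c$ in $\Delta_\eps\u(\R\u)$ and forces $\Ext^q_{\R\u}(eAe_c, V) = 0$. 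Since $\{Ae_c\}_{c \in \B}$ jointly generate $\R$, this yields $R^q j_* V = 0$. Exactness of $j_*$ on $\nabla_\eps(\R\u)$ is then immediate, and applying $j_*$ to any $\nabla_\eps\u$-flag of $V$ produces a $\nabla_\eps$-flag of $j_*V$ in $\R$ with sections $j_* \nabla_\eps\u(b) \cong \nabla_\eps(b)$. The principal technical obstacle is the head-bound argument giving projectivity of $jP_b$ in the second paragraph.
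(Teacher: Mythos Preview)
Your proof is correct but follows a genuinely different route from the paper's. The paper argues by induction on the length of the $\nabla_\eps$-flag: for a short exact sequence $0\to K\to X\to Q\to 0$ in $\nabla_\eps(\R\u)$, it verifies exactness of $j_*$ by checking composition multiplicities $[j_*X:L(b)]=[j_*K:L(b)]+[j_*Q:L(b)]$ for each $b\in\B$. To do this it passes to an auxiliary \emph{larger} finite quotient $\R\uu$ (the upper set generated by $\Lambda\u$ and $\rho(b)$), factors $j=\bar\jmath\circ k$, and invokes the already-established Theorem~\ref{gf3}(4) for the pair $(\R\uu,\R\u)$ of finite $\eps$-stratified categories to get exactness of $\bar\jmath_*$.

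Your approach is more direct: you establish once that $\R\u$ is finite $\eps$-stratified (your paragraph~2 is essentially Lemma~\ref{sup}(2) together with a clean head-bound argument supplying the projectivity of $jP_b$ that the paper leaves implicit), and then prove the stronger statement $R^qj_*V=0$ for $V\in\nabla_\eps(\R\u)$ and $q\geq 1$ via the identification $e_c(R^qj_*V)\cong\Ext^q_{\R\u}(eAe_c,V)$. The vanishing for $c\in\B\u$ is projectivity of $P\u(c)$; for $c\notin\B\u$ you use that $eAe_c=jP(c)$ is a summand of $jP_c\in\Delta_\eps(\R\u)$, so Lemma~\ref{anotherpar} in $\R\u$ kills the Ext. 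This avoids the nested-truncation bookkeeping with $\R\uu$ at the cost of a little derived-functor machinery, and yields the acyclicity statement rather than just exactness on $\nabla_\eps$-flags. Both approaches ultimately rest on the same ingredients from \S\ref{ssp}.
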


\begin{proof}[Proof (assuming lemmas in
  $\S$\ref{ssp} below)]
  Take 
   a short exact sequence $0 \rightarrow K \rightarrow X \rightarrow Q \rightarrow
0$
in $\R\u$ such that $K, X$ and $Q$ have
$\nabla_\eps$-flags.
We must 
show that $0 \rightarrow j_* K \rightarrow j_* X \rightarrow j_* Q
\rightarrow 0$ is exact with all objects belonging to
$\nabla_\eps(\R)$. We proceed by induction on the length of the $\nabla_\eps$-flag of $j_*(X)$, with the
base case (length one) 
following from Lemma~\ref{sup}(1).
For the induction step, we may assume that $K, Q \neq 0$ and
know by induction that $j_* K$ and $j_* Q$ have 
$\nabla_\eps$-flags. We must show that 
$0 \rightarrow j_* K \rightarrow j_* X \rightarrow j_* Q \rightarrow 0$
is exact. Since $j_*$ is left exact, this follows if we can show that
$$
[j_* X:L(b)] = [j_* K:L(b)] + [j_* Q:L(b)]
$$ 
for all $b \in \B$.
To see this, let $\Lambda\uu$ be the finite
upper set generated by $\Lambda\u$ and $b$. Let $\B\uu :=
\rho^{-1}(\Lambda\uu)$
and $k:\R\rightarrow \R\uu$ be the corresponding Serre quotient.
By Lemma~\ref{guns}, we have that $[j_* X:L(b)] = [k (j_* X):k L(b)] =
[k(j_* X):L\uu(b)]$, and similarly for $K$ and $Q$.
Since $\Lambda\u$ is an upper set in $\Lambda\uu$,
we can also view $\R\u$ as a quotient of $\R\uu$, and the quotient
functor $j$ factors as $j = \bar\jmath \circ k$ for another quotient
functor $\bar\jmath:\R\uu\rightarrow\R\u$.
We have that $k_* \circ \bar\jmath_* \cong j_*$, hence, applying $k$,
we get that $\bar\jmath_* \cong k \circ j_*$.
It follows that $[k(j_* X):L\uu(b)] = [\bar \jmath_* X: L\uu(b)]$, and
similarly for $K$ and $Q$. 
We have now reduced the proof to showing that
$$
[\bar\jmath_* X:L\uu(b)] = [\bar\jmath_* K:L\uu(b)] + [\bar\jmath_* Q:L\uu(b)].
$$ 
To see this, we note that $\R\uu$ and $\R\u$ are finite $\eps$-highest
weight categories due to Lemma~\ref{sup}(2) and Theorem~\ref{fund}.
So we can apply Theorem~\ref{gf3}(4)  to see 
that the sequence $0 \rightarrow \bar\jmath_* K \rightarrow
\bar\jmath_* X \rightarrow \bar\jmath_* Q \rightarrow 0$ is exact.
\end{proof}

\begin{lemma}\label{phase2}
Suppose that $\R$ is 
Schurian with upper finite stratification 
$(\B,L,\rho,\Lambda,\leq)$ and sign function $\eps$,
and assume that the
property $(\widehat{P\Delta}_\eps)$ holds in $\R$.
Let $V \in \R$ be a finitely cogenerated object such that
$\Ext^1_\R(\Delta_\eps(b), V) = 0$ for all $b \in \B$.
Then we have that $V \in \nabla_\eps(b)$, and the multiplicity
$(V:\nabla_\eps(b))$ of $\nabla_\eps(b)$ in any $\nabla_\eps$-flag
is equal to the dimension of $\Hom_{\R}(\Delta_\eps(b), V)$.
\end{lemma}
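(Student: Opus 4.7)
The plan is to truncate to a finite upper set of $\Lambda$ where Theorem~\ref{gf1} applies, and then transport the resulting $\nabla_\eps$-flag back to $\R$ via the costandardization functor using Lemma~\ref{phase1}. First I would exploit that $V$ is finitely cogenerated: $\soc V$ has finite length, so its composition factors are indexed by a finite set $\B_0 \subseteq \B$, and because $\Lambda$ is upper finite, the upper set $\Lambda\u$ generated by $\rho(\B_0)$ is itself finite. Setting $\B\u := \rho^{-1}(\Lambda\u)$ and $\B\d := \B \setminus \B\u$, let $j: \R \rightarrow \R\u$ be the Serre quotient, which by Lemma~\ref{sup}(2) is a finite $\eps$-stratified category with standard and costandard objects $\Delta_\eps\u(b)$ and $\nabla_\eps\u(b)$ for $b \in \B\u$ satisfying $j_! \Delta_\eps\u(b) \cong \Delta_\eps(b)$ and $j_* \nabla_\eps\u(b) \cong \nabla_\eps(b)$.

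Next, using the Ext-adjunction from Lemma~\ref{sup}, the Ext-vanishing hypothesis descends to $\Ext^1_{\R\u}(\Delta_\eps\u(b), jV) \cong \Ext^1_\R(\Delta_\eps(b), V) = 0$ for all $b \in \B\u$. Theorem~\ref{gf1} applied inside the finite category $\R\u$ then yields a $\nabla_\eps$-flag of $jV$ with multiplicities $(jV:\nabla_\eps\u(b)) = \dim \Hom_{\R\u}(\Delta_\eps\u(b), jV)$. Invoking Lemma~\ref{phase1} shows that $j_*(jV)$ lies in $\nabla_\eps(\R)$, with flag sections $\nabla_\eps(c)$ for the same $c \in \B\u$ appearing in the flag of $jV$.

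The main step, and the principal obstacle, will be to identify $V$ with $j_*(jV)$ via the adjunction unit $\eta: V \to j_*(jV)$. Since $j \eta$ is an isomorphism by Lemma~\ref{recollementj}, both $\ker \eta$ and the cokernel $Q$ have all their composition factors in $\B\d$. But $\soc(\ker\eta) \subseteq \soc V$ is supported on $\B_0 \subseteq \B\u$, forcing $\ker \eta = 0$, so $\eta$ is injective. For the cokernel, I would apply $\Hom_\R(\Delta_\eps(c), -)$ to $0 \to V \to j_*(jV) \to Q \to 0$; the Ext-vanishing then produces a surjection $\Hom_\R(\Delta_\eps(c), j_*(jV)) \twoheadrightarrow \Hom_\R(\Delta_\eps(c), Q)$. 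For $c \in \B\d$ the upper set property forces every composition factor of $\Delta_\eps(c)$ to lie in $\B\d$, so $j\Delta_\eps(c) = 0$ and hence $\Hom_\R(\Delta_\eps(c), j_*(jV)) \cong \Hom_{\R\u}(j\Delta_\eps(c), jV) = 0$; composing with $\Delta_\eps(c) \twoheadrightarrow L(c)$ then gives $\Hom_\R(L(c), Q) = 0$. For $c \in \B\u$ the same vanishing is automatic from $Q \in \ker j$. Hence $\soc Q = 0$, so $Q = 0$ and $V \cong j_*(jV) \in \nabla_\eps(\R)$.

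Finally, the multiplicity formula will drop out. For $b \in \B\u$, one has $(V:\nabla_\eps(b)) = (jV:\nabla_\eps\u(b)) = \dim \Hom_{\R\u}(\Delta_\eps\u(b), jV) = \dim \Hom_\R(\Delta_\eps(b), V)$ by the adjunction used above. For $b \in \B\d$ both sides vanish: the flag of $V \cong j_*(jV)$ is supported on $\B\u$, while $\Hom_\R(\Delta_\eps(b), V) = 0$ by the socle argument applied to $V$ in place of $Q$. Throughout, the finite cogeneration hypothesis is what makes the reduction to a finite upper set work and what kills the cokernel $Q$ in the crucial identification step; without it, $Q$ could conceivably contain a nontrivial socle concentrated outside $\Lambda\u$.
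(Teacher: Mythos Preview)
Your overall strategy mirrors the paper's: truncate to a finite upper set $\Lambda\u$, show $jV$ has a $\nabla_\eps$-flag via Theorem~\ref{gf1}, transport this back via Lemma~\ref{phase1}, then identify $V$ with $j_*(jV)$. However, there are two problems with the execution.

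The minor one concerns your second step. The isomorphism $\Ext^1_{\R\u}(\Delta_\eps\u(b), jV) \cong \Ext^1_\R(\Delta_\eps(b), V)$ is not what Lemma~\ref{sup}(3) provides; that result is about $j_*$ in the second variable, not about $(j_!,j)$-adjunction extended to $\Ext^1$. The vanishing you need can be established, but it relies on your specific choice of $\Lambda\u$: the injective hull of $V$ is a finite direct sum of $I(c)$ with $c\in\B_0\subseteq\B\u$, and for such $c$ one has $jI(c)\cong I\u(c)$, which is injective in $\R\u$. The paper makes exactly this argument via a commuting diagram comparing Hom spaces along the short exact sequence $0\to V\to\bigoplus I(b)^{\oplus n_b}\to Q\to 0$.

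The serious gap is in your third paragraph: the implication ``$\soc Q=0$, so $Q=0$'' is \emph{false} in a Schurian category. Objects of $A\lfdlmod$ can have zero socle without being zero (take the path algebra of the quiver $0\to 1\to 2\to\cdots$ and consider $Ae_0$); the paper itself flags precisely this pitfall in the Schurian part of the proof of Lemma~\ref{maxcase}. Your argument is easily repaired: you have in fact established $\Hom_\R(\Delta_\eps(c),Q)=0$ for \emph{all} $c\in\B$, and since each projective $P_c$ from $(\widehat{P\Delta}_\eps)$ has a finite $\Delta_\eps$-flag, an immediate induction on flag length yields $\Hom_\R(P_c,Q)=0$, hence $[Q:L(c)]=\dim\Hom_\R(P(c),Q)=0$ for every $c$, so $Q=0$. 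The paper takes a different route for this surjectivity, comparing composition multiplicities $[V:L(b)]$ and $[j_*jV:L(b)]$ via a further truncation to a larger finite upper set $\Lambda\uu\supseteq\Lambda\u\cup\{\rho(b)\}$; your argument (once patched) is arguably more direct.
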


\begin{proof}[Proof (assuming lemmas from $\S$\ref{ssp} below)]
Since $V$ is finitely cogenerated, its injective hull 
is a finite direct sum of the indecomposable injective
objects $I(b)$.
This means that we can find 
a finite upper set $\Lambda\u$ and $\B\u := \rho^{-1}(\Lambda\u)$ so
that there is a short exact sequence
$$
0 \longrightarrow V \longrightarrow \bigoplus_{b \in \B\u} I(b)^{\oplus n_b}
\longrightarrow Q \longrightarrow 0
$$
for some $n_b \geq 0$.
Let $j:\R\rightarrow\R\u$ be the corresponding Serre
quotient. This is a finite $\eps$-stratified category by
Lemma~\ref{sup}(2) and Theorem~\ref{fund}.

Applying $j$ to the above short exact sequence gives us a short
exact sequence in $\R\u$. Then we take $b \in \B\u$ and apply the functor
$\Hom_{\R\u}(\Delta\u_\eps(b),?)$ to this using also Lemma~\ref{sup}(1)
to obtain the long exact sequence
\begin{multline*}
0 \longrightarrow \Hom_{\R\u}(\Delta\u_\eps(b),  jV) \longrightarrow \Hom_{\R\u}\Big(\Delta\u_\eps(b),\textstyle\bigoplus_{b \in \B\u} I\u(b)^{\oplus n_b}\Big)
\\\longrightarrow \Hom_{\R\u}(\Delta\u_\eps(b), jQ) \longrightarrow
\Ext^1_{\R\u}(\Delta\u_\eps(b), jV)\longrightarrow 0.
\end{multline*}
From adjunction and Lemma~\ref{sup}(1) again, we get a commuting diagram
$$
\begin{CD}
0 \rightarrow&\Hom_{\R\u}(\Delta\u_\eps(b),
jV)&\rightarrow&\Hom_{\R\u}\Big(\Delta\u_\eps(b), \bigoplus_{b \in \B\u}
I\u(b)^{\oplus n_b}\Big)&\rightarrow&\Hom_{\R\u}(\Delta\u_\eps(b),
j Q)&\rightarrow 0\\
&@VVV@VVV@VVV\\
0 \rightarrow&\Hom_\R(\Delta_\eps(b), V)&\rightarrow&\Hom_\R\Big(\Delta_\eps(b),
\bigoplus_{b \in \B\u} I(b)^{\oplus
  n_b}\Big)&\rightarrow&\Hom_\R(\Delta_\eps(b), Q)&\rightarrow 0.
\end{CD}
$$
The vertical maps are isomorphisms and the bottom row is exact
since $\Ext^1_\R(\Delta_\eps(b), V) = 0$.
Hence the top row is exact. 
Comparing with the previously displayed long exact sequence, 
it follows that
$\Ext^1_{\R\u}(\Delta\u_\eps(b), jV) = 0$.
Now we can apply Theorem~\ref{gf1} in the finite $\eps$-stratified
category $\R\u$ to deduce that $j V$ has a
$\nabla_\eps$-flag.

From Lemma~\ref{phase1}, we deduce that $j_* j V$ has a $\nabla_\eps$-flag.
Moreover the multiplicity of $\nabla_\eps(b)$ in any $\nabla_\eps$-flag
in $j_* j V$ 
is $\dim \Hom_{\R}(\Delta_\eps(b), j_* j V)$
thanks to Lemma~\ref{anotherpar}.
To complete the proof, we show that
the unit of adjunction $f:V \rightarrow j_* j V$ is an isomorphism.
We know from Lemma~\ref{sup}(1) that 
the unit of adjunction is an isomorphism $I(b) \rightarrow j_* j
I(b)$  for each $b \in \B\u$. Since $V$ embeds into a direct sum of
such $I(b)$, it follows that $f$ is injective.
To show that it is surjective, it suffices to show that
$$
[j_*j V:L(b)] = [V:L(b)]
$$
for all $b \in \B$. To prove this, we fix a choice of $b \in \B$ then 
define $\Lambda\uu$, $\B\uu$,
$k:\R\rightarrow \R\uu$ and $\bar\jmath: \R\uu\rightarrow \R\u$ as
in the proof of Lemma~\ref{phase1}.
Since $b \in \B\uu$, we have that $[V:L(b)] = [k V: L\uu(b)]$
and $[j_* j V:L(b)] = [k(j_* j V):L\uu(b)]$.
As in the proof of Lemma~\ref{phase1},
$k(j_* j V) \cong \bar\jmath_* (jV)
\cong \bar\jmath_* \bar \jmath (k V)$.
Thus, we are reduced to showing that
$$
[\bar \jmath_* \bar\jmath (kV):L\uu(b)] = [kV:L\uu(b)].
$$
This follows because $kV \cong \bar\jmath_* \bar\jmath (kV)$.
To see this,  we repeat 
the arguments in the previous paragraph to 
show that $kV\in \R\uu$ has a $\nabla_\eps$-flag.
Since the unit of adjunction is an isomorphism
$\nabla_\eps\uu(b) \stackrel{\sim}{\rightarrow}
\bar\jmath_* \bar\jmath \nabla_\eps\uu(b)$ for each $b \in \B\uu$,
we deduce using the exactness from
Theorem~\ref{gf3}(4) 
that it gives an isomorphism $k V \stackrel{\sim}{\rightarrow} \bar\jmath_*
\bar\jmath(kV)$ too.
\end{proof}

\begin{theorem}
Theorem~\ref{fund} holds in the upper finite setup too.
\end{theorem}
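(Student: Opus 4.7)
The plan is to adapt the proof of Theorem~\ref{fund} in the essentially finite case, substituting Lemma~\ref{phase2} for Theorem~\ref{gf1} as the Schurian homological criterion for $\nabla_\eps$-flags. The implications $(P\Delta_\eps)\Rightarrow (\widehat{P\Delta}_\eps)$ and $(I\nabla_\eps)\Rightarrow (\widehat{I\nabla}_\eps)$ are immediate from the definitions.

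To establish $(\widehat{P\Delta}_\eps)\Rightarrow (I\nabla_\eps)$, I would apply Lemma~\ref{phase2} to $V = I(b)$: the injective hull has simple socle $L(b)$, so it is finitely cogenerated, and $\Ext^1_\R(\Delta_\eps(c), I(b)) = 0$ for every $c$ since $I(b)$ is injective in $\Ind(\R_c)$. Hence $I(b)$ admits a $\nabla_\eps$-flag with multiplicities
$$
(I(b):\nabla_\eps(c)) = \dim \Hom_\R(\Delta_\eps(c), I(b)) = [\Delta_\eps(c):L(b)],
$$
which vanishes unless $\rho(b) \leq \rho(c)$ because $\Delta_\eps(c) \in \R_{\leq \rho(c)}$. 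This is precisely $(I\nabla_\eps)$. The reverse implication $(\widehat{I\nabla}_\eps)\Rightarrow (P\Delta_\eps)$ then follows by passing to the opposite category $\R^{\op} \cong A^{\op}\lfdlmod$, which is again Schurian with the same upper finite poset $\Lambda$: via Lemma~\ref{skyfall}, $(\widehat{I\nabla}_\eps)$ for $\R$ becomes $(\widehat{P\Delta}_{-\eps})$ for $\R^{\op}$, and the already-established implication applied in $\R^{\op}$ yields $(I\nabla_{-\eps})$ there, which is $(P\Delta_\eps)$ in $\R$.

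For the exactness assertion, by the opposite-category device it suffices to treat $\eps(\lambda) = +$ and show $j^\lambda_*$ is exact. Since $|\B_\lambda| < \infty$, the stratum $\R_\lambda$ is a finite Abelian category and every $V \in \R_\lambda$ has finite composition length, so one may argue by induction on length, mirroring the final paragraph of the proof of Lemma~\ref{thesublemma}. The essential ingredient is a Schurian analog of the first half of that lemma: every finitely cogenerated object $W$ of $\Ind(\R_c)$ lying in $\R_{\leq\lambda}$ with $\Ext^1_\R(\Delta_\eps(b), W) = 0$ for all $b \in \B$ and $\soc W \cong L(b_1) \oplus \cdots \oplus L(b_n)$ with each $b_i \in \B_\lambda$ must be isomorphic to $j^\lambda_*(j^\lambda W)$. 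This combines Lemma~\ref{phase2} (yielding a $\nabla_\eps$-flag of $W$) with the unit-of-adjunction argument of Lemma~\ref{thesublemma}, which goes through verbatim once the flag is in hand.

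The main obstacle is checking that the intermediate object $Y := j^\lambda_* X / j^\lambda_* K$ arising in the inductive step satisfies the finite cogeneration hypothesis needed for Lemma~\ref{phase2}. I expect this to follow from the embedding $Y \hookrightarrow j^\lambda_*Q$, the inductive knowledge that $j^\lambda_*Q$ admits a finite $\nabla_\eps$-flag whose sections $\nabla_\eps(c)$ for $c \in \B_\lambda$ are themselves finitely cogenerated (having simple socle $L(c)$); alternatively, one can sidestep the issue by reducing to the finite $\eps$-stratified quotient $\R\u$ associated to the finite upper set $[\lambda,\infty)$ and invoking the essentially finite case of Theorem~\ref{fund} directly inside $\R\u$ before lifting back via Lemma~\ref{phase1}.
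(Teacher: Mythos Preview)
Your treatment of the equivalence of the four properties is correct and matches the paper exactly: use Lemma~\ref{phase2} in place of Theorem~\ref{gf1} to get $(\widehat{P\Delta}_\eps)\Rightarrow(I\nabla_\eps)$, then pass to $\R^{\op}$ for the other direction.

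For the exactness assertion, your approach is more involved than necessary. The paper simply applies Lemma~\ref{phase1} inside the Schurian category $\R_{\leq\lambda}$, which satisfies $(\widehat{P\Delta}_\eps)$ by Lemma~\ref{sub}(2). There $\{\lambda\}$ is a finite upper set with Serre quotient $\R_\lambda$, so Lemma~\ref{phase1} gives that $j^\lambda_*$ is exact on $\nabla_\eps(\R_\lambda)$. When $\eps(\lambda)=+$, the $\nabla_\eps$-objects of the single-stratum category $\R_\lambda$ are filtered by the $\bar\nabla$'s, i.e., by the irreducibles $L_\lambda(c)$, so $\nabla_\eps(\R_\lambda)=\R_\lambda$ and exactness follows immediately. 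Your primary route---re-running the induction from the last paragraph of Lemma~\ref{thesublemma}---would also work (your justification that $Y\hookrightarrow j^\lambda_* Q$ is finitely cogenerated via the inductively-known finite $\bar\nabla$-flag of $j^\lambda_* Q$ is sound), but this duplicates work already encapsulated in Lemma~\ref{phase1}. Your alternative involving the quotient $\R\u$ for $[\lambda,\infty)$ is misdirected: $j^\lambda_*$ lands in $\R_{\leq\lambda}$, and in $\R\u$ the element $\lambda$ is \emph{minimal}, so $(\R\u)_{\leq\lambda}=\R_\lambda$ and the corresponding $j^\lambda_*$ there is trivial; this does not help with exactness in $\R_{\leq\lambda}$.
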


\begin{proof}
This is almost the same as the proof of Theorem~\ref{fund} given in
the previous subsection. One needs to
use Lemma~\ref{phase2} in place of Theorem~\ref{gf1} to see that
$I(b)$ has a $\nabla_\eps$-flag with the appropriate multiplicities.
The exactness
of $j^\lambda_*$ when $\eps(\lambda)=+$ follows from 
Lemma~\ref{phase1} applied to the quotient functor $j^\lambda:\R_{\leq\lambda} \rightarrow \R_\lambda$.
Note for this that $\R_{\leq\lambda}$ satisfies
$(\widehat{P\Delta}_\eps)$ due to Lemma~\ref{sub}(2), and we have that
$\nabla_\eps(\R_\lambda) = \R_\lambda$ as $\eps(\lambda)=+$.
\end{proof}

We are ready to proceed to the main
definition.

\begin{definition}\label{ufc}
  Let $(\B,L,\rho,\Lambda,\leq)$
  be an upper finite stratification on $\R$.
\begin{enumerate}
\item[($\eps$S)]
We say that $\R$ is an {\em upper finite $\eps$-stratified category}
if one of the equivalent properties 
$(\widehat{P\Delta}_\eps)$--$(\widehat{I\nabla}_\eps)$ holds for a given
choice of sign function $\eps:\Lambda\rightarrow\{\pm\}$.
\item[(FS)]
We say that $\R$ is an {\em upper finite fully stratified category}
if one of these properties
holds for all choices of sign function $\eps:\Lambda\rightarrow\{\pm\}$.
\item[($\eps$HW)]
We say that $\R$ is an {\em upper finite $\eps$-highest weight
  category}
if the stratification function
$\rho$ is a bijection, and one of the equivalent properties
$(\widehat{P\Delta}_\eps)$--$(\widehat{I\nabla}_\eps)$ holds for a given choice of sign function $\eps:\Lambda\rightarrow\{\pm\}$.
\item[(FHW)]
We say that $\R$ is an {\em upper finite fibered highest weight
  category}
if the stratification function is
a bijection and one of these properties holds for all choices of sign function.
\item[(HW)]
We say that $\R$ is an {\em upper finite highest weight category}
if all of the stata are simple
(cf. Lemma~\ref{allsimple}) and one of the equivalent properties
$(\widehat{P\Delta})$--$(\widehat{I\nabla})$ holds.
\end{enumerate}
\end{definition}

The $\Ext^1$-vanishing 
(\ref{repeatedly}) and
Theorem~\ref{opstrat}
both
still hold in the same way as before. 

Next we are going to consider two (in fact dual) notions of ascending
$\Delta_\eps$- and descending $\nabla_\eps$-flags, generalizing the
finite flags discussed already. One might be tempted to say that an
ascending $\Delta_\eps$-flag in $V$ is an ascending chain
$0 = V_0 < V_1 < V_2 < \cdots$ of subobjects of $V$ with $V = \sum_{n \in\N} V_n$ 
such that $V_m / V_{m-1} \cong \Delta_\eps(b_m)$,
and a descending $\nabla_\eps$-flag 
is a
descending chain
$V = V_0 > V_{1} > V_{2} > \cdots$ of subobjects of $V$
such that $\bigcap_{n \in \N} V_n = 0$ and 
$V_{m-1} / V_{m} \cong \Delta_\eps(b_m)$, for $b_m  \in \B$.
These would be 
serviceable definitions when $\Lambda$ is
countable. In order
to avoid this unnecessary restriction, we will work instead with the
following more general formulations.

\begin{definition}\label{tpc}
Suppose that $\R$ is an upper finite $\eps$-stratified category and $V \in \R$.
\begin{enumerate}
\item[$(A\Delta)$]
An {\em ascending $\Delta_\eps$-flag} in $V$ is the data of
a directed set $\Omega$ with smallest element $0$ and
a direct system $(V_\omega)_{\omega \in \Omega}$ 
of subobjects of $V$ such that
$V_0 = 0$,
$\sum_{\omega \in \Omega} V_\omega = V$, and
$V_\upsilon/V_\omega \in \Delta_\eps(\R)$ for each $\omega < \upsilon$.
Let $\Delta_\eps^\asc(\R)$ be the full subcategory of $\R$ consisting
of all objects $V$ possessing such a flag.
\item[$(D\nabla)$]
A {\em descending $\nabla_\eps$-flag} in $V$ is the data of
a directed set $\Omega$ with smallest element $0$
and an inverse system $(V / V_\omega)_{\omega \in \Omega}$ 
of quotients of $V$ such that
$V_0 = V$, $\bigcap_{\omega \in \Omega} V_\omega = 0$,
and $V_\omega / V_\upsilon\in\nabla_\eps(\R)$ for each $\omega < \upsilon$.
Let $\nabla_\eps^\desc(\R)$ be the full subcategory of $\R$ consisting
of all objects $V$ possessing such a flag.
\end{enumerate}
We stress that $\Delta_\eps^\asc(\R)$ 
and $\nabla_\eps^\desc(\R)$ are subcategories of $\R$: we have {\em not}
 passed to the completion $\Ind(\R_c)$.
\end{definition}

\begin{lemma}\label{dry}
Suppose that $\R$ is an upper finite $\eps$-stratified category.
\begin{enumerate}
\item
For  $V \in \Delta^\asc_\eps(\R)$,
$W \in \nabla^\desc_\eps(\R)$ and $n \geq 1$,
we have that 
$\Ext^n_{\R}(V,W) = 0$.
\item
For $V \in \Delta_\eps^\asc(\R)$
the multiplicity of $\Delta_\eps(b)$ in a $\Delta_\eps$-flag may be
defined from
$$
(V:\Delta_\eps(b)) := \dim \Hom_{\R}(V, \nabla_\eps(b)) =
\sup\left\{ (V_\omega:\Delta_\eps(b))\:\big|\:\omega \in
  \Omega\right\} < \infty,
$$
where $(V_\omega)_{\omega \in \Omega}$ is
any choice of ascending
$\Delta_\eps$-flag.
\item
For $V \in \nabla_\eps^\desc(\R)$,
the multiplicity of $\nabla_\eps(b)$ in a $\nabla_\eps$-flag may be
defined from
$$
(V:\nabla_\eps(b)) := \dim \Hom_{\R}(\Delta_\eps(b), V) =
\sup\left\{ (V / V_\omega:\nabla_\eps(b))\:\big|\:\omega \in
  \Omega\right\} < \infty,
$$
where $(V / V_\omega)_{\omega \in \Omega}$ is any choice of descending
$\nabla_\eps$-flag.
\end{enumerate}
\end{lemma}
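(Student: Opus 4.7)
The plan is to establish parts~(2) and~(3) first as inverse-limit computations on $\Hom$-spaces, then deduce part~(1) via two sub-lemmas on $\Ext^n$-vanishing against finite flags. Throughout I invoke the finite-flag analog of Theorem~\ref{gf2} in the upper finite setting, which follows verbatim from the essentially finite case via Lemma~\ref{anotherpar} and Lemma~\ref{against} (already used in the proof of the upper-finite Theorem~\ref{fund} just established): in particular $\Ext^n_\R(N,M)=0$ for $N\in\Delta_\eps(\R)$, $M\in\nabla_\eps(\R)$ and $n\ge 1$, and $\dim\Hom_\R(N,\nabla_\eps(b))=(N{:}\Delta_\eps(b))$ by induction on flag length combined with (\ref{onedimensionality}). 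I also need the syzygy-stability observation that the kernel $\Omega N$ of the projective cover of any $N\in\Delta_\eps(\R)$ again lies in $\Delta_\eps(\R)$: this follows by applying the upper-finite analog of Corollary~\ref{shanks} to $0\to \Omega N\to P\to N\to 0$, where $P$ has a finite $\Delta_\eps$-flag by $(P\Delta_\eps)$.

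For part~(2), applying $\Hom_\R(-,\nabla_\eps(b))$ to $0\to V_\omega\to V_\upsilon\to V_\upsilon/V_\omega\to 0$ for $\omega<\upsilon$, the finite-flag $\Ext^1$-vanishing makes each restriction $\Hom_\R(V_\upsilon,\nabla_\eps(b))\twoheadrightarrow\Hom_\R(V_\omega,\nabla_\eps(b))$ surjective, with source and target finite-dimensional of dimension $(V_\upsilon{:}\Delta_\eps(b))$ and $(V_\omega{:}\Delta_\eps(b))$ respectively by the finite-flag case. Since $V=\varinjlim V_\omega$, we have $\Hom_\R(V,\nabla_\eps(b))=\varprojlim_\omega\Hom_\R(V_\omega,\nabla_\eps(b))$, and this is finite-dimensional because $\nabla_\eps(b)$ is finitely cogenerated (its socle being of finite length). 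An inverse system of finite-dimensional vector spaces with surjective transitions and globally bounded dimension stabilizes, so its limit has dimension equal to the supremum, giving~(2). Part~(3) exploits the fact that $\Delta_\eps(b)$ is finitely presented (its defining kernel in $P(b)$ being finitely generated by the syzygy observation): any map $\Delta_\eps(b)\to V/V_\omega$ is determined by a single generator image in $e_b(V/V_\omega)$ subject to finitely many relations landing in finitely many idempotent components $e_{i_1},\dots,e_{i_r}$. Choosing $\omega$ large enough that $e_{i_k}V_\omega=0$ for each $k$ (possible because $e_{i_k}V$ is finite-dimensional with $\bigcap_\omega e_{i_k}V_\omega=0$) makes $\Hom_\R(\Delta_\eps(b),V)\to\Hom_\R(\Delta_\eps(b),V/V_\omega)$ an isomorphism, whose target has dimension $(V/V_\omega{:}\nabla_\eps(b))$ by the finite-flag case.

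For part~(1), I establish two sub-lemmas: (B)~$\Ext^n_\R(N,W)=0$ for $N\in\Delta_\eps(\R)$, $W\in\nabla_\eps^\desc(\R)$, $n\ge 1$; and (A)~$\Ext^n_\R(V,M)=0$ for $V\in\Delta_\eps^\asc(\R)$, $M\in\nabla_\eps(\R)$, $n\ge 1$. For (B) with $n=1$: given an extension $0\to W\to E\to N\to 0$, I fix finite generators $v_1,\dots,v_k$ of $N$ and a finite set of generating relations $r_1,\dots,r_m$ (as in the finite-presentation argument for~(3)), then choose $\omega$ so that $e_iW_\omega=0$ for all finitely many idempotents $e_i$ occurring in these generators and relations. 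The pushout $0\to W/W_\omega\to E/W_\omega\to N\to 0$ splits by the finite-flag case with splitting $s$; the unique lifts $\tilde v_j\in e_{i_j}E=e_{i_j}(E/W_\omega)$ of $s(v_j)$ then satisfy $r_l(\tilde v)\in e_{j_l}W_\omega=0$ for every relation $r_l$, producing an honest splitting $N\to E$. Higher $n$ in~(B) follows by dimension-shifting $\Ext^n_\R(N,W)\cong\Ext^1_\R(\Omega^{n-1}N,W)$ together with iterated syzygy-stability. Sub-lemma~(A) is then a Milnor-type argument for $V=\varinjlim V_\omega$: the Mittag-Leffler property on $\Hom_\R(V_\omega,M)$ from part~(2) yields $\varprojlim^1=0$, while $\Ext^n_\R(V_\omega,M)=0$ by the finite-flag case, so $\Ext^n_\R(V,M)=0$. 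Claim~(1) now combines~(A) and~(B): sub-lemma~(B) gives $\Ext^n_\R(V_\omega,W)=0$ for each $\omega$, and Mittag-Leffler on $\Hom_\R(V_\omega,W)$ (finite-dimensional with surjective transitions by sub-lemma~(B) applied to the quotients $V_\upsilon/V_\omega$) forces $\Ext^n_\R(V,W)=0$.

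I expect the main technical obstacle to be handling an uncountable directed set $\Omega$, where the Milnor short exact sequence is unavailable and must be replaced by a transfinite induction: one refines the ascending or descending flag to a well-ordered transfinite filtration and checks the Mittag-Leffler condition at limit ordinals, with care that each transfinite step preserves the finite-presentation and locally-finite-dimensional features exploited in the sub-lemmas.
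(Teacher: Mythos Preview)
Your approach is correct and broadly parallel to the paper's, but with one genuine difference in route and one small circularity to flag.

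The paper proves (1) first, then deduces (2) and (3); you reverse this order, which is fine since your proofs of (2) and (3) only use the finite-flag Ext-vanishing from Lemma~\ref{anotherpar}, not the full (1). Within (1), both you and the paper establish the two sub-lemmas you call (A) and (B), and both pass from these to the general case by the same Mittag-Leffler argument. The genuine difference is in how (B) is obtained: the paper gets it in one line by applying Lemma~\ref{skyfall} (duality) to the already-proved (A), exploiting that $\R^{\op}$ is upper finite $(-\eps)$-stratified. Your direct splitting argument via finite presentation of $N$ and local finite-dimensionality of $W$ is valid and more explicit, but the duality shortcut is what the paper actually does and is considerably shorter.

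One point to tighten: your syzygy-stability step invokes ``the upper-finite analog of Corollary~\ref{shanks},'' but in the paper that corollary is a consequence of Theorem~\ref{gf8}, which in turn \emph{uses} Lemma~\ref{dry}. So as stated this is circular. The fix is easy and implicit in your setup: rather than the minimal syzygy $\Omega N$, build a (possibly non-minimal) projective surjection onto $N$ by horseshoe from the surjections $P(b)\twoheadrightarrow\Delta_\eps(b)$, whose kernels lie in $\Delta_\eps(\R)$ by $(P\Delta_\eps)$; the horseshoe kernel is then an extension of such kernels, hence again in $\Delta_\eps(\R)$. This uses only $(P\Delta_\eps)$ and closure of $\Delta_\eps(\R)$ under extensions, avoiding the forward reference. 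The same horseshoe observation is what justifies your claim that objects of $\Delta_\eps(\R)$ are finitely presented, needed in (3) and in your $n=1$ argument for (B).

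Your caveat about uncountable $\Omega$ is well taken; the paper's citation of \cite[Application 3.5.10]{Wei} is also for towers. In practice the inverse systems of $\Hom$-spaces here are finite-dimensional with surjective transitions and globally bounded dimension (by (2)), hence stabilize on a cofinal subset, which suffices for the splitting argument at $n=1$ without any $\varprojlim^1$ machinery; for $n\geq 2$ the relevant $\Ext^{n-1}$ groups are already zero.
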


\begin{proof}
 (1)
We first prove this in the special case that $W = \nabla_\eps(b)$. 
Let $(V_\omega)_{\omega\in\Omega}$ be an ascending $\Delta_\eps$-flag
in $V$, so that $V \cong \varinjlim V_\omega$.
Since $\Ext^n_{\R}(V_\omega,W) = 0$ by
Lemma~\ref{anotherpar}, 
it suffices to show that
$$
\Ext^n_{\R}(V, W) \cong \varprojlim \Ext^n_{\R}(V_\omega,
W).
$$
To see this, 
like in \cite[3.5.10]{Wei},
we need to check a Mittag-Leffler
condition. We show that the natural map
$\Ext^{n-1}_{\R}(V_\upsilon, W) \rightarrow
\Ext^{n-1}_{\R}(V_\omega,W)$ is surjective for each
$\omega < \upsilon$ in $\Omega$. 
Applying $\Hom_{\R}(?, W)$ to the short exact sequence
$0 \rightarrow V_\omega \rightarrow V_\upsilon \rightarrow
V_\upsilon/V_\omega\rightarrow 0$ gives an exact sequence
\begin{align*}
\Ext^{n-1}_{\R}(V_\upsilon, W) \longrightarrow
\Ext^{n-1}_{\R}(V_\omega, W)\longrightarrow 
\Ext^n_{\R}(V_\upsilon/V_\omega,W).
\end{align*}
It remains to observe that
$\Ext^n_{\R}(V_\upsilon/ V_\omega,
W) = 0$ by Lemma~\ref{anotherpar} again, since
we know from the definition of ascending $\Delta_\eps$-flag that $V_\upsilon/V_\omega \in
\Delta_\eps(\R)$.

The dual of the previous paragraph plus
Lemma~\ref{skyfall}
gives that
$\Ext^n_{\R}(V, W)=0$ for $n \geq 1$, $V = \Delta_\eps(b)$
and $W \in \nabla^\desc_\eps(\R)$.
Then we can repeat the argument of the previous paragraph yet again,
using this assertion in place of Lemma~\ref{anotherpar}, to obtain the
result we are after for 
general $V \in \Delta^\asc_\eps(\R)$ and $W \in \nabla^\desc_\eps(\R)$.

\vspace{1.5mm}
\noindent 
(2)
This follows 
from (1) and 
(\ref{onedimensionality}) 
 because
$$
\Hom_{\R}(V, \nabla_\eps(b)) \cong \Hom_{\R}(\varinjlim V_\omega, \nabla_\eps(b))
\cong
\varprojlim \Hom_{\R}(V_\omega, \nabla_\eps(b)),
$$
which is finite-dimensional as $\nabla_\eps(b)$, hence, each $V_\omega$, is finitely cogenerated.

\vspace{1.5mm}
\noindent
(3) Similarly to (2), we have that
$$
\Hom_{\R}(\Delta_\eps(b), V) \cong \Hom_{\R}(\Delta_\eps(b), \varprojlim (V/V_\omega))
\cong
\varprojlim \Hom_{\R}(\Delta_\eps(b), V / V_\omega),
$$
which is finite-dimensional as $\Delta_\eps(b)$ is finitely generated.
Then we can apply (1) and  (\ref{onedimensionality}) once again.
\end{proof}

\begin{theorem}[Homological criterion for ascending $\Delta_\eps$-flags]\label{gf8}
Assume that $\R$ is an upper finite $\eps$-stratified category.
For $V \in \R$, the following are equivalent:
\begin{itemize}
\item[(i)]
$V \in \Delta_\eps^\asc(\R)$;
\item[(ii)]
$\Ext^1_\R(V,\nabla_\eps(b)) = 0$ for all $b \in \B$;
\item[(iii)]
$\Ext^n_\R(V,\nabla_\eps(b)) = 0$ for all $b \in \B$ and  $n \geq 1$.
\end{itemize}
Assuming these properties, we have that
$V \in \Delta_\eps(\R)$ if and only if it is finitely generated.
\end{theorem}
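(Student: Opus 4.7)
I would prove (i)$\Rightarrow$(iii)$\Rightarrow$(ii)$\Rightarrow$(i), of which the first follows from Lemma~\ref{dry}(1) applied to $W := \nabla_\eps(b)$, which trivially carries a descending $\nabla_\eps$-flag of length one, and the second is tautological. All the substance is in (ii)$\Rightarrow$(i), which I would establish by truncating to finite upper sets of $\Lambda$---this is the one place where the upper finiteness of $\Lambda$ enters essentially.

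Let $\Omega$ denote the directed set of all finite upper sets of $\Lambda$, ordered by inclusion with smallest element $\emptyset$. For each $\omega \in \Omega$ I would set $\B_\omega := \rho^{-1}(\omega)$ and take $j^\omega:\R \rightarrow \R^\omega$ to be the Serre quotient by the Serre subcategory corresponding to $\B \setminus \B_\omega$; the appropriate Schurian analog of Theorem~\ref{gf3} makes $\R^\omega$ into a finite $\eps$-stratified category. Writing $V^\omega := j^\omega V$, the adjunction together with $j^\omega_* \nabla^\omega_\eps(b) \cong \nabla_\eps(b)$ and hypothesis (ii) yields $\Ext^1_{\R^\omega}(V^\omega, \nabla^\omega_\eps(b)) = 0$ for all $b \in \B_\omega$, so Theorem~\ref{gf2} applied inside $\R^\omega$ gives $V^\omega \in \Delta_\eps(\R^\omega)$, whence $j_!^\omega V^\omega \in \Delta_\eps(\R)$ by the exactness of $j_!^\omega$ on $\Delta_\eps$-flagged objects.

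The crux, and the main obstacle, is to show that each counit $\phi^\omega:j_!^\omega V^\omega \rightarrow V$ is a monomorphism. For $\omega \subseteq \tau$ in $\Omega$, Corollary~\ref{iuseitoften}(2) applied inside $\R^\tau$ to the upper set $\omega$, combined with the exactness of $j_!^\tau$, produces a short exact sequence $0 \rightarrow j_!^\omega V^\omega \rightarrow j_!^\tau V^\tau \rightarrow j_!^\tau Q \rightarrow 0$ whose inclusion $\iota$ fits into a factorization $\phi^\omega = \phi^\tau \circ \iota$ arising from the identity $j_!^\omega \cong j_!^\tau \circ p_\omega^{\tau,!}$. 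Consequently $K_\omega := \ker \phi^\omega$ embeds into $K_\tau := \ker \phi^\tau$. Since the triangle identity forces $j^\omega \phi^\omega$ to be an isomorphism, $j^\omega K_\omega = 0$, so every composition factor of $K_\omega$ has the form $L(a)$ with $a \notin \B_\omega$; the choice $\tau := \omega \cup [\rho(a), \infty) \in \Omega$ (finite precisely because $\Lambda$ is upper finite) would then place $a \in \B_\tau$ and force $j^\tau K_\tau = 0$ to exclude $L(a)$ as a factor of $K_\tau \supseteq K_\omega$, a contradiction. Hence $K_\omega = 0$ and $V_\omega := \phi^\omega(j_!^\omega V^\omega) \cong j_!^\omega V^\omega$ lies in $\Delta_\eps(\R)$. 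The family $(V_\omega)_{\omega \in \Omega}$ is then the sought ascending $\Delta_\eps$-flag: the inclusion $V_\omega \subseteq V_\tau$ follows from the factorization, the quotient $V_\tau/V_\omega \cong j_!^\tau Q$ lies in $\Delta_\eps(\R)$, and $V = \bigcup_{\omega \in \Omega} V_\omega$ because, in a pointed algebra realization of $\R$, $V_\omega$ is the $A$-submodule generated by $\bigoplus_{a \in \B_\omega} e_a V$ while every element of $V$ is homogeneous. For the closing assertion, $V \in \Delta_\eps(\R)$ plainly forces finite generation, and conversely a finitely generated $V$ satisfying (ii) coincides with $V_\omega \in \Delta_\eps(\R)$ for any $\omega \in \Omega$ containing the (finitely many) strata supporting its generators.
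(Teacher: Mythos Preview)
Your proposal is correct and follows essentially the same approach as the paper: the same directed system $\Omega$ of finite upper sets, the same candidates $V_\omega := j_!^\omega(j^\omega V)$, the same factorization $\phi^\omega = \phi^\tau \circ \iota$ coming from Corollary~\ref{iuseitoften}(2) and exactness of $j_!^\tau$ on $\Delta_\eps$-flags, and the same conclusion that $(V_\omega)_{\omega\in\Omega}$ is an ascending $\Delta_\eps$-flag.

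The only noteworthy difference is in how you establish that each counit $\phi^\omega$ is a monomorphism. The paper works in a pointed algebra realization and first shows directly that $e_b V_\omega \to e_b V$ is an \emph{isomorphism} for every $b\in\B_\omega$ (via the explicit inverse $e_b v \mapsto e_b\otimes e_b v$); it then uses the factorization to pull back injectivity at any $b$ from a larger $\tau$. You instead argue by contradiction: the triangle identity gives $j^\omega K_\omega=0$, so a hypothetical composition factor $L(a)$ of $K_\omega$ has $a\notin\B_\omega$, and enlarging to $\tau\ni\rho(a)$ contradicts $j^\tau K_\tau=0$ via $K_\omega\hookrightarrow K_\tau$. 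This is a clean categorical repackaging of the same idea; what the paper's more explicit computation additionally buys is the surjectivity of $e_b V_\omega \to e_b V$ for $b\in\B_\omega$, which immediately gives $\sum_\omega V_\omega = V$ without further comment. Your justification of exhaustion (``$V_\omega$ is the submodule generated by $\bigoplus_{a\in\B_\omega} e_a V$'') is correct for the same reason, though the phrase ``every element of $V$ is homogeneous'' should read ``every element is a finite sum of homogeneous elements.'' Your argument for the final clause (that a finitely generated $V$ equals $V_\omega$ for large $\omega$) is a legitimate alternative to the paper's, which instead bounds the support of $(V:\Delta_\eps(b))=\dim\Hom_\R(V,\nabla_\eps(b))$ via the head of $V$.
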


\begin{proof}
(iii)$\Rightarrow$(ii). Trivial.

\vspace{1.5mm}
\noindent
(i)$\Rightarrow$(iii). This follows from Lemma~\ref{dry}(1).

\vspace{1.5mm}
\noindent
(ii)$\Rightarrow$(i).
Let $\Omega$ be the directed set of finite upper sets in $\Lambda$.
Take $\omega \in \Omega$; it is some finite upper set $\Lambda\u$.
Let $\B\u := \rho^{-1}(\Lambda\u)$ and $j:\R\rightarrow\R\u$ be the
corresponding Serre quotient.
By Lemma~\ref{sup}(3), $\Ext^1_{\R\u}(j V, \nabla_\eps(b)) = 0$ for
all $b \in \B\u$. Hence, $V_\omega := j_!( j V) \in \Delta_\eps(\R)$
thanks to the dual of Lemma~\ref{phase1}.
Let $f_\omega:V_\omega \rightarrow V$ be the morphism induced by the
counit of adjunction.
We claim for any $b \in \B\u$
that the map 
$$
f_\omega(b):\Hom_{\R}(P(b),
V_\omega)
\rightarrow \Hom_{\R}(P(b), V), \:\theta \mapsto f_\omega \circ \theta
$$ 
is an isomorphism. To see this, we assume that $\R = A\lfdlmod$ for a 
pointed locally finite-dimensional locally unital algebra
$A = \bigoplus_{a,b\in\B} e_a A e_b$. Then $\R\u = eAe\lfdlmod$
where
$e = \sum_{a \in \B\u} e_a$,
and
$V_\omega = Ae \otimes_{eAe} e V$.
In these terms, the map $f_\omega$ is the natural multiplication map.
For $b \in \B\u$, this multiplication map
gives an isomorphism $e_b V_\omega
\stackrel{\sim}{\rightarrow} e_b V$
with inverse $e_b v \mapsto e_b \otimes e_b v$.
This proves the claim.

Now take $\upsilon > \omega$, i.e., another finite upper set
$\Lambda\uu \supset\Lambda\u$, and
let $k:\R\rightarrow \R\uu$ be the associated quotient.
The quotient functor $j:\R\rightarrow\R\u$ factors as $j = \bar\jmath
\circ k$
for another quotient functor $\bar\jmath:\R\uu \rightarrow \R\u$, 
and we have that
$$
V_\omega = (\bar\jmath \circ k)_!
((\bar\jmath \circ k) V) \cong k_! (\bar\jmath_! (\bar\jmath (k V))),\qquad
V_\upsilon = k_! (k V).
$$
By Corollary~\ref{iuseitoften}(2),
there is a short exact sequence
$0 \rightarrow \bar\jmath_! (\bar\jmath (k V)) \rightarrow k V \rightarrow 
Q \rightarrow 0$ such that both
$\bar\jmath_! (\bar\jmath (k V))$ and $Q$ belong to $\Delta_\eps(\R\uu)$.
Applying $k_!$ and using the exactness 
from the dual of Lemma~\ref{phase1},  we
get an embedding $f_{\omega}^\upsilon:V_\omega
\hookrightarrow V_\upsilon$ such that 
$V_\upsilon /
V_\omega \cong k_! Q \in \Delta_\eps(\R)$.
Since the morphisms all came from counits of adjunction, we have that
$f_\upsilon \circ f_{\omega}^\upsilon =
f_\omega$.

Now we can show that each $f_\omega$ is a monomorphism.
It suffices to show that $f_\omega(b):\Hom_{\R}(P(b), V_\omega)
\rightarrow \Hom_{\R}(P(b), V)$ is injective for all $b \in \B$.
Choose $\upsilon$ in the previous paragraph to be sufficiently large
so as to ensure that $b \in \B\uu$.
We explained already that $f_\upsilon(b)$ is an isomorphism.
Since $f_\omega = f_\upsilon \circ f_{\omega}^\upsilon$ and
$f_{\omega}^\upsilon$ is a monomorphism, it follows that $f_\omega(b)$
is injective too.
Thus, identifying $V_\omega$ with its image under $f_\omega$, we have defined a direct
system
$(V_\omega)_{\omega \in \Omega}$ of subobjects of $V$ such that
$V_\upsilon / V_\omega \in \Delta_\eps(\R)$ for each $\omega <
\upsilon$.
It remains to observe that $V_\varnothing = 0$ for a trivial reason, and $\sum_{\omega \in
  \Omega} V_\omega = V$ because
we know for each $b \in \B$ that 
$f_\omega(b)$ is surjective for sufficiently large $\omega$.
 
\vspace{1.5mm}\noindent
{Final part:}
If $V \in \Delta_\eps(\R)$, it is obvious that it is finitely
generated since each $\Delta_\eps(b)$ is finitely generated.
Conversely, suppose that $V$ is finitely generated and has an
ascending $\Delta_\eps$-flag.
To see that it is actually a finite flag, it suffices to show that
$\Hom_{\R}(V, \nabla_\eps(b)) = 0$ for all but finitely many $b \in
\B$.
Say $\hd V \cong L(b_1)\oplus\cdots\oplus L(b_n)$.
If $V \rightarrow \nabla_\eps(b)$ is a non-zero homomorphism, we must
have that $\rho(b_i) \leq \rho(b)$ for some $i=1,\dots,n$. Hence,
there are only finitely many choices for $b$ as the poset is upper finite. 
\end{proof}

\begin{corollary}
Let $0 \rightarrow U \rightarrow V \rightarrow W \rightarrow 0$ be a
short exact sequence in $\R$. 
\begin{enumerate}
\item If $U$ and $W$ belong to $\Delta^\asc_\eps(\R)$ (resp., $\Delta_\eps(\R)$) so does $V$.
\item
If
$V$ and $W$ belong to $\Delta^\asc_\eps(\R)$ (resp.,
$\Delta_\eps(\R)$) 
so does $U$.
\end{enumerate}
\end{corollary}

\begin{theorem}[Homological criterion for descending $\nabla_\eps$-flags]\label{gf7}
Assume that $\R$ is an upper finite $\eps$-stratified category.
For $V \in \R$, the following are equivalent:
\begin{itemize}
\item[(i)]
$V \in \nabla_\eps^\desc(\R)$;
\item[(ii)]
$\Ext^1_\R(\Delta_\eps(b), V) = 0$ for all $b \in \B$;
\item[(iii)]
$\Ext^n_\R(\Delta_\eps(b), V) = 0$ for all $b \in \B$ and  $n \geq 1$.
\end{itemize}
Assuming these properties,
$V \in \nabla_\eps(\R)$ if and only if it is finitely cogenerated.
\end{theorem}

\begin{proof}
This is the equivalent dual statement
to Theorem~\ref{gf8}.
\end{proof}

\begin{corollary}
Let $0 \rightarrow U \rightarrow V \rightarrow W \rightarrow 0$ be a
short exact sequence in $\R$. 
\begin{enumerate}
\item
If
$U$ and $W$ belong to $\nabla^\desc_\eps(\R)$ (resp.,
$\nabla_\eps(\R)$) 
so does $V$.
\item
If
$U$ and $V$ belong to $\nabla^\desc_\eps(\R)$ (resp.,
$\nabla_\eps(\R)$) 
so does $W$.
\end{enumerate}
\end{corollary}

The following is the upper finite analog of
Theorem~\ref{gf}; we have dropped part (6) since the proof of that
required objects of $\R\d$ to have finite length.

\begin{theorem}[Truncation to lower sets]\label{gf9}
Assume that $\R$ is an upper finite $\eps$-stratified category.
Suppose that $\Lambda\d$ is a lower set in $\Lambda$.
Let
$\B\d := \rho^{-1}(\Lambda\d)$
and
 $i:\R\d\rightarrow \R$
be the corresponding Serre
subcategory of $\R$ with the induced stratification.
Then $\R\d$ is an upper finite $\eps$-stratified
category.
Moreover:
\begin{enumerate}
\item
The distinguished
objects in $\R\d$ satisfy
$L\d(b) \cong L(b)$,
$P\d(b) \cong i^* P(b)$,
$I\d(b) \cong i^! I(b)$,
$\Delta\d(b) \cong \Delta(b)$,$\bar\Delta\d(b)\cong\bar\Delta(b)$,
$\nabla\d(b)\cong\nabla(b)$ and $\bar\nabla\d(b)\cong\bar
\nabla(b)$
for $b \in \B\d$.
\item 
$i^*$ sends short exact sequences of objects in $\Delta_\eps(\R)$ to short exact sequences,
$i^* \Delta(b) \cong \Delta\d(b)$ and $i^* \bar\Delta(b) \cong \bar\Delta\d(b)$
for $b \in \B\d$,
and $i^*\Delta(b) = i^*\bar\Delta(b) = 0$ for $b \notin \B\d$.
\item
$\Ext^n_{\R}(V, i W) \cong \Ext^n_{\R\d}(i^* V, W)$
for $V \in \Delta_\eps(\R)$, $W \in \R\d$
and all $n \geq 0$.
\item 
$i^!$ sends short exact sequences of objects in $\nabla_\eps(\R)$ to short exact sequences, 
$i^! \nabla(b) \cong \nabla\d(b)$ and $i^! \bar\nabla(b) \cong \bar\nabla\d(b)$
for $b \in \B\d$,
and $i^!\nabla(b) = i^!\bar\nabla(b) = 0$ for $b \notin \B\d$.
\item
$\Ext^n_{\R}(i V, W) \cong \Ext^n_{\R\d}(V, i^! W)$
for $V \in \R\d, W \in \nabla_\eps(\R)$
and all $n \geq 0$.
\end{enumerate}
\end{theorem}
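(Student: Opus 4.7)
The plan is to mirror the proof of Theorem~\ref{gf}, omitting its part (6), whose original argument exploited finite composition length of $\R\d$-objects. All five remaining parts (1)--(5) admit Schurian analogs with essentially the same structure of proof, built on top of the fact that $\R\d$ itself is an upper finite $\eps$-stratified category.

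First I would verify that $\R\d$ itself fits into the framework of Definition~\ref{ufc}. The induced poset $\Lambda\d$ remains upper finite because $[\lambda,\infty)\cap\Lambda\d\subseteq[\lambda,\infty)$ is finite for each $\lambda\in\Lambda\d$. Realizing $\R\cong A\lfdlmod$ via a pointed algebra $A$ as in $\S\ref{errorhere}$, we get $\R\d\cong A\d\lfdlmod$ for the quotient $A\d:=A/(e_c\:|\:c\notin\B\d)$, which is again locally finite-dimensional and locally unital, so $\R\d$ is Schurian. Since $\Lambda\d$ is a lower set, $\R_{\leq\lambda}\subseteq\R\d$ for $\lambda\in\Lambda\d$, and the intrinsic characterization in Lemma~\ref{charac} shows the objects $\Delta(b),\bar\Delta(b),\nabla(b),\bar\nabla(b)$ for $b\in\B\d$ coincide with their $\R\d$-counterparts, giving the first part of (1); the remaining isomorphisms $P\d(b)\cong i^*P(b)$ and $I\d(b)\cong i^!I(b)$ follow from the adjunctions $(i^*,i)$ and $(i,i^!)$. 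Property $(\widehat{P\Delta}_\eps)$ for $\R\d$ is then verified by applying $i^*$ to the projective $P_b$ supplied by $(\widehat{P\Delta}_\eps)$ in $\R$: the result is projective in $\R\d$ since $i^*$ is left adjoint to the exact inclusion $i$, and its $\Delta_\eps$-flag comes from that of $P_b$ once (2) is in place.

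For (2), the exactness of $i^*$ on $\Delta_\eps(\R)$ amounts to showing $(\mathbb{L}_n i^*)V=0$ for all $V\in\Delta_\eps(\R)$ and $n\geq 1$. Induction on the length of the $\Delta_\eps$-flag reduces this to the case $V=\Delta_\eps(b)$. When $b\in\B\d$ we have $\Delta_\eps(b)\in\R\d$ already, so $i^*$ is the identity with vanishing higher derived functors; when $b\notin\B\d$, using upper finiteness of $\Lambda$ I would select a finite upper set $\Lambda\u$ containing $\rho(b)$ together with the finitely many strata appearing in a sufficient initial piece of the projective resolution, pass to the Serre quotient $j:\R\rightarrow\R\u$ (a finite $\eps$-stratified category), and invoke Theorem~\ref{gf}(2) applied to the lower set $\Lambda\u\cap\Lambda\d$ of $\Lambda\u$. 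The formulas $i^*\Delta(b)\cong\Delta\d(b)$ and $i^*\bar\Delta(b)\cong\bar\Delta\d(b)$ for $b\in\B\d$ then hold because $i^*$ acts as the identity on $\R\d$, while $i^*\Delta(b)=i^*\bar\Delta(b)=0$ for $b\notin\B\d$ holds because their irreducible heads $L(b)$ are killed by $i^*$. Statement (3) follows by transporting a projective resolution of $V\in\Delta_\eps(\R)$ (built from sums of the $P_b$) through the now-exact $i^*$ to a projective resolution of $i^*V$ in $\R\d$, combined with the adjunction $\Hom_\R(P,iW)\cong\Hom_{\R\d}(i^*P,W)$ on projectives. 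Finally, (4) and (5) are obtained as the dual assertions to (2) and (3) via Theorem~\ref{opstrat} (which converts $\R$ to an upper finite $(-\eps)$-stratified category $\R^{\op}$, exchanging $\Delta_\eps$- with $\nabla_\eps$-flags and $i^*$ with $i^!$) together with Lemma~\ref{skyfall} (identifying the relevant $\Ext$ groups). The main obstacle I anticipate is the vanishing argument underlying (2): without the finite-length tool available in the essentially finite setting, one must check carefully that the truncation-to-finite-upper-set reduction is compatible with $i^*$ and the projective resolutions in play, making this the most delicate step.
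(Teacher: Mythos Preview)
Your overall plan (parallel Theorem~\ref{gf}, then dualize via Theorem~\ref{opstrat} and Lemma~\ref{skyfall}) is right, and matches what the paper does. But there is a genuine gap in your argument for (2), and the detour through truncation to finite upper sets is both unnecessary and not clearly workable.

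For the case $b\in\B\d$ you write that ``$\Delta_\eps(b)\in\R\d$ already, so $i^*$ is the identity with vanishing higher derived functors.'' This is not a valid justification: the derived functors $(\mathbb{L}_n i^*)V$ are computed from a projective resolution of $V$ in $\R$, not in $\R\d$, and membership in $\R\d$ does not by itself force them to vanish. Concretely, in an algebra realization $i^*V\cong A\d\otimes_A V$, so $(\mathbb{L}_n i^*)V\cong\operatorname{Tor}_n^A(A\d,V)$; for $V\in\R\d$ one computes $\operatorname{Tor}_1^A(A\d,V)\cong J\otimes_A V$ with $J=\ker(A\twoheadrightarrow A\d)$, which need not vanish. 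Indeed, the vanishing of $(\mathbb{L}_n i^*)$ on all of $\R\d$ is exactly what underlies part (6) of Theorem~\ref{gf}, and the paper explicitly drops that part here because its proof breaks down without finite length. So you cannot simply invoke it. For the case $b\notin\B\d$, your truncation to a finite upper set $\Lambda\u$ would require relating $i^*:\R\to\R\d$ to the analogous functor on $\R\u$; since one is a subcategory truncation and the other a quotient truncation, there is no obvious compatibility, and you flag this yourself as the main obstacle without resolving it.

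The paper's route is much simpler. Lemma~\ref{sub} in \S\ref{ssp} is stated and proved \emph{simultaneously} for the essentially finite Abelian and the Schurian (upper finite) settings; its hypotheses are just $(\widehat{P\Delta}_\eps)$ and Lemma~\ref{against}. In particular, Lemma~\ref{sub}(3) gives $(\mathbb{L}_n i^*)\Delta_\eps(b)=0$ for \emph{every} $b\in\B$ by a direct degree-shifting argument: take $0\to K\to P_b\to\Delta_\eps(b)\to 0$ with $P_b$ the projective from $(\widehat{P\Delta}_\eps)$, reorder the $\Delta_\eps$-flag of $K$ via Lemma~\ref{against} to split off the $\B\d$-part, and read off the vanishing. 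No truncation to finite quotients is needed, and no appeal to ``$V\in\R\d\Rightarrow(\mathbb{L}_n i^*)V=0$'' is made. With Lemma~\ref{sub} in hand, parts (1)--(3) are immediate and (4)--(5) follow from the dual statement, which is exactly the one-line proof in the paper.
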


\begin{proof}
This follows from Lemma~\ref{sub} and the dual statement.
\end{proof}

Next is the upper finite analog of Theorem~\ref{gf3}.

\begin{theorem}[Truncation to upper sets]\label{gf6}
Assume that $\R$ is an upper finite $\eps$-stratified category.
Suppose that $\Lambda\u$ is an upper set in $\Lambda$.
Let $\B\u := \rho^{-1}(\Lambda\u)$ and $j:\R\rightarrow \R\u$
be the corresponding Serre quotient category
of $\R$ with the induced stratification.
Then $\R\u$ is itself a finite or upper finite $\eps$-stratified
category according to whether $\Lambda\u$ is finite or infinite.
Moreover:
\begin{enumerate}
\item
For $b \in \B\u$, the distinguished objects
$L\u(b)$,
$P\u(b)$,
$I\u(b)$,
$\Delta\u(b)$, $\bar\Delta\u(b)$,
$\nabla\u(b)$ and $\bar\nabla\u(b)$
 in $\R\u$ are isomorphic to the images under $j$ of the corresponding objects of
 $\R$. 
\item We have that
$
j L(b) = j \Delta(b) = j \bar\Delta(b) = j \nabla(b) = j
\bar\nabla(b) = 0$ if $b \notin \B\u$.
\item
  $\Ext^n_\R(V, j_* W) \cong \Ext^n_{\R\u}(j V, W)$ for $V \in \R,
 W \in \nabla^\desc_\eps(\R\u)$ and all $n\geq 0$.
\item 
$j_*$ sends short exact sequences of objects in $\nabla_\eps(\R\u)$ to short exact sequences, 
$j_* \nabla\u(b) \cong \nabla(b)$, $j_* \bar\nabla\u(b) \cong
\bar\nabla(b)$
and $j_* I\u(b) \cong I(b)$
for $b \in \B\u$.
\item
  $\Ext^n_\R(j_! V, W) \cong \Ext^n_{\R\u}(V, jW)$ for $V \in
  \Delta^\asc_\eps(\R\u)$, $W \in \R$ and all $n\geq 0$.
\item 
$j_!$ sends short exact sequences of objects in $\Delta_\eps(\R\u)$ to short exact sequences,
$j_! \Delta\u(b) \cong \Delta(b)$, $j_! \bar\Delta\u(b) \cong
\bar\Delta(b)$
and $j_! P\u(b) = P(b)$
for $b \in \B\u$.
\end{enumerate}
\end{theorem}

\begin{proof}
If $\Lambda\u$ is finite, this is
proved in just the same way as Theorem~\ref{gf3}.
Assume instead that $\Lambda\u$ is infinite.
Then the same arguments prove (1) and (2), but the proofs of the remaining parts
need 
some slight modifications. It suffices to prove (3) and (4), since (5) and
(6) are the same results for $\R^\op$.

For (3), the argument from the proof of Lemma~\ref{sup}(3) reduces to
checking that $j$ sends projectives to objects that are acyclic for
$\Hom_{\R\u}(?, W)$. To see this, it suffices to show that
$\Ext^n_{\R\u}(j P(b), W) = 0$ for $n \geq 1$ and $b \in \B$, which
follows from Lemma~\ref{dry}(1).

Finally, for (4), the argument from the proof of Theorem~\ref{gf3}(4)
cannot be used since it
depends on $\R\u$ being essentially finite Abelian. So we provide an alternate argument.
Take a short exact sequence 
$0
\rightarrow U \rightarrow V \rightarrow W \rightarrow 0$ in
$\nabla_\eps(\R\u)$.
Applying $j_*$, we get 
$0 \rightarrow j_* U \rightarrow j_* V \rightarrow j_* W$,
and just need to show that the final morphism here is an epimorphism.
This follows because,
by (3) and Theorem~\ref{gf7}, $j_* U$, $j_* V$ and $j_* W$ all have
$\nabla_\eps$-flags such that $(j_* V:\nabla_\eps(b)) = (j_*
U:\nabla_\eps(b)) + (j_* W : \nabla_\eps(b))$ for all $b \in
\B$.
\end{proof}

The reader should have no difficulty in transporting Lemma~\ref{rain}
and Corollary~\ref{sun} to
the upper finite setting. Also, Lemma~\ref{strata} remains valid when ``finite
$\eps$-stratified category"
is replaced by ``upper finite $\eps$-stratified category". To see this, we just note that
the argument by downwards induction on the partial order explained in the proof 
works just as well when $\Lambda$ is upper finite rather than finite.
The following is the upper finite analog of Corollary~\ref{inparticulartiltings}.

\begin{lemma}\label{inparticulartiltingsupper}
If $\R$ is an upper finite $+$-stratified (resp., $-$-stratified) category then all $V \in \Delta(\R)$ (resp., $V \in \nabla(\R)$)
are of finite projective (resp., injective) dimension.
\end{lemma}

\begin{proof}
This follows from the upper finite analog of Lemma~\ref{strata}.
\end{proof}

\subsection{\boldmath Shared lemmas for $\S\S$\ref{ssn}--\ref{sst}}\label{ssp}
In this subsection, we prove a series of lemmas needed in both
$\S$\ref{ssn} and in $\S$\ref{sst}. 
Let $\R$ be an Abelian category equipped with a
stratification $(\B,L,\rho,\Lambda,\leq)$ which is {\em either}
essentially finite ($\S$\ref{ssn}) {\em or} upper finite ($\S$\ref{sst}).
Also let
$\eps:\Lambda\rightarrow \{\pm\}$ be a sign function.
We assume throughout the subsection that the property
($\widehat{P\Delta}_\eps$) from $\S$\ref{ssn} holds.

\begin{lemma}\label{against}
We have that
$\Ext^1_{\R}(\Delta_\eps(b), \Delta_\eps(c)) = 0$
for $b,c \in \B$ such that $\rho(b)\not\leq\rho(c)$.
\end{lemma}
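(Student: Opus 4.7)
The plan is to use property $(\widehat{P\Delta}_\eps)$ to produce a projective resolution-like short exact sequence for $\Delta_\eps(b)$, and then analyze the connecting homomorphism. Concretely, let $\mu := \rho(c)$ and assume $\rho(b) \not\leq \mu$. By $(\widehat{P\Delta}_\eps)$ there is a projective object $P_b$ together with a short exact sequence
\begin{equation*}
0 \longrightarrow K \longrightarrow P_b \longrightarrow \Delta_\eps(b) \longrightarrow 0
\end{equation*}
such that $K$ admits a $\Delta_\eps$-flag whose sections are of the form $\Delta_\eps(a)$ with $\rho(a) \geq \rho(b)$. Applying $\Hom_{\R}(-, \Delta_\eps(c))$ and using that $P_b$ is projective yields the exact sequence
\begin{equation*}
\Hom_{\R}(K, \Delta_\eps(c)) \longrightarrow \Ext^1_{\R}(\Delta_\eps(b), \Delta_\eps(c)) \longrightarrow 0,
\end{equation*}
so it suffices to prove that $\Hom_{\R}(K, \Delta_\eps(c)) = 0$.

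Using the $\Delta_\eps$-flag of $K$ and induction along it (applying $\Hom_{\R}(-, \Delta_\eps(c))$ to each short exact sequence in the filtration), the vanishing reduces to showing that $\Hom_{\R}(\Delta_\eps(a), \Delta_\eps(c)) = 0$ for every $a \in \B$ with $\rho(a) \geq \rho(b)$. For such an $a$ our standing hypothesis $\rho(b) \not\leq \rho(c)$ forces $\rho(a) \not\leq \rho(c) = \mu$, since otherwise $\rho(b) \leq \rho(a) \leq \mu$. Thus the problem reduces to the following order-theoretic vanishing statement: if $\rho(a) \not\leq \rho(c)$ then $\Hom_{\R}(\Delta_\eps(a), \Delta_\eps(c)) = 0$.

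For the reduced statement, observe that the head of $\Delta_\eps(a)$ is $L(a)$ (this is true for both $\Delta(a)$ and $\bar\Delta(a)$ by Lemma~\ref{charac}), so the image of any nonzero morphism $\Delta_\eps(a) \to \Delta_\eps(c)$ is a nonzero quotient of $\Delta_\eps(a)$ whose head is $L(a)$. In particular $L(a)$ would appear as a composition factor of $\Delta_\eps(c)$; but all composition factors of $\Delta_\eps(c)$ lie in $\R_{\leq \mu}$, so $\rho(a) \leq \mu = \rho(c)$, contradicting $\rho(a) \not\leq \rho(c)$.

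No serious obstacle is expected here — the argument is a clean two-step reduction — provided one may freely invoke $(\widehat{P\Delta}_\eps)$ and the head description of $\Delta_\eps(a)$. The only mild subtlety is to be sure the inductive argument along the (possibly multi-step) $\Delta_\eps$-flag of $K$ goes through uniformly in both the essentially finite Abelian and the upper finite Schurian settings; this is routine since $K$ has a finite $\Delta_\eps$-flag by definition.
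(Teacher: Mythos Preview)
Your proof is correct and follows essentially the same approach as the paper's: both use the short exact sequence $0\to K\to P_b\to\Delta_\eps(b)\to 0$ coming from $(\widehat{P\Delta}_\eps)$ and conclude by showing the relevant Hom into $\Delta_\eps(c)$ vanishes via the composition-factor support $\rho\leq\rho(c)$ of $\Delta_\eps(c)$. The only cosmetic difference is that the paper resolves $K$ one step further by the projectives $P_a$ (with $\rho(a)\geq\rho(b)$) and observes the middle Hom term of the resulting complex vanishes, whereas you argue directly with the $\Delta_\eps$-flag of $K$; the content is the same.
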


\begin{proof}
Using the projective objects $P_b$ given 
by the assumed property ($\widehat{P\Delta}_\eps$), 
we can construct 
the first terms of a projective resolution of $\Delta_\eps(b)$
in the form 
\begin{equation}\label{billy}
Q \longrightarrow \bigoplus_{\substack{a \in \B\\\rho(a) \geq
  \rho(b)}} P_a^{\oplus n_a} \longrightarrow P_b \longrightarrow
\Delta_\eps(b)\longrightarrow 0
\end{equation} 
for some $n_a \geq 0$.
Now apply $\Hom_\R(?,\Delta_\eps(c))$ to get that
$\Ext^1_{\R}(\Delta_\eps(b), \Delta_\eps(c))$
is the homology of the complex
$$
\Hom_\R(
P_b,\Delta_\eps(c)) \longrightarrow \Hom_\R\Big(\bigoplus_{\substack{a \in \B\\\rho(a) \geq
  \rho(b)}} P_a^{\oplus n_a}, \Delta_\eps(c)\Big)
\longrightarrow 
\Hom_\R(Q, \Delta_\eps(c)).
$$
The middle term of this already vanishes as 
$[\Delta_\eps(c):L(a)] \neq 0\Rightarrow \rho(a) \leq \rho(c)$.
\end{proof}

\begin{lemma}\label{sub}
Let $\Lambda\d$ be a lower set in $\Lambda$
and $\B\d := \rho^{-1}(\Lambda\d)$.
Let $i:\R\d\rightarrow \R$
be the corresponding Serre
subcategory of $\R$ equipped with the induced stratification.
\begin{enumerate}
\item
The standard, proper
standard
and indecomposable projective
objects of $\R\d$ are the objects
$\Delta(b)$, $\bar\Delta(b)$ and 
$i^* P(b)$ for $b \in \B\d$.
\item The object $i^* P_b$ is zero unless $b \in \B\d$, in which case
  it is a projective object admitting a
  $\Delta_\eps$-flag with top section $\Delta_\eps(b)$ and other
  sections of the form
$\Delta_\eps(c)$ for $c \in \B\d$ with $\rho(c) \geq \rho(b)$.
In particular, this shows that $(\widehat{P\Delta}_\eps)$ holds in $\R\d$.
\item
$(\der{L}_n i^*) V = 0$ for $V \in \Delta_\eps(\R)$ and $n \geq 1$.
\item
$
\Ext^n_{\R}(V, i W) \cong \Ext^n_{\R\d}(i^* V, W)
$
 for $V \in \Delta_\eps(\R)$, $W \in \R\d$
and $n \geq 0$.
\end{enumerate}
\end{lemma}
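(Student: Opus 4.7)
The plan is to prove the four parts in order, with part (3) being the principal technical obstacle. For (1), the key observation is that since $\Lambda\d$ is a lower set, the Serre subcategories $\R_{<\lambda} \subseteq \R_{\leq\lambda}$ of $\R$ coincide with the corresponding subcategories of $\R\d$ for any $\lambda \in \Lambda\d$. Hence the recollement diagram (\ref{rst}) defining $\Delta(b), \bar\Delta(b), \nabla(b), \bar\nabla(b)$ for $b \in \B_\lambda$ is identical in $\R$ and $\R\d$, yielding the stated identifications. For the projective covers in $\R\d$: $i^*$ is left adjoint to the exact functor $i$, so it preserves projectivity, and the adjunction gives $\Hom_{\R\d}(i^*P(b), L(c)) \cong \Hom_{\R}(P(b), i L(c)) = \k\cdot\delta_{b,c}$ for $b,c \in \B\d$. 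Thus $i^*P(b)$ has simple head $L(b)$ and is a projective cover of $L(b)$ in $\R\d$.

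For part (2) when $b \notin \B\d$, note that the $\Delta_\eps$-flag of $P_b$ has all sections $\Delta_\eps(c_k)$ with $\rho(c_k) \geq \rho(b)$, so $\dim\Hom_{\R}(P_b, L(c)) \leq |\{k : c_k = c\}|$ vanishes unless $c = c_k$ for some $k$. Since $\Lambda\d$ is a lower set and $\rho(b) \notin \Lambda\d$, each such $c_k$ satisfies $\rho(c_k) \notin \Lambda\d$. Thus $\Hom_{\R\d}(i^*P_b, L(c')) = \Hom_{\R}(P_b, L(c')) = 0$ for all $c' \in \B\d$, forcing $i^*P_b = 0$. For $b \in \B\d$, granting part (3), apply the right exact functor $i^*$ to the $\Delta_\eps$-flag of $P_b$ to obtain a filtration of $i^*P_b$ with sections $i^*\Delta_\eps(c_k)$. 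Each such section equals $\Delta_\eps(c_k)$ when $c_k \in \B\d$ (since then $\Delta_\eps(c_k) \in \R\d$) and vanishes when $c_k \notin \B\d$ (by the same head argument applied to the simple quotient $L(c_k)$ of $\Delta_\eps(c_k)$). Collapsing the trivial sections produces the required flag, with the top section still $\Delta_\eps(b) = i^*\Delta_\eps(b)$.

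The main obstacle is part (3), that $L_n i^* V = 0$ for $V \in \Delta_\eps(\R)$ and $n \geq 1$. The strategy is to reduce, via the long exact sequences for $L_\bullet i^*$ applied to the filtration $0 = V_0 < V_1 < \cdots < V_m = V$, to the case $V = \Delta_\eps(b)$. For that case, I would iterate the construction leading to (\ref{billy}) to build a projective resolution $\cdots \to Q_1 \to Q_0 \to \Delta_\eps(b) \to 0$ in which every $Q_k$ is a finite direct sum of projectives $P_a$ with $\rho(a) \geq \rho(b)$ (the successive kernels remain $\Delta_\eps$-flagged by Corollary~\ref{shanks} or its upper-finite analog). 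When $b \notin \B\d$ all such $a$ lie outside $\B\d$, so by the first part of (2) each $i^*Q_k = 0$ and the complex becomes identically zero. The subtler case $b \in \B\d$ requires upward induction on $\rho(b)$, using the interval finiteness of $\Lambda$ to reduce, one stratum at a time, via lower sets $\Lambda\d \subsetneq \Lambda\d \cup \{\mu\} \subseteq \cdots$ with $\mu$ minimal in $\Lambda\u$; at each such single-step extension the new stratum's recollement $(j^\mu_!, j^\mu, j^\mu_*)$ controls the vanishing of the derived functor directly. The careful bookkeeping of which summands of each $Q_k$ survive $i^*$ and how the inductive hypothesis glues back is the principal difficulty.

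For part (4), take a projective resolution $P_\bullet \twoheadrightarrow V$ in $\R$ whose terms and successive kernels all lie in $\Delta_\eps(\R)$, which is possible by the appropriate version of Corollary~\ref{shanks}. By part (3), $L_n i^* P_i = 0$ and $L_n i^* K_i = 0$ for $n \geq 1$, so $i^* P_\bullet \to i^* V \to 0$ is exact. Each $i^* P_i$ is projective in $\R\d$ by part (2), hence $i^* P_\bullet$ is a projective resolution of $i^* V$ in $\R\d$. Adjunction then gives
\[
\Ext^n_{\R}(V, iW) = H^n \Hom_{\R}(P_\bullet, iW) \cong H^n \Hom_{\R\d}(i^* P_\bullet, W) = \Ext^n_{\R\d}(i^* V, W),
\]
which completes the proof.
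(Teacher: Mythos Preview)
Your approach to (1) is essentially the same as the paper's. But your treatment of (2) and (3) has a genuine gap, and the fix is a single idea you are missing: the $\operatorname{Ext}^1$-vanishing of Lemma~\ref{against} lets you \emph{rearrange} any $\Delta_\eps$-flag so that all sections indexed by $\B\d$ sit above all sections indexed by $\B\setminus\B\d$.

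Concretely, for (2) the paper does not use (3) at all. Instead, given the $\Delta_\eps$-flag of $P_b$, Lemma~\ref{against} allows one to reorder it to obtain a short exact sequence $0\to K\to P_b\to Q\to 0$ with $Q$ having a $\Delta_\eps$-flag with sections in $\B\d$ and $K$ having one with sections in $\B\setminus\B\d$. Since every nonzero quotient of $K$ has some $L(c)$ with $c\notin\B\d$ in its head, one gets $i^*P_b\cong Q$ directly. This proves (2) independently of (3), avoiding the circularity in your argument.

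For (3), the same rearrangement applied to the kernel $K$ of $P_b\twoheadrightarrow\Delta_\eps(b)$ yields $0\to L\to K\to Q\to 0$ as above, hence $i^*K=Q$. Combining this with the description of $i^*P_b$ from (2), one sees that $0\to i^*K\to i^*P_b\to i^*\Delta_\eps(b)\to 0$ is already short exact; comparing with the long exact sequence for $\der L_\bullet i^*$ gives $(\der L_1 i^*)\Delta_\eps(b)=0$ immediately, for \emph{every} $b\in\B$ at once. Higher $n$ then follow by dimension shifting via $(\der L_{n+1}i^*)\Delta_\eps(b)\cong(\der L_n i^*)K$ and induction on $n$, using only that $K$ is $\Delta_\eps$-flagged. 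No induction on $\rho(b)$ is needed, and no iterated resolution is built.

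Your proposed route instead relies on Corollary~\ref{shanks} (to keep successive kernels $\Delta_\eps$-flagged when iterating the resolution), but that corollary is downstream of Theorem~\ref{fund}, whose proof uses this very lemma. So your argument for (3) is circular as written, and the vague ``upward induction on $\rho(b)$'' for $b\in\B\d$ does not fill the gap. For (4), once (3) is in hand the paper simply invokes the Grothendieck spectral sequence for the composite $\Hom_{\R\d}(-,W)\circ i^*$, which degenerates by (3); this avoids any need for $\Delta_\eps$-flagged kernels in the resolution and hence avoids the same circularity present in your version of (4).
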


\begin{proof}
(1) 
For projectives, this follows from the usual adjunction properties.
This also shows that $i^* P_b$ is projective, as needed for (2).
For standard and proper standard objects, just note that the standardization functors for $\R\d$ are some of the
ones for $\R$. 

\vspace{1.5mm}\noindent
(2) 
Consider a $\Delta_\eps$-flag of $P_b$.
Using Lemma~\ref{against}, we can rearrange this filtration if necessary
so that all of the sections 
$\Delta_\eps(c)$ with $c \in \B\d$ appear above
the sections $\Delta_\eps(d)$ with $d \in
\B\setminus\B\d$.
So there exists a short exact sequence
$0 \rightarrow K \rightarrow P_b \rightarrow Q \rightarrow 0$
in which $Q$ has a finite filtration with 
sections $\Delta_\eps(c)$ for $c \in \B\d$ with $\rho(c)
\geq \rho(b)$,
and $K$ has a finite filtration with sections $\Delta_\eps(c)$ for
$c \in \B \setminus \B\d$.
It follows easily that $i^* P_b$ is isomorphic to $Q$, so it has the appropriate
filtration.

\vspace{1.5mm}\noindent
(3)
It suffices to show that
$(\der{L}_n i^*) \Delta_\eps(b) = 0$
for all $b \in \B$ and $n > 0$.
Take a short exact sequence $0 \rightarrow K \rightarrow P_b \rightarrow
\Delta_\eps(b) \rightarrow 0$ such that
$K$ has a $\Delta_\eps$-flag with sections
$\Delta_\eps(c)$ for $c$ with $\rho(c) \geq \rho(b)$.
Applying $i^*$, we obtain the long exact sequence
$$
0 \longrightarrow (\mathbb{L}_1 i^*) \Delta_\eps(b) \longrightarrow i^* K \longrightarrow
i^* P_b \longrightarrow i^*
\Delta_\eps(b)\longrightarrow 0
$$
and isomorphisms
$(\der{L}_{n+1} i^*) \Delta_\eps(b) \cong (\der{L}_n i^*) K$ for $n > 0$.
We claim that $(\der{L}_1 i^*) \Delta_\eps(b) = 0$.
We use Lemma~\ref{against}
to order the $\Delta_\eps$-flag of $K$ so that it yields
a short exact sequence
$0 \rightarrow L \rightarrow K \rightarrow Q \rightarrow 0$
in which $Q$ has a $\Delta_\eps$-flag with sections
$\Delta_\eps(c)$ for $c \in \B\d$, 
and $L$ has a $\Delta_\eps$-flag with sections $\Delta_\eps(c)$ for $c \in \B\setminus\B\d$.
It follows that $i^* K = Q$ and there is a short exact
sequence
$0 \rightarrow i^* K \rightarrow i^* P_b \rightarrow
\Delta_\eps(b) \rightarrow 0$.
Comparing with the long exact sequence, we deduce that $(\der{L}_1 i^*)\Delta_\eps(b) = 0$.
Finally some degree
shifting using the isomorphisms
$(\der{L}_{n+1} i^*) \Delta_\eps(b) \cong (\der{L}_n i^*) K$ 
gives that $(\der{L}_n i^*) \Delta_\eps(b) = 0$ for $n > 1$ too.

\vspace{1.5mm}\noindent
(4)
By the adjunction, we have that $
\Hom_{\R}(?, i W) \cong \Hom_{\R\d}(?, W) \circ i^*,$
i.e., the result holds when $n=0$.
Also $i^*$ sends projectives to projectives as it is left adjoint to
an exact functor.
Now the result for $n > 0$ follows by a standard Grothendieck spectral
sequence argument; the spectral sequence degenerates due to (3).
\end{proof} 

\begin{lemma}\label{maxcase}
Suppose that $\lambda \in\Lambda$ is maximal and $b \in \B_\lambda$. Then $P(b) \cong \Delta(b)$
and $I(b) \cong \nabla(b)$.
\end{lemma}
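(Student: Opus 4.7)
The plan is to prove both claims by showing that $P(b)$ and $I(b)$ themselves already belong to the Serre subcategory $\R_{\leq\lambda}$. Once this is established, the two isomorphisms are immediate: $P(b)$ (resp., $I(b)$) is an indecomposable projective cover (resp., injective hull) of $L(b)$ in $\R$, and any such object lying in a Serre subcategory containing $L(b)$ remains a projective cover (resp., injective hull) there, which is precisely the characterization of $\Delta(b)$ (resp., $\nabla(b)$) from Lemma~\ref{charac}.

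For $P(b) \cong \Delta(b)$, I would apply hypothesis $(\widehat{P\Delta}_\eps)$ to obtain a projective object $P_b$ equipped with a $\Delta_\eps$-flag whose sections are $\Delta_\eps(c)$ for $c \in \B$ with $\rho(c) \geq \rho(b) = \lambda$. Maximality of $\lambda$ forces $\rho(c) = \lambda$ for every such $c$, so each section lies in $\R_{\leq\lambda}$ and therefore so does $P_b$. Since the top section is $\Delta_\eps(b)$, which has $L(b)$ as an irreducible quotient, $L(b)$ appears in the head of $P_b$; hence $P(b)$ is a direct summand of $P_b$ and so lies in $\R_{\leq\lambda}$.

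The main obstacle is the injective statement: at this point in the development only $(\widehat{P\Delta}_\eps)$ is available, since the equivalence with $(\widehat{I\nabla}_\eps)$ is exactly what Theorem~\ref{fund} will eventually prove, with the present lemma feeding into that argument via Lemma~\ref{thesublemma}. To bypass this I would use the standard composition-multiplicity identities
$$
[I(b):L(c)] \;=\; \dim \Hom_\R(P(c), I(b)) \;=\; [P(c):L(b)],
$$
which are valid in both the essentially finite Abelian and the Schurian setting (e.g.\ (\ref{compmults})). It therefore suffices to prove that $[P(c):L(b)] \neq 0$ implies $\rho(c) \leq \lambda$. Realizing $P(c)$ as a direct summand of the projective $P_c$ furnished by $(\widehat{P\Delta}_\eps)$ and examining its $\Delta_\eps$-flag, if $L(b)$ is a composition factor of $P(c)$ then $[\Delta_\eps(d):L(b)] \neq 0$ for some $d$ with $\rho(d) \geq \rho(c)$, which forces $\rho(b) \leq \rho(d)$. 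Because $\rho(b) = \lambda$ is maximal this gives $\rho(d) = \lambda$, hence $\rho(c) \leq \lambda$. Consequently every composition factor $L(c)$ of $I(b)$ satisfies $\rho(c) \leq \lambda$, so $I(b) \in \R_{\leq\lambda}$, completing the proof.
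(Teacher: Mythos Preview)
Your proof is correct, and your treatment of $P(b)\cong\Delta(b)$ coincides with the paper's. For $I(b)\cong\nabla(b)$, however, you take a genuinely different and more elementary route.

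The paper argues by considering the short exact sequence $0\to\nabla(b)\to I(b)\to Q\to 0$ with $i^!_{\leq\lambda}Q=0$, then uses the vanishing $\Ext^n_\R(\Delta_\eps(a),\nabla(b))=0$ for $n>0$ (obtained from Lemma~\ref{sub}(4)) to deduce $\Hom_\R(\Delta_\eps(a),Q)=0$ for all $a$. In the essentially finite Abelian case this already forces $\soc Q=0$, hence $Q=0$; in the Schurian case a further step is needed, since $Q$ might have zero socle without being zero, and the paper pushes on to show $\Ext^1_\R(\Delta_\eps(a),Q)=0$ as well, whence $\Hom_\R(P(a),Q)=0$ for all $a$ and $Q=0$.

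Your argument sidesteps this entirely by invoking the reciprocity $[I(b):L(c)]=[P(c):L(b)]$ from (\ref{compmults}) to transfer the question back to the projective side, where $(\widehat{P\Delta}_\eps)$ is directly available. This avoids both Lemma~\ref{sub}(4) and the case distinction between the essentially finite Abelian and Schurian settings, and it makes no appeal to $\Ext$-vanishing. It is a cleaner proof of this particular lemma; the paper's approach, while longer, has the minor virtue of rehearsing the $\Ext$-vanishing mechanism that drives the surrounding arguments in $\S$\ref{ssp}.
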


\begin{proof}
Lemma~\ref{charac} shows that 
$\Delta(b) \cong i^*_{\leq\lambda} P(b)$
and
$\nabla(b) \cong i^!_{\leq\lambda} I(b)$.

To complete the proof for $P(b)$, it remains to observe that
$P(b)$ belongs to $\R_{\leq\lambda}$, so 
$i^*_{\leq\lambda} P(b)=P(b)$.
This follows from $\widehat{P\Delta}_\eps$:
the object $P_b$ belongs to $\R_{\leq\lambda}$
due to the maximality of $\lambda$ and $P(b)$ is a summand of it.

The proof for $I(b)$ needs a different approach.
From $\nabla(b) \cong i^!_{\leq\lambda} I(b)$, we deduce that there is
a short exact sequence
$0 \rightarrow
\nabla(b) \rightarrow I(b) \rightarrow Q \rightarrow 0$
with $i^!_{\leq\lambda} Q = 0$, and
we must show that $Q = 0$.
Take $a \in \B$ and apply $\Hom_\R(\Delta_\eps(a),?)$ to this short exact sequence to get
an exact sequence
\begin{equation}\label{Ta1}
\Hom_\R(\Delta_\eps(a), I(b))\longrightarrow \Hom_\R(\Delta_\eps(a), Q)
\longrightarrow 0
\end{equation}
and isomorphisms
\begin{equation}\label{Ta2}
\Ext^{n+1}_\R(\Delta_\eps(a), \nabla(b)) \cong \Ext^n_\R(\Delta_\eps(a), Q)
\end{equation}
for $n \geq 1$.
If $\rho(a) = \lambda$ then
$\Hom_\R(\Delta_\eps(a), Q) = 0$
because $i^!_{\leq\lambda} Q = 0$.
If $\rho(a) \neq \lambda$, then in fact 
we have that $\rho(a) \not\geq \lambda$ by the assumed maximality of $\lambda$, so
$[\Delta_\eps(a):L(b)] = 0$. Hence, 
$\Hom_\R(\Delta_\eps(a), I(b)) = 0$, implying in view of (\ref{Ta1}) 
that $\Hom_\R(\Delta_\eps(a), Q) = 0$ again.
Thus, we have shown that $\Hom_\R(\Delta_\eps(a), Q) = 0$ for all $a \in \B$.
This implies that $\soc Q = 0$.
In the essentially finite Abelian case, this is all that is needed to deduce that $Q = 0$, completing the proof.
In the Schurian case,
we need to argue a little further
because $Q$ need not be finitely cogenerated, so can have zero socle
even when it is itself non-zero.
We have for any $a \in \B$ that
$\Ext^n_\R(\Delta_\eps(a), \nabla(b)) = 0$
for $n > 0$. This follows using Lemma~\ref{sub}(4): it shows
that
$\Ext^n_\R(\Delta_\eps(a), \nabla(b)) \cong \Ext^n_{\R_{\leq
    \lambda}}(i_{\leq\lambda}^* \Delta_\eps(a), \nabla(b))$ which is
zero as $\nabla(b)$ is injective in $\R_{\leq\lambda}$.
Combining this with (\ref{Ta2}), we get that 
$\Ext^1_\R(\Delta_\eps(a), Q) = 0$.
Now we observe that the properties
$\Hom_{\R}(\Delta_\eps(a), Q) = 0 = \Ext^1_{\R}(\Delta_\eps(a),Q)$
for all $a \in \B$ do imply that $Q$ is zero.
Indeed, we have that 
$\Hom_{\R}(P, Q) = \Ext^1_{\R}(P, Q) = 0$ for any $P \in
\R$ with a $\Delta_\eps$-flag. This follows using induction on the
length of the flag plus the long exact sequence.
Since $P_b$ has a $\Delta_\eps$-flag by the hypothesis $(\widehat{P\Delta}_\eps)$
and $P(b)$ is a summand of it,
we deduce that $\Hom_\R(P(b), Q) = 0$ for all $b \in \B$, which
certainly implies that $Q = 0$.
\end{proof}

\begin{lemma}\label{quot}
Assume that $\lambda \in \Lambda$ is maximal
and $\eps(\lambda) = +$.
For any $V \in \R_\lambda$ and $b \in \B$, we have that
$\Ext^1_{\R}(\Delta_\eps(b), j^\lambda_* V) = 0.$
\end{lemma}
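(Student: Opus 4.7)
The plan is to reduce the question to one inside the Serre subcategory $\R_{\leq\lambda}$ via Lemma~\ref{sub}(4) and then split into cases according to the value of $\rho(b)$. First, because $\Delta_\eps(b) \in \Delta_\eps(\R)$ and $j^\lambda_* V \in \R_{\leq\lambda}$, Lemma~\ref{sub}(4) supplies the natural isomorphism
\[
\Ext^1_\R(\Delta_\eps(b), j^\lambda_* V) \cong \Ext^1_{\R_{\leq\lambda}}\bigl(i_{\leq\lambda}^* \Delta_\eps(b),\, j^\lambda_* V\bigr).
\]
If $\rho(b) \not\leq \lambda$, I will use the fact that $\Delta_\eps(b)$ has simple head $L(b)$: any non-zero quotient of $\Delta_\eps(b)$ still has $L(b)$ in its head, so no non-zero quotient lies in $\R_{\leq\lambda}$, forcing $i_{\leq\lambda}^* \Delta_\eps(b) = 0$ and the Ext to vanish.

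So assume $\rho(b) \leq \lambda$, in which case $i_{\leq\lambda}^* \Delta_\eps(b) = \Delta_\eps(b)$ and the task is to show $\Ext^1_{\R_{\leq\lambda}}(\Delta_\eps(b), j^\lambda_* V) = 0$. Given an extension $0 \to j^\lambda_* V \xrightarrow{i} E \to \Delta_\eps(b) \to 0$ in $\R_{\leq\lambda}$, I will apply the exact functor $j^\lambda$ to obtain $0 \to V \to j^\lambda E \to j^\lambda \Delta_\eps(b) \to 0$ in $\R_\lambda$. The crux of the argument is the observation that $j^\lambda \Delta_\eps(b)$ is projective in $\R_\lambda$: when $\rho(b) < \lambda$ it is simply zero (all composition factors lie in $\R_{<\lambda}$), while when $\rho(b) = \lambda$ the hypothesis $\eps(\lambda) = +$ gives $\Delta_\eps(b) = \Delta(b) = j^\lambda_! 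P_\lambda(b)$, so $j^\lambda\Delta(b) \cong P_\lambda(b)$ by the $(j^\lambda_!, j^\lambda)$ adjunction (unit iso). Consequently the $j^\lambda$-image splits, yielding a retraction $r \colon j^\lambda E \to V$ with $r \circ (j^\lambda i) = \id_V$.

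Finally I will lift this splitting back to $\R_{\leq\lambda}$ using the unit $\eta$ of the adjunction $(j^\lambda, j^\lambda_*)$. Consider the composite $(j^\lambda_* r) \circ \eta_E \colon E \to j^\lambda_* V$. By naturality of $\eta$, $\eta_E \circ i = (j^\lambda_* j^\lambda i) \circ \eta_{j^\lambda_* V}$, and the triangle identity together with the fact that $j^\lambda_*$ is fully faithful (the counit $j^\lambda j^\lambda_* \to \id$ is an isomorphism by Lemma~\ref{recollementj}) forces $\eta_{j^\lambda_* V}$ to be an isomorphism. Therefore
\[
(j^\lambda_* r) \circ \eta_E \circ i \;=\; j^\lambda_*\bigl(r \circ j^\lambda i\bigr) \circ \eta_{j^\lambda_* V} \;=\; \eta_{j^\lambda_* V},
\]
so $\eta_{j^\lambda_* V}^{-1} \circ (j^\lambda_* r) \circ \eta_E$ is a retraction of $i$ and the extension splits.

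There is no serious obstacle here. The argument is essentially a formal splitting-lifting calculation, and the only substantive input is the identification of $j^\lambda \Delta_\eps(b)$ for $b \in \B_\lambda$ with the projective $P_\lambda(b)$, which is exactly what the positive sign $\eps(\lambda) = +$ delivers; without that sign condition $\Delta_\eps(b)$ would be $\bar\Delta(b)$ and $j^\lambda$ would produce $L_\lambda(b)$ instead of $P_\lambda(b)$, and the splitting argument in $\R_\lambda$ would fail.
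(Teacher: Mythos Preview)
Your proof is correct and takes a genuinely different route from the paper's. The paper splits into two cases: for $b\in\B_\lambda$ it invokes Lemma~\ref{maxcase} directly to see that $\Delta_\eps(b)=\Delta(b)\cong P(b)$ is projective; for $b\notin\B_\lambda$ it embeds $j^\lambda_*V$ into $j^\lambda_*I$ for an injective hull $I$ of $V$ in $\R_\lambda$ (this is injective in $\R$ again by Lemma~\ref{maxcase}, since $j^\lambda_*I_\lambda(c)=\nabla(c)\cong I(c)$), observes that the socle of $j^\lambda_*I/j^\lambda_*V$ lives entirely in stratum $\lambda$ so admits no maps from $\Delta_\eps(b)$, and splits the extension by lifting into $j^\lambda_*I$.

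Your argument instead first passes to $\R_{\leq\lambda}$ via Lemma~\ref{sub}(4), then pushes the extension down to $\R_\lambda$ via $j^\lambda$, splits there because $j^\lambda\Delta_\eps(b)$ is projective (either $0$ or $P_\lambda(b)$), and lifts the retraction back through the unit of $(j^\lambda,j^\lambda_*)$. This is more uniform---both cases $\rho(b)<\lambda$ and $\rho(b)=\lambda$ are handled by the same mechanism---and it uses only the formal recollement structure, avoiding any appeal to injective hulls or socle analysis. The paper's approach, by contrast, bypasses Lemma~\ref{sub}(4) and works directly in $\R$, at the cost of needing Lemma~\ref{maxcase} twice (once for projectivity, once for injectivity). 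One small remark: in your displayed computation you are silently identifying $j^\lambda j^\lambda_*V$ with $V$ via the counit, so strictly speaking $r\circ(j^\lambda i)=\epsilon_V$ rather than $\id_V$; the triangle identity $(j^\lambda_*\epsilon_V)\circ\eta_{j^\lambda_*V}=\id$ then gives the retraction directly without needing to invert $\eta_{j^\lambda_*V}$ separately. This does not affect correctness.
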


\begin{proof}
If $b \in \B_\lambda$ then $\Delta_\eps(b)$ 
is projective in $\R_{\leq\lambda}$ by Lemma~\ref{maxcase}, 
so we get the $\Ext^1$-vanishing in this case.
For the remainder of the proof, suppose that $b \notin \B_\lambda$.
Let $I$ be an injective hull of $V$ in $\R_\lambda$.
Applying $j^\lambda_*$ to a short exact sequence $0 \rightarrow
V\rightarrow I \rightarrow Q \rightarrow 0$, we get an exact sequence
$0 \rightarrow j^\lambda_* V \rightarrow j^\lambda_* I \rightarrow
j^\lambda_* Q$.
By properties of adjunctions, $j^\lambda_* Q$ is finitely cogenerated
and all constituents of its socle are of the form $L(c)$ for $c \in \B_\lambda$.
The same is true for $j^\lambda_* I / j^\lambda_* V$ since it embeds
into $j^\lambda_* Q$.
We deduce that $\Hom_{\R}(\Delta_\eps(b), 
j^\lambda_* I / j^\lambda_* V) = 0$.

Now take an extension
$0 \rightarrow j^\lambda_* V  \rightarrow E  \rightarrow \Delta_\eps(b)
\rightarrow 0$.
Since $j^\lambda_* I$ is injective, we can find morphisms $f$ and $g$ making the following
diagram with exact rows commute:
$$
\begin{CD}
0 &@>>>&j^\lambda_* V&@>s>>&E&@>>>&\Delta_\eps(b)&@>>>0\\
&&&&@|&&@Vf VV&&@VV gV\\
0 &@>>>&j^\lambda_* V&@>t>>&j^\lambda_* I&@>>>&j^\lambda_* I /
j^\lambda_* V&@>>>0.
\end{CD}
$$
The previous paragraph implies that $g = 0$.
Hence, $\im f \subseteq \im t$. Thus, 
$f = t \circ \bar f$ 
for some $\bar f:E \rightarrow j^\lambda_* V$.
We deduce that $\bar f \circ s = \operatorname{id}$, i.e., the top short exact sequence splits,
proving that
$\Ext^1_{\R}(\Delta_\eps(b), j^\lambda_* V)  = 0$.
\end{proof}

\begin{lemma}\label{anotherpar}
For $b,c \in \B$ and $n \geq 0$, we have that
$\dim \Ext^n_\R(\Delta_\eps(b), \nabla_\eps(c))= \delta_{b,c}\delta_{n,0}.$
\end{lemma}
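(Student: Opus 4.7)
The $n=0$ case is immediate from (\ref{onedimensionality}) (equivalently Corollary~\ref{reallysilly}). For $n \geq 1$, the plan is to use Lemma~\ref{sub} to truncate to a Serre subcategory with simpler structure, then split on the sign $\eps(\mu)$ where $\mu := \rho(c)$. Since $\nabla_\eps(c) \in \R_{\leq \mu}$, Lemma~\ref{sub}(4) identifies
$$\Ext^n_\R(\Delta_\eps(b), \nabla_\eps(c)) \cong \Ext^n_{\R_{\leq \mu}}(i^*_{\leq \mu}\Delta_\eps(b), \nabla_\eps(c)).$$
If $\rho(b) \not\leq \mu$, then since the only irreducible quotient of $\Delta_\eps(b)$ is $L(b) \notin \R_{\leq \mu}$, any quotient of $\Delta_\eps(b)$ lying in $\R_{\leq \mu}$ is zero; hence $i^*_{\leq \mu}\Delta_\eps(b) = 0$ and the Ext vanishes. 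Otherwise $\rho(b) \leq \mu$, so $\Delta_\eps(b) \in \R_{\leq \mu}$, and I work henceforth inside $\R_{\leq \mu}$, which by Lemma~\ref{sub}(2) still satisfies $(\widehat{P\Delta}_\eps)$ and in which $\mu$ is maximal.

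If $\eps(\mu) = -$, then $\nabla_\eps(c) = \nabla(c)$ coincides with the injective hull $I_{\leq \mu}(c)$ by Lemma~\ref{maxcase}, so all positive $\Ext$-groups vanish directly.

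If $\eps(\mu) = +$, then $\nabla_\eps(c) = \bar\nabla(c) = j^\mu_* L_\mu(c)$. The case $n=1$ is precisely Lemma~\ref{quot}. For $n \geq 2$, I propose to induct on $n$ while strengthening the statement to
$$\Ext^n_{\R_{\leq \mu}}(\Delta_\eps(b), j^\mu_* W) = 0 \qquad \text{for every } W \in \R_\mu;$$
the base $n=1$ is exactly Lemma~\ref{quot} applied with $V = W$. For the inductive step, embed $W$ in its injective hull $I$ in $\R_\mu$ to obtain $0 \to W \to I \to Q \to 0$, then apply the left-exact functor $j^\mu_*$ to get $0 \to j^\mu_* W \to j^\mu_* I \to E \to 0$, with $E \hookrightarrow j^\mu_* Q$ and cokernel $F := j^\mu_* Q/E$ lying in $\R_{<\mu}$ (since $j^\mu$ is exact and $j^\mu E = Q = j^\mu j^\mu_* Q$). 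As the right adjoint to the exact functor $j^\mu$, $j^\mu_*$ preserves injectives, so $j^\mu_* I$ is injective; the long exact sequence then yields the dimension shift $\Ext^n(\Delta_\eps(b), j^\mu_* W) \cong \Ext^{n-1}(\Delta_\eps(b), E)$, and the long exact sequence coming from $0 \to E \to j^\mu_* Q \to F \to 0$ reduces this to the flanking terms $\Ext^{n-1}(\Delta_\eps(b), j^\mu_* Q)$ (which vanishes by the inductive hypothesis applied to $Q$) and $\Ext^{n-2}(\Delta_\eps(b), F)$.

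The hard part will be controlling these residual groups $\Ext^{n-2}(\Delta_\eps(b), F)$ with $F \in \R_{<\mu}$. For $b \in \B_\mu$ this is effortless since $\Delta_\eps(b) = P(b)$ is projective by Lemma~\ref{maxcase} (as $\eps(\mu)=+$ and $\mu$ is maximal), killing every positive Ext. For $b \in \B_{<\mu}$, Lemma~\ref{sub}(4) with $\R_{<\mu}$ in place of $\R_\d$ reduces the computation to $\Ext^{n-2}_{\R_{<\mu}}(\Delta_\eps(b), F)$, and a nested induction using the finite composition length of the relevant objects in the essentially finite Abelian category $\R_{\leq \mu}$ (or its analogue in the Schurian upper finite case, where the truncation to $\R_{\leq\mu}$ still places us in a manageable setting) allows this Ext to be controlled and the outer induction to close.
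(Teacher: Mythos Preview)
Your argument tracks the paper's proof closely through the truncation to $\R_{\leq\mu}$, the case $\eps(\mu)=-$ (injectivity of $\nabla(c)$), and the case $\eps(\mu)=+$, $n=1$ (Lemma~\ref{quot}). The divergence---and the gap---is in the degree-shifting step for $n\geq 2$ when $\eps(\mu)=+$.

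You shift degree in the \emph{second} variable via an injective resolution of $W$ in $\R_\mu$ pushed forward by $j^\mu_*$. This produces the error term $F=R^1 j^\mu_* W\in\R_{<\mu}$, and you then need $\Ext^{n-2}_{\R_{\leq\mu}}(\Delta_\eps(b),F)=0$ for $b\in\B_{<\mu}$. But $F$ is an \emph{arbitrary} object of $\R_{<\mu}$ with no special structure (it is not $\nabla_\eps$-filtered), and there is no reason for this Ext to vanish: already for $n=2$ you would need $\Hom(\Delta_\eps(b),F)=0$, which fails whenever $L(b)$ is a constituent of $\soc F$. No ``nested induction'' on composition length or on the poset can rescue this, because the statement you are trying to prove concerns only $\nabla_\eps$-objects in the second slot, not arbitrary ones.

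The paper avoids this by shifting degree in the \emph{first} variable instead. One uses the projective $P:=i_{\leq\mu}^* P_b$ in $\R_{\leq\mu}$, which by Lemma~\ref{sub}(2) has a $\Delta_\eps$-flag with top section $\Delta_\eps(b)$ and other sections $\Delta_\eps(a)$ for $\lambda\leq\rho(a)\leq\mu$. The short exact sequence $0\to K\to P\to\Delta_\eps(b)\to 0$ gives
\[
\Ext^n_{\R_{\leq\mu}}(\Delta_\eps(b),\bar\nabla(c))\cong\Ext^{n-1}_{\R_{\leq\mu}}(K,\bar\nabla(c)),
\]
and since $K$ again has a $\Delta_\eps$-flag with the same type of sections, straight induction on $n$ (over all $b$ with $\rho(b)\leq\mu$ simultaneously) closes immediately. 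The key point is that the $\Delta_\eps$-flag structure is preserved under this shift, whereas your shift destroys the $\nabla_\eps$-structure on the second variable.
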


\begin{proof}
The case $n=0$ follows from (\ref{onedimensionality}), so assume that $n > 0$.
Suppose that $b \in \B_\lambda$ and $c \in \B_\mu$.
By Lemma~\ref{sub}(4), we have that
$$
\Ext^n_\R(\Delta_\eps(b), \nabla_\eps(c)) \cong \Ext^n_{\R_{\leq
    \mu}}(i_{\leq\mu}^* \Delta_\eps(b), \nabla_\eps(c)).
$$
If $\lambda\not\leq\mu$ then $i_{\leq\mu}^* \Delta_\eps(b) = 0$ and we get
the desired vanishing.
Now assume that $\lambda \leq \mu$,
when we may identify $i_{\leq\mu}^* \Delta_\eps(b) = \Delta_\eps(b)$.
If $\eps(\mu) = -$ then 
$\nabla_\eps(c) = \nabla(c)$, and the result follows since
$\nabla(c)$ is injective in $\R_{\leq\mu}$ by Lemma~\ref{charac}(2).
So we may assume also that $\eps(\mu) = +$.
If $\lambda = \mu$ then $\Delta(b)$ is
projective in $\R_{\leq\mu}$ by the same lemma, so again we are done.
Finally, we are reduced to $\lambda < \mu$ and $\eps(\mu) = +$, and
need to show that
$
\Ext^n_{\R_{\leq\mu}}(\Delta_\eps(b), \bar\nabla(c))  = 0
$
for $n > 0$.
If $n=1$, we get the desired conclusion from Lemma~\ref{quot} applied
in the subcategory $\R_{\leq\mu}$ (allowed due to Lemma~\ref{sub}(2)).
Then for $n \geq 2$ we use a degree shifting argument:
let $P := i_{\leq\mu}^* P_b$. 
By Lemma~\ref{sub}(2), $P$ is projective in $\R_{\leq\mu}$, and
there is a short exact sequence $0 \rightarrow K \rightarrow P
\rightarrow \Delta_\eps(b) \rightarrow 0$ such that
$K$ has a $\Delta_\eps$-flag with sections $\Delta_\eps(a)$ for
$a \in \B_{\leq\mu}$.
Applying $\Hom_{\R_{\leq\mu}}(?, \bar\nabla(c))$ we obtain
$\Ext^n_{\R_{\leq\mu}}(\Delta_\eps(b), \bar\nabla(c)) \cong \Ext^{n-1}_{\R_{\leq\mu}}(K,
\bar\nabla(c))$, which is zero by induction.
\end{proof}

\begin{lemma}\label{sup}
Let $\Lambda\u$ be an upper set in $\Lambda$
and $\B\u := \rho^{-1}(\Lambda\u)$.
Let $j:\R\rightarrow \R\u$
be the corresponding Serre
quotient category of $\R$ equipped with the induced stratification.
\begin{enumerate}
\item
For $b \in \B\u$, 
the objects
$P\u(b)$,
$I\u(b)$,
$\Delta\u(b)$, $\bar\Delta\u(b)$,
$\nabla\u(b)$ and $\bar\nabla\u(b)$
 in $\R\u$ are
the images under $j$ of the corresponding objects of
 $\R$.
Moreover, we have that $j_! \Delta\u(b) \cong \Delta(b)$,
$j_!\bar\Delta\u(b) \cong \bar\Delta(b)$,
$j_! P\u(b) \cong P(b)$
and
$j_* \nabla\u(b) \cong \nabla(b)$,
$j_*\bar\nabla\u(b) \cong \bar\nabla(b)$,
$j_* I\u(b) \cong I(b)$.
\item
For any $b \in \B$, the object $j P_b$ has a
$\Delta_\eps$-flag with top section $\Delta\u_\eps(b)$ and other
sections of the form $\Delta\u_\eps(c)$ 
for $c \in \B\u$ with $\rho(c)
\geq \rho(b)$.
In particular, this show that
($\widehat{P\Delta}_\eps$) holds in $\R\u$.
\item
$\Ext^n_{\R}(V, j_* W) \cong \Ext^n_{\R\u}(jV, W)$
for $V \in \R$, $W \in \nabla_\eps(\R\u)$
and $n \geq 0$.
\end{enumerate}
\end{lemma}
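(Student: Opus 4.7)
The plan is to establish the three parts in order, with (2) building on (1) and (3) on both.

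For part (1), the central observation is that for each $\lambda \in \Lambda\u$ the stratum $\R\u_\lambda$ is canonically identified with $\R_\lambda$, since both are finite Abelian categories whose irreducibles are parametrized by $\B_\lambda \subseteq \B\u$. Writing $j_{\leq\lambda}$ for the restriction of $j$ to a functor $\R_{\leq\lambda} \to \R\u_{\leq\lambda}$, this identification fits into a commutative square relating $j^\lambda$, $(j^\lambda)\u$, and $j_{\leq\lambda}$. Passing to left adjoints yields $j^\lambda_! \cong j_{\leq\lambda,!} \circ (j^\lambda)\u_!$; applying this to $P_\lambda(b)$ and then applying $j_{\leq\lambda}$ (which satisfies $j_{\leq\lambda} \circ j_{\leq\lambda,!} \cong \Id$) delivers $j\Delta(b) \cong \Delta\u(b)$. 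The parallel arguments, using $L_\lambda(b)$ in place of $P_\lambda(b)$ and right adjoints in place of left adjoints, handle the analogous identifications for $\bar\Delta$, $\nabla$, and $\bar\nabla$. The identities $jP(b)\cong P\u(b)$ and $jI(b)\cong I\u(b)$ come from Lemma~\ref{guns}. The statements about $j_!$ and $j_*$ then follow from the dual commutative square $j_! \circ i\u_{\leq\lambda} \cong i_{\leq\lambda} \circ j_{\leq\lambda,!}$ obtained by taking left adjoints of $j \circ i_{\leq\lambda} \cong i\u_{\leq\lambda}\circ j_{\leq\lambda}$.

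For part (2), I apply the exact functor $j$ to the given $\Delta_\eps$-flag of $P_b$. By part (1), each section $\Delta_\eps(c)$ with $c \in \B\u$ maps to $\Delta\u_\eps(c)$. For $c \in \B\d$, every composition factor $L(d)$ of $\Delta_\eps(c)$ satisfies $\rho(d) \leq \rho(c) \notin \Lambda\u$, forcing $\rho(d)\notin\Lambda\u$ by the upper-set property of $\Lambda\u$; hence $d \in \B\d$, $\Delta_\eps(c) \in \R\d$, and $j\Delta_\eps(c) = 0$. Discarding the vanishing sections produces the advertised $\Delta\u_\eps$-flag on $jP_b$. For the ``in particular'' clause with $b \in \B\u$, an inductive analysis on the flag length shows every indecomposable summand $P(c)$ of $P_b$ has $L(c)$ occurring as the head of some flag section, so $\rho(c) \geq \rho(b) \in \Lambda\u$ and the upper-set property forces $c \in \B\u$. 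Therefore $jP_b = \bigoplus P\u(c)$ is projective in $\R\u$, and $P\u_b := jP_b$ witnesses $(\widehat{P\Delta}_\eps)$ in $\R\u$.

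For part (3), the case $n=0$ is the $(j,j_*)$-adjunction. For $n\geq 1$, I choose a projective resolution $P_\bullet \to V$ in $\R$ whose terms are each direct sums of the $P_c$ from $(\widehat{P\Delta}_\eps)$, so that each $P_i$ carries a $\Delta_\eps$-flag; by exactness of $j$, $jP_\bullet \to jV$ is exact in $\R\u$, and by part (2) each $jP_i$ admits a $\Delta\u_\eps$-flag. The adjunction gives $\Hom_\R(P_\bullet, j_* W) \cong \Hom_{\R\u}(jP_\bullet, W)$ as complexes. Applying Lemma~\ref{anotherpar} in $\R\u$ (valid since part (2) supplies $(\widehat{P\Delta}_\eps)$ there), extended from standard objects to $\Delta_\eps$-filtered ones by the usual long-exact-sequence induction, gives $\Ext^k_{\R\u}(jP_i, W) = 0$ for $k \geq 1$ and $W \in \nabla_\eps(\R\u)$. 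Hence $jP_\bullet$ is a $\Hom_{\R\u}(-,W)$-acyclic resolution of $jV$, and the desired isomorphism of Ext groups results. The main obstacle I anticipate lies in part (1): the careful bookkeeping of adjoints across two parallel recollement setups and the distinction between $j_!$ and $j_{\leq\lambda,!}$; once these compatibilities are secured, parts (2) and (3) follow by relatively standard manoeuvres.
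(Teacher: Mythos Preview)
Your proposal is correct and follows essentially the same approach as the paper: part (1) via the commutative square of quotient functors at level $\lambda$ and passage to adjoints, part (2) by applying the exact functor $j$ to the given $\Delta_\eps$-flag and discarding the sections indexed by $\B\d$, and part (3) via adjunction plus acyclicity of $\Delta_\eps$-filtered objects (Lemma~\ref{anotherpar} in $\R\u$). The one place you are more careful than the paper is the ``in particular'' clause of (2), where you explicitly argue that for $b\in\B\u$ every indecomposable summand $P(c)$ of $P_b$ has $\rho(c)\geq\rho(b)$ (hence $c\in\B\u$), so that $jP_b$ is genuinely projective in $\R\u$; the paper leaves this implicit.
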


\begin{proof}
(1)
By Lemma~\ref{guns},
$P\u(b) = j P(b)$ for each $b
\in \B\u$.
Now take $b \in \B_\lambda$ for $\lambda \in \Lambda\u$.
Let $j^\lambda:\R_{\leq\lambda} \rightarrow \R_\lambda$ be the
quotient functor as usual, and denote the analogous functor for $\R\u$
by $k^\lambda:\R\u_{\leq\lambda} \rightarrow \R\u_\lambda$.
The universal property of quotient category gives us an exact functor
$\bar\jmath:\R_\lambda \rightarrow \R\u_\lambda$ making the diagram
$$
\begin{CD}
\R_{\leq\lambda}&@>j>>&\R\u_{\leq\lambda}\\
@Vj^\lambda VV&&@VVk^\lambda V\\
\R_\lambda&@>>\bar\jmath>&\R\u_\lambda
\end{CD}
$$
commute. In fact, $\bar\jmath$ is an equivalence of categories
because it sends the indecomposable projective $j^\lambda P(b)$ in $\R_\lambda$ to the
indecomposable projective $k^\lambda P\u(b)$ in $\R\u_\lambda$ for
each $b \in \B_\lambda$.
We deduce that there is an isomorphism of functors
$j_! \circ k^\lambda_! \circ \bar\jmath \cong j^\lambda_!$.
Applying this to $P_\lambda(b)$ and to $L_\lambda(b)$ gives that
$j_! \Delta\u(b) \cong \Delta(b)$ and $j_! \bar\Delta\u(b) \cong
\bar\Delta(b)$. Also by adjunction properties we have that $j_! P\u(b)
\cong P(b)$.
Similarly, applying the isomorphism
$j_* \circ k^\lambda_* \circ \bar\jmath \cong j^\lambda_*$ to $I_\lambda(b)$ and $L_\lambda(b)$
gives that
$j_* \nabla\u(b) \cong \nabla(b)$ and $j_* \bar\nabla\u(b) \cong
\bar\nabla(b)$. Also by adjunction properties we have that $j_* I\u(b)
\cong I(b)$.
It just remains to apply $j$ to the isomorphisms constructed thus far
and use 
$j\circ j_* \cong \Id_{\R\u}  \cong  j \circ j_!$.

\vspace{1.5mm}\noindent
(2) 
This follows from (1) and the exactness of $j$, using also that 
$j \Delta_\eps(b) = 0$ if $b \notin \B\u$.

\vspace{1.5mm}\noindent
(3)  The adjunction gives an isomorphism
$\Hom_{\R}(?, j_* W) \cong \Hom_{\R\u}(?, W) \circ j.$
This proves the result when $n=0$.
For $n > 0$, the functor $j$ is exact. In order to invoke the usual 
degenerate Grothendieck spectral sequence argument,
all that
remains is to check that $j$ sends projectives to objects that are
acyclic for $\Hom_{\R\u}(?, W)$.
By (2),
the functor $j$ sends projectives in $\R$ to objects with a
$\Delta_\eps$-flag.
It remains to note
that $\Ext^1_{\R\u}(X,W) = 0$ for $X \in \Delta_\eps(\R\u), W \in
\nabla_\eps(\R\u)$. This follows from
the analog of Lemma~\ref{anotherpar} for $\R\u$, which is valid due to (2).
\end{proof}

\subsection{\boldmath Lower finite $\eps$-stratified categories}\label{lfsc}
In this subsection, $\R$ is a locally finite Abelian category equipped a
lower finite stratification $(\B,L,\rho,\Lambda,\leq)$ and
$\eps:\Lambda\rightarrow \{\pm\}$ denotes a sign function.
For $b\in \B$, we use the notation $I(b)$ to denote
an injective hull of $L(b)$ in $\Ind(\R)$.
 
\begin{definition}\label{newlfd}
  Let $(\B,L,\rho,\Lambda,\leq)$ be a lower finite stratification of
  the locally finite Abelian category
  $\R$.
  For a finite lower set $\Lambda\d$ in $\Lambda$, let $\B\d :=
  \rho^{-1}(\Lambda\d)$
  and
  $\R\d$ be corresponding
  Serre subcategory of $\R$.
  We say that $\R$ is a {\em lower finite $\eps$-stratified category} (resp.,
  {\em lower finite fully stratified category}, resp., {\em lower finite
    $\eps$-highest weight category}, resp., {\em lower
    finite fibered highest
    weight category}, resp., {\em lower finite highest weight category})
  if $\R\d$ with its naturally induced stratification
  is a
  finite $\eps$-stratified category (resp., finite fully stratified category, resp., finite
  $\eps$-highest
  weight category, resp., finite fibered highest weight category, resp., finite highest weight category)
    for every finite lower set $\Lambda\d \subseteq \Lambda$.
  \end{definition}
  
 \begin{remark}
 For a simple example, let $Q$ be any quiver. 
 The category $\R$ of finite length nilpotent representations of $Q$ can be realized naturally as the category of finite-dimensional comodules over the path coalgebra of $Q$ as in \cite[(8.3)]{Simson}. 
In order for this to be a lower finite highest weight category, one must assume that there are only finitely many different paths between any two vertices. In that case, the path algebra $\k Q$ is locally finite-dimensional, and we have that
 $\R\cong \k Q \fdlmod$
 with irreducible objects labelled by the set $\Lambda$ of vertices of $Q$ in the usual way.  
We claim now that $\R$ is a lower finite highest weight category with weight poset $(\Lambda,\leq)$ 
 for any lower finite partial ordering $\leq$ on $\Lambda$.
 To see this, the Serre subcategory $\R\d$ corresponding to a finite lower set $\Lambda\d \subset \Lambda$ is $\k \Q\d\fdlmod$ where $\Q\d$ is the full subquiver $Q\d$ of $Q$ generated by $\Lambda\d$. It is well known that this is a hereditary category, hence, it is a finite highest weight category (e.g., see \cite[Th.~4.1]{Madsen}). 
 \end{remark}
 
  Let $\R$ be a lower finite $\eps$-stratified category. Since
  $\R_{\leq \lambda}$ is a finite Abelian category,
 the admissibility axiom (A) from $\S$\ref{fs} holds, so we
  can introduce the objects $\Delta(b), \bar\Delta(b), \bar\nabla(b)$
  and $\nabla(b)$ as explained there, also adopting the shorthands $\Delta_\eps(b)$ and $\nabla_\eps(b)$.
These objects are of finite length.
Note also that
Theorem~\ref{opstrat}, Lemma~\ref{rain}
and Corollary~\ref{sun}
carry over immediately
to the lower finite setting.

Now we are going to consider another sort of infinite good filtration in objects
of $\Ind(\R)$.
Usually (e.g., if $\Lambda$ is countable), it is sufficient to
restrict attention to filtrations
 given by an ascending 
 chain of subobjects $0 = V_0 < V_1 < V_2 < \cdots$ such
that $V = \sum_{n\in \N} V_n$ and $V_m / V_{m-1} \cong
\nabla_\eps(b_m)$ for some $b_m \in \B$.
Here is the general definition which avoids this restriction.

\begin{definition}\label{good}
An {\em ascending $\nabla_\eps$-flag} in an object $V \in \Ind(\R)$ is the data of
a direct system $(V_\omega)_{\omega \in \Omega}$ 
of subobjects of $V$ such that the following properties hold:
\begin{itemize}
 \item[($A\nabla 1$)]
    $V = \sum_{\omega \in \Omega} V_\omega$;
    \item[($A\nabla 2$)] each
      $V_\omega$ has a $\nabla_\eps$-flag with $\nabla_\eps(b)$
      appearing with
      multiplicity $(V_\omega:\nabla_\eps(b)) \in \N$;
      \item[($A\nabla 3$)] $(V:\nabla_\eps(b)) :=
        \sup((V_\omega,\nabla_\eps(b))\:|\:\omega \in \Omega\} < \infty$ for each $b \in \B$.
      \end{itemize}
Let $\nabla_\eps^\asc(\R)$ be the full subcategory of $\R$
consisting of all objects $V$ that possess an ascending
$\nabla_\eps$-flag.
In the special case $\eps=+$ (resp., $\eps=-$),
we call it an ascending $\bar\nabla$-flag (resp.,
$\nabla$-flag), denoting the category $\nabla_\eps(\R)$ by $\bar\nabla(\R)$ (resp., $\nabla(\R)$).
\end{definition}

The multiplicities $(V_\omega:\nabla_\eps(b))$ and
$(V:\nabla_\eps(b))$ appearing in this definition depend {\em a
  priori} on the choice of flag. In fact, they do not, so that the
notation is unambiguous:

\begin{lemma}\label{ridgemultiplicity}
  Assume $\R$ is a lower finite $\eps$-stratified category.
  For $V \in \nabla_\eps^\asc(\R)$, the multiplicity
  $(V:\nabla_\eps(b))$ of $\nabla_\eps(b)$ 
  in the ascending
  $\nabla_\eps$-flag appearing in
  Definition~\ref{good}
  is equal to $\dim \Hom_\R(\Delta_\eps(b), V)$. Hence, it is
  well-defined independent of the particular choice for this flag.
 \end{lemma}

\begin{proof}
By Theorem~\ref{gf2} applied in
  the Serre subcategory $\R\d$ associated to a finite lower set
  $\Lambda\d$ of $\Lambda$ chosen so that $V_\omega \in \R\d$,
  we have that $(V_\omega:\nabla_\eps(b)) = \dim
  \Hom_{\R}(\Delta_\eps(b), V_\omega)$.
Also
$\Hom_\R(\Delta_\eps(b), V) = \Hom_\R(\Delta_\eps(b),  
\varinjlim V_\omega) \cong
\varinjlim \Hom_{\R}(\Delta_\eps(b), V_\omega).$
We deduce that
$$
\dim \Hom_\R(\Delta_\eps(b), V) =
\max\{(V_\omega:\Delta_\eps(b))\:|\:\omega\in\Omega\},
$$
which is the definition of the multiplicity $(V:\nabla_\eps(b))$ from Definition~\ref{good}.
\end{proof}

\begin{lemma}\label{goodcnew}
Assume that $\R$ is a lower finite $\eps$-stratified category.
For $V \in \nabla^\asc_\eps(\R)$ and $b \in \B$, we have that
$\Ext^1_{\R}(\Delta_\eps(b),  V) = 0$.
\end{lemma}

\begin{proof}
If $V$ is of finite length then it belongs to the finite Abelian
category $\R\d$ associated to some finite
lower set $\Lambda\d$ of $\Lambda$, and the lemma follows from
Theorem~\ref{gf1}.
Now suppose that $V$ is not of finite length.
Let $(V_\omega)_{\omega \in \Omega}$ be an ascending $\nabla_\eps$-flag in $V$.
Take an extension $V \hookrightarrow E \twoheadrightarrow 
\Delta_\eps(b)$. We can find a subobject $E_1$ of $E$ of
finite length such that $V+E_1 = V+E$; this follows easily by
induction on the length of $\Delta_\eps(b)$ as explained at the start
of the proof of \cite[Lem.~3.8(a)]{CPS}.
Since $V \cap E_1$ is of finite length, there exists $\omega \in \Omega$ with
$V\cap E_1 \subseteq V_\omega$.
Then we have that $V \cap E_1 = V_\omega \cap E_1$ and 
$$
(V_\omega+E_1) / V_\omega \cong E_1 / V_\omega \cap E_1 = E_1 / V \cap E_1 \cong
(V+E_1) / V = (V+E) / V \cong \Delta_\eps(b).
$$
Thus, there is a short exact sequence
$
0 \rightarrow V_\omega \rightarrow V_\omega+E_1 \rightarrow \Delta_\eps(b)
\rightarrow 0.
$
The first sentence of the proof implies that $\Ext^1_\R(\Delta_\eps(b),
V_\omega) = 0$, hence, this splits. Thus, we can find a subobject $E_2
\cong \Delta_\eps(b)$ of
$V_\omega+E_1$
such that $V_\omega+E_1 = V_\omega \oplus E_2$.
Then $V+E = V+E_1 = V+V_\omega+E_1 = V+V_\omega+E_2 = V+E_2 = V\oplus
E_2,
$
and our original short exact sequence splits too.
\end{proof}

\begin{corollary}\label{ridgeinc}
  Let $i:\R\d\rightarrow\R$ be the inclusion of the Serre subcategory
  of $\R$ associated to a finite lower set $\Lambda\d$ of $\Lambda$
  and $i^!$ be its right adjoint.
  For $V \in \nabla_\eps^\asc(\R)$, we have that $i^! V \in \nabla_\eps(\R\d)$.
\end{corollary}

\begin{proof}
Take a short exact sequence
$0 \rightarrow i^! V \rightarrow V \rightarrow Q \rightarrow 0$.
Note that
$$
\Hom_{\R\d}(\Delta_\eps(b), i^! V) \cong
\Hom_{\R}(\Delta_\eps(b), V)
$$
is finite-dimensional
for each $b \in \B\d$.
Since
$\R\d$ is finite Abelian, 
it follows that $i^! V \in
\R\d$ (rather than $\Ind(\R\d)$). Moreover, 
$\Hom_{\R}(\Delta_\eps(b), Q) = 0$
for $b \in \B\d$.
So, on applying $\Hom_{\R}(\Delta_\eps(b),?)$ and 
considering the long exact sequence using Lemma~\ref{goodcnew},
we get that
$
\Ext^1_{\R\d}(\Delta_\eps(b), i^! V) =
\Ext^1_\R(\Delta_\eps(b), i^! V) = 0
$ 
for all $b \in \B\d$.
Thus, by Theorem~\ref{gf1}, we have that $i^! V \in
\nabla_\eps(\R\d)$.
\end{proof}

The following homological criterion for ascending 
$\nabla_\eps$-flags
generalizes Theorem~\ref{gf1}.

\begin{theorem}[Homological criterion for ascending $\nabla_\eps$-flags]\label{thethmnew}
Assume that $\R$ is a lower finite $\eps$-stratified category.
For $V \in\Ind(\R)$,
the following are equivalent:
\begin{itemize}
\item[(i)] $V\in \nabla_\eps^\asc(\R)$;
 \item[(ii)] $\Ext^1_\R(\Delta_\eps(b), V) = 0$
and $\dim  \Hom_\R(\Delta_\eps(b), V) < \infty$ for all $b \in \B$;
\item[(iii)]
$\Ext^n_\R(\Delta_\eps(b), V) = 0$ and
$\dim  \Hom_\R(\Delta_\eps(b), V) < \infty$ for all $b \in \B$ and $n \geq
  1$.
\end{itemize}
Assuming these properties, we have that $V \in \nabla_\eps(\R)$ if
and only if $V \in \R$.
\end{theorem}

\begin{proof}
(ii)$\Rightarrow$(i):
Let $\Omega$ be the directed set consisting of all finite lower sets
in $\Lambda$.
Take $\omega \in \Omega$. It is a finite lower set $\Lambda\d \subseteq
\Lambda$, so we have associated the corresponding finite $\eps$-stratified
subcategory $\R\d$.
Letting $i:\R\d \rightarrow \R$ be the
inclusion, we set $V_\omega := i^! V$.
By Corollary~\ref{ridgeinc}, we have that $V_\omega \in
\nabla_\eps(\R)$.
So the subobject $V' := \sum_{\omega \in \Omega} V_\omega$ of $V$ has
an ascending $\nabla_\eps$-flag.

Now we complete the proof by showing
that $V = V'$.
Applying $\Hom_{\R}(\Delta_\eps(b),?)$ to the short exact
sequence $0 \rightarrow V' \rightarrow V \rightarrow V/V' \rightarrow
0$ using Lemma~\ref{goodcnew}, we get a short exact sequence
$$
0 \longrightarrow \Hom_{\R}(\Delta_\eps(b), V') \longrightarrow
\Hom_{\R}(\Delta_\eps(b), V) 
\longrightarrow \Hom_{\R}(\Delta_\eps(b), V / V') \longrightarrow 0
$$
for every $b \in \B$. But any homomorphism $\Delta_\eps(b) \rightarrow
V$ has image contained in $V_\omega$ for sufficiently large $\omega$,
hence,  also in $V'$. Thus, the first morphism in this short exact sequence
is an isomorphism, and $\Hom_{\R}(\Delta(b), V / V') = 0$ for all $b
\in \B$. This implies that $V / V' = 0$ as required.

\vspace{1.5mm}\noindent
 (i)$\Rightarrow$(ii):
This follows by Lemmas~\ref{ridgemultiplicity} and \ref{goodcnew}.

\vspace{1.5mm}\noindent
(iii)$\Rightarrow$(ii):
Trivial.

\vspace{1.5mm}\noindent
 (i)$\Rightarrow$(iii):
 This follows from Lemma~\ref{ridgemultiplicity} and Theorem~\ref{salsanewer}(4). Since this is a forward
 reference, we should note that the proof of Theorem~\ref{salsanewer}
 only depends on (i)$\Leftrightarrow$(ii) from the present
 theorem.
 \end{proof}

\begin{corollary}\label{flowy}
In a lower finite $\eps$-stratified category, each indecomposable injective object $I(b)$
belongs to $\nabla_\eps^\asc(\R)$ and 
$(I(b):\nabla_\eps(c)) = [\Delta_\eps(c):L(b)]$ for each $b,c \in \B$.
\end{corollary}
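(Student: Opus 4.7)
The plan is to apply Theorem~\ref{thethm} directly to the object $V = I(b)$. First I would verify the finiteness hypothesis. Since $\Delta_\eps(c)$ has finite length (Lemma~\ref{fl}) and $I(b)$ is an injective hull of $L(b)$ in $\Ind(\R)$, the standard identity
\[
\dim\Hom_{\R}(\Delta_\eps(c), I(b)) = [\Delta_\eps(c):L(b)]
\]
gives a finite value for every $c \in \B$. Next, since $I(b)$ is injective in $\Ind(\R)$, we trivially have $\Ext^1_{\R}(\Delta_\eps(c), I(b)) = 0$ for all $c \in \B$. Thus both hypotheses of Theorem~\ref{thethm}(ii) are satisfied, so $I(b)$ possesses an ascending $\nabla_\eps$-flag.

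To conclude that $I(b) \in \nabla_\eps^\asc(\R)$ (and not merely that it has an ascending $\nabla_\eps$-flag), I would appeal to the formula (\ref{talk}), which computes the multiplicity of $\nabla_\eps(c)$ as a section as
\[
(I(b):\nabla_\eps(c)) = \dim\Hom_{\R}(\Delta_\eps(c), I(b)) = [\Delta_\eps(c):L(b)].
\]
By the finiteness already observed, these multiplicities are finite for every $c \in \B$, which is exactly the membership condition defining $\nabla_\eps^\asc(\R)$. This simultaneously establishes both assertions of the corollary.

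Since all the substantive work has already been done in Theorem~\ref{thethm} (the homological criterion) and in (\ref{talk}) (well-definedness and formula for multiplicities in ascending flags), there is no real obstacle here; the only thing to check carefully is that the identity $\dim\Hom_{\R}(\Delta_\eps(c), I(b)) = [\Delta_\eps(c):L(b)]$ holds in $\Ind(\R)$, but this is immediate from the universal property of the injective hull applied inductively along a composition series of the finite-length object $\Delta_\eps(c)$.
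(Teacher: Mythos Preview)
Your proposal is correct and follows essentially the same approach as the paper: apply the homological criterion of Theorem~\ref{thethm} (using injectivity of $I(b)$ for the $\Ext^1$-vanishing and finite length of $\Delta_\eps(c)$ for the finiteness hypothesis), then invoke (\ref{talk}) to compute the multiplicities via $\dim\Hom_\R(\Delta_\eps(c),I(b)) = [\Delta_\eps(c):L(b)]$. You are slightly more explicit than the paper in separating the existence of an ascending $\nabla_\eps$-flag from the finiteness of multiplicities required for membership in $\nabla_\eps^\asc(\R)$, but the substance is identical.
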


\begin{proof}
The first part follows from the implication (ii)$\Rightarrow$(i) in the theorem.
For the second part, 
we get from Lemma~\ref{ridgemultiplicity}
that $(I(b):\nabla_\eps(c)) = \dim \Hom_\R(\Delta_\eps(c):L(b))$.
\end{proof}

\begin{corollary}\label{fli}
Let $0 \rightarrow U \rightarrow V \rightarrow W \rightarrow 0$ be a
short exact sequence in a lower finite $\eps$-stratified category. If $U, V \in
\nabla_\eps^\asc(\R)$
then $W \in \nabla_\eps^\asc(\R)$ too. Moreover
$$
(V:\nabla_\eps(b))  = (U:\nabla_\eps(b))+(W: \nabla_\eps(b)).
$$
\end{corollary}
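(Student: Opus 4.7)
The plan is to deduce everything from the homological criterion of Theorem~\ref{thethm} together with the multiplicity formula (\ref{talk}). The key observation is that since $U, V \in \nabla_\eps^\asc(\R)$, we already have $\dim \Hom_{\R}(\Delta_\eps(b), U) < \infty$ and $\dim \Hom_{\R}(\Delta_\eps(b), V) < \infty$, and moreover $\Ext^n_{\R}(\Delta_\eps(b), U) = \Ext^n_{\R}(\Delta_\eps(b), V) = 0$ for all $b \in \B$ and $n \geq 1$ by the implication (i)$\Rightarrow$(iii) of that theorem.

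First I would apply $\Hom_{\R}(\Delta_\eps(b), -)$ to the given short exact sequence to obtain the long exact sequence
\begin{align*}
0 \to \Hom_{\R}(\Delta_\eps(b), U) \to \Hom_{\R}(\Delta_\eps(b), V) \to \Hom_{\R}(\Delta_\eps(b), W)&\\
\to \Ext^1_{\R}(\Delta_\eps(b), U) \to \Ext^1_{\R}(\Delta_\eps(b), V) \to \Ext^1_{\R}(\Delta_\eps(b), W) &\to \cdots
\end{align*}
The $\Ext^n$ terms for $U$ and $V$ all vanish for $n \geq 1$ by the remark above, so we extract the short exact sequence
$$
0 \to \Hom_{\R}(\Delta_\eps(b), U) \to \Hom_{\R}(\Delta_\eps(b), V) \to \Hom_{\R}(\Delta_\eps(b), W) \to 0
$$
together with the vanishing $\Ext^n_{\R}(\Delta_\eps(b), W) = 0$ for all $n \geq 1$.

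The short exact sequence of $\Hom$ spaces immediately shows $\dim \Hom_{\R}(\Delta_\eps(b), W) < \infty$, so the hypothesis needed to invoke the implication (ii)$\Rightarrow$(i) of Theorem~\ref{thethm} is satisfied. Combined with the $\Ext^1$-vanishing, that theorem yields $W \in \nabla_\eps^\asc(\R)$. Finally, applying (\ref{talk}) to each of $U$, $V$, $W$ and reading off dimensions from the short exact sequence above gives
$$
(V:\nabla_\eps(b)) = (U:\nabla_\eps(b)) + (W:\nabla_\eps(b)),
$$
which is the desired multiplicity formula. There is no serious obstacle here; the entire argument is a mechanical consequence of the homological characterization, and the only thing to check is that the $\Ext$-vanishing propagates along the long exact sequence, which it does automatically.
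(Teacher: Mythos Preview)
Your argument is logically sound \emph{if} one is permitted to use the implication (i)$\Rightarrow$(iii) of Theorem~\ref{thethm}, but in the paper's development this creates a circularity. The paper explicitly flags that (i)$\Rightarrow$(iii) is proved via a forward reference to Theorem~\ref{gf5}(2), and that ``prior to that point'' only the equivalence (i)$\Leftrightarrow$(ii) may be invoked. Corollary~\ref{fli} sits strictly before Theorem~\ref{gf5}, and indeed the proof of Theorem~\ref{gf5}(1) \emph{uses} Corollary~\ref{fli}. So your appeal to $\Ext^2_\R(\Delta_\eps(b),U)=0$---which is exactly what you need from the long exact sequence to force $\Ext^1_\R(\Delta_\eps(b),W)=0$---is not available at this stage. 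With only Corollary~\ref{goodc} (i.e.\ $\Ext^1$-vanishing) in hand, the long exact sequence gives the short exact sequence of $\Hom$'s and hence the multiplicity formula, but it does \emph{not} yield the $\Ext^1$-vanishing for $W$ needed to re-enter the homological criterion.

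The non-circular route is to build the ascending flag of $W$ directly. For each finite lower set $\omega=\Lambda\d$ with inclusion $i:\R\d\to\R$, Corollary~\ref{ridge} gives $i^!U,\,i^!V\in\nabla_\eps(\R\d)$. Since $i^!$ is left exact one has $U\cap i^!V=i^!U$, so the image $W_\omega$ of $i^!V$ in $W$ fits into $0\to i^!U\to i^!V\to W_\omega\to 0$; now Corollary~\ref{hanks} in the finite category $\R\d$ gives $W_\omega\in\nabla_\eps(\R)$. As $V=\sum_\omega i^!V$ (each $V_\sigma$ in a given ascending flag of $V$ lies in some $\R\d$), one gets $W=\sum_\omega W_\omega$, and $(W_\omega:\nabla_\eps(b))=(i^!V:\nabla_\eps(b))-(i^!U:\nabla_\eps(b))$ is bounded by $(V:\nabla_\eps(b))<\infty$. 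Hence $W\in\nabla_\eps^\asc(\R)$, and then your short exact sequence of $\Hom$'s together with (\ref{talk}) gives the additivity of multiplicities.
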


The following is the lower finite counterpart of Theorem~\ref{gf}.

\begin{theorem}[Truncation to lower sets]\label{salsanewer}
Suppose $\R$ is a lower finite $\eps$-stratified category.
Let $\Lambda\d$ be a lower set, $\B\d := \rho^{-1}(\Lambda\d)$,
and $i:\R\d\rightarrow \R$ be the corresponding Serre subcategory of
$\R$ with the induced stratification.
Then $\R\d$ is a finite or lower finite $\eps$-stratified category
according to whether $\Lambda\d$ is finite or infinite.
Moreover:
\begin{enumerate}
\item
 The distinguished
objects of $\R\d$ are
$L\d(b) \cong L(b)$,
$I\d(b) \cong i^! I(b)$,
$\Delta\d(b) \cong \Delta(b)$,
$\bar\Delta\d(b)\cong\bar\Delta(b)$,
$\nabla\d(b)\cong\nabla(b)$ and $\bar\nabla\u(b)\cong\bar \nabla(b)$
for $b\in \B\d$.
\item
$(\der{R}^n i^!) V = 0$ for $n \geq 1$ assuming either that $V \in
\nabla^\asc_\eps(\R)$ or that $V \in \R\d$.
\item $i^!$ takes short exact sequences of objects in
$\nabla^\asc_\eps(\R)$
to short exact sequences of objects in $\nabla^\asc_\eps(\R\d)$,
with 
$i^! \nabla(b) \cong \nabla\d(b)$ and $i^! \bar\nabla(b) \cong \bar\nabla\d(b)$
for $b \in \B\d$
and $i^!\nabla(b) = i^!\bar\nabla(b) = 0$ for $b \notin \B\d$.
\item
$\Ext^n_{\R}(i V, W) \cong \Ext^n_{\R\d}(V, i^! W)$
for $V \in \R\d, W \in \nabla^\asc_\eps(\R)$
and all $n \geq 0$.
\item
$\Ext^n_{\R}(i V, i W) \cong \Ext^n_{\R\d}(V, W)$
for $V, W \in \R\d$
and all $n \geq 0$.
\end{enumerate}
\end{theorem}

\begin{proof}
  The fact that $\R\d$ is itself a lower finite $\eps$-stratified follows immediately
  from Definition~\ref{newlfd}.  It is finite if and only if $\B\d$ is
  finite. The identification of objects as in (1) is straightforward. In particular, the objects $\nabla_\eps(b)$ in $\R\d$
are just the same as the ones in $\R$ indexed by $b \in \B\d$, while
the indecomposable injectives in $\Ind(\R\d)$ are the objects $i^! I(b)$ for
$b \in \B\d$.

To prove (2), assume first that $V \in \nabla_\eps^\asc(\R)$.
Let $I$ be an injective hull of $\soc V$ in $\Ind(\R)$.
Note that
$I$ is of the form $\bigoplus_{a\in \B} I(a)^{\oplus
  n_a}$ for $$
0 \leq n_a \leq
\dim \Hom_\R(\Delta_\eps(a), V) = (V:\nabla_\eps(a)) < \infty.
$$
Hence, for $b \in \B_{\leq \lambda}$, we have that
$$
\dim \Hom_\R(\Delta_\eps(b), I) = \sum_{a \in \B_{\leq \lambda}} n_a
[\Delta_\eps(b):L(a)] < \infty$$
too. We deduce that $I \in \nabla^\asc_\eps(\R)$
using the implication
(ii)$\Rightarrow$(i) of Theorem~\ref{thethmnew}.
Now consider the short exact sequence $0 \rightarrow V \rightarrow
I \rightarrow Q \rightarrow 0$.
By Corollary~\ref{fli}, we have that $Q \in \nabla^\asc_\eps(\R)$ too.
Applying the left exact functor $i^!$ and considering the long
exact sequence, we see that to prove that $(\der{R}^1 i^!) V = 0$ it suffices to
show that the canonical map $i^! I \rightarrow i^!
Q$ is an epimorphism.
Once that has been proved we can use degree shifting
to establish the desired vanishing for all higher $n$; it is
important for the induction step that we have already established that
$Q \in \nabla_\eps^\asc(\R)$ just like $V$.

To prove the surjectivity, 
look at 
$0 \rightarrow i^! I / i^! V \rightarrow i^! Q
\rightarrow C \rightarrow 0$. Both $i^! I$ and $i^! V$ have
$\nabla_\eps$-flags by Lemma~\ref{ridgeinc}.
Hence, so does $i^! I / i^! V$, and on applying
$\Hom_{\R\d}(\Delta_\eps(b),?)$ for $b \in \B\d$, we get a short exact sequence
$$
0 \longrightarrow \Hom_{\R\d}(\Delta_\eps(b), i^! I / i^! V)
\longrightarrow \Hom_{\R\d}(\Delta_\eps(b), i^! Q)
\longrightarrow 
\Hom_{\R\d}(\Delta_\eps(b), C)
\longrightarrow 0.
$$
The first space here has dimension $$
(i^!I:\nabla_\eps(b)) - (i^! V:
\nabla_\eps(b))
= (I:\nabla_\eps(b))-(V:\nabla_\eps(b)) = 
(Q:\nabla_\eps(b)) = (i^! Q:\nabla_\eps(b)),
$$ 
which is the dimension
of the second space. This shows that the
first morphism is an isomorphism. Hence, 
$\Hom_{\R\d}(\Delta_\eps(b), C) = 0$. This implies that $C = 0$ as required.

Finally let $V \in \R\d$. Then $V$ is of
finite length, so it suffices just to consider the case that $V =
L(b)$ for $b \in \B\d$.
Then we consider the short exact sequence $0 \rightarrow L(b) \rightarrow \nabla_\eps(b)
\rightarrow Q \rightarrow 0$.
Applying $i^!$ and using the vanishing established so far 
gives $0 \rightarrow i^! L(b) \rightarrow i^!
\nabla_\eps(b)
\rightarrow i^! Q \rightarrow (\der{R}^1 i^!) L(b) \rightarrow 0$
and isomorphisms $(\der{R}^n i^!) Q \cong (\mathbb{R}^{n+1} i^!) L(b)$ for $n \geq
1$.
But $i^!$ is the identity on $L(b), \nabla_\eps(b)$ and $Q$, so this
immediately yields
$(\der{R}^1 i^!) L(b) = 0$, and then $(\der{R}^{n} i^!) L(b) = 0$ for higher $n$ by
degree shifting.

Having proved (2), property (3) follows easily.
Finally (4)--(5) follow by the usual Grothendieck spectral sequence argument
starting from the adjunction isomorphism
$\Hom_{\R\d}(iV,?) \cong \Hom_{\R}(V,?) \circ i^!$. One just needs
(2) and the observation that
$i^!$ sends injectives to injectives.
\end{proof}

Our next result gives an
alternative characterization of lower finite
$\eps$-stratified categories.
Note for this that if $\R$ is a lower finite $\eps$-stratified category then the hypotheses of the theorem
are automatically satisfied taking $I^\asc_b := I(b)$; cf. Corollary~\ref{flowy}.

\begin{theorem}[Global characterization of lower finite $\eps$-stratified categories]\label{globalchar}
Let $\R$ be a locally finite Abelian category equipped with a lower
finite stratification $(\B,L,\rho,\Lambda,\leq)$ and
$\eps:\Lambda\rightarrow\{\pm\}$ be a sign function.
Assume for each $b \in \B$ that $L(b)$ has an injective hull in
$\R_{\leq\rho(b)}$ so that we can introduce the objects
$\nabla_\eps(b)$ in the usual way\footnote{We do not insist that $L(b)$ has 
a projective cover in $\R_{\leq\rho(b)}$ and do not need the
objects $\Delta_\eps(b)$.}.
Suppose that the following property holds:
{\rm
\begin{itemize}
\item[($\widehat{I\nabla}_\eps^\asc$)]
For every $b \in \B$, 
there exists an injective object $I_b \in
\Ind(\R)$ 
with an ascending $\nabla_\eps$-flag $(V_\omega)_{\omega
  \in \Omega}$ in the sense of Definition~\ref{good}
such that for each $\omega \in \Omega$
the given $\nabla_\eps$-flag of
$V_\omega$ has $\nabla_\eps(b)$ at the bottom and all other sections
are of the form
$\nabla_\eps(c)$ for $c\in \B$ with $\rho(c) \geq \rho(b)$.
\end{itemize}
}
\noindent
Then $\R$ is a lower finite $\eps$-stratified category.
 \end{theorem}

 \begin{proof} We must verify the condition from Definition~\ref{newlfd}.
   Let $\Lambda\d$ be a finite lower set, $\B\d :=
   \rho^{-1}(\Lambda\d)$, and $\R\d$ be the corresponding Serre 
subcategory of $\R$. This is a locally finite Abelian category
 with irreducible objects labelled by the finite
   set $\B\d$. We need to show it is a finite $\eps$-stratified
   category with respect to the induced stratification.
 
\vspace{1.5mm}
\noindent
{Step 1:} {\em
$\Ext^1_{\R}(\nabla_\eps(a), \nabla_\eps(b)) = 0$ for
$\rho(a)\not\geq\rho(b)$.}
Let $(V_\omega)_{\omega \in \Omega}$ be the given ascending
$\nabla_\eps$-flag of $I_b$.
We have that $\nabla_\eps(b) \hookrightarrow I_b$ and
$I_b / \nabla_\eps(b) = \sum_{\omega \in \Omega} (V_\omega /
\nabla_\eps(b))$.
The socle of the latter object only involves constituents $L(c)$ with $\rho(c)
\geq \rho(b)$. 
We deduce that there is an injective resolution
$0 \rightarrow \nabla_\eps(b)
\rightarrow I_b
\rightarrow J\rightarrow \cdots$ in $\Ind(\R)$ in which
$J$ is a direct sum of $I_c$ with
$\rho(c) \geq \rho(b)$. The $\Ext^1$-vanishing now follows on applying
$\Hom_{\R}(\nabla_\eps(a),?)$ to this resolution and taking homology.

\vspace{1.5mm}
\noindent
{Step 2:} {\em
 For $b \in \B\d$, the object $I\d_b := i^! I_b \in \Ind(\R\d)$ has a $\nabla_\eps$-flag 
  with $\nabla_\eps(b)$ at the bottom and other sections of the form
  $\nabla_\eps(c)$ for $c \in \B\d$ with $\rho(c) \geq
  \rho(b)$. In particular, $I\d_b$ is of finite length.}
Take $b \in \B\d$ and let $(V_\omega)_{\omega \in \Omega}$ be the given ascending
   $\nabla_\eps$-flag in  $I_b$.
     Since $\B\d$ is finite, we can choose some
   sufficiently large $\omega\in \Omega$ so that $(V:\nabla_\eps(c)) =
   (V_\omega:\nabla_\eps(c))$ for all $c \in \B\d$; these
   multiplicities are the given ones from Definition~\ref{good}.
Then we see that $i^! V_\upsilon = i^! V_\omega$ for all larger
$\upsilon$, hence, $i^! V =i^! V_\omega$.
   In view of Step 1, we can rearrange the $\nabla_\eps$-flag of
   $V_\omega$ so that the sections $\nabla_\eps(c)$ with $c \in
   \B\d$ appear below the other sections, with bottom section
   $\nabla_\eps(b)$.
   So there is a short exact sequence
   $0 \rightarrow U_\omega \rightarrow V_\omega \rightarrow
   W_\omega\rightarrow 0$ such that $U_\omega \in \nabla_\eps(\R\d)$
   and $i^! W_\omega = 0$. Then we get that $i^! V = i^! V_\omega =
   U_\omega$, which has the desired $\nabla_\eps$-flag.
 
   \vspace{1.5mm}
   \noindent
{Step 3:} {\em $\R\d$ is a finite $\eps$-stratified category with
  respect to the induced stratification.}
By adjunction properties, the object $I_b\d \in \R\d$ from Step~2 is
injective and it
has $L(b)$ in its socle. This shows that the locally finite Abelian
category $\R\d$ has enough injectives,
hence, it is a finite Abelian category by Lemma~\ref{kitchen}.
Moreover, the objects $I_b\d\:(b \in \B\d)$
satisfy the condition $(\widehat{I\nabla}_\eps)$ from $\S$\ref{ssn}, so $\R\d$ is a
finite $\eps$-stratified category according to Definition~\ref{vector}.
 \end{proof}

\begin{corollary}\label{globalcharcor}
  Let $\R$ be a locally finite Abelian category,
  $(\Lambda,\leq)$ be a lower finite poset, and $L:\Lambda\rightarrow \R$ be a function labelling a complete set of pairwise inequivalent
  irreducible objects. Assume for all $\lambda \in \Lambda$ that
  $L(\lambda)$ has an injective hull $\nabla(\lambda) \in \R_{\leq\lambda}$ such that
  $[\nabla(\lambda):L(\lambda)] = 1$.
  Suppose that the following property holds:
{\rm
  \begin{itemize}
\item[($\widehat{I\nabla}^\asc$)]
For every $\lambda \in \Lambda$
there exists an injective object $I_\lambda \in
\Ind(\R)$ 
with an 
ascending $\nabla$-flag $(V_\omega)_{\omega
  \in \Omega}$
such that 
for each $\omega \in \Omega$
the given $\nabla$-flag of
$V_\omega$ has $\nabla(\lambda)$ at the bottom and all other sections
are of the form
$\nabla(\mu)$ for $\mu\in \Lambda$ with $\mu \geq \lambda$
\end{itemize}
}
\noindent
Then $\R$ is a lower finite highest weight category.
 \end{corollary}

 \begin{proof}
   Apply the theorem taking $\B=\Lambda$ and $\rho$ to be the
   identity function, using also Lemma~\ref{allsimple}.
   \end{proof}
 
\begin{remark}\label{cpsref}
Using Corollary~\ref{globalcharcor}, it follows that $\R$ is a lower finite highest
weight category with all intervals $(\lambda,\infty]$ in the weight
poset being countable
if and only
if $\Ind(\R)$ is a highest weight category 
in the original sense of
\cite[Def.~3.1]{CPS}
with a weight poset that is lower finite. This is also mentioned in \cite{C2}.
\end{remark}

The following theorem gives a related characterization for lower
finite fully
stratified categories.
The proof is based on the well-known proof of the homological criterion for good
filtrations in the context of reductive algebra groups
from \cite[Prop.~II.4.16]{J}.
The $\Ext^2$-vanishing property
needed for this is used as one of the defining properties in
\cite[Def.~2.1]{RW}; see also \cite[Def.~3.1.2($\dagger$)]{C2}.
We know already that lower finite fully stratified categories
automatically satisfy the conditions of this theorem.

\begin{theorem}[Homological characterization of lower finite fully stratfied categories]\label{anotherchar}
  Suppose that $\R$ is a locally finite Abelian category equipped with
  a lower finite stratification $(\B,L,\rho,\Lambda,\leq)$. Suppose
that every $L(b)$ has a projective cover and an injective hull in
$\R_{\leq\rho(b)}$ so that we can introduce standard and costandard objects.
Consider the following properties:
  \begin{itemize}
  \item[(1)] $\Ext^1_\R(\bar\Delta(b), \nabla(c)) =
    \Ext^2_\R(\bar\Delta(b), \nabla(c)) = 0$ for all $b,c \in \B$.
  \item[(2)] $\Ext^1_\R(\Delta(b), \bar\nabla(c)) =
    \Ext^2_\R(\Delta(b), \bar\nabla(c)) = 0$ for al $b,c\in\B$.
  \end{itemize}
  If (1) holds then $\R$ is a lower finite
  $-$-stratified category, and 
  if (2) holds then $\R$ is a lower finite
  $+$-stratified category. Hence, if both (1) and (2) hold then $\R$ is a lower finite fully stratified
category.
\end{theorem}

\begin{proof}
  We will prove that (1) implies that $\R$ is a lower finite $-$-stratified
  category.
  The fact that (2) implies that $\R$ is $+$-stratified then follows
  from this assertion with $\R$ replaced by $\R^\op$.
Hence, if both hold then $\R$ is fully stratified thanks to
Lemma~\ref{rain}(iii).

 So now we just assume (1). Define ascending $\nabla$-flags and
  the corresponding full subcategory $\nabla^\asc(\R)$ by repeating
  the $\eps = -$ case of
  Definition~\ref{good}. We first establish two claims.
  
\vspace{1.5mm}
\noindent
Claim 1: {\em For $V \in \nabla^\asc(\R)$, we have that
  $\Ext^1_\R(\bar\Delta(b), V)= 0$ for all $b \in \B$. Moreover, the
  multiplicity $(V:\nabla(b))$ defined  from a specific choice of
  ascending $\nabla$-flag in $V$ is equal to
$\dim \Hom_\R(\bar\Delta(b), V)$.}
For any $c \in \B$, we have as always that $\dim \Hom_\R(\bar\Delta(b), \nabla(c)) = \delta_{b,c}$, and
moreover $\Ext^1_\R(\bar\Delta(b), \nabla(c)) = 0$ by property (1).
Hence, Claim 1 holds when the $\nabla$-flag is of finite length.
Then it follows for arbitrary $V \in \nabla^\asc(\R)$ by
the same arguments as used to prove Lemmas~\ref{ridgemultiplicity} and \ref{goodcnew} above,
using the special case just established in place of the references to
Theorems~\ref{gf1} and \ref{gf2} made in those proofs.

\vspace{1.5mm}
\noindent
{Claim 2:}
{\em If $V \in \Ind(\R)$ satisfies
  $\dim \Hom_\R(\bar\Delta(b), V) < \infty$ and $\Ext^1_\R(\bar\Delta(b), V) = 0$ for
  all $b \in \B$ then $V$ has an ascending $\nabla$-flag
  $(V_\omega)_{\omega \in \Omega}$.}
Let $\Omega$ be the poset of finite lower sets in $\Lambda$
ordered by containment. For $\omega = \Lambda\d\in \Omega$, define $V_\omega$ to
be the subobject $i^! V$ where $i:\R\d\rightarrow \R$ is the inclusion
of the Serre subcategory of $\R$ associated to $\B\d :=
\rho^{-1}(\Lambda\d)$.
This defines a direct system $(V_\omega)_{\omega
  \in \Omega}$ of subobjects of $V$.
We prove the claim by establishing the following:
\begin{itemize}
  \item[(a)] Each $V_\omega\:(\omega \in \Omega)$ has a finite $\nabla$-flag.
  \item[(b)] $V = \sum_{\omega \in \Omega} V_\omega$.
    \end{itemize}
To check (a), take $\omega = \Lambda\d \in \Omega$ setting $\B\d :=
\rho^{-1}(\Lambda\d)$ once again.
We show that $V_\omega$ has a finite $\nabla$-flag by induction on
$n(V) := \sum_{b \in \B\d} \dim
\Hom_\R(\bar\Delta(b), V)$. If $n(V) = 0$ then $V_\omega = 0$ and there
is nothing to do. If $n(V) > 0$, let $\lambda$ be minimal such
that
$\dim \Hom_\R(\bar\Delta(b), V) \neq 0$ for some $b \in \B_\lambda$.
Then $\Hom_\R(L(c), V) = 0$ for $c \in \B_{< \lambda}$ and
$\Hom_\R(L(b), V) \neq 0$.
By applying $\Hom_\R(?,V)$ to the short exact sequence
$0 \rightarrow K \rightarrow \bar\Delta(c) \rightarrow L(c) \rightarrow 0$, it
follows that $\Ext^1_\R(L(c), V) = 0$ for all $c \in \B_{\leq
  \lambda}$.
Then by applying $\Hom_\R(?,V)$ to the short exact sequence
$0 \rightarrow L(b) \rightarrow \nabla(b) \rightarrow Q \rightarrow
0$,
it follows that the natural map $\Hom_\R(\nabla(b), V) \rightarrow
\Hom_\R(L(b), V)$ is surjective. Since the right hand space is
non-zero and $\soc \nabla(b) = L(b)$, it follows that there is an
injective homomorphism $f: \nabla(b)
\rightarrow V$. Let $U := \im f$ and $W := V / U$.
Thus, $U \cong \nabla(b)$ and there is a short exact sequence
$0 \rightarrow U \rightarrow V \rightarrow W \rightarrow 0$.
Applying $\Hom_\R(\bar\Delta(a),?)$ and using the hypotheses
$\Ext^1_\R(\bar\Delta(a), U) = \Ext^1_\R(\bar\Delta(a), V) = \Ext^2_\R(\bar\Delta(a), U) = 0$, we
deduce that $n(W) < n(V)$ and $\Ext^1_\R(\bar\Delta(a), W) = 0$
for all $a \in \B$. Thus we can apply induction to prove that
$W_\omega$ has a finite $\nabla$-flag. Since $V_\omega = W_\omega / U$
it follows that $V_\omega$ does too, and (a) is proved.
To check (b), we let
$V' := \sum_{\omega \in \Omega} V_\omega$ and show that $V = V'$ by repeating the argument
from the proof of (ii)$\Rightarrow$(i) in Theorem~\ref{thethmnew} with
$\Delta_\eps(b)$ replaced by $\bar\Delta(b)$, using Claim 1 to get that
$\Ext^1_\R(\bar\Delta(b), V') = 0$.
Thus, we have proved Claim 2.

\vspace{1.5mm}
Now we complete the proof of the theorem.
For $b \in \B$, let $I_b := I(b)$.
Like in the proof of Corollary~\ref{flowy},
Claims 1 and 2 imply that $I_b$ has an ascending
$\nabla$-flag
  $(V_\omega)_{\omega \in \Omega}$ with $(I_b:\nabla(c)) =
  [\bar\Delta(c):L(b)]$. By passing to a subset of $\Omega$ if necessary, we
may assume 
that all $V_\omega$ are non-zero.
It follows that the condition ($\widehat{I\nabla}_-^\asc$) from
Theorem~\ref{globalchar} is satisfied, and $\R$ is a lower finite
  $-$-stratified category.
\end{proof}

\begin{corollary}\label{prey}
  Suppose that $\R$ is a locally finite Abelian category,
  $(\Lambda,\leq)$ is a lower finite poset,
  and $L:\Lambda\rightarrow \R$ is a function labelling a complete set of pairwise inequivalent
  irreducible objects.
  Assume $L(\lambda)$ has both an injective hull
  $\nabla(\lambda)$ and a projective cover $\Delta(\lambda)$ in $\R_{\leq \lambda}$.
    Suppose that the following properties hold for all $\lambda,\mu\in\Lambda$:
    \begin{itemize}
      \item[(i)] $\Hom_\R(\Delta(\lambda),\nabla(\lambda))$ is one-dimensional;
  \item[(ii)] $\Ext^1_\R(\Delta(\lambda), \nabla(\mu)) =
    \Ext^2_\R(\Delta(\lambda), \nabla(\mu)) = 0$.
\end{itemize}
Then $\R$ is a lower finite highest weight
category.
\end{corollary}

\begin{proof}
  Property (i) implies that all strata are simple;
  cf. Lemma~\ref{allsimple}. Now apply the theorem.
  \end{proof}

Corollary~\ref{prey} applies in particular to the category $\R = \Rep(G)$
for a reductive algebraic group $G$; see $\S$\ref{eg3}.
The $\Ext$-vanishing properties in the corollary are consequences of
Kempf's vanishing theorem; see \cite[Prop.~II.4.13]{J}.

\subsection{Refining stratifications in fully stratified categories}
We end the section by formulating a basic lemma 
about refinement of stratifications
in fully stratified categories in any of the settings (finite,
essentially finite, upper finite or lower finite).

\begin{definition}\label{baby}
  Let $(\B,L,\rho,\Lambda,\leq)$ be a
  stratification of an Abelian category $\R$.
 A {\em refinement} of it means a stratification
 $(\B,L,\sigma,\Gamma,\preceq)$ of $\R$ with the same
 underlying labelling function 
together with a surjective function $q:\Gamma\twoheadrightarrow\Lambda$ such that the following
properties hold:
\begin{enumerate}
\item[(R1)] $\Gamma \cap \Lambda = \varnothing$.
  \item[(R2)] $\rho = q \circ \sigma$.
\item[(R3)] For $\beta,\gamma \in \Gamma$, we have that
$\beta \preceq\gamma\Rightarrow q(\beta) \leq q(\gamma)$ and
  $q(\beta) < q(\gamma)
  \Rightarrow \beta \prec \gamma$.
  \end{enumerate}
\end{definition}

In the setup of Definition~\ref{baby}, if one of the stratifications is admissible of one of our four types
then the other one is automatically admissible of the same type.
Assuming this is the case,
take $\gamma \in \Gamma$ and set $\lambda := q(\gamma)$.
We have the stratum $\R_\lambda := \R_{\leq\lambda} /
\R_{<\lambda}$ with quotient functor $j^\lambda$ coming from the
original stratification,
and
the stratum $\R_\gamma := \R_{\preceq\gamma} / \R_{\prec\gamma}$
with quotient functor $j^\gamma$
coming from the refined stratification\footnote{The axiom (R1) is needed so that this notation is unambiguous.}.
There is also an induced finite stratification
$(\rho_\lambda,\B_\lambda,\Gamma,\preceq,L_\lambda)$ on $\R_\lambda$
defined by setting $\rho_\lambda := \rho|_{\B_\lambda}$ and
$L_\lambda(b) := j^\lambda L(b)$ for each $b \in \B_\lambda$.
We denote the stratum of this labelled by $\gamma$ by $\R_{\lambda,\gamma}$
with quotient functor $(j_\lambda)^\gamma:\R_{\lambda, \preceq\gamma} \rightarrow
\R_{\lambda, \gamma}$.
In fact, $R_{\lambda,\gamma}$ may naturally be identified with $\R_\gamma$ so that
$j^\gamma = (j_\lambda)^\gamma \circ j^\lambda|_{\R_{\preceq\gamma}}$.
Now one can denote the standard and proper objects of $\R$ for the original
stratification by 
$$
\big\{\rho\Delta(b) := j^\lambda_! P_\lambda(b)\:\big|\:\lambda \in
\Lambda, b \in \B_\lambda\big\},\quad
\big\{\rho\bar\Delta(b) := j^\lambda_! L_\lambda(b)\:\big|\:\lambda \in
\Lambda, b \in \B_\lambda\big\},
$$
and the standard and proper standard objects of $\R$ for the refined stratification by $$
\big\{\sigma\Delta(b) := j^\gamma_! P_\gamma(b)\:\big|\:\gamma \in
\Gamma, b \in \B_\gamma\big\},\quad
\big\{\sigma\bar\Delta(b) := j^\gamma_! L_\gamma(b)\:\big|\:\gamma \in
\Gamma, b \in \B_\gamma\big\}.
$$
The standard and proper standard 
objects of $\R_\lambda$ for its induced stratification
are 
\begin{align*}
\big\{\Delta_\lambda(b) &:= (j_\lambda)^\gamma_!
P_\gamma(b)\:\big|\: \textstyle b \in \bigcup_{\gamma \in q^{-1}(\lambda)}\B_\gamma\big\},&
\big\{\bar\Delta_\lambda(b) &:= (j_\lambda)^\gamma_!
L_\gamma(b)\:\big|\:\textstyle b \in \bigcup_{\gamma \in q^{-1}(\lambda)}\B_\gamma\big\},
\end{align*}
and for such $b$ we have
that 
$\sigma\Delta(b) = j^\lambda_! \Delta_\lambda(b)$,
$\sigma\bar\Delta(b) = j^\lambda_! \bar\Delta_\lambda(b)$
since $j^\gamma_! = j^\lambda_! \circ (j_\lambda)^\gamma_!$.
We deduce for all $b \in \B$
that
\begin{align}
\rho\Delta(b) &\twoheadrightarrow \sigma\Delta(b),&
\sigma\bar\Delta(b)&\twoheadrightarrow\rho\bar\Delta(b),\\\intertext{Similar notation can be introduced for the costandard objects,
 and one sees that} 
 \rho\bar\nabla(b) &\hookrightarrow \sigma\bar\nabla(b),
 &\sigma\nabla(b) 
 &\hookrightarrow \rho\nabla(b)
 \end{align}
since $j^\gamma_* = j^\lambda_* \circ (j_\lambda)^\gamma_*$.

\begin{lemma}\label{tired}
  Let $\R$ be an Abelian category
  equipped with an admissible stratification
$(\B,L,\rho,\Lambda,\leq)$. Let $(\B,L,\sigma,\Gamma,\preceq)$ be a
refinement of it in the sense of Definition~\ref{baby}.
\begin{enumerate}
\item If $\R$ is fully stratified with respect  to the original
stratification, and the strata $\R_\lambda$ are fully stratified
with respect to their induced stratifications
for all $\lambda \in \Lambda$, then
$\R$ is fully stratified with respect to the refined stratification.
\item
If $\R$ is fully stratified with respect to the refined stratification,
and the functors $j^\lambda_!, j^\lambda_*:\R_\lambda \rightarrow \R_{\leq \lambda}$ are exact for all $\lambda \in \Lambda$,
then $\R$ is fully stratified with respect to the original stratification.
\end{enumerate}
\end{lemma}

\begin{proof}
Due to the local nature of the definition of ``fully stratified" in the lower finite case, the proof reduces just to 
the finite, essentially finite and upper finite cases.
We assume we are in one of these three situations 
for the remainder of the argument.

\vspace{1.5mm}
\noindent
(1)
Note that the functors $j^\gamma_!$ and $j^\gamma_*$ are exact since they are compositions of exact functors.
In view 
of Lemma~\ref{rain}(iv), it remains to show that 
$P(b)$ has a $\sigma\Delta$-flag with $\sigma\Delta(b)$ at the top and other sections of the form $\sigma\Delta(c)$ for $c \in \B$ with $\sigma(c) \succ \sigma(b)$. To see this, let $\lambda := \rho(b)$.
As $\R$ is fully stratified
with respect to the original stratification,
$P(b)$ has a $\rho\Delta$-flag with $\rho\Delta(b)$ at the top and other sections of the form $\rho\Delta(c)$ for $c \in \B$ with $\rho(c) > \rho(b)$.
Moreover each $\rho\Delta(b)$ has a $\sigma\Delta$-flag with $\sigma\Delta(b)$ at the top and other sections of the form $\sigma\Delta(c)$ for $c \in \B_{\lambda}$ with $\sigma(c) \succ \sigma(b)$; this follows by applying the exact functor $j^\lambda_!$ to a
$\Delta_\lambda$-flag in $P_\lambda(b)$.

\vspace{1.5mm}
\noindent
(2)
To show that $\R$ is fully stratified with respect to the original stratification,
both $j^\lambda_!$ and $j^\lambda_*$ are exact by assumption,  so 
it suffices to show that each $P(b)$ has a $\rho\bar\Delta$-flag.
This follows because $P(b)$ has a $\sigma\bar\Delta$-flag and each
$\sigma\bar\Delta(b)$ has a $\rho\bar\Delta(b)$-flag; to see the latter assertion apply the exact functor
$j^\lambda_!$ to a composition series of $\bar\Delta_\lambda(b)$.
\end{proof}

\begin{corollary}\label{eve}
 Let $\R$ be fully stratified category with
  stratification  $(\B,L,\rho,\Lambda,\leq)$.
  Assume that each stratum $\R_\lambda\:(\lambda \in \Lambda)$
  is a
  highest weight category with weight poset $(\Gamma_\lambda,\preceq_\lambda)$ and labelling function $L_\lambda$.
Let $\Gamma:=\bigsqcup_{\lambda \in \Lambda} \Gamma_\lambda$,
 $\sigma:\B\stackrel{\sim}\rightarrow\Gamma$  be the bijection such
 that $j^\lambda L(b) \cong L_\lambda(\sigma(b))$ for $b \in \B_\lambda$, and
$\preceq$ be the partial order on $\Gamma$ defined by
 $\sigma(b) \preceq \sigma(c)$ if and only if 
 either $\rho(b) < \rho(c)$, or
 $\lambda:=\rho(b)=\rho(c)$ and $\sigma(b) \preceq_\lambda \sigma(c)$.
 Then $(\B,L,\sigma,\Gamma,\preceq)$ is a refinement of the original 
 stratification which makes $\R$ into a highest
 weight category.
\end{corollary}

\begin{remark}
It is also interesting to consider changing the underlying partial order on the
set $\Lambda$.
For a fully stratified category $\R$  with stratification $(\B,L,\rho,\Lambda,\leq)$,
one can always replace the given order
$\leq$ by the {\em minimal order} $\preceq$, that is, the partial
order generated
by the relation that $\lambda \prec \mu$ if
$[\nabla(b):L(c)] +[\bar\Delta[b]:L(c)] \neq 0$
for some $b
\in \B_\lambda, c\in\B_\mu$.
Then $\R$ is also fully stratified with respect to
$(\B,L,\rho,\Lambda,\preceq)$ with all the same strata, standard
objects, etc..
For highest weight categories, Coulembier \cite{C}, \cite{C2} has
made the following elegant observation: if $\R$ is a finite Abelian,
locally finite Abelian or Schurian category,
$\{L(\lambda)\:|\:\lambda \in
\Lambda\}$ is a full set of pairwise inequivalent irreducible
objects, and $\R$ possesses a contravariant autoequivalence preserving isomorphism classes of irreducible objects, then
all partial orders on
$\Lambda$ making $\R$ into a highest weight category give rise to the
same minimal order.
There are examples showing that this statement is false for essentially finite
highest weight categories.
\end{remark}

\section{Tilting modules and semi-infinite Ringel duality}

We now develop the theory of tilting objects and Ringel duality. Even
in the finite case, we are not aware of a complete exposition of these
results in the existing literature in the general $\eps$-stratified setting. 

\subsection{Tilting objects in the finite and lower finite cases}\label{sbugs}
In this subsection, $\R$ is a finite or locally finite Abelian category with a finite or
lower finite stratification $(\B,L,\rho,\Lambda,\leq)$,
and $\eps:\Lambda\rightarrow\{\pm\}$ is a fixed
sign function with respect to which $\R$ is a finite or lower finite
$\eps$-stratified category, respectively; see Definitions~\ref{vector} and \ref{newlfd}.
By an {\em $\eps$-tilting object}, we mean an object of the following
full subcategory of $\R$:
\begin{equation}\label{to}
\Tilt_\eps(\R) := \Delta_\eps(\R)
\cap \nabla_\eps(\R).
\end{equation}
The following shows that the additive subcategory $\Tilt_\eps(\R)$ of
$\R$ is Karoubian.

\begin{lemma}\label{karo}
Direct summands of $\eps$-tilting objects are $\eps$-tilting objects.
\end{lemma}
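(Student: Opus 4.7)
The plan is to leverage the homological criteria for $\Delta_\eps$- and $\nabla_\eps$-flags established earlier, and then to bootstrap the finite case to the lower finite setting by truncating to a sufficiently large finite Serre subcategory.

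First I would handle the finite case directly. Suppose $\R$ is a finite $\eps$-stratified category and $T = T_1 \oplus T_2 \in \Tilt_\eps(\R)$. By Theorems~\ref{gf1} and \ref{gf2}, we have $\Ext^1_\R(\Delta_\eps(b), T) = 0 = \Ext^1_\R(T, \nabla_\eps(b))$ for all $b \in \B$. Since $\Ext^1_\R(-,-)$ is additive in each variable, the analogous vanishings hold with $T$ replaced by $T_i$ for $i = 1, 2$. Applying the converse implications in the two theorems, $T_i \in \Delta_\eps(\R) \cap \nabla_\eps(\R) = \Tilt_\eps(\R)$.

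For the lower finite case, I would reduce to the finite case as follows. By definition, a $\Delta_\eps$-flag and a $\nabla_\eps$-flag are both finite filtrations, so any $T \in \Tilt_\eps(\R)$ has finite length. Given a decomposition $T = T_1 \oplus T_2$, the set of $b \in \B$ for which $L(b)$ appears as a composition factor of $T$ is finite; let $\Lambda\d \subseteq \Lambda$ denote the finite lower set generated by the images of these $b$ under $\rho$, and set $\B\d := \rho^{-1}(\Lambda\d)$. Then $T, T_1, T_2$ all lie in the Serre subcategory $\R\d$. By Theorem~\ref{salsa}, $\R\d$ is a finite $\eps$-stratified category whose standard and costandard objects for $b \in \B\d$ coincide with the corresponding objects of $\R$. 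Consequently $T$ is $\eps$-tilting in $\R\d$, and the finite case gives that $T_1$ and $T_2$ are also $\eps$-tilting in $\R\d$, hence in $\R$.

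There is no genuine obstacle: the argument is a routine application of the Ext-criteria together with the compatibility between $\R$ and its finite Serre subcategories. The only point worth being careful about is that the notions of $\Delta_\eps$-flag and $\nabla_\eps$-flag in $\R\d$ agree with those inherited from $\R$, which is precisely what Theorem~\ref{salsa} supplies.
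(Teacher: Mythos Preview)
Your proposal is correct and follows essentially the same approach as the paper: both use the homological criteria from Theorems~\ref{gf1} and~\ref{gf2} combined with additivity of $\Ext^1$ in the finite case, and in the lower finite case reduce to a finite $\eps$-stratified subcategory via Theorem~\ref{salsa}. Your write-up simply spells out the details that the paper leaves implicit.
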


\begin{proof}
This follows easily from the homological criteria from Theorems~\ref{gf1}
and \ref{gf2}.
In the lower finite case, one needs to pass first to a finite
$\eps$-stratified subcategory containing the object in question using Theorem~\ref{salsanewer}.
\end{proof}

The next goal is to construct and classify
$\eps$-tilting objects. Our exposition of this is based roughly on 
\cite[Appendix]{Donkin}, which in turn goes back
to the work of 
Ringel \cite{R}. There are some additional complications in the
$\eps$-stratified setting.

\begin{theorem}[Classification of $\eps$-tilting objects]\label{gin}
Assume that $\R$ is a finite or lower finite $\eps$-stratified category.
For $b \in \B_\lambda$
there is an indecomposable object $T_\eps(b) \in
\Tilt_\eps(\R)$ satisfying the following properties:
\begin{itemize}
\item[(i)] $T_\eps(b)$ has a $\Delta_\eps$-flag with bottom section
  isomorphic to $\Delta_\eps(b)$;
\item[(ii)] $T_\eps(b)$ has a $\nabla_\eps$-flag with top section
  isomorphic to $\nabla_\eps(b)$;
\item[(iii)] $T_\eps(b) \in \R_{\leq\lambda}$ and $j^\lambda T_\eps(b) \cong
\left\{
\begin{array}{ll}P_\lambda(b)&\text{if $\eps(\lambda)=+$}\\
I_\lambda(b)&\text{if $\eps(\lambda) = -$}
\end{array}
\right.$.
\end{itemize}
These properties determine $T_\eps(b)$ uniquely
up to isomorphism: if $U$ is any indecomposable object of
$\Tilt_\eps(\R)$
satisfying any one of the properties (i)--(iii) then $U \cong T_\eps(b)$;
hence, it satisfies the other two properties as well.
\end{theorem}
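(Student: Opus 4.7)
The plan is to adapt Ringel's classical iterative construction of indecomposable tilting objects to the $\eps$-stratified setting, following the spirit of \cite[Appendix]{Donkin} in the highest weight case. First, in the lower finite case, Theorem~\ref{salsa} replaces $\R$ by a finite $\eps$-stratified Serre subcategory $\R\d$ containing every $c \in \B$ with $\rho(c) \leq \lambda$, and Theorem~\ref{gf}(1) ensures that the distinguished objects $L(b), \Delta_\eps(b), \nabla_\eps(b)$ are preserved. So I may assume $\R$ is finite $\eps$-stratified throughout.

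For existence, set
$$
X := \begin{cases} P_\lambda(b) & \text{if } \eps(\lambda) = +,\\ I_\lambda(b) & \text{if } \eps(\lambda) = -,\end{cases}
$$
and $V_0 := j^\lambda_! X$. When $\eps(\lambda) = +$, $V_0 = \Delta(b) = \Delta_\eps(b)$. When $\eps(\lambda) = -$, the exactness of $j^\lambda_!$ from Theorem~\ref{fund}, applied to a composition series of $I_\lambda(b)$ beginning with the socle $L_\lambda(b)$, endows $V_0$ with a $\Delta_\eps$-flag whose bottom section is $\bar\Delta(b) = \Delta_\eps(b)$. In either case $V_0 \in \R_{\leq \lambda}$, $j^\lambda V_0 = X$, and $V_0$ has a $\Delta_\eps$-flag starting with $\Delta_\eps(b)$. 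Then I would build a tower $V_0 \subseteq V_1 \subseteq V_2 \subseteq \cdots$ of iterated universal extensions: whenever $\Ext^1_\R(\Delta_\eps(a), V_i) \neq 0$, form the universal extension $0 \to V_i \to V_{i+1} \to \Delta_\eps(a)^{\oplus n_a} \to 0$ killing this group. Lemma~\ref{against} together with the projectivity (resp.\ injectivity) of $X = j^\lambda V_i$ in $\R_\lambda$ when $\eps(\lambda) = +$ (resp.\ $-$) forces any such $a$ to satisfy $\rho(a) < \lambda$: in the $-$ case this uses that an extension $0 \to j^\lambda_! X \to E \to j^\lambda_! L_\lambda(a) \to 0$ splits after applying $j^\lambda$, hence splits after applying $j^\lambda_! j^\lambda$, hence itself splits since $j^\lambda_! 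X$ and $j^\lambda_! L_\lambda(a)$ are fixed by the counit of adjunction. Consequently each extension preserves $V_i \in \R_{\leq \lambda}$ and $j^\lambda V_i = X$. Finiteness of $\B_{<\lambda}$ and boundedness of $(V_i:\Delta_\eps(a)) = \dim \Hom_\R(V_i, \nabla_\eps(a))$ via Theorem~\ref{gf2} ensure termination at some $V$; by Theorem~\ref{gf1}, $V \in \nabla_\eps(\R) \cap \Delta_\eps(\R) = \Tilt_\eps(\R)$.

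Decomposing $V$ by Krull--Schmidt and observing that $j^\lambda V = X$ is indecomposable, I let $T_\eps(b)$ be the unique summand with $j^\lambda T_\eps(b) = X$, all other summands lying in $\R_{<\lambda}$. Property (iii) is then immediate, and by Lemma~\ref{karo} the summand $T_\eps(b)$ is $\eps$-tilting. For (i), the sub $\Delta_\eps(b) \hookrightarrow V_0 \subseteq V$ must lie inside $T_\eps(b)$ (since other summands are in $\R_{<\lambda}$ while $\Delta_\eps(b) \notin \R_{<\lambda}$). For (ii), the multiplicity $(T_\eps(b):\nabla_\eps(b)) = \dim \Hom_\R(\Delta_\eps(b), T_\eps(b)) \geq 1$ from the sub inclusion, while every other $\nabla_\eps(c)$-section satisfies $\rho(c) \leq \lambda$; the dual of Lemma~\ref{against} lets me reorder the $\nabla_\eps$-flag to place $\nabla_\eps(b)$ at the top.

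For uniqueness, suppose $U \in \Tilt_\eps(\R)$ is indecomposable and satisfies any one of (i)--(iii). Given (i), the sub $\Delta_\eps(b) \hookrightarrow U$ lifts through the $\Delta_\eps$-flag of $T_\eps(b)$ using the $\Ext^1$-vanishing from Theorem~\ref{gf2} to yield a morphism $T_\eps(b) \to U$; applying $j^\lambda$ and invoking (\ref{onedimensionality}) together with the indecomposability of $X$ in $\R_\lambda$ forces $j^\lambda U = X$ and the morphism to be an isomorphism, the locality of $\End(T_\eps(b))$ (Krull--Schmidt) ensuring this map cannot be nilpotent. The cases (ii) and (iii) follow symmetrically. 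The main obstacle is carefully verifying in the $\eps(\lambda) = -$ case that the iteratively-added extensions never involve standard objects from the stratum $\B_\lambda$—the subtlety absent in the classical highest-weight setting—so that $j^\lambda V_i = X$ is maintained and property (iii) is secured for $T_\eps(b)$.
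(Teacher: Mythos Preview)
Your reduction to the finite case and your handling of the case $\rho(a)=\lambda$ (via projectivity of $\Delta(a)$ when $\eps(\lambda)=+$, and via the adjunction splitting argument when $\eps(\lambda)=-$) are fine. But there is a genuine gap at the heart of your existence proof: the termination of the iterated universal extension.

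Your justification, ``boundedness of $(V_i:\Delta_\eps(a)) = \dim \Hom_\R(V_i, \nabla_\eps(a))$,'' is not correct: this quantity \emph{increases} along the tower. What one actually needs is that $\sum_{a}\dim\Ext^1_\R(\Delta_\eps(a),V_i)$ eventually drops to zero. For a stratum $\mu<\lambda$ with $\eps(\mu)=+$ this works as in Ringel's original argument: $\Delta(a)$ is projective in $\R_{\leq\mu}$, so $\Ext^1_\R(\Delta(a),\Delta(c))=0$ for $a,c\in\B_\mu$ (via Theorem~\ref{gf}(6)), and the total $\Ext^1$ over $\B_\mu$ strictly decreases at each step. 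But for $\eps(\mu)=-$ one has $\Ext^1_\R(\bar\Delta(a),\bar\Delta(c))\cong \Ext^1_{\R_\mu}(L_\mu(a),L_\mu(c))$, which is typically nonzero; extending by some $\bar\Delta(c)$ can therefore \emph{reintroduce} $\Ext^1(\bar\Delta(a),-)$ for other $a\in\B_\mu$, and your argument gives no monotone quantity forcing termination. This is precisely why the paper does \emph{not} always extend on top by $\Delta_\eps$'s. Instead it argues by induction on $|\Lambda|$, stripping off one \emph{minimal} stratum $\mu$ at a time: starting from $j_!T_\eps\u(b)$ and extending on top by $\Delta(c)$'s when $\eps(\mu)=+$, but starting from $j_*T_\eps\u(b)$ and extending at the \emph{bottom} by $\nabla(c)$'s when $\eps(\mu)=-$. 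In the latter case the quantity $d_-(V)=\sum_{c\in\B_\mu}\dim\Ext^1_\R(V,\nabla(c))$ strictly decreases because $\nabla(a)$ is injective in $\R_{\leq\mu}$ (so $\Ext^1_\R(\nabla(c),\nabla(a))=0$ via Theorem~\ref{gf}(6)). The paper's introduction explicitly flags this mixed top/bottom construction as the new ingredient beyond Ringel's classical argument.

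There is also a smaller issue in your uniqueness argument: you apply $j^\lambda$ to $U$, but property (i) alone does not tell you that $U\in\R_{\leq\lambda}$. The paper avoids this by a pure Fitting's Lemma argument: extend $\Delta_\eps(b)\hookrightarrow U$ to $\bar g:U\to T_\eps(b)$ and $\Delta_\eps(b)\hookrightarrow T_\eps(b)$ to $\bar f:T_\eps(b)\to U$ (both using $\Ext^1(\Delta_\eps\text{-filtered},\text{tilting})=0$); then $\bar f\circ\bar g$ restricts to the identity on $\Delta_\eps(b)$, so is not nilpotent, hence is an automorphism of the indecomposable $U$. Your concluding sentence gestures at this, but the $j^\lambda$ detour you take before it is unjustified and unnecessary.
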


\begin{proof}
By replacing $\R$ by the Serre subcategory associated to a
sufficiently large but finite lower set $\Lambda\d$ in $\Lambda$,
chosen so as to contain $\lambda$ and (for the uniqueness statement) all $\rho(b)$ for $b$ such that
$[T:L(b)] \neq 0$, one reduces to the case that $\R$ is a
finite $\eps$-stratified category. This reduction depends only on
Theorem~\ref{salsanewer}.
Thus, we may assume henceforth that $\Lambda$ is finite.

\vspace{1.5mm}
\noindent
{Existence:}
The main step
is to construct an indecomposable object $T_\eps(b) \in \Tilt_\eps(\R)$
such that (iii) holds. The argument for this proceeds by induction on $|\Lambda|$.
If $\lambda \in \Lambda$ is minimal, 
we set $T_\eps(b) := \Delta(b)$ if $\eps(\lambda) = +$ or
$\nabla(b)$ if $\eps(\lambda) = -$. Since $\bar\Delta(b) = L(b) =
\bar\nabla(b)$ by the minimality of $\lambda$, this has both a
$\Delta_\eps$- and a $\nabla_\eps$-flag.  It is indecomposable, and
we get (iii) from Lemma~\ref{guns}.

For the induction step, suppose that $\lambda$ is not minimal
and pick
$\mu < \lambda$ that is minimal.
Let $\Lambda\u := \Lambda\setminus\{\mu\}, \B\u :=
\rho^{-1}(\Lambda\u)$,
and $j:\R \rightarrow \R\u$ be the corresponding Serre quotient.
By induction, there is
an indecomposable object $T\u_\eps(b) \in \Tilt_\eps(\R\u)$ satisfying
the analog of (iii).
Now there are two cases according to whether $\eps(\mu) = +$ or $-$.

\vspace{1.5mm}
\noindent
{Case $\eps(\mu)=+$:} 
For any $V \in \R$, let $d_+(V) := \sum_{c \in \B_\mu} \dim
\Ext^1_{\R}(\Delta(c), V)$.
We recursively
construct $n \geq 0$ and $T_0,T_1,\dots,T_n$ so that 
$d_+(T_0) > d_+(T_1) > \cdots > d_+(T_n) = 0$ and the following
properties hold for all $m$:
\begin{enumerate}
\item[(1)]
$T_m \in \Delta_\eps(\R)$.
\item[(2)]
$j^\lambda T_m \cong 
P_\lambda(b)$ if $\eps(\lambda) = +$ or
$I_\lambda(b)$ if $\eps(\lambda) = -$.
\item[(3)]
$\Ext^1_{\R}(\Delta_\eps(a), T_m) = 0$ for all $a \in \B \setminus
\B_\mu$.
\end{enumerate}
To start with, set $T_0 := j_! T_\eps\u(b)$.
This satisfies all of the above properties:
(1) follows from Theorem~\ref{gf3}(6);
(2) follows because
$j^\lambda$ factors through $j$ and we know that $T\u_\eps(b)$ satisfies
the analogous property; (3) follows by 
Theorem~\ref{gf3}(5).
For the recursive step, assume that we are given $T_m$ satisfying
(1), (2) and (3) and  $d_+(T_m) > 0$.
We can find $c \in \B_\mu$ and a non-split extension
\begin{equation}\label{ses}
0 \longrightarrow T_m \longrightarrow T_{m+1} \longrightarrow \Delta(c)
\longrightarrow 0.
\end{equation}
This constructs $T_{m+1}$. We claim that $d_+(T_{m+1}) < d_+(T_m)$ and that
$T_{m+1}$ 
satisfies
(1), (2) and (3) too.
Part (1) is clear from the definition.
For (2), we just apply the exact functor $j^\lambda$ to the exact sequence \eqref{ses},
noting that
$j^\lambda \Delta(c) = 0$. For (3), take $a \in \B\setminus\B_\mu$ and
apply the functor
$\Hom_{\R}(\Delta_\eps(a),?)$ to the short exact sequence \eqref{ses}
to get 
$$
\Ext^1_{\R}(\Delta_\eps(a), T_m)
\longrightarrow
\Ext^1_{\R}(\Delta_\eps(a), T_{m+1})
\longrightarrow
\Ext^1_{\R}(\Delta_\eps(a), \Delta(c)).
$$
The first and last term are zero by hypothesis and 
(\ref{repeatedly}), hence, so is the middle term.
It remains to show $d_+(T_{m+1}) < d_+(T_m)$.
For $a \in \B_\mu$, 
we have $\Ext^1_{\R}(\Delta(a), \Delta(c)) = 0$ by
(\ref{repeatedly}),
so again we have an exact sequence
$$
\Hom_{\R}(\Delta(a), \Delta(c))
\stackrel{f}{\longrightarrow}
\Ext^1_{\R}(\Delta(a), T_m)
\longrightarrow
\Ext^1_{\R}(\Delta(a), T_{m+1})
\longrightarrow 0.
$$
This shows that $\dim \Ext^1_{\R}(\Delta(a), T_{m+1}) \leq \dim
\Ext^1_{\R}(\Delta(a), T_m)$, and we just need to observe that the
inequality is actually a strict one in the case $a = c$.
To see this, note that the first morphism $f$ is non-zero in the case $a=c$
as $f(\id_{\Delta(c)}) \neq 0$ 
due to the fact that the original short
exact sequence was not split.
This completes the proof of the claim. 
We have now defined an object
$T_n \in \Delta_\eps(\R)$ 
such that $j^\lambda T_n \cong P_\lambda(b)$ if
$\eps(\lambda)=+$ or
$I_\lambda(b)$ if $\eps(\lambda)=-$, 
and moreover $\Ext^1_{\R}(\Delta_\eps(a), T_n) = 0$ for all $a \in
\B$.
By Theorem~\ref{gf1}, we deduce that $T_n \in \nabla_\eps(\R_{\leq
  \lambda})$ too, hence, it is an $\eps$-tilting object.
Decompose $T_n$ into indecomposables $T_n = T_{n,1}\oplus\cdots\oplus
T_{n,r}$.
Then each $T_{n,i}$ is also an $\eps$-tilting object by Lemma~\ref{karo}.
Since $j^\lambda T_n$ is indecomposable, we must have that $j^\lambda
T_n = j^\lambda T_{n,i}$ for some unique $i$.
Then we set $T_\eps(b) := T_{n,i}$ for this $i$.
This gives us the desired indecomposable $\eps$-tilting object.

\vspace{1.5mm}
\noindent
{Case $\eps(\mu)=-$:} 
Let $d_-(V) := \sum_{c \in
  \B_\mu} \dim \Ext^1_{\R}(V, \nabla(c))$.
This time, one recursively constructs $T_0 := j_* T_\eps\u(b), T_1,\dots,T_n$ so that 
$d_-(T_0) > \cdots > d_-(T_n) = 0$ and
\begin{enumerate}
\item[(1$'$)]
$T_m \in \nabla_\eps(\R)$.
\item[(2$'$)]
$j^\lambda T_m \cong 
P_\lambda(b)$ if $\eps(\lambda) = +$ or
$I_\lambda(b)$ if $\eps(\lambda) = -$.
\item[(3$'$)]
$\Ext^1_{\R}(T_m, \nabla_\eps(a)) = 0$ for all $a \in \B \setminus
\B_\mu$.
\end{enumerate}
Since this is this is just the dual construction to the case $\eps(\mu)=+$
already treated, i.e., it is the same construction in the opposite
category, we omit the details.
Then, at the end, one decomposes
$T_n$ into indecomposables $T_n = T_{n,1}\oplus\cdots\oplus
T_{n,r}$. By Theorem~\ref{gf2} each $T_{n,i}$ is an $\eps$-tilting object.
Since $j^\lambda T_n$ is indecomposable, we must have that $j^\lambda
T_n = j^\lambda T_{n,i}$ for some unique $i$, and
set $T_\eps(b) := T_{n,i}$ for this $i$.

This completes the construction of $T_\eps(b)$ in general. We have
shown it satisfies (iii). Let us show that it also satisfies
(i) and (ii). For (i), we know by (iii) that $T_\eps(b)$ belongs to
$\R_{\leq\lambda}$, and it has a $\Delta_\eps$-flag.
By (\ref{repeatedly}), we may order this flag so that 
the sections $\Delta_\eps(c)$ for $c \in \B_\lambda$ appear at the bottom.
Thus, there is a short exact sequence $0 \rightarrow K \rightarrow
T_\eps(b) \rightarrow Q\rightarrow 0$ such that $K$ has a $\Delta_\eps$-flag with sections
$\Delta_\eps(c)$ for $c \in \B_\lambda$ and $j^\lambda Q  =0$.
Then $j^\lambda K \cong j^\lambda T_\eps(b)$. 
If $\eps(\lambda) = +$, this is $P_\lambda(b)$. Since $j^\lambda$ is exact and $j^\lambda
\Delta(c) = P_\lambda(c)$ for each $c \in \B_\lambda$, we must
have that $K \cong \Delta(b)$, and (1) follows.
Instead, if $\eps(\lambda)=-$, the bottom section of the
$\bar\nabla$-flag of $K$ must be $\bar\nabla(b)$ since $j^\lambda K 
\cong I_\lambda(b)$ has irreducible socle $L_\lambda(b)$,
giving (i) in this case too.
The proof of (ii) is similar.

\vspace{1.5mm}

\noindent
{Uniqueness:}
Let $T := T_\eps(b)$ and $U$ be some other indecomposable object of $\Tilt_\eps(\R)$
satisfying one of the properties (i)--(iii).
We must prove that $T \cong U$.
By the argument from the previous paragaph, we 
may assume actually that $U$ satisfies either (i) or (ii).
We just explain how to see this in the case that $U$ satisfies (i);
the dual argument treats the case that $U$ satisfies
(ii). So
there are short exact sequences
$0 \rightarrow \Delta_\eps(b) \stackrel{f}{\rightarrow} U \rightarrow Q_1 \rightarrow
0$ and
$0 \rightarrow \Delta_\eps(b) \stackrel{g}{\rightarrow} T
\rightarrow Q_2 \rightarrow 0$
such that $Q_1$, $Q_2$ have $\Delta_\eps$-flags.
Applying $\Hom_{\R}(?, T)$ to the first and using
$\Ext^1_{\R}(Q_1, T) =0$, we get that
$\Hom_{\R}(U, T) \twoheadrightarrow \Hom_{\R}(\Delta_\eps(b),
T)$. Hence, $g$ extends to a homomorphism
$\bar g:U \rightarrow T$. Similarly, $f$ extends to $\bar
f:T  \rightarrow U$.
We have constructed morphisms making the triangles in the following diagram
commute:
$$
\begin{tikzcd}[row sep=tiny]
& U \arrow[dd,shift left=-.8mm,"\bar g" left]\\
\Delta_\eps(b) \arrow[ur,"f"] \arrow[dr,"g" below] & \\ &T \arrow[uu, shift
right=.8mm,"\bar f" right]
\end{tikzcd}
$$
Since $\bar f \circ \bar g \circ  f = f$, we deduce that $\bar f
\circ \bar g$ is not nilpotent. Since $U$ is indecomposable, Fitting's Lemma implies $\bar f \circ \bar g$ is an
isomorphism. Similarly, so is $\bar g \circ \bar f$. Hence, $U \cong T$.
\end{proof}

\begin{remark}\label{genie}
Let $b \in \B_\lambda$.
When $\eps(\lambda) = +$, Theorem~\ref{gin} 
implies that $(T_\eps(b):\Delta_\eps(b))
= 1$ and $(T_\eps(b):\Delta_\eps(c)) = 0$ for all other $c \in
\B_\lambda$.
Similarly, when $\eps(\lambda)=-$,
we have that
$(T_\eps(b):\nabla_\eps(b))
= 1$ and $(T_\eps(b):\nabla_\eps(c)) = 0$ for all other $c \in
\B_\lambda$.
\end{remark}

The following corollaries show that 
$\eps$-tilting objects behave well with respect to 
passage to lower and upper sets, extending
Theorems~\ref{gf}, \ref{salsanewer} and \ref{gf3}.
This follows easily from those theorems plus the
characterization of tilting objects in Theorem~\ref{gin};
the situation is just
like \cite[Lem.~A4.5]{Donkin}.

\begin{corollary}\label{it}
Let $\R$ be a finite or lower finite $\eps$-stratified category and
$\R\d$ be the
finite $\eps$-stratified subcategory associated to a finite lower set
$\Lambda\d$ of $\Lambda$. For $b \in \B\d := \rho^{-1}(\Lambda\d)$,
the corresponding indecomposable $\eps$-tilting object of $\R\d$ 
is $T_\eps(b)$ (the same as
in $\R$).
\end{corollary}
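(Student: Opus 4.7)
The plan is to verify directly that the object $T := T_\eps(b)$ constructed in $\R$ satisfies the characterizing properties of an indecomposable $\eps$-tilting object in the smaller category $\R\d$, and then invoke the uniqueness clause of Theorem~\ref{gin}.

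First I would observe that $T$ actually lies in $\R\d$. Indeed, Theorem~\ref{gin}(iii) gives $T \in \R_{\leq\lambda}$ where $\lambda := \rho(b) \in \Lambda\d$; since $\Lambda\d$ is a lower set containing $\lambda$, we have $\R_{\leq\lambda} \subseteq \R\d$. In particular, it makes sense to compare $T$ with the indecomposable $\eps$-tilting object of $\R\d$ associated to $b$.

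Next I would show that $T \in \Tilt_\eps(\R\d)$. By Theorem~\ref{gf}(1) (in the finite case) or Theorem~\ref{salsa} (in the lower finite case), the standard, proper standard, costandard and proper costandard objects indexed by $c \in \B\d$ coincide in $\R$ and in $\R\d$. Consequently, the $\Delta_\eps$-flag of $T$ supplied by Theorem~\ref{gin}(i) — whose sections $\Delta_\eps(c)$ all have $\rho(c) \leq \lambda \in \Lambda\d$, hence $c \in \B\d$ — is a valid $\Delta_\eps$-flag in $\R\d$, and similarly the $\nabla_\eps$-flag of Theorem~\ref{gin}(ii) is a valid $\nabla_\eps$-flag in $\R\d$. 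Therefore $T$ belongs to $\Tilt_\eps(\R\d) = \Delta_\eps(\R\d) \cap \nabla_\eps(\R\d)$, and it remains indecomposable.

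Finally, the stratum $(\R\d)_\lambda$ is identical to $\R_\lambda$ because $\lambda \in \Lambda\d$ and both Serre subcategories $(\R\d)_{\leq \lambda}$ and $(\R\d)_{<\lambda}$ coincide with $\R_{\leq\lambda}$ and $\R_{<\lambda}$, respectively. Therefore the quotient functor $j^\lambda$ in $\R\d$ is the restriction of the one in $\R$, and Theorem~\ref{gin}(iii) for $T$ in $\R$ gives the analogous identification $j^\lambda T \cong P_\lambda(b)$ or $I_\lambda(b)$ inside $\R\d$. Applying the uniqueness statement of Theorem~\ref{gin} within the finite $\eps$-stratified category $\R\d$, we conclude that $T$ is isomorphic to the indecomposable $\eps$-tilting object of $\R\d$ associated to $b$. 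There is no genuine obstacle here; the only point requiring slight care is the matching up of the recollement data under the inclusion $i:\R\d \hookrightarrow \R$, which is exactly what Theorems~\ref{gf} and \ref{salsa} provide.
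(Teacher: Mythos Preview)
Your proof is correct and follows exactly the approach the paper indicates: the paper does not spell out a proof but simply remarks that the corollary follows easily from the truncation results (Theorems~\ref{gf} and \ref{salsa}) together with the characterization of indecomposable $\eps$-tilting objects in Theorem~\ref{gin}, citing \cite[Lemma A4.5]{Donkin} for the analogous argument. You have filled in precisely those details.
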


\begin{corollary}\label{jt}
Assume $\R$ is a finite $\eps$-stratified category and let $\Lambda\u$
be an upper set in $\Lambda$ with associated quotient
$j:\R\rightarrow \R\u$. Let $b \in \B\u:=\rho^{-1}(\Lambda\u)$. 
The corresponding indecomposable
$\eps$-tilting object $T\u_\eps(b)$ of $\R\u$ 
 satisfies
$T\u_\eps(b) \cong j T_\eps(b)$.
Also $j T_\eps(b) = 0$ if $b \notin \B\u$.
\end{corollary}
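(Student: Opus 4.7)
The plan is to verify that $jT_\eps(b)$ satisfies the characterization of $T^\shortuparrow_\eps(b)$ coming from Theorem~\ref{gin}(iii) and its uniqueness clause. The two assertions of the corollary split into two cases, corresponding to whether $b$ lies in $\B\u$ or not. First suppose $b\notin\B\u$. Then by Theorem~\ref{gin}(iii), $T_\eps(b)\in\R_{\leq\rho(b)}$, so every composition factor $L(c)$ of $T_\eps(b)$ has $\rho(c)\leq\rho(b)$; since $\Lambda\u$ is an upper set and $\rho(b)\notin\Lambda\u$, we also have $\rho(c)\notin\Lambda\u$, so $c\notin\B\u$ and $jL(c)=0$. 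A straightforward dévissage on the composition length then gives $jT_\eps(b)=0$.

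Now suppose $b\in\B\u$, and set $\lambda:=\rho(b)\in\Lambda\u$. Applying the exact functor $j$ to the $\Delta_\eps$- and $\nabla_\eps$-flags of $T_\eps(b)$ provided by Theorem~\ref{gin}(i)--(ii), and using Theorem~\ref{gf3}(1)--(2) to identify the surviving sections as $\Delta_\eps\u(c)$ and $\nabla_\eps\u(c)$ for $c\in\B\u$ (with the other sections becoming zero), shows that $jT_\eps(b)\in\Tilt_\eps(\R\u)$. From $T_\eps(b)\in\R_{\leq\lambda}$ one reads off $jT_\eps(b)\in(\R\u)_{\leq\lambda}$. The commutative square and equivalence $\bar\jmath:\R_\lambda\to(\R\u)_\lambda$ from the proof of Lemma~\ref{sup}(1) sends $P_\lambda(b)\mapsto P\u_\lambda(b)$ and $I_\lambda(b)\mapsto I\u_\lambda(b)$; combined with Theorem~\ref{gin}(iii) for $T_\eps(b)$ it shows that the stratum quotient of $jT_\eps(b)$ in $\R\u$ is $P\u_\lambda(b)$ if $\eps(\lambda)=+$ and $I\u_\lambda(b)$ if $\eps(\lambda)=-$. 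Thus $jT_\eps(b)$ verifies (iii) for $T\u_\eps(b)$, and by the uniqueness in Theorem~\ref{gin} it will suffice to prove that $jT_\eps(b)$ is indecomposable.

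This indecomposability is the main obstacle. The plan is to show that the ring homomorphism $\End_\R(T_\eps(b))\to\End_{\R\u}(jT_\eps(b))$, $f\mapsto jf$, is surjective with kernel contained in the Jacobson radical of the source; since $T_\eps(b)$ is indecomposable of finite length the source is local, and a quotient of a local ring by an ideal in its radical is local, forcing $jT_\eps(b)$ to be indecomposable as its endomorphism ring is local. Concretely, Corollary~\ref{iuseitoften}(1) applied to $T_\eps(b)\in\nabla_\eps(\R)$ yields a short exact sequence $0\to K\to T_\eps(b)\to j_*jT_\eps(b)\to 0$ in which $K$ has a $\nabla_\eps$-flag with sections $\nabla_\eps(c)$ for $c\in\B\setminus\B\u$. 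Applying $\Hom_\R(T_\eps(b),-)$ and invoking $\Ext^1_\R(T_\eps(b),K)=0$ (Lemma~\ref{anotherpar}, since $T_\eps(b)\in\Delta_\eps(\R)$ and $K\in\nabla_\eps(\R)$), together with the adjunction $\Hom_\R(T_\eps(b),j_*jT_\eps(b))\cong\End_{\R\u}(jT_\eps(b))$, produces the short exact sequence
\[
0\longrightarrow\Hom_\R(T_\eps(b),K)\longrightarrow\End_\R(T_\eps(b))\longrightarrow\End_{\R\u}(jT_\eps(b))\longrightarrow 0,
\]
and by naturality the right-hand map is the functorial ring homomorphism $f\mapsto jf$.

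To finish, I verify the kernel lies in the radical. For any $c\in\B\setminus\B\u$ the upper-set hypothesis forbids $\lambda=\rho(b)\leq\rho(c)$ (else $\rho(c)\in\Lambda\u$), so $L(b)$ is not a composition factor of $\nabla_\eps(c)$, and hence not of $K$. Therefore any endomorphism of $T_\eps(b)$ factoring through $K$ has image missing $L(b)$; but $L(b)$ does appear as a composition factor of $T_\eps(b)$ (being the head of the bottom section $\Delta_\eps(b)$ of its $\Delta_\eps$-flag), so no such endomorphism is surjective, hence none is an isomorphism. In the local ring $\End_\R(T_\eps(b))$ the non-units coincide with the radical, so $\Hom_\R(T_\eps(b),K)\subseteq\rad\End_\R(T_\eps(b))$. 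This delivers the local quotient $\End_{\R\u}(jT_\eps(b))$, the indecomposability of $jT_\eps(b)$, and via Theorem~\ref{gin} the isomorphism $jT_\eps(b)\cong T\u_\eps(b)$.
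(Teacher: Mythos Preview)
Your proof is correct and follows the route the paper sketches (it leaves the details to the reader, citing Theorem~\ref{gf3}, Theorem~\ref{gin}, and \cite[Lemma A4.5]{Donkin}). The only part the paper does not spell out is the indecomposability of $jT_\eps(b)$; your argument via the surjection $\End_\R(T_\eps(b))\twoheadrightarrow\End_{\R\u}(jT_\eps(b))$ with kernel in the radical is a clean way to supply this missing detail.
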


The next result is concerned with tilting resolutions.

\begin{definition}\label{tres}
Assume that $\R$ is a finite or lower finite $\eps$-stratified
category.
An {\em $\eps$-tilting resolution} $d:T_\bullet \rightarrow V$ 
of $V \in \R$ is 
the data of an exact sequence
$$
\cdots 
\stackrel{d_2}{\longrightarrow} T_1
\stackrel{d_1}{\longrightarrow} T_0
\stackrel{d_0}{\longrightarrow} V \longrightarrow 0
$$
such that 
\begin{itemize}
\item[(TR1)]
$T_m \in \Tilt_\eps(\R)$ for each $m=0,1,\dots$;
\item[(TR2)] $\im d_m \in \nabla_\eps(\R)$
for $m\gg 0$.
\end{itemize}
Similarly, an {\em $\eps$-tilting coresolution}
$d:V \rightarrow T^\bullet$ of $V \in \R$ 
is the data of an exact sequence
$$0 \longrightarrow V \stackrel{d^0}{\longrightarrow} T^0
\stackrel{d^1}{\longrightarrow} 
T^1 \stackrel{d^2}{\longrightarrow} \cdots
$$
such that 
\begin{itemize}
\item[(TC1)]
$T^m \in \Tilt_\eps(\R)$ 
for $m=0,1,\dots$;
\item[(TC2)] $\coim d^m \in \Delta_\eps(\R)$
for $m \gg 0$.
\end{itemize}
We say it is a {\em finite} resolution (resp., coresolution) if there is some $n$
such that
$T_m = 0$ (resp., $T^m = 0$) for $m >
n$. Note in the finite case that axioms (TR2) and (TC2) are redundant since the zero object belongs to both $\nabla_\eps(\R)$ and $\Delta_\eps(\R)$.
\end{definition}

\begin{lemma}\label{ben0}
Assume that $\R$ is a finite or lower finite $\eps$-stratified
category.
\begin{enumerate}
\item
If $d:T_\bullet \rightarrow V$ is an $\eps$-tilting resolution of $V
\in \R$ then $\im d_m \in \nabla_\eps(\R)$ for {\em all} $m \geq 0$.
In particular, $V \in \nabla_\eps(\R)$.
\item
If $d:V \rightarrow T^\bullet$ is an $\eps$-tilting coresolution of $V
\in \R$ then $\coim d^m \in \Delta_\eps(\R)$ for {\em all} $m \geq 0$.
In particular, $V \in \Delta_\eps(\R)$.
\end{enumerate}
\end{lemma}

\begin{proof} 
(1) It suffices to show for any exact sequence
$A \stackrel{f}{\rightarrow} B \stackrel{g}{\rightarrow} C$
in a finite or lower finite $\eps$-stratified category
that $B\in\nabla_\eps(\R)$ and $\im f \in \nabla_\eps(\R)$
implies $\im g \in \nabla_\eps(\R)$.
Since $\im f = \ker g$,
there is a short exact sequence $0 \rightarrow \im f \rightarrow
B \rightarrow \im g \rightarrow 0$.
Now apply Corollary~\ref{hanks} (or Corollary~\ref{fli}).

\vspace{1.5mm}\noindent
(2)
An $\eps$-tilting coresolution of $V$ in $\R$
is the same thing as a $(-\eps)$-tilting resolution of $V$ in $\R^\op$.
Hence, this follows as it is the dual statement to (1).
\end{proof}

\begin{theorem}[Tilting resolutions and coresolutions]\label{tr}
Let $\R$ be a finite or lower finite $\eps$-stratified category and
take $V \in \R$.
\begin{enumerate}
\item $V$ has an $\eps$-tilting resolution if and only if
$V \in \nabla_\eps(\R)$.
\item $V$ has an $\eps$-tilting coresolution if and only if
$V \in \Delta_\eps(\R)$.
\end{enumerate}
\end{theorem}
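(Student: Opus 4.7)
The plan is as follows. The forward implications in both parts are already Lemma~\ref{ben0} and Corollary~\ref{benhalf}, so I focus on the converses. Since Theorem~\ref{opstrat} (together with its lower-finite counterpart stated after Theorem~\ref{ca}) gives that $\R^{\op}$ is $(-\eps)$-stratified whenever $\R$ is $\eps$-stratified, with $\Delta_{-\eps}(\R^{\op}) = \nabla_\eps(\R)$ and $\Tilt_{-\eps}(\R^{\op}) = \Tilt_\eps(\R)$, an $\eps$-tilting resolution of $V$ in $\R$ is exactly a $(-\eps)$-tilting coresolution of $V$ in $\R^{\op}$. Thus~(1) follows from~(2), and it suffices to prove that every $V \in \Delta_\eps(\R)$ admits an $\eps$-tilting coresolution.

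In the lower finite case, any such $V$ is of finite length in the locally finite Abelian category $\R$, so $\rho$ applied to its (finite) support generates a finite lower set $\Lambda\d \subseteq \Lambda$. By Theorem~\ref{salsa} the associated Serre subcategory $\R\d$ is a finite $\eps$-stratified category, and by Corollary~\ref{it} its indecomposable $\eps$-tilting objects coincide with those of $\R$ indexed by $\B\d := \rho^{-1}(\Lambda\d)$. Any $\eps$-tilting coresolution of $V$ constructed in $\R\d$ remains one in $\R$ via the exact inclusion, so I may assume $\R$ is a finite $\eps$-stratified category.

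The heart of the argument is the following claim: for every $V \in \Delta_\eps(\R)$, there exist $T \in \Tilt_\eps(\R)$ and a monomorphism $\iota: V \hookrightarrow T$ with $T/V \in \Delta_\eps(\R)$. I prove it by induction on the length $n$ of a $\Delta_\eps$-flag of $V$; the case $n=0$ is trivial. For $n \geq 1$, fix a flag whose top quotient is $\Delta_\eps(b)$ and subobject $V'$ of flag length $n-1$; the induction hypothesis supplies $\alpha:V' \hookrightarrow T'$ with $T' \in \Tilt_\eps(\R)$ and $T'/V' \in \Delta_\eps(\R)$. Set $T := T' \oplus T_\eps(b)$, and let $\beta:\Delta_\eps(b) \hookrightarrow T_\eps(b)$ be the bottom inclusion in the $\Delta_\eps$-flag of $T_\eps(b)$ from Theorem~\ref{gin}(i). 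I then construct $\iota$ making
\begin{equation*}
\begin{tikzcd}
0 \arrow[r] & V' \arrow[r] \arrow[d, "\alpha"] & V \arrow[r] \arrow[d, "\iota"] & \Delta_\eps(b) \arrow[r] \arrow[d, "\beta"] & 0 \\
0 \arrow[r] & T' \arrow[r] & T \arrow[r] & T_\eps(b) \arrow[r] & 0
\end{tikzcd}
\end{equation*}
commute, where the bottom row is the canonical split extension. The obstruction to the existence of $\iota$ is the difference $\alpha_*[V] - \beta^*[T] \in \Ext^1_\R(\Delta_\eps(b), T')$; the second term vanishes since the bottom row splits, and the first vanishes because Theorem~\ref{gf1} gives $\Ext^1_\R(\Delta_\eps(b), T') = 0$ for the tilting (hence $\nabla_\eps$-filtered) object $T'$. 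The short five-lemma then forces $\iota$ to be a monomorphism, and the snake lemma produces the short exact sequence $0 \to T'/V' \to T/V \to T_\eps(b)/\Delta_\eps(b) \to 0$; since both outer terms lie in $\Delta_\eps(\R)$, so does $T/V$, completing the induction.

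Iterating the claim gives the required coresolution: set $V_0 := V$, recursively pick $\iota_m:V_m \hookrightarrow T^m$ with $V_{m+1} := T^m/V_m \in \Delta_\eps(\R)$, and let $d^m$ be the composition $T^m \twoheadrightarrow V_{m+1} \hookrightarrow T^{m+1}$. The resulting complex $0 \to V \to T^0 \to T^1 \to \cdots$ is exact with $T^m \in \Tilt_\eps(\R)$ and $\coim d^m = V_m \in \Delta_\eps(\R)$ for every $m$, verifying (TC1) and (TC2). The main obstacle is the inductive construction of $\iota$ in the key lemma, where the vanishing $\Ext^1_\R(\Delta_\eps(b), T') = 0$ (the $\nabla_\eps$-side of the tilting hypothesis) and the explicit mono $\Delta_\eps(b) \hookrightarrow T_\eps(b)$ (the $\Delta_\eps$-side) both play decisive roles.
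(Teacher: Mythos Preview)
Your proof is correct and follows essentially the same strategy as the paper: establish a one-step lemma (every $\Delta_\eps$-filtered object embeds into a tilting object with $\Delta_\eps$-filtered cokernel) by induction on flag length, then splice. The only cosmetic differences are that you prove~(2) and deduce~(1) by duality while the paper does the reverse, and your induction peels off a single $\Delta_\eps(b)$ at the top whereas the paper uses a Horseshoe-type argument splitting into two shorter pieces; also note that with your indexing $d^m:T^m\to T^{m+1}$ one has $\coim d^m = V_{m+1}$ rather than $V_m$, though this is inconsequential since every $V_m$ lies in $\Delta_\eps(\R)$.
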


\begin{proof}
We just prove (1), since (2) is the equivalent dual statement.
If $V$ has an $\eps$-tilting resolution, then we must have that $V \in
\nabla_\eps(\R)$ thanks to Lemma~\ref{ben0}(1).
For the converse, 
we claim for $V \in \nabla_\eps(\R)$ that there is a
short exact sequence $0 \rightarrow S_V \rightarrow T_V \rightarrow V
\rightarrow 0$ with $S_V \in \nabla_\eps(\R)$ and $T_V \in
\Tilt_\eps(\R)$.
Given the claim, one can construct an $\eps$-tilting resolution of $V$ by ``Splicing''
(e.g., see \cite[Fig.~2.1]{Wei}), to complete the proof.

To prove the claim,
we argue by induction on the length
 $\sum_{b \in \B} (V:\nabla_\eps(b))$ of a $\nabla_\eps$-flag of $V$.
If this number is one, then $V \cong \nabla_\eps(b)$ for some $b \in \B$, and there is a 
short exact sequence $0 \rightarrow S_V \rightarrow T_V \rightarrow V
\rightarrow 0$ with $S_V \in \nabla_\eps(b)$ and $T_V := T_\eps(b)$ due to
Theorem~\ref{gin}(ii).
If it is greater than one, then there is a short exact sequence 
$0 \rightarrow U \rightarrow V \rightarrow W \rightarrow 0$ such that
$U$ and $W$ have strictly shorter $\nabla_\eps$-flags.
By induction, there are short exact sequences
 $0 \rightarrow S_U \rightarrow T_U \rightarrow U \rightarrow 0$ and
$0 \rightarrow S_W \rightarrow T_W \rightarrow W \rightarrow 0$
with $S_U, S_W \in \nabla_\eps(\R)$ and $T_U, T_W \in \Tilt_\eps(\R)$.
It remains to show that these short exact sequences can 
be assembled to produce the desired short exact sequence for
$V$. The argument is like in
the proof of the Horseshoe Lemma in \cite[Lem.~2.2.8]{Wei}.
\begin{equation}\label{toobig}
\begin{tikzcd}
&0\arrow[d]&0\arrow[d]&0\arrow[d]\\
0\arrow[r]&
S_U\arrow[d]\arrow[r]&
T_U\arrow[r,"i"]\arrow[d]&U \arrow[d,"f"]\arrow[r]& 0\\
0\arrow[r]&
S_V\arrow[d]\arrow[r]&
T_V\arrow[d]\arrow[r,"j"]&V\arrow[d,"g"]\arrow[r]& 0\\
0\arrow[r]&
S_W\arrow[d]\arrow[r]&
T_W\arrow[ur,dashed,"\hat k"]\arrow[d]\arrow[r,"k"]&W\arrow[d]\arrow[r]&0\\
&0&0&0
\end{tikzcd}
\end{equation}
Since $\Ext^1_\R(T_W, U) = 0$, we can lift $k:T_W \rightarrow W$ to
$\hat k:T_W \rightarrow V$ so that $k = g \circ \hat k$.
Let $T_V := T_U \oplus T_W$ and $j:T_V \rightarrow V$ be $\operatorname{diag}(f i,
\hat k)$. This gives us a split short exact sequence 
in the middle column in \eqref{toobig}, such that the right hand
squares commute. Then we let $S_V := \ker
j$, and see  that there are induced
maps making the left hand column and middle row into short exact
sequences 
such that the left hand squares commute too.
\end{proof}

\subsection{Finite Ringel duality}
In this subsection, we review the theory of
Ringel duality for finite $\eps$-stratified categories.
Our exposition is based in part on
\cite[Appendix]{Donkin}, which gives a self-contained treatment in the
highest weight setting, and \cite{AHLU}, where the $+$-highest weight
case is considered assuming $\Lambda
= \{1 < \cdots < n\}$; the 
survey in \cite[Ch.~3]{Rei} is also helpful.
Throughout, we assume that $\R$ is a finite $\eps$-stratified
category with the usual stratification 
$(\B,L,\rho,\Lambda,\leq)$.

\begin{definition}\label{thesetup}
Let $\R$ be a finite $\eps$-stratified category.
By an {\em $\eps$-tilting generator} $T$ for $\R$, we mean an object $T \in
\Tilt_\eps(\R)$
such that $T$ has a summand isomorphic to $T_\eps(b)$ for each $b \in
\B$.
Given such an object, we define the {\em Ringel dual} of $\R$
relative to $T$ to be the finite Abelian category
$\R' := B\fdlmod$
where
$B := \End_\R(T)^\op$.
 We also define the two (covariant) {\em Ringel duality functors}
\begin{align}
\F:=\Hom_{\R}(T,?)&:\R \rightarrow \R',\label{ffunc}\\
  \G := \Cohom_\R(T,?) = \Hom_{\R}(?, T)^*&:\R \rightarrow\R'.\label{gfunc}
\end{align}
Note for the second of these that $\Hom_{\R}(V,T)$ is naturally a
finite-dimensional right $B$-module for $V \in \R$, hence, its dual is
a left $B$-module.
\end{definition}

\begin{theorem}[Finite Ringel duality]\label{Creek}
In the setup of Definition~\ref{thesetup},
the Ringel dual $\R'$ of $\R$ relative to $T$
is a finite $(-\eps)$-stratified category with 
stratification 
$(\B,L',\rho,\Lambda,\geq)$
and distinguished objects
\begin{align*}
P'(b) &= \F T_\eps(b),
&I'(b) &= \G T_\eps(b),&
L'(b) &= \hd P'(b) \cong \soc I'(b),\\
\Delta'_{-\eps}(b) &= \F \nabla_\eps(b),
&\nabla'_{-\eps}(b) &= \G \Delta_\eps(b),
& T'_{-\eps}(b) &= \F I(b)\cong \G P(b).
\end{align*}
The restrictions
$\F:\nabla_\eps(\R)\rightarrow \Delta_{-\eps}(\R')$
and
$\G:\Delta_\eps(\R)\rightarrow \nabla_{-\eps}(\R')$
are equivalences; in fact, they induce isomorphisms
\begin{align}\label{abitbetter}
\Ext^n_\R(V_1,V_2) &\cong
\Ext^n_{\R'}(FV_1,FV_2),
&
\Ext^n_\R(W_1,W_2) &\cong
\Ext^n_{\R'}(GW_1,GW_2),
\end{align}
for all $V_i \in \nabla_\eps(\R)$, $W_i \in \Delta_\eps(\R)$ and $n \geq 0$.
\end{theorem}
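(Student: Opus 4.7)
The core homological input comes from combining Theorems~\ref{gf1} and~\ref{gf2}: since $T \in \Delta_\eps(\R) \cap \nabla_\eps(\R)$, we have $\Ext^n_\R(T, W) = 0$ for all $W \in \nabla_\eps(\R)$ and $\Ext^n_\R(V, T) = 0$ for all $V \in \Delta_\eps(\R)$ whenever $n \geq 1$. This shows that $\F$ is exact on $\nabla_\eps(\R)$ and $\G$ is exact on $\Delta_\eps(\R)$, a fact I will use repeatedly.

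Next I would identify the distinguished projectives and injectives of $\widetilde{\R} = A\fdlmod$. After passing to a Morita-equivalent pointed realization, choose a primitive idempotent $e_b \in A$ corresponding to each summand $T_\eps(b)$ of $T$. One checks directly that $A e_b \cong \Hom_\R(T, T_\eps(b)) = \F T_\eps(b)$ and $(e_b A)^* \cong \Hom_\R(T_\eps(b), T)^* = \G T_\eps(b)$ as left $A$-modules, yielding $\widetilde{P}(b) \cong \F T_\eps(b)$ and $\widetilde{I}(b) \cong \G T_\eps(b)$. So the irreducibles of $\widetilde{\R}$ are labeled by $\B$, and I set $\widetilde{\Delta}_{-\eps}(b) := \F\nabla_\eps(b)$ and $\widetilde{\nabla}_{-\eps}(b) := \G\Delta_\eps(b)$ as candidates.

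To verify the $(-\eps)$-stratified structure with respect to $\rho:\B\rightarrow\Lambda^{\op}$, apply the exact functor $\F$ to the $\nabla_\eps$-flag of $T_\eps(b)$ given by Theorem~\ref{gin}(ii): the top section is $\nabla_\eps(b)$ and the remaining sections are $\nabla_\eps(c)$ with $\rho(c) \leq \rho(b)$, equivalently $\rho(c) \geq \rho(b)$ in $\Lambda^{\op}$. This produces a $\widetilde{\Delta}_{-\eps}$-flag of $\widetilde{P}(b) = \F T_\eps(b)$ exactly realizing property $(\widehat{P\Delta}_{-\eps})$ for the sign function $-\eps$ on $\Lambda^{\op}$, so $\widetilde{\R}$ is $(-\eps)$-stratified. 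To confirm that the candidates $\widetilde{\Delta}_{-\eps}(b)$, $\widetilde{\nabla}_{-\eps}(b)$ agree with the intrinsically defined standard and costandard objects of $\widetilde{\R}$, I would match composition multiplicities using the BGG-type reciprocity in Theorem~\ref{gf1} together with Remark~\ref{genie}. The final identifications $\widetilde{T}_{-\eps}(b) \cong \F I(b) \cong \G P(b)$ are then forced by the characterization of indecomposable tiltings in Theorem~\ref{gin}, once one checks that $\F I(b)$ has both a $\widetilde{\Delta}_{-\eps}$-flag (obtained from any $\Delta_\eps$-flag of $I(b)$, which exists because $I(b) \in \nabla_\eps(\R)$ has a $\nabla_\eps$-flag by $(I\nabla_\eps)$, together with Lemma~\ref{rain}-type considerations in the fully-stratified setting, or by a direct tilting-resolution argument in general) and the correct top section.

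Finally, for the equivalences and the $\Ext$ isomorphisms (\ref{abitbetter}), pick a tilting resolution $T_\bullet \rightarrow V_1$ of $V_1 \in \nabla_\eps(\R)$ provided by Theorem~\ref{tr}(1). By Lemma~\ref{ben0} every intermediate image lies in $\nabla_\eps(\R)$, so $\F$ remains exact along the resolution and $\F T_\bullet \rightarrow \F V_1$ is a projective resolution in $\widetilde{\R}$. The first-paragraph Ext-vanishing also makes $T_\bullet$ acyclic for $\Hom_\R(-, V_2)$, and since each $T_m$ is a summand of $T^{\oplus N}$ the canonical comparison map $\Hom_\R(T_m, V_2) \rightarrow \Hom_{\widetilde{\R}}(\F T_m, \F V_2)$ is an isomorphism, reducing on $T$ itself to the tautology $\Hom_A(A, \F V_2) = \F V_2$. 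Taking cohomology yields (\ref{abitbetter}); the case $n = 0$ gives full faithfulness of $\F$ on $\nabla_\eps(\R)$, and essential surjectivity onto $\Delta_{-\eps}(\widetilde{\R})$ follows because any such object admits a finite resolution by the projectives $\F T_\eps(b)$, which lifts through $\F$ by full faithfulness. The statement for $\G$ is dual. The main technical obstacle is orchestrating this comparison so that tilting resolutions in $\R$ are transported to projective resolutions in $\widetilde{\R}$ with the $\Hom$-identifications compatible at every step; a secondary point requiring care is the correct bookkeeping of the stratum ordering, which flips under the simultaneous replacement of $\eps$ by $-\eps$ and $\Lambda$ by $\Lambda^{\op}$, so that the condition $\rho(c) \leq \rho(b)$ in Theorem~\ref{gin} translates cleanly into the $(\widehat{P\Delta}_{-\eps})$ condition in $\widetilde{\R}$.
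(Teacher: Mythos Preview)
Your overall strategy matches the paper's closely, and your $\Ext$-isomorphism argument via tilting resolutions (using that each $T_m$ is $\Hom_\R(-,V_2)$-acyclic) is correct and essentially equivalent to the paper's Step~8. But two steps need repair. First, the identification $\F\nabla_\eps(b)\cong\widetilde{\Delta}_{-\eps}(b)$ is the crux of the argument and cannot be done just by ``matching composition multiplicities.'' You must show that $V(b):=\F\nabla_\eps(b)$ is the \emph{largest} quotient of $\widetilde P(b)$ with the appropriate characterization (full standard when $\eps(\lambda)=-$, proper standard when $\eps(\lambda)=+$, both relative to $\Lambda^{\op}$), and this requires analyzing which extra composition factor appears in any strictly larger quotient. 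The paper does this by a case analysis on $\eps(\lambda)$ using the precise structure of the $\nabla_\eps$-flag of $T_\eps(b)$ \emph{below} its top section; Remark~\ref{genie} alone is not enough, and BGG reciprocity in $\widetilde\R$ is not yet available since you have not established that $\widetilde\R$ is stratified. Until this identification is made, applying $\F$ to a $\nabla_\eps$-flag of $T_\eps(b)$ does not yet verify $(\widehat{P\Delta}_{-\eps})$.

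Second, your essential surjectivity argument has a genuine gap: objects of $\Delta_{-\eps}(\widetilde\R)$ need not have finite projective resolutions (that would require finite global dimension of certain strata, cf.\ Lemma~\ref{strata}), so ``lifting a finite resolution through $\F$'' does not work. The paper instead inducts on the length of a $\Delta_{-\eps}$-flag: given $0\to U\to V\to W\to 0$ with shorter $U,W$ already lifted to $U',W'\in\nabla_\eps(\R)$, the $\Ext^1$ isomorphism from Step~8 lifts the extension class to produce $V'\in\nabla_\eps(\R)$ with $\F V'\cong V$. Relatedly, your tilting identification is muddled: $I(b)$ has no $\Delta_\eps$-flag, and your fallback suggestions only cover the fully stratified case. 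The paper's route is cleaner and general: once the $\Ext$ isomorphism is in hand, $\Ext^1_{\widetilde\R}(\widetilde\Delta_{-\eps}(a),\F I(b))\cong\Ext^1_\R(\nabla_\eps(a),I(b))=0$, so the homological criterion gives $\F I(b)$ a $\widetilde\nabla_{-\eps}$-flag, and then Theorem~\ref{gin} applies.
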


Before the proof,
we give
some applications.
  
\begin{corollary}[Double centralizer property]\label{mustard}
Suppose that the finite $\eps$-stratified category $\R$
 in Theorem~\ref{Creek} is the category
$A\fdlmod$ for a finite-dimensional algebra $A$, so that $T$ is
an $(A,B)$-bimodule.
Let $T' := T^*$ be the
dual $(B,A)$-bimodule. Then the following holds.
\begin{enumerate}
\item
$T'$ is a $(-\eps)$-tilting generator for $\R'=B\fdlmod$ and there is an algebra isomorphism
\begin{equation}\label{thisisiso}
\mu:A \stackrel{\sim}{\rightarrow} \End_{\R'}(T')^\op
\end{equation}
sending $x \in A$ to $\mu(x):T'\rightarrow T', v \mapsto vx$.
So the Ringel dual of $\R'$ relative to $T'$
is equivalent to the original category $\R$.
\item
Denote the Ringel duality functors for $\R'$ relative to $T'$ now by
\begin{align}\label{newrd1}
\G_* := \Hom_{\R'}(T',?)&:\R'\rightarrow\R,\\
\F^* := \Cohom_{\R'}(T',?) = \Hom_{\R'}(?,T')^*&:\R' \rightarrow \R.\label{newrd2}
\end{align}
We have that
$\F^*\cong T \otimes_B ?$ and $\G \cong T'
\otimes_A ?$,
hence,  $(\F^*, \F)$ and $(\G, \G_*)$ are adjoint pairs.
\end{enumerate}
\end{corollary}

\begin{proof}
(1) Note that $\G A$ is a $(-\eps)$-tilting generator since $\G P(b)
\cong T'_{-\eps}(b)$ for $b \in \B$.
Actually, $\G A = \Hom_A(A, T)^* \cong T^* = T'$. Thus, $T'$ is
a $(-\eps)$-tilting generator for $\R'$.
Its opposite endomorphism algebra is isomorphic to $A$ as stated since 
$\G$ defines an algebra isomorphism
$$
A \cong \End_{A}(A)^\op \stackrel{\sim}{\rightarrow} 
\End_{B}(\G A)^\op \cong 
\End_{B}(T')^\op.
$$

\vspace{1.5mm}
\noindent
(2)
As $\F^*$ is right exact and commutes with direct sums, 
a standard argument using the Five Lemma
shows that
it is isomorphic to $(\F^* B) \otimes_{B} ? \cong
T \otimes_{B} ?$.
Thus, $\F^*$ is left adjoint to $\F$. 
Similarly, $\G \cong T' \otimes_A ?$ is left adjoint to $\G_*$.
\end{proof}
 
The next corollary describes the strata $\R'_\lambda$ of the Ringel
dual category; see also Lemma~\ref{aftersun} below.
For $\lambda \in \Lambda$,
denote the
quotient functor $\R'_{\geq\lambda}\rightarrow \R'_\lambda$ by
$(j')^\lambda$,
and denote its left and right adjoints by $(j')^\lambda_{!}:\R'_\lambda \rightarrow
\R'_{\geq\lambda}$ and $(j')^\lambda_{*}:\R'_\lambda \rightarrow
\R'_{\geq\lambda}$.
We also have the inclusion $(i')_{\geq \lambda}:\R'_{\geq \lambda} \rightarrow
\R'$ with left and right adjoints $(i')^*_{\geq \lambda}$ and $(i')^!_{\geq \lambda}$.

\begin{corollary}\label{notquite}
For $\lambda \in \Lambda$, the strata $\R_\lambda$ and
$\R'_\lambda$ are equivalent. 
More precisely:
\begin{enumerate}
\item
If $\eps(\lambda)=+$ the functor
$F_\lambda := 
(j')^\lambda \circ (i')_{\geq \lambda}^! \circ \F\circ i_{\leq \lambda} \circ j^\lambda_*:\R_\lambda
\rightarrow \R'_\lambda$ is an equivalence of categories
taking $L_\lambda(b) = j^\lambda L(b)$
to $L_\lambda'(b) = (j')^\lambda L'(b)$.
\item
If $\eps(\lambda)=-$ the functor
$G_\lambda:=(j')^\lambda \circ (i')_{\geq \lambda}^* \circ \G\circ i_{\leq \lambda} \circ  j^\lambda_!:\R_\lambda
\rightarrow \R'_\lambda$ is an equivalence of categories
taking $L_\lambda(b) = j^\lambda L(b)$
to $L_\lambda'(b) = (j')^\lambda L'(b)$.
\end{enumerate}
\end{corollary}

\begin{proof}
We just prove (1), since (2) is similar. 
So assume that $\eps(\lambda)=+$.
We first note that ${\F}_\lambda$ is exact. Indeed, $j^\lambda_*$ is
exact by Theorem~\ref{fund}, so it sends objects of $\R_\lambda$ to
objects of $\R_{\leq \lambda}$ which have filtrations with sections $\nabla_\eps(b)$
for $b \in \B_\lambda$.
Then we apply the exact functor $i_{\leq \lambda}$ followed by
$\F$, which takes short exact sequences in $\nabla_\eps(\R)$ to short exact sequences in $\Delta_\eps(\R)$, to obtain
an object of
$\Delta_{-\eps}(\R'_{\geq\lambda})$. The functor $(i')^!_{\geq \lambda}$ 
is the identity on this subcategory, and finally $(j')^\lambda$ is exact.
Adopting the setup of
Corollary~\ref{mustard}, we can also define
\begin{align*}
\F^*_\lambda &:= j^\lambda \circ i_{\leq \lambda}^*\circ \F^* \circ (i')_{\geq \lambda}\circ (j')^\lambda_{!}:
\R'_\lambda\rightarrow \R_\lambda.
\end{align*}
A similar argument to before gives that this is exact too.
We complete the proof by showing that ${\F}_\lambda$ and
$\F^*_\lambda$ are quasi-inverse equivalences.
Note that $\F^*_\lambda$ is left adjoint to ${\F}_\lambda$.
The counit of adjunction gives us a natural transformation
$\F^*_\lambda \circ {\F}_\lambda \rightarrow
\operatorname{Id}_{\R_\lambda}$. We claim this is an isomorphism.
Since both functors are exact, it suffices to prove this on
irreducible objects: we have $\F^*_\lambda (\F_\lambda L_\lambda(b)) \cong
\F^*_\lambda L'_\lambda(b) \cong L_\lambda(b)$.
Similar argument shows that the unit of adjunction is an isomorphism
in the other direction.
\end{proof}

\begin{corollary}\label{strata2}
Let $\R$ be a finite $\eps$-stratified category.
\begin{enumerate}
\item
All $V \in \nabla_\eps(\R)$ have finite $\eps$-tilting resolutions
if and only if
all positive strata are of finite global dimension.
\item
All $V \in \Delta_\eps(\R)$ have finite $\eps$-tilting coresolutions
if and only if all negative strata
are of finite global dimension.
\end{enumerate}
\end{corollary}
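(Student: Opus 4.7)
The plan is to transport the statement across semi-infinite Ringel duality and then invoke Lemma~\ref{strata} inside the Ringel dual. Fix an $\eps$-tilting generator $T$ in $\R$, form $\widetilde{\R} := \End_\R(T)^{\op}\fdlmod$, and use the Ringel duality functors $\F$ and $\G$ from Definition~\ref{thesetup}. By Theorem~\ref{Creek}, $\widetilde{\R}$ is a finite $(-\eps)$-stratified category with stratum $\widetilde{\R}_\lambda$ of negative sign precisely when $\eps(\lambda) = +$. Since the reduction to the finite case is in place, the key is to match \emph{finite $\eps$-tilting (co)resolutions in $\R$} with \emph{finite projective (resp. injective) resolutions in $\widetilde{\R}$} of the transported objects.

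For (1), I claim that $V \in \nabla_\eps(\R)$ has a finite $\eps$-tilting resolution if and only if $\F V \in \Delta_{-\eps}(\widetilde{\R})$ has finite projective dimension. For ``$\Rightarrow$'', start from a finite tilting resolution $0 \to T_n \to \cdots \to T_0 \to V \to 0$ in $\R$. Lemma~\ref{ben0} guarantees that each image $\im d_m$ lies in $\nabla_\eps(\R)$, so the resolution breaks into short exact sequences in $\nabla_\eps(\R)$. Theorem~\ref{Creek} says $\F$ restricts to an equivalence $\nabla_\eps(\R) \stackrel{\sim}{\to} \Delta_{-\eps}(\widetilde{\R})$ (in particular, it is exact on such sequences) and that $\F T_\eps(b) \cong \widetilde{P}(b)$, so each $\F T_m$ is projective; splicing yields a finite projective resolution of $\F V$. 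For ``$\Leftarrow$'', given a finite projective resolution $0 \to P_n \to \cdots \to P_0 \to \F V \to 0$, Corollary~\ref{shanks} applied in $\widetilde{\R}$ ensures all syzygies remain in $\Delta_{-\eps}(\widetilde{\R})$ (since $\F V$ and each $P_m$ are there), so the whole resolution lives inside $\Delta_{-\eps}(\widetilde{\R})$. Applying a quasi-inverse to the equivalence (it exists because $\F$ is an equivalence on these subcategories) and using that each $P_m$ is a summand of a sum of $\widetilde{P}(b)$'s, hence pulls back to a summand of a sum of $T_\eps(b)$'s, we obtain a finite $\eps$-tilting resolution of $V$.

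Having made this identification, we conclude (1) as follows: the condition that every $V \in \nabla_\eps(\R)$ admits a finite $\eps$-tilting resolution is equivalent (via the equivalence $\F$) to the condition that every object of $\Delta_{-\eps}(\widetilde{\R})$ is of finite projective dimension. By Lemma~\ref{strata}(1) applied to the finite $(-\eps)$-stratified category $\widetilde{\R}$, the latter holds iff every negative stratum of $\widetilde{\R}$, i.e.\ every $\widetilde{\R}_\lambda$ with $\eps(\lambda)=+$, has finite global dimension. By Corollary~\ref{notquite}, $\widetilde{\R}_\lambda \simeq \R_\lambda$, so this is the assertion that every positive stratum of $\R$ has finite global dimension. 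Statement (2) follows by the dual argument using $\G$: by Theorem~\ref{Creek}, $\G$ restricts to an equivalence $\Delta_\eps(\R) \stackrel{\sim}{\to} \nabla_{-\eps}(\widetilde{\R})$ sending $T_\eps(b)$ to $\widetilde{I}(b)$, so finite $\eps$-tilting coresolutions in $\R$ correspond to finite injective resolutions in $\widetilde{\R}$, and Lemma~\ref{strata}(2) together with Corollary~\ref{notquite} finishes the argument.

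The only non-routine step is the equivalence between finite $\eps$-tilting resolutions in $\R$ and finite projective resolutions in $\widetilde{\R}$; once this is established, the rest is a direct translation via Lemma~\ref{strata} and Corollary~\ref{notquite}. The main subtlety I expect is keeping syzygies inside the appropriate exact subcategories throughout both directions of the correspondence, which is what forces the use of both Lemma~\ref{ben0} (to keep $\nabla_\eps$-structure on the $\R$-side) and Corollary~\ref{shanks} (to keep $\Delta_{-\eps}$-structure on the $\widetilde{\R}$-side).
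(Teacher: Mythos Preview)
Your proposal is correct and follows essentially the same route as the paper: transport via Ringel duality (Theorem~\ref{Creek}) to convert finite $\eps$-tilting (co)resolutions into finite projective (injective) resolutions in $\widetilde{\R}$, then apply Lemma~\ref{strata} and identify strata via Corollary~\ref{notquite}. The paper's proof is a terse one-paragraph sketch that leaves the correspondence implicit in its citation of Theorem~\ref{Creek}, whereas you spell out the two directions carefully using Lemma~\ref{ben0} and Corollary~\ref{shanks}; one small slip is that this is \emph{finite} Ringel duality, not the semi-infinite version.
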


\begin{proof}
We just explain the proof of (1). 
By Theorem~\ref{Creek}, all $V \in \nabla_\eps(\R)$
have finite $\eps$-tilting resolutions if and only if all $V' \in
\Delta_{-\eps}(\R')$ have finite projective resolutions.
 By Lemma~\ref{strata}(1), this is equivalent to 
all negative strata of the $(-\eps)$-stratified category $\R'$
are of finite global dimension.
Equivalently, by Corollary~\ref{notquite}, all positive strata of the
$\eps$-stratified category $\R$ are of finite global dimension.
\end{proof}

\begin{corollary}\label{inparticulartiltings2}
If $\R$ is a finite $+$-stratified (resp., $-$-stratified) category then all $V \in \Delta(\R)$ (resp., $V \in \nabla(\R)$)
have finite $+$-tilting coresolutions (resp., finite $-$-tilting resolutions).
\end{corollary}

The next theorem is a consequence of Happel's tilting theory
for finite-dimensional algebras. To prepare for this, we
explain the 
connection between $\eps$-tilting objects
in our setting and the general notions of tilting and cotilting modules from that theory; e.g., see \cite{H}, \cite{Rei}.
Suppose that 
$\R = A\fdlmod$ is a finite $\eps$-stratified algebra 
for a finite-dimensional algebra $A$, 
and let $T$ be an $\eps$-tilting generator for $\R$.
If all negative strata are of finite global dimension (this
assumption being vacuous in the case $\eps=+$)
then $T$ is a {\em tilting module} in the sense of tilting theory;
if all positive strata are of finite global dimension (this assumption being vacuous in the case $\eps=-$) then $T$ is a {\em cotilting module}.
These assertions follow using Theorem~\ref{gf1} to see that
$\Ext^1_\R(T,T) = 0$,
Lemma~\ref{strata} to see that $\pd T < \infty$ or $\injd T < \infty$,
and
Corollary~\ref{strata2}.
Without assumptions on the global dimensions of strata,
 $T$ need not be tilting or cotilting, but 
Theorem~\ref{tr} implies that it is still an example of a {\em Wakamatsu tilting module}\footnote{With this in mind, the fact that the map 
  (\ref{thisisiso}) is an isomorphism could also be deduced from \cite[Cor.~2]{Wakamatsu}.} 
  as defined in \cite[Ch.~3]{BR}; see also \cite[$\S$4.1]{Rei}. 
 The {\em WT-conjecture} formulated in \cite[Ch.~3]{BR}
 is the assertion that any 
 Wakamatsu tilting module of finite projective (resp., injective) dimension is 
 tilting (resp., cotilting). 
 This motivates the following conjecture in our special situation; we will prove this 
 assuming a mild additional hypothesis on strata in Lemma~\ref{proofofconj} below. 

\begin{conjecture}[$\eps$T-conjecture]\label{wakconj}
Suppose that $\R$ is a 
finite fully stratified category and $\eps$ is a given sign function.
For $b \in \B$,
the $\eps$-tilting module $T_\eps(b)$ is of finite projective (resp., injective) dimension if and only if $T_\eps(b)$ belongs to $\Tilt_+(\R)$ (resp.,  $\Tilt_-(\R)$).
\end{conjecture}

Let $\der{R} \F$ and $\der{L} \G$ be the total 
derived functors of the Ringel duality functors. 
These are triangulated functors between the bounded derived categories $D^b(\R)$ and $D^b(\R')$.

\begin{theorem}[Derived equivalences]\label{happels}
Let $\R'$ be the Ringel dual of a finite $\eps$-stratified category
$\R$.
Assume that all negative strata (resp., all positive strata) of $\R$
are of finite global dimension.
Then 
$\der{R} \F:D^b(\R)\rightarrow D^b(\R')$ (resp., $\der{L}
\G:D^b(\R)\rightarrow D^b(\R')$) is 
an
equivalence of triangulated categories.
Moreover, if $\R$ is of finite
global dimension, then so is $\R'$.
\end{theorem}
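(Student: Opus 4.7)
The plan is to derive Theorem~\ref{happels} from Happel's classical tilting theorem \cite{H}. Realize $\R \simeq B\fdlmod$ for a finite-dimensional algebra $B$; then $T$ becomes a left $B$-module, $A = \End_B(T)^{\op}$, and $F \cong \Hom_B(T,-)$. To apply Happel's theorem I would verify that, under the hypothesis that all negative strata of $\R$ have finite global dimension, $T$ is a classical tilting $B$-module, i.e.,
\begin{itemize}
\item[(T1)] $\pd_B T<\infty$;
\item[(T2)] $\Ext^i_B(T,T)=0$ for all $i\geq 1$;
\item[(T3)] there exists a finite exact sequence $0\to B\to T^0\to T^1\to\cdots\to T^m\to 0$ with each $T^j\in\operatorname{add}(T)$.
\end{itemize}
Condition (T1) holds by Lemma~\ref{strata}(1), since $T\in\Tilt_\eps(\R)\subseteq\Delta_\eps(\R)$. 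Condition (T2) is immediate from Theorem~\ref{gf2} combined with a filtration argument on a $\nabla_\eps$-flag of $T$. For (T3), each indecomposable projective $P(b)$ lies in $\Delta_\eps(\R)$ and hence admits an $\eps$-tilting coresolution by Theorem~\ref{tr}(2); Corollary~\ref{strata2}(2) guarantees under our hypothesis that this coresolution can be taken finite, and assembling such coresolutions across $b\in\B$ yields the desired coresolution of $B\cong\bigoplus_{b\in\B}P(b)^{\oplus\dim L(b)}$ by summands of $T$. Happel's theorem then produces the triangle equivalence $\der{R}F:D^b(\R)\stackrel{\sim}{\to}D^b(\widetilde{\R})$.

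For the $\der{L}G$ version, I would pass to the opposite category. By Theorem~\ref{opstrat}, $\R^{\op}\simeq B^{\op}\fdlmod$ is a finite $(-\eps)$-stratified category whose negative strata coincide with the positive strata of $\R$, and in which $\widetilde{T}:=T^*$ is a $(-\eps)$-tilting generator. Under the hypothesis that positive strata of $\R$ have finite global dimension, the preceding argument (applied in $\R^{\op}$) shows that $\widetilde{T}$ is a classical tilting $B^{\op}$-module, whose endomorphism algebra is $A$; Happel then gives a triangle equivalence $\der{R}\Hom_{B^{\op}}(\widetilde{T},-):D^b(\R^{\op})\stackrel{\sim}{\to}D^b(\widetilde{\R})$. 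Since $\Hom_{B^{\op}}(\widetilde{T},V^*)\cong\Hom_B(V,T)$ via the duality (\ref{aduality}), conjugating this equivalence by the exact contravariant dualities on both sides (which swap right- and left-derived functors) identifies $\der{R}\Hom_{B^{\op}}(\widetilde{T},-)$ with $\der{L}G$, yielding the desired equivalence $\der{L}G:D^b(\R)\stackrel{\sim}{\to}D^b(\widetilde{\R})$.

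For the final assertion, suppose $\R$ has finite global dimension. Then every object of $\R$ has both finite projective and finite injective dimension, so by Lemma~\ref{strata} both the negative and the positive strata of $\R$ have finite global dimension; in particular the hypothesis for $\der{R}F$ is satisfied and we obtain a triangle equivalence $\der{R}F:D^b(\R)\stackrel{\sim}{\to}D^b(\widetilde{\R})$. It is a standard consequence of classical tilting theory (see \cite{H}) that if two finite-dimensional algebras are derived equivalent via a tilting module of finite projective dimension, then finite global dimension of one forces finite global dimension of the other: explicitly, using the quasi-inverse $T\otimes_A^{\der{L}}-$ and (T1) one sees that every object of $\widetilde{\R}$ corresponds to a complex of amplitude bounded by $\pd_B T$ in $D^b(\R)$, whence $\Ext^n_{\widetilde{\R}}(\widetilde V,\widetilde W)=0$ uniformly once $n$ exceeds a constant depending only on $\pd_B T$ and $\operatorname{gl.dim}\R$. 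The main obstacle in this strategy is condition (T3): Theorem~\ref{tr}(2) provides the existence of $\eps$-tilting coresolutions of objects of $\Delta_\eps(\R)$, but their crucial finiteness is exactly what Corollary~\ref{strata2}(2) delivers under the negative-strata hypothesis.
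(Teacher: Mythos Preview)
Your proposal is correct and follows essentially the same strategy as the paper: both reduce the statement to Happel's tilting theorem by verifying the tilting-module conditions via Lemma~\ref{strata}(1) (for finite projective dimension) and Corollary~\ref{strata2}(2) (for finite tilting coresolutions), with the self-$\Ext$-vanishing following from the homological criterion. The only cosmetic difference is that the paper, rather than spelling out (T1)--(T3), points directly to \cite[Lemma~2.9, Theorem~2.10]{H} when $\R$ has finite global dimension and to Keller's formulation \cite[Theorem~4.1]{Keller} otherwise, whereas you invoke Happel's theorem in its generalized-tilting form uniformly; your opposite-category argument for $\der{L}\G$ is exactly the duality $G\cong *\circ F^{\op}\circ *$ already recorded in Step~11 of the proof of Theorem~\ref{Creek}.
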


\begin{proof}
Assuming $\R$ has finite global dimension, this all follows by
\cite[Lem.~2.9, Th.~2.10]{H};
the hypotheses there hold thanks to
Corollary~\ref{strata2}.
To get the derived equivalence without assuming $\R$ has finite global
dimension, we cite instead Keller's exposition
of Happel's result in
\cite[Th.~4.1]{Keller}, since it assumes slightly less;
the hypotheses (a) and (c) there hold due to Corollary~\ref{strata2}(2)
and Lemma~\ref{strata}(1).
\end{proof}

\begin{corollary}
If $\R$ is $+$-highest weight (resp., $-$-highest weight) and
$\R'$ 
is the Ringel dual relative
to a $+$-tilting generator (resp., $-$tilting generator),
then $\der{R}\F:D^b(\R)\rightarrow D^b(\R')$
(resp., $\der{L}\G:D^b(\R)\rightarrow D^b(\R')$)
is an equivalence.
\end{corollary}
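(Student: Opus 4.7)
The plan is to deduce this immediately from Theorem~\ref{happels} by observing that one of its two hypotheses is vacuously satisfied. Recall from Definition~\ref{vector} that a $+$-highest weight category is in particular a finite $+$-stratified category, so the constant sign function $\eps \equiv +$ is in force. Consequently, the set $\{\lambda \in \Lambda \mid \eps(\lambda) = -\}$ is empty, and the condition in Theorem~\ref{happels} that all negative strata be of finite global dimension holds vacuously. Theorem~\ref{happels} then yields that $\der{R}\F : D^b(\R) \rightarrow D^b(\widetilde{\R})$ is an equivalence of triangulated categories.

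The $-$-highest weight case is treated by the dual argument: with $\eps \equiv -$, there are no strata $\R_\lambda$ with $\eps(\lambda) = +$, so the parenthetical hypothesis of Theorem~\ref{happels} (all positive strata of finite global dimension) is vacuous, and the parenthetical conclusion delivers the equivalence $\der{L}\G : D^b(\R) \rightarrow D^b(\widetilde{\R})$. Since the reduction is purely formal, there is no substantive obstacle; one need only check that all algebras involved are genuinely finite-dimensional so that Happel's theorem applies, and this is automatic because a finite $+$-highest weight category is by Definition~\ref{vector} realized as $B\fdlmod$ for a finite-dimensional $B$, and the $+$-tilting generator $T$ then produces the finite-dimensional Ringel dual algebra $A = \End_\R(T)^{\op}$ with $\widetilde{\R} = A\fdlmod$.
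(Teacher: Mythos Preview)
Your proposal is correct and matches the paper's approach: the corollary is stated without proof, as an immediate consequence of Theorem~\ref{happels}, and your observation that the hypothesis on negative (resp., positive) strata is vacuous when $\eps \equiv +$ (resp., $\eps \equiv -$) is exactly the intended reasoning. The final remark about finite-dimensionality is unnecessary since this is already built into Definition~\ref{thesetup}, but it does no harm.
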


\proof[Proof of Theorem~\ref{Creek}]
This follows the same steps as in \cite[pp.158--160]{Donkin}.
Assume without loss of generality that $\R  = A\fdlmod$ for a
finite-dimensional algebra $A$. 
For each $b \in \B$, let $f_b \in B
= \End_B(T)^\op$ be an idempotent such that $T f_b \cong T_\eps(b)$.
Then $P'(b) := B f_b$ is an indecomposable projective
$B$-module
and
the modules $$
\big\{L'(b) := \hd P'(b)\:\big|\:b \in \B\big\}
$$ 
give a full set of
pairwise inequivalent irreducible left $B$-modules.
Since $\R'$ is a finite Abelian category, it is immediate that
$(\B,L',\rho,\Lambda,\geq)$ is a stratification of
it. 
Let $\Delta'_{-\eps}(b)$ and $\nabla'_{-\eps}(b)$ be the
  $(-\eps)$-standard and $(-\eps)$-costandard objects of $\R'$ defined
  from this stratification.
Set $V(b) := \F \nabla_\eps(b)$.

 \vspace{1.5mm}
\noindent
{Step 1:}
 {\em For $b \in \B$ we have that $P'(b) \cong \F T_\eps(b)$.}
This follows immediately from the equality
$\Hom_A(T, T) f_b = \Hom_A(T,
T f_b)$.

\vspace{1.5mm}
\noindent
{Step 2:}
{\em The functor $\F$ sends short exact sequences of objects in $\nabla_\eps(\R)$ to short exact sequences in $\R'$.}
This follows because $\Ext^1_\R(T, V) = 0$ for $V \in \nabla_\eps(\R)$ by
the usual $\Ext^1$-vanishing between $\Delta_\eps$- and
$\nabla_\eps$-filtered objects.

\vspace{1.5mm}
\noindent
{Step 3:}
{\em
For $a,b \in \B$, we have that $[V(b):L'(a)] =
(T_\eps(a):\Delta_\eps(b))$.}
The left hand side is
$\dim f_a V(b) =\dim f_a \Hom_{A}(T, \nabla_\eps(b))\cong 
\dim \Hom_{A}(T_\eps(a), \nabla_\eps(b))$, which equals the right
hand side.

\vspace{1.5mm}
\noindent
{Step 4:} 
{\em $V(b)$ is a non-zero quotient of $P'(b)$, thus, $\hd V(b)=L'(b)$.} By
Theorem~\ref{gin}(i), there is a short exact sequence
$0 \rightarrow K \rightarrow T_\eps(b) \rightarrow \nabla_\eps(b)\rightarrow 0$
with $K \in \nabla_\eps(\R)$. Hence, Step~2 implies that
$V(b)$ is quotient of $P'(b)$. It is non-zero by
Step~3.

\vspace{1.5mm}
\noindent
{Step 5:}
{\em We have that $V(b)\cong\Delta'_{-\eps}(b)$.}
Let $\lambda := \rho(b)$. We treat
the cases $\eps(\lambda)=+$ and $\eps(\lambda)=-$ separately.
If $\eps(\lambda)=+$ we must show that $V(b)$ is the largest quotient of
$P'(b)$ with the property that $[V(b):L'(a)] \neq 0\Rightarrow \rho(a)
\geq \rho(b)$. We have already observed in Step~4 that $V(b)$ is
a quotient of $P'(b)$. Also $(T_\eps(a):\Delta_\eps(b)) \neq
0\Rightarrow \rho(b) \leq \rho(a)$ by Theorem~\ref{gin}(iii).
Using Step~3, this imples that $V(b)$ has the
property $[V(b):L'(a)] \neq 0\Rightarrow \rho(a)
\geq \rho(b)$.
It remains to show that any strictly larger quotient of $P'(b)$ fails
this condition. To see this, since $\eps(\lambda)=+$, a
$\nabla_\eps$-flag in $T_\eps(b)$
has $\nabla_\eps(b)$ at the top and other sections $\nabla_\eps(c)$ for $c$ with
$\rho(c) < \rho(b)$.
In view of Step~4, any strictly larger quotient of $P'(b)$ than $V(b)$
therefore 
has an additional composition factor
$L'(c)$ arising from the head of $V(c)$ for some
$c$ with $\rho(c) < \rho(b)$.

Instead, if $\eps(\lambda)=-$, 
we use the characterization of $\Delta'_{-\eps}(b)$ from Lemma~\ref{charac}(1): we must show
that $V(b)$ is the largest quotient of $P'(b)$ with the property
that
$[\rad V(b):L'(a)] \neq 0\Rightarrow \rho(a)
> \rho(b)$.
Since $\eps(\lambda)=-$, we have that
$(T_\eps(b):\nabla_\eps(b)) = 1$ and
$(T_\eps(b):\nabla_\eps(a)) \neq 0 \Rightarrow \rho(a) < \rho(b)$
for $a \neq b$.
Hence, using Step~3 again,
the quotient $V(b)$ of $P'(b)$ has the required
properties.
A $\nabla_\eps$-flag in $T_\eps(b)$ has $\nabla_\eps(b)$ at the top
and other sections $\nabla_\eps(c)$ for $c$ with $\rho(c) \leq
\rho(b)$.
So any strictly larger quotient of $P'(b)$ than $V(b)$
has a composition factor $L'(c)$ arising from the head of
$V(c)$ for $c$ with $\rho(c) \leq \rho(b)$.
In case $c=b$, this violates the requirement that the quotient has $L'(b)$ appearing
with multiplicity one; otherwise, it violates the requirement that all other
composition factors of the quotient are of the form $L'(a)$ with
$\rho(a) > \rho(b)$.

\vspace{1.5mm}
\noindent
{Step 6:} {\em $\R'$ is a finite $(-\eps)$-stratified category.}
In view of Step~5, it suffices to show that $P'(b)$ has a filtration with
sectons $V(c)$ for $c$ with $\rho(c)\leq \rho(b)$.
Since
$T_\eps(b)$ has a $\nabla_\eps$-flag with sections $\nabla_\eps(c)$ for $c$ with $\rho(c) \leq
\rho(b)$, 
this follows using Steps~1 and 2.

\vspace{1.5mm}
\noindent
{Step 7:} {\em For any $U \in \Tilt_\eps(\R)$ and $V \in \R$, the map
  $f:\Hom_{A}(U, V) \rightarrow \Hom_{B}(\F U, \F V)$ induced by $\F$ is
  an isomorphism.} It suffices to prove this when $U = T$,
so that the right hand space is $\Hom_{B}(B, \F V)$ and $\F V =
\Hom_A(T, V)$.
This special case follows because $f$ is the inverse of the
isomorphism
$\Hom_{B}(B, \F V) \rightarrow \F V, \theta \mapsto \theta(1)$.

\vspace{1.5mm}
\noindent
{Step 8:}
{\em For any $V, W \in \nabla_\eps(\R)$ and $n \geq 0$, 
  the functor $\F$ induces a linear isomorphism
$\Ext^n_{\R}(V,W) \stackrel{\sim}{\rightarrow}
\Ext^n_{\R'}(\F V, \F W)$.}
Take an $\eps$-tilting resolution $d:T_\bullet \rightarrow V$ in the
sense of Definition~\ref{tres}, which exists thanks to
Theorem~\ref{tr}.
The functor $\F$ takes this resolution
to a complex
$$
\cdots \longrightarrow \F T_1 \longrightarrow \F T_0 \longrightarrow
\F V
\longrightarrow 0.
$$
In fact, this complex is exact. To see this, take $m \geq 0$ and
consider 
the short exact sequence
$
0 \rightarrow \ker d_m \rightarrow T_m \rightarrow \im d_m \rightarrow
0.
$
All of $\ker d_m$, $T_m$ and $\im d_m$ have
$\nabla_\eps$-flags due to Lemma \ref{ben0}(1). 
Hence, thanks to Step~2, we get a short exact sequence
$$
0
\longrightarrow \F(\ker d_m) \stackrel{i}{\longrightarrow} \F T_m
\stackrel{p}{\longrightarrow} \F (\im d_m)
\longrightarrow 0
$$ 
on applying $\F$. 
Since $\F$ is left exact, the canonical map $\F(\im d_m) \rightarrow \F T_{m-1}$
is a monomorphism. Its image is all $\theta:T \rightarrow
T_{m-1}$ with image contained in $\im d_m$.
As $p$ is an epimorphism,
any such $\theta$ can be written as $d_m \circ \phi$ for $\phi:T
\rightarrow T_m$, i.e., $\theta \in \im (\F d_m)$.
Thus, $\F(\im d_m) \cong \im (\F d_m)$, and 
$0 \rightarrow \ker (\F d_m) \longrightarrow \F T_m
\rightarrow \im (\F d_m)
\rightarrow 0$ 
is exact, as required.
In view of Step~1, we have constructed a projective resolution of
$\F V$ in $\R'$:
$$
\cdots \longrightarrow \F T_1 \longrightarrow \F T_0
\longrightarrow \F V \longrightarrow
0.
$$

Next, we use the projective resolution just constructed
to compute $\Ext^n_{\R'}(\F V, \F I)$ for any injective $I \in \R$.
We have a commutative
diagram
$$
\begin{tikzcd}
0\arrow[r]&\Hom_{\R}(V,I)\arrow[r]\arrow[d,"f"]&
\Hom_{\R}(T_0, I)\arrow[r]\arrow[d,"f_0"]
&
\Hom_{\R}(T_1, I)\arrow[r]\arrow[d,"f_1"]
&\cdots\\
0\arrow[r]&\Hom_{\R'}(\F V, \F I)\arrow[r]&\Hom_{\R'}(\F T_0,
\F I)\arrow[r]&\Hom_{\R'}(\F T_1, \F I)\arrow[r]&\cdots
\end{tikzcd}
$$
with vertical maps induced by $\F$.
The maps $f_0,f_1,\dots$ are isomorphisms due to Step~7. 
Also
the top row is exact as $I$ is injective.
We deduce that the bottom row is exact at  the positions
$\Hom_{\R'}(\F T_m, \F I)$ for all $m \geq 1$.
It is exact at positions $\Hom_{\R'}(\F V, \F I)$
and $\Hom_{\R'}(\F T_0, \F I)$ as
$\Hom_{\R'}(?,\F I)$ is left exact.
Thus, the bottom row is exact everywhere.
So the map $f$ is an isomorphism too
and $\Ext^n_{\R'}(\F V,\F I) = 0$ for $n > 0$.

Finally,
take a short exact sequence 
$0 \rightarrow W \rightarrow I \rightarrow Q \rightarrow 0$ in $\R$
with $I$ injective. We have that $Q \in \nabla_\eps(\R)$ by Corollary~\ref{hanks}.
Hence, using Step~2 and the previous paragraph,
there is a commutative diagram
$$
\begin{tikzcd}
\!\Hom_{\R}(V, W)\!\arrow[r,hookrightarrow]\arrow["f_1",d]&\!\Hom_{\R}(V, I) \!
\arrow[r]\arrow[d,"f_2"]&
\!\Hom_{\R}(V, Q)\!\arrow[d,"f_3"]
\arrow[r,twoheadrightarrow]&\!\Ext^1_{\R}(V,W)\arrow[d,"f_4"]\\
\!\!\!\Hom_{\R'}(\F V,
\F W)\!\arrow[r,hookrightarrow]&\!\Hom_{\R'}(\F
V, \F I) \!
\arrow[r]&
\!\Hom_{\R'}(\F V, \F Q)\!
\arrow[r,twoheadrightarrow]&\!\Ext^1_{\R'}(\F V,
\F W)
\end{tikzcd}
$$
with exact rows.
As $f_2$ is an isomorphism, we get that $f_1$ is injective. Since this
is proved for all $W$, this means that $f_3$ is injective too. Then a
diagram chase gives that $f_1$ is surjective, hence, $f_3$ is
surjective and $f_4$ is an isomorphism.
Degree shifting now gives the 
isomorphisms $\Ext^n_{\R}(V,W)\stackrel{\sim}{\rightarrow}
\Ext^n_{\R'}(\F V, \F W)$ for $n \geq 2$ as well.

\vspace{1.5mm}
\noindent
{Step 9:}
{\em We have that $T'_{-\eps}(b) \cong \F I(b)$.}
By Steps~5 and 8, we get that $$
\Ext^1_{\R'}(\Delta'_{-\eps}(a), \F I(b)) \cong
\Ext^1_{\R}(\nabla_\eps(a), I(b)) = 0$$ 
for all $a \in \B$.
Hence, by the homological criterion for $\nabla_{-\eps}$-flags in the 
$(-\eps)$-stratified category $\R'$, 
the $A$-module $\F I(b)$ has a $\nabla_{-\eps}$-flag. It also has a
$\Delta_{-\eps}$-flag with bottom section isomorphism to 
$\Delta'_{-\eps}(b)$ 
due to Steps~2 and 5.
So $\F I(b) \in \Tilt_{-\eps}(\R')$.
It is indecomposable as $\End_{\R'}(\F I(b)) \cong \End_{\R}(I(b))$ by
Step~8, which is local.
Therefore $\F I(b) \cong T'_{-\eps}(b)$ due to Theorem~\ref{gin}(i).

\vspace{1.5mm}
\noindent
{Step 10:}
{\em The restriction $\F:\nabla_\eps(\R) \rightarrow
  \Delta_{-\eps}(\R')$ is an equivalence of categories.}
It is full and faithful by Step~8. It remains to show that it is dense, i.e.,
for any $V' \in \Delta_{-\eps}(\R')$
there exists $V \in \nabla_\eps(\R)$ with $\F V
\cong V'$.
The proof of this goes by induction on the length of a
$\Delta_{-\eps}$-flag of $V'$. If this length is one, we are done by
Step~5.
For the induction step, 
consider $V'$ fitting into a short exact sequence
$0 \rightarrow U' \rightarrow V' \rightarrow W' \rightarrow 0$
for shorter
$U', W' \in \Delta_{-\eps}(\R')$.
By induction there are $U, W \in
\nabla_\eps(\R)$
such that $\F U \cong U'$ and $\F W \cong W'$.
Then we use the isomorphism $\Ext^1_{\R'}(\F W, \F U)
\cong \Ext^1_\R(W, U)$ from Step~8 to see that there is an extension
$V$ of $U$ and $W$ in $\R$ 
such that $\F V \cong V'$.

\vspace{1.5mm}
\noindent
{Step 11:}
{\em 
The dual right $A$-module $T^*$ to $T$ 
is a $(-\eps)$-tilting generator for
$\R^\op = \fdrmod A$
such that $\End_A(T^*)^\op = B^\op$.
Moreover, letting 
$\F^\op:=\Hom_A(T^*,?):\fdrmod A\rightarrow \fdrmod B$ be the corresponding Ringel duality
functor, we have that $\G \cong ?^*\circ \F^\op\circ ?^*$.}
The first statement is clear from Theorem~\ref{opstrat}, observing
that $\End_A(T^*)^\op \cong \End_A(T)$
since $*:A\fdlmod\rightarrow \fdrmod A$ is a contravariant
equivalence.
It remains to observe that
$* \circ \F^\op \circ* \cong \Hom_A(T^*,?^*)^*
\cong \Hom_A(?, T)^* = \G$.

\vspace{1.5mm}
\noindent
{Step 12:}
{\em 
The restriction $\G:\Delta_\eps(\R)\rightarrow \nabla_{-\eps}(\R')$ is
an equivalence of categories
inducing isomorphisms as in (\ref{abitbetter}), such that
$\G T_\eps(b) \cong I'(b)$, $\G \Delta_\eps(b) \cong
  \nabla'_{-\eps}(b)$
and $\G P(b) \cong T'_{-\eps}(b)$.}
This follows using Step~11 and the analogs for $F^\op$ of the statements about $F$
establishd thus far.
\endproof

\subsection{Tilting objects in the upper finite and essentially finite cases}\label{sbags}
Throughout the subsection, $\R$ will be either be an upper finite
or an essentially finite
$\eps$-stratified category with the usual stratification $(\B,L,\rho,\Lambda,\leq)$. 
It is still possible to make sense of $\eps$-tilting objects but now the iterative procedure used to construct the indecomposable ones
in the proof of Theorem~\ref{gin} does not terminate after finitely many steps. Consequently, we must allow for tilting objects which have infinite $\Delta_\eps$- and $\nabla_\eps$-flags;
see (\ref{megan}) below for a baby example of this phenomenon.

Suppose to start with that $\R$ is an upper finite $\eps$-stratified category.
Using the notions of ascending $\Delta_\eps$-flags and
descending $\nabla_\eps$-flags 
introdued in Definition~\ref{tpc}, we set
\begin{equation}
\Tilt_\eps(\R) := \Delta_\eps^\asc(\R) \cap \nabla_\eps^\desc(\R).
\end{equation}
We emphasize that objects of $\Tilt_\eps(\R)$ are in particular
objects of $\R$, so all of their composition multiplicities are
finite.
Like in Lemma~\ref{karo}, $\Tilt_\eps(\R)$ is an additive Karoubian subcategory of
$\R$.

\begin{theorem}[Classification of tilting objects in the upper finite case]\label{moregin}
Assume that $\R$ is an upper finite $\eps$-stratified category.
For $b \in \B_\lambda$,
there is an indecomposable object $T_\eps(b) \in
\Tilt_\eps(\R)$ satisfying the following properties:
\begin{itemize}
\item[(i)] $T_\eps(b)$ has an ascending $\Delta_\eps$-flag
with bottom section\footnote{We mean that there is
an ascending $\Delta_\eps$-flag 
$(V_\omega)_{\omega \in \Omega}$
in which $\Omega$ has a smallest non-zero element
$1$ such that $V_1 \cong \Delta_\eps(b)$.} isomorphic to
$\Delta_\eps(b)$;
\item[(ii)] $T_\eps(b)$ has a descending $\nabla_\eps$-flag with top
  section\footnote{Similarly, we mean that $V / V_1 \cong \nabla_\eps(b)$.}
  isomorphic to $\nabla_\eps(b)$;
\item[(iii)] $T_\eps(b) \in \R_{\leq\lambda}$ and $j^\lambda T_\eps(b) \cong
\left\{
\begin{array}{ll}P_\lambda(b)&\text{if $\eps(\lambda)=+$}\\
I_\lambda(b)&\text{if $\eps(\lambda) = -$}
\end{array}
\right.$.
\end{itemize}
These properties determine $T_\eps(b)$ uniquely
up to isomorphism: if $T$ is any indecomposable object of
$\Tilt_\eps(\R)$
satisfying any one of the properties (i)--(iii) then $T \cong T_\eps(b)$;
hence, it satisfies the other two properties as well.
\end{theorem}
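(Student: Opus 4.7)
The plan is to reduce to the case that $\lambda$ is maximal in $\Lambda$ by working inside $\R_{\leq\lambda}$ (still an upper finite $\eps$-stratified Schurian category by Theorem~\ref{gf9}), and then to build $T_\eps(b)$ as a union of finite tilting objects coming from the quotients by finite upper sets of the reduced $\Lambda$. Let $\Omega$ be the directed set of finite non-empty upper sets of the reduced $\Lambda$. For each $\omega \in \Omega$, Theorem~\ref{gf6} makes $\R^\omega$ into a finite $\eps$-stratified category, so Theorem~\ref{gin} provides an indecomposable $\eps$-tilting object $T^\omega \in \R^\omega$ with $j^{\lambda,\omega} T^\omega \cong P_\lambda(b)$ if $\eps(\lambda)=+$ or $I_\lambda(b)$ if $\eps(\lambda)=-$. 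Corollary~\ref{jt} applied in each $\R^\upsilon$ supplies the key compatibility $\bar\jmath T^\upsilon \cong T^\omega$ whenever $\omega \subseteq \upsilon$, making the $T^\omega$ into a compatible inverse system under the quotient functors $\bar\jmath$.

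I will then set $V^\omega := j^\omega_! T^\omega$, which lies in $\Delta_\eps(\R)$ by Theorem~\ref{gf6}(6). For $\omega \subseteq \upsilon$, Corollary~\ref{iuseitoften}(2) in $\R^\upsilon$ furnishes an injection $\bar\jmath_!\bar\jmath T^\upsilon \hookrightarrow T^\upsilon$ with quotient in $\Delta_\eps(\R^\upsilon)$, and applying the functor $j^\upsilon_!$ (exact on $\Delta_\eps$-filtered objects) yields an injection $V^\omega \hookrightarrow V^\upsilon$ with quotient in $\Delta_\eps(\R)$. Realizing $\R = A\lfdlmod$ for a pointed algebra, I define $T_\eps(b) := \bigcup_\omega V^\omega$ inside $A\lmod$. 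Compatibility forces $e_c T_\eps(b) = e_c T^\omega$ for any $\omega \ni \rho(c)$, so $T_\eps(b)$ is locally finite-dimensional and moreover $j^\omega T_\eps(b) \cong T^\omega$ for every $\omega \in \Omega$. Property (iii) is then immediate: $j^\lambda$ commutes with the filtered colimit and $j^\lambda V^\omega \cong T^{\{\lambda\}}$ compatibly, so $j^\lambda T_\eps(b) \cong T^{\{\lambda\}}$, which is $P_\lambda(b)$ or $I_\lambda(b)$.

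Property (i) is then essentially by construction: $(V^\omega)_\omega$ with $V^0 := 0$ prepended is an ascending $\Delta_\eps$-flag, and its bottom step can be refined into a $\Delta_\eps$-filtration of $V^{\{\lambda\}} = j^\lambda_! T^{\{\lambda\}}$ starting with $j^\lambda_! \Delta_\eps^{\{\lambda\}}(b) = \Delta_\eps(b)$. For property (ii), I will apply Theorem~\ref{gf7}: for each $c \in \B$, choosing $\omega \in \Omega$ with $\rho(c) \in \omega$, Theorem~\ref{gf6}(5) together with $j^\omega T_\eps(b) \cong T^\omega \in \nabla_\eps(\R^\omega)$ yields
\[
\Ext^1_\R(\Delta_\eps(c), T_\eps(b)) \;\cong\; \Ext^1_{\R^\omega}(\Delta_\eps^\omega(c), T^\omega) \;=\; 0,
\]
so $T_\eps(b) \in \nabla_\eps^\desc(\R)$. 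To pin down the top section, I use the units $T_\eps(b) \to j^\omega_* T^\omega$ of adjunction, which are surjective by comparing $\nabla_\eps$-multiplicities via Lemma~\ref{dry}(3) and invoking the triangle identity; taking $\omega = \{\lambda\}$ gives a quotient $T_\eps(b) \twoheadrightarrow j^\lambda_* T^{\{\lambda\}}$ whose topmost section in the finite $\nabla_\eps$-flag from Theorem~\ref{gf6}(4) is $\nabla_\eps(b)$ in both sign cases.

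The main obstacle I anticipate is the surjectivity of the units $T_\eps(b) \to j^\omega_* T^\omega$ and the identification of the correct top section, because the canonical natural transformation $j^\omega_! T^\omega \to j^\omega_* T^\omega$ usually fails to be surjective and one must instead argue via the multiplicity identities of Lemma~\ref{dry} applied directly to $T_\eps(b)$. Uniqueness will then follow as in the finite case: any indecomposable $T \in \Tilt_\eps(\R)$ satisfying one of (i)--(iii) has $j^\omega T \cong T^\omega$ compatibly across $\Omega$ (by the uniqueness clause of Theorem~\ref{gin} applied inside each $\R^\omega$), and a Fitting-type argument exactly as at the end of the existence proof of Theorem~\ref{gin} assembles the compatible isomorphisms into a single isomorphism $T \cong T_\eps(b)$.
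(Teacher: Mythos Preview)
Your existence argument is essentially identical to the paper's: reduce to $\lambda$ maximal via Theorem~\ref{gf9}, set $V_\omega:=j^\omega_!\,T^\omega$ for $\omega$ a finite upper set, build the direct system using Corollary~\ref{iuseitoften}(2) and the exactness in Theorem~\ref{gf6}(6), take the colimit, and verify the $\nabla_\eps$-side via Theorem~\ref{gf6}(5) and Theorem~\ref{gf7}. Your extra detail on pinning down the bottom and top sections in (i)--(ii) is fine and slightly more explicit than the paper's ``it follows easily.'' One small omission: you never argue that $T_\eps(b)$ is indecomposable; the paper deduces this from $j^\omega T_\eps(b)\cong T^\omega$ being indecomposable for every $\omega$.

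Your uniqueness sketch, however, has a genuine soft spot. Knowing $j^\omega U\cong T^\omega\cong j^\omega T_\eps(b)$ for each $\omega$ separately does \emph{not} give you a system of isomorphisms compatible under the transition maps, and there is no mechanism to ``assemble'' such a family into a global isomorphism (the Fitting lemma is a statement about endomorphisms of a single object, not about gluing). The paper avoids this entirely: it first constructs a single global morphism $\bar f:U\to T_\eps(b)$ by extending the inclusion $\Delta_\eps(b)\hookrightarrow T_\eps(b)$ along $\Delta_\eps(b)\hookrightarrow U$, using the $\Ext^1$-vanishing of Lemma~\ref{dry}(1). Only then does it pass to each finite quotient, where $j^\omega\bar f$ is a morphism between two copies of $T^\omega_\eps(b)$ restricting to an isomorphism on $\Delta^\omega_\eps(b)$; the uniqueness argument inside Theorem~\ref{gin} forces $j^\omega\bar f$ to be an isomorphism, and since this holds for all $\omega$, $\bar f$ itself is an isomorphism. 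You should reorganize your uniqueness paragraph along these lines.
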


\begin{proof}
{Existence:}
Replacing $\R$ by $\R_{\leq\lambda}$ if necessary and using
Theorem~\ref{gf9},
we reduce to the special case that
$\lambda$ is
the largest element of the poset $\Lambda$.
Assuming this,
the first step in the construction of $T_\eps(b)$
is to define a direct system $(V_\omega)_{\omega \in
  \Omega}$ of objects of $\R$.
This is indexed by
the directed set $\Omega$ of all finite upper sets in
$\Lambda$.
Let $V_\varnothing := 0$.
Then take $\varnothing \neq \omega \in \Omega$ and denote it instead by $\Lambda\u$.
Letting $j:\R\rightarrow\R\u$ be the corresponding finite
$\eps$-stratified quotient of $\R$,
we set $V_\omega := j_! T\u_\eps(b)$. 
By Theorem~\ref{gf6}(6),
this has a $\Delta_\eps$-flag.
Given also $\omega < \upsilon \in \Omega$, i.e., another upper set
$\Lambda\uu$ containing $\Lambda\u$, let $k:\R\rightarrow\R\uu$ be the corresponding quotient.
Then $j$ factors as $j = \bar\jmath \circ k$ for an induced quotient functor
$\bar\jmath:\R\uu \rightarrow\R\u$.
Since $\bar\jmath T\uu_\eps(b) \cong T\u_\eps(b)$ by
Corollary~\ref{jt},
we deduce from Corollary~\ref{iuseitoften}(2) that there is a short exact sequence
$$
0 \longrightarrow \bar\jmath_! T\u_\eps(b) \longrightarrow T\uu_\eps(b)
\longrightarrow Q \longrightarrow 0$$ 
such that $Q$ has a $\Delta_\eps$-flag
with sections $\Delta_\eps\uu(c)$ for $c$ with $\rho(c)
\in \Lambda\uu\setminus\Lambda\u$.
Applying $k_!$ and using the exactness from Theorem~\ref{gf6}(6)
again,
we deduce that there is an embedding $f_\omega^\upsilon:V_\omega
\hookrightarrow V_\upsilon$ with $\operatorname{coker} f_\omega^\upsilon
\in \Delta_\eps(\R)$.
Thus, we have  a direct system
$(V_\omega)_{\omega \in
  \Omega}$.
Now let $T_\eps(b) := \varinjlim V_\omega \in \Ind(\R_c)$.
Using the induced embeddings $f_\omega:V_\omega \hookrightarrow
T_\eps(b)$, 
we identify each $V_\omega$ with a subobject of $T_\eps(b)$.
We have shown for $\omega < \upsilon$ 
that $V_\upsilon / V_\omega \in \Delta_\eps(\R)$ and,
moreover, $j V_\upsilon = j V_\omega$ 
where $j:\R\rightarrow\R\u$ is the quotient
associated to $\omega$.

In this paragraph, we show that $T_\eps(b)$ actually lies in $\R$ rather than $\Ind(\R_c)$, i.e., all of the
composition multiplicities $[T_\eps(b):L(c)]$ are finite.
To see this, take $c \in \B$.
Let $\omega=\Lambda\u \in \Omega$ be some fixed finite
upper set such that $\rho(c)\in\Lambda\u$, and $j:\R\rightarrow\R\u$ be
the quotient functor as usual. Then for any $\upsilon \geq
\omega$
we have that
$$
[V_\upsilon:L(c)] = [j V_\upsilon: L\u(c)] = [j V_\omega:L\u(c)] =
[V_\omega:L(c)].
$$
Hence, $[T_\eps(b):L(c)] = [V_\omega:L(c)] < \infty$.

So now we have defined $T_\eps(b) \in \R$ together with an
ascending $\Delta_\eps$-flag
$(V_\omega)_{\omega \in \Omega}$.
The smallest non-empty element of $\Omega$ is $\omega := \{\lambda\}$,
and $V_\omega = j^\lambda_! P_\lambda(b) =
\Delta_\eps(b)$
if $\eps(\lambda) = +$, or $j^\lambda_! I_\lambda(b)$
if $\eps(\lambda)=-$.
Since $j^\lambda T_\eps(b) = j^\lambda V_\omega$, we deduce that (iii)
holds. Also by construction $T_\eps(b)$ has an ascending
$\Delta_\eps$-flag.
To see that it has a descending $\nabla_\eps$-flag, take any $a \in \B$.
Let $\omega = \Lambda\u \in \Omega$ be such that $\rho(a)\in\Lambda\u$.
Then $\Delta_\eps(a) = j_! \Delta\u_\eps(a)$
and $j T_\eps(b) = j V_\omega = T_\eps\u(b)$, 
so by Theorem~\ref{gf6}(5) we
get that
$$
\Ext^1_{\R}(\Delta_\eps(a), T_\eps(b)) 
\cong \Ext^1_{\R\u}(\Delta\u_\eps(a),
T_\eps\u(b)) = 0.
$$
By Theorem~\ref{gf7}, this shows that
$T_\eps(b) \in \nabla_\eps^\desc(\R)$.

Note finally that $T_\eps(b)$ is indecomposable.
This follows because $j T_\eps(b)$ is indecomposable for every
$j:\R\rightarrow \R\u$ (adopting the usual notation). 
Indeed, by the construction we have that $j T_\eps(b) \cong T\u_\eps(b)$ 
This completes the construction of the indecomposable object
$T_\eps(b) \in \Tilt_\eps(\R)$. We have shown that it satisfies (iii),
and it follows easily that it also satisfies (i) and (ii).

\vspace{1.5mm}
\noindent
{Uniqueness:}
Since (iii) implies (i) and (ii), it suffices to show that any
indecomposable $U \in \Tilt_\eps(\R)$ satisfying either (i) or (ii) is
isomorphic to the object $T := T_\eps(b)$ just constructed. We explain this
just in the case of (i), since the argument for (ii) is similar.
We take a short exact sequence $0 \rightarrow \Delta_\eps(b)
\rightarrow T \rightarrow Q \rightarrow 0$ with $Q \in
\Delta_\eps^\asc(\R)$.
Using the $\Ext$-vanishing from Lemma~\ref{dry},
we deduce like in the proof of Theorem~\ref{gin}
that the inclusion $f:\Delta_\eps(b) \hookrightarrow
T$
extends to $\bar f:U \rightarrow T$.
In fact, $\bar f$ is an isomorphism. To see this, take a finite upper set
$\Lambda\u$ containing $\lambda$
and consider the quotient $j:\R\rightarrow\R\u$ as usual.
Both $j U$ and $j T$ are isomorphic to $T\u_\eps(b)$ by the
uniqueness in Theorem~\ref{gin}.
The proof there implies that any homomorphism $jT \rightarrow
j U$ which restricts to an isomorphism on the subobject $\Delta\u_\eps(b)$ is an
isomorphism.
We deduce that $j \bar f$ is an
isomorphism.
Since holds for all choices of $\Lambda\u$, it follows that $\bar
f$ itself is an isomorphism.
\end{proof}

\begin{corollary}
Any object of $\Tilt_\eps(\R)$
is isomorphic to $\bigoplus_{b \in \B} T_\eps(b)^{\oplus n_b}$
for {unique} multiplicities $n_b \in \N$.
Conversely, any such direct sum belongs to $\Tilt_\eps(\R)$.
\end{corollary}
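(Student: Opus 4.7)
My plan is to treat the two sentences separately, starting with the converse. Given arbitrary multiplicities $n_b \in \N$, set $T' := \bigoplus_{b \in \B} T_\eps(b)^{\oplus n_b}$, a priori an object of $\Ind(\R)$. To see that $T' \in \R$, I compute $[T':L(c)] = \sum_b n_b [T_\eps(b):L(c)]$: by Theorem~\ref{moregin}(iii), $[T_\eps(b):L(c)] \neq 0$ forces $\rho(b) \in [\rho(c),\infty)$, which is a finite set by upper-finiteness of $\Lambda$ combined with finiteness of the fibers $\B_\lambda$, so the sum reduces to a finite sum of finite multiplicities. The ascending $\Delta_\eps$-flag of $T'$ is then obtained by combining the ascending flags of the individual copies of $T_\eps(b)$, indexed over the directed set consisting of pairs (finite subset of $\{(b,k) : 1 \leq k \leq n_b\}$, a choice of flag parameter for each included copy); the descending $\nabla_\eps$-flag of $T'$ is constructed dually.

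For the forward direction, given $T \in \Tilt_\eps(\R)$, I first extract well-defined multiplicities via finite quotients. For each finite upper set $\Lambda\u \subseteq \Lambda$ with quotient $j:\R \to \R\u$, Theorem~\ref{gf6}(4),(6) gives $jT \in \Tilt_\eps(\R\u)$. Since $\R\u$ is a finite $\eps$-stratified category, the Krull--Schmidt theorem combined with Theorem~\ref{gin} yields a unique decomposition $jT \cong \bigoplus_{c \in \B\u} T\u_\eps(c)^{\oplus n_c(\u)}$. Whenever $\Lambda\uu \subseteq \Lambda\u$ is a smaller finite upper set with further quotient $\bar\jmath:\R\u \to \R\uu$, Corollary~\ref{jt} applied in the finite $\eps$-stratified category $\R\u$ gives $\bar\jmath T\u_\eps(c) \cong T\uu_\eps(c)$ for $c \in \B\uu$ and $\bar\jmath T\u_\eps(c) = 0$ otherwise, forcing the multiplicities $n_c(\u)$ to agree across all choices of $\Lambda\u \supseteq \{\rho(c)\}$; hence $n_b := n_b(\u)$ is well-defined independent of $\u$. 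Applying this extraction to any decomposition $T \cong \bigoplus_b T_\eps(b)^{\oplus m_b}$ (which lies in $\Tilt_\eps(\R)$ by the converse) forces $m_b = n_b$, yielding uniqueness.

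The remaining task is to exhibit an isomorphism $T' \cong T$, where $T' := \bigoplus_b T_\eps(b)^{\oplus n_b}$. My strategy is to establish that $\End_\R(T_\eps(b))$ is a local ring, so that Krull--Schmidt applies directly within $\Tilt_\eps(\R)$. Using the realization $T_\eps(b) = \varinjlim_\omega j_! T\u_\eps(b)$ from the proof of Theorem~\ref{moregin} together with Theorem~\ref{gf6}(5), one has
\begin{equation*}
\End_\R(T_\eps(b)) \cong \varprojlim_{\Lambda\u \ni \rho(b)} \End_{\R\u}(T\u_\eps(b)).
\end{equation*}
Each factor is a finite-dimensional local algebra. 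Surjectivity of the transition maps (established via the adjunction between $\bar\jmath$ and $\bar\jmath_*$ combined with appropriate $\Ext^1$-vanishing from Lemma~\ref{anotherpar}) ensures that the inverse limit is itself local with nil radical. Krull--Schmidt in $\Tilt_\eps(\R)$ then produces a decomposition of $T$ into indecomposable tilting summands, each of the form $T_\eps(b)$ by Theorem~\ref{moregin}, with multiplicities necessarily equal to those extracted above.

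The principal obstacle I anticipate is verifying the surjectivity of these transition maps between endomorphism algebras; if a direct verification proves unwieldy, an alternative strategy is to construct the isomorphism $T' \cong T$ more directly by using the $\Ext^1$-vanishing of Lemma~\ref{dry}(1) (in particular $\Ext^1_\R(T_\eps(b), T) = 0$) to lift compatible embeddings $T\u_\eps(b)^{\oplus n_b} \hookrightarrow jT$ in each finite quotient to embeddings $T_\eps(b)^{\oplus n_b} \hookrightarrow T$, then assembling these over $b \in \B$ into a morphism $T' \to T$ which one checks is an isomorphism by verifying that every quotient functor $j$ sends it to an isomorphism in the corresponding finite category.
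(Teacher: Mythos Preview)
Your converse argument matches the paper's (which is even terser, noting only that membership in $\R$ is the issue). For the forward direction, your extraction of stable multiplicities $n_b$ via finite quotients and Corollary~\ref{jt} is also what the paper does. Where you diverge is in constructing the isomorphism itself: your primary route shows $\End_\R(T_\eps(b))$ is local via an inverse-limit description and then invokes Krull--Schmidt in $\Tilt_\eps(\R)$. The paper avoids this entirely. Having decomposed $jU \cong \bigoplus_{b \in \B\u} T_\eps\u(b)^{\oplus n_b(\omega)}$ in each finite quotient $\R\u$, it simply asserts that there is a corresponding direct summand $T_\omega \cong \bigoplus_{b \in \B\u} T_\eps(b)^{\oplus n_b(\omega)}$ of $U$ back in $\R$, and then concludes $U = \varinjlim_\omega T_\omega$. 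This is much shorter, but the lifting of the summand from $\R\u$ to $\R$ is left unjustified in the text; your alternative plan (b) --- lifting maps via the $\Ext^1$-vanishing of Lemma~\ref{dry}(1) and checking the resulting morphism becomes an isomorphism on every finite quotient --- is precisely how one would fill that gap, and is the route closest to the paper's intent. Your Krull--Schmidt approach buys an independently useful structural statement (locality of the endomorphism ring of each indecomposable tilting object) at the cost of the surjectivity verification you flag; the paper's approach sidesteps that cost but is correspondingly less informative.
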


\begin{proof}
Let us first show that any direct sum
$U := \bigoplus_{b \in \B} T_\eps(b)^{\oplus n_b}$
belongs to $\Tilt_\eps(\R)$. The only issue is to see that $U$ actually
belongs to $\R$ rather than $\Ind(\R_c)$, i.e., it has finite composition multiplicities.
But for a given $c \in \B$, the multiplicity 
$[T_\eps(b):L(c)]$ is zero unless $\rho(c) \leq \rho(b)$.
There are only finitely many such $b \in \B$,
so $[U:L(c)] = \sum_{b \in \B} n_b [T_\eps(b):L(c)] < \infty$.

Now take any $U \in \Tilt_\eps(\R)$.
Let $\Omega$ be the directed set of all finite upper sets in
$\Lambda$.
Take $\omega =\Lambda\u\in \Omega$.
Let $j:\R\rightarrow \R\u$ be the quotient functor as usual.
Then we have that $j U \in \Tilt_\eps(\R\u)$,
so it decomposes as a finite direct sum as 
$j U \cong \bigoplus_{b \in \B\u}
T_\eps\u(b)^{\oplus n_b(\omega)}$ for $n_b(\omega) \in \N$.
There is a corresponding direct summand $T_\omega \cong \bigoplus_{b
  \in \B\u} T_\eps(b)^{\oplus n_b(\omega)}$ of $U$.
Then $T = \varinjlim T_\omega$.
Moreover, for $b \in \B\u$, the multiplicities $n_b(\omega)$ are stable in the sense
that
$n_b(\upsilon) = n_b(\omega)$ for 
all $\upsilon > \omega$.
We deduce that $U \cong \bigoplus_{b \in \B}
T_\eps(b)^{\oplus n_b}$ where $n_b := n_b(\omega)$ for any
sufficiently large $\omega$.
\end{proof}


It remains to discuss tilting objects in 
the essentially finite case.
So now we assume that $\R$ is an essentially finite
$\eps$-stratified category with stratification
$(\B,L,\rho,\Lambda,\leq)$. 
Since $\Lambda$ is interval finite, finite unions of lower sets of the form $(-\infty,\lambda]$ are upper
finite. If $\R\d$ is the Serre subcategory of $\R$ associated to such an upper
finite lower set then its Schurian envelope $\Loc(\R\d)$ in
the sense of Lemma~\ref{coffee}
is a Cartan-bounded upper
finite $\eps$-stratified category which is naturally embedded into $\Loc(\R)$.
This follows from Theorem~\ref{gf}.
For $b \in \B$, we define 
the corresponding
$\eps$-tilting object $T_\eps(b)\in\Loc(\R)$ as follows:
pick any upper finite lower set $\Lambda\d$ such that $\rho(b) \in \Lambda\d$,
let $\R\d$ be the corresponding Serre 
subcategory of $\R$, 
then let $T_\eps(b)$ be the $\eps$-tilting object in $\Loc(\R\d)$
from Theorem~\ref{moregin}. This is well-defined independent of the
choice of $\Lambda\d$ by
the uniqueness part of Theorem~\ref{moregin}.
Thus, we have defined the indecomposable $\eps$-tilting objects
$\{T_\eps(b)\:|\:b \in \B\}$ in the essentially finite case too, although these may be of infinite length, i.e., in
general they belong to $\Loc(\R)$ rather than to $\R$ itself.

\begin{definition}\label{tiltingboundeddef}
Suppose that $\R$ is a lower finite, upper finite or essentially finite $\eps$-stratified category with the usual stratification.
We say that it is {\em tilting-bounded} if
the matrix
\begin{equation}\label{TM}
\left(\dim \Hom_\R(T_\eps(a), T_\eps(b)\right)_{a,b \in \B}
\end{equation}
has finitely many non-zero entries in each row and each column.
\end{definition}

The matrix (\ref{TM}) 
is analogous to the Cartan matrix (\ref{CM}) with projectives/injectives replaced by $\eps$-tilting objects.
In the lower finite case, all entries of this matrix are obviously $< \infty$, but in the upper finite or essentially finite cases it is possible that some of these dimensions are $\infty$. 
However they are all finite in the tilting-bounded case:

\begin{lemma}\label{anotherway}
If $\R$ is tilting-bounded then the spaces
$\Hom_\R(T_\eps(a), T_\eps(b))$ are finite-dimensional 
for all $a, b \in \B$.
\end{lemma}

\begin{proof}
In the lower finite case, the indecomposable tilting objects are of finite length, so these spaces are finite-dimensional even without the assumption that $\R$ is tilting-bounded.
In the remaining 
upper finite or essentially finite cases, 
we have that
\begin{equation}\label{easyproof}
\dim \Hom_\R(T_\eps(a), T_\eps(b)) = 
\sum_{c \in \B} (T_\eps(a):\nabla_\eps(c)) (T_\eps(b):\Delta_\eps(c))
\in \N \cup \{\infty\}.
\end{equation}
All of the multiplicities $(T_\eps(a):\nabla_\eps(c))$
and $(T_\eps(b):\Delta_\eps(c))$ are finite.
Moreover, if
$(T_\eps(a):\Delta_\eps(c)) \neq 0$ then
$\Hom_\R(T_\eps(a), T_\eps(c)) \neq 0$. Hence, assuming the tilting-bounded hypothesis, only finitely many of the terms in the sum on the right hand side are non-zero.
\end{proof}

Assuming $\R$ is an essentially finite $\eps$-stratified category once again, assume that $\R$ is also tilting-bounded.
Then the $\eps$-tilting objects $T_\eps(b)$
actually belong to
\begin{equation}
\Tilt_\eps(\R) := \Delta_\eps(\R) \cap \nabla_\eps(\R),
\end{equation}
i.e., they belong to $\R$ rather than to $\Loc(\R)$
of $\R$. Thus, we are in a similar situation to (\ref{to}). 
Theorem~\ref{gin} carries over easily, to
show that $\{T_\eps(b)\:|\:b \in \B\}$ gives a full set of
the indecomposable objects in the additive Karoubian category $\Tilt_\eps(\R)$.
The construction of Theorem~\ref{tr} also carries over unchanged. So all
objects of $\nabla_\eps(\R)$ have $\eps$-tilting resolutions and all
objects of $\Delta_\eps(\R)$ have $\eps$-cotilting resolutions.

\begin{remark}
Most of the interesting examples of essentially finite
highest weight categories which arise ``in nature'' 
seem to satisfy the tilting-bounded hypothesis, although
there is no reason for this to be the case from the recursive
construction of Theorem~\ref{moregin}.
We refer the reader to Remark~\ref{opendoor} for
an
example which is not tilting-bounded.
\end{remark}

\begin{remark}\label{penultimateremark}
The tilting-bounded hypothesis is also interesting in the lower finite case; see Corollary~\ref{finalcorollary} below.
Using (\ref{easyproof}), it is easy to see
in the lower finite case that
$\R$ is tilting-bounded if and only if
for each $b \in \B$ the multiplicities
$(T_\eps(a):\Delta_\eps(b))$ and $(T_\eps(a):\nabla_\eps(b))$
are zero for all but finitely many $a \in \B$.
Natural examples of lower finite highest weight categories
which are definitely {\em not} tilting-bounded include the 
categories $\Rep(G)$ for reductive groups $G$ (unless this is actually a semisimple category), as follows from the results in \cite[$\S$5]{anotherkevin}. In situations involving quantum groups at roots of unity, tilting-boundedness can be checked combinatorially by considering properties of Kazhdan-Lusztig polynomials; e.g., see \cite{SoergelKac}, \cite{Stroppel}.
\end{remark}

\subsection{Semi-infinite Ringel duality}\label{sird}
Now we
extend Ringel duality to lower finite and upper finite
$\eps$-stratified categories. The situation is not as symmetric as in the finite
case and demands different constructions when going from lower finite
to upper finite or from upper finite to lower finite.
If we start with a lower finite $\eps$-stratified category, the
Ringel dual is an upper finite $(-\eps)$-stratified category:

\begin{definition}\label{rd1}
Let $\R$ be a lower finite $\eps$-stratified category
with the usual stratification $(\B,L,\rho,\Lambda,\leq)$.
An {\em $\eps$-tilting generator} for $\R$ is an
object $T = \bigoplus_{i \in I} T_i \in \Ind(\R)$ 
with a given decomposition as a direct sum of
objects $T_i \in \Tilt_\eps(\R)$ such 
that each $T_\eps(b)$ is isomorphic to a summand of $T$.
Define the {\em Ringel dual} of $\R$ relative to $T = \bigoplus_{i \in I} T_i$
to be the Schurian category 
$\R' := A\lfdlmod$
where $$
A :=
\left(\bigoplus_{i,j \in I} \Hom_\R(T_i,T_j)\right)^\op.
$$
Identifying $\Ind(\R'_c)$ with $A\lmod$ as explained in (\ref{errorhere}), 
we have the {\em Ringel duality functor}
\begin{align}\label{rdf1}
\F:=\bigoplus_{i \in I} \Hom_{\R}(T_i,?) &: \Ind(\R) \rightarrow
\Ind(\R'_c).
\end{align}
This functor takes objects of $\R$ to objects of
$\R'$.
\end{definition}

\begin{theorem}[Lower to upper semi-infinite Ringel duality]\label{rt1}
In the setup of Definition~\ref{rd1},
$\R'$ is an upper finite $(-\eps)$-stratified category with stratification $(\B,L',\rho,\Lambda,\geq)$
and distinguished objects
\begin{align*}
P'(b) &\cong \F T_\eps(b),
  &L'(b) &\cong \hd P'(b),\\
\Delta'_{-\eps}(b) &\cong \F \nabla_\eps(b),
& T'_{-\eps}(b) &\cong \F I(b).
\end{align*}
The restriction
$\F:\nabla_\eps^\asc(\R)\rightarrow\Delta_{-\eps}^\asc(\R')$
is an equivalence of categories.
\end{theorem}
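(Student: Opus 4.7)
The plan is to reduce Theorem~\ref{rt1} to the finite Ringel duality of Theorem~\ref{Creek} by truncating to finite lower sets of $\Lambda$.

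First, I would pass to a pointed algebra realization of $A$, so that its distinguished idempotents $\{e_b : b \in \B\}$ can be chosen with $T e_b \cong T_\eps(b)$; this gives $\widetilde{P}(b) := A e_b \cong \F T_\eps(b)$, and $\widetilde{L}(b) := \hd \widetilde{P}(b)$ supplies a full set of pairwise inequivalent irreducibles of $\widetilde{\R}$ indexed by $\B$. For each finite lower set $\Lambda\d \subseteq \Lambda$ with $\B\d := \rho^{-1}(\Lambda\d)$, Theorem~\ref{salsa} gives a finite $\eps$-stratified subcategory $\R\d$ in which $(T_\eps(b))_{b \in \B\d}$ is an $\eps$-tilting generator by Corollary~\ref{it}. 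Let $\widetilde{\R}\d$ be the corresponding finite Ringel dual from Theorem~\ref{Creek}, with algebra $eAe$ for $e := \sum_{b \in \B\d} e_b$, and let $\F\d: \R\d \to \widetilde{\R}\d$ be its Ringel duality functor. The finite upper set $\Lambda\u := \Lambda\d$ of $\Lambda^{\op}$ yields a Serre quotient $\widetilde{\jmath}: \widetilde{\R} \to \widetilde{\R}\u$; since $\widetilde{\R}\u = eAe\lfdlmod$, this quotient is canonically identified with $\widetilde{\R}\d$, and under this identification $\widetilde{\jmath} \circ \F \cong \F\d \circ i_{\leq \lambda \d}^!$ on $\R$ (where one uses that the $T_j$ are compact in $\Ind(\R)$, since $\R$ is locally finite Abelian).

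Next I would verify the upper finite $(-\eps)$-stratified structure on $\widetilde{\R}$ by checking property $(\widehat{P\Delta}_{-\eps})$ with $P_b := \widetilde{P}(b) = \F T_\eps(b)$. By Theorem~\ref{gin}, $T_\eps(b)$ has a $\nabla_\eps$-flag with top section $\nabla_\eps(b)$ and other sections $\nabla_\eps(c)$ for $\rho(c) \leq \rho(b)$ in $\Lambda$. Applying $\F$, which is exact on $\nabla_\eps$-filtered objects (this follows from Theorem~\ref{Creek} after truncating to any finite lower set containing all the $c$'s that appear), produces a $\Delta_{-\eps}$-flag of $\widetilde{P}(b)$ with sections $\F \nabla_\eps(c)$. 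Via the identification in the previous paragraph and Theorem~\ref{Creek} applied to $\R\d$, one identifies $\F \nabla_\eps(c) \cong \widetilde{\Delta}_{-\eps}(c)$, and the order condition $\rho(c) \leq \rho(b)$ in $\Lambda$ becomes the required $\rho(c) \geq \rho(b)$ in $\Lambda^{\op}$. That each $\widetilde{T}_{-\eps}(b)$ equals $\F I(b)$ then follows by observing that $I(b) \in \nabla_\eps^\asc(\R)$ (Corollary~\ref{flowy}), so $\F I(b)$ lies in $\Delta_{-\eps}^\asc(\widetilde{\R})$ once we know $\F$ preserves such flags; together with $\Ext^1_{\widetilde{\R}}(\Delta_{-\eps}(a), \F I(b)) = 0$ for all $a \in \B$ (checked by reducing to a sufficiently large $\R\d$ and invoking the $\Ext$-isomorphism \eqref{abitbetter}), Theorem~\ref{gf7} places $\F I(b)$ in $\nabla_{-\eps}^\desc(\widetilde{\R})$; the identification with $\widetilde{T}_{-\eps}(b)$ then comes from Theorem~\ref{moregin}(iii) after applying $\widetilde{\jmath}$ for varying $\Lambda\u$.

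For the asserted equivalence $\F : \nabla_\eps^\asc(\R) \to \Delta_{-\eps}^\asc(\widetilde{\R})$, given $V \in \nabla_\eps^\asc(\R)$ with ascending flag $(V_\omega)_{\omega \in \Omega}$, each $\F V_\omega$ lies in $\Delta_{-\eps}(\widetilde{\R})$ by Theorem~\ref{Creek} on a finite subcategory, and $\F V = \varinjlim \F V_\omega$ since each $T_i$ is compact in $\Ind(\R)$; this exhibits an ascending $\Delta_{-\eps}$-flag of $\F V$. Full faithfulness follows by reducing $\Hom_\R(V,W)$ and $\Hom_{\widetilde{\R}}(\F V, \F W)$ for $V, W \in \nabla_\eps^\asc(\R)$ to inverse limits over finite truncations and applying \eqref{abitbetter}, using Corollary~\ref{goodc} to eliminate higher $\Ext$ terms. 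For density, given $\widetilde{V} \in \Delta_{-\eps}^\asc(\widetilde{\R})$ with flag $(\widetilde{V}_\omega)_{\omega \in \Omega}$, each $\widetilde{V}_\omega$ lies in some finite $\widetilde{\R}\u \simeq \widetilde{\R}\d$, so by the finite density statement there is $V_\omega \in \nabla_\eps(\R\d) \subseteq \nabla_\eps(\R)$ with $\F V_\omega \cong \widetilde{V}_\omega$; the compatible transition maps lift uniquely by full faithfulness together with $\Ext^1$-vanishing, yielding a direct system $(V_\omega)$ whose colimit $V$ satisfies $\F V \cong \widetilde{V}$.

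The hardest step will be the density argument in the final paragraph: the lifts $V_\omega$ must be assembled into a coherent direct system of \emph{subobjects} of a common $V \in \R$ rather than merely into a compatible diagram in $\Ind(\R)$, which requires carefully using full faithfulness together with the $\Ext^1$-vanishing of Corollary~\ref{ext1a} to rigidify the transition morphisms as monomorphisms and to verify that the resulting colimit still has finite composition multiplicities (so lives in $\R$ rather than merely in $\Ind(\R)$).
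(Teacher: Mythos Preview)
Your proposal is essentially correct and follows the same truncation-to-finite-lower-sets strategy as the paper: both identify the Serre quotient $\widetilde{\R}\u$ of $\widetilde{\R}$ associated to a finite upper set $\Lambda\d$ of $\Lambda^{\op}$ with the finite Ringel dual $\widetilde{\R}\d$ of $\R\d$, and both use the key compatibility $\tilde\jmath \circ \F \cong \F\d \circ i^!$. The paper in fact dispatches the structural assertions ($\widetilde{\R}$ is upper finite $(-\eps)$-stratified, $\F\nabla_\eps(b)\cong\widetilde\Delta_{-\eps}(b)$) by simply noting that Steps~1--6 from the proof of Theorem~\ref{Creek} carry over verbatim, whereas you rederive these more explicitly.

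The main divergence is in the proof of the equivalence $\F:\nabla_\eps^\asc(\R)\to\Delta_{-\eps}^\asc(\widetilde{\R})$. You argue full faithfulness and density directly, assembling a preimage of $\widetilde V\in\Delta_{-\eps}^\asc(\widetilde{\R})$ by lifting each $\widetilde V_\omega$ and then rigidifying the transition maps; as you rightly note, this assembly is the delicate part. The paper instead constructs the quasi-inverse in one stroke as the left adjoint $\Fhi := T\otimes_A-$: taking the left mate of $\F\d\circ i^!\cong\tilde\jmath\circ\F$ gives $i\circ(\F\d)^*\cong\Fhi\circ\tilde\jmath_!$, so $\Fhi$ restricts to a quasi-inverse of each finite equivalence $\F:\nabla_\eps(\R,\omega)\to\Delta_{-\eps}(\widetilde{\R},\omega)$, and since $\Fhi$ commutes with direct limits the passage to ascending flags is automatic. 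This bypasses the assembly problem entirely. Your approach works but is more laborious; the adjoint shortcut is the paper's main simplification. One small omission in your sketch: you should verify explicitly that $\F V$ lands in $\widetilde{\R}=A\lfdlmod$ rather than merely in $A\lmod$ (the paper checks this by bounding $\dim\Hom_A(\F V,\widetilde I(b))$ using that $\widetilde I(b)$ has a finite $\nabla_{-\eps}$-flag), and the density target $V$ should be viewed in $\Ind(\R)$ with finite $\nabla_\eps$-multiplicities rather than in $\R$ itself.
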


The proof will be explained later in the subsection.

In the other direction, if we start from an upper finite
$\eps$-stratified category, the Ringel dual is a lower finite
$(-\eps)$-stratified category:

\begin{definition}\label{rd2}
Let $\R$ be an upper finite $\eps$-stratified 
category with the usual
stratification $(\B,L,\rho,\Lambda,\leq)$.
An {\em $\eps$-tilting generator} is an object $T \in
\Tilt_\eps(\R)$
such that $T_{\eps}(b)$ is isomorphic to a summand of $T$ for every $b
\in \B$.
Let $C := \Coend_{\R}(T)$ be the coalgebra that is the continuous dual
of the pseudo-compact topological algebra $B:=\End_\R(T)^\op$; see Lemma~\ref{prof}.
Then the {\em Ringel dual} of $\R$ relative to $T$ is the
category
$\R' := \fdrcomod C=B\fdlmod$.
Recalling Lemma~\ref{ha}, 
the {\em Ringel duality functor} is
\begin{align}\label{care1}
  G:= \Cohom_{\R}(T,?) = \Hom_\R(?,T)^\star&:
\Ind(\R_c) \rightarrow \Ind(\R'),
\end{align}
which sends finitely generated
objects of $\R$ to objects of $\R'$.
\end{definition}

\begin{theorem}[Upper to lower semi-infinite Ringel duality]\label{rt2}
In the setup of Definition~\ref{rd2},
$\R'$ is a lower finite $(-\eps)$-stratified
category
with stratification $(\B,L',\rho,\Lambda,\geq)$ and
distinguished objects 
\begin{align*}
I'(b) &= G T_{\eps}(b),
&L'(b) &=\soc I'(b),\\
\nabla'_{-\eps}(b) &= G
\Delta_{\eps}(b),
& T'_{-\eps}(b)&= G P(b).
\end{align*}
The restriction
$G:\Delta_{\eps}^\asc(\R)
\rightarrow \nabla_{-\eps}^\asc(\R')$
is an 
equivalence of categories.
\end{theorem}

Again the proof will be explained later.

We proceed to formulate several 
consequences of Theorems~\ref{rt1} and \ref{rt2}.
The first is concerned with a special case.
Recall the definition of Cartan-bounded from just before Lemma~\ref{coffee}, and the definition of tilting-bounded from Definition~\ref{tiltingboundeddef}.

\begin{corollary}\label{finalcorollary}
The Ringel dual of a tilting-bounded lower finite $\eps$-stratified category is a Cartan-bounded upper finite $(-\eps)$-stratified category. Conversely,
the Ringel dual of a Cartan-bounded upper finite $\eps$-stratified
category is a tilting-bounded lower finite $(-\eps)$-stratified category.
\end{corollary}

\begin{proof}
From either Theorem~\ref{rt1} or Theorem~\ref{rt2}, it follows that the Cartan matrix (\ref{CM}) for the upper finite category
is equal to the matrix (\ref{TM}) for the lower finite category.
\end{proof}

The next two corollaries give the
analogs of the double centralizer property from Corollary~\ref{mustard}
in the semi-infinite setting.

\begin{corollary}[Lower to upper double centralizer property]\label{mustard1}
Let notation be as in Definition~\ref{rd1}. Assume in addition
that $\R = \fdrcomod C$ for a coalgebra $C$. 
Let $B := C^*$ be the dual algebra, so that $T$ is a $(B,A)$-bimodule.
Let $T' := T^\circledast$ be the dual $(A,B)$-bimodule.
\begin{enumerate}
\item
$T'$ is a $(-\eps)$-tilting generator for
$\R'$ and there is an algebra isomorphism
\begin{equation}
\mu:B \stackrel{\sim}{\rightarrow} \End_{\R'}(T')^\op
\end{equation}
sending $y \in B$ to $\mu(y):T' \rightarrow T', v \mapsto vy$.
Equivalently, there is a coalgebra isomorphism
\begin{equation}\label{bubbly}
\mu^\star:\Coend_{\R'}(T')
\stackrel{\sim}{\rightarrow} C,\qquad
c^{(i)}_{r,s} \mapsto \tilde c^{(i)}_{r,s}
\end{equation}
where 
$c^{(i)}_{r,s}$ is the element of $\Coend_{\R'}(T')$
corresponding to  $v^{(i)}_s \otimes u^{(i)}_r \in T_i \otimes T_i^*$ 
according to (\ref{C}) for dual bases
$v^{(i)}_1,\dots,v^{(i)}_{d(i)}$ for $T_i$ and $u^{(i)}_1,\dots,u^{(i)}_{d(i)}$
for $T_i^*$,
and $\tilde c^{(i)}_{r,s}\in C$ is defined so that the structure map of the right $C$-comodule $T_i$ sends $v^{(i)}_s \mapsto \sum_{r=1}^{d(i)} v^{(i)}_r \otimes \tilde c^{(i)}_{r,s}$.
So the Ringel dual of $\R'$ relative to
$T'$ in the sense of Definition~\ref{rd2} 
is equivalent to the original category $\R$.
\item
  Denote the Ringel duality 
  functor for $\R'$ relative to $T'$ now by
  \begin{equation}
      F^*:=\Cohom_{\R'}(T',?) = \Hom_{\R'}(?,T')^\star: \Ind(\R_c') \rightarrow \Ind(\R).
      \end{equation}
We have that $F^* \cong T \otimes_A ?$, hence, $(\F^*, \F)$ is an adjoint pair; cf. Lemma~\ref{ha}.
\end{enumerate}
\end{corollary}

\begin{proof}
By Lemma~\ref{dumb}, we have natural isomorphisms
$\Hom_C(T_i, C) \cong T_i^*$ as right $B$-modules, hence,
$\F C \cong T'$ as an $(A,B)$-bimodule.
Since every $I(b)$ appears as a summand of the regular comodule,
and $\F I(b) \cong T'_{-\eps}(b)$ by Theorem~\ref{rt1},
we deduce that $T'$ is a $(-\eps)$-tilting generator for
$\R'$.
To see that $B \cong \End_A(T')^\op$,
we use the fact that $\F$ is an equivalence on $\nabla$-filtered
objects to deduce that
$$\End_A(T')^\op
\cong \End_A(\F C)^\op
\cong \End_C(C)^\op \cong B,
$$
using Lemma~\ref{dumb} again for the final algebra isomorphism.
This produces the isomorphism $\mu$.
To deduce (\ref{bubbly}), we need to 
show that $\mu^\star(c_{r,s}^{(i)})$ and $\tilde c_{r,s}^{(i)}$
take the same value on $y \in B$.
The left hand side gives $c_{r,s}^{(i)}(\mu(y))
= v_s^{(i)}(u_s^{(i)}b)$.
For the right hand side, we have
 that
$y v_s^{(i)} = \sum_{r=1}^{d(i)} \tilde c_{r,s}^{(i)}(y) v_r^{(i)}$, so $c_{r,s}^{(i)}(y) = (y v_s^{(i)}) u_r^{(i)}$.
These are equal. This establishes (1).
Then (2) follows from Lemma~\ref{ha}.
\end{proof}

\begin{corollary}[Upper to lower double centralizer property]\label{mustard2}
Let notation be as in Definition~\ref{rd2}, and assume in addition that
$\R=
A\lfdlmod$ for a locally finite-dimensional locally unital algebra $A
= \bigoplus_{i,j \in I} e_i A e_j$.
Let
$T_i = e_i T$ and $T_i' := T_i^*$, so that
$T' := \bigoplus_{i \in I} T_i' = T^\circledast$. This is a
$(B,A)$-bimodule.
\begin{enumerate}
\item
$T' = \bigoplus_{i \in I} T'_i$ is a $(-\eps)$-tilting generator for $\R'$ 
and there is an algebra isomorphism
\begin{equation}
\mu:A \stackrel{\sim}{\rightarrow} 
\left(\bigoplus_{i,j \in I} \Hom_{\R'}(T'_i,T'_j)\right)^\op
\end{equation}
sending $a \in e_i A e_j $ to $\mu(a):T_i'\rightarrow T_j', v \mapsto va$.
So the Ringel dual of $\R'$ relative to 
$T'$ in the sense of Definition~\ref{rd1} is equivalent to the original category $\R$.
\item
  Denote the Ringel duality 
  functor for $\R'$ relative to $T'$ now by
  \begin{equation}
      G_*:=\bigoplus_{i \in I} \Hom_{\R'}(T_i',?): \Ind(\R') \rightarrow \Ind(\R_c).
      \end{equation}
We have that $G \cong T' \otimes_A ?$,
hence, $(G, G_*)$ is an adjoint pair.
\end{enumerate}
\end{corollary}

\begin{proof}
 Note that $G(A e_i) = \Hom_A(A e_i, T)^* = (e_i T)^* = T_i$.
So Theorem~\ref{rt2} implies that $T = \bigoplus_{i \in I} T_i$ is a
$(-\eps)$-tilting generator for $\R'$.
Moreover,
$$
\Hom_{\R'}(T_i, T_j) = \Hom_{\R'}(G (A e_i), G (A
e_j))
\cong \Hom_{\R}(A e_i, A e_j) = e_i A e_j.
$$
This proves (1) and then (2) follows from Lemma~\ref{ha}.
\end{proof}

\begin{remark}\label{finalremark}
Combining
Corollary~\ref{finalcorollary} with the double centralizer properties just explained, one obtains a restricted version of semi-infinite Ringel duality giving a correspondence
\begin{eqnarray*}
\left\{
\begin{array}{ccc}\text{Tilting-bounded lower finite} \\ 
\text{highest weight categories}
\end{array}
\right\}
&\xleftrightarrow{\text{Ringel duality}}&
\left\{
\begin{array}{ccc}
\text{Cartan-bounded upper finite}\\
\text{highest weight categories}
\end{array}
\right\}.
\end{eqnarray*}
In the upper finite to lower finite direction,
this appeared already in the work of Marko and Zubkov \cite{MZ}.
In more detail, 
if $\R$ is the category of finite-dimensional modules over a descending quasi-hereditary pseudo-compact algebra in the sense of \cite[Def.~3.19]{MZ} and the indecomposable projectives in $\R$ are of finite length as  assumed in \cite[$\S$4]{MZ},
then $\R$ is an essentially finite highest weight category with upper finite weight poset, hence, $\Loc(\R)$ is a Cartan-bounded upper finite highest weight category. In this case, the indecomposable 
tilting modules $T(\lambda)\in \Loc(\R)$ were constructed already in \cite[$\S$4]{MZ}, 
and the appropriate (lower finite) Ringel dual category appears
in \cite[$\S$6]{MZ}.
Also \cite[Lem.~6.5]{MZ} establishes a double centralizer property which is equivalent to Corollary~\ref{mustard2}(1) for such categories. \end{remark}

In the setup of Definition~\ref{rd1},
one can also define a functor 
\begin{equation}\label{Gagaineq}
G:=\Cohom_\R(T,?) = 
\Hom_\R(?,T)^\circledast:\Delta_\eps(\R) \rightarrow \nabla_{-\eps}(\R').
\end{equation}
Theorem~\ref{rt1} plus an argument with duality like 
in Steps~11--12 of the proof of Theorem~\ref{Creek}
 shows that $G$ is an equivalence of categories 
such that
$G\Delta_\eps(b) \cong 
\nabla'_{-\eps}(b)$ and
$G T_\eps(b) \cong I'(b)$ for all $b \in \B$.
Likewise, in the setup of Definition~\ref{rd2},
one can also define
\begin{equation}\label{Fagaineq}
F:=\Hom_\R(T,?):\Delta_\eps(\R) \rightarrow \nabla_{-\eps}(\R').
\end{equation}
Theorem~\ref{rt2} plus an argument involving duality shows that $F$ is an equivalence of categories such that
$F I(b) \cong T'_{-\eps}(b)$ and
$F \nabla_\eps(b) \cong \Delta'_{-\eps}(b)$ for all $b \in \B$.
These functors are needed to formulate the following, which is the semi-infinite counterpart of Corollary~\ref{notquite}.
The proof is similar to the finite case; see also Lemma~\ref{aftersun} below.

\begin{corollary}\label{notquite2}
If $\R$ is a 
lower finite or an upper finite $\eps$-stratified category and $\R'$ is the Ringel dual category relative to some $\eps$-tilting generator as above,
the strata $\R_\lambda$ and
$\R'_\lambda$ are equivalent for all $\lambda \in \Lambda$. 
More precisely:
\begin{enumerate}
\item
If $\eps(\lambda)=+$ the functor
$F_\lambda := 
(j')^\lambda \circ (i')_{\geq \lambda}^! \circ \F\circ i_{\leq \lambda} \circ j^\lambda_*
:\R_\lambda
\rightarrow \R'_\lambda$ is an equivalence of categories
taking $L_\lambda(b) = j^\lambda L(b)$
to $L_\lambda'(b) = (j')^\lambda L'(b)$.
\item
If $\eps(\lambda)=-$ the functor
$G_\lambda:=
(j')^\lambda \circ (i')_{\geq \lambda}^* \circ \G\circ i_{\leq \lambda} \circ  j^\lambda_!:\R_\lambda
\rightarrow \R'_\lambda$ is an equivalence of categories
taking $L_\lambda(b) = j^\lambda L(b)$
to $L_\lambda'(b) = (j')^\lambda L'(b)$.
\end{enumerate}
\end{corollary}

In view of Corollary~\ref{it}, 
Corollary~\ref{strata2} can be applied also in any lower finite $\eps$-stratified category
(without any need to appeal to semi-infinite Ringel duality).
In particular,
if $\R$ is a lower finite $+$-stratified (resp., $-$-stratified) category then all $V \in \nabla(\R)$ (resp., $V \in \Delta(\R)$)
have finite $-$-tilting resolutions (resp., finite $+$-tilting coresolutions).
Using Theorem~\ref{rt1}, one sees that this assertion is equivalent to Lemma~\ref{inparticulartiltingsupper}.

We 
have not investigated 
derived equivalences or any analog of Theorem~\ref{happels} in the semi-infinite setting.

\proof[Proof of Theorem~\ref{rt1}]
We may assume that $\R = \fdrcomod C$ for a coalgebra $C$.
Let $B := C^*$ be the dual algebra, so that 
$\R$ is identified also with $B\fdlmod$.
We can replace the $\eps$-tilting generator $T = \bigoplus_{i \in I} T_i$ 
with any other. This just has the effect of
transforming $A$ into a Morita equivalent locally unital algebra.
Consequently, without loss of generality,
we may assume that $I = \B$ and $T = \bigoplus_{b \in \B} T_\eps(b)$. Then 
$$
A =
\left(\bigoplus_{a,b \in \B}
\Hom_\R(T_\eps(a), T_\eps(b)\right)^\op
$$ 
is a pointed locally finite-dimensional locally
unital algebra with (primitive) distinguished 
idempotents $\{e_b\:|\:b \in
\B\}$. Let $P'(b) := A e_b$ and $L'(b) := \hd
P'(b)$.
Then $\R' = A\lfdlmod$ is a Schurian category, the
$A$-modules
$\{L'(b)\:|\:b \in \B\}$ give a full set of pairwise
inequivalent irreducible objects, and $P'(b)$ is a
projective cover of $L'(b)$ in $\Ind(\R'_c) = A\lmod$.
It is immediate that $(\B,L',\rho,\Lambda,\geq)$ is a
stratification of $\R'$. Let
$\Delta'_{-\eps}(b)$ and $\nabla'_{-\eps}(b)$ be
its $(-\eps)$-standard and $(-\eps)$-costandard objects. Also let
$V(b) := \F \nabla_\eps(b)$.
Now one checks that Steps~1--6 from the proof of
Theorem~\ref{Creek} carry over to the present situation with very
minor modifications. We will not rewrite these steps here, but cite them
freely below. In particular, Step~6 establishes that $\R'$ is an upper
finite $(-\eps)$-stratified category. Also,
$\F \nabla_\eps(b) \cong \Delta'_{-\eps}(b)$ by Step~5. It
just remains to show:
\begin{itemize}
\item
$\F$ restricts to an equivalence of categories between
$\nabla_\eps^\asc(\R)$
and $\Delta_{-\eps}^\asc(\R')$.
\item
$\F I(b) \cong T'_{-\eps}(b)$, the indecomposable $(-\eps)$-tilting
object of $\R'$ labelled by $b\in \B$.
\end{itemize}
This requires some
different arguments compared to the ones from Steps~7--10 in the
proof of Theorem~\ref{Creek}.

Let $\Omega$ be the directed poset consisting of all finite lower sets
in $\Lambda$. Take $\omega=\Lambda\d\in \Omega$.
Let $\nabla_\eps(\R,\omega)$ be the full
subcategory of $\nabla_\eps(\R)$ consisting of the
$\nabla_\eps$-filtered objects with sections $\nabla_\eps(b)$ 
for $b \in \B\d := \rho^{-1}(\Lambda\d)$.
Similarly, we define the subcategory
$\Delta_{-\eps}(\R',\omega)$
of $\Delta_{-\eps}(\R')$.
By Steps~2 and 5, $\F$ restricts to a well-defined functor
\begin{equation}\label{soccer}
\F:\nabla_\eps(\R,\omega) \rightarrow
\Delta_{-\eps}(\R',\omega).
\end{equation}
We claim that this is an equivalence of categories.
To prove it, let
$i:\R\d\rightarrow \R$ be the finite $\eps$-stratified
subcategory of $\R$ associated to $\Lambda\d$.
Let $e := \sum_{b \in \B\d} e_b \in A$.
Then $T\d := \bigoplus_{b \in \B\d} T_\eps(b)$ is an $\eps$-tilting
generator for $\R\d$. As $\End_{\R\d}(T\d)^\op = e A e$,
the Ringel dual $(\R\d)'$ of $\R\d$ relative to $T\d$ is identified with
the
quotient category
$(\R')\d := eAe\fdlmod$ of $\R'$. Let $\F\d:=\Hom_\R(T\d,?):\R\d
\rightarrow (\R')\d$ be the corresponding
Ringel duality functor.
We also know from Theorem~\ref{gf6} that
$(\R')\d$ is the finite
$(-\eps)$-stratified quotient
of $\R'$ associated to
$\Lambda\d$ (which is a finite upper set in $(\Lambda, \geq)$).
Let $j':\R'\rightarrow(\R')\d$ be the quotient
functor, i.e., the functor defined by multiplication by the idempotent $e$.
For a right $C$-comodule $V$, we have that
$$
\F\d(i^! V) \cong \bigoplus_{b \in \B\d} \Hom_\R(T_\eps(b), i^! V)
\cong e \bigoplus_{b \in \B} \Hom_\R(T_\eps(b), V)
\cong j' (\F V).
$$
This shows that 
\begin{equation}\label{mondaymath}
\F\d \circ i^! \cong j' \circ \F,
\end{equation}
so in particular following diagram commutes up to a natural isomorphism:
$$
\begin{tikzcd}
  \R\arrow[r,"F" above] \arrow[d,"i^!" left] & \R'\arrow[d,"j'" right]\\
  \R\d\arrow[r,"F\d" above] &(\R\d)'\equiv(\R')\d.
\end{tikzcd}
$$
By Theorem~\ref{Creek}, $\F\d$ restricts to an equivalence $\nabla_\eps(\R\d)
\rightarrow
\Delta_{-\eps}((\R')\d)$.
Also the restrictions $i^!:\nabla_\eps(\R,\omega)\rightarrow
\nabla_\eps(\R\d)$
and $j':\Delta_{-\eps}(\R',\omega)\rightarrow
\Delta_{-\eps}((\R')\d)$
are equivalences.
This is clear for $i^!$. To see it for $j'$, one shows using
Theorem~\ref{gf6} that
the left
adjoint $(j')_!$ gives a quasi-inverse equivalence.
Putting these things together, we deduce that (\ref{soccer}) is an
equivalence as claimed.

Now we can show that $\F$ defines an equivalence
$\F:\nabla_\eps^\asc(\R)\rightarrow
\Delta_{-\eps}^\asc(\R')$.
Take $V \in \nabla_\eps^\asc(\R)$.
Then $V$ has a distinguished ascending $\nabla_\eps$-flag
$(V_\omega)_{\omega \in \Omega}$ indexed by the set $\Omega$ of finite
lower sets in $\Lambda$. This is defined by setting $V_\omega := i^!
V$ in the notation of the previous paragraph; see the proof of
Theorem~\ref{thethmnew}.
As each comodule $T_\eps(b)$ is finite-dimensional, hence, compact,
the functor $\F$ commutes with direct limits.
Hence, $\F V \cong \varinjlim (\F V_\omega)$. In fact, 
$(\F V_\omega)_{\omega
  \in \Omega}$ is the data of an ascending
$\Delta_{-\eps}$-flag in $\F V \in \R'$.
To see this, we have that $\F V_\omega \in
\Delta_{-\eps}(\R')$ by the previous paragraph. For $\omega < \upsilon$ the
quotient $V_\upsilon / V_\omega$ has a $\nabla_\eps$-flag thanks to
Corollary~\ref{fli}, so $\F V_\upsilon / \F V_\omega \cong \F(V_\upsilon
/ V_\omega)$ has a $\Delta_{-\eps}$-flag.
We still need to show that $\F V$ is locally finite-dimensional.
For this, we prove that
$\dim \Hom_A(\F V, I'(b)) < \infty$
for each $b \in\B$.
Since $I'(b)$ has a finite
$\nabla_{-\eps}$-flag, this reduces to  checking that
$\dim \Hom_A(\F V, \nabla'_{-\eps}(b)) < \infty$
for each $b$.
To see this, pick a finite lower set $\omega$ containing
$\rho(b)$.
Then for $\upsilon > \omega$, $\F V_\upsilon / \F V_\omega$ has a
$\nabla_{-\eps}$-flag with all sections different from
$\nabla'_{-\eps}(b)$, so $\Hom_A(\F V_\upsilon / \F V_\omega,
\nabla'_{-\eps}(b)) = \Ext^1_A(\F V_\upsilon / F V_\omega,
\nabla'_{-\eps}(b))=0$.
It follows that $\Hom_A(\F V_\upsilon, \nabla'_{-\eps}(b)) \cong
\Hom_A(\F V_\omega, \nabla'_{-\eps}(b))$ and
$$
\Hom_A(\F V, \nabla'_{-\eps}(b)) = \Hom_A(\varinjlim (\F V_\upsilon),
\nabla'_{-\eps}(b)) \cong \Hom_A(\F V_\omega, \nabla'_{-\eps}(b)),
$$
which is finite-dimensional.

At this point, we have proved that $\F$ induces a well-defined functor
$$
\F:\nabla_\eps^\asc(\R)\rightarrow
\Delta_{-\eps}^\asc(\R').
$$
We prove that this is an equivalence by showing that 
the left adjoint $\F^* := T \otimes_A ?$ to $\F$ gives a
quasi-inverse.
The left mate of (\ref{mondaymath}) gives an isomorphism
\begin{equation}\label{wednesdaymath}
i \circ (\F\d)^*\cong \F^* \circ (j')_!.
\end{equation}
Combining this with Corollary~\ref{mustard}, we deduce that $\F^*$ restricts to a
quasi-inverse of the equivalence (\ref{soccer}) for each $\omega \in
\Omega$. Also, $\F^*$ commutes with direct limits, and again
any $V' \in \Delta_{-\eps}^\asc(\R')$ has a
distinguished ascending $\Delta_{-\eps}$-flag
  $(V'_\omega)_{\omega \in \Omega}$ 
as we saw in the proof of Theorem~\ref{gf8}. These facts are enough to
show that $\F^*$ restricts to a well-defined functor
$\F^*:\Delta_{-\eps}^\asc(\R')\rightarrow \nabla_\eps^\asc(\R)$
which is quasi-inverse to $\F$.

Finally, we check that $\F I(b) \cong
T'_{-\eps}(b)$.
Let $V := I(b)$ and $(V_\omega)_{\omega \in \Omega}$ be its
distinguished ascending $\nabla_\eps$-flag indexed by the set $\Omega$
of finite lower sets in $\Lambda$ as above. Using the same notation as above,
for $\omega=\Lambda\d\in\Omega$ such that $\rho(b) \in \Lambda\d$, 
we know that $V_\omega$ is an injective hull of $L(b)$ in $\R\d$.
Hence, by Theorem~\ref{Creek},
$\F\d V_\omega$ is the indecomposable $(-\eps)$-tilting object of
$\R\d$ labelled by $b$.
From this, we see that the ascending $\Delta_{-\eps}$-flag $(\F V_\omega)_{\omega\in\Omega}$
in $\F I(b)$ coincides with the distinguished ascending $\Delta_{-\eps}$-flag
in $T'_{-\eps}(b)$ from the construction 
from the proof of Theorem~\ref{moregin}.
\endproof

\proof[Proof of Theorem~\ref{rt2}]
We may assume that $\R = A\lfdlmod$ for a pointed 
locally finite-dimensional locally unital algebra $A = \bigoplus_{a,b \in \B} e_a A e_b$, so that
$T$ is a locally finite-dimensional left $A$-module.
Let $C:= T^\circledast \otimes_A T$ viewed as a coalgebra
according to (\ref{pens}).
By Lemma~\ref{prof} this coalgebra 
is the continuous dual of $B = \End_A(T)^\op$, and 
we may identify $\R$ with the locally finite Abelian category $\fdrcomod C$.
Applying Lemma~\ref{ha}, the Ringel duality functor $G$
becomes the functor $T^\circledast \otimes_A ?:A\lmod\rightarrow\rcomod C$,
with the comodule structure map of $GV := T^\circledast\otimes_A V$ being defined as in (\ref{puns}).
Let 
\begin{equation}\label{june}
I'(b) := G T_{\eps}(b),\qquad
L'(b) := \soc
I'(b),
\qquad
\nabla'_{-\eps}(b) := 
G \Delta_\eps(b).
\end{equation}
Each $I'(b)$ is an
indecomposable injective right $C$-comodule, and 
$\{L'(b)\:|\:b \in \B\}$
is a full set of pairwise inequivalent irreducible $C$-comodules.
To show that $\R'$ is a lower finite $(-\eps)$-stratified category, we
must show for each finite upper set $\Lambda\u$ in $\Lambda$
that the Serre subcategory $(\R')\u$ of $\R'$
generated by $\{L'(b)\:|\:b \in \B\u := \rho^{-1}(\Lambda\u)\}$ is a
finite $(-\eps)$-stratified category for the induced stratification
$(\B\u,L',\rho,\Lambda,\geq)$.

The functor $G$ sends short exact sequences of objects in
$\Delta_{\eps}^\asc(\R)$ to short exact sequences in $\Ind(\R')$.
This follows because  $\Hom_\R(?,T)$ has this property 
thanks to the $\Ext^1$-vanishing from Lemma~\ref{dry}.
Since $\Delta_{\eps}(b) \hookrightarrow
T_{\eps}(b)$,
we deduce that
that $\nabla'_{-\eps}(b) \hookrightarrow I'(b)$. 
Thus, we have that $L'(b) = \soc \nabla'_{-\eps}(b)$.

Now let $\R\u$ be the Serre quotient of $\R$ associated to some finite
upper set
$\Lambda\u \subseteq \Lambda$ and let
$j:\R\rightarrow \R\u$ be the quotient functor.
This is a finite $\eps$-stratified category thanks to Theorem~\ref{gf6}.
In fact, $\R\u = A\u\fdlmod$ where $A\u := eAe$ for $e := \sum_{b \in
  \B\u} e_b$; the quotient functor $j$ is the idempotent truncation functor defined by multiplying by $e$.
By the upper finite analog of Corollary~\ref{jt},
$T\u := e T$ is an $\eps$-tilting
generator for $\R\u$.
Let $B\u := \End_{A\u}(T\u)^\op$ be its
(finite-dimensional) endomorphism
algebra. Then 
$(\R\u)':= B\u\fdlmod$ is the Ringel dual of $\R\u$ relative
to $T\u$.
By the finite Ringel duality from Theorem~\ref{Creek}, $(\R\u)'$ is a finite
$(-\eps)$-stratified category. Let
$G\u:= \Cohom_{\R}(T\u,?) =
\Hom_{\R}(?,T\u)^*:\R\u\rightarrow (\R\u)'$ be its Ringel duality
functor.
The functor $j$ defines an algebra homomorphism $\pi:B\rightarrow
B\u$, hence, we get a functor $\pi^*:(\R\u)' \rightarrow \R'$.
We claim that this gives an isomorphism identifying $(\R\u)'$
with the subcategory $(\R')\u$ of $\R'$.
This will be proved in the next paragraph. Moreover,
making this identification, we have that
\begin{equation}\label{tuesdaymath}
i' \circ G\u
\cong G \circ j_!.,
\end{equation}
i.e., the following diagram commutes up to natural isomorphism:
$$
\begin{tikzcd}
  \R\u\arrow[r,"G\u" above] \arrow[d,"j_!" left] & (\R\u)' \equiv(\R')\u\arrow[d,"{i'}" right]\\
  \R\arrow[r,"G" above] &\R'.
\end{tikzcd}
$$
This follows because the northeast composition is the functor  $T^\circledast e
\otimes_{eAe} ?$ while the southwest composition is
$T^\circledast \otimes_A Ae \otimes_{eAe} ?$, and
$T^\circledast e \cong T^\circledast \otimes_A Ae$ as bimodules.
Since we already know that $(\R\u)'$ is a finite $(-\eps)$-category,
it follows that $(\R')\u$ one too, with
costandard objects $$
i' (G\u \Delta_\eps(b)) \cong G (j_!
\Delta\u_\eps(b)) \cong G \Delta_\eps(b)
=\nabla'_{-\eps}(b)
$$
thanks again to
Theorem~\ref{Creek} plus Theorem~\ref{gf6}(6).

To prove the claim,
let $C\u := (B\u)^*$ be the (finite-dimensional) dual coalgebra so
that
$(\R\u)' = \fdrcomod C\u$.
Consider
the short exact sequence 
$$
0 \longrightarrow A e\otimes_{eAe} e T
\longrightarrow T \longrightarrow Q \longrightarrow 0
$$
which comes from the upper finite counterpart of
Lemma~\ref{iuseitoften}(2);
thus, $Q \in \Delta_{\eps}^\asc(\R)$ and all of its sections are of
the form $\Delta_{\eps}(b)$ for $b \notin \B\u$, while $Ae
\otimes_{eAe} eT \in \Delta_\eps(\R)$ has sections of the form
$\Delta_\eps(b)$ for $b \in \B\u$.
Applying $G$ and using the exactness noted in the second paragraph of
the proof,
we get a short exact sequence
$$
0 \longrightarrow C\u \longrightarrow C \rightarrow
G Q \rightarrow 0.
$$
The first map $C\u \rightarrow C$ here is dual to the algebra homomorphism
$\pi:B \rightarrow B\u$, so it is a coalgebra homomorphism. It identifies $(\R\u)'$ with the
the Abelian subcategory $\fdrcomod C\u$ of $\R' = \fdrcomod C$. Note also that
the irreducible objects of $\R'$ are $\{L'(b)\:|\:b \in \B'\}$.
To complete the proof of the claim,  it suffices using Lemma~\ref{crack}
to show that the socle of $G Q$ only has constituents of the
for $L'(b)$ for $b \notin \B\u$.
Fix an ascending $\Delta_{\eps}$-flag
$(V_\omega)_{\omega \in \Omega}$ in $Q$.
As $G$ commutes with direct limits,
we deduce that $G Q = \varinjlim (G V_\omega)$. 
The sections in a $\Delta_{\eps}$-flag in $V_\omega$
are $\Delta_{\eps}(b)$ for $b \notin \B\u$,
hence, $G V_\omega$ has a $\nabla_{-\eps}$-flag with
sections $\nabla'_{-\eps}(b)$ for $b \notin \B\u$.
It follows that $\soc (G
V_\omega)$ is of the desired form for each $\omega$, hence, the socle of
$G Q$ is too.

We can now complete the proof of the theorem.
We have shown already that
$\R'$ is a lower finite $(-\eps)$-stratified category.
Theorem~\ref{Creek} plus Corollary~\ref{it} shows for
$\Lambda\u$ chosen to contain $\rho(b)$
that 
$$
T_{-\eps}'(b) \cong
G \u (j P(b))
\cong G(j_!(j P(b)))
\cong G P(b).
$$
Also, for $a, b \in \B\u$, we have that
$$
\Hom_{\R'}(T'_{-\eps}(a), T'_{-\eps}(b))
\cong \Hom_{(\R')\u}(T'_{-\eps}(a), T'_{-\eps}(b))\cong \Hom_{A\u}(
A\u e_a , A\u e_b) \cong e_a A e_b. 
$$ 
These things are true for all choices of
$\Lambda\u$, so we see that the Ringel dual of $\R'$ relative to
$\bigoplus_{b \in \B} T'_{-\eps}(b)$ is the original category $\R=A\lfdlmod$.
This puts us in the situation of Corollary~\ref{mustard1}, and finally
we invoke that corollary (whose proof did not depend on
Theorem~\ref{rt2})
to establish that 
$G:\Delta^\asc_{\eps}(\R) \rightarrow
\nabla^\asc_{-\eps}(\R')$
is an equivalence.
\endproof

\subsection{Essentially finite Ringel duality}\label{ifc}
To complete our account of infinite versions of 
Ringel duality, 
it remains to discuss the essentially finite case.
For this, we impose the tilting-bounded assumption
from Definition~\ref{tiltingboundeddef}.

\begin{definition}\label{bored}
Assume $\R$ is an essentially finite $\eps$-stratified category with
stratification
$(\B,L,\rho,\Lambda,\leq)$. Assume in addition that $\R$ is
tilting-bounded.
An {\em $\eps$-tilting generator} for $\R$ means an
object $T = \bigoplus_{j \in J} T_j \in \Loc(\R)$
with a given decomposition as a direct sum of objects $T_j \in \Tilt_\eps(\R)$ such 
that each $T_\eps(b)$ appears as an indecomposable summand of $T$ with 
multiplicity that is non-zero and finite.
Then we define the {\em Ringel dual} of $\R$ relative to
$T$ to be the category
$\R' := B\fdlmod$ where
$$
B := \left(\bigoplus_{i,j \in J} \Hom_\R(T_i,T_j)\right)^\op.
$$
We denote the system of distinguished idempotents of $B$ arising from
the identity endomorphisms of each $T_j$ by $\{f_j\:|\:j \in J\}$.
Also define the two Ringel duality functors
\begin{align}\label{billie1}
\F := \bigoplus_{j \in J} \Hom_{\R}(T_j,?)&:\R \rightarrow
  \R',\\\label{billie2}
\G := \Cohom_\R(T,?) = \Hom_{\R}(?,T)^*&:\R \rightarrow \R'.
\end{align}
\end{definition}

\begin{theorem}[Essentially finite Ringel duality]\label{intfinrd}
In the setup of Definition~\ref{bored}, the Ringel dual category
$\R'$ is a tilting-bounded essentially finite $(-\eps)$-stratified category
with stratification
$(\B,L',\rho,\Lambda,\geq)$ and
distinguished objects
\begin{align*}
P'(b) &= \F T_\eps(b),
&I'(b) &= \G T_\eps(b),&
L'(b) & =\hd P'(b) \cong \soc I'(b),\\
\Delta'_{-\eps}(b) &= \F \nabla_\eps(b),
&\nabla'_{-\eps}(b) &= \G \Delta_\eps(b),
& T'_{-\eps}(b) &= \F I(b)\cong \G P(b).
\end{align*}
The restrictions
$\F:\nabla_\eps(\R)\rightarrow \Delta_{-\eps}(\R')$
and
$\G:\Delta_\eps(\R)\rightarrow \nabla_{-\eps}(\R')$
are equivalences.
\end{theorem}

\begin{proof}
We may assume that $\R = A\fdlmod$ for an essentially
finite-dimensional pointed
locally unital algebra $A = \bigoplus_{i,j \in I} e_i A e_j$.
Replacing the $\eps$-tilting generator $T = \bigoplus_{j \in J} T_j$ 
by any other changes $B$ to a Morita equivalent
algebra, so we may as well assume simply that $J = \B$ and
$T = \bigoplus_{b \in \B} T_\eps(b)$. Then the algebra $B =
  \bigoplus_{a,b\in \B} f_a B f_b$ is a pointed locally unital algebra.
The assumption that $\R$ is tilting-bounded implies that
$$
\sum_{a \in \B} \dim \Hom_\R(T_\eps(a), T_\eps(b)) < \infty, 
\quad
\sum_{b \in \B} \dim \Hom_\R(T_\eps(a), T_\eps(b)) < \infty
$$
for each $a,b \in \B$.
Thus, $B$ is essentially
finite-dimensional, i.e., $\R'$ is essentially finite Abelian.
The module $P'(b) := A f_b$ is an indecomposable projective
$A$-module,
and 
$$
\big\{L'(b):=\hd P'(b)\:\big|\:b \in \B\big\}
$$ 
is a full set of pairwise inequivalent irreducibles.
Now $(\B,L',\rho,\Lambda,\geq)$ defines a
stratification of $\R'$.
One checks that Steps~1--12 from the proof of
Theorem~\ref{Creek} all go through essentially unchanged in the
present setting. This completes the proof except for one point: we must
observe finally that $\R'$ is
tilting-bounded. This follows because the
relevant matrix
from Definition~\ref{tiltingboundeddef} (with each $T_\eps(b)$ now being replaced by $T_{-\eps}'(b)$)
is the Cartan matrix 
$$
\big(\dim \Hom_A(P(a),
P(b))\big)_{a,b \in \B}
$$ 
of $A$. Its rows and columns have only
finitely many non-zero entries as $A$ is essentially finite-dimensional.
\end{proof}

\begin{corollary}[Essentially finite double centralizer property]\label{mustardier}
Continuing in the general setup of Definition~\ref{bored},
suppose that the 
$\eps$-stratified category $\R$ is 
$A \fdlmod$ for an essentially finite-dimensional locally unital algebra
$A = \bigoplus_{i,j\in I} e_i A e_j$, so that $T =
\bigoplus_{j \in J} T_j$
is an $(A,B)$-bimodule.
  For $i \in I$, 
  let $T_i' := (e_i T)^* \in B\fdlmod$, so that
$T' := \bigoplus_{i \in I} T_i'$ is a $(B,A)$-bimodule.
\begin{enumerate}
\item
The module
$T' = \bigoplus_{i \in I} T'_i$ is a $(-\eps)$-tilting generator for $\R'=B\fdlmod$ and there is an algebra isomorphism
\begin{equation}
\mu: A \stackrel{\sim}{\rightarrow} \left(\bigoplus_{i,j \in I} \Hom_{\R'}(T_i',
 T_j')\right)^\op
\end{equation}
sending $a \in e_i A e_j$ to $\mu(a):T_i'\rightarrow T_j',
t \mapsto ta$.
So the Ringel dual of $\R'$ relative to $T'=\bigoplus_{i \in I} T_i'$
is equivalent to the original category
$\R$.
\item
Denote the Ringel duality functors 
from $\R'$ to $\R$ by
\begin{align}
G_* :=\bigoplus_{i \in I} \Hom_{\R'}(T'_i,?)&:\R'\rightarrow \R,\\
F^* :=\Cohom_\R(T',?) = \Hom_\R(?, T')^*&:\R'\rightarrow
\R.
\end{align}
respectively. We have that $F^*
\cong T \otimes_B ?$ and $G \cong T'
\otimes_A ?$,
hence,  $(\F^*, \F)$ and $(\G, \G_*)$ are adjoint pairs.
\end{enumerate}
\end{corollary}
 
\begin{proof}
For (1), note that $\bigoplus_{i \in I} \G (A e_i)$ 
is a $(-\eps)$-tilting
generator for $\R'$
since $\G P(b)
\cong T'_{-\eps}(b)$ for $b \in \B$.
Actually, $\G (A e_i) = \Hom_A(A e_i, T)^* \cong (e_iT)^* =
T_i'$. Thus, 
$T' = \bigoplus_{i \in I} T_i'$ is
a $(-\eps)$-tilting generator for $\R'$.
To obtain the isomorphism between $A$ and the locally finite endomorphism algebra of
$T'$,
apply the functor $\G$ to
the canonical isomorphism $A 
 \cong \left(\bigoplus_{i,j \in I}
\Hom_A(A e_i, A e_j)\right)^\op$.
To prove (2), we note first that $F^*(B f_j) \cong T
\otimes_B B f_j$. It then follows that $F^*(V) \cong T \otimes_B V$ on
any finite-dimensional $B$-module $V$ by taking a resolution
$P_2 \rightarrow P_1 \rightarrow V \rightarrow 0$ in which $P_1,P_2$
are direct sums of modules of the form $B f_j$, then using the Five
Lemma. The argument for $G$ is similar.
\end{proof}

We leave it to the reader to adapt Corollary \ref{notquite} to
the essentially finite setting.

\subsection{Tilting-rigidity}\label{trsec}
We begin by recalling some well-known definitions:
\begin{itemize}
\item[(QF)] A finite Abelian category $\R$ is {\em quasi-Frobenius} 
if all projective objects are injective.
In that case,
there is a unique 
bijection $\nu:\B\rightarrow\B$,
the {\em Nakayama permutation},
such that $$
P(b) \cong I(\nu(b))
$$
for each $b \in \B$,
where $P(b)$ and $I(b)$ are projective covers and injective hulls of of the irreducible objects $\{L(b)\:|\:b \in \B\}$.
\item[(WS)] A finite Abelian category $\R$ is {\em weakly symmetric} if
it is quasi-Frobenius with Nakayama permutation 
being the identity function.
Equivalently, $P(b) \cong I(b)$ for all $b \in \B$.
\item[(S)] A finite Abelian category $\R$ is
 {\em symmetric}
if there is a natural isomorphism of vector spaces
\begin{equation}\label{rickardsdef}
\Hom_\R(P, V) \cong \Hom_\R(V, P)^*
\end{equation}
for all $P, V \in \R$ with $P$ projective.
\end{itemize}
These are equivalent to saying that
every algebra realization $A$ of $\R$ is
quasi-Frobenius, Frobenius, or symmetric, respectively;
see \cite[$\S$4.4]{Haze} and \cite[Th.~3.1]{Rickard}.
Of course, (QF) $\Rightarrow$ (WS) $\Rightarrow$ (S).
We are going to investigate some 
properties of fully stratified categories 
which involve the properties
(QF), (WS) and (S) at the level of strata.

We assume from now on that $\R$ is a fully stratified category, by which we mean a fully stratified category
of any one of the four types, finite, essentially finite, upper finite or lower finite. We use the usual notation $(\B,L,\rho,\Lambda,\leq)$ for its stratification.

\begin{definition}\label{tiltingrigiddef}
Let $\R$ be a fully stratified category. We say that $\R$ is {\em tilting-rigid}
if $$
\Tilt_+(\R) = \Tilt_-(\R).
$$
 For this to make sense in the essentially finite case,
 it is necessary to assume implicitly that $\R$ is tilting-bounded in the sense of
Definition~\ref{tiltingboundeddef} for some choice (equivalently, all choices) of sign function $\eps$. 
\end{definition}

Highest weight categories are automatically tilting-rigid for trivial reasons, so that Definition~\ref{tiltingrigiddef} is not needed
when working just with highest weight categories.
The importance of tilting-rigidity first became apparent
in the context of fibered highest weight categories in 
\cite{MO}, \cite{FM}, where it is formulated as
the property ``tilting $=$ cotilting".
The following lemma shows in a tilting-rigid category that the subcategories $\Tilt_\eps(\R)$ coincide 
for all choices of $\eps$, so that we can 
denote them all simply by $\Tilt(\R)$.

\begin{theorem}[Tilting-rigid categories have quasi-Frobenius strata]\label{tiltingrigidstrata}
Let $\R$ be a tilting-rigid fully stratified category.
There is a unique 
bijection $\nu:\B\rightarrow \B$
such that $$
T_+(b) \cong T_-(\nu(b)).
$$
For $\lambda \in \Lambda$, this function 
leaves $\B_\lambda \subseteq \B$ invariant,
and the stratum $\R_\lambda$ is quasi-Frobenius with 
Nakayama permutation $\nu|_{\B_\lambda}$.
Moreover, for any sign function $\eps:\Lambda\rightarrow \{\pm\}$, we have that
\begin{equation}\label{walk}
T_\eps(b) \cong \left\{
\begin{array}{ll}
T_+(b) &\text{if $\eps(\lambda)=+$,}\\
T_+(\nu^{-1}(b))&\text{if $\eps(\lambda)=-$.}
\end{array}
\right.
\end{equation}
\end{theorem}

\begin{proof}
There is obviously a unique function
$\nu:\B\rightarrow \B$ such that $T_+(b) \cong T_-(\nu(b))$.
This function is injective and 
leaves each of the finite subsets $\B_\lambda$
invariant, hence, it is actually a bijection.
To see that $\R_\lambda$ is quasi-Frobenius with $\nu|_{\B_\lambda}$
as its Nakayama permutation, 
we must show that $P_\lambda(b) \cong I_\lambda(\nu(b))$ for each $b \in \B_\lambda$.
This follows using $T_+(b) \cong T_-(\nu(b))$ together with Theorem~\ref{gin}(3) or Theorem~\ref{moregin}(3)
(which one depends on the particular setting we are in). 
Finally, take $b \in \B_\lambda$ and 
a sign function $\eps$. 
Then $T_+(b) \cong T_-(\nu(b))$ has both a $\Delta$-flag and a $\nabla$-flag,
hence, it has a $\Delta_\eps$-flag and a $\nabla_\eps$-flag.
It follows that it is isomorphic to $T_\eps(b')$
for a unique $b' \in \B_\lambda$. 
Applying $j^\lambda$ and using Theorems~\ref{gin} or \ref{moregin} again gives that $b'=b$ if $\eps(\lambda)=+$ or $b' = \nu(b)$
if $\eps(\lambda) = -$, and the formula (\ref{walk}) follows.
\end{proof}

The argument used to prove the next lemma is based on 
the proof of \cite[Th.~2.2]{CM}. Note this proves Conjecture~\ref{wakconj}
assuming an additional hypothesis on the strata.

\begin{lemma}\label{proofofconj}
Suppose that $\R$ is a finite fully stratified category 
and $\eps:\Lambda\rightarrow\{\pm\}$ is some given sign function.
\begin{itemize}
\item[(1)] Assume that 
$L_\lambda(a)$ 
is isomorphic to a subobject of a projective object in $\R_\lambda$
for all $a \in \B_\lambda$ and $\lambda \in \Lambda$ with
$\eps(\lambda)=+$.
Then for $b \in \B$, 
$T_\eps(b)$ has finite injective dimension if and only if
$T_\eps(b) \in \Tilt_-(\R)$.
\item[(2)] Assume 
that
$L_\lambda(a)$ 
is isomorphic to a quotient of an injective object in $\R_\lambda$ for all $a \in \B_\lambda$
and $\lambda \in \Lambda$ with
$\eps(\lambda)=-$.
Then 
for $b \in \B$, 
$T_\eps(b)$ has finite projective dimension if and only if
$T_\eps(b) \in \Tilt_+(\R)$.
\end{itemize}
\end{lemma}

\begin{proof}
We just prove (1), (2) being the equivalent dual statement.
If $T_\eps(b) \in \Tilt_-(\R)$ then $T_\eps(b)$ has a $\nabla$-flag, so it has 
finite injective dimension thanks to Corollary~\ref{inparticulartiltings}.
Conversely, suppose that $T_\eps(b)$ has finite injective dimension.
Since $T_\eps(b) \in \Tilt_\eps(b)$, it has both a $\Delta_\eps$-flag and a $\nabla_\eps$-flag. Hence, as $\R$ is fully stratified,
it has both a $\bar\Delta$-flag 
and a $\bar\nabla$-flag. 
To show that $T_\eps(b) \in \Tilt_-(\R)$, it remains to show that $T_\eps(b)$ has a $\nabla$-flag.
This follows from the homological criterion (Theorem~\ref{gf1}) 
if we can show that $\Ext^1_\R(\bar\Delta(c), T_\eps(b)) = 0$ for all $c \in \B$.
By assumption, $T_\eps(b)$ has finite injective dimension, so there is a greatest $d$ 
such that $\Ext^d_\R(\bar\Delta(a), T_\eps(b)) \neq 0$ for some $a \in \B$.
Now the goal is to show that $d=0$.

Suppose for a contradiction that $d \neq 0$.
Since $\Ext^d_\R(\Delta_\eps(a), T_\eps(b)) = 0$,
we must have that $a \in \B_\lambda$ for $\lambda$ with
$\eps(\lambda) = +$. By the assumption on strata,
there exists $a' \in \B_\lambda$ such that $L_\lambda(a) \hookrightarrow P_\lambda(a')$. Let
$0 = V_0 < \cdots < V_n = \Delta(a')$ be the
$\bar\Delta$-flag for $\Delta(a')$ obtained by applying the exact functor 
$j_!^\lambda$ to a composition series for $P_\lambda(a')$ chosen so that its bottom section is isomorphic to 
$L_\lambda(a)$. 
For each $r=1,\dots,n$ 
we have that $V_r / V_{r-1} \cong \bar\Delta(a_r)$ for some $a_r \in \B_\lambda$
with $a_1 = a$.
Applying $\Hom_R(?,T_\eps(b))$ to the short exact sequence
$0 \rightarrow V_{r-1} \rightarrow V_r \rightarrow \bar\Delta(a_r)\rightarrow 0$ and using 
$\Ext^{d+1}_\R(\bar\Delta(a_r), T_\eps(b)) = 0$ gives
a surjection
$\Ext^d_\R(V_r, T_\eps(b)) \twoheadrightarrow \Ext^d_\R(V_{r-1}, T_\eps(b))$.
Since $\Ext^d_\R(V_1, T_\eps(b)) \neq 0$ by the choice of $a$, we deduce that $\Ext^d_\R(V_r,T_\eps(b)) \neq 0$ for all $r=1,\dots,n$.
Taking $r = n$ gives 
$\Ext^d_\R(\Delta(a'), T_\eps(b)) \neq 0$.
This is a contradiction since $T_\eps(b)$ has a $\bar\nabla$-flag. 
\end{proof}

The following extends
\cite[Th.~2.2]{CM} from fibered highest weight categories
to fully stratified categories; cf. Remark~\ref{moremark}.

\begin{theorem}[Homological criterion for tilting-rigidity]
\label{goren}
For a finite fully stratified category $\R$, the following properties are equivalent:
\begin{itemize}
\item[(i)] $\R$ is tilting-rigid;
\item[(ii)] $\R$ is Gorenstein\footnote{All projectives have finite injective dimension and all injectives have finite projective dimension.} and all of its strata are quasi-Frobenius;
\item[(iii)] $\R$ is Gorenstein and for each $\lambda \in \Lambda$ and $b \in \B_\lambda$ the irreducible object $L_\lambda(b)$ appears in the socle of some projective in $\R_\lambda$;
\item[(iii$'$)] $\R$ is Gorenstein and for each $\lambda \in \Lambda$ and $b \in \B_\lambda$ the irreducible object $L_\lambda(b)$ appears in the head of some injective in $\R_\lambda$.
\end{itemize}
\end{theorem}

\begin{proof}
We may assume that $\R = A\fdlmod$ for a finite-dimensional algebra $A$.

\vspace{1.5mm}
\noindent
(i)$\Rightarrow$(ii).
All strata are quasi-Frobenius by Theorem~\ref{tiltingrigidstrata}.
The injective left $A$-module $A^*$ has a finite $-$-tilting resolution $0 \rightarrow T_n \rightarrow\cdots\rightarrow T_1 \rightarrow T_0 \rightarrow A^* \rightarrow 0$ by Corollary~\ref{inparticulartiltings2}. As $\R$ is tilting-rigid, this is also a finite $+$-tilting-resolution, so each $T_i$ has a $\Delta$-flag.
Using Corollary~\ref{inparticulartiltings}, it follows that each $T_i$ has finite projective dimension.
We deduce that $A^*$ has finite projective dimension 
by arguing as in the proof of \cite[Th.~4.3.1]{Wei}; 
cf. the proof of (2)$\Rightarrow$(1) from \cite[Th.~2.2]{CM}. The dual argument
gives that $A$ has finite injective dimension.
Hence, $A$ is Gorenstein.

\vspace{1.5mm}
\noindent
(ii)$\Rightarrow$(iii), (iii$'$). This follows immediately
since $P_\lambda(b) \cong I_\lambda(\nu(b))$ for all $b \in \B_\lambda$, where $\nu$ is the Nakayama permutation.

\vspace{1.5mm}
\noindent
(iii)$\Rightarrow$(i).
It suffices to show that each $T_+(b)$ belongs to $\Tilt_-(\R)$.
As $\bigoplus_{b \in \B} T_+(b)$ is tilting in the general sense of tilting theory (cf. the discussion before Conjecture~\ref{wakconj}),
the assumption that $A$ is Gorenstein together with \cite[Lem.~1.3]{HU} 
implies that $\bigoplus_{b\in \B} T_+(b)$ is cotilting. Hence, it has finite injective dimension, so
each $T_+(b)$ has finite injective dimension.
Then we apply Lemma~\ref{proofofconj}(1) with $\eps = +$.

\vspace{1.5mm}
\noindent
(iii$'$)$\Rightarrow$(i).
This follows by 
the dual argument to the proof of (iii)$\Rightarrow$(i).
\end{proof}

\begin{corollary}\label{gorencor}
If $\R$ is a finite fibered highest weight category, it is 
tilting-rigid if and only if it is Gorenstein.
\end{corollary}

\begin{proof}
In a fibered highest weight category each stratum has a unique irreducible object (up to isomorphism). Therefore the second parts of (iii) and (iii$'$) in Theorem~\ref{goren} hold automatically. 
\end{proof}

Now we are going to consider the Ringel dual $\R'$ of a 
tilting-rigid fully stratified 
category $\R$
as in Definitions~\ref{thesetup}, \ref{rd1}, \ref{rd2} or \ref{bored} (depending on the setting).
These definitions all involve the choice of a sign function $\eps$ and the choice of an $\eps$-tilting generator $T$.
By (\ref{walk}), an $\eps$-tilting generator for some choice of $\eps$ is an $\eps$-tilting generator for all $\eps$,
 so it makes sense to drop the prefix $\eps$, referring 
 to $T$ simply as a tilting generator.
 Fixing such a choice, let $\R'$ be the corresponding Ringel dual category, and
let $F$ and $G$ be the Ringel duality functors from those definitions together with (\ref{Gagaineq}) and (\ref{Fagaineq}) 
in the lower finite and upper finite cases, respectively.
Note these functors only depend on the choice of tilting generator, not on the choice of sign function $\eps$, i.e., they are the same 
functors for all $\eps$.
For each $\lambda \in \Lambda$, there are now
{\em two}
equivalences of categories
\begin{align}\label{labelF}
F_\lambda &= 
(j')^\lambda \circ (i')_{\geq \lambda}^! \circ \F\circ i_{\leq \lambda} \circ j^\lambda_*:\R_\lambda \rightarrow \R_\lambda',\\
G_\lambda &= 
(j')^\lambda \circ (i')_{\geq \lambda}^* \circ \G\circ i_{\leq \lambda} \circ  j^\lambda_!:\R_\lambda \rightarrow \R_\lambda'\label{labelG}
\end{align}
between strata; see
Corollary~\ref{notquite} (which also holds in 
the essentially finite case) and Corollary~\ref{notquite2}.
The following lemma gives a more explicit description of these functors.

\begin{lemma}\label{aftersun}
Let $\R$ be a finite, tilting-bounded essentially finite, upper finite or lower finite  $\eps$-stratified category with the usual stratification $(L,\B,\rho,\Lambda,\leq)$.
Suppose that $\R'$ is the Ringel dual of $\R$ with respect to 
some given tilting generator $T = \bigoplus_{i \in I} T_i$
such that the index set $I$ contains $\B$ and
$T_b\:(b \in \B)$ is a direct sum of $T_\eps(b)$
and copies of $T_\eps(c)$ for $c \in \B$ with $\rho(c) < \rho(b)$.
For $\lambda \in \Lambda$, let 
$T_\lambda := \bigoplus_{b \in \B_\lambda} T_b \in \R_{\leq \lambda}$.
There is an algebra isomorphism
\begin{equation*}
\phi_\lambda:A_\lambda\stackrel{\sim}{\rightarrow}
\End_{\R_\lambda}(j^\lambda T_\lambda)^\op
\end{equation*}
between the
natural algebra realization $A_\lambda$ for the stratum $\R'_\lambda$ 
and the endomorphism algebra of $j^\lambda T_\lambda \in \R_\lambda$.
Moreover:
\begin{itemize}
\item[(1)] If $\eps(\lambda)=+$ then
$F_\lambda \cong
\Hom_{\R_\lambda}(j^\lambda T_\lambda, ?):\R_\lambda\rightarrow A_\lambda\fdlmod$
with the action of $A_\lambda$ defined via $\phi_\lambda$.
\item[(2)] If $\eps(\lambda)=-$ then
$G_\lambda \cong \Hom_{\R_\lambda}(?, j^\lambda T_\lambda)^*:\R_\lambda \rightarrow A_\lambda\fdlmod$
with the action of $A_\lambda$ defined via $\phi_\lambda$.
\end{itemize}
\end{lemma}

\begin{proof}
We just explain the argument in detail 
if $\R$ is a finite $\eps$-stratified category;
the other cases are similar but there are minor notational differences.
We have that $\R'= A\fdlmod$ for $A:=\End_\R(T)^\op$. The functors $F$ and $G$ are $\Hom_\R(T,?)$ and
$\Hom_\R(?, T)^*$, respectively.
Let $e_b \in A$ be the projection of $T$
onto $T_\eps(b)$ and set $e_\lambda := \sum_{b \in \B_\lambda} e_b$.
Let $A_{\geq \lambda}$ be the quotient of $A$ by the two-sided ideal generated by the idempotents 
$\{e_\mu\:|\:\mu \in \Lambda\text{ with }\mu\not\geq\lambda\}$. This 
is the natural realization of the Serre subcategory 
$\R_{\geq \lambda}'$ of $\R'$.
Then the stratum $\R_\lambda'$ is realized by the basic
finite-dimensional algebra $A_\lambda := \bar e_\lambda A_{\geq \lambda} \bar e_\lambda$,
where we write $\bar x$ for the canonical image 
of $x \in A$ under the quotient map $A \twoheadrightarrow A_{\geq \lambda}$.
The idempotents $\{\bar e_b\:|\:b \in \B_\lambda\}$ are representatives for the conjugacy classes of primitive idempotents in $A_\lambda$. 

By Theorem~\ref{gin}(3), $j^\lambda T_\lambda$
is a minimal projective generator for $\R_\lambda$ if $\eps(\lambda)=+$
or a minimal injective cogenerator for $\R_\lambda$ if $\eps(\lambda)=-$.
In either case, $\End_{\R_\lambda}(j^\lambda T_\lambda)^\op$
is the basic algebra realizing the stratum $\R_\lambda$. Since $\R_\lambda$ and $\R_\lambda'$ are equivalent, it follows that $A_\lambda \cong \End_{\R_\lambda}(j^\lambda T_\lambda)^\op$.
However, the argument so far does not produce the desired explicit isomorphism $\phi_\lambda$ 
between these algebras.
To obtain this, 
since we have already seen that the dimensions agree, it suffices to construct a surjective algebra homomorphism
$\phi_\lambda:A_\lambda \twoheadrightarrow \End_{\R_\lambda}(j^\lambda T_\lambda)^\op$.

Let $\R_{\geq \lambda}$ be the Serre quotient of $\R$ 
associated to the upper set $(\lambda,\infty]$, so that
$\R_{\geq \lambda}$ 
has irreducible objects labelled by $\B_{\geq \lambda}$.
Denote the quotient functor by $j^{\geq \lambda}:\R\rightarrow\R_{\geq\lambda}$.
The functor $j^{\geq \lambda}$ defines  
an algebra homomorphism 
\begin{equation}\label{above}
A = \End_{\R}(T)^\op \rightarrow 
\End_{\R_{\geq \lambda}}(j^{\geq \lambda} T)^\op.
\end{equation}
This homomorphism is surjective. To see this, Corollary~\ref{iuseitoften}(2) 
gives a short exact sequence
$0 \rightarrow j^{\geq \lambda}_! j^{\geq \lambda} T \rightarrow T \rightarrow Q \rightarrow 0$ in which $Q$ has a $\Delta_\eps$-flag. Applying
$\Hom_{\R}(?,T)$ to this gives surjectivity of the first map below:
$$
\Hom_\R(T,T) \twoheadrightarrow \Hom_\R(j^{\geq \lambda}_! j^{\geq \lambda} T, T)
\stackrel{\sim}{\rightarrow} \Hom_{\R_{\geq \lambda}}(j^{\geq \lambda}T, j^{\geq \lambda}T).
$$
The second map comes from the adjunction. The composite is the map (\ref{above}), so indeed it is surjective.
Now we note that this map sends each $e_\mu$ for $\mu\not\geq\lambda$ to zero, so it factors through the quotient $A \twoheadrightarrow A_{\geq \lambda}$
to give a surjective homomorphism $A_{\geq \lambda}
\twoheadrightarrow \End_{\R_{\geq \lambda}}(j^{\geq \lambda} T)^\op$.
Then we restrict to 
$\bar e_\lambda A_{\geq \lambda} \bar e_\lambda$
to obtain the homomorphism $\phi_\lambda$.

It just remains to prove (1) and (2).
The universal property of Serre quotients 
produces a unique fully faithful
functor $i_\lambda$ making the following diagram of functors commute:
$$
\begin{tikzcd}
  \R\arrow[r,"j^{\geq \lambda}" above] & \R_{\geq \lambda}\\
  \R_{\leq \lambda}\arrow[u,"i_{\leq \lambda}" left,hookrightarrow]\arrow[r,"j^\lambda" below] & \R_\lambda.\arrow[u, "i_\lambda" right,hookrightarrow]
\end{tikzcd}
$$
Thus, $j^{\geq \lambda} \circ i_{\leq \lambda} \cong i_\lambda \circ j^\lambda$.
Composing on the left with $j^{\geq \lambda}_*$ and on the right with
$j^\lambda_*$, using that $j^\lambda \circ j^\lambda_* \cong \Id$
and $j^{\geq \lambda}_* \circ j^{\geq \lambda} \cong \Id$ on objects in the image of
$i_{\leq \lambda} \circ j^\lambda_*$, we deduce that
\begin{equation}\label{smallcat1}
j^{\geq \lambda}_* \circ i_\lambda \cong i_{\leq \lambda} \circ j^\lambda_*.
\end{equation}
Using this, we have that
$$
F_\lambda \cong 
(j')^\lambda ((i')^!_{\geq \lambda}\Hom_{\R}(T, j^{\geq \lambda}_* (i_\lambda ?)))
\cong
\bar e_\lambda \Hom_{\R_{\geq \lambda}}(j^{\geq \lambda} T, i_\lambda ?)
= \Hom_{\R_\lambda}(j^{\lambda} T_\lambda, ?),
$$
proving (1).
The proof of (2) is similar, using
the isomorphism
$j^{\geq \lambda}_! \circ i_\lambda \cong i_{\leq \lambda} \circ j^\lambda_!$
in place of (\ref{smallcat1}).
\end{proof}

Returning to the setup before the lemma, so $\R$ is a tilting-rigid fully stratified
category and $\R'$ is its Ringel dual relative to some tilting generator $T$,
we next discuss the labelling of irreducible objects in $\R'$. 
In the general tilting-rigid setting, this depends on a 
choice of sign function $\eps$, since one needs to fix a specific labelling 
$\{T_\eps(b)\:|\:b \in \B\}$ 
of the isomorphism classes of indecomposable summands of $T$. 
Put another way, the labelling of irreducible objects in $\R'$ depends on a labelling $\{L'_b(\lambda)\:|\:b \in \B_\lambda\}$
of irreducible objects in each of the strata $\R_\lambda'$,
which we do given a choice of $\eps$ 
by declaring that
\begin{equation}\label{claws}
L'_\lambda(b) := 
\left\{
\begin{array}{ll}
F_\lambda L_\lambda(b)&\text{if $\eps(\lambda) = +$,}\\
G_\lambda L_\lambda(b)&\text{if $\eps(\lambda) = -$.}
\end{array}\right.
\end{equation}
In the next theorem, we 
see for the first time the advantage of assuming that all of the strata of $\R$ are
symmetric, or at least weakly symmetric, since then the labelling of irreducibles in $\R'$ does not depend on the choice of $\eps$ here.

\begin{theorem}[Ringel duality for tilting-rigid fully stratified categories]\label{tiltingrigiddual}
Let $\R$ be a tilting-rigid fully stratified category.
The Ringel dual $\R'$ of $\R$ with respect to some tilting generator is again tilting-rigid.
Moreover, the following hold for $\lambda \in \Lambda$:
\begin{itemize}
\item[(1)]
$\R_\lambda$ 
is weakly symmetric if and only if $F_\lambda L_\lambda(b) \cong G_\lambda L_\lambda(b)$ for all $b \in \B_\lambda$.
\item[(2)] $\R_\lambda$ is symmetric if and only if $F_\lambda \cong G_\lambda$.
\end{itemize}
\end{theorem}

\begin{proof}
Taking $\eps=+$ in the appropriate Ringel duality theorem (one of Theorems~\ref{Creek}, 
\ref{rt1}, \ref{rt2} or \ref{intfinrd})
gives that $\R'$ is $-$-stratified with indecomposable $-$-tilting objects $\{F I(b)\:|\:b \in \B\}$ in the finite, lower finite or essentially finite cases and 
$\{G P(b)\:|\:b \in \B\}$ in the finite, upper finite or essentially finite cases.
Taking $\eps=-$ gives that $\R'$ is $+$-stratified
with indecomposable $+$-tilting objects
$\{F I(b)\:|\:b \in \B\}$ in the finite, lower finite or essentially finite cases
and $\{G P(b)\:|\:b \in \B\}$ in the finite, upper finite or essentially finite cases.
It follows $\R'$ is fully stratified and its indecomposable $-$-tilting objects and
$+$-tilting objects are the same, i.e.,
$\Tilt_+(\R') = \Tilt_-(\R')$
and $\R'$ is tilting-rigid.

To prove (1) and (2), let $\eps$ be any sign function.
We may assume that the tilting generator is
$T = \bigoplus_{b \in \B} T_\eps(b)$. Let 
$T_\lambda := \bigoplus_{b \in \B_\lambda} T_\eps(b)$
and $A_\lambda
\cong \End_{\R_\lambda}(j^\lambda T_\lambda)^\op$ be as in Lemma~\ref{aftersun}.
Using the explicit descriptions of $F_\lambda$ and $G_\lambda$ from Lemma~\ref{aftersun}(1)--(2), we deduce that $F_\lambda L_\lambda(b) \cong
G_\lambda L_\lambda(b)$ if and only if
$$
\Hom_{\R_\lambda}(j^\lambda T_\lambda, L_\lambda(b))
\cong \Hom_{\R_\lambda}(L_\lambda(b), j^\lambda T_\lambda)^*
$$
as left $A_\lambda$-modules (notation as in Lemma~\ref{aftersun}).
The left hand side is the irreducible
$A_\lambda$-module associated
to the primitive 
idempotent that is the projection of $j^\lambda T_\lambda$ onto the summand isomorphic to $P_\lambda(b)$, and the right hand side is the irreducible $A_\lambda$-module associated to the primitive idempotent that is the projection of $j^\lambda T_\lambda$ onto the 
summand isomorphic to $I_\lambda(b)$.
Thus, these modules are isomorphic for all $b \in \B_\lambda$ if and only if 
$P_\lambda(b) \cong I_\lambda(b)$ for all $b \in \B_\lambda$, 
i.e., the Nakayama permutation of $\R_\lambda$ is the identity,
and $\R_\lambda$ is weakly symmetric.
This proves (1).

To prove (2), using Lemma~\ref{aftersun} again, we have that
$F_\lambda \cong G_\lambda$ if and only if there is a natural isomorphism of left $A_\lambda$-modules
$$
\Hom_{\R_\lambda}(j^\lambda T_\lambda, V) \cong \Hom_{\R_\lambda}(V,j^\lambda T_\lambda)^*
$$
for $V \in \R_\lambda$. Since $j^\lambda T_\lambda$ is a projective generator for $\R_\lambda$ and $A_\lambda=\End_{\R_\lambda}(j^\lambda T_\lambda)^\op$, 
there is such an $A_\lambda$-module 
isomorphism if and only if there is a natural vector space 
isomorphism as in (\ref{rickardsdef}) for all $P, V \in \R_\lambda$ with $P$ projective, i.e., $\R_\lambda$ is symmetric according to the definition we gave earlier.
\end{proof}

In the sequel, we will
only consider tilting-rigid fully stratified categories with the additional property that all strata are weakly symmetric.
By Theorem~\ref{tiltingrigidstrata}, 
a tilting-rigid fully stratified category has this property if and only if 
$\nu = \id$. Thus, a fully stratified category is tilting-rigid with weakly symmetric strata if and only if
\begin{equation}\label{neededinthesequel}
T_+(b) \cong T_-(b)
\end{equation}
for all $b \in \B$. In that case, 
$T_+(b) \cong T_\eps(b)$ for all sign functions $\eps$,
so that one can simply write $T(b)$ in place of $T_\eps(b)$.
Moreover, if $\R'$ is the Ringel dual category to $\R$
with respect to some tilting generator, the irreducible objects of $\R'$ are labelled unambiguously by the
set $\B$; the induced labelling of irreducible objects of the stratum $\R_\lambda'$ satisfies
\begin{equation}\label{stratumlabelling}
L'_\lambda(b) \cong F_\lambda L_\lambda(b) \cong G_\lambda L_\lambda(b)
\end{equation}
for all $\lambda \in \Lambda$ and $b \in \B_\lambda$.

\subsection{\boldmath Bases for morphism spaces between $\Delta$- and $\nabla$-filtered objects}
In this subsection, we explain how to extend 
the construction of \cite[Th.~3.1]{AST} first to
$\eps$-stratified and then to fully stratified categories.
These results will be used in the next section to construct triangular bases for endomorphism algebras of tilting generators.

\begin{theorem}\label{astfact}
Let $\R$ be a finite, lower finite or tilting-bounded essentially finite $\eps$-stratified category
with stratification 
$(\B,L,\rho,\Lambda,\leq)$.
Suppose for each $b \in \B$ that
we are given $T_b \in \Tilt_\eps(\R)$
such that $T_b$ is 
a direct sum of $T_\eps(b)$ and copies of $T_\eps(c)$ for $c$ with 
$\rho(c) < \rho(b)$.
Take $M \in\Delta_\eps(\R)$ and $N \in \nabla_\eps(\R)$.
For each $b \in \B$, choose an embedding $\iota_b:\Delta_{\eps}(b) \hookrightarrow T_b$, a
projection
$\pi_b:T_b \twoheadrightarrow \nabla_{\eps}(b)$, and subsets
$$
Y_b\subset 
\Hom_\R(M,T_b),
\qquad
X_b \subset \Hom_\R(T_b, N)
$$
so that 
$\left\{\bar y := \pi_b \circ y\:\big|\: y \in Y_b\right\}$ is a basis for
$\Hom_\R(M,\nabla_{\eps}(b))$
and 
$\left\{\bar x := x \circ \iota_b\:\big|\:x \in X_b\right\}$ is a basis for
$\Hom_\R(\Delta_{\eps}(b), N)$, as illustrated by the diagram:
\begin{equation}\label{helpful}
\begin{gathered}
\begin{xy}
  \xymatrix{
  	      &   \Delta_\eps(b) \ar@{^{(}->}[d]_{\iota_b}\ar[rd]^{\overline{x}} &  \\
      M \ar@{>}[r]^{y}\ar[rd]_{\overline{y}}     &   T_b\ar@{->>}[d]^{\pi_b} \ar@{>}[r]_{x}&  N\\
                   & \nabla_\eps(b)   &
  }
\end{xy}
\end{gathered}.
\end{equation}
Then the morphisms
$x \circ y$ for all $(y,x) \in\bigcup_{b \in \B} Y_b \times X_b$
give a basis for $\Hom_\R(M,N)$.
\end{theorem}

\begin{proof}
We proceed by induction on $\ell_{\Delta_\eps}(M)+\ell_{\nabla_\eps}(N)$
where $\ell_{\Delta_\eps}(M) := \sum_{b \in \B} (M:\Delta_\eps(b))$
and $\ell_{\nabla_\eps}(N) := \sum_{b \in \B}
(N:\nabla_\eps(b))$.
The base case is this number is zero, hence, $M=N=0$ too, 
which is trivial.
For the induction step, we can
replace $\R$ by the Serre subcategory of $\R$
associated to the lower set of $\Lambda$ generated by all
$\{\lambda\:|\:(M:\Delta_\eps(b))+N:\nabla_\eps(b)) \neq 0\text{ for some }b \in \B_\lambda\}$ to assume that there is some
maximal element $\lambda \in \Lambda$ such that
such that $(M:\Delta_\eps(b)) + (N:\nabla_\eps(b)) \neq 0$
for some $b \in \B_\lambda$.
Then we let $\Lambda\d := \Lambda \setminus \{\lambda\}$,
$\B\d := \rho^{-1}(\Lambda\d)$, and $i:\R\d\rightarrow \R$ be the natural
inclusion of the corresponding Serre subcategory of $\R$. 
Let $j:\R \rightarrow \R_\lambda$ be the quotient functor.

In this paragraph, we treat the special case $N \in \R\d$.
Let $M\d := i^* M$. Note by the choice of $\lambda$ 
that $\ell_{\Delta_\eps}(M\d) + \ell_{\nabla_\eps}(N)| 
< \ell_{\Delta_\eps}(M)+\ell_{\nabla_\eps}(N)$.
By (\ref{repeatedly}) and Theorem~\ref{gf}(2), we have that 
$M\d \in \Delta_\eps(\R\d)$, and there is a short exact sequence
$0 \rightarrow K \rightarrow M \rightarrow M\d \rightarrow 0$
where $K$ has a $\Delta_\eps$-flag with sections of the form
$\Delta_\eps(b)$ for $b \in \B_\lambda$. It follows that
the natural inclusion $\Hom_\R(M\d,N) \hookrightarrow \Hom_\R(M,N)$
is an isomorphism. For $b \in \B\d$, all of the morphisms
$\{y:M \rightarrow T_b\:|\:y \in Y_b\}$ factor through $M\d$ too.
Hence, we can apply the induction hypothesis to deduce that
the morphisms $x \circ y$ for all $(y,x) \in\bigcup_{b \in \B\d} Y_b \times X_b$
give a basis for $\Hom_\R(M\d,N)=\Hom_\R(M,N)$.
Since $X_b = \varnothing$ for $b \in \B_\lambda$, 
we have that
$\bigcup_{b \in \B} Y_b \times X_b=
\bigcup_{b \in \B\d} Y_b \times X_b$, so 
this is just what
is needed.

Now suppose that $N \notin\R\d$ and let $N\d := i^! N \in \R\d$.
We again have that
$\ell_{\Delta_\eps}(M\d) + \ell_{\nabla_\eps}(N)| 
< \ell_{\Delta_\eps}(M)+\ell_{\nabla_\eps}(N)$.
By (\ref{repeatedly}) and 
Theorem~\ref{gf}(4), we have that 
$N\d \in \nabla_\eps(\R\d)$, and
there is a short exact
sequence
$0 \rightarrow N\d\rightarrow N \stackrel{\pi}{\rightarrow} Q \rightarrow 0$
where $Q$ has a $\nabla_\eps$-flag with sections of the form
$\nabla_\eps(b)$ for $b \in \B_\lambda$.
Applying $\Hom_\R(M,?)$ to this and using Theorem~\ref{gf2} gives a short exact sequence
$$
0
\rightarrow
\Hom_\R(M,N\d) \rightarrow
\Hom_\R(M,N) \rightarrow \Hom_\R(M,Q) \rightarrow 0.
$$
For $b \in \B\d$, 
the morphisms $\{x:T_b\rightarrow N\:|\:x \in X_b\}$
have image contained in $N\d$
and are lifts of a basis for $\Hom_{\R\d}(\Delta_\eps(b), N\d)$.
By induction, we get that 
$\Hom_\R(M,N\d)$ has basis given by 
the compositions $x \circ y$ for all $(y,x) \in \bigcup_{b \in \B\d} Y_b
\times X_b$.
In view of this and the above short exact sequence, we are therefore
reduced to showing that the morphisms
$\pi \circ x \circ y$ for $(y,x) \in \bigcup_{b \in \B_\lambda} Y_b
\times X_b$ give a basis for $\Hom_\R(M,Q)$.
We have that 
$Q \cong j_* j Q$ by Corollary~\ref{iuseitoften}(1),
hence, the exact quotient 
functor $j$ defines isomorphisms
$\Hom_\R(M,Q) \stackrel{\sim}{\rightarrow} \Hom_{\R_\lambda}(j M, j Q)$.
Similarly,
$\Hom_\R(M, \nabla_\eps(b)) \stackrel{\sim}{\rightarrow}
\Hom_{\R_\lambda}(j M, j \nabla_\eps(b))$
and
$\Hom_\R(\Delta_\eps(b), N) \stackrel{\sim}{\rightarrow}
\Hom_{\R_\lambda}(j \Delta_\eps(b), j N)$
for $b \in \B_\lambda$.
Moreover, $j\pi:jN \rightarrow jQ$ is an isomorphism.
Thus, we are reduced to showing that the
morphisms
$jx \circ jy$ give a basis for
$\Hom_{\R_\lambda}(jM, jN)$ for all $(y,x) \in \bigcup_{b \in
  \B_\lambda} Y_b \times X_b$.
The sets of morphisms $\bar Y_b := 
\{jy:jM \rightarrow j T_b\:|\:y \in Y_b\}$ and $\bar X_b := 
\{jx:j T_b \rightarrow j N\:|\:x \in X_b\}$ 
appearing here are characterized equivalently as lifts of bases for
$\Hom_{\R_\lambda}(j M, j \nabla_\eps(b))$
and 
$\Hom_{\R_\lambda}(j \Delta_\eps(b), jN)$, respectively.
Let $\bar M := jM$ and $\bar N := jN$.

To complete the proof, we consider the two cases
$\eps(\lambda)=+$ and $\eps(\lambda)=-$ separately. 
The arguments are similar, so we just explain the former.
In this case, 
for $b \in \B_\lambda$, we have that 
$j \nabla_\eps(b) \cong L_\lambda(b)$ and $j \Delta_\eps(b) \cong P_\lambda(b)
\cong j T_b$ by Theorem~\ref{gin}(3). 
The module $\bar M$ is projective in $\R_\lambda$.
We are trying to show 
that the morphisms
$\bar x \circ \bar y$ 
for all $(\bar y,\bar x) \in \bigcup_{b \in \B_\lambda} \bar Y_b \times \bar X_b$
give a basis for $\Hom_{\R_\lambda}(\bar M,\bar N)$
where:
\begin{itemize}
\item$
\bar Y_b \subset \Hom_{\R_\lambda}(\bar M, P_\lambda(b))$ 
is a set lifting a basis of
$\Hom_{\R_\lambda}(\bar M,L_\lambda(b))$;
\item
$\bar X_b$ is a basis of $\Hom_{\R_\lambda}(P_\lambda(b), \bar N)$.
\end{itemize}
Since $\bar M$ is projective, the proof reduces to the case that
$\bar M = P_\lambda(b)$, when the assertion is clear.
\end{proof}

The following restatement in the special case of a highest weight categories
recovers \cite[Th.~3.1]{AST}.

\begin{corollary}
\label{astfactexactly}
Let $\R$ be a finite, lower finite or tilting-bounded essentially finite highest weight category
with poset $(\Lambda,\leq)$ and labelling function $L$.
Suppose for each $\lambda \in \Lambda$ that
we are given $T_\lambda \in \Tilt(\R)$
such that $T_\lambda$ is 
a direct sum of $T(\lambda)$ and copies of $T(\mu)$ for $\mu < \lambda$.
Take $M \in\Delta(\R)$ and $N \in \nabla(\R)$.
For each $\lambda \in\Lambda$, 
choose
an embedding $\iota_\lambda:\Delta(\lambda) \hookrightarrow T_\lambda$, a
projection
$\pi_\lambda:T_\lambda \twoheadrightarrow \nabla(\lambda)$, and subsets
$$
Y_\lambda\subset 
\Hom_\R(M,T_\lambda),
\qquad
X_\lambda \subset \Hom_\R(T_\lambda, N)
$$
so that 
$\left\{\bar y := \pi_\lambda \circ y\:\big|\: y \in Y_\lambda\right\}$ is a basis for
$\Hom_\R(M,\nabla(\lambda))$
and 
$\left\{\bar x := x \circ \iota_b\:\big|\:x \in X_b\right\}$ is a basis for
$\Hom_\R(\Delta(\lambda), N)$.
Then the morphisms
$x \circ y$ for all $(y,x) \in\bigcup_{\lambda \in \Lambda} Y_\lambda \times X_\lambda$
give a basis for $\Hom_\R(M,N)$.
\end{corollary}

For tilting-rigid 
fully stratified categories, 
there is a more refined version of Theorem~\ref{astfact}.

\begin{theorem}\label{upgradedastfact}
Let $\R$ be a finite, lower finite or
essentially finite fully stratified category
with stratification 
$(\B,L,\rho,\Lambda,\leq)$ such that $\R$ is tilting-rigid with weakly symmetric strata.
Suppose for each $b \in \B$ that
we are given $T_b \in \Tilt(\R)$
such that $T_b$ is 
a direct sum of $T(b)$ and copies of $T(c)$ for $c$ with 
$\rho(c) < \rho(b)$.
Take $M \in \Delta(\R)$ and $N \in \nabla(\R)$.
For $a,b \in \B$, choose
embeddings
$\iota_a:\Delta(a) \hookrightarrow T_a$,
$\bar\iota_b:\bar\Delta(b)\hookrightarrow T_b$, 
projections
$\bar\pi_a:T_a \twoheadrightarrow \bar\nabla(a), \pi_b:T_b\twoheadrightarrow \nabla(b)$, and subsets
$$
Y_a
\subset 
\Hom_\R(M,T_a),
\qquad
H(a,b) \subset
\Hom_\R(T_a, T_b),
\qquad
X_b \subset \Hom_\R(T_b, N)
$$ 
so that 
$\big\{\bar y := \bar \pi_a \circ y\:\big|\:y \in Y_a\big\}$ is a basis for
$\Hom_\R(M,\bar\nabla(a))$,
$\big\{\bar h := \pi_b \circ h \circ \iota_a\:\big|\:h \in H(a,b)\big\}$ is a basis for
$\Hom_\R(\Delta(a), \nabla(b))$,
and 
$\big\{\bar x := x \circ \bar\iota_b\:\big|\:x \in X_b\big\}$ is a basis for
$\Hom_\R(\bar\Delta(b), N)$, as illustrated by the diagram:
\begin{equation}\label{morehelpful}
\begin{gathered}
\begin{xy}
  \xymatrix{
  	      &   \Delta(a)
              \ar@{^{(}->}[d]_{\iota_a}\ar@{->}[r]^{\bar h}
&\nabla(b)&  \\
      M \ar@{>}[r]^{y}\ar[rd]_{\overline{y}}     &   T_a\ar@{->>}[d]^{\bar\pi_a} \ar@{>}[r]_{h}&T_b  \ar@{->>}[u]_{\pi_b}\ar@{>}[r]^{x}&  N\\
                   & \bar\nabla(a)&\bar\Delta(b)\ar[ru]_{\overline{x}}\ar@{^{(}->}[u]^{\bar\iota_b}   &
  }
\end{xy}
\end{gathered}.
\end{equation}
Then the morphisms
$x\circ h \circ y$ for 
all
$(y,h,x) \in 
\bigcup_{a,b \in \B}
Y_a \times H(a,b) \times X_b$
give a basis for $\Hom_\R(M,N)$.
\end{theorem}

\begin{proof}
This follows by the same strategy as was used in the proof of
Theorem~\ref{astfact}. The only substantial difference is in the final
paragraph of the proof. By that point,
we have reduced to showing for
projective and injective 
objects $\bar M, \bar N \in \R_\lambda$, respectively,
that the morphisms $\bar x \circ \bar h \circ \bar y$ for all 
$(\bar y,\bar h,\bar x) \in \bigcup_{a,b \in \B_\lambda} \bar Y_a \times \bar H(a,b)
\times \bar X_b$ give a basis for $\Hom_{\R_\lambda}(\bar M, \bar N)$ where:
\begin{itemize}
\item
$\bar Y_a \subset \Hom_{\R_\lambda}(\bar M, P_\lambda(a))$ is a set lifting a
basis of
$\Hom_{\R_\lambda}(\bar M, L_\lambda(a))$;
\item$
\bar H(a,b)$ is a basis for
$\Hom_{\R_\lambda}(P_\lambda(a), I_\lambda(b))$;
\item
$\bar X_b \subset \Hom_{\R_\lambda}(I_\lambda(b), \bar N)$ is a set lifting a
basis of $\Hom_{\R_\lambda}(L_\lambda(b), \bar N)$.
\end{itemize}
Using 
that $\bar M$ is projective and $\bar N$ is injective, the proof of this
reduces to the case that $\bar M = P_\lambda(a)$ and $\bar N = I_\lambda(b)$,
when the assertion is clear.
\end{proof}

\subsection{Chevalley dualities}
Finally, in this section we discuss some further aspects of Ringel duality. These results will be used in the next section to construct symmetric triangular bases for endomorphism algebras of tilting generators.
Like in $\S$\ref{trsec}, the phrase ``fully stratified category" means a fully stratified category $\R$ that is either finite, essentially finite, upper finite or lower finite.
 
Given a finite-dimensional algebra $A$ and
an algebra anti-automorphism
$\sigma:A \rightarrow A$,
there is a contravariant autoequivalence
\begin{equation}\label{notchevduality}
?^\sigmadual:A\fdlmod \rightarrow A\fdlmod
\end{equation}
taking $V$ to its linear dual $V^*$ viewed as a left module by restricting the natural right action along $\sigma$.
If $\R$ is a finite Abelian category and $?^\vee:\R\rightarrow\R$ is a contravariant autoequivalence, we call a pair $(A, \sigma)$ consisting of a finite-dimensional algebra $A$ and an anti-automorphism $\sigma$
a {\em realization} of $(\R,?^\vee)$ if 
there is an equivalence of categories $F:\R\rightarrow A\fdlmod$
such that $F \circ ?^\vee \cong ?^\sigmadual \circ F$.
The following lemma shows that any contravariant autoequivalence of $\R$ admits a realization in this sense. 
In fact, we will only ever consider contravariant autoequivalences that preserve isomorphism classes of irreducible objects, in which case we can say a little more about $\sigma$ as explained at the end of the lemma.

\begin{lemma}\label{sunlem1} 
Let $A$ be a finite-dimensional algebra.
Suppose that $?^\vee$ 
 is a contravariant autoequivalence of $A\fdlmod$.
Then there exists an algebra 
anti-automorphism $\sigma:A \rightarrow A$
such that $?^\vee \cong ?^\sigmadual$.
Moreover, 
if $?^\vee$ preserves isomorphism classes of irreducible $A$-modules, 
then $\sigma$ can be chosen so that it fixes each of
a given set 
$\{e_i\:|\:i \in I\}$ 
of mutually orthogonal idempotents in $A$.
\end{lemma}

\begin{proof}
Consider the functor
$F := ?^* \circ ?^\vee:A\fdlmod \rightarrow A^{\op} \fdlmod$.
Since this is right exact and preserves direct sums, we have that 
$F \cong FA \otimes_A ?$ where $FA$ is the $(A^\op,A)$-bimodule obtained
by applying $F$ 
to the regular $(A,A)$-bimodule $A$.
Note that 
the right action of $x \in A$ on $FA$ here is defined by applying $F$ to the left $A$-module homomorphism $r_x:A \rightarrow A, a\mapsto ax$. 

Viewing $A$ as a left $A^\op$-module with action $x\cdot y := yx$,
we claim that $FA \cong A$ as left $A^\op$-modules.
To see this, let $\{L(b)\:|\:b \in \B\}$ be a full set of pairwise inequivalent irreducible left $A$-modules. Then $A \cong \bigoplus_{b \in \B} P(b)^{\oplus \dim L(b)}$ as left $A$-modules,
where $P(b)$ is the projective cover of $L(b)$.
Let $\B \rightarrow \B, b \mapsto b'$ be the bijection defined from
$L(b)^\vee \cong L(b')$. 
Then $P(b)^\vee \cong I(b')$, the injective hull of $L(b')$.
Hence $F P(b) \cong I(b')^*$ as left $A^\op$-modules.
Here, $I(b)^*$ is the projective cover of the left $A^\op$-module $L(b)^*$.
Using that $\dim L(b) = \dim L(b')^*$, we deduce that 
$$
F A \cong \bigoplus_{b \in \B} (I(b')^*)^{\bigoplus \dim L(b)} \cong \bigoplus_{b \in \B} (I(b)^*)^{\bigoplus \dim L(b)^*}
\cong A
$$ 
as left $A^\op$-modules. This proves the claim.
Similarly, under the additional hypothesis that $?^\vee$ preserves isomorphism classes of irreducible objects and we are given mutually orthogonal idempotents $\{e_i\:|\:i \in I\}$,
we get that $F (A e_i) \cong e_i A$ as left $A^\op$-modules for each $i \in I$.

Now we let
$\phi:FA \stackrel{\sim}{\rightarrow} A$ be some choice of a left $A^\op$-module isomorphism. When the additional hypothesis holds, we may pick this so that it restricts to isomorphisms 
$F (Ae_i) \stackrel{\sim}{\rightarrow} e_i A$ for each $i \in I$.
Transporting the right $A$-module structure on $FA$ through 
$\phi$,
we make the left $A^\op$-module $A$
into an $(A^\op,A)$-bimodule, which we will denote by $A_{\sigma^{-1}}$. 
Explicitly, left action of $x \in A^{\op}$ on $y\in A_{\sigma^{-1}}$ is given by $x \cdot y := yx$ as in the previous paragraph, while
the new right action of $x\in A$ is 
by $y \cdot x := (\phi \circ((r_x)^\vee)^*\circ \phi^{-1})(y)$. 
Since $\End_{A^\op}(A) \cong A$, this right action of $x$
can be written as left multiplication by a unique element
$x' \in A$. 
The resulting map $A \rightarrow A, x \mapsto x'$ is an algebra anti-automorphism. Let $\sigma:A \rightarrow A$ be the inverse anti-automorphism.
Note then that the right action of $x \in A$ on $y \in A_{\sigma^{-1}}$
is by $y \cdot x = \sigma^{-1}(x) y$, explaining our earlier choice of notation.
When the additional hypothesis holds, the choice of $\phi$ ensures that $(e_i)' = e_i$ for $i \in I$, hence, $\sigma(e_i) = e_i$ for each $i \in I$.

For a left $A$-module $V$, let ${_\sigma}V$ be $V$ viewed instead as a left $A^\op$-module by restricting along $\sigma$.
Then ${_\sigma} A$ is an $(A^\op,A)$-bimodule which 
is isomorphic via 
$\sigma:A_{\sigma^{-1}} \stackrel{\sim}{\rightarrow} {_\sigma}A$ to the $(A^\op,A)$-bimodule $A_{\sigma^{-1}} \cong FA$ 
from the previous paragraph. Thus, we have shown that 
$F \cong A_{\sigma^{-1}} \otimes_A ? \cong {_\sigma}A \otimes_A ?  \cong {_\sigma}?:A\fdlmod \rightarrow A^\op \fdlmod$.
Applying $?^*$ gives finally that $?^\vee \cong ?^\sigmadual$.
\end{proof}

\begin{remark}
In the setup of Lemma~\ref{sunlem1}, assume that $?^\vee$ preserves isomorphism classes of irreducible $A$-modules. 
Then we can take the set of mutually orthogonal idempotents at the end of the lemma to be a mutually orthognal set $\{e_b\:|\:b \in \B\}$ of representatives for the conjugacy classes of primitive idempotents in $A$. Then the lemma shows that we can choose the anti-automorphism $\sigma$ so that $\sigma(e_b) = e_b$ for all $b \in \B$. Conversely, if $\sigma:A \rightarrow A$ is an anti-automorphism fixing such a set of reprentatives for the conjugacy classes of primitive idempotents on $A$, it is obvious that the contravariant autoequivalence $?^\sigmadual$ preserves isomorphism classes of irreducible $A$-modules.
\end{remark}

To adapt the above from finite Abelian categories 
to essentially finite Abelian
categories, Schurian categories or locally finite Abelian categories, we need the following definitions:
\begin{itemize}
\item
If $A = \bigoplus_{i,j \in I} e_i A e_j$ is an
essentially or locally finite-dimensional locally unital algebra, a locally unital algebra anti-automorphism $\sigma:A \rightarrow A$
gives rise to 
a contravariant autoequivalence 
$?^\sigmadual$ of the categories $A\fdlmod$ or $A\lfdlmod$, respectively. This is defined by first applying 
the usual duality from left modules to right modules, either $?^*:A\fdlmod\rightarrow \fdrmod A$
or $?^\circledast:A\lfdlmod \rightarrow \lfdrmod A$ depending on the case,
and then converting right modules back to left modules by restricting along
$\sigma$.
\item
If $A$ is a pseudo-compact topological algebra, that is, $A \cong C^*$ for a coalgebra $C$, 
 an algebra anti-automorphism $\sigma:A \rightarrow A$ gives rise to a contravariant autoequivalence $?^\sigmadual$ of $A\fdlmod \cong \fdrcomod C$.
Note in this case that $\sigma$ is necessarily continuous so that 
it is the dual of a coalgebra anti-automorphism $\sigma^\star:C\rightarrow C$; the definition of the 
duality $?^\sigmadual$ could also be formulated in terms of comodules using $\sigma^\star$.
\end{itemize}
Then given an essentially finite Abelian category, a Schurian category, or a locally finite Abelian category $\R$
with a contravariant autoequivalence $?^\vee$, a {\em realization} of $(\R,?^\vee)$
means a pair $(A, \sigma)$ consisting of an algebra $A$ and an anti-automorphism $\sigma:A\rightarrow A$
of the appropriate type 
such that $?^\sigmadual \circ F \cong F \circ ?^\vee$
for some equivalence $F$ from $\R$
to $A\fdlmod, A\lfdlmod$ or $A\fdlmod$, respectively.
The following lemmas are analogs of Lemma~\ref{sunlem1}
in each of these new settings.

\begin{lemma}\label{sunlem3}
Suppose that $A = \bigoplus_{i,j \in I} e_i A e_j$ 
is either an essentially or a locally finite-dimensional
locally unital algebra.
Let $?^\vee$ be a contravariant autoequivalence of $A\fdlmod$ or $A\lfdlmod$, respectively, which preserves isomorphism classes of irreducible objects.
There exists a locally unital algebra anti-automorphism
$\sigma:A \rightarrow A$ such that $?^\vee \cong ?^\sigmadual$.
\end{lemma}

\begin{proof}
In the locally finite-dimensional case, let $F := ?^\circledast \circ ?^\vee:A\lfdlmod \rightarrow A^\op\lfdlmod$.
Viewing 
$\bigoplus_{i \in I} F(A e_i)$ as an $(A^\op, A)$-bimodule in the natural way, 
we have tat
$F \cong \left(\bigoplus_{i \in I} F(A e_i) \right) \otimes_A ?$.
Then we observe for each $i \in I$ 
that $F(A e_i) \cong e_i A$ as left $A^\op$-modules as $?^\vee$ 
preserves isomorphism classes of irreducibles. Now argue as 
in proof of Lemma~\ref{sunlem1}. The essentially finite-dimensional case is similar.
\end{proof}

\begin{lemma}
\label{sunlem2}
Suppose that $A$ is a pseudo-compact topological algebra.
Let $?^\vee$ be a contravariant autoequivalence of $A\fdlmod$
which preserves isomorphism classes of irreducible objects. Then
there exists an algebra anti-automorphism $\sigma:A \rightarrow A$
such that $?^\vee \cong ?^\sigmadual$.
Moreover, given a family
$\{e_i\:|\:i \in I\}$ 
of mutually orthogonal idempotents in $A$,
$\sigma$ can be chosen so that $\sigma(e_i) = e_i$ for all $i \in I$.
\end{lemma}

\begin{proof}
The functor $?^\vee:A\fdlmod\rightarrow A\fdlmod$ 
extends to 
$?^\vee:A\pclmod \rightarrow A\dslmod$ with $(\varprojlim V_\omega)^\vee
:= \varinjlim (V_\omega^\vee),$
taking limits over finite-dimensional submodules $V_\omega \leq V$.
Composing  with 
$?^*$
gives an equivalence $F := ?^* \circ ?^\vee:A\pclmod \rightarrow A^\op \pclmod$.
Moreover, for each $i \in I$ we have that 
$F (A e_i) \cong e_i A$ as a $(A^\op,A)$-bimodule as $?^\vee$ 
preserves isomorphism classes of irreducibles.
Then we argue as in Lemma~\ref{sunlem1} to obtain an 
algebra anti-automorphism $\sigma:A\rightarrow A$
 with $\sigma(e_i) = e_i$ for each $i \in I$ such that 
 $F$ is isomorphic to the functor $A\pclmod \rightarrow A^\op \pclmod$ defined by restriction along $\sigma$.
The lemma follows on composing with $?^\star$ then 
restricting to $A\fdlmod$.
 \end{proof}

With these preliminaries in place, we can now prove 
a result which explains how to transfer a
contravariant autoequivalence 
on a fully stratified category to its Ringel dual.

\begin{theorem}[Dualities commute with Ringel duality]
\label{tiltingduality}
Suppose that $\R$ is a fully stratified category
with stratification $(\B, L, \rho, \Lambda,\leq)$
such that $\R$ is tilting-rigid with weakly symmetric strata, i.e., (\ref{neededinthesequel}) holds. 
Assume also that $\R$ possesses a 
contravariant autoequivalence $?^\vee$ which preserves isomorphism classes of irreducible objects. 
Then we have that 
$T(b)^\vee \cong T(b)$ for all $b \in \B$.
Moreover, letting $\R'$ be the Ringel dual category with respect to some choice of tilting generator and $F, G$ be the usual Ringel duality functors, 
there is an induced contravariant autoequivalence  $?^\wedge$ on 
$\R'$ preserving isomorphism classes of irreducible objects such
that 
\begin{equation}\label{asint}
F \circ ?^\vee \cong\: ?^\wedge \circ G,
\qquad\qquad
G \circ ?^\vee \cong\: ?^\wedge \circ F
\end{equation}
whenever these functors make sense (e.g., these isomorphisms always hold on $\Delta_\eps(\R)$ and on $\nabla_\eps(\R)$, respectively, for any choice of $\eps$).
\end{theorem}

\begin{proof}
We just explain the proof in the case that $\R$ is a finite fully stratified category, leaving the minor modifications needed in the other three cases to the reader.
By Lemma~\ref{sunlem1}, we may assume 
that $\R = A\fdlmod$ for a finite-dimensional algebra $A$
 and that $?^\vee:\R\rightarrow \R$ is 
the functor $?^\sigmadual$ taking a left $A$-module $V$
 to the dual right $A$-module viewed as a left module by 
 restricting the natural right action along
 some given anti-automorphism $\sigma:A \stackrel{\sim}{\rightarrow} A$.
(In the other three cases, one needs to use
Lemmas~\ref{sunlem3}--\ref{sunlem2} here in place of Lemma~\ref{sunlem1}.)

Since $T_+(b)$ has a $\Delta$-flag with $\Delta(b)$ at the bottom, and 
also a $\bar\nabla$-flag, we see using Lemma~\ref{dualitylemma} that $T_+(b)^\vee$ has a $\nabla$-flag with $\nabla(b)$ at the top, and also a $\bar\Delta$-flag.
So it is isomorphic to $T_-(b)$.
As $\R$ is tilting-rigid, $T(b) := T_+(b) \cong T_-(b)$, so we have shown that $T(b)^\vee \cong T(b)$ for all $b \in \B$.

We are given some full tilting module $T$ defining the Ringel dual category $\R'$,
i.e., $\R' = B\fdlmod$ for $B = \End_A(T)^\op$.
From the previous paragraph, we get that
 $T \cong T^\vee$. 
Let $\phi:T \stackrel{\sim}{\rightarrow} T^\vee$ 
be an isomorphism of left $A$-modules.
Equivalently, $\phi$ is the data of a non-degenerate pairing
$\langle\cdot,\cdot\rangle:T \times T \rightarrow \k$ 
with $\langle v,w \rangle := \phi(v)(w)$, and we have that
$\langle xv, w \rangle = \langle v, \sigma(x) w\rangle$ for $v, w \in T$, $x \in A$.
Let $\tau:B \rightarrow B$ be 
the anti-automorphism of $B$ defined
so that $\langle v y, w \rangle = \langle v, w \tau(y)\rangle$
for $v,w \in T$, $y \in B$. It follows that $\phi$ is also an isomorphism of right $B$-modules for the right $B$-module structure on $T^\vee$ obtained by restricting its natural left action on $T^*$ along $\tau$.
Now we can define the contravariant autoequivalence $?^\wedge:B\fdlmod \rightarrow B\fdlmod$
to be $?^\taudual$.

In this paragraph, we check (\ref{asint}).
We just prove the first of these isomorphisms; the latter follows from former (with the roles of $A$ and $B$ reversed) on taking adjoints.
Take $V \in \R$.
Then we have natural left $B$-module isomorphisms
$$
(G V)^{\wedge} \cong \Hom_A(V, T)
\cong \Hom_A(T^\vee, V^\vee) 
\cong \Hom_A(T, V^\vee) = F (V^\vee),
$$
as required. (On the space $\Hom_A(V,T)$ here, the left $B$-module structure is defined by restricting the natural right action along $\tau$.)

It remains to check that $?^\wedge$ preserves isomorphism classes of irreducible objects in $\R'$.
Since the strata are weakly symmetric,
we have that
$$
\nabla'(b)^{\wedge} \cong (G \Delta(b))^{\wedge}
\cong F(\Delta(b)^\vee)
\cong F \nabla(b) \cong \Delta'(b).
$$
This implies that $L'(b)^{\wedge} \cong L'(b)$.
\end{proof}

In examples coming from Lie theory, highest weight categories
usually come equipped with dualities arising from 
anti-involutions which restrict to the identity on the Cartan part. 
The material in the rest of the subsection is an attempt to axiomatize the essential features of such dualities in the more general setting of fully stratified categories.
We start with a definition which will be relevant at the level of strata.

 \begin{definition}\label{sigmasymmetric}
 Let $A$ be a finite-dimensional algebra and $\sigma:A\rightarrow A$
be an anti-involution.
 We say that $A$ is {\em $\sigma$-symmetric} if the following hold:
 \begin{itemize}
 \item[($\sigma$S1)]
 There is a set $\{e_b\:|\:b \in \B\}$ of representatives for the conjugacy classes of primitive idempotents in $A$ such that
 $\sigma(e_b) = e_b$ for all $b \in \B$.
\item[($\sigma$S2)]
There is a non-degenerate associative symmetric bilinear form
 $(\cdot,\cdot):A \times A \rightarrow \k$ such that
 $(x,y) = (\sigma(x), \sigma(y))$
 for all $x, y \in A$.
 \end{itemize}
 \end{definition}
 
If $A$ is $\sigma$-symmetric
in the sense of Definition~\ref{sigmasymmetric}
then it is a symmetric algebra in the usual sense. Moreover, every
 finitely generated projective left $A$-module $P$ possesses a non-degenerate
 symmetric bilinear form $\langle\cdot,\cdot\rangle$
 such that $\langle xv, w \rangle = \langle v, \sigma(x) w \rangle$
 for $v,w \in P, x \in A$; in particular, $P \cong P^\sigmadual$. 
 To see this,
 we may assume without loss of generality that $P$ is 
 indecomposable and that
$P = A e$ 
  for a $\sigma$-invariant primitive idempotent $e$. Then the form
 $\langle\cdot,\cdot \rangle:P \times P \rightarrow \k$ defined
in terms of the given $\sigma$-symmetric form $(\cdot,\cdot)$ 
on $A$ by
$\langle v, w \rangle := (\sigma(v), w)$
for $v,w \in P$
 has these properties; it is non-degenerate because 
 by associativity 
 \begin{equation}
 \label{orthogdec}
 A = eAe \oplus [e A (1-e) + (1-e) A e] \oplus (1-e) A (1-e)
 \end{equation}
 is an orthogonal decomposition 
 of $A$ with respect to $(\cdot,\cdot)$ and the subspaces $eA(1-e)$ and $(1-e)Ae$ are isotropic.
 
 The following lemma shows that $\sigma$-symmetry is preserved by 
 Morita equivalence. The basic point underlying this is that if $A$ is $\sigma$-symmetric and $e \in A$ is a $\sigma$-invariant idempotent, then 
 $\sigma$ restricts to an anti-involution of $eAe$. 
 Moreover a $\sigma$-symmetric 
form $(\cdot,\cdot)$ on $A$ restricts to such a form on $eAe$ so that $eAe$ is also $\sigma$-symmetric; the non-degeneracy of this restriction follows
 from the orthogonal decomposition (\ref{orthogdec}).

\begin{lemma}\label{chevinvlem}
Let $A$ be a 
finite-dimensional algebra which is $\sigma$-symmetric
for some anti-involution $\sigma$.
Let $B$ be another finite-dimensional algebra that is Morita equivalent to $A$,
so that there is an equivalence of categories
$F:B\fdlmod \rightarrow A\fdlmod$.
Then $B$ possesses an anti-involution $\tau:B \rightarrow B$
such that $?^\sigmadual \circ F \cong F \circ ?^\taudual$, and 
$B$ is $\tau$-symmetric for any such anti-involution $\tau$.
Moreover, $\tau$ can be chosen in such a way that it fixes each of some given set
 $\{f_i\:|\:i \in I\}$ of mutually orthogonal idempotents in $B$.
\end{lemma}

\begin{proof}
Let
$\{e_b\:|\:b \in \B\}$ be a set of mutually orthogonal
representatives for the conjugacy classes of primitive idempotents in $A$
with $\sigma(e_b) = e_b$ for all $b$.
Let $e := \sum_{b \in \B} e_b$. Then $e A e$ is the basic algebra that is Morita equivalent to $A$, and it is $\sigma$-symmetric too.
The functors $?^\sigmadual$ on $A\fdlmod$ and $eAe\fdlmod$ obviously commute with the idempotent truncation functor giving an equivalence
$A\fdlmod \rightarrow eAe \fdlmod$.
All of this means that we can replace $A$ with $eAe$ if necessary to assume that $A$ itself is {\em basic} with 
$1 = \sum_{b \in \B} e_b$
being a decomposition of its identity element into 
mutually orthogonal $\sigma$-invariant primitive idempotents.

Now suppose that $B$ is Morita equivalent to $A$
via some given $F:B\fdlmod \rightarrow A\fdlmod$. 
Let $P := FB$ be the $(A,B)$-bimodule obtained by applying $F$ to the regular $(B,B)$-bimodule. Note that $P = \bigoplus_{i \in I} P f_i$ where
$\{f_i\:|\:i \in I\}$ is the given set of mutually orthogonal idempotents in $B$;
we are assuming here that $\sum_{i \in I} f_i = 1_B$ which we can clearly do
by adding one more idempotent to this set if necessary.
As an $A$-module, we have for each $i \in I$ that 
$P f_i\cong \bigoplus_{b \in \B}
A e_b^{\oplus d_i(b)}$ for integers $d_i(b) > 0$; the numbers
$d(b) = \sum_{i \in I} d_i(b)$ are the dimensions of the 
irreducible $B$-modules.
Moreover, $e_i B e_j \cong \End_A(P e_i, P e_j)^\op$. 
Fixing such isomorphisms, we may assume
simply that $P = \bigoplus_{i \in I} P f_i$ with $P f_i = \bigoplus_{b \in \B} A e_b^{\oplus d_i(b)}$,
$B = \End_A(P)^\op$ with $f_i$ being the projection
of $P$ onto the $i$-th summand $P f_i$, and $F=P \otimes_B ?$.

Next we observe that
$B = \End_A(P)^\op$ is isomorphic to an algebra of block matrices, with
blocks indexed by the set $I \times \B$,
and the block in the row indexed by $(i,a)$ and column indexed
by $(j,b)$ being a $d_i(a) \times d_j(b)$ matrix with entries in 
$e_a A e_b$. The multiplication is just matrix multiplication combined with
multiplication in $A$. From this description, it is clear 
that $B$ possesses an anti-involution
$\tau$ defined by taking the transpose of a matrix and applying
$\sigma$ to all of the entries of the result.
For $i \in I, b \in \B$ and $1 \leq r \leq d_i(b)$,
let $f_{i,b;r} \in B$ be the matrix with all entries equal to zero except
for the $r$-th entry in its $(i,b)$-th diagonal block, which is equal
to $e_b$.
This is a primitive idempotent in $B$, and it is fixed by $\tau$.
This verifies the axiom ($\sigma$S1) for this particular anti-involution $\tau$ of $B$.
Next we check that the axiom ($\sigma$S2) is satisfied.
Let $\operatorname{tr}:A \rightarrow \k, x \mapsto (1_A,x)$ be the trace function associated to a
$\sigma$-symmetric form on $A$.
Define $\operatorname{tr}':B \rightarrow \k$
by mapping a matrix in $B$ to the sum of the scalars obtained
by applying $\operatorname{tr}$ to each of its diagonal entries.
Then let $(\cdot,\cdot)':B \times B \rightarrow \k$ be the 
bilinear form defined from $(x,y)' := \operatorname{tr}'(xy)$.
This is a non-degenerate
symmetric bilinear form on $B$ with
$(\tau(x), \tau(y))' = (x,y)'$.

It is clear that $F \circ ?^\taudual \cong ?^\sigmadual \circ F$
since $F$ is isomorphic to the idempotent truncation functor defined by $f := \sum f_{i,b;1}$ summing over all $i \in I, b \in \B$ such that $d_i(b) \neq 0$.
We also have that
$f_i = \sum_{b \in \B} \sum_{r=1}^{d_i(b)} f_{i,b;r}$, so $\tau(f_i) = f_i$ for each $i \in I$. 
So we have now proved the existence of an anti-involution $\tau$
with all of the desired properties. It remains to note given another
other anti-involution $\omega:B \rightarrow B$ with 
$F \circ ?^\omegadual \cong ?^\sigmadual \circ F$ 
that $?^\omegadual \circ ?^\taudual \cong \Id$, hence, we have that
$\omega\circ \tau = \gamma$ for some inner automorphism
$\gamma:B \rightarrow B, x \mapsto u x u^{-1}$; equivalently, 
$\omega = \gamma \circ \tau$.
If that is the case, then $B$ is also $\omega$-symmetric since the bilinear form 
$(\cdot,\cdot)'$ constructed in the previous paragraph also satisfies
$$
(\omega(x), \omega(y))' = (u \tau(x) u^{-1}, u \tau(y) u^{-1})
= (\tau(x), \tau(y)) = (x,y)'
$$
for $x,y \in A$.
\end{proof}

\begin{definition}\label{chevinvdef}
Let $\R$ be a fully stratified category with 
 stratification $(\B,L,\rho,\Lambda,\leq)$.
 We say that
a contravariant autoequivalence $?^\vee$ of $\R$ is a {\em Chevalley duality} if
there is a realization $(A, \sigma)$ of $(\R,?^\vee)$
in which $\sigma$ is a {\em Chevalley anti-involution},
meaning that $\sigma^2 = \id$ and the following two properties hold:
\begin{itemize}
\item[(Ch1)]
There exists a set $\{e_a\:|\:a \in \B\}$ of mutually orthogonal
$\sigma$-invariant idempotents in $A$ such that
$\dim e_a L(b) = \delta_{a,b}$ for all $b \in \B$ with
$\rho(b) \not> \rho(a)$; here, $L(b)$ is the irreducible $A$-module labelled by $b \in \B$.
\item[(Ch2)] 
Let $A_{\leq \lambda}$ be the quotient of $A$ by the two-sided ideal generated by the idempotents $\{e_a\:|\:a \in \B\text{ with }\rho(a) \not\leq \lambda\}$ and $A_\lambda := \bigoplus_{a,b\in \B_\lambda} \bar e_a A_{\leq \lambda} \bar e_b$. 
For each $\lambda \in \Lambda$, we require that $A_\lambda$ possesses a non-degenerate
associative symmetric bilinear form $(\cdot,\cdot)_\lambda$
such that $(\sigma_\lambda(x), \sigma_\lambda(y))_\lambda = 
(x,y)_\lambda$ for all $x,y \in A_\lambda$,
where $\sigma_\lambda:A_\lambda \rightarrow A_\lambda$ is the anti-involution induced by $\sigma$.
\end{itemize}\end{definition}

In view of the following lemma, axiom (Ch2) is vacuous in the case that $\R$ is a highest weight category, since then we have that
$A_\lambda = \k$ and $\sigma_\lambda = \id$.

\begin{lemma}\label{chdl}
Suppose that $(A,\sigma)$ is a realization of $(\R, ?^\vee)$ in which $\sigma$ is a Chevalley involution as in Definition~\ref{chevinvdef}.
The algebra $A_\lambda$ from (Ch2) is the basic algebra realizing the stratum $\R_\lambda$, and it is $\sigma_\lambda$-symmetric in the sense of Definition~\ref{sigmasymmetric}.
We also have that $L(b)^\vee \cong L(b)$ for all $b \in \B$, i.e.,
Chevalley dualities preserve 
isomorphism classes of irreducible objects.
\end{lemma}

\begin{proof}
Let $I$ be the two-sided ideal of $A$ generated by
$\{e_a\:|\:a \in \B\text{ with }\rho(a)\not\leq \lambda\}$.
We claim that $\R_{\leq \lambda}$ is the full 
subcategory of $\R$ consisting of all objects $V$ such that $IV = 0$. To see this, 
if $IV = 0$ then $e_a V = 0$ for all $a \in \B$ with 
$\rho(a) \not\leq\lambda$ then $[V:L(a)] = 0$ for all such $a$
thanks to axiom (Ch1).
So we have that $V \in \R_{\leq \lambda}$.
Conversely, if $V \in \R_{\leq \lambda}$ and $\rho(a) \not \leq \lambda$ then the idempotent $e_a$ is zero on every irreducible
subquotient of $V$ by (Ch1), hence, $e_a V = 0$. This implies that $IV = 0$.

By the claim, the algebra $A_{\leq \lambda} = A / I$ 
gives a realization of
$\R_{\leq \lambda}$. Let $\bar e_b$ denote the image of $e_b$ in $A_{\leq \lambda}$. For $b \in \B_\lambda$, we have that $\bar e_b L(c) = \delta_{b,c}$ for all $c \in \B_\lambda$.
This shows that the mutually orthogonal idempotents $\{\bar e_b \:|\:b \in \B_{\lambda}\}$ are primitive in $A_{\leq \lambda}$.
Hence, $A_\lambda = \bigoplus_{a,b \in \B_\lambda} \bar e_a A_{\leq \lambda} \bar e_b$ is the basic algebra realizing the stratum $\R_\lambda$. It is immediate from the axioms (Ch1)--(Ch2) 
and the definition that $A_\lambda$ is $\sigma_\lambda$-symmetric.

Finally to show that $L(b)^\sigmadual \cong L(b)$ for all $b \in \B$,
suppose that $b \in \B_\lambda$. We have that $e_a L(b)^\sigmadual
\cong (e_a L(b))^*= 0$
for all $a$ with $\rho(a) \not\leq \lambda$, so $L(b)^\sigmadual \in \R_{\leq \lambda}$.
Moreover, $e_b L(b)^\sigmadual \cong (e_b L(b))^*$ is one-dimensional. 
Since $\bar e_b$ is primitive in $A_{\leq \lambda}$ this implies
that $L(b)^\sigmadual \cong L(b)$.
\end{proof}

 \begin{theorem}[Chevalley dualities commute with Ringel duality]\label{ugly}
 Let $\R$ be a fully stratified category with 
 stratification $(\B,L,\rho,\Lambda,\leq)$.
 Assume that $\R$ possesses a Chevalley duality $?^\vee$.
 Fix also a realization $(A, \sigma)$ of $(\R,?^\vee)$
 in which $\sigma$ is a Chevalley involution,
 and let $T(b)$ denote the left $A$-module corresponding to $T_+(b) \in \R$.
\begin{enumerate}
\item
If $\R$ is tilting-rigid and $\operatorname{char} \k \neq 2$
then for each $b \in \B$ 
there exists a non-degenerate symmetric bilinear form
$\langle\cdot,\cdot \rangle:T(b) \times T(b) \rightarrow \k$
satisfying the following {\em $\sigma$-adjunction property}:
\begin{equation}\label{sunriver}
\langle xv, w \rangle = \langle v, \sigma(x)w \rangle
\end{equation}
for $v,w \in T(b)$ and $x \in A$.
\item
Suppose that we are given objects of $\R$ corresponding to
$A$-modules $\{T_b\:|\:b \in \B\}$
such that each $T_b$ is a direct sum of $T(b)$ and
copies of $T(c)$ for $c\in \B$ with $\rho(c) < \rho(b)$.
Assume moreover that each $T_b$ is equipped with 
a non-degenerate symmetric bilinear form
$\langle \cdot,\cdot \rangle$
satisfying the $\sigma$-adjunction property.
Then, $\R$ is tilting-rigid with symmetric strata, and
there is an induced Chevalley duality $?^\wedge$ on the Ringel dual $\R'$ of $\R$ satisfying (\ref{asint}).
\end{enumerate}
\end{theorem}

\begin{proof}
(1) Suppose that 
$b \in \B_\lambda$ for some $\lambda \in \Lambda$.
For the purpose of proving (1) for $T(b)$, 
we can replace $\R$ by $\R_{\leq \lambda}$
and the algebra $A$ realizing $\R$ by the corresponding
quotient algebra to assume without loss of generality that 
$\R = \R_{\leq \lambda}$.
So now $\R$ is either finite or upper finite, and the 
chosen algebra $A$ is either
a finite-dimensional algebra or a locally finite-dimensional locally
unital algebra, respectively. Let $\{e_a\:|\:a \in \B\}$ be the 
mutually orthogonal 
$\sigma$-invariant idempotents given by the axiom (Ch1).
Let $e_\lambda := \sum_{b \in \B_\lambda} e_b$
and $A_\lambda := e_\lambda A e_\lambda$. By Lemma~\ref{chdl}, 
this is the basic
finite-dimensional algebra realizing the top stratum $\R_\lambda$, and
$\{e_b\:|\:b \in \B_\lambda\}$ is 
a set of representatives for the 
conjugacy classes of primitive idempotents in $A_\lambda$.
The anti-involution $\sigma$ of $A$ restricts to an anti-involution $\sigma_\lambda$ of $A_\lambda$.
Also $e_\lambda T(b)$ is isomorphic to the indecomposable projective $A_\lambda$-module $A_\lambda e_b$.

\vspace{1.5mm}
\noindent
Claim 1: {\em
Let $\psi:T(b) \rightarrow T(b)$ be an $A$-module homomorphism
and $\bar\psi:e_\lambda T(b) \rightarrow e_\lambda T(b)$
be its restriction, which is an $A_\lambda$-module homomorphism.
Then $\psi$ is an isomorphism if and only if $\bar\psi$ is an isomorphism.}
The forward implication is clear. For the converse,
let $E := \End_A(T(b))^\op$
and $E_\lambda := \End_{A_\lambda}(e_\lambda T(b))^\op$.
As $e_\lambda T(b)$ is an indecomposable $A_\lambda$-module,
the algebra $E_\lambda$ is a finite-dimensional local algebra, so its Jacobson radical is of codimension one and 
any non-unit is nilpotent. The algebra $E$ is also a finite-dimensional local algebra in the finite case,
while in the upper finite case it is a
pseudo-compact topological algebra with Jacobson radical $J(E)$ having codimension one. In either case, any 
element of $E$ is either a unit or it belongs to $J(E)$.
Let $\bar E$ be the image of $E$ under the homomorphism $E \rightarrow E_\lambda$ defined by restriction. The Jacobson radical 
of $\bar E$ is the image of $J(E)$, so it is again of codimension one\footnote{In fact, one can show that $\bar E = E_\lambda$ but we do not need to use this here.}
in $\bar E$.
We are given $\psi \in E$ such that $\bar\psi$ is a unit in $E_\lambda$.
This means that $\bar\psi$ is not nilpotent, hence, it is also a unit in $\bar E$. It follows that $\bar\psi \notin J(\bar E)$
so $\psi \notin J(E)$. This shows that $\psi$ is a unit in $E$, i.e., it is an isomorphism as required.

\vspace{1.5mm}
\noindent
Claim 2: {\em
Let $\langle\cdot,\cdot\rangle$ be a bilinear form on $T(b)$ 
with the $\sigma$-adjunction property.
Then $\langle\cdot,\cdot\rangle$ is non-degenerate
if and only if its restriction $\langle\cdot,\cdot\rangle_\lambda$ to $e_\lambda T(b)$ is non-degenerate.}
To see this, observe that the form $\langle\cdot,\cdot\rangle$ induces an $A$-module homomorphism $\theta:T(b) \rightarrow T(b)^\sigmadual$ with $\theta(v)(w) = \langle v, w \rangle$,
and the form is non-degenerate if and only if this induced homomorphism is an isomorphism.
Similarly, the restriction $\langle\cdot,\cdot\rangle_\lambda$ induces an $A_\lambda$-module homomorphism
$\bar\theta:e_\lambda T(b) \rightarrow 
(e_\lambda T(b))^\sigmadual$,
and the restricted form is non-degenerate if and only if 
$\bar \theta$ is an isomorphism.
If we identify $(e_\lambda T(b))^\sigmadual$ with $e_\lambda (T(b)^\sigmadual)$ in the natural way, we see that $\bar\theta$ is the restriction of $\theta$.
We are given that $\R$ is tilting-rigid, and its strata are $\sigma_\lambda$-symmetric which implies that they are weakly symmetric, so there is an $A$-module isomorphism $\phi:T(b)^\sigmadual \stackrel{\sim}{\rightarrow} T(b)$ according to Theorem~\ref{tiltingduality}. 
This restricts to an $A_\lambda$-module isomorphism
$\bar\phi:e_\lambda 
(T(b)^\sigmadual) \rightarrow e_\lambda T(b)$.
Now Claim~2 is reduced to showing that the $A$-module homomorphism
$\phi\circ\theta:T(b) \rightarrow T(b)$
is an isomorphism if and only if its restriction
$\bar\phi\circ\bar\theta:e_\lambda T(b) \rightarrow e_\lambda T(b)$ is an isomorphism. This follows from Claim 1.

\vspace{1.5mm}
\noindent
Claim 3: {\em The socle of $A_\lambda e_b$ is irreducible, and any non-zero vector $z_b \in \soc (A_\lambda e_b)$ satisfies $\sigma_\lambda(z_b) = z_b$.}
By (Ch2),
there is a non-degenerate associative symmetric bilinear form $(\cdot,\cdot)_\lambda$ on $A_\lambda$ with $(\sigma_\lambda(x), \sigma_\lambda(y))_\lambda = (x,y)_\lambda$ for all $x,y \in A_\lambda$. 
By the discussion before Lemma~\ref{chevinvlem}, 
$A_\lambda e_b$ is self-dual, so it has irreducible socle
isomorphic to its head. Moreover, $(\cdot,\cdot)_\lambda$ restricts to a non-degenerate associative symmetric bilinear form on $e_b A_\lambda e_b$.
This is a local symmetric algebra, so its Jacobson radical $J$ is a two-sided
ideal of codimension one and $J^\perp$ is a two-sided ideal of dimension one.
Let $z_b$ be a non-zero vector in $J^\perp$. We must have
that $(e_b,z_b)_\lambda \neq 0$ by the non-degeneracy of the form.
Moreover, $z_b$ also spans the socle of $A_\lambda e_b$.
It remains to show that $\sigma_\lambda(z_b) = z_b$.
Since $\sigma_\lambda$ leaves $J^\perp$ invariant we certainly have that
$\sigma_\lambda(z_b) = c z_b$ for $c \in \k$. Now we use the $\sigma_\lambda$-symmetry of the form:
$$
(e_b, z_b)_\lambda = 
(\sigma_\lambda(e_b), \sigma_\lambda(z_b))_\lambda 
= 
(e_b, c z_b)_\lambda.
$$
Since $(e_b,z_b)_\lambda \neq 0$, this implies that $c=1$.

\vspace{1.5mm}
\noindent
Claim 4: {\em 
Suppose that $\langle\cdot,\cdot\rangle_\lambda$ is a bilinear form on
 $A_\lambda e_b$ with the $\sigma_\lambda$-adjunction property, i.e.,
 the analog of (\ref{sunriver}) with $\sigma$ replaced by $\sigma_\lambda$
 holds for all $x \in A_\lambda$. This form is non-degenerate
 if and only if $\langle e_b, z_b \rangle_\lambda \neq 0$ for
$z_b$ as in Claim 3.
 }
 Suppose first that $\langle e_b, z_b \rangle_\lambda \neq 0$.
 Take any $0 \neq x \in A_\lambda e_b$. Since the socle of $A_\lambda e_b$ is one-dimensional,
 there exists $y \in A_\lambda$ such that
 $yx = z_b$. Then $\langle e_b, yx \rangle_\lambda \neq 0$ so
 $\langle \sigma(y) e_b, x \rangle_\lambda \neq 0$.
 This shows that the function $A_\lambda e_b \rightarrow (A_\lambda e_b)^*,
 x\mapsto \langle ?,x\rangle_\lambda$ is injective, hence, the form is non-degenerate.
 Conversely, 
suppose that $\langle e_b, z_b \rangle_\lambda = 0$.
Then the $A_\lambda$-submodule $\{x \in A_\lambda e_b\:|\:\langle x, z_b \rangle_\lambda = 0\}$ contains $e_b$, hence, it is all of $A_\lambda e_b$.
So the form is degenerate.

\vspace{1.5mm}
Now we can complete the proof of (1).
As noted in the proof of Claim~2, $T(b) \cong T(b)^\sigmadual$.
Let $[\cdot,\cdot]$ be the bilinear form on $T(b)$ corresponding to such an 
isomorphism. 
This form is non-degenerate
and has the $\sigma$-adjunction property.
However, it is not necessarily symmetric, so we symmetrize by 
letting $\langle\cdot,\cdot\rangle$ be the form on $T(b)$ defined from 
$$
\langle v, w \rangle := [v, w] + [w, v].
$$
Using that $\sigma$ is an involution, it is easy to check that this new form still has the $\sigma$-adjunction property, and now it is symmetric, but we do not yet know that it is non-degenerate. 
To see this, let $\iota:A_\lambda e_b\stackrel{\sim}{\rightarrow} e_\lambda T(b)$ be an $A_\lambda$-module isomorphism.
Let $[\cdot,\cdot]_\lambda$ and $\langle\cdot,\cdot\rangle_\lambda$ be
the bilinear forms on $A_\lambda e_b$
defined from
$[ x,y ]_\lambda := [ \iota(x), \iota(y)]$ and
$\langle x,y \rangle_\lambda := \langle \iota(x), \iota(y)\rangle$.
Applying Claim 2, we see that the form $[ \cdot,\cdot ]_\lambda$ is non-degenerate, and
the goal is to show that $\langle\cdot,\cdot\rangle_\lambda$ is non-degenerate.
Applying Claim 4, we have that $[e_b, z_b]_\lambda \neq 0$ and 
we need to show that $\langle e_b, z_b \rangle_\lambda \neq 0$.
This follows since 
$$
\langle e_b, z_b \rangle_\lambda = 
[ e_b, z_b ]_\lambda+
[ z_b, e_b ]_\lambda
= [ e_b, z_b ]_\lambda + 
[ e_b, \sigma_\lambda(z_b) ]_\lambda
= 
2 [ e_b, z_b ]_\lambda \neq 0,
$$
using that $\sigma_\lambda(z_b) = z_b$ by Claim 3
together with the hypothesis that $\operatorname{char} \k \neq 2$.

\vspace{1.5mm}
\noindent
(2) 
We are given non-degenerate symmetric bilinear forms $\langle\cdot,\cdot\rangle$ on each $T_b$ satisfying the $\sigma$-adjunction property. It follows that 
$T_b \cong T_b^\sigmadual$. 
Since $T_+(b)^\vee \cong T_-(b)$ for each $b \in \B$, 
this is enough to deduce that $\R$ 
is tilting-rigid. Also the assumption that $?^\vee$ is a Chevalley duality implies that the basic algebra
$A_\lambda$ realizing $\R_\lambda$ is $\sigma_\lambda$-symmetric, hence, $\R_\lambda$ is symmetric.

Now the argument proceeds in a similar way to 
the proof of Theorem~\ref{tiltingduality}.
We just explain the details in the finite case; the other three cases are similar but there are slight notational differences.
We may assume that the tilting generator used to define the Ringel dual 
category is $T = \bigoplus_{b \in \B} T_b$. 
Then $\R' = B\fdlmod$ for $B := \End_A(T)^\op$.
The given forms on each $T_b$
give us a non-degenerate symmetric bilinear form $\langle\cdot,\cdot\rangle$ on $T$ satisfying (\ref{sunriver}),
with the summands $T_b$ being mutually orthogonal.
Define an anti-automorphism $\tau$
of $B$ from the equation
$\langle vy, w \rangle = \langle v, w \tau(y) \rangle$
for $v,w \in T$ and $y \in B$. This gives us a contravariant autoequivalence $?^\vee := ?^\taudual$
on $\R'$, and we get the isomorphisms
(\ref{asint}) like in the proof of Theorem~\ref{tiltingduality}.

As $\langle\cdot,\cdot\rangle$ is symmetric and $T$ is a faithful $B$-module,
the following calculation implies that $\tau^2=\id$:
$$
\langle vy, w \rangle = \langle v, w\tau(y) \rangle
= \langle w \tau(y), v \rangle = 
\langle w, v \tau^2(y) \rangle = \langle v \tau^2(y), w \rangle.
$$
For each $b \in \B$, 
let $f_b \in B$ be the idempotent projecting $T$ onto the summand $T_b$.
Using that the restriction of the form $\langle\cdot,\cdot\rangle$ to this summand is non-degenerate, 
it follows that $\tau(f_b) = f_b$. So $\{f_b\:|\:b \in \B\}$
is a set of mutually orthogonal $\tau$-invariant idempotents in $B$.
The idempotent $f_b$ is equal to the primitive idempotent projecting $T_b$ onto its summand $T(b)$ plus other orthogonal primitive idempotents which project onto summands $T(a)$ for $a \in \B_{< \rho(b)}$. Bearing in mind we are using the opposite ordering on $\Lambda$ on the Ringel dual side,
this is just what we need for the property (Ch1).

Finally, to see that property (Ch2) holds, let $B_\lambda$ be the
algebra obtained from $B$ according to the construction of (Ch2) and 
$\tau_\lambda:B_\lambda \rightarrow B_\lambda$ be the anti-involution induced by $\tau$. The pair $(B_\lambda,\tau_\lambda)$
is a realization of $(\R_\lambda', ?^\wedge)$, where $?^\wedge$ here is the contravariant autoequivalence of $\R_\lambda'$ induced by the one on $\R'$. 
We also have the pair $(A_\lambda, \sigma_\lambda)$
realizing $\R_\lambda$ with its contravariant autoequivalence induced by $?^\vee$. We know already by Lemma~\ref{chdl} that $A_\lambda$ is $\sigma_\lambda$-symmetric, and (Ch2) follows if we can show that $B_\lambda$ is $\tau_\lambda$-symmetric. This follows from Lemma~\ref{chevinvlem}
since the functor $F_\lambda:A_\lambda\fdlmod\rightarrow B_\lambda \fdlmod$ is an equivalence 
satisfying $F_\lambda \circ ?^\vee \cong ?^\wedge \circ F_\lambda$. Indeed, 
Theorem~\ref{tiltingrigiddual}(2) gives that
 $F_\lambda \cong G_\lambda$, while (\ref{asint})
 and the definitions (\ref{labelF})--(\ref{labelG})
give that the dualities
 $?^\vee:\R_\lambda \rightarrow \R_\lambda$
 and $?^\wedge:\R_\lambda' \rightarrow \R_\lambda'$
 satisfy
$ G_\lambda \circ ?^\vee\cong
?^\wedge \circ F_\lambda$.
\end{proof}

\section{Generalizations of quasi-hereditary algebras}

Now we give some applications of semi-infinite Ringel duality.
First, we use it to show that
any upper finite 
highest weight category can be realized as $A\lfdlmod$ for an upper finite based
quasi-hereditary algebra $A$. The latter notion, which is
Definition~\ref{doacc}, 
already exists in the literature in some equivalent forms. 
When $A$ is finite-dimensional, it gives 
an alternative algebraic characterization of the usual notion
of quasi-hereditary algebra.
Then, in $\S$\ref{newsec2}, we introduce further
notions of based $\eps$-stratified algebras and based $\eps$-quasi-hereditary
algebras, which correspond to $\eps$-stratified categories and
$\eps$-highest weight categories, respectively.
In $\S$\ref{newsec3}, we introduce
based stratified algebras and based properly stratified
algebras corresponding to fully stratified and fibered highest weight categories, respectively.
Finally, in $\S\S$\ref{stb}--\ref{std}, we discuss the related notions of triangular bases and a triangular decompositions.

\subsection{Based quasi-hereditary algebras}\label{newsec1}
The following definition is a simplified
version of \cite[Def.~2.1]{ELauda} translated 
from the framework of $\k$-linear
categories to that of locally unital algebras.
Also, for finite-dimensional algebras, it is equivalent to
\cite[Def.~2.4]{KM}. These assertions 
will be explained in more detail in Remarks~\ref{oo}--\ref{ooo} below.

\begin{definition}\label{doacc}
By a {\em finite} (resp., {\em upper finite}, resp., {\em essentially finite})
 {\em based quasi-hereditary algebra}, we mean 
 a finite-dimensional (resp., locally finite-dimensional, resp., essentially
finite-dimensional) 
locally unital algebra $A = \bigoplus_{i,j \in I} e_i A e_j$ with
the following additional data:
\begin{itemize}
\item[(QH1)] A 
subset $\Lambda \subseteq I$ 
indexing {\em special idempotents} $\{e_\lambda\:|\:\lambda \in
\Lambda\}$.
\item[(QH2)]
A partial order $\leq$ making the set $\Lambda$ into a poset which is upper finite in the upper finite case and interval finite in the essentially finite case.
\item[(QH3)] Sets 
$Y(i,\lambda) \subset e_i A e_\lambda$, $X(\lambda,j) \subset e_\lambda A e_j$
for $\lambda \in \Lambda, i,j \in I$.
\end{itemize}
Let $Y(\lambda) := \bigcup_{i \in I} Y(i,\lambda)$
and $X(\lambda) := \bigcup_{j \in I} X(\lambda,j)$.
 We impose the following axioms:
\begin{itemize}
\item[(QH4)]
The products $y x$ for
$(y,x) \in \bigcup_{\lambda \in \Lambda} Y(\lambda) \times X(\lambda)$
are a basis for $A$.
\item[(QH5)]
For $\lambda,\mu \in\Lambda$,
the sets $Y(\mu,\lambda)$ and $X(\lambda,\mu)$ are empty unless
$\mu \leq \lambda$.
\item[(QH6)]
We have that $Y(\lambda,\lambda) = X(\lambda,\lambda) = \{e_\lambda\}$
for each $\lambda \in \Lambda$.
\end{itemize}
We say that $A$ is {\em symmetrically based} if there is also some given
algebra anti-involution
$\sigma:A \rightarrow A$ with $\sigma(e_i) = e_i$ and
$Y(i,\lambda) = \sigma(X(\lambda,i))$ for all $i\in I, \lambda \in \Lambda$.
\end{definition}

We refer to the given basis for $A$ from (QH4) as 
the {\em triangular basis}; it is certainly not 
unique since one can replace any
$Y(i,\lambda)$ or $X(\lambda,j)$ by another basis 
that spans the same subspace up to ``higher terms''.
If $A$ is symmetrically based
rather than merely based, this basis is a {\em cellular basis} in the general sense of \cite{GL}, \cite{Westbury}.
However, Definition~\ref{doacc} is 
considerably more restrictive than the general notions of
cellular algebra or category introduced in {\em loc. cit.}. 
In fact, for finite-dimensional algebras, Definition~\ref{doacc} is equivalent to the usual notion of quasi-hereditary algebra, as we will explain more fully below. 

\begin{remark}\label{jabs}
It is clear from (QH4) that
$A = \sum_{\lambda \in \Lambda} A e_\lambda A$.
Hence, $A$ is Morita equivalent to the idempotent truncation
$\bigoplus_{\lambda,\mu \in \Lambda} e_\lambda A e_\mu$.
This means that if one is prepared to pass to a Morita equivalent algebra then one can
assume without loss of generality that
the sets $\Lambda$ and $I$ in Definition~\ref{doacc}
are actually equal, i.e., all distinguished idempotents are special. However, in naturally-occurring examples, one often encounters
situations in which the set $I$ is strictly larger than $\Lambda$.
\end{remark}

\begin{remark}\label{tlex0}
A classical example of a finite symmetrically based
quasi-hereditary algebra is the
Schur algebra $S(n,r)$ with its basis of {\em
  codeterminants} $\xi_{\bi,\ell(\lambda)} \xi_{\ell(\lambda),\bj}$
  as constructed by Green in \cite{Greenco}; one definitely needs $I \supsetneq \Lambda$ in this example.
  \end{remark}
  
  \begin{remark}\label{tlex}
  For a well-known infinite-dimensional example, consider the path
  algebra $A$ of the Temperley-Lieb category
  $\TL(\delta)$ for any value of its parameter $\delta \in \k$.
 The natural diagram basis gives a triangular basis making $A$ into an 
 upper finite symmetrically based quasi-hereditary algebra. 
For this, one takes $I = \Lambda = \N$ ordered by the opposite of the
natural ordering.
The set $Y(\lambda)$ (resp., $X(\lambda)$)
consists of all cap-free Temperley-Lieb diagrams with $\lambda$ strings 
at the bottom
(resp., all cup-free Temperley-Lieb diagrams with $\lambda$ strings at
the top). The anti-automorphism $\sigma$ is defined by reflecting diagrams in a horizontal axis.
\end{remark}

\begin{lemma}\label{fire}
Let $A$ be a finite, essentially finite or upper finite 
based quasi-hereditary algebra.
For $\lambda \in \Lambda$, 
any element $f$ of the two-sided ideal $A e_\lambda A$
can be written as a linear combination of elements of the form 
$y x$ for $y \in Y(\mu)$, $x \in X(\mu)$ and $\mu \geq \lambda$.
\end{lemma}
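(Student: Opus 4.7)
The strategy is to prove $Ae_\lambda A \subseteq A^{\geq \lambda}$ by descending induction on $\lambda$, where
\[
A^{\geq \lambda} := \operatorname{span}\{yx : y \in Y(\mu),\ x \in X(\mu),\ \mu \in \Lambda,\ \mu \geq \lambda\}.
\]
First I would record a direct consequence of (QH5): for any $\mu_1,\mu_2 \in \Lambda$, the subspace $e_{\mu_1} A e_{\mu_2}$ is spanned by basis elements of level $\geq \max(\mu_1,\mu_2)$, because any basis element $y'x' \in e_{\mu_1} A e_{\mu_2}$ has $y' \in Y(\mu_1,\nu)$ and $x' \in X(\nu,\mu_2)$, which by (QH5) force $\nu \geq \mu_1$ and $\nu \geq \mu_2$.

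By bilinearity, it suffices to treat an element of the form $f = (y_1 x_1) e_\lambda (y_2 x_2)$, where $y_i x_i$ is a basis element of level $\mu_i$. For $f$ to be nonzero, we need $x_1 \in X(\mu_1,\lambda)$ and $y_2 \in Y(\lambda,\mu_2)$; by (QH5) this gives $\mu_1,\mu_2 \geq \lambda$, and then $f = y_1 (x_1 y_2) x_2$. If $\mu_1 = \mu_2 = \lambda$, then (QH6) forces $x_1 = y_2 = e_\lambda$, so $f = y_1 x_2$ is itself a basis element of level $\lambda$ and hence already lies in $A^{\geq \lambda}$. Otherwise $\max(\mu_1,\mu_2) > \lambda$, and expanding $x_1 y_2 \in e_{\mu_1} A e_{\mu_2}$ in the basis via the observation above gives
\[
x_1 y_2 = \sum_\sigma c_\sigma \, y_\sigma x_\sigma, \qquad \nu_\sigma \geq \max(\mu_1,\mu_2) > \lambda.
\]
Each corresponding summand of $f$ can be regrouped as
\[
c_\sigma \, y_1 y_\sigma x_\sigma x_2 \;=\; c_\sigma \,(y_1 y_\sigma)\, e_{\nu_\sigma}\, (x_\sigma x_2) \;\in\; A e_{\nu_\sigma} A,
\]
using $y_\sigma = y_\sigma e_{\nu_\sigma}$ and $x_\sigma = e_{\nu_\sigma} x_\sigma$; the inductive hypothesis applied at $\nu_\sigma > \lambda$ then places this in $A^{\geq \nu_\sigma} \subseteq A^{\geq \lambda}$.

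The main technical obstacle is justifying the descending induction. In the upper-finite case it is immediate, since $[\lambda,\infty) \cap \Lambda$ is finite. In the essentially finite case, where $\Lambda$ is only interval-finite, one instead fixes the given element $f \in Ae_\lambda A$: because $A$ is essentially finite-dimensional, $f$ lies in a finite sum $\bigoplus_k e_{i_k} A e_{j_k}$ of finite-dimensional subspaces, and all the levels $\nu_\sigma$ arising in the recursion sit among the finitely many levels appearing in the bases of these subspaces. The inductive argument therefore reduces to a finite induction on this finite subposet of $\Lambda$.
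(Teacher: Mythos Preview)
Your proof is correct and follows essentially the same approach as the paper's: downward induction on $\lambda$, reducing to a product of two basis elements via $e_\lambda$, and using (QH6) for the base case $\mu_1=\mu_2=\lambda$. The paper's version is slightly more streamlined in the inductive step: rather than expanding $x_1 y_2$ in the basis, it simply observes that if (say) $\mu_1>\lambda$ then $f=y_1 e_{\mu_1}(x_1 y_2 x_2)\in Ae_{\mu_1}A$ already, so induction applies directly at $\mu_1$; your extra expansion is harmless but unnecessary. For the essentially finite case, the paper makes the finiteness more explicit by fixing $f\in e_iAe_j$ and noting that the finite set $\Lambda'=\{\mu: e_iAe_\mu\neq 0\text{ or }e_\mu Ae_j\neq 0\}$ contains every $\mu_1,\mu_2$ arising in the recursion (since $y_1\in Y(i,\mu_1)$ and $x_2\in X(\mu_2,j)$), which is the precise version of what you sketched.
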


\begin{proof}
We first consider the upper finite case. By considering the triangular basis, we may assume that
$f =y_1 x_1 y_2 x_2$ for
$y_1 \in Y(\mu_1), x_1 \in X(\mu_1,\lambda)$,
$y_2 \in Y(\lambda,\mu_2), x_2 \in X(\mu_2)$
and $\mu_1,\mu_2 \geq \lambda$.
If $\mu_1=\mu_2=\lambda$ then $x_1 = e_\lambda = y_2$ and $f = y_1
x_2$, as required. This finished the proof for $\lambda$ maximal. If $\mu_r > \lambda$ for some $r \in \{1,2\}$, then we have that
$f \in A e_{\mu_r} A$ for this $r$, and are done by downward induction on the partial order on $\Lambda$.  

The finite and essentially finite cases are similar.
Now, assuming that $f \in e_i A e_j$ for $i,j \in I$, there are only finitely many
$\mu \in \Lambda$ such that $e_i A e_\mu \neq 0$ or 
$e_\mu A e_j \neq 0$. Letting $\Lambda'$ be the finite set of all such $\mu$,
we can then again proceed by downward induction on the partial order on
$\Lambda'$.
\end{proof}

\begin{corollary}\label{climbing}
Let $\Lambda\u$ be an upper set in $\Lambda$.
The two-sided ideal $J_{\Lambda\u}$ of $A$ generated
by $\{e_\lambda\:|\:\lambda \in \Lambda\u\}$ has basis 
$\big\{y x\:\big|\:(y,x) \in \bigcup_{\lambda \in \Lambda\u} Y(\lambda)\times X(\lambda)\big\}$.
\end{corollary}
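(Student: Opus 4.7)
The plan is to verify the two inclusions and then invoke (QH4) for linear independence. For the containment of the spanning set in $J_{\Lambda\u}$, I would fix $\lambda \in \Lambda\u$ and $(y,x) \in Y(\lambda) \times X(\lambda)$; since $y \in e_i A e_\lambda$ and $x \in e_\lambda A e_j$ for some $i,j \in I$, the product factors as $y x = y \cdot e_\lambda \cdot x \in A e_\lambda A \subseteq J_{\Lambda\u}$, giving the easy direction.

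For the reverse containment, I would take an arbitrary element $f \in J_{\Lambda\u}$ and reduce to the case $f = a e_\lambda b$ for some $\lambda \in \Lambda\u$ and $a,b \in A$. Such an $f$ lies in $A e_\lambda A$, so Lemma~\ref{fire} expresses it as a linear combination of products $y' x'$ with $y' \in Y(\mu)$, $x' \in X(\mu)$ and $\mu \geq \lambda$. The key point is then that $\Lambda\u$ is an upper set, so every such $\mu$ lies in $\Lambda\u$, showing $f$ is in the span of the stated set.

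Finally, linear independence of $\{y x \mid (y,x) \in \bigcup_{\lambda \in \Lambda\u} Y(\lambda) \times X(\lambda)\}$ is immediate because this set is a subset of the idempotent-adapted cellular basis of $A$ guaranteed by (QH4). There is no real obstacle: Lemma~\ref{fire} already does the work of the downward induction, and the corollary is essentially a repackaging of that lemma using the definition of $J_{\Lambda\u}$ together with (QH4).
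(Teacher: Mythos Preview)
Your proposal is correct and follows essentially the same approach as the paper: both directions of containment are established exactly as you describe, with the easy inclusion via $yx = y e_\lambda x$, the reverse inclusion via Lemma~\ref{fire} combined with $\Lambda\u$ being an upper set, and linear independence coming from (QH4). The only cosmetic difference is that the paper packages things by first defining $J$ as the subspace with the stated basis and then proving $J = J_{\Lambda\u}$, whereas you spell out the linear independence step separately.
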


\begin{proof}
Let $J$ be the subspace of $A$ with basis
given by the products
$y x$ for $y \in Y(\lambda), 
x \in X(\lambda)$ and $\lambda \in \Lambda\u$.
For any such element $y x \in J$, we have that 
$y x = y e_\lambda x$, hence, $y x \in J_{\Lambda\u}$.
This shows that $J \subseteq J_{\Lambda\u}$.
Conversely, any element of $J_{\Lambda\u}$ is a linear
combination of elements of $A e_\lambda A$ for $\lambda \in \Lambda\u$.
In turn, Lemma~\ref{fire} shows that any element of $A e_\lambda A$ for $\lambda
\in \Lambda\u$
is a linear combination of elements $y x$
for $y \in Y(\mu), x \in X(\mu)$ and $\mu \geq \lambda$.
All of these elements $y x$ belong to $J$ because $\Lambda\u$ is an upper set; thus $J_{\Lambda\u} \subseteq J$.
\end{proof}

\begin{remark}\label{oo}
We have formulated Definition~\ref{doacc} 
only for algebras over our usual 
ground field $\k$, but the definition makes sense 
with $\k$ replaced by some more general commutative ground ring $R$ (``finite-dimensional" being interpreted as ``free of finite rank").
Then, 
in the symmetrically based upper finite case,
Definition~\ref{doacc} 
is equivalent to the notion of an
object-adapted cellular category from \cite[Def.~2.1]{ELauda}.
This can be seen from Corollary~\ref{climbing} and \cite[Lemmas~2.6-2.8]{ELauda}.
Elias and Lauda also note in {\em loc. cit.} that the diagrammatic Hecke category
$\mathcal{H}_{BS}(W,S)$ of \cite{EW} associated to a Coxeter system $(W,S)$
is an example of an object-adapted cellular category. In our language, the path algebra $H$
of $\mathcal{H}_{BS}(W,S)$ 
is an upper finite symmetrically based quasi-hereditary algebra
defined over the ground ring $R = \mathbb{Q}[\mathfrak{h}]$, that is, the ring of regular functions arising from a realization $\mathfrak{h}$ of $(W,S)$. 
A cellular basis is given by the double 
light leaves basis. (One needs some assumptions on the realization
as in \cite{EW} for this basis to be defined.)
\end{remark}

\begin{remark}\label{ooo}
In the finite case, Definition~\ref{doacc} is equivalent to the notion of based
quasi-hereditary algebra from \cite[Def.~2.4]{KM}.
To see this, one takes the set $\Lambda$ indexing the special
idempotents in our setup
to be the set $I$ from {\em loc. cit.}
(which indexes mutually orthogonal idempotents $e_i \in A$
according to \cite[Lem.~2.8]{KM}).
Then we take our set $I$ to be the set $\Lambda \sqcup \{0\}$, i.e.,
we add one more element indexing one more idempotent
$e_0 := 1_A - \sum_{\lambda \in \Lambda} e_\lambda$.
Kleshchev and Muth established the equivalence of their notion
of based quasi-hereditary algebra with the original notion of
quasi-hereditary algebra from \cite{CPS} (providing the partial order
on $\Lambda$ is actually a total order);
for ground fields, we will reprove this equivalence in a different way below.
See also \cite{DuR} which established a similar result
using a related notion of standardly based algebra.
\end{remark}

Let $A$ be a based quasi-hereditary algebra as in Definition~\ref{doacc}.
For $\lambda \in \Lambda$, let $A_{\leq\lambda}$ be the quotient of $A$ by the two-sided ideal
generated by the idempotents $e_\mu$ for $\mu \not\leq \lambda$.
For $x \in A$, we often write simply $\bar x$ for the image of $x$ in
$A_{\leq\lambda}$. Corollary~\ref{climbing} implies that 
\begin{equation}\label{climbingcor}
A_{\leq\lambda} = \bigoplus_{i,j \in I} \bar e_i A_{\leq
  \lambda} \bar e_j
\end{equation}
is based quasi-hereditary in
its own right, with special idempotents indexed by elements of the
lower set $(-\infty,\lambda]$ and basis given
by the products $\bar y \bar x$ for $y \in Y(\mu), x \in X(\mu)$ and $\mu \in (-\infty,\lambda]$.
Define the {\em standard} and {\em costandard modules}
associated to $\lambda \in \Lambda$ by
\begin{align}\label{cellmodules}
\Delta(\lambda) &:= A_{\leq\lambda} \bar e_\lambda&
\nabla(\lambda) &= (\bar e_\lambda A_{\leq\lambda})^\circledast.
\end{align}
These are left $A$-modules which are projective and injective as
$A_{\leq\lambda}$-modules, respectively.
In the finite or essentially finite case, $\bar e_\lambda A_{\leq \lambda}$ is finite-dimensional and one could 
just take the full linear dual in (\ref{cellmodules}),
but in general in
the upper finite case $\Delta(\lambda)$ and $\nabla(\lambda)$ are only locally finite-dimensional.
The modules $\Delta(\lambda)$ may also be called
{\em cell modules} and the modules $\nabla(\lambda)$ {\em dual cell modules}.
The vectors $\{ y \bar e_\lambda\:|\:y \in Y(\lambda)\}$ give the {\em
  standard basis}
for $\Delta(\lambda)$.
Similarly, the vectors
$\{\bar e_\lambda x\:|\:x \in X(\lambda)\}$ are a basis for the
right $A$-module $\bar e_\lambda A$;
the dual basis to this is the {\em costandard basis}
for $\nabla(\lambda)$.

\begin{theorem}[Highest weight categories from based quasi-hereditary algebras]\label{oneway}
Let $A$ be a finite (resp., upper finite, resp., essentially finite) 
based quasi-hereditary algebra.
The modules 
$$
\{L(\lambda) := \hd \Delta(\lambda)\cong\soc\nabla(\lambda)\:|\:\lambda \in \Lambda\}$$ 
give a complete
set of pairwise inequivalent irreducible left $A$-modules.
Moreover,
the category
$\R := A\fdlmod$ (resp., $\R := A\lfdlmod$, resp., $\R := 
A\fdlmod$) is a finite (resp., upper finite, resp., essentially finite) highest weight category with the given weight poset 
$(\Lambda,\leq)$.
Its standard and costandard objects
$\Delta(\lambda)$ and $\nabla(\lambda)$ are as defined by
(\ref{cellmodules}).
If $A$ is symmetrically based with anti-involution $\sigma$ then $?^\sigmadual:\R\rightarrow\R$ is Chevalley duality of $\R$ in the sense of Definition~\ref{chevinvdef}.
\end{theorem}

\begin{proof}
For $\lambda \in \Lambda$, let $P_\lambda$ be the left ideal $A
e_\lambda$. We start by establishing the claim that $P_\lambda$ has a $\Delta$-flag with $\Delta(\lambda)$
at the top and other sections of the form $\Delta(\mu)$ for $\mu >
\lambda$.
To prove this, fix some $\lambda$ and set $P := P_\lambda$ for short.
This module has basis $\big\{y x\:\big|\:(y,x) \in \bigcup_{\mu \geq \lambda}
 Y(\mu)\times X(\mu,\lambda)\big\}$.
Let $\{\mu_1,\dots,\mu_n\}$ 
be the finite set $\{\mu \in [\lambda,\infty)\:|\: X(\mu,\lambda) \neq \varnothing\}$
ordered so that $\mu_r \leq \mu_s \Rightarrow r \leq s$;
in particular, $\mu_1=\lambda$.
For $1\leq r\leq n$ let $P_r$ be the subspace of $P$ spanned by 
$\big\{yx\:\big|\:(y,x) \in \bigcup_{s=r+1}^n Y(\mu_s)\times X(\mu_s,\lambda)\big\}$.
They define a 
filtration $P =: P_0 > P_1 > \cdots > P_n = 0,$ since each $P_r$ is a $A$-submodule of $P$.
Moreover, there is, for each $0\leq r\leq n$ an $A$-module isomorphism
\begin{equation}\label{firms}
\theta_r:\bigoplus_{x \in X(\mu_r,\lambda)} 
\Delta(\mu_r)
\stackrel{\sim}{\rightarrow}
 P_{r-1}/P_r
\end{equation}
which in case $r\geq 1$ sends the basis vector $y \bar e_{\mu_r}\:(y \in Y(\mu_r))$ in the $x$th copy of
$\Delta(\mu_r)$
to $y x + P_r \in P_{r-1}/P_r$. 
This defines clearly a linear isomorphism,
so we just need to check that it is an $A$-module homomorphism.
For this take $y \in Y(j,\mu_r)$ and $u \in e_i A e_j$.
Expand $uy$ in terms of the triangular basis as 
$\sum_p c_p y_p + \sum_q c'_q y_q' x'_q$
for scalars $c_p, c_q'$, $y_p \in Y(i,\mu_r)$,
$y_q' \in Y(i,\nu_q), x_q' \in X(\nu_q,\mu_r)$ and $\nu_q > \mu_r$.
Then we have that $u y \bar e_{\mu_r} = \sum_p c_p y_p \bar e_{\mu_r}$
and $u y x + P_r = \sum_p c_p y_p x + P_r$, since the ``higher terms''
$y_q'x_q'$ act as zero on both $\bar e_{\mu_r}$ and $x + P_r$.
This shows that $\theta_r$ intertwines the actions of $u$ and so the claim follows, since $P_0 / P_1 \cong \Delta(\lambda)$ by construction. 

Now we can classify the irreducible $A$-modules.
The first step for this is to 
show that $\Delta(\lambda)$ has a unique irreducible
quotient.
To see this, note that the ``weight space'' 
$e_\lambda \Delta(\lambda)$ is one-dimensional with basis
$\bar e_\lambda$, due to the fact that
$Y(\lambda,\lambda) = \{e_\lambda\}$.
This is a cyclic vector, so any proper submodule of $\Delta(\lambda)$ 
must intersect
$e_\lambda \Delta(\lambda)$ trivially. It follows that the sum of all
proper submodules is proper, so $\Delta(\lambda)$ has a unique
irreducible quotient $L(\lambda)$.
Since $e_\lambda L(\lambda)$ is one-dimensional and all other $\mu$
with $e_\mu L(\lambda) \neq 0$ satisfy $\mu < \lambda$,
the modules $\{L(\lambda)\:|\:\lambda \in \Lambda\}$ are pairwise
inequivalent.
To see that they give a full set of irreducible $A$-modules, let $L$
be any irreducible $A$-module.
In view of Remark~\ref{jabs}, there exists $\lambda \in \Lambda$ such
that $e_\lambda L \neq 0$.
Then $L$ is a quotient of $P_\lambda = A e_\lambda$.
By the claim we proved already, it follows that
$L$ is a quotient of $\Delta(\mu)$ for some
$\mu \geq \lambda$, i.e., $L \cong L(\mu)$.

Thus, we have shown that the modules $\{L(\lambda)\:|\:\lambda \in
\Lambda\}$ give a full set of pairwise inequivalent irreducible left
$A$-modules. Now consider the stratification of $\R$
arising from the given partial order on the index set $\Lambda$. 
In the recollement situation of (\ref{rst}), 
the Serre subcategory $\R_{\leq\lambda}$ (resp., $\R_{<\lambda}$)
may be identified with $A_{\leq\lambda} \lfdlmod$ 
(resp., $A_{\leq\lambda}\fdlmod$), 
and the Serre quotient $\R_\lambda = \R_{\leq
  \lambda} / \R_{<\lambda}$ is $A_\lambda \fdlmod$
where $A_\lambda := \bar e_\lambda A_{\leq\lambda} \bar e_\lambda$.
The algebra $A_\lambda$ has basis $\bar e_\lambda$, i.e., it 
is a copy of the ground field $\k$. This shows that all strata are
simple in the sense of 
Lemma~\ref{allsimple}. Moreover, the standard and costandard objects
in the general 
sense of (\ref{rumble}) are obtained by applying the standardization
functor
$j^\lambda_! := A_{\leq\lambda} \bar e_\lambda \otimes_{A_\lambda}?$
and the costandardization functor
$j^\lambda_* := \bigoplus_{i \in I} \Hom_{A_\lambda}(\bar e_\lambda A_{\leq\lambda}
\bar e_i,?)$ to the irreducible $A_\lambda$-module
$A_\lambda=\k\bar e_\lambda$. Clearly, the resulting modules are isomorphic to
$\Delta(\lambda)$ and $\nabla(\lambda)$ as defined by (\ref{cellmodules}).
The axiom
$(\widehat{P\Delta})$ follows from the claim.

For the final statement about Chevalley duality, the observations made earlier in the proof establish property (Ch1) from Definition~\ref{chevinvdef}, and (Ch2) is vacuous as we are in the highest weight setting. Hence, $\sigma$ is a Chevalley anti-involution.
\end{proof}

Finally in this subsection we are going to prove a converse to Theorem~\ref{oneway}. This will be deduced from the next theorem
together with an application of Ringel duality.
In fact, the next theorem is a reformulation of the main result of \cite{AST}.

\begin{theorem}[Based quasi-hereditary algebras from highest weight categories]\label{TBAnew}
Let $\R$ be a finite (resp., lower finite, resp., tilting-bounded essentially finite) highest weight category with weight poset $(\Lambda,\leq)$ and labelling function $L$. Suppose we are given $\Lambda \subseteq I$ and a tilting generator $T = \bigoplus_{i \in I} T_i$ for $\R$ such that each $T_\lambda\:(\lambda \in \Lambda)$ is a direct sum of $T(\lambda)$ and other $T(\mu)$
for $\mu < \lambda$.
Let
$$
A := \left(\bigoplus_{i,j \in I} \Hom_\R(T_i, T_j)\right)^\op.
$$
\begin{itemize}
\item[(1)]
For $i,j \in I$ and $\lambda \in \Lambda$, pick morphisms
$$
Y(i,\lambda) \subset \Hom_\R(T_i, T_\lambda),
\qquad
X(\lambda,j)
\subset \Hom_\R(T_\lambda,T_j)
$$
lifting bases
for $\Hom_\R(T_i, \nabla(\lambda))$ and $\Hom_\R(\Delta(\lambda),T_j)$
as in Corollary~\ref{astfactexactly},
such that
$Y(\lambda,\lambda) = X(\lambda,\lambda) = \{\id_{T_\lambda}\}$.
Then $\big\{yx\:\big|\:(y,x) \in \bigcup_{i,j \in I} 
\bigcup_{\lambda \in \Lambda} Y(i,\lambda) \times X(\lambda,j)\big\}$ 
is a triangular basis making $A$
into a finite (resp., upper finite, resp., essentially finite) based quasi-hereditary algebra
with respect to the opposite poset $(\Lambda,\geq)$.
\item[(2)]
If in addition $\R$ has a Chevalley duality $?^\vee$ and, in a suitable
realization, the modules corresponding to each $T_i$ possess non-degenerate symmetric bilinear forms 
satisfying the adjunction property as in (\ref{sunriver}), then the triangular basis in (1) can be chosen so that $A$ is symmetrically based.
\end{itemize}
\end{theorem}

\begin{proof}
(1) We have all of the necessary data in place 
to have a based quasi-hereditary algebra, taking $e_i := \id_{T_i}$ in the obvious way. To check the axioms,
Corollary~\ref{astfactexactly} checks (QH4), and we have chosen the lifts so that
$Y(\lambda,\lambda) =
\{e_\lambda\} = X(\lambda,\lambda)$ as in (QH6). For (QH5), note that $Y(\mu,\lambda)$
and $X(\lambda,\mu)$ are empty unless $\mu \geq \lambda$
because $\Hom_{\R}(T_\mu, \nabla(\lambda))$ and 
$\Hom_{\R}(\Delta(\lambda), T_\mu)$ are zero unless $\lambda \leq \mu$.

\vspace{1mm}
\noindent
(2)
Suppose that we are working in a particular algebra realization $(B, \tau)$ of $(\R,?^\vee)$ in which $\tau$ is a Chevalley anti-involution 
and each $T_i$ admits a non-degenerate symmetric bilinear form with the
$\tau$-adjunction property.
Let $T := \bigoplus_{i \in I} T_i$ and $\langle\cdot,\cdot\rangle:T \times T \rightarrow \k$ be the orthogonal sum of the given forms.
Then we obtain an algebra anti-involution $\sigma:A \rightarrow A$ 
such that $\langle v x, w \rangle = \langle c, w \sigma(x) \rangle$
for all $v,w \in T$, $x \in A$; cf. the proof of Theorem~\ref{ugly}(2).
This fixes each of the idempotents $e_i \in A$.
The bilinear form on $T_i$ induces a $B$-module isomorphism
$\phi_i:T_i
\stackrel{\sim}{\rightarrow} T_i^\taudual$.
Also let $\pi_\lambda:T_\lambda \twoheadrightarrow \nabla(\lambda)$ be some choice of epimorphism for each $\lambda \in \Lambda$ as needed for Corollary~\ref{astfactexactly}. 
Then define the embeddings 
$\iota_\lambda:\Delta(\lambda) \hookrightarrow T_\lambda$ 
there so that
there are induced isomorphisms 
$\Delta(\lambda)\stackrel{\sim}{\rightarrow} \nabla(\lambda)^\taudual$
making the following diagrams commute for all $\lambda \in \Lambda$:
$$
\begin{tikzcd}
\Delta(\lambda)\arrow[r,"\sim", left]
\arrow[d,hookrightarrow,"\iota_\lambda", right]
&
\nabla(\lambda)^\taudual\arrow[d,hookrightarrow,"\pi_\lambda^*", right]\\
T_\lambda \arrow[r, "\phi_\lambda", right]&T_\lambda^\taudual.
\end{tikzcd}
$$
Now we pick the sets $X(\lambda,i)$ lifting bases for $\Hom_B(\Delta(\lambda),T_j)$ as in Corollary~\ref{astfactexactly}.
Then define $Y(i,\lambda) := \{\phi_\lambda^{-1} \circ x^* \circ \phi_i\:|\:x \in X(\lambda,i)\}$.
This set lifts a basis for $\Hom_B(T_i, \nabla(\lambda))$
 as stipulated in Corollary~\ref{astfactexactly}. 
 Using these choices, the construction from the previous paragraph makes $A$ into a based quasi-hereditary algebra.
 Moreover, we now have that
 $Y(i,\lambda) = \sigma(X(\lambda,i))$ for all $i, \lambda$, so $A$
 is symmetrically based with the underlying anti-involution $\sigma$.
\end{proof}

\begin{corollary}[Quasi-hereditary algebras are based quasi-hereditary]\label{TBA}
Let $$
A = \bigoplus_{i,j \in I} e_i A e_j
$$ 
be an algebra realization
of a finite (resp., upper finite, resp., tilting-bounded essentially finite) 
highest weight category $\R$, with weight poset $(\Lambda,\leq)$
and labelling function $L$.
\begin{itemize}
\item[(1)]
There is an idempotent  expansion $A = \bigoplus_{i,j \in \hat I}
\hat e_i A \hat e_j$ of $A$
with $\Lambda\subseteq \hat I$, and 
subsets $$
Y(i,\lambda) \subset \hat e_i A \hat e_\lambda, \qquad
X(\lambda,j)
\subset \hat e_\lambda A \hat e_j$$ 
for all $\lambda\in\Lambda$ and $i,j \in
\hat I$
making $A$ into a finite (resp., upper finite, resp., essentially finite) 
based quasi-hereditary algebra with respect to the given
ordering on $\Lambda$. 
\item[(2)] If $\operatorname{char} \k \neq 2$ and 
$\R$ has a Chevalley duality $?^\vee$
then the choices in (1) can be made so that $A$ is symmetrically based
 with anti-involution $\sigma$ realizing $?^\vee$.
 \end{itemize}
\end{corollary}

\begin{proof}
(1) Let $A = \bigoplus_{i,j \in \hat I} \hat e_i A \hat e_j$ be an idempotent
expansion indexed by a set $\hat I$
chosen so that $\Lambda \subseteq \hat I$ and
$\hd (A \hat e_\lambda) \cong L(\lambda)$ for each $\lambda  \in \Lambda$.
We are going to apply the Ringel duality from
Definition~\ref{thesetup} (resp., Definition \ref{rd2}, resp., Definition  \ref{bored}).
In the finite or upper finite cases, we fix a choice of tilting generator $T$ for
$\R$ and let $B := \End_\R(T)^{\op}$.
In the essentially finite case, we fix a tilting generator
$T = \bigoplus_{j \in J} T_j$ for $\R$ then let $B := \left(\bigoplus_{i,j \in J}
  \Hom_\R(T_i, T_j)\right)^\op$.
Then in all cases the category $\R' := B\fdlmod$ is the Ringel dual of the original category.
It is a finite (resp., lower finite, resp., tilting-bounded essentially
finite) highest
weight category with
irreducible objects denoted
$\{L'(\lambda)\:|\:\lambda \in \Lambda\}$
and weight poset $(\Lambda,\geq)$.
Let $T'_i := (\hat e_i T)^* \in \R'$.
By Corollary~\ref{mustard} (resp., Corollary~\ref{mustard2}, resp., Corollary~\ref{mustardier}),
$T' = \bigoplus_{i \in \hat I} T'_i$ is a tilting generator for $\R'$
such that the original algebra
$A = \bigoplus_{i,j \in \hat I} \hat e_i A \hat e_j$ is isomorphic as a locally
unital algebra
to $\left(\bigoplus_{i,j \in \hat I} \Hom_{\R'}(T'_i,
  T'_j)\right)^\op$.
Moreover, $T'_\lambda$ is the indecomposable tilting module
$T'(\lambda)$ for each $\lambda \in \Lambda$.
To make $A$ into a based quasi-hereditary algebra, it remains to apply Theorem~\ref{TBAnew}(1) with $\R$, $(\Lambda,\leq)$ and $T_i$ replaced by $\R'$, $(\Lambda,\geq)$ and $T_i'$ in the present setup.

\vspace{1mm}
\noindent
(2) Assume that $\R$ has a Chevalley duality $?^\vee$. 
Then the category $\R'$
admits a Chevalley duality $?^\wedge$ such that the Ringel duality functors
intertwine $?^\vee$ and $?^\wedge$ as in (\ref{asint}).
This follows by Theorem~\ref{ugly}, using the assumption that $\operatorname{char} \k \neq 2$
and part (1) of the theorem to establish 
the existence of suitable bilinear forms as in part (2).
Hence, $\R'$ has a realization $(B,\tau)$ with $\tau$ being a Chevalley involution
realizing $?^\wedge$.
Then we can appeal to Theorem~\ref{TBAnew}(2), again using
Theorem~\ref{ugly}(1) to obtain suitable bilinear forms on each $T'_i$,
to deduce that 
the triangular basis can be chosen so that $A$ is symmetrically based.
In particular, this gives an anti-involution $\sigma:A\rightarrow A$ fixing
each $\hat e_i$. It remains to note that $?^\sigmadual$ realizes  $?^\vee$. It suffices to check this on finitely generated 
projectives when it follows from (\ref{asint}) (applied twice since we have used Ringel duality twice).
\end{proof}

In the finite case, Corollary~\ref{TBA} 
recovers \cite[Prop.~3.5]{KM} (but note that the result in {\em loc. cit.} is also valid over more general ground rings).

\subsection{\boldmath Based $\eps$-stratified and $\eps$-quasi-hereditary algebras}\label{newsec2}
In this subsection, we upgrade the results of $\S$\ref{newsec1} (excluding any that involve 
Chevalley duality) to
$\eps$-stratified and $\eps$-highest weight categories.
The main new definition is as follows.

\begin{definition}\label{eclairs}
By a {\em finite} (resp., {\em upper finite}, resp., {\em essentially finite}) {\em 
based $\eps$-stratified algebra}, we mean a finite-dimensional (resp., locally finite-dimensional, resp., essentially finite-dimensional) locally
unital algebra $A = \bigoplus_{i,j \in I} e_i A e_j$ with the
following additional data:
\begin{itemize}
\item[($\eps$S1)]
A subset $\B \subseteq I$ indexing the {\em special idempotents}
  $\{e_b\:|\:b\in \B\}$.
\item[($\eps$S2)]
A poset $(\Lambda,\leq)$ 
which is upper finite in the upper finite case and interval finite in the essentially finite case, such that $\Lambda \cap I = \varnothing$.
\item[($\eps$S3)]
A sign function
$\eps:\Lambda \rightarrow \{\pm\}$.
\item[($\eps$S4)]
A function $\rho:\B\rightarrow \Lambda$ with finite fibers $\B_\lambda
:= \rho^{-1}(\lambda)$.
\item[($\eps$S5)]
Sets $Y(i,b) \subset e_i A e_b$ and $X(b,j) \subset e_b A e_j$
for all $b \in \B$ and $i,j \in I$.
\end{itemize}
Let $Y(b) := \bigcup_{i \in I} Y(i,b)$ and $X(b) := \bigcup_{j \in I}
X(b,j)$.
There are then four axioms, the first three of which are as follows:
\begin{itemize}
\item[($\eps$S6)]
The products $y x$ for $(y,x) \in \bigcup_{b \in \B} Y(b)\times
  X(b)$
  are a basis for $A$.
\item[($\eps$S7)]
For $a,b \in \B$, the sets $Y(b,a)$ and $X(a,b)$ are empty
  unless $\rho(b) \leq \rho(a)$.
\item[($\eps$S8)]
The following hold for all $\lambda \in \Lambda$ and $a,b \in \B_\lambda$:
\begin{itemize}
\item
if $\eps(\lambda)=-$ then
$Y(a,a) = \{e_a\}$ and
  $Y(a,b) = \varnothing$ for $a \neq b$;
\item if 
$\eps(\lambda)=+$ then
$X(a,a) = \{e_a\}$ and
  $X(a,b) = \varnothing$ for $a \neq b$.
\end{itemize}
\end{itemize}
To formulate the fourth axiom, 
let $e_{\lambda} := \sum_{b \in \B_\lambda} e_b$ for short\footnote{This notation is unambiguous due to the assumption $\Lambda \cap I = \varnothing$.}
let $A_{\leq
  \lambda}$ be the quotient of $A$ by the two-sided ideal generated by
$\{e_{{\mu}}\:|\:\mu \not\leq\lambda\}$, and set $A_\lambda := \bar e_{{\lambda}}
A_{\leq\lambda} \bar e_{{\lambda}}$ (where $\bar x \in A_{\leq\lambda}$ denotes the image of
$x \in A$ as usual).
Then:
\begin{itemize}
\item[($\eps$S9)]
For each $\lambda \in \Lambda$, the finite-dimensional algebra
$A_\lambda$ is
basic and $\bar e_\lambda = \sum_{b \in \B} \bar e_b$
is a decomposition of its identity element into mutually orthogonal primitive idempotents.
\end{itemize}
\end{definition}

Definition~\ref{eclairs} in the special case that the stratification function $\rho$ is a bijection deserves its own name: 

\begin{definition}\label{eclairs2}
A {\em finite} (resp., {\em upper finite}, resp., {\em essentially finite}) {\em based
  $\eps$-quasi-hereditary algebra} is a 
  finite-dimensional (resp., locally finite-dimensional, resp., essentially finite-dimensional) locally
unital algebra $A = \bigoplus_{i,j \in I} e_i A e_j$ with the
following additional data:
\begin{itemize}
\item[($\eps$QH1)]
A subset $\Lambda \subseteq I$ indexing the {\em special idempotents}
  $\{e_\lambda\:|\:\lambda\in \Lambda\}$.
\item[($\eps$QH2)]
A partial order $\leq$ making the set $\Lambda$ into a poset which is interval finite in the essentially finite case and upper finite in the upper finite case.
\item[$\eps$QH3)]
A sign function
$\eps:\Lambda \rightarrow \{\pm\}$.
\item[($\eps$QH4)]
Sets $Y(i,\lambda) \subset e_i A e_\lambda$, $X(\lambda,j) \subset e_\lambda A e_j$
for $i,j \in I$ and $\lambda \in \Lambda$.
\end{itemize}
Let $Y(\lambda) := \bigcup_{i \in I} Y(i,\lambda)$ and $X(\lambda) := \bigcup_{j \in I}
X(\lambda,j)$.
The axioms are as follows:
\begin{itemize}
\item[($\eps$QH5)]
The products $y x$ for $(y,x) \in \bigcup_{\lambda \in \Lambda} Y(\lambda)\times
  X(\lambda)$
  are a basis for $A$.
\item[($\eps$QH6)]
For $\lambda,\mu\in\Lambda$, the sets $Y(\mu,\lambda)$ and $X(\lambda,\mu)$ are empty
  unless $\mu \leq \lambda$.
\item[($\eps$QH7)]
If $\eps(\lambda)=-$ then
$Y(\lambda,\lambda) = \{e_\lambda\}$, and
if  $\eps(\lambda)=+$ then $X(\lambda,\lambda) = \{e_\lambda\}$.
\end{itemize}
\begin{itemize}
\item[($\eps$QH8)]
For each $\lambda \in \Lambda$, the finite-dimensional algebra
$A_\lambda$ as defined in Definition~\ref{eclairs} 
is basic and local.
\end{itemize}
\end{definition}

From now on, we just formulate the results for based $\eps$-stratified
algebras, since based $\eps$-quasi-hereditary algebras
are a special case.
The development below parallels the treatment in the previous
subsection, but there are some additional subtleties.
 
 Remark~\ref{jabs}
remains true: one can always pass to a Morita equivalent algebra in which 
all of the distinguished idempotents are special.
The analog of Lemma~\ref{fire} is as follows.

\begin{lemma}\label{parasites}
Let $A$ be a finite, essentially finite or upper finite
based $\eps$-stratified
algebra. For $\lambda \in \Lambda$, any
element $f$ of the
two-sided ideal $A e_{{\lambda}} A$ can be written as a linear combination of
elements of the form $y x$ for $y \in Y(a), x \in X(a)$ and $a \in
\B_{\geq\lambda}$.
\end{lemma}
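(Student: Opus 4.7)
The plan is to mimic the proof of Lemma~\ref{fire}, proceeding by downward induction on $\lambda \in \Lambda$ in the upper finite case (the essentially finite case reduces to a finite subposet via the same trick as in the proof of Lemma~\ref{fire}, so I would only write out the upper finite case). The downward induction is legitimate since the poset is upper finite. Let $M_\lambda$ denote the span of the basis elements $yx$ with $y \in Y(a)$, $x \in X(a)$, and $\rho(a) \geq \lambda$; the goal is to show $A e_\lambda A \subseteq M_\lambda$.

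First I would reduce to showing that a product of the form $y_1 x_1 y_2 x_2$ lies in $M_\lambda$, where $y_1 \in Y(i_1,b_1)$, $x_1 \in X(b_1,b)$, $y_2 \in Y(b,b_2)$, $x_2 \in X(b_2,j_2)$ and $b \in \B_\lambda$. This follows by expanding arbitrary factors in $A e_\lambda A$ in the cellular basis. Axiom ($\eps$S6) forces $\rho(b_1), \rho(b_2) \geq \lambda$. If $\rho(b_r) > \lambda$ for some $r \in \{1,2\}$, then $f \in A e_{b_r} A$ and the induction hypothesis applied at $\rho(b_r)$ puts $f$ into $M_{\rho(b_r)} \subseteq M_\lambda$.

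The key case is $\rho(b_1) = \rho(b_2) = \lambda$, which is where axiom ($\eps$S7) enters and the sign $\eps(\lambda)$ controls which middle factor collapses. If $\eps(\lambda) = +$, then $x_1 \in X(b_1,b)$ with $b_1,b \in \B_\lambda$ forces $b_1 = b$ and $x_1 = e_b$, giving $f = y_1 y_2 x_2$. Expand $y_1 y_2 \in e_{i_1} A e_{b_2}$ in the cellular basis as $\sum_k \alpha_k z_k w_k$ with $z_k \in Y(i_1,d_k)$, $w_k \in X(d_k, b_2)$; by ($\eps$S6), $\rho(d_k) \geq \rho(b_2) = \lambda$. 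For each $k$ with $\rho(d_k) > \lambda$, the summand $z_k w_k x_2$ lies in $A e_{d_k} A$, which by induction is in $M_{d_k} \subseteq M_\lambda$; for each $k$ with $\rho(d_k) = \lambda$, axiom ($\eps$S7) forces $d_k = b_2$ and $w_k = e_{b_2}$, so $z_k w_k x_2 = z_k x_2$ is a basis element passing through $b_2 \in \B_\lambda$, hence already lies in $M_\lambda$. The case $\eps(\lambda) = -$ is dually the same: axiom ($\eps$S7) forces $y_2 = e_b$, reducing to $f = y_1 x_1 x_2$; expand $x_1 x_2$ in the cellular basis and repeat the analysis.

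The main obstacle is simply book-keeping: the proof needs two nested expansions in the cellular basis, with a careful case split on $\eps(\lambda)$ at each application of ($\eps$S7). Once it is clear that the ``same-stratum'' terms $X(d_k,b_2)$ or $Y(b_1,d_k)$ collapse to the identity idempotent, the product $y_1 z_k w_k x_2$ (respectively $y_1 z_k w_k x_2$ in the dual case) simplifies to a genuine cellular basis element of the required type, and no further expansion is needed. The essentially finite case is identical once one first replaces $\Lambda$ by the finite subposet of those $\mu$ with $e_i A e_\mu \neq 0$ or $e_\mu A e_j \neq 0$, which is finite by essential finite-dimensionality.
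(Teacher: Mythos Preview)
Your proposal is correct and follows essentially the same approach as the paper's proof: downward induction on $\lambda$ (upper finite case), reduction to a product $y_1 x_1 y_2 x_2$ via the cellular basis, the case split on whether $\rho(b_1)$ or $\rho(b_2)$ is strictly above $\lambda$, and in the equal case the use of ($\eps$S7) to collapse one middle factor followed by a second cellular-basis expansion. The paper also only writes out the $\eps(\lambda)=+$ case and defers the essentially finite case to the same trick as in Lemma~\ref{fire}, exactly as you suggest.
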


\begin{proof}
This is similar to the proof of Lemma~\ref{fire}. We just explain in
the upper finite case.
We may assume that
$f =y_1 x_1 y_2 x_2$ for
$y_1 \in Y(a_1), x_1 \in X(a_1,b)$,
$y_2 \in Y(b,a_2), x_2 \in X(a_2)$,
$b \in \B_\lambda$ and 
$a_1,a_2 \in \B_{\geq\lambda}$.
If $a_1 \in \B_{>\lambda}$ or $a_2 \in \B_{>\lambda}$, we are done by
induction.
If $a_1,a_2 \in \B_\lambda$, there are two cases according to
whether $\eps(\lambda) = +$ or $\eps(\lambda)=-$.
The arguments for these are similar, so we just go through the former
case when $\eps(\lambda)=+$.
Then we have that $a_1 = b$ and $x_1 = e_b$.
Hence $f = y_1 y_2 x_2$.
Then we use the basis again to expand $y_1 y_2$ as a linear combination of
terms $y_3 x_3$
for $y_3 \in Y(a_3), x_3 \in X(a_3,a_2)$ and $a_3 \in \B_{\geq\lambda}$.
If $a_3 \in \B_ \lambda$ then we get that $a_3=a_2$ and $x_3 = e_{a_2}$, so
$y_3 x_3 x_2 = y_3 x_2$ as required. If $a_3 \in \B_{>\lambda}$, we can then rewrite $y_3 x_3 x_2$ in the desired
form by induction.
\end{proof}

\begin{corollary}\label{climbing2}
Let $\Lambda\u$ be an upper set in $\Lambda$ and $\B\u := \rho^{-1}(\Lambda\u)$.
The two-sided ideal $J_{\Lambda\u}$ of $A$ generated
by $\{e_{{\lambda}}\:|\:\lambda \in \Lambda\u\}$ has basis
$\big\{y x\:\big|\:(y,x) \in \bigcup_{b \in \B\u}
Y(b)\times X(b)\big\}$.
\end{corollary}

Let $A$ be a based $\eps$-stratified algebra as in Definition~\ref{eclairs}.
For $\lambda \in \Lambda$, Corollary~\ref{climbing2} implies that
$A_{\leq\lambda}$ has basis
$\big\{\bar y \bar x\:\big|\:\text{$y \in Y(b), x \in X(b)$ and $b \in \B_{\leq\lambda}$}\big\}$.
Hence, the basic algebra
$A_\lambda = \bar e_{{\lambda}} A_{\leq\lambda} \bar e_{{\lambda}}$ has basis $$\textstyle
\big\{\bar y\:\big|\:y \in \bigcup_{a, b \in \B_\lambda} Y(a,b)\big\}\quad\text{if
$\eps(\lambda)=+$},
\qquad
\big\{\bar x\:\big|\:x \in \bigcup_{a,b \in \B_\lambda} X(a,b)\big\}\quad\text{if $\eps(\lambda)= -$.}
$$
Let $j^\lambda:A_{\leq\lambda}\lfdlmod \rightarrow A_\lambda\fdlmod, V \mapsto \bar e_{{\lambda}} V$ be the 
quotient functor $V \mapsto \bar e_{{\lambda}} V$, then define the
standardization and costandardization 
functors
\begin{align}\label{pinkeye}
j^\lambda_! &:= A_{\leq\lambda} \bar e_{{\lambda}} \otimes_{A_\lambda} ?,
&
j^\lambda_* &:= \bigoplus_{i \in I} \Hom_{A_\lambda}(\bar
              e_{{\lambda}} A_{\leq\lambda} \bar e_i,?),
\end{align}
which are left and right adjoints of $j^\lambda$, respectively.

\begin{lemma}\label{xmas}
If $\lambda \in \Lambda$ has $\eps(\lambda)=-$
then the standardization functor $j^\lambda_!$ is exact.
\end{lemma}

\begin{proof}
There is an isomorphism of right $A_\lambda$-modules
$\bigoplus_{a \in \B_\lambda} \bigoplus_{y \in Y(a)}
\bar e_a A_\lambda
\stackrel{\sim}{\rightarrow}
A_{\leq\lambda} \bar e_{{\lambda}}
$
sending the vector
$\bar e_a$ in the $y$th copy of $\bar e_a A_\lambda$ to $\bar y \in A_{\leq\lambda}
\bar e_{{\lambda}}$.
To see this, note as $\eps(\lambda)=-$ that the projective 
$A_\lambda$-module
$\bar e_a A_\lambda$ has basis $\big\{\bar x\:\big|\:x \in \bigcup_{b \in \B_\lambda}X(a,b)\big\}$,
and $A_{\leq\lambda} \bar e_{{\lambda}}$ has basis
$\big\{\bar y \bar x\:\big|\: (y,x) \in \bigcup_{a,b\in\B_\lambda} Y(a)\times X(a,b)\big\}$.
Hence, $A_{\leq\lambda} \bar e_{{\lambda}}$ 
is a projective right $A_\lambda$-module, and the exactness follows.
\end{proof}

Continuing with $A$ being a 
based $\eps$-stratified algebra, we let
\begin{align}
P_\lambda(b) &:= A_\lambda \bar e_b,
&
I_\lambda(b) &:= (\bar e_b A_\lambda)^\circledast,
&
L_\lambda(b) &:= \hd P_\lambda(b) \cong \soc I_\lambda(b)
\end{align}
for $b \in \B_\lambda$.
These give full sets of indecomposable projective, indecomposable injective, and irreducible $A_\lambda$-modules, respectively.
Then we define standard, proper standard, costandard and proper
costandard modules
\begin{align}\label{cellmodules1}
\Delta(b) &:= A_{\leq\lambda} \bar e_b \cong 
j^\lambda_! P_\lambda(b),
&
\bar\Delta(b) &:= j^\lambda_! L_\lambda(b),\\
\nabla(b) &:= (\bar e_b A_{\leq\lambda})^\circledast \cong 
j^\lambda_* I_\lambda(b),
&\bar\nabla(b) &:= j^\lambda_* L_\lambda(b),\label{cellmodules2}
\end{align}
cf. (\ref{rumble}).
Adopt the shorthands $\Delta_\eps(b)$ and $\nabla_\eps(b)$ from
(\ref{device}) too.
The module $\Delta_\eps(b)$ has a standard basis indexed by the
set $Y(b)$.
In the case that $\eps(\lambda)=+$, when $\Delta_\eps(b) = \Delta(b)$,
this basis is $\{y \bar e_b\:|\:y \in Y(b)\}$.
In the case that $\eps(\lambda)=-$, when $\Delta_\eps(b) =
\bar\Delta(b)$, let $\tilde e_b$ be
the canonical image of $\bar e_b$ under the natural quotient map
$\Delta(b)\twoheadrightarrow \bar\Delta(b)$.
Then the basis is $\{y \tilde e_b\:|\:y \in Y(b)\}$.
(One can also construct a costandard basis for $\nabla_\eps(b)$ 
indexed by $X(b)$ by taking a certain dual basis, but we will not need this
here.)

\begin{theorem}[$\eps$-Highest weight categories from based $\eps$-stratified algebras]\label{onewayplus}
Let $A$ be a finite (resp., upper finite, resp., essentially finite) based
$\eps$-stratified algebra as above.
The modules $$
\{L(b) := \hd \Delta_\eps(b)\cong\soc\nabla_\eps(b)\:|\:b \in \B\}$$ 
give a complete
set of pairwise inequivalent irreducible left $A$-modules.
Moreover, 
$\R := A\fdlmod$ (resp., $\R := A\lfdlmod$, resp., $\R := 
A\fdlmod$) is a finite (resp., upper finite, resp., essentially
finite) $\eps$-stratified category with
stratification $(\B,L,\rho,\Lambda,\leq)$.
Its strata may be identified with the categories $\R_\lambda :=
A_\lambda\fdlmod$ with
standardardization and costandardization functors
as in (\ref{pinkeye}).
\end{theorem}

\begin{proof}
For $b \in \B$, let $P_b$ be the left ideal $A e_b$.
We claim 
that $P_b$ has a $\Delta_\eps$-flag with $\Delta_\eps(b)$
at the top and other sections of the form $\Delta_\eps(a)$ for $a \in \B$
with $\rho(a) \geq \rho(b)$.
To prove this, suppose that $b \in \B_\lambda$ and set $P := P_b$ for
short.
Note $P$ has basis $\big\{yx\:\big|\:(y,x) \in \bigcup_{a \in \B_{\geq\lambda} }Y(a) \times X(a,b)\big\}$.
Let $\{\mu_1,\dots,\mu_n\}$ 
be the finite set $$
\big\{\mu \in [\lambda,\infty)\:\big|\:\textstyle\bigcup_{a \in \B_\mu} X(a,b) \neq
\varnothing\big\}
$$
ordered so that $\mu_r \leq \mu_s \Rightarrow r \leq s$;
in particular, $\mu_1=\lambda$.
Let $P_r$ be the subspace of $P$ spanned by
$\big\{y x\:\big|\: (y,x) \in \bigcup_{s=r+1}^n \bigcup_{a \in \B_{\mu_s}}  Y(a)\times X(a,b)\big\}$.
This defines a 
filtration $P = P_0 > P_1 > \cdots > P_n = 0$ in which the section
 $P_{r-1}/P_r$ has basis $\big\{yx+P_r\:\big|\:(y,x) \in \bigcup_{a \in \B_{\mu_r}} Y(a)\times X(a,b)\big\}$.
Now we show that each $P_{r-1}/P_r$ 
has a $\Delta_\eps$-flag with sections of the form $\Delta_\eps(a)$
for $a \in \B_{\mu_r}$.
There are two cases:

\vspace{1.5mm}
\noindent
Case 1: $\eps(\mu_r) = +$.
In this case, there is an $A$-module isomorphism
$$
\theta:\bigoplus_{a \in \B_{\mu_r}}
\bigoplus_{x \in X(a,b)} 
\Delta(a)
\stackrel{\sim}{\rightarrow}
 P_{r-1}/P_r
$$
sending the basis vector $y \bar e_{a}\:(y \in Y(a))$ in the $x$th copy of
$\Delta(a)$
to $y x + P_r \in P_{r-1}/P_r$. This follows from properties of the
basis and is similar to the proof of (\ref{firms}).

\vspace{1.5mm}
\noindent
Case 2: $\eps(\mu_r) = -$.
Note that $P_{r-1}/P_r$ is naturally an $A_{\leq\mu_r}$-module.
Let $Q_r := \bar e_{{\mu_r}} (P_{r-1} / P_r)$. This is an
$A_{\mu_r}$-module with basis $\{x + P_r\:|\: x \in
X(a,b), a \in \B_{\mu_r}\}$.
We claim that the natural multiplication map
$$
A_{\leq\mu_r} \bar e_{{\mu_r}} \otimes_{A_{\mu_r}} Q_r
\rightarrow P_{r-1} / P_r,
\qquad
y \bar e_{{\mu_r}} \otimes (x+P_r) \mapsto y x + P_r
$$ 
is an isomorphism.
This follows because the module on the left hand side is spanned by the vectors $
\big\{y \bar e_{{\mu_r}} \otimes (x+P_r)\:\big|\:
(y,x) \in \bigcup_{a \in \B_{\mu_r}} Y(a) \times X(a,b)\big\}$, 
and the images of these vectors under multiplication
are a basis for the module on the right.
Hence, $P_{r-1}/ P_r \cong j_!^{\mu_r} Q_r$. We deduce that 
it has a $\Delta_\eps$-flag
with sections of the form $\bar\Delta(a)\:(a \in \B_{\mu_r})$
on applying the standardization functor to a composition series for
$Q_r$, using the exactness from 
Lemma~\ref{xmas}.

\vspace{1.5mm}
\noindent
We can now complete the proof of the claim. The only thing left is to
check that the top section of the $\Delta_\eps$-flag we have
constructed so far is isomorphic to $\Delta_\eps(b)$.
This follows from the constructions just explained:
in the case $\eps(\lambda)=+$ we showed that 
$P_0 / P_1 \cong \Delta(b) = \Delta_\eps(b)$, while
if $\eps(\lambda)=-$ then the 
top section is $j_!^\lambda L_\lambda(b) = \Delta_\eps(b)$.

Using the claim just established, we can now classify the irreducible
$A$-modules.
For $b \in \B_\lambda$, the proper standard module
$\Delta_\eps(b)$ has irreducible head denoted $L(b)$.
This follows by the usual 
properties of adjunctions and the quotient functor $j^\lambda:A_{\leq
  \lambda}\lfdlmod \rightarrow A_\lambda \fdlmod, V \mapsto \bar
e_{{\lambda}} V$.
Moreover, $L(b)$ is the unique (up to isomorphism) 
irreducible $A_{\leq\lambda}$-module
such that $j^\lambda L(b) \cong L_\lambda(b)$,
hence, the modules $\{L(b)\:|\:b\in\B\}$ are pairwise
inequivalent.
To see that they give a full set of irreducible $A$-modules, let $L$
be any irreducible $A$-module.
By the analog of Remark~\ref{jabs}, there exists $b \in \B$ 
such that $e_b L \neq 0$.
Then $L$ is a quotient of $P_b = A e_b$.
Finally, using the claim, we deduce that
$L$ is a quotient of
$\Delta_\eps(a)$ for some $a \in \B$ with $\rho(a) \geq \rho(b)$ and thus $L$ is isomorphic to  $L(a)$.

Having classified the irreducible $A$-modules $\{L(b)\:|\:b \in \B\}$, 
$(\B,L,\rho,\Lambda,\leq)$ defines a stratification of $\R$.
We are in the recollement situation of (\ref{rst}), with
$\R_\lambda$ identified with $A_\lambda\fdlmod$.
Since (\ref{cellmodules1})--(\ref{cellmodules2}) 
agrees with (\ref{rumble}), the standard, proper standard,
costandard and proper costandard modules are the correct objects.
Moreover, the claim established at the start of the proof verifies the
property $(\widehat{P\Delta}_\eps)$.
\end{proof}

The goal in the remainder of the subsection is to prove a converse to
Theorem~\ref{onewayplus}.

\begin{theorem}[Based $\eps$-stratified algebras from $\eps$-highest weight categories]\label{TBA2new}
Let $\R$ be a finite (resp., lower finite, resp., tilting-bounded essentially finite) $\eps$-stratified category with stratification $(\B,L,\rho,\Lambda,\leq)$.
Suppose we are given $\B \subseteq I$ disjoint from $\Lambda$ and an $\eps$-tilting generator $T = \bigoplus_{i \in I} T_i$
such that each $T_b\:(b \in \B)$ is a direct sum of $T_\eps(b)$ and other $T_\eps(c)$
for $c$ with $\rho(c) < \rho(b)$.
Let $$
A := \left(\bigoplus_{i,j \in I} \Hom_\R(T_i, T_j)\right)^\op
$$
For $i,j \in I$ and $b \in \B$, pick morphisms
$$
Y(i,b) \subset \Hom_\R(T_i, T_b),\qquad X(b,j)
\subset \Hom_\R(T_b,T_j)
$$
lifting bases
for $\Hom_\R(T_i, \nabla_\eps(b))$ and $\Hom_\R(\Delta_\eps(b),T_j)$
as in Theorem~\ref{astfact} such that
$Y(b,b) = \{\id_{T_b}\}$ when $\eps(b) = +$ and
$X(b,b) = \{\id_{T_b}\}$ when $\eps(b) = -$.
These choices make $A$
into a finite (resp., upper finite, resp., essentially finite) 
based $(-\eps)$-stratified algebra 
with respect to the poset $(\Lambda,\geq)$ (the opposite ordering on $\Lambda$ compared to $\R$).
\end{theorem}

\begin{proof}
We need to check the axioms ($\eps$S6)--($\eps$S9).
Theorem~\ref{astfact} checks the first one.
The axioms ($\eps$S7)--($\eps$S8) also hold.
For example, if $\eps(\lambda) = +$ and $b \in \B_\lambda$, we have 
that $Y(b,b) = \{e_b\}$ by the choice of lifts, and 
$\Hom_{\R}(T_b, \nabla_{\eps}(a))$ is zero unless $a=b$ or $\rho(a) < \rho(b)$ (remembering we are checking these axioms for $-\eps$ not $\eps$).
It remains to check the final axiom ($\eps$S9). 
The algebra $A_\lambda$ in the statement of the axiom 
(remembering that we are working now with the opposite ordering) is the same as the algebra $A_\lambda$ in Lemma~\ref{aftersun}. By that lemma, there is an algebra isomorphism
\begin{equation}\label{windy}
\phi_\lambda:
A_\lambda \stackrel{\sim}{\rightarrow} \End_{\R_\lambda}(j^\lambda T_\lambda)^\op,
\end{equation}
where $T_\lambda := \bigoplus_{b \in \B_\lambda} T_b$.
If $\eps(\lambda)=+$ then $j^\lambda T_\lambda$ is 
a minimal projective generator for $\R_\lambda$
 thanks to
Theorem~\ref{gin}(3),
so the algebra on the
right hand side of (\ref{windy}) 
is basic and
$\bar e_\lambda = \sum_{b \in \B_\lambda} \bar e_b$
is a decomposition of its identity element as a sum of mutually orthogonal primitive idempotents.
If $\eps(\lambda)=-$, we have instead that $j^\lambda T_\lambda$
is a minimal injective cogenerator for $\R_\lambda$ and the conclusion follows similarly.
\end{proof}

\begin{corollary}\label{TBA2}
Let $\R$ be a finite (resp., upper finite, resp., tilting-bounded essentially finite)
$\eps$-stratified category with
the usual stratification $(\B,L,\rho,\Lambda,\leq)$. Let
$A = \bigoplus_{i, j \in I} e_i A e_j$ be an algebra realization of $\R$.
There is an idempotent 
expansion $A = \bigoplus_{i,j \in \hat I} \hat e_i A \hat e_j$
with $\B \subseteq \hat I$, and finite sets
$Y(i,b) \subset \hat e_i A \hat e_b, X(b,j) \subset \hat e_b A \hat
e_j$ for all $i,j \in \hat I$ and $b\in \B$,
making $A$ into a finite (resp., upper finite, resp., essentially finite) based
$\eps$-stratified algebra
with $\rho$ as its stratification function.
\end{corollary}

\begin{proof}
This follows from Theorem~\ref{TBA2new} in the same way as Corollary~\ref{TBA}
was deduced from Theorem~\ref{TBAnew}.
\end{proof}

\subsection{Based stratified and properly stratified algebras}\label{newsec3}
In this subsection, we consider modified versions of
Definitions~\ref{eclairs} and \ref{eclairs2} which
involve bases which do not depend 
on the sign function $\eps$.
These definitions, which were inspired in part by
\cite[Def.~2.17]{ELauda}, are relevant when studying
fully stratified rather than 
merely $\eps$-stratified categories.

\begin{definition}\label{strawberries}
A {\em finite} (resp., {\em upper finite}, resp., {\em essentially finite}) {\em based stratified algebra} is a 
finite-dimensional (resp.,
locally finite-dimensional, resp., essentially finite-dimensional) locally
unital algebra $A = \bigoplus_{i,j \in I} e_i A e_j$ with the
following additional data:
\begin{itemize}
\item[(S1)] A subset $\B \subseteq I$ indexing {\em 
special idempotents}
  $\{e_b\:|\:b\in \B\}$.
\item[(S2)]
A poset $(\Lambda,\leq)$ 
which is upper finite in the upper finite case and
interval finite in the essentially finite case, such that $\Lambda \cap I = \varnothing$.
\item[(S3)] A function 
$\rho:\B\rightarrow \Lambda$ with finite fibers $\B_\lambda
:= \rho^{-1}(\lambda)$.
\item[(S4)]
Sets $Y(i,a) \subset e_i A e_a$, 
$H(a,b) \subset e_a A e_b$, $X(b,j) \subset e_b A e_j$
for $i,j \in I$ and $a,b \in \B$.
\end{itemize}
Let 
$Y(a) := \bigcup_{i \in I} Y(i,a)$ and
$X(b) 
:= \bigcup_{j \in I} X(b,j)$.
The axioms are as follows:
\begin{itemize}
\item[(S5)] The products $y h x$ for $(y,h,x) \in \bigcup_{a,b\in\B} Y(a) \times H(a,b) \times X(b)$
  are a basis for $A$.
\item[(S6)]  For $a,b \in \B$ with $a \neq b$, the set $H(a,b)$ is empty unless $\rho(a) = \rho(b)$,
the sets $Y(b,a)$ and $X(a,b)$ are
empty unless $\rho(b) <\rho(a)$,
and $Y(a,a) = X(a,a) = \{e_a\}$.
\item[(S7)] 
The finite-dimensional algebra $A_\lambda$ defined as in Definition~\ref{eclairs} is
basic and 
$\bar e_\lambda = \sum_{b \in \B_\lambda} \bar e_b$
is a decomposition of its identity element as a sum of mutually orthogonal primitive idempotents.
\end{itemize}
We say that $A$ is {\em symmetrically based} if there is also some given
algebra anti-involution
$\sigma:A \rightarrow A$ with $\sigma(e_i) = e_i$ and
$Y(i,b) = \sigma(X(b,i))$ for all $i\in I, b \in \B$,
such that each of the algebras $A_\lambda\:(\lambda \in \Lambda)$ is $\sigma_\lambda$-symmetric
in the sense of Definition~\ref{sigmasymmetric}, where $\sigma_\lambda$ here is the anti-involution of $A_\lambda$ induced by $\sigma$.
 \end{definition}

Here is the same definition rewritten in the special case that the
stratification function $\rho$ is a bijection.

\begin{definition}\label{strawberries2}
A {\em finite} (resp., {\em upper finite}, resp., {\em essentially finite}) {\em based properly stratified algebra} is a finite-dimensional (resp., locally finite-dimensional, resp.,
essentially finite-dimensional) 
locally unital algebra $A = \bigoplus_{i,j \in I} e_i A e_j$ with the
following additional data:
\begin{itemize}
\item[(PS1)] A subset $\Lambda \subseteq I$ indexing {\em special idempotents}
  $\{e_\lambda\:|\:\lambda\in \Lambda\}$.
\item[(PS2)]
A poset $(\Lambda,\leq)$ upper finite in the upper finite case and 
interval finite in the essentially finite case.
\item[(PS3)]
Sets $Y(i,\lambda) \subset e_i A e_\lambda$, 
$H(\lambda) \subset e_\lambda A e_\lambda$, $X(\lambda,i) \subset e_\lambda A e_i$
for $\lambda \in \Lambda$, $i \in I$.
\end{itemize}
Let 
$Y(\lambda) := \bigcup_{i \in I} Y(i,\lambda)$ and
$X(\lambda) 
:= \bigcup_{i \in I} X(\lambda,i)$.
The axioms are as follows.
\begin{itemize}
\item[(PS4)] The products $y h x$ for $(y,h,x) \in \bigcup_{\lambda \in
    \Lambda} Y(\lambda) \times H(\lambda) \times X(\lambda)$
  are a basis for $A$.
\item[(PS5)]  For $\lambda,\mu \in \Lambda$, the sets
  $Y(\mu,\lambda)$ and $X(\lambda,\mu)$ are empty unless $\mu \leq
  \lambda$, and $Y(\lambda,\lambda) = X(\lambda,\lambda) =
  \{e_\lambda\}$.
\item[(PS6)] The finite-dimensional algebra $A_\lambda$ defined as in Definition~\ref{eclairs2}
  is basic and local.
\end{itemize}
We say that $A$ is {\em symmetrically based} if there is also some given
algebra anti-involution
$\sigma:A \rightarrow A$ with $\sigma(e_i) = e_i$ and
$Y(i,\lambda) = \sigma(X(\lambda,i))$ for all $i\in I, \lambda \in \Lambda$,
such that each of the algebras $A_\lambda\:(\lambda \in \Lambda)$ is $\sigma_\lambda$-symmetric, where $\sigma_\lambda$ here is the anti-involution of $A_\lambda$ induced by $\sigma$.
\end{definition}

In the remainder of the subsection, we just explain the results for based
stratified algebras, since based properly stratified algebras are a
special case.
For the next lemma,
we adopt the shorthands
\begin{align}
YH(i,b) 
&:= \left\{y h\:\big|\:
(y,h) \in \textstyle\bigcup_{a \in \B} Y(i,a) \times H(a,b)\right\},\\
HX(b,j) &
:= \left\{h x\:\big|\:
(h,x) \in \textstyle\bigcup_{a \in \B} H(b,a) \times X(a,j)\right\}.\label{lovely}
\end{align}
Also set
$YH(b) := \bigcup_{i \in I} YH(i,b)$
and
$HX(b) := \bigcup_{j \in I} HX(b,j)$.

\begin{lemma}\label{threat}
Suppose that $A$ is a based stratified algebra
as in Definition~\ref{strawberries}. Also let
$\eps:\Lambda\rightarrow \{\pm\}$ be any choice of sign function.
Then $A$ is a based $\eps$-stratified algebra
in the sense of Definition~\ref{eclairs}
with the required sets
$Y(i,b)$ and $X(b,j)$ for that
being the sets
$YH(i,b)$ and $X(b,j)$ 
in the present setup
if $\eps(\rho(b))=+$,
or the sets $Y(i,b)$ and $HX(b,j)$ in the present setup if $\eps(\rho(b))=-$.
\end{lemma}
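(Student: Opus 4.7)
The plan is to verify directly each of the eight axioms ($\eps$S1)--($\eps$S8) of Definition~\ref{eclairs} for the new data. Several are immediate: ($\eps$S1)--($\eps$S3) hold with the same $\B$, $\Lambda$, $\rho$ and sign function $\eps$ that are part of the based stratified data; ($\eps$S4) merely records the prescribed choice of $Y(i,b)$ and $X(b,j)$; and ($\eps$S8) is literally (BS8), since the algebras $\bar A_\lambda = \bar e_\lambda A_{\leq\lambda}\bar e_\lambda$ and their idempotents $\bar e_b$ depend only on $A$, $\Lambda$ and $\rho$, not on any basis choice.

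The main content is axiom ($\eps$S5), which I would check by regrouping the stratified basis from (BS5). Fix $\lambda \in \Lambda$ and $b \in \B_\lambda$. If $\eps(\lambda)=+$, any new product $y\,x$ with $y \in Y H(i,b)$ and $x \in X(b,j)$ is, by definition of $YH$, of the form $(y' h)\,x$ with $y' \in Y(i,a)$, $h \in H(a,b)$, $x \in X(b,j)$, $a \in \B_\lambda$; conversely every basis vector $y' h x$ of (BS5) indexed by the stratum $\lambda$ arises in this way exactly once. If $\eps(\lambda)=-$, the parallel rewriting $y\,(h x')$ with $y \in Y(i,b)$, $h \in H(b,c)$, $x' \in X(c,j)$, $c \in \B_\lambda$ does the job. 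Summing over $b \in \B$ (and thus over $\lambda$) now identifies the family of new products $y x$ with the basis of (BS5), so ($\eps$S5) holds.

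Axiom ($\eps$S6) is inherited from (BS6). Take $a, b \in \B$ with $\rho(b) = \lambda$. If $\eps(\lambda)=+$, any element of the new $Y(a,b) = Y H(a,b)$ involves a factor from $Y(a,c)$ for some $c \in \B_\lambda$, which by (BS6) is empty unless $\rho(a) \leq \rho(c) = \rho(b)$; while the new $X(b,a)$ equals the old $X(b,a)$ and vanishes unless $\rho(b) \leq \rho(a)$ by (BS6). The case $\eps(\lambda)=-$ is parallel, with the roles of $Y$ and $X$ exchanged. Finally, ($\eps$S7) is a direct case check using (BS7): when $\eps(\lambda)=-$, the new $Y(a,b)$ for $a,b \in \B_\lambda$ equals the old $Y(a,b)$, which is $\{e_a\}$ if $a=b$ and empty otherwise; dually, when $\eps(\lambda)=+$, the new $X(a,b)$ coincides with the old $X(a,b)$. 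Altogether the proof is pure bookkeeping, and the only real obstacle is keeping the indexing straight when unpacking the composite products $y h x$ into either $(yh)\,x$ or $y\,(hx)$ according to the sign of the stratum.
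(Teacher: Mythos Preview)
Your proof is correct and follows exactly the paper's approach, which simply states that the result follows on comparing Definitions~\ref{eclairs} and~\ref{strawberries}; you have written out that comparison explicitly. One small slip: in your verification of ($\eps$S6), the inequality for the new $X(b,a)$ should read $\rho(a)\leq\rho(b)$ rather than $\rho(b)\leq\rho(a)$, since (BS6) asserts $X(b,a)=\varnothing$ unless $\rho(a)\leq\rho(b)$.
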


\begin{proof}
This follows on comparing Definitions~\ref{eclairs} and
\ref{strawberries}.
\end{proof}

This means that the results from the previous
subsection apply 
to based stratified algebras too.
In particular, we define the standard, proper standard, costandard and
proper costandard modules as in 
(\ref{cellmodules1})--(\ref{cellmodules2}). The modules $\Delta(b)$ and $\bar\Delta(b)$ have
standard bases
$\{y \bar e_b\:|\:y \in YH(b)\}$ and 
$\{y \tilde e_b\:|\:y \in Y(b)\}$, respectively. Similarly, one can
introduce costandard bases for $\nabla(b)$ and $\bar\nabla(b)$ indexed
by the sets $HX(b)$ and $X(b)$, respectively.
Note also that the basic algebra $$
A_\lambda = \bigoplus_{a,b \in \B_\lambda} \bar e_a A_\lambda \bar e_b
$$ 
has basis $\big\{\bar h\:\big|\:h \in \bigcup_{a,b \in \B_\lambda} H(a,b)\big\}$.

\begin{theorem}[Fully stratified categories from based stratified algebras]\label{twoway}
Let $A$ be a finite (resp., upper finite, resp., essentially finite) 
based stratified algebra as above.
The modules $$
\{L(b) := \hd \Delta(b)\cong \hd \bar\Delta(b)\cong\soc\bar\nabla(b) \cong \soc\nabla(b)\:|\:b \in \B\}$$ 
give a full
set of pairwise inequivalent irreducible left $A$-modules.
Moreover, 
$\R := A\fdlmod$ (resp., $\R:=A\lfdlmod$, resp., $\R := A\fdlmod$) is a finite (resp., upper finite, resp.,
essentially finite) fully stratified category with
stratification $(\B,L,\rho,\Lambda,\leq)$ with strata $\R_\lambda := A_\lambda\fdlmod$.
If $A$ is symmetrically based with anti-involution $\sigma$ then $?^\sigmadual:\R\rightarrow\R$ is a Chevalley duality of $\R$ in the sense of Definition~\ref{chevinvdef}.
\end{theorem}

\begin{proof}
Using Lemma~\ref{threat}, 
the first part follows from Theorem~\ref{onewayplus} applied twice,
once with $\eps=+$ and once with $\eps=-$.
For the final part about Chevalley duality, axiom (Ch1) from Definition~\ref{chevinvdef} is established in the course of the proof of Theorem~\ref{onewayplus}, and (Ch2) follows from the definition of symmetrically
based stratified algebra.
\end{proof}

For the converse recall the definition of tilting-rigid from Definition~\ref{tiltingrigiddef}.

\begin{theorem}[Based stratified algebras from fully stratified categories]\label{TBA3new}
Let $\R$ be a 
finite (resp., lower finite, resp., essentially finite) fully stratified category with stratification $(\B,L,\rho,\Lambda,\leq)$.
Assume that $\R$ is tilting-rigid with weakly symmetric strata.
Suppose we are given $\B \subseteq I$ disjoint from $\Lambda$ and
a tilting generator $T = \bigoplus_{i \in I} T_i$
such that each $T_b\:(b \in \B)$ is a direct sum of $T(b)$ and other $T(c)$
for $c$ with $\rho(c) < \rho(b)$.
Let
$$
A := \left(\bigoplus_{i,j \in I} \Hom_\R(T_i, T_j)\right)^\op
$$
\begin{itemize}
\item[(1)]
For $i,j \in I$ and $a,b \in \B$, pick morphisms
$$
Y(i,a) \subset \Hom_\R(T_i, T_a), \:
H(a,b) \subset \Hom_\R(T_a,T_b), \: 
X(b,j) \subset \Hom_\R(T_b,T_j)
$$
lifting bases
for $\Hom_\R(T_i, \bar\nabla(a)$, $\Hom_\R(\Delta(a),\nabla(b))$  and $\Hom_\R(\bar\Delta(b),T_j)$
as in Theorem~\ref{upgradedastfact}
such that
$Y(b,b) = X(b,b) = \{\id_{T_b}\}$.
These choices give a triangular basis making
into a finite (resp., upper finite, resp., essentially finite) 
based stratified algebra 
with respect to the poset $(\Lambda,\geq)$ (the opposite ordering on $\Lambda$ compared to $\R$).
\item[(2)]
If in addition $\R$ has a Chevalley duality $?^\vee$ and, in a suitable
realization, the modules corresponding to each $T_i$ possess non-degenerate symmetric bilinear forms 
satisfying the adjunction property as in (\ref{sunriver}), then the triangular basis in (1) can be chosen so that $A$ is symmetrically based.
\end{itemize}
\end{theorem}

\begin{proof}
Part (1) is similar to the proof of Theorem~\ref{TBA2new}, using Theorem~\ref{upgradedastfact}
in place of Theorem~\ref{astfact}. Part (2) follows in the same way as in the proof of Theorem~\ref{TBA}(2).
\end{proof}

\begin{corollary}\label{TBA3}
Let $\R$ be a finite (resp., upper finite, resp., essentially finite) fully stratified category with
stratification $(\B,L,\rho,\Lambda,\leq)$.
Let $A = \bigoplus_{i, j \in I} e_i A e_j$ be an algebra realization of $\R$.
\begin{itemize}
\item[(1)]
Assume that $\R$ is tilting-rigid with weakly symmetric strata.
Then there is an idempotent 
expansion $A = \bigoplus_{i,j \in \hat I} \hat e_i A \hat e_j$
with $\B \subseteq \hat I$, and finite sets
$$
Y(i,a) \subset \hat e_i A \hat e_a, 
\:
H(a,b) \subset \hat e_a A \hat e_b, 
\:
X(b,j) \subset \hat e_b A \hat e_j
$$ 
for all $i,j \in \hat I$ and $a,b \in \B$,
making $A$ into an upper finite (resp., essentially finite)
based stratified algebra.
\item[(2)]
Assume that $\R$ is tilting-rigid with a Chevalley duality $?^\vee$
and that $\operatorname{char} \k \neq 2$.
Then the choices in (1) can be made so that $A$ is symmetrically based
 with anti-involution $\sigma$ realizing $?^\vee$.
 \end{itemize}
\end{corollary}

\begin{proof}
This follows from Theorem~\ref{TBA3new} in the same way as Corollary~\ref{TBA}
was deduced from Theorem~\ref{TBAnew}. One also needs to use the fact that the Ringel dual $\R'$ of $\R$ is tilting-rigid by Theorem~\ref{tiltingrigiddual}.
\end{proof}

\subsection{Algebras with a triangular basis}\label{stb}
The final axiom (S7) of Definition~\ref{strawberries}, namely, 
that the algebra $A_\lambda$ is basic, 
is quite restrictive.
However, this assumption is not essential, as we will explain in this subsection. The following simply repeats
Definition~\ref{strawberries} with the final axiom dropped, but at the same time we switch to using the notation $\partial:\BS\rightarrow\Lambda$ where we had $\rho:\B\rightarrow \Lambda$ 
before.

\begin{definition}\label{raspberries}
Let $A = \bigoplus_{i,j \in I} e_i A e_j$ be a 
finite-dimensional (resp., locally finite-dimensional, resp., essentially finite-dimensional) locally unital algebra.
We say that $A$ 
has a {\em triangular basis} if we are given the
following additional data:                          
\begin{itemize}
\item[(TB1)] A subset $\BS \subseteq I$ indexing {\em special idempotents}
  $\{e_s\:|\:s\in \BS\}$.
\item[(TB2)] A poset $(\Lambda,\leq)$
which is upper finite in the locally finite-dimensional
case and interval finite in the essentially finite-dimensional 
case, such that $\Lambda\cap I = \varnothing$.
\item[(TB3)] A function 
$\partial:\BS\rightarrow \Lambda$ with finite fibers $\BS_\lambda := \partial^{-1}(\lambda)$.
\item[(TB4)]
Sets $Y(i,s) \subset e_i A e_s$, 
$H(s,t) \subset e_s A e_t$, $X(t,j) \subset e_t A e_j$
for $i,j\in I$ and $s,t \in \BS$.
\end{itemize}
Let 
$Y(s) := \bigcup_{i \in I} Y(i,s)$ and
$X(t) 
:= \bigcup_{j \in I} X(t,j)$.
The axioms are as follows:
\begin{itemize}
\item[(TB5)] The products $y h x$ for  $(y,h,x) \in \bigcup_{s,t \in \BS} Y(s) \times H(s,t) \times X(t)$
  are a basis for $A$.
\item[(TB6)]  For $s,t \in \BS$ with $s\neq t$, the set $H(s,t)$ is empty unless $\partial(s) = \partial(t)$, the sets $Y(t,s)$ and $X(s,t)$ are
  empty unless $\partial(t) < \partial(s)$,
and $Y(s,s)=X(s,s) = \{e_s\}$.
\end{itemize}
\end{definition}

Suppose that $A$ has a triangular basis as in Definition~\ref{raspberries}. 
We define algebras $A_\lambda = \bar e_{{\lambda}} A_{\leq\lambda} \bar e_{{\lambda}}$ for each $\lambda \in \Lambda$ 
like at the end of Definition~\ref{eclairs}. 
Thus, we let $e_{{\lambda}} := \sum_{s \in \BS_\lambda} e_s$,
then set
$A_\lambda := \bar e_{{\lambda}}
A_{\leq\lambda} \bar e_{{\lambda}}$ 
where $A_{\leq\lambda}$ is the quotient of $A$ by the two-sided ideal generated by
    $\{e_{{\mu}}\:|\:\mu\not\leq\lambda\}$.
Corollary~\ref{climbing2} carries over to show that
$A_{\leq\lambda}$ has basis $\bar y \bar h \bar x$ for all $y \in Y(s), h \in
H(s,t), x \in X(t)$ and $s,t \in \BS$ with $\partial(s), \partial(t)\leq \lambda$.
Hence, $A_\lambda$ has basis $\big\{\bar h\:\big|\:h \in \bigcup_{s,t \in \BS_\lambda}
H(s,t)\big\}$. 
Let $j^\lambda:A_{\leq\lambda}\lmod \rightarrow A_\lambda\lmod, V \mapsto \bar e_{{\lambda}} V$ be the 
quotient functor and define $j^\lambda_!$ and $j^\lambda_*$ analogously to (\ref{pinkeye}).

\begin{lemma}\label{xmas2}
The functors $j^\lambda_!$ and $j^\lambda_*$ are exact.
\end{lemma}

\begin{proof}
By the argument from the proof of Lemma~\ref{xmas}, there is an isomorphism of right $A_\lambda$-modules
$\bigoplus_{s \in \BS_\lambda} \bigoplus_{y \in Y(s)}
\bar e_s A_\lambda
\stackrel{\sim}{\rightarrow}
A_{\leq\lambda} \bar e_{{\lambda}}
$
sending the vector
$\bar e_s$ in the $y$th copy of $\bar e_s A_\lambda$ to $\bar y \in A_{\leq\lambda}
\bar e_{{\lambda}}$.
So the
right $A_\lambda$-module $A_{\leq\lambda} \bar e_{{\lambda}}$ 
is  projective, which implies the exactness of $j^\lambda_!$.
Similarly, 
the left $A_\lambda$-module $\bar e_{{\lambda}} A_{\leq\lambda}$ 
is projective, which implies the exactness of $j^\lambda_*$.
\end{proof}

  The following theorem is essentially \cite[Th.~3.5]{GRS},
  although we give a self-contained proof since our notation is different enough. See Remark~\ref{history} for
  further historical discussion.

\begin{theorem}[Fully stratified categories from triangular bases]\label{grs}
Let $A$ be a finite-dimensional (resp., locally finite-dimensional, resp. essentially finite-dimensional algebra with a triangular basis as above.
Let $\rho:\B \rightarrow \Lambda$ be a function whose fibers $\B_\lambda := \rho^{-1}(\lambda)$ label a full set $\{L_\lambda(b)\:|\:b \in \B_\lambda\}$
of pairwise inequivalent irreducible left $A_\lambda$-modules.
Let $\bar\Delta(b) := j^\lambda_! L_\lambda(b)$ and
$\bar\nabla(b) := j^\lambda_* L_\lambda(b)$
for $b \in \B_\lambda$.
Then the modules 
$$
\{L(b):= 
\hd \bar\Delta(b) \cong \soc \bar\nabla(b) \:|\:b \in \B\}
$$ 
give a full set of pairwise inequivalent irreducible left $A$-modules.
Moreover, $\R := A \fdlmod$ (resp., $\R := A\lfdlmod$, resp., $\R := A\fdlmod$) is a finite (resp., upper finite, resp., essentially finite) 
fully stratified category with stratification $(\B,L,\rho,\Lambda,\leq)$. Its strata are the categories $\R_\lambda := A_{\lambda}\fdlmod$, with
standardization and
costandardization functors as in (\ref{pinkeye}).
\end{theorem}

\begin{proof}
Take $u \in \BS_\lambda$ and any
$b \in \B_\lambda$ such that $\bar e_u L_\lambda(b) \neq 0$.
We claim that 
$A e_u$ has a
$\bar\Delta$-flag with $\bar\Delta(b)$ at the top and other
sections of the form $\bar\Delta(c)$ for $c$ with
$\rho(c) \geq \lambda$.
To see this, let $P := A e_u$ for short.
Note $P$ has basis
$$\big\{yhx\:\big|\:\textstyle(y,h,x) \in \bigcup_{\mu \geq \lambda} \bigcup_{s,t \in \BS_\mu} Y(s) \times H(s,t) \times X(t,u)\big\}.
$$
Let 
$\{\mu_1,\dots,\mu_n\}$ be the finite set
$
\big\{\mu \in [\lambda,\infty)\:\big|\:\textstyle\bigcup_{t \in S_\mu} X(t,u) \neq \varnothing\big\}
$ 
enumerated in some order refining $\leq$.
There is a filtration $P = P_0 > P_1 > \cdots > P_n = 0$
in which the section $P_{r-1}/P_r$ has basis 
$\big\{yhx+P_r\:\big|\:(y,h,x) \in \bigcup_{s,t \in \BS_{\mu_r}} Y(s) \times H(s,t) \times X(t,u)\big\}$.
Moreover, $
P_{r-1} / P_r \cong j^{\mu_r}_! Q_r$
 where 
$Q_r := \bar e_{{\mu_r}} (P_{r-1}/P_r)$.
This follows by a similar argument to the Case 
2 in the proof of Theorem~\ref{onewayplus}.
Since $j^{\mu_r}_!$ is
exact by Lemma~\ref{xmas2}, it follows that $P_{r-1} / P_r$ has a $\bar\Delta$-flag with
sections $\bar\Delta(c)$ for $c \in \B_{\mu_r}$.
So we have proved that $P$ has a $\bar\Delta$-flag with sections $\bar\Delta(c)$ for $c \in \B$ with $\rho(c) \geq \lambda$.
Moreover, $P_0 / P_1 \cong j_!^{\lambda} (A_\lambda \bar e_u)$. Since
$A_\lambda \bar e_u$ has $L_\lambda(b)$ in its head, it follows
that the $\bar\Delta$-flag can be chosen so that it has $\bar\Delta(b)$ at its top.

Now we can classify the irreducible left $A$-modules.
As in the penultimate paragraph of the proof of Theorem~\ref{onewayplus}, the modules
$\{L(b) := \hd \bar\Delta(b)\:|\:b \in \B\}$
are pairwise inequivalent irreducible $A$-modules.
It remains to show that any irreducible left $A$-module $L$ is isomorphic to some such module.
There exists $u \in \BS$ such that $e_u L \neq 0$. Hence, $L$ is a quotient of $A e_u$. By considering the filtration of $A e_u$ from the previous paragraph we deduce that $L$ is a quotient of $\bar \Delta(c)$ for some $c \in \B$, i.e., $L \cong L(c)$.

At this point, we have in hand the data of a stratification of $\R$
with strata 
$\R_\lambda := A_{\lambda}\fdlmod$ and
standardization and
costandardization functors as in (\ref{pinkeye}). 
For each $b \in \B_\lambda$, choose  $u \in \BS_\lambda$ such that $\bar e_u L_\lambda(b) \neq 0$ then set $P_b := A e_u$.
The claim established in the first paragraph of the proof
checks that
these modules satisfy the property
$(\widehat{P\Delta}_-)$, hence, $\R$ is an upper finite (resp., essentially finite) $-$-stratified category. Finally we deduce that it is fully stratified using 
the criterion from Lemma~\ref{rain}(iv) plus Lemma~\ref{xmas2}.
\end{proof}

\begin{corollary}\label{laughter}
Let $A$ be as above.
If each of the finite-dimensional algebras $A_\lambda$ is quasi-hereditary (e.g., they could all be semisimple), then the stratification can be refined to make the category $\R$ from Theorem~\ref{grs} into a
highest weight category. 
\end{corollary}

\begin{proof}
Combine Theorem~\ref{grs} and Corollary~\ref{eve}.
\end{proof}

\begin{remark}\label{history}
We did not fully appeciate the utility of Definition~\ref{raspberries}
before seeing \cite{GRS}, in which Gao, Rui and Song
introduce a notion of an
algebra with a {\em weak triangular decomposition}
and give a (slightly different) proof of Theorem~\ref{grs} for such
algebras. They justify their definition by
constructing several interesting families of examples, namely, cyclotomic
quotients of the affine
oriented Brauer and HOMFLY-PT skein categories and of the affine
Brauer and Kauffman skein categories.
In the special case that $I =
\BS$, i.e., all distinguished idempotents are special, our notion of an
algebra with a triangular basis is
exactly equivalent to the notion
of an algebra with a weak triangular decomposition.
More precisely, a weak triangular decomposition is the data of
subspaces $A^- = \bigoplus_{i,j \in I} e_i A^- e_j, A^\circ =
\bigoplus_{i,j \in I} e_i A^\circ e_j,
A^+ = \bigoplus_{i,j \in I} e_i A^+ e_j$ for $i,j \in I$ subject to certain axioms.
Picking homogeneous bases $Y(i,j), H(i,j)$ and $X(i,j)$ for $e_i A^- e_j,
e_i A^\circ e_j$ and $e_i A^+ e_j$, respectively,
produces a triangular basis in the sense of
Definition~\ref{raspberries}. Conversely given
a triangular basis one obtains a weak triangular decomposition by
replacing the bases by the subspaces that they span.
\end{remark}

\subsection{Algebras with a triangular decomposition}\label{std}
Let $A$ be an algebra with a triangular basis as in
Definition~\ref{raspberries} and assume in addition that
$I = \BS$, i.e., all of the distinguished idempotents are special.
Let
$A^\flat$ and $A^\sharp$ be the subspaces spanned by
$\big\{yh\:\big|\:(y,h) \in \textstyle\bigcup_{i,j\in I} Y(i) \times H(i,j)\big\}$
and
$\big\{hx\:\big|\:(h,x) \in \textstyle\bigcup_{i,j\in I} H(i,j) \times X(j)\big\}$,
respectively.
If it happens that
these subspaces are locally unital subalgebras\footnote{Locally unital
    subalgebra means subspace closed under multiplication and containing
    all of the distinguished idempotents.} of $A$
then $A$ has a triangular decomposition in the following sense.

\begin{definition}\label{prankster}
Let $A = \bigoplus_{i, j \in I} e_i A e_j$ be a finite-dimensional (resp., locally finite-dimensional, resp., essentially finite-dimensional) locally unital algebra.
A {\em triangular decomposition} of $A$
is the following additional data:
\begin{itemize}
\item[(TD1)]
A poset $(\Lambda,\leq)$ which is upper finite in the locally finite-dimensional case or interval finite in the essentially finite-dimensional case.
\item[(TD2)]
A function $\partial:I \rightarrow \Lambda$ with finite
fibers $I_\lambda := \partial^{-1}(\lambda)$.
\item[(TD3)] Locally unital subalgebras $A^\flat$ and $A^\sharp$.
\end{itemize}
We call $A^\flat$ and $A^\sharp$ the {\em negative} and {\em positive Borel subalgebras}.
Let $A^\circ := A^\flat \cap A^\sharp$. 
This is also a locally unital subalgebra called the {\em Cartan subalgebra}.
The following axioms are required to hold:
\begin{itemize}
\item[(TD4)] $A^\flat$ is a projective right $A^\circ$-module
  and $A^\sharp$ is a projective left $A^\circ$-module.
\item[(TD5)] The natural multiplication map 
$A^\flat \otimes_{A^\circ} A^\sharp \rightarrow
A$ is a linear isomorphism.
\item[(TD6)] For $i,j \in I$,
  $e_j A^\flat e_i$ and $e_i A^\sharp e_j$ are zero unless
  $\partial(j) \leq \partial(i)$,
  and $e_i A^\flat e_j = e_i A^\sharp e_j$ when $\partial(i) = \partial(j)$.
\end{itemize}
\end{definition}

\begin{remark}\label{triangularhistory2}
Our formulation of Definition~\ref{prankster} has been influenced by the definition of a {\em triangular category} from a
  recent preprint of Sam and Snowden \cite{SS}; these are finite-dimensional categories
satisfying equivalent axioms to algebras with an
  upper finite triangular decomposition in the above sense in which the Cartan subalgebra is semisimple.
In an earlier draft, we had formulated a
slightly more restrictive notion which we now refer to a {\em split} 
triangular decomposition, as follows. 
Let $A = \bigoplus_{i,j \in I} e_i A e_j$ be a finite-dimensional (resp., locally finite-dimensional, resp., essentially finite-dimensional) locally unital algebra.
We say that $A$ has a {\em split triangular decomposition} if we have the additional data:
\begin{itemize}
\item[(STD1)]
A poset $(\Lambda,\leq)$ which is upper finite in the locally finite-dimensional case and interval finite in the essentially finite-dimensional case.
\item[(STD2)]
A function $\partial:I \rightarrow \Lambda$ with finite
fibers $I_\lambda := \partial^{-1}(\lambda)$.
\item[(STD3)] Locally unital subalgebras $A^-$, $A^\circ$ and $A^+$.
\end{itemize}
Letting $\K := \bigoplus_{i \in I} \k e_i$, the axioms are:
\begin{itemize}
\item[(STD4)] The subspaces $A^\flat := A^- A^0$ and $A^\sharp := A^0
  A^+$
  are subalgebras.
\item[(STD5)] The natural multiplication map 
$A^- \otimes_{\K} A^\circ \otimes_{\K} A^+ \rightarrow A$ is a linear
isomorphism.
\item[(STD6)] For $i,j \in I$ with $i \neq j$,
    $e_i A^\circ e_j$ is zero unless $\partial(i) = \partial(j)$,
  $e_j A^- e_i$ and  $e_i A^+ e_j$ are zero unless $\partial(j) < \partial(i)$, and
  $e_i A^\flat e_i  = e_i A^\sharp e_i = \k e_i$ for all $i \in I$.
\end{itemize}
The axiom (STD5) implies that $A^\flat \cong A^- \otimes_\K A^\circ$
and $A^\sharp \cong A^\circ \otimes_\K A^\sharp$. Hence, by associativity of tensor product we have that
$$
A^\flat \otimes_{A^\circ} A^\sharp
\cong
A^- \otimes_{\K} A^\circ \otimes_{A^\circ} A^\circ \otimes_{\K} A^+
\cong
A^- \otimes_\K A^\circ\otimes_\K A^+ \cong A,
$$
proving (TD5).
Moreover, the isomorphisms 
$A^\flat \cong A^- \otimes_\K A^\circ$
and $A^\sharp \cong A^\circ \otimes_\K A^\sharp$ show
that $A^\flat$ and $A^\sharp$ are $I$-free 
in the sense of Definition~\ref{locfreedef}
as right and left $A^\circ$-modules, respectively, which implies (TD4).
Axiom (TD6) is also easily deduced from (STD6).
When they hold, the axioms (STD4)--(STD6) are easier to check than (TD4)--(TD6),
so this gives a practical way to obtain triangular decompsitions. In fact,
most of the examples of triangular
decompositions arising from diagrammatic
monoidal categories considered in \cite{SS} and elsewhere are split
triangular decompositions, so the split formulation is useful.
 \end{remark}

\begin{remark}\label{triangularhistory1}
In \cite{HN},
Holmes and Nakano introduced a notion of a $\Z$-graded algebra
with a triangular decomposition. To explain the connection to our setup,
suppose we are 
given a unital $\Z$-graded algebra $\tilde A = \bigoplus_{\lambda \in \Z} \tilde A_\lambda$.
There is an associated locally unital algebra $A =
\bigoplus_{\lambda,\mu \in \Z} e_\lambda A e_\mu$ 
with $e_\lambda A e_\mu := \tilde
A_{\lambda-\mu}$ and multiplication induced by multiplication in $\tilde A$ in the
natural way. 
Moreover, any $\Z$-graded left
$\tilde A$-module $V = \bigoplus_{\lambda \in \Z} V_\lambda$ can be viewed as a left $A$-module 
with $e_\lambda V := V_\lambda$; this 
defines an isomorphism from the usual category $\tilde
A\lgmod$ of $\Z$-graded $\tilde A$-modules and degree-preserving
morphisms to the category $A\lmod$ of locally unital
$A$-modules. 
If we start with $\tilde A$ that is a finite-dimensional $\Z$-graded algebra
with a triangular decomposition $(\tilde A^-, \tilde A^\circ, \tilde A^+)$ 
as in \cite{HN} (see also \cite[Def.~3.1]{BT})
then the essentially finite-dimensional locally unital algebra $A$ and the subalgebras $A^\circ, A^-$
and $A^+$
obtained via this construction has a split triangular
decomposition, with $I = \Lambda =
\Z$ ordered in the natural way.
\end{remark}

To make the connection
with Definition~\ref{raspberries}, 
suppose that $A$ has a triangular decomposition. 
For $\lambda \in \Lambda$, let 
$1_\lambda := \sum_{i \in I_\lambda} e_i$. The axioms imply that 
  $e_i A^\circ e_j = 0$ unless $\partial(i) = \partial(j)$, so
  $1_\lambda A^\circ 1_\mu = 0$ for $\lambda \neq \mu$.
It follows that $\{1_\lambda\:|\:\lambda \in \Lambda\}$
are mutually orthogonal central idempotents in $A^\circ$,
and the Cartan subalgebra has the ``block" decomposition
\begin{equation}\label{blockk}
  A^\circ =\bigoplus_{\lambda \in \Lambda} A_\lambda^\circ
  \qquad\text{ where }\qquad A_\lambda^\circ := 1_\lambda A^\circ = A^\circ 1_\lambda.
\end{equation}

\begin{lemma}\label{iflocallyfree}
Let $A$ be as in Definition~\ref{prankster}
with $\Lambda\cap I = \varnothing$.
Suppose we are given $\BS \subseteq I$
such that all
$e_i A^\flat$ and $A^\sharp e_j$ are $\BS$-free as 
right and left $A^\circ$-modules, respectively.
For $i,j \in I, s,t \in \BS$, one can choose subsets
$Y(i,s) \subset e_i A^\flat e_s$, $X(t,j) \subset
e_t A^\sharp e_j$ so that
\begin{itemize}
  \item[(i)]
$e_i A^\flat =\bigoplus_{s \in \BS} \bigoplus_{y \in Y(i,s)} y A^\circ$ with
$y A^\circ \cong e_s A^\circ$ for $y \in Y(i,s)$;
\item[(ii)]
  $A^\sharp e_j =\bigoplus_{t \in \BS} \bigoplus_{x \in X(t,j)} A^\circ x$ with
  $A^\circ x\cong A^\circ e_t$ for $x \in X(t,j)$;
\item[(iii)]
  $Y(t,t) =  X(t,t)=\{e_t \}$ for all $t \in \BS$.
 \end{itemize}
Also let $H(s,t)$ be a basis for $e_s A^\circ e_t$.
This makes $A = \bigoplus_{i,j \in \hat I} e_i A e_j$ into an algebra with a triangular basis in the sense
of Definition~\ref{raspberries} with $\partial:\BS \rightarrow \Lambda$ being the restriction of the given function $\partial:I \rightarrow \Lambda$.
For $\lambda \in \Lambda$ and 
$e_{{\lambda}} := \sum_{s \in \BS_\lambda} e_s$, 
the subquotient $A_\lambda = \bar e_{{\lambda}} A_{\leq \lambda} \bar e_{{\lambda}}$
defined after Definition~\ref{raspberries} is isomorphic to the subalgebra
$e_{{\lambda}} A^\circ_\lambda e_{{\lambda}}$ of $A^\circ_\lambda$. Moreover, we
have that $A^\circ_\lambda = A^\circ_\lambda e_{{\lambda}} A^\circ_\lambda$
so $A_\lambda$
is Morita equivalent to $A^\circ_\lambda$.
\end{lemma}

\begin{proof}
By the definition of $\BS$-free, there are subsets $Y(i,s) 
\subset e_i A^\flat e_s$ as in (i).
Since $e_i A^\flat e_s$ is zero unless $\partial(i) \leq \partial(s)$, we have that $Y(i,s) = \varnothing$ unless $\partial(i) \leq \partial(s)$.
Suppose that $t \in \BS_\lambda := \BS \cap I_\lambda$.
By (TD6), we have that
$$
e_t A^\flat 1_\lambda
= e_t A^\circ_\lambda 
= \bigoplus_{s \in \BS_\lambda} \bigoplus_{y \in Y(t,s)} y A^\circ_\lambda,
$$ 
i.e., the sets $Y(t,s)$ for $s \in \BS_\lambda$ come from an $\BS$-free
decomposition of $e_t A^\circ_\lambda$. 
This means we can choose them so that $Y(t,t) = \{e_t\}$ as in (iii), in which case
$Y(t,s) = \varnothing$ for $s \in \BS_\lambda$ with $s \neq t$. Hence, for $s,t \in \BS$ with $s \neq t$, we have that $Y(t,s) = \varnothing$ unless $\partial(t) < \partial(s)$.
Similarly, we choose subsets $X(t,j) \subset e_t A^\sharp e_j$ according to (ii) and (iii), and 
then for $s, t \in \BS$ with $s \neq t$ we have that $X(s,t) = \varnothing$ 
unless $\partial(t) < \partial(s)$.
Note also  that $H(s,t) = \varnothing$ unless $\partial(s) = \partial(t)$
due to (\ref{blockk}).
Thus we have the required data from (TB1)--(TB4), and the conditions of (TB6) are satisfied.

In this paragraph, we check (TB5).
Let $Y(s) = \bigcup_{i \in I} Y(i,s)$ and $X(t) = \bigcup_{j \in I} X(t,j)$.
We have seen already that 
$A^\flat = \bigoplus_{s \in \BS} \bigoplus_{y \in Y(s)} y A^\circ$
and $A^\sharp = \bigoplus_{t \in \BS} \bigoplus_{x \in X(t)} A^\circ x$.
Tensoring these together, we deduce that 
$$
A^\flat \otimes_{A^\circ} A^\sharp = \bigoplus_{s,t \in \BS} \bigoplus_{y \in Y(s), x \in X(t)}
y A^\circ \otimes_{A^\circ} A^\circ x.
$$
Each summand $y A^\circ \otimes_{A^\circ} A^\circ x$ here is isomorphic to
$e_s A^\circ \otimes_{A^\circ} A^\circ e_t \cong e_s A^\circ e_t$.
We deduce that $A^\flat \otimes_{A^\circ} A^\sharp$ has basis
$\big\{yh \otimes x = y \otimes hx\:\big|\: (y,h,x) \in \bigcup_{s,t \in \BS} Y(s)\times H(s,t)\times X(t)\big\}$.
Then we use (TD5) to see that the axiom (TB5) is satisfied.

Finally we must identify the algebra $A_\lambda$.
The quotient map $A \twoheadrightarrow A_{\leq \lambda}$
restricts to 
a homomorphism $\phi:A^\circ \rightarrow A_{\leq \lambda}$ which further restricts to
\begin{equation}\label{newyear}
\phi_\lambda: e_{{\lambda}} A^\circ_\lambda e_{{\lambda}}
\stackrel{\sim}{\rightarrow} A_\lambda.
\end{equation}
The subalgebra $A_\lambda^\circ$ has basis $\{yhx\:|\:(y,h,x) \in
\bigcup_{i,j \in I_\lambda, s,t \in \BS_\lambda}
Y(i,s) \times H(s,t) \times X(t,j)\}$, hence,
$A_\lambda^\circ = A^\circ_\lambda e_{{\lambda}} A_\lambda^\circ$.
The subalgebra $A_\lambda$ of
$e_{{\lambda}} A_\lambda^\circ e_{{\lambda}}$ has basis $\bigcup_{s,t \in \BS_\lambda} H(s,t)$.
It follows that $\phi_\lambda$ sends a basis to a basis, so it is an isomorphism.
\end{proof}

The freeness assumption in Lemma~\ref{iflocallyfree} may seem restrictive, but
one can always pass to an idempotent expansion so that this is the case.
In fact, we can do this in such a way that the algebras $A_\lambda$ are {\em basic}, thereby giving $A$ the structure of a based stratified algebra rather than merely an algebra with a triangular basis:

\begin{theorem}[Based stratified algebras from triangular decompositions]\label{myconstruction}
Suppose that $A$ has a triangular decomposition as in Definition~\ref{prankster}.
Let $A^\circ = \bigoplus_{i,j \in \hat I} \hat e_i A^\circ \hat e_j$ be an
idempotent expansion of $A^\circ = \bigoplus_{i,j \in I} e_i A^\circ e_j$
such that
\begin{itemize}
\item[(i)]
$\hat I \cap \Lambda = \varnothing$;
\item[(ii)] 
$\hat I$ contains a subset $\B$ indexing a full set
$\{\hat e_b\:|\:b \in \B\}$ of pairwise non-conjugate primitive
idempotents in $A^\circ$;
\item[(iii)] there is a function $q:\hat I \rightarrow I$
with $|q^{-1}(i)| < \infty$ and
$e_i = \sum_{j \in q^{-1}(i)} \hat e_j$ for $i \in I$.
\end{itemize}
Then $A = \bigoplus_{i,j \in \hat I} \hat e_i A \hat e_j$ 
has a triangular decomposition with the given Borel subalgebras,
taking the function from (TD2) now to be
$\rho := \partial \circ q:\hat I \rightarrow \Lambda$.
Moreover, 
$\hat e_i A^\flat$ and $A^\sharp \hat e_j$ are $\B$-free 
as right and left $A^\circ$-modules, respectively.
Hence, we can apply the construction of Lemma~\ref{iflocallyfree} 
to $A = \bigoplus_{i,j \in \hat I} \hat e_i A \hat e_j$ to
make $A$ into a based stratified algebra in the sense of Definition~\ref{raspberries}
with $\rho:\B\rightarrow\Lambda$ defined by restriction.
\end{theorem}

\begin{proof}
The fact that we have in hand a triangular decomposition of
$A = \bigoplus_{i,j \in \hat I} \hat e_i A \hat e_j$ is immediately clear 
from the nature of Definition~\ref{prankster}.
Since $1_\lambda A^\sharp \hat e_j $ is a finite-dimensional projective 
left $A^\circ_\lambda$,
Lemma~\ref{locfreelem} implies that it is $\B$-free 
as a left $A_\lambda$-module. 
Hence $A^\sharp \hat e_j = \bigoplus_{\lambda \in \Lambda} 1_\lambda A^\sharp \hat e_j$
is $\B$-free as a left module.
Similarly, we get that $\hat e_i A^\flat$ is $\B$-free as a right module.
So now Lemma~\ref{iflocallyfree} can be applied and we obtain a triangular basis
such that $A_\lambda \cong \hat e_{{\lambda}} A^\circ_\lambda \hat e_{{\lambda}}$
for $\hat e_{{\lambda}} := \sum_{b \in \B_\lambda} \hat e_b$.
By the choice of the idempotents $\{\hat e_b\:|\:b \in \B\}$, 
$\hat e_{{\lambda}} A^\circ_\lambda \hat e_{{\lambda}}$
is the basic algebra that is Morita equivalent to $A^\circ_\lambda$,
checking the remaining axiom (S7) needed in order to have a based stratified algebra.
\end{proof}

\begin{corollary}\label{myconstructioncor}
  If $A$ has a triangular decomposition in which the Cartan subalgebra
  $A^\circ$ is semisimple, then there is an idempotent refinement
  $A=\bigoplus_{i,j \in \hat I}e_iAe_j$ of $A$ with
  the structure of a
  based quasi-hereditary algebra in the sense of Definition~\ref{doacc}.
\end{corollary}

\begin{proof}
  The construction in the theorem produces an idempotent refinement of $A$ that is a based stratified algebra with stratification function $\rho:\B\rightarrow \Lambda$.
  Let $\Gamma := \B$
  with partial order $\preceq$ on $\Gamma$
  defined by $a \preceq b$ if and only if $a = b$ or
  $\rho(a) < \rho(b)$.
  Since $A_\lambda$ is basic and semisimple, we have for $a, b \in \B_\lambda$
  that $H(a,b)$ is
  empty unless $a=b$ and $H(a,a)$ may be chosen to be $\{\hat e_a\}$.
  It follows that $A$ is actually a based quasi-hereditary algebra
  with weight poset $(\Gamma,\preceq)$
  and the basis
  which we have constructed.
\end{proof}

\begin{remark}\label{cartandecomp}
  The construction used to prove Theorem~\ref{myconstruction}
    suggests yet another variation on all of these
  definitions, which is weaker than having a triangular decomposition but
  stronger than having a triangular basis. For $A$ like in
  Definition~\ref{prankster} we say that it has a {\em Cartan decomposition}  if there is the following additional data:
\begin{itemize}
\item[(CD1)]
A poset $(\Lambda,\leq)$ which is upper finite in the locally finite-dimensional case and interval finite in the essentially finite-dimensional case.
\item[(CD2)]
A function $\partial:I \rightarrow \Lambda$ with finite
fibers $I_\lambda := \partial^{-1}(\lambda)$.
\item[(CD3)] A locally unital subalgebra $A^\circ$ and $(A^\circ,
  A^\circ)$-subbimodules $A^\flat$ and $A^\sharp$ of $A$.
  \end{itemize}
  The axioms are:
  \begin{itemize}
\item[(CD4)] $A^\flat$ is a projective right $A^\circ$-module
  and $A^\sharp$ is a projective left $A^\circ$-module.
\item[(CD5)] The natural multiplication map 
$A^\flat \otimes_{A^\circ} A^\sharp \rightarrow
A$ is a linear isomorphism.
\item[(CD6)] For $i,j \in I$,
$e_i A^\circ e_j$ is zero unless
  $\partial(i) = \partial(j)$,
  $e_i A^\flat e_j$ and $e_j A^\sharp e_i$ are zero unless
  $\partial(i) \leq \partial(j)$, 
  and $e_i A^\flat e_j = e_i A^\circ e_j = e_i A^\sharp e_j$ when $\partial(i) = \partial(j)$.
\end{itemize}
The proof of Theorem~\ref{myconstruction} carry over to such algebras essentially unchanged.
However we do not know of any compelling examples,
whereas as we noted in Remarks~\ref{history}, \ref{triangularhistory1}
and \ref{triangularhistory2} there are plenty of important
examples of algebras with triangular bases and with triangular
decompositions, justifying both of those definitions.
\end{remark}

If $A$ is a finite-dimensional (resp., locally finite-dimensional, resp. essentially finite-dimensional) algebra with a triangular decomposition,
then we can apply Theorems~\ref{myconstruction} and ~\ref{twoway} to
deduce that $A\fdlmod$ (resp., $A\lfdlmod$, resp., $A\fdlmod$)
is a finite (resp., upper finite, resp., essentially finite) fully
stratified category.
We end the section by making this structure more explicit.
We first define some global standardization and costandardization functors.
\begin{itemize}
\item 
The axioms imply that $A$ is a projective right $A^\sharp$-module and
that there is a locally unital projection homomorphism $A^\sharp \twoheadrightarrow A^\circ$.
Let
\begin{equation}\label{standardizationagain}
j_!:A^\circ\fdlmod \rightarrow A\lmod
\end{equation}
be the exact functor
defined by inflating along this projection homorphism
$A^\sharp \twoheadrightarrow A^\circ$ and then applying the exact induction functor
$A\otimes_{A^\sharp}?:A^\sharp\lmod\rightarrow
A\lmod$.
The fact that it takes finite-dimensional modules to finite-dimensional or locally finite-dimensional modules (as appropriate for the case) 
follows because as functors to
$A^\flat\lmod$
we have that
$A \otimes_{A^\sharp} ?
\cong A^\flat \otimes_{A^\circ} ?$ due to (TD5).
\item The axioms imply that $A$ is a projective left $A^\flat$-module and that there is a locally unital projection homomorphism $A^\flat \twoheadrightarrow A^\circ$.
  Let
  \begin{equation}\label{costandardizationagain}
j_*:A^\circ\fdlmod\rightarrow A\lmod
\end{equation}
be the exact functor defined by first inflating along the projection 
$A^\flat \twoheadrightarrow A^\circ$ then applying the exact coinduction functor
$\bigoplus_{i \in I} \Hom_{A^\flat}\left(A e_i,
  -\right):A^\flat\lmod\rightarrow A\lmod$.
It takes finite-dimensional modules to finite-dimensional or locally finite-dimensional
modules (as appropriate for the case) follows because as a functor to $A^\sharp\lmod$
it is isomorphic to
$\bigoplus_{i \in I}\Hom_{A^\circ}\left(A^\sharp e_i,?\right)$.
\end{itemize}
The following theorem can be proved by mimicking
standard arguments from Lie theory; see \cite{CZ}
noting that $(A^\flat, A^\circ)$ and $(A^\circ, A^\sharp)$
are {\em Borelic pairs} in the sense defined there.
We will deduce it instead from the work already done in
Theorems~\ref{myconstruction} and \ref{twoway}.

\begin{theorem}[Fully stratified categories from algebras with a triangular decomposition]\label{detailsdetails}
Suppose that $A$ 
has a triangular decomposition of one of the three types as above.
Let $\{L^\circ(b)\:|\:b \in \B\}$
be a full set of pairwise inequivalent irreducible left $A^\circ$-modules.
Let $\rho:\B \rightarrow \Lambda$ be the function sending $b \in \B$ to the unique $\lambda \in \Lambda$ 
such that $L^\circ(b)$ is an irreducible $A^\circ_\lambda$-module.
Let $\bar\Delta(b) := j_! L^\circ(b)$ and $\bar\nabla(b) := j_*
L^\circ(b)$; cf. (\ref{standardizationagain})--(\ref{costandardizationagain}).
Then
$$
\big\{L(b) := \hd \bar\Delta(b) \cong \soc \bar\nabla(b)\:\big|\:b \in \B\big\}
$$
is a complete set of pairwise inequivalent irreducible left
$A$-modules.
Moreover, the category $\R := A\fdlmod$ (resp., $A\lfdlmod$, resp., $A\fdlmod$) is a
finite (resp., upper
finite, resp., essentially finite) fully stratified category with
stratification $(\B,L,\rho,\Lambda,\leq)$.
Its strata may be identified with the categories 
$A^\circ_\lambda\fdlmod\:(\lambda \in \Lambda)$ with standardization and costandardization
functors defined by the restrictions of $j_!$ and $j_*$, respectively.
\end{theorem}

\begin{proof}
As explained by Theorem~\ref{myconstruction}, 
we can pass to an idempotent refinement if necessary to assume without loss of generality that the set $I$ indexing the distinguished idempotents is disjoint from $\Lambda$ and 
contains $\B$ as a subset in such a way that
$L^\circ(b) \cong \hd (A^\circ e_b)$ for each $b \in \B$.
The function $\rho:\B\rightarrow \Lambda$ is then the restriction of $\partial:I \rightarrow \Lambda$. Now Theorem~\ref{myconstruction} gives bases making $A$ into a based 
stratified algebra. We
we deduce that $\R$ 
is a finite (resp., upper finite, resp., essentially finite) fully
  stratified category with stratification $(\B,L,\rho,\Lambda,\leq)$ by applying
  Theorem~\ref{twoway}. However for this the strata and the labelling function $L$ 
  are produced in a different way to the formulation here, so we need to
  argue a little further to see that the standardization and costandardization functors here
  and the ones from earlier may be identified.
Using the isomorphism (\ref{newyear}),
the quotient functor $j^\lambda:A_{\leq
    \lambda}\lmod\rightarrow A_\lambda\lmod$ 
    in the setup of  (\ref{pinkeye}) 
    may be
  identified with the functor $j:A_{\leq \lambda}\lmod \rightarrow
  e_{{\lambda}} A_\lambda^\circ e_{{\lambda}}\lmod$
  obtained by restriction to $A^\circ$
  then multiplication by the idempotent $e_{{\lambda}}$.
  Since $A^\circ_\lambda$ and $e_{{\lambda}} A^\circ_\lambda e_{{\lambda}}$ are Morita equivalent, we can instead 
  use the algebra $A^\circ_\lambda$ to realize the stratum, and then this quotient functor gets replaced by
  the functor obtained by restriction to $A^\circ$ then multiplication
  by $1_{\lambda}$.
  It remains to observe that the restrictions of $j_!$ and $j_*$ to
  $A_\lambda^\circ\lmod$ are left and right adjoint to this functor, respectively.
\end{proof}

\begin{corollary}\label{usefultoo}
Suppose that $A$ 
has a triangular decomposition of one of the three types
and that its Cartan subalgebra $A^\circ$ is semisimple.
Let $\{L^\circ(\gamma)\:|\:\gamma \in \Gamma\}$
be a full set of pairwise inequivalent irreducible left $A^\circ$-modules.
Let $\rho:\Gamma \rightarrow \Lambda$ be the function sending $\gamma$ to the unique
$\lambda$ such that $L^\circ(\gamma)$ is an irreducible $A^\circ_\lambda$-module.
Then $\R := A\fdlmod$ (resp., $A\lfdlmod$, resp. $A\fdlmod$)
is a finite (resp., upper finite, resp., essentially finite) highest weight category with weight poset $(\Gamma, \preceq)$ for $\preceq$ defined by
$\beta \preceq \gamma$ if either $\beta = \gamma$ or $\rho(\beta) < \rho(\gamma)$.
Its standard and costandard modules are $\Delta(\gamma)
 := j_! L^\circ(\gamma)$ and $\nabla(\gamma) := j_* L^\circ(\gamma)$
for $\gamma \in \Gamma$.
\end{corollary}

\begin{proof}
This follows from the theorem and Corollary~\ref{myconstructioncor}.
\end{proof}

\begin{remark}
We end by mentioning one last variation on the definitions in this subsection.
We say that a triangular decomposition of $A$ as in Definition~\ref{prankster} is 
a {\em symmetric triangular decomposition}
if in addition there is given a locally unital 
algebra anti-involution $\sigma:A \rightarrow A$
which leaves $A^\circ$ invariant and interchanges $A^\sharp$ and $A^\flat$,
such that for each $\lambda \in \Lambda$
the subalgebras $e_\lambda A^\lambda e_\lambda$ are $\sigma_\lambda$-symmetric in the sense of Definition~\ref{sigmasymmetric}, 
where $\sigma_\lambda$ denotes the restriction of $\sigma$.
Then there is an enhanced version of Theorem~\ref{myconstruction} making $A$ into a symmetrically based stratified algebra,
and an enhanced version of Theorem~\ref{detailsdetails} making $\R$ into a fully stratified category with a Chevalley duality $?^\sigmadual$. We omit the details.
\end{remark}

\section{Examples}

In this section, we explain several examples. For the ones in
$\S\S$\ref{eg4}--\ref{eg5} we give very few details but have tried to
indicate the relevant ingredients from the existing literature. 

\subsection{A finite-dimensional example via quiver and relations}\label{eg1}
Let $A$ and $B$ 
be the basic finite-dimensional 
algebras defined by the following quivers:
\begin{align*}
A \:\:(1 > 2)&: \xymatrix{
\ar@(dl,ul)[]^{s}{1}\ar[r]^{y}&
\ar@(dr,ur)[]_{t}{2}}
&&\text{with relations } s^2=0, t^2=0, ty=0,\\
 B \:\:(1<2)&: 
\xymatrix{
\ar@(dl,ul)[]^{z}{1}\ar@/^/[r]^{u}&
\ar@/^/[l]^{v}{2}}
&&\text{with relations } z^2=0, uv=0, vuzv=0.
\end{align*}
The algebra $A$ has basis $e_1,s;e_2,t;y,ys$ and $B$ has basis
$e_1,z,vu,vuz,zvu,zvuz;$ $e_2,uzv;$ $v,zv;$ $u,uz,uzvu,uzvuz$.
The irreducible $A$- and $B$-modules 
are indexed by the set
$\{1,2\}$. We are going to consider $A\fdlmod$ and $B\fdlmod$
with the stratifications defined by the orders $1 > 2$ and $1 <
2$, respectively.

We first look at $A\fdlmod$.
As usual, we denote its irreducibles by $L(i)$, indecomposable projectives by
$P(i)$, standards by $\Delta(i)$, etc..
The indecomposable projectives and
injectives look as follows
(where we abbreviate the irreducible module $L(i)$ just by $i$):
\begin{align*}
P(1) &=
\begin{tikzpicture}[anchorbase]
\node at (0,0) {{\xymatrix@C=0.9em@R=1.1em{
&1\ar@{-}[dl]|{s}\ar@{-}[dr]|{y}\\
1\ar@{-}[d]|{y}&&2\\
2&&
}}};
\end{tikzpicture},
&
P(2)&=
\begin{tikzpicture}[anchorbase]
\node at (0,0) {{\xymatrix@C=0.9em@R=1.1em{
2\ar@{-}[d]|{t}\\
2
}}};\end{tikzpicture},
&
I(1)&=
\begin{tikzpicture}[anchorbase]
\node at (0,0) {{\xymatrix@C=0.9em@R=1.1em{
1\ar@{-}[d]|{s}\\
1
}}};\end{tikzpicture},
&I(2)&=
\begin{tikzpicture}[anchorbase]
\node at (0,0) {{\xymatrix@C=0.9em@R=1.1em{
&&1\\
2&&\ar@{-}[u]|{s}1\\
&\ar@{-}[ul]|{t}\ar@{-}[ur]|{y}2
}}};\end{tikzpicture}.
\end{align*}
It follows easily that $A\fdlmod$ is a fibered highest weight
category in the sense of Definition~\ref{vector} with the structure of the standards and
costandards as follows:
\begin{equation*}
\begin{aligned}
\Delta(1) &= P(1),&
\bar\Delta(1)&=
\begin{tikzpicture}[anchorbase]
\node at (0,0) {{\xymatrix@C=0.9em@R=1.1em{
1\ar@{-}[d]|{y}\\
2
}}};\end{tikzpicture},&
\Delta(2)&=P(2),&
\bar\Delta(2)&=L(2),\\
\nabla(1)&=I(1),&
\bar\nabla(1)&=L(1),&
\nabla(2)&=
\begin{tikzpicture}[anchorbase]
\node at (0,0) {{\xymatrix@C=0.9em@R=1.1em{
2\ar@{-}[d]|{t}\\
2
}}};\end{tikzpicture},&
\bar\nabla(2)&=L(2).
\end{aligned}
\end{equation*}
This can also be seen from Theorem~\ref{twoway} on noting that $A$ is
a based properly stratified 
algebra in the sense of Definition~\ref{strawberries2} with
$Y(2,1) = \{y\}, X(1,2) = \varnothing$
and $H(1) = \{e_1,s\}, H(2) = \{e_2,t\}$. The basic local algebras realizing the strata are $\k[s]/(s^2)$ and $\k[t]/(t^2)$.
Next we look at the tilting modules in $A\fdlmod$. If one takes the sign function
$\eps = (\eps_1, \eps_2)$ to be either $(+,+)$ or $(-,+)$ then one finds that
the indecomposable $\eps$-tilting modules are:
\begin{align*}
T_+(1) = P(1)
&=
\begin{tikzpicture}[anchorbase]
\node at (0,0) 
{{\xymatrix@C=0.9em@R=1.1em{
\bar\Delta(1)\ar@{-}[d]\\
\bar\Delta(1)
}}};\end{tikzpicture}
=\!\!
\begin{tikzpicture}[anchorbase]
\node at (0,0) 
{{\xymatrix@C=0.9em@R=1.1em{
&\ar@{-}[dl]\nabla(1) 
\ar@{-}[dr]\\
\bar\nabla(2)\!\!\!\!\!\!&&\!\!\!\!\!\!\bar\nabla(2)
}}};\end{tikzpicture}
,&
T_+(2) = P(2)&=\Delta(2)=\!\!\!
\begin{tikzpicture}[anchorbase]
\node at (0,0) 
{{\xymatrix@C=0.9em@R=1.1em{
\bar\nabla(2) \ar@{-}[d]\\
\bar\nabla(2)\\
}}};\end{tikzpicture}.
\end{align*}
These cases are not very interesting since
the Ringel dual category is just $A\fdlmod$ again.
Assume henceforth that $\eps = (-,-)$ or $(+,-)$. 
Then the indecomposable $\eps$-tilting modules are:
\begin{align*}
T_-(1) &=
\begin{tikzpicture}[anchorbase]
\node at (0,0) {{\xymatrix@C=0.9em@R=1.1em{
1\ar@{-}[d]|{s}\ar@{-}[rd]|{y}&&\ar@{-}[ld]|{t}2\\
1\ar@{-}[rd]|{y}&2 &\ar@{-}[ld]|{t}2\\
&2
}}};\end{tikzpicture}
=
\!\!\begin{tikzpicture}[anchorbase]
\node at (0,0) 
{{\xymatrix@C=0.9em@R=1.1em{
\ar@{-}[dr]\bar\Delta(2)\!\!\!\!\!\!&&\!\!\!\!\!\!\bar\Delta(2) \ar@{-}[dl]\\
&\Delta(1)
}}};\end{tikzpicture}
\!\!=\!\!
\begin{tikzpicture}[anchorbase]
\node at (0,0) 
{{\xymatrix@C=0.9em@R=1.1em{
&\ar@{-}[dl]\nabla(1) 
\ar@{-}[dr]\\
\nabla(2)\!\!\!\!\!\!&&\!\!\!\!\!\!\nabla(2)
}}};\end{tikzpicture}\!\!\!,
&
T_-(2)&=P(2).
\end{align*}
To see this, one just has to check that these modules are
indecomposable with the appropriate $\Delta_\eps$- and
$\nabla_\eps$-flags.
This analysis reveals that $A\fdlmod$ is {\em not} tilting-rigid.

The minimal projective resolution of $T_-(1)$ takes the form
$$
\cdots
\longrightarrow
 P(2)\oplus P(2)\longrightarrow
P(2)\oplus P(2)\longrightarrow
P(1)\oplus P(2) \oplus P(2) \longrightarrow 
T_-(1) \longrightarrow 0.
$$
In particular, it is not of finite projective dimension, as follows 
also from Lemma~\ref{proofofconj} since $T_-(1) \not\cong T_+(1)$.
Observe also that there is a non-split short exact sequence
$0 \rightarrow X \rightarrow T_-(1) \rightarrow X \rightarrow 0$
where
$$
X = 
\begin{tikzpicture}[anchorbase]
\node at (0,0) {\xymatrix@C=0.6em@R=1em{
1\ar@{-}[rd]|{y}&&\ar@{-}[ld]|{t}2\\
&2}};\end{tikzpicture}.
$$

Now let $T := T_-(1)\oplus T_-(2)$.
We claim that $\End_A(T)^\op$ is the algebra $B$ defined above.
To prove this, one takes $z:T_-(1)\rightarrow T_-(1)$ to be an endomorphism whose image
and kernel is the submodule $X$ of $T_-(1)$,
$u:T_-(2) \rightarrow T_-(1)$ to be a homomorphism which includes $T_-(2)$ as a submodule of
$X\subseteq T_-(1)$,
and $v:T_-(1) \rightarrow T_-(2)$
to be a homomorphism with kernel containing $X$ and image $L(2)\subseteq T_-(2)$.
Hence, $B\fdlmod$ is the Ringel dual of $A\fdlmod$ relative to $T$.
Note also that the algebra $B$ is based $(+,+)$- and $(-,+)$-quasi-hereditary but it is {\em not}
based $(+,-)$- or $(-,-)$-quasi-hereditary (cf. Definition~\ref{eclairs2}).

One can also analyze $B\fdlmod$ directly. Its projective modules have the
following structure: 
\begin{align*}
P'(1) &=
\begin{tikzpicture}[anchorbase]
\node at (0,0) {{\xymatrix@C=0.9em@R=1.1em{
&1\ar@{-}[dl]|{u}\ar@{-}[dr]|{z}&\\
2\ar@{-}[d]|{v}&&1\ar@{-}[d]|{u}\\
1\ar@{-}[d]|{z}&&2\ar@{-}[d]|{v}\\
1\ar@{-}[d]|{u}&&1\ar@{-}[d]|{z}\\
2&&1\ar@{-}[d]|{u}\\
&&2\\
}}};
\end{tikzpicture},
&
P'(2) &= 
\begin{tikzpicture}[anchorbase]
\node at (0,0) 
{{\xymatrix@C=0.9em@R=1.1em{
2 \ar@{-}[d]|{y}\\
1 \ar@{-}[d]|{z}\\
1 \ar@{-}[d]|{u}\\
2\\
}}};\end{tikzpicture},
\end{align*}
Continuing with $\eps = (-,-)$ or $\eps = (+,-)$, it is easy to check
directly from this that $B\fdlmod$ is $(-\eps)$-highest weight,
as we knew already due to Theorem~\ref{Creek}. However, it is {not}
$\eps$-highest weight for either of these choices of $\eps$, so it is {\em not}
fibered highest weight.

We leave it to the reader to compute explicitly the indecomposable $(-\eps)$-tilting
modules $T'_+(1)$ and $T'_+(2)$
in $B\fdlmod$.
Their structure reflects the structure of the injectives
$I(1)$ and $I(2)$ in $A\fdlmod$.
Let 
$T' := T'_+(1)\oplus T'_+(2) \cong T^*$.
By the double centralizer property from Corollary~\ref{mustard},
we have that $A = \End_B(T')^\op$, as may also be checked
directly.
By Theorem~\ref{happels},
the functor
$\der{R}\Hom_B(T',?):D^b(B\fdlmod)\rightarrow
D^b(A\fdlmod)$
is an equivalence.
Note though that $\der{R}\Hom_A(T,?):D^b(A\fdlmod)\rightarrow
D^b(B\fdlmod)$
is {\em not} one; this follows using \cite[Th.~4.1]{Keller}
since $T_-(1)$ does not have
finite projective dimension.

\subsection{An explicit semi-infinite example}\label{eg1a}
In this subsection, we give a baby example involving a lower finite highest
weight category. Let $C$ be the coalgebra with basis 
$$
\big\{c_{i,j}^{(\ell)}\:\big|i,j,\ell \in \Z, 0 \leq i,j \leq \ell\big\},
$$
counit defined by $\epsilon(c_{i,j}^{(\ell)}) := \delta_{i,\ell} \delta_{j,\ell}$, and comultiplication
$\delta:C \rightarrow C \otimes C$ defined by
\begin{align*}
c^{(i)}_{i,j} &\mapsto 
\!\sum_{\substack{k=0\\i \not\equiv
    j (2)}}^{j}\!
c_{i,k}^{(i)} \otimes c_{k,j}^{(j)}+\!\sum_{\substack{k=j\\k \equiv i (2)}}^{i}
\!c_{i,k}^{(i)} \otimes c_{k,j}^{(k)},\qquad
c^{(j)}_{i,j} \mapsto 
\!\sum_{\substack{k=0\\i \not\equiv
    j (2)}}^{i}
\!c_{i,k}^{(i)} \otimes c_{k,j}^{(j)}+\!\sum_{\substack{k=i\\k \equiv
    j (2)}}^{j}
\!c_{i,k}^{(k)} \otimes c_{k,j}^{(j)},\\
c_{i,j}^{(\ell)} &\mapsto
c_{i,\ell}^{(\ell)} \otimes c_{\ell,j}^{(\ell)} 
+\!\sum_{\substack{k=0\\i \not\equiv \ell (2)}}^{i} \!c_{i,k}^{(i)} \otimes
  c_{k,j}^{(\ell)}
+ \!
\sum_{\substack{k=i\\k \equiv \ell (2)}}^{\ell-1} \!c_{i,k}^{(k)} \otimes
  c_{k,j}^{(\ell)}
+\!\sum_{\substack{k=0\\j \not\equiv \ell (2)}}^{j} \!c_{i,k}^{(\ell)} \otimes
  c_{k,j}^{(j)}
+ \!
\sum_{\substack{k=j\\k \equiv \ell (2)}}^{\ell-1} \!c_{i,k}^{(\ell)} \otimes
  c_{k,j}^{(k)}
\end{align*}
for $i,j\geq0$ and $\ell > \max(i,j)$.
We will show that $\R := \fdrcomod C$ is a lower finite highest weight
category with weight poset $\Lambda := \N$ ordered in the natural way.
Then we will determine the costandard, standard and indecomposable
injective and tilting objects explicitly, 
and describe the Ringel dual category $\R'$.
To do this, we mimic some arguments for reductive groups which
we learnt from \cite{J}.

We will need comodule induction functors, which we review briefly.
For any coalgebra $C$ with comultiplication $\delta$, a right $C$-comodule $V$ with structure map
$\eta_R:V \rightarrow V \otimes C$, and a left $C$-comodule $W$ with
structure map $\eta_L:W \rightarrow C \otimes W$, the {\em cotensor product}
$V \cotimes_C W$ is the subspace of the vector space $V \otimes W$
that is the equalizer
of the diagram
$$
V\otimes W \:\:\substack{\eta_R\otimes\id\\\longrightarrow\\\longrightarrow\\\id\otimes\eta_L} \:\:V \otimes C \otimes W.
$$
In particular, $\eta_R:V \rightarrow V \otimes C$ is an isomorphism
from $V$ to the subspace $V \cotimes_C C$, and similarly $\eta_L:W
\stackrel{\sim}{\rightarrow} C \cotimes_C W$.
Now suppose that $\pi:C\rightarrow C'$ is a coalgebra
homomorphism and $V$ is a right $C'$-comodule. 
Viewing $C$ as a left $C'$-comodule
with structure map
$\delta_L := (\pi \otimes \id)
\circ \delta:C \rightarrow C' \otimes C$, we define the induced
comodule to be
$$
\ind_{C'}^C V
:= V \cotimes_{C'} C.
$$
This is a subcomodule of the right $C$-comodule $V \otimes C$
(with structure map $\id \otimes \delta$).
In fact, $\ind_{C'}^C:\rcomod{C'}\rightarrow \rcomod{C}$ defines a functor which is right adjoint to the exact restriction
functor $\res^C_{C'}$, so it is left exact and sends injectives to injectives.

Now let $C$ be the coalgebra defined above, and consider the natural quotient
maps $\pi^\flat:C \twoheadrightarrow C^\flat$ and $\pi^\sharp:C
\twoheadrightarrow C^\sharp$,
where $C^\flat$ and $C^\sharp$ are the quotients of $C$ by the coideals
spanned by $\big\{c_{i,j}^{(\ell)}\:\big|\:\ell
> j\big\}$ or $\big\{c_{i,j}^{(\ell)}\:\big|\:\ell
> i\big\}$, respectively.
These coalgebras have 
bases denoted
$
\big\{c_{i,j} := \pi^\flat(c_{i,j}^{(j)})\:\big|\:0
\leq i \leq j\big\}$ and
$\big\{c_{i,j}:=\pi^\sharp(c_{i,j}^{(i)})\:\big|\:i \geq j \geq
  0\big\}$, 
and comultiplications
$\delta^\flat$ and $\delta^\sharp$
satisfying
\begin{align}\label{safe}
\delta^\flat(c_{i,j}) &= c_{i,i} \otimes c_{i,j} +
\sum_{\substack{k=i+1\\ k \equiv j (2)}}^{j} c_{i,k}\otimes c_{k,j},
&\delta^\sharp(c_{i,j}) =& c_{i,j} \otimes c_{j,j}+
\sum_{\substack{k=j+1\\ k \equiv i (2)}}^{i} c_{i,k}\otimes c_{k,j},
\end{align}
respectively.
Also let 
$C^\circ\cong\bigoplus_{i \geq 0} \k$
be the semisimple coalgebra with basis $\{c_i\:|\:i \geq 0\}$
and comultiplication $\delta^\circ:c_i \mapsto c_i \otimes c_i$.
Note $C^\circ$ is a quotient of both $C^\flat$ and $C^\sharp$
via the obvious maps sending $c_{i,j} \mapsto \delta_{i,j} c_i$; hence, it is
also a quotient of $C$.
It may also be identified with a subcoalgebra of both $C^\flat$ and $C^\sharp$
via the maps sending $c_i \mapsto c_{i,i}$.

Let $L^\circ(i)$ be the one-dimensional irreducible
right $C^\circ$-comodule
spanned by $c_{i,i}$. Since $C^\circ$ is semisimple with these as its
irreducible comodules,
any irreducible right
$C^\circ$-comodule $V$ decomposes as $V = \bigoplus_{i \in I} V_i$
with the ``weight spaces'' $V_i$ being a direct sum of copies of $L^\circ(i)$. Similarly,
any left $C^\circ$-comodule $V$ decomposes as $V  = \bigoplus_{i \in
  I} {_i}V$. This applies in particular to left and right $C^\flat, C^\sharp$ or
$C$-comodules, since these may be viewed as $C^\circ$-comoodules by
restriction.

Since $C^\circ$ is a subcoalgebra of $C^\flat$, the irreducible
comodule $L^\circ(i)$ may also be viewed as an irreducible right
$C^\flat$-comodule. We denote this instead by $L^\flat(i)$; it is
the subcomodule of $C^\flat$ spanned by the vector $c_{i,i}$.
For $i \geq 0$, 
let $I(i) := {_i} C \cong \ind_{C^\circ}^C L^\circ(i)$, let
$\nabla(i)$ be the subcomodule of $I(i)$ spanned by the vectors $\{c_{i,j}^{(i)}\:|\:0 \leq j
  \leq i\}$, and let
 $L(i)$ be the one-dimensional irreducible 
subcomodule of $\nabla(i)$ spanned by
  the vector $c_{i,i}^{(i)}$.
Now we proceed in several steps.

\vspace{1.5mm}
\noindent
{Claim 1:} {\em 
Viewed as a functor to vector spaces, the induction functor
$\ind_{C^\flat}^C$ is isomorphic to the functor $V \mapsto V
\cotimes_{C^\circ} C^\sharp \cong
\bigoplus_{i \geq 0} V_i \otimes {_i}C^\sharp$.
Hence, this functor is exact.}
To prove this, let $\delta_{LR}:=(\pi^\flat \bar\otimes \pi^\sharp) \circ
\delta:C \rightarrow C^\flat 
\cotimes_{C^\circ} C^\sharp$.
As $\delta_{LR}(c_{i,j}^{(\ell)}) = c_{i,\ell} \otimes
c_{\ell,j}$ and these vectors for all $\ell \geq \max(i,j)$ give a
basis for $C^\flat 
\cotimes_{C^\circ} C^\sharp$, this map is a linear isomorphism.
Moreover, the following diagram commutes:
$$
\begin{CD}
C&@>\delta_L>>&C^\flat \otimes C\\
@V\delta_{LR}VV&&@VV\id \otimes \delta_{LR} V\\
C^\flat \cotimes_{C^\circ} C^\sharp&@>>\delta^\flat\otimes \id>&C^\flat
\otimes C^\flat \cotimes_{C^\circ} C^\sharp.
\end{CD}
$$
The vertical maps are isomorphisms. Using the definition of
$\ind_{C^\flat}^C$, it follows for any right
$C^\flat$-comodule $V$ with structure map $\eta$ that the induced module $\ind_{C^\flat}^C V$
is isomorphic as a vector space (indeed, as a right $C^\sharp$-comodule) to the equalizer of the diagram
$$
V\otimes C^\flat \cotimes_{C^\circ} C^\sharp
\:\:\substack{\eta\otimes\id\otimes\id\\\longrightarrow\\\longrightarrow\\\id\otimes\delta^\flat
\otimes \id} \:\:V \otimes C^\flat \otimes C^\flat \cotimes_{C^\circ} C^\sharp.
$$
Since $\ind_{C^\flat}^{C^\flat} V \cong V$, this is naturally isomorphic to $V
\cotimes_{C^\circ} C^\sharp$. As $C^\sharp \cong
\bigoplus_{i \geq 0} {_i}C^\sharp$, 
we get finally that
$V
\cotimes_{C^\circ} C^\sharp \cong \bigoplus_{i \geq 0} V_i \otimes {_i}C^\sharp$.

\vspace{1.5mm}
\noindent
{Claim 2:} {\em For $i \geq 0$, the right $C^\flat$-comodule 
${_i}C^\flat \cong \ind_{C^\circ}^{C^\flat} L^\circ(i)$ has an exhaustive ascending filtration
$0 < V_0 < V_1 <  \cdots$ such that 
$V_0 \cong L^\flat(i)$ and $V_r / V_{r-1} \cong L^\flat(i+2r-1)\oplus
L^\flat (i+2r)$ for $r \geq 1$.
Also, the modules
$\{L^\flat(i)\:|\:i \geq 0\}$ give a full set of pairwise
inequivalent irreducible right $C^\flat$-comodules.}
The first statement follows from (\ref{safe}), defining $V_0$ to be the subspace spanned by $c_{i,i}$, and $V_r$ is
spanned by
$c_{i,i+2r-1}, c_{i,i+2r}$.  To prove the second statement, take any
irreducible $C^\flat$-comodule $L$. Take a non-zero homomorphism 
$\res^{C^\flat}_{C^\circ} L \rightarrow L^\circ(i)$
for some $i$. Then use
adjointness of $\res_{C^\circ}^{C^\flat}$ and
$\ind^{C^\flat}_{C^\circ}$ to obtain an embedding 
$L \hookrightarrow {_i}C^\flat$. Hence, $L \cong
L^\flat(i)$ as a $C^\flat$-comodule.

\vspace{1.5mm}
\noindent
{Claim 3:}
{\em 
We have that $\nabla(i) \cong \ind_{C^\flat}^C L^\flat(i)$ and it
is uniserial with composition factors 
$L(i), L(i-2), L(i-4),\dots, L(a), L(b),\cdots L(i-3), L(i-1)$, where $(a,b)\in\{(0,1),(1,0)\}$ depending on parity of $i$, in order from
bottom to top:}
\begin{equation}
\nabla(i)=
\begin{tikzpicture}[anchorbase]
\node at (0,0){{\xymatrix@C=0.9em@R=1.1em{
i-1\ar@{-}[d]\\
i-3
\ar@{.}[d] \\
i-2\ar@{-}[d] \\
i
}}};\end{tikzpicture}
\end{equation}
The restriction of $\delta_L:C \rightarrow C^\flat \otimes C$ to
$\nabla(i)$ gives an embedding of $\nabla(i)$ into $\ind_{C^\flat}^C
L^\flat(i)$. This embedding is an isomorphism since we know
$\ind_{C^\flat}^C
L^\flat(i)$ has the same dimension $(i+1)$ as $\nabla(i)$ thanks to
Claim 1. 
The determinaton of the
subcomodule structure is straightforward using the definition of $\delta(c_{i,j}^{(i)})$
for $0 \leq j \leq i$.

\vspace{1.5mm}
\noindent
{Claim 4:}
{\em The injective $C$-comodule $I(i)$ has an exhaustive filtration
$0 < I_0 < I_1 < \cdots$ such that $I_0 \cong \nabla(i)$
and $I_r / I_{r-1} \cong \nabla(i+2r-1)\oplus \nabla(i+2r)$
for $r \geq 1$:}

\vspace{-5mm}

\begin{equation}
I(i)=
\begin{tikzpicture}[anchorbase]
\node at (0,0) 
{{\xymatrix@C=0.9em@R=1.1em{
\ar@{.}[d]&
&\ar@{.}[d]\\
\nabla(i+3) \ar@{-}[d]&&\nabla(i+4) \ar@{-}[d] \\
\nabla(i+1) \ar@{-}[dr]&&\nabla(i+2)\ar@{-}[dl] \\
&\nabla(i)
}}};\end{tikzpicture}
\end{equation}
This follows from Claims 1, 2 and 3.

\vspace{1.5mm}
\noindent
{Claim 5:}
{\em The $C$-comodules $\{L(i)\:|\:i \geq 0\}$ give a
  full set of pairwise inequivalent irreducibles. Moreover,  $I(i)$ is the injective hull of
  $L(i)$.
}
By Claim 3, the last part of Claim 2, and an adjunction argument, any
irreducible $C$-comodule embeds into $\nabla(i)$ for some $i$, hence,
it is isomorphic to $L(i)$. The comodule $I(i)$ is injective, 
and it has irreducible socle $L(i)$ by another adjunction
argument. Hence, it is the injective hull of $L(i)$. 

\vspace{1.5mm}
\noindent
{Claim 6:}
{\em The category $\R:=\fdrcomod C$ is a lower finite highest weight category with
  costandard objects $\nabla(i)\:(i \geq 0)$. It also possesses a
 Chevalley duality.}
  We use the criterion from Corollary~\ref{globalcharcor}.
  From Claim 4, it follows that
  the largest submodule of $I(i)$ that belongs to $\R_{\leq i}$ is $\nabla(i)$, which is finite-dimensional. This shows that $\R_{\leq i}$ has enough injectives with the injective hull of $L(i)$ being $\nabla(i)$. We also know already that $[\nabla(i):L(i)] =1$, and 
  the property $(\widehat{I\nabla}^\asc)$ follows from
Claim 4. Hence, $\R$ is a lower finite highest weight category. 
Finally, the Chevalley duality is
defined using the evident coalgebra antiautomorphism of $C$ which 
maps $c_{i,j}^{(\ell)} \mapsto c_{j,i}^{(\ell)}$.

\vspace{1.5mm}
\noindent
{Claim 7:}
{\em The indecomposable tilting comodule $T(i)$ is equal to $L(i)=\Delta(i)=\nabla(i)$ if
  $i=0$, and there are non-split short exact sequences
\begin{align*}
0 &\rightarrow \Delta(i) \rightarrow T(i) \rightarrow
\Delta(i-1)\rightarrow 0,&
0 &\rightarrow \nabla(i-1)\rightarrow
T(i)
\rightarrow \nabla(i) \rightarrow 0
\end{align*}
for $i > 0$.}\\
This is immediate in the case $i=0$. Now for $i > 0$, let $T(i)$ be the
non-split extension of $\nabla(i-1)$ by $\nabla(i)$ that is the
subcomodule of $I(i-1)$ spanned by the vectors
$\{c_{i-1,j}^{(i-1)}, c_{i-1,k}^{(i)} \:|\:0 \leq j \leq i-1,
0 \leq k \leq i\}$. Then one checks that this submodule is self-dual. Since it has a
$\nabla$-flag it therefore also has a $\Delta$-flag, so it must be the
desired tilting object by Theorem~\ref{gin}.

\vspace{1.5mm}
\noindent
{Claim 8:}
{\em 
The Ringel dual category $\R'$ is the category $A\lfdlmod$ of
locally finite-dimensional left modules over the locally unital algebra $A$ defined by the following
quiver:}
\begin{align*}
A \:&:
\xymatrix{
0
\ar@/^/[r]^{y_0}
&
\ar@/^/[l]^{x_0}{1}
\ar@/^/[r]^{y_1}
&
\ar@/^/[l]^{x_1}
2\;\cdots}
&\text{\em{with relations }}y_{i+1}y_i = x_i x_{i+1} = x_i y_i = 0.
\end{align*}
We need to find an isomorphism of algebras $$
A \stackrel{\sim}{\rightarrow} 
\Big(\bigoplus_{i,j \geq 0} \Hom_C(T(i), T(j))\Big)^\op.
$$
For this, we consider $T(i)\:(i=0,1,2,3,\dots)$ with the 
$\nabla$-flags:
\begin{equation}
\begin{tikzpicture}[anchorbase]
\node at (-4,0) {
{\xymatrix@C=0.8em@R=1em{
\save[].[].[].[]*[F.]\frm{}\ar@{.}+UL;+UL\restore
0\\
}}};
\node at (-3,-.25){$\stackrel{x_0}{\longrightarrow}$};
\node at (-3, .25){$\stackrel{y_0}{\longleftarrow}$};
\node at (-2,0) {
{\xymatrix@C=0.8em@R=1em{
\save[].[d].[d].[]*[F.]\frm{}\ar@{.}+UL;+UL\restore
0\ar@{-}[d]&\\
1\ar@{-}[d]&\\
\save[].[].[].[]*[F.]\frm{}\ar@{.}+UL;+UL\restore
0
}}};
\node at (-1,-.25){$\stackrel{x_1}{\longrightarrow}$};
\node at (-1,.25){$\stackrel{y_1}{\longleftarrow}$};
\node at (0,0) {
{\xymatrix@C=0.8em@R=1em{
\save[].[dd].[dd].[]*[F.]\frm{}\ar@{.}+UL;+UL\restore
1\ar@{-}[d]\\
0\ar@{-}[d]\\
2\ar@{-}[d]\\
\save[].[d].[d].[]*[F.]\frm{}\ar@{.}+UL;+UL\restore
0\ar@{-}[d]\\
1
}}};
\node at (1,-.25){$\stackrel{x_2}{\longrightarrow}$};
\node at (1,.25){$\stackrel{y_2}{\longleftarrow}$};
\node at (2,0){
{\xymatrix@C=0.8em@R=1em{
\save[].[ddd].[ddd].[]*[F.]\frm{}\ar@{.}+UL;+UL\restore
2\ar@{-}[d]\\
0\ar@{-}[d]\\
1\ar@{-}[d]\\
3\ar@{-}[d]\\
\save[].[dd].[dd].[]*[F.]\frm{}\ar@{.}+UL;+UL\restore
1\ar@{-}[d]\\
0\ar@{-}[d]\\
2
}}};
\node at (3,.25){$\stackrel{y_3}{\longleftarrow}$};
\node at (3,-.25){$\stackrel{x_3}{\longrightarrow}$};
\node at (4,0){$\cdots$};
\end{tikzpicture}
\end{equation}
We will describe the images, also called $e_i$, $x_i$, $y_i$, of the generators of $A$. 
We send $e_i$ to the identity endomorphism of $T(i)$,
$x_i$ to the morphism $T(i)\rightarrow T(i+1)$ sending the quotient
$\nabla(i)$ of $T(i)$ to the subcomodule $\nabla(i)$ of $T(i+1)$
and $y_i$ to the morphism $T(i+1) \rightarrow T(i)$ sending the
quotient $\Delta(i)$ of $T(i+1)$ to the submodule $\Delta(i)$ of
$T(i)$. The relations are easy to check (remembering the $\op$, e.g., one
must verify that $y_2\circ x_2 = 0 \neq x_2 \circ y_2$). Since the algebra $A$ is very
easy to understand, one also sees that this homomorphism is injective, then it is an isomorphism by
dimension considerations.

\begin{remark}
The above analysis of $\fdrcomod C$
relies ultimately on the observation that the coalgebra $C$ has a
triangular decomposition in a precise sense which is the analog for
coalgebras of Definition~\ref{prankster}. There are also
coalgebra analogs of the other definitions from the previous section, which we intend to develop in more detail in a sequel to this article. 
The coalgebra analog of
Definition~\ref{doacc} is the notion of a {\em based
quasi-hereditary coalgebra}. 
The dual of such a coalgebra whose weight poset is finitely generated and good in the sense of \cite[Def.~3.9]{MZ} is an ascending quasi-hereditary pseudo-compact algebra as defined in \cite[Def.~3.11]{MZ}.
\end{remark}

\vspace{2mm}
One can argue in the opposite direction too, starting from the algebra
$A$ just defined and
computing 
its Ringel dual to recover the coalgebra $C$ (in fact,
this is how we discovered the coalgebra $C$ in the first place). Note
for this that $A$ is an upper finite based quasi-hereditary algebra
with the given basis. In fact, it has an upper finite split triangular
decomposition in the sense of
Remark~\ref{triangularhistory2}
with 
$A^\circ = \bigoplus_{i \in \N} \k e_i$,
$A^+ = \bigoplus_{i \in \N}(\k e_i \oplus \k y_i)$ and
$A^- = \bigoplus_{i \in \N} (\k e_i \oplus \k x_i)$.
Hence, $A\lfdlmod$ is an upper finite highest weight
category. Its standard and costandard modules have the structure
\begin{align}
\Delta'(i)&=
\begin{tikzpicture}[anchorbase]
\node at (0,0){{\xymatrix@C=0.9em@R=1.1em{
i\ar@{-}[d]|{y}\\
i+1
}}};\end{tikzpicture},
&\nabla'(i)&=
\begin{tikzpicture}[anchorbase]
\node at (0,0){{\xymatrix@C=0.9em@R=1.1em{
i+1\ar@{~}[d]|{x}\\
i
}}};\end{tikzpicture}.
\end{align}
Using the characterization from Theorem~\ref{moregin}(i), it follows that
the indecomposable tilting modules for $A$ have
a similar structure to $T'(0)$, which is as follows (to get $T'(i)$ in general
one just has to add $i$ to all of the labels):
\begin{equation}\label{megan}
T'(0)=
\begin{tikzpicture}[anchorbase]
\node (a) at (0,.3) {
{\xymatrix@C=0.9em@R=1.1em{
&& 5\ar@{~}[dr]|{x}\ar@{-}[dl]|{y}&&3\ar@{-}[dl]|{y}\ar@{~}[dr]|{x}&&1 \ar@{-}[dl]|{y}\ar@{~}[dr]|{x}&&\\
\cdots&6&&4&&2&&0\ar@{-}[dr]|{y}&&2\ar@{-}[dr]|{y}\ar@{~}[dl]|{x}&&4\ar@{-}[dr]|{y}\ar@{~}[dl]|{x}&&6\ar@{~}[dl]|{x}&\cdots\\
&&&&&&&&1&&3&&5\\
}}};
\draw (0,.29) circle (.2 cm);
\end{tikzpicture}
\end{equation}
This diagram demonstrates that $T'(0)$ has both an infinite ascending
$\Delta$-flag with $\Delta'(0)$ at the bottom and subquotients as indicated by the straight lines, and an infinite
descending $\nabla$-flag with $\nabla'(0)$ at the top and subquotients
indicated by the wiggly lines; cf. Claim~4 above.
Given the indecomposable tilting modules
$T'(i)$ for $A$, one can now compute the coalgebra $C$ arising
from the tilting generator $T' := \bigoplus_{i \geq 0}
T'(i)$ 
according to the general recipe from
Definition~\ref{rd2}.
We leave this to the reader, but display below the
homomorphisms $f_{i,j}^{(\ell)}:T'(i) \rightarrow
T'(j)$
in the endomorphism algebra $B := \End_A(T')^\op$
which are dual to the basis elements $c_{i,j}^{(\ell)}$ of the
coalgebra $C=B^\star$ as above.

The map $f_{i,i}^{(i)}:T'(i)\rightarrow T'(i)$ is
the identity endomorphism, and 
$f_{i,j}^{(\ell)}:T'(i)\rightarrow T'(j)$ 
for $\ell > \max(i,j)$ has irreducible
image and coimage isomorphic to $L'(\ell)$, i.e., it
sends the (unique) irreducible copy of $L'(\ell)$ in the
head of $T'(i)$ to the irreducible
$L'(\ell)$
in the socle of $T'(j)$.
The remaining maps $f_{i,j}^{(i)}, f_{i,j}^{(j)}:T'(i)\rightarrow T'(j)$ for $i \neq j$
are depicted below:
\begin{align*}
\substack{\phantom{i \not\equiv j(2)}\\{\normalsize  f_{i,j}^{(j)}}\\ i \not\equiv j(2)}:
\begin{tikzpicture}[anchorbase]
\node (a) at (0,.3) {
{\xymatrix@C=0.6em@R=.8em{
&&j\ar@{~}[dr]\ar@{-}[dl]&\\
\cdots\!\!\!&\!\!\! j+1 \!\!\!&&&\\
&&\\
}}};
\node at (1.4,.1){$\cdots$};
\node (a) at (3.3,.25) {
{\xymatrix@C=0.6em@R=.8em{
&\ar@{-}[dl] \!\!\! i+1 \!\!\! \ar@{~}[dr]&\\
&&i\ar@{-}[dr]&&\\
&&&\!\!\! i+1\!\!\!&\!\!\cdots\\
}}};
\draw (3.15,.23) circle (.2 cm);
 \draw [blue,thick] (-1.4,-.1) to (0,-.1) to (1.1,1.2) to (-1.4,1.2);
\end{tikzpicture}
\!\!\!\!
&\mapsto
\begin{tikzpicture}[anchorbase]
\node (a) at (3.45,.25) {
{\xymatrix@C=0.6em@R=.8em{
\cdots\!\!& \!\!\! j+1 \!\!\! \ar@{~}[dr]&\\
&&j\ar@{-}[dr]&&\!\!\!j+2\!\!\!&\!\!\cdots\\
&&&\!\!\! j+1\!\!\!\ar@{~}[ur]&\\
}}};
\draw (3,.23) circle (.2 cm);
 \draw [blue,thick] (5.7,.6) to (2,.6) to (3.3,-.8) to (5.7,-.8);
\end{tikzpicture}\\
\substack{\phantom{i \equiv j(2)}\\{\normalsize  f_{i,j}^{(j)}}\\ i \equiv j(2)}:
\begin{tikzpicture}[anchorbase]
\node (a) at (6.9,-.1) {
{\xymatrix@C=0.6em@R=.8em{
&j\ar@{~}[dl]\ar@{-}[dr]&\\
&&\!\!\! j+1 \!\!\!&\!\!\!\cdots&&\\
}}};
\node (a) at (3.4,.25) {
{\xymatrix@C=0.6em@R=.8em{
&& \ar@{-}[dl]\!\!\! i+1\! \!\!\! \ar@{~}[dr]&\\
\cdots\!\!&\!\!\!i+2\!\!\!&&\!i\ar@{-}[dr]&&\\
&&&&\:\:\:\:&\!\!\!\!\!\!\cdots\\
}}};
\draw (3.87,.18) circle (.2 cm);
 \draw [blue,thick] (7.8,.5) to (5.2,.5) to (6.5,-.9) to (7.8,-.9);
\end{tikzpicture}
\!\!\!\!\!\!\!\!\!\!\!\!\!
&\mapsto
\begin{tikzpicture}[anchorbase]
\node (a) at (3.45,.25) {
{\xymatrix@C=0.6em@R=.8em{
\cdots\!& \!\!\! j+1 \!\!\! \ar@{~}[dr]&\\
&&j\ar@{-}[dr]&&\!\!\!j+2\!\!\!&\!\!\cdots\\
&&&\!\!\! j+1\!\!\!\ar@{~}[ur]&\\
}}};
\draw (3,.23) circle (.2 cm);
 \draw [blue,thick] (5.7,.6) to (2,.6) to (3.3,-.8) to (5.7,-.8);
\end{tikzpicture}
\end{align*}
\begin{align*}
\substack{\phantom{i \equiv j(2)}\\{\normalsize  f_{i,j}^{(i)}}\\ i
  \not\equiv j(2)}:
\begin{tikzpicture}[anchorbase]
\node (a) at (3.45,.25) {
{\xymatrix@C=0.8em@R=1em{
&& \!\!\! \ar@{-}[dl] i+1 \!\!\! \ar@{~}[dr]&\\
\cdots\!\!\!&\!\!\!i+2\!\!\!&&i\ar@{-}[dr]&&\\
&&&&\!\!\! i+1\!\!\!&\!\!\cdots\\
}}};
\draw (3.88,.23) circle (.2 cm);
 \draw [blue,thick] (1.3,-.1) to (4.9,-.1) to (3.5,1.3) to (1.3,1.3);
\end{tikzpicture}
&\!\!\!\!\!\mapsto
\begin{tikzpicture}[anchorbase]
\node (a) at (6.9,-.1) {
{\xymatrix@C=0.6em@R=.8em{
\:\cdots\!\!\!\!\!\!\!\!\!&\:\:\:\:&&\!\!\!i+1\!\!\!\ar@{~}[dl]&\!\!\!\cdots\\
&&\ar@{-}[ul]\! i \!&&&&&\\
}}};
\node (a) at (3.4,.25) {
{\xymatrix@C=0.6em@R=.8em{
\cdots\!\!&\!\!\! j+1\! \!\!\! \ar@{~}[dr]&\\
&&\!j\ar@{-}[dr]&&\\
&&&\!\!\!j+1\!\!\!\ar@{~}[ur]&\\
}}};
\draw (3.44,.23) circle (.2 cm);
 \draw [blue,thick] (7.6,.6) to (6.5,.6) to (5.2,-.8) to (7.6,-.8);
\end{tikzpicture}
\!\!\!\!\!\!\!\!\!\!\!\!\!\\
\substack{\phantom{i \equiv j(2)}\\{\normalsize  f_{i,j}^{(i)}}\\ i \equiv j(2)}:
\begin{tikzpicture}[anchorbase]
\node (a) at (3.45,.25) {
{\xymatrix@C=0.6em@R=.8em{
&& \!\!\! \ar@{-}[dl] i+1 \!\!\! \ar@{~}[dr]&\\
\cdots\!&\!\!\!i+2\!\!\!&&i\ar@{-}[dr]&&\\
&&&&\!\!\! i+1\!\!\!&\!\!\cdots\\
}}};
\draw (3.87,.23) circle (.2 cm);
 \draw [blue,thick] (1.2,-.1) to (4.9,-.1) to (3.5,1.3) to (1.2,1.3);
\end{tikzpicture}
\!\!\!\!
&\!\!\!\!\!\mapsto
\begin{tikzpicture}[anchorbase]
\node (a) at (.2,.3) {
{\xymatrix@C=0.6em@R=.8em{
\cdots\!\!\!\!\!\!&&\!\!\!i+1\ar@{~}[dr]&&\:\:\:\\
&&&\!i \!\ar@{-}[ur]&\\
&&\\
}}};
\node (a) at (3.2,.25) {
{\xymatrix@C=0.6em@R=.8em{
\cdots\!\!& \!\!\! j+1 \!\!\! \ar@{~}[dr]&\\
&&j\ar@{-}[dr]&&\\
&&&\!\!\! j+1\!\!\!&\!\cdots\\
}}};
\draw (3.2,.23) circle (.2 cm);
 \draw [blue,thick] (-1.3,-.1) to (1.5,-.1) to (0.4,1.3) to (-1.3,1.3);
\end{tikzpicture}
\end{align*}

\begin{remark}\label{opendoor}
The above example can be changed slightly to obtain an
essentially finite example with weight poset $\Lambda:=\mathbb{Z}$ ordered by
the opposite of the natural ordering. To do this, let $D$ be the essentially finite-dimensional locally unital
algebra defined by the following quiver:
\begin{align*}
D\:&: \xymatrix{
\cdots
-\!1
\ar@/^/[r]^{y_{-1}}
&
0
\ar@/^/[r]^{y_0}
\ar@/^/[l]^{x_{-1}}
&
\ar@/^/[l]^{x_0}{1}
\ar@/^/[r]^{y_1}
&
\ar@/^/[l]^{x_1}
2\cdots}
&&\text{with relations }y_{i+1}y_i = x_i x_{i+1} = x_i y_i =0.
\end{align*}
Like for $A$, this algebra has a triangular decomposition, so
$D\fdlmod$ is an essentially finite highest weight category.
Recalling that the construction of tilting modules in the essentially finite
case explained in $\S$\ref{ifc} is by
passing to an upper finite truncation, 
the indecomposable tilting module $T(0)$ for $D$ has the same
structure as 
for $A$; see (\ref{megan}).
This module is infinite-dimensional; thus $D\fdlmod$ is {\em not} tilting-bounded.
Note also that 
this algebra $D$ can be obtained from the general construction from
Remark~\ref{triangularhistory1}, starting from the obvious triangular decomposition of
the $\Z$-graded algebra $\bar A =
\k\langle x,y \:|\:x^2=y^2=0, xy=0\rangle$ with $x$ in degree $1$ and
$y$ in degree $-1$; cf. \cite[Ex.~5.12]{BT}.
\end{remark}

\subsection{\boldmath Category $\Ocat$ for affine Lie algebras}\label{eg2}
Perhaps the first naturally-occurring examples of finite highest weight
categories came 
from the
blocks of the BGG category $\Ocat$ for a semisimple Lie
algebra. This context also provides natural examples of {finite}
fibered highest weight categories; see \cite{Mazsurvey} 
for a survey. To get examples of {\it semi-infinite} highest weight
categories, one 
can consider instead blocks of 
the category $\Ocat$ for an affine Kac-Moody Lie algebra. We briefly recall the setup referring to  \cite{Kac}, \cite{Carter} for more details. 

Let $\ocirc{\mg}$ be a finite-dimensional semisimple Lie algebra over
$\mathbb{C}$  and
$$\mg:=
\ocirc{\mg}\otimes_{\mathbb{C}}\mathbb{C}[t,t^{-1}]\oplus\mathbb{C}c\oplus
\mathbb{C}d
$$ 
be the corresponding affine Kac-Moody algebra.
Fix also a Cartan subalgebra $\ocirc{\mh}$ contained in a Borel
subalgebra 
$\ocirc{\mathfrak{b}}$ of $\ocirc{\mg}$.
There are corresponding subalgebras $\mathfrak{h}$ and $\mathfrak{b}$
of $\mathfrak{g}$, namely,
$$
\mh := \ocirc{\mh}\oplus\mathbb{C}c\oplus
\mathbb{C}d,
\qquad
\mathfrak{b} := \left(\ocirc{\mathfrak{b}} \otimes_{\mathbb{C}}\mathbb{C}[t]
+ \ocirc{\mg} \otimes_{\mathbb{C}} t \mathbb{C}[t]\right) \oplus\mathbb{C}c\oplus
\mathbb{C}d. 
$$
Let
$\{\alpha_i\:|\:i \in I\} \subset \mh^*$ and $\{h_i\:|\:i \in I\} \subset
\mh$ be the simple roots and coroots of $\mg$ 
and $(\cdot|\cdot)$ be the normalized invariant form on $\mh^*$,
all as in \cite[Ch.~7--8]{Kac}.
The {\em basic imaginary root}
$\delta\in\mh^*$ is the positive root corresponding to the canonical 
central element $c\in\mh$ under $(\cdot|\cdot)$.
The linear automorphisms of $\mh^*$ defined by 
$s_i:\la\mapsto \la-\la(h_i)\alpha_i$ generate the Weyl group
$W$ of $\mg$.  
Let $\rho\in\mh^*$ be the element satisfying $\rho(h_i)=1$ for all
$i\in I$ and $\rho(d) = 0$.
Then define the shifted action of $W$ on $\mh^*$ by
$w\cdot\la=w(\la+\rho)-\rho$ for $w\in W$, $\la\in\mh^*$.

We define the {\it level} of $\la\in\mh^*$ to be $(\la+\rho)(c) \in \mathbb{C}$. It is
{\it critical} 
if it equals the level of $\la=-\rho$, i.e., it is zero\footnote{Many
  authors define the level to be 
$\la(c)$, in which case the  critical level is $-\check{h}$, where $\check{h}$ is the dual Coxeter
  number.}.
We usually restrict our attention to integral weights $\la$, that is,
weights $\la \in \mh^*$ such that $\lambda(h_i) \in \Z$ for all $i \in
I$.
The level of an integral weight is either {\it positive}, {\it negative} or
{\it critical} ($=$ zero).  For any $\la \in \mh^*$, we define
\begin{equation}\label{tomorrow}
\lambda'
 := -\la-2\rho.
\end{equation}
Since $w\cdot(-\la-2\rho)=-w\cdot \la-2\rho$, weights $\la$ and $\mu$
are in the same orbit 
under the shifted action of $W$ if and only if so are $\la'$ and
$\mu'$. 
Note also that
the level of $\la$ is positive (resp., critical)
if and only if the level of $\la'$ is negative (resp., critical).
A crucial fact is that the orbit $W\cdot\lambda$ of an integral
weight $\la$ of positive level
contains a unique weight $\la_{\operatorname{max}}$ such that $\la_{\operatorname{max}}+\rho$ is dominant; e.g., see \cite[Ex.~13.1.E8a, Prop.~1.4.2]{Kumarbook}. 
By \cite[Cor.~1.3.22]{Kumarbook}, 
this weight is maximal in its orbit with respect to the usual
dominance ordering $\leq$ on weights, i.e., $\mu\leq\la$ if $\la-\mu \in
\bigoplus_{i \in I} \N \alpha_i$.
If $\la$ is integral of negative level, we deduce from this discussion
that its orbit contains a unique minimal weight $\la_{\operatorname{min}}$.

For $\la\in \mh^*$, let $\Delta(\la)$ be the Verma module with highest
weight $\la$ and $L(\la)$ be its unique irreducible quotient. Although
Verma modules need not be of finite length, the composition multiplicities
$[\Delta(\la)\::\:L(\mu)]$  are always finite.
There is also the {\em dual Verma
  module}
$\nabla(\la)$ which is the restricted dual $\Delta(\la)^\#$ 
of $\Delta(\la)$, i.e., the sum of the duals of the
weight spaces of $\Delta(\la)$ with the $\mathfrak{g}$-action twisted
by  the Chevalley antiautomorphism. 
All of the modules just introduced are objects in the category 
$\Ocat$ consisting of all $\mg$-modules $M$ which
are semisimple over $\mh$ with finite-dimensional weight spaces and such
that the set of weights of $M$ is contained in the lower set generated
by a finite subset of $\mh^*$; see \cite[$\S$2.1]{Kumarbook}. 
There is also a
larger category $\widehat{\Ocat}$ consisting of
the $\mg$-modules $M$ which are semisimple over $\mh$ and locally
finite-dimensional over $\mathfrak{b}$.

Let $\sim$ be the equivalence relation on $\mathfrak{h}^*$ generated
by $\lambda \sim \mu$ if there exists a positive root $\gamma$
and $n \in \Z$ such that $2(\lambda+\rho|\gamma) =n(\gamma|\gamma)$
and $\lambda-\mu = n\gamma$. 
For a $\sim$-equivalence class $\Lambda$, let $\Ocat_\Lambda$
(resp., $\widehat{\Ocat}_\Lambda$) be the full subcategory of
$\Ocat$ (resp., $\widehat{\Ocat}$) consisting of all
$M \in \Ocat$ (resp., $M \in \widehat{\Ocat}$) 
such that $[M:L(\la)] \neq 0\Rightarrow \la \in \Lambda$.
In view of the linkage
principle from \cite[Th.~2]{KK},
these subcategories may be called the {\em blocks} of $\Ocat$ and of
$\widehat{\Ocat}$, respectively.
In particular, 
by \cite[Th.~4.2]{DGK}, 
any $M \in \Ocat$
decomposes uniquely as a direct sum $M=\bigoplus_{\Lambda \in \mh^* /
  \sim} M_\Lambda$ with $M_\Lambda \in \Ocat_\Lambda$. Note though that
$\Ocat$ is not the coproduct of its blocks in the strict sense
since it is possible to find $M\in\Ocat$ such that 
$M_\Lambda$ is non-zero for infinitely many different $\Lambda$.
The situation is more satisfactory for $\widehat{\Ocat}$:
$\widehat{\Ocat}$ is the product of its blocks since
by \cite[Th.~6.1]{SoergelKac} 
the functor
\begin{eqnarray}
\label{blocks}
\prod_{\Lambda \in \mh^* / \sim}\widehat{\Ocat}_{\Lambda} \rightarrow
\widehat{\Ocat},
\qquad
(M_\Lambda)_{\Lambda \in \mh^* / \sim} \mapsto \bigoplus_{\Lambda \in \mh^* / \sim}
M_\Lambda
\end{eqnarray}
is an equivalence of categories.
Note also that $[\Delta(\la):L(\mu)] \neq 0$ implies that the level of 
$\la$ equals that of $\mu$, since the scalars by which $c$ acts on
$L(\la)$ and $L(\mu)$ must agree. Consequently,
we can talk simply about the level of a block.

A general combinatorial description of the $\sim$-equivalence classses $\Lambda$
 can be found for instance in \cite[Lem.~3.9]{Fiebigsubgeneric}.
For simplicity, we restrict ourselves from now on to integral blocks.
In non-critical levels, one gets exactly the $W$-orbits $W\cdot\lambda$ 
of the integral weights of non-critical level.
In critical level, one needs to incorporate also the 
translates 
by $\mathbb{Z}\delta$. 
From this description, it follows that the poset $(\Lambda, {\leq})$
underlying an integral block $\Ocat_\Lambda$ is upper finite
with unique maximal element $\lambda_{\operatorname{max}}$
if $\Ocat_\Lambda$ is of positive level, and lower finite with
unique minimal element $\lambda_{\operatorname{min}}$ if $\Ocat_\Lambda$ is of negative level. 
In case of the critical level, the poset is 
neither upper finite nor
lower finite, but it is always interval
finite.

\begin{example}\label{exsKM}
Here we give some explicit examples of posets which can occur for  $\mg=\hat{\mathfrak{sl}}_2$, the Kac-Moody algebra for the Cartan matrix 
$\begin{psmallmatrix}
\;\;2&-2\\
-2&\;\;2
\end{psmallmatrix}$.
The labelling set for the principal block is $W \cdot 0=\{\lambda_k, \mu_k\mid k\geq 0\}$
where $\lambda_k:=-\frac{1}{2}k(k+1)\alpha_0 -
\frac{1}{2}k(k-1)\alpha_1$
and $\mu_k :=
-\frac{1}{2}k(k-1)\alpha_0-\frac{1}{2}k(k+1)\alpha_1$. This is a block
of positive level with maximal element $\lambda_0=\mu_0=0$.
Applying the map (\ref{tomorrow}), we deduce that
$W \cdot (-2\rho) =\{\lambda'_k, \mu'_k\mid k\geq
0\}$. This is the labelling set for a block of negative level with minimal
element $\lambda'_0=\mu'_0=-2\rho$.
Finally, we have that
 $W \cdot(\alpha_0-\rho)\sqcup W \cdot
(\alpha_1-\rho) = \{\bar\lambda_k,\bar\mu_k\:|\:k \in \Z\}$ where
$\bar\lambda_k := (k+1)\alpha_0+k\alpha_1 -\rho$ and $\bar\mu_k :=
k\alpha_0 + (k+1)\alpha_1 -\rho$.
This
is the
labelling set for a block of critical level,
and it is neither upper nor lower finite.

\begin{align*}
\xymatrix@C=1em@R=1em{
&&&&&\ar@{.}[d]\ar@{.}[dr]&\ar @{.} [dl]\ar@{.}[d]&&\ar@{.}[d]&&\ar@{.}[d]\\
&0\ar @{-} [dl]_{s_1} \ar @{-} [dr]^{s_0}&&
&&\bar\la_{2}\ar@{-}[d]_{\delta}\ar @{-} [rd]_{\!\!s_1\:\:\:}&\bar\mu_2\ar@{-}[d]^{\delta}\ar@{-}[ld]_{\!\!\!s_0\,\:\:}&&\mu'_3&&\la'_3\\
\mu_1 \ar @{-} [d]_{s_0}&&\lambda_1 \ar @{-} [d]^{s_1}
&&&\bar\lambda_1\ar@{-}[d]_{\delta} \ar@{-}[rd]_{\!\!\!s_1\:\:\:}&\bar\mu_1\ar @{-}[dl]_{\!\!\!s_0\:\:\:}\ar@{-}[d]^{\delta}&&
\lambda'_2 \ar @{-} [u]^{s_1}&&\mu'_2  \ar @{-}[u]_{s_0}\\
\lambda_2 \ar @{-} [d] _{s_1}&&\mu_2\ar @{-} [d]^{s_0}
&,&&\bar\la_{0}\ar @{-} [rd]_{\!\!\!\!\!s_1\!\:\:}\ar@{-}[d]_{\delta}&\bar\mu_0\ar@{-}[d]^{\delta}\ar@{-}[dl]_{\!\!\!s_0\:\:\:}&,\qquad&\mu'_1\ar @{-} [u]^{s_0}&&\lambda'_1\ar@{-} [u]_{s_1}&.\\
\mu_3\ar@{.}[d]&&\la_3\ar@{.}[d] 
&&&\bar\lambda_{-1}\ar@{.}[d]\ar@{.}[dr]&\bar\mu_{-1}\ar@{.}[d]\ar @{.} [dl] &&& \!\!\!-2\rho\!\!\!\ar @{-} [ul]^{s_1}\ar @{-} [ur]_{s_0}\\
&&&&\:\:\:&\:\:\:&
}\\
{\text{Positive level}}
\hspace{20.1mm}
{\text{Critical level}}
\hspace{19mm}
{\text{Negative level}}\hspace{6.7mm}
\end{align*}
\end{example}

Recall the definitions of upper finite and lower finite highest
weight categories from Definitions~\ref{ufc} and \ref{newlfd}, respectively.

\begin{theorem}\label{dennies}
Let $\Ocat_\Lambda$ be an integral block of $\Ocat$ of 
non-critical level.
Then it is an upper finite or lower finite highest weight category
according to whether the level is positive or negative, respectively.
In both cases,
the standard and costandard objects 
are the Verma modules $\Delta(\la)$ and the dual Verma
modules $\nabla(\la)$, respectively, for $\la \in \Lambda$.
The partial order $\leq$ on $\Lambda$ is the dominance order.
\end{theorem}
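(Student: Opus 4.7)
The strategy is to verify the axioms of Definitions~\ref{ufc} and \ref{lfs} in the two cases separately. In both cases, the crucial input is the Kac--Kazhdan linkage principle, which forces $[\Delta(\la):L(\mu)]\neq 0$ to imply $\mu\in W\cdot\la$ and $\mu\leq\la$ in the dominance order. I will also use that weight-space considerations give finite-dimensional morphism spaces $\Hom_{\mathcal{O}}(M,N)$ between any two finitely generated objects in $\mathcal O_\Lambda$.

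Negative level (lower finite): Since $\Lambda$ is lower finite, the set $(-\infty,\la]\cap\Lambda$ is finite for every $\la\in\Lambda$, so both $\Delta(\la)$ and $\nabla(\la)$ are of finite length. Together with finite-dimensionality of morphism spaces and the fact that every object of $\mathcal O_\Lambda$ is the direct limit of its finitely generated (hence finite length) submodules, Lemma~\ref{char} shows that $\mathcal O_\Lambda$ is locally finite Abelian. To apply Theorem~\ref{ca}, I would fix a finite lower set $\Lambda\d\subseteq\Lambda$ and form the Serre subcategory $\R\d$ of $\mathcal O_\Lambda$ generated by $\{L(\mu):\mu\in\Lambda\d\}$. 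This is essentially the classical truncated block of Rocha-Caridi--Wallach: the indecomposable projective cover $P\d(\mu)$ of $L(\mu)$ in $\R\d$ exists, and a standard argument by induction on $|\Lambda\d|$ produces a finite $\Delta$-flag for $P\d(\mu)$ with top section $\Delta(\mu)$ and other sections $\Delta(\nu)$ for $\mu<\nu\in\Lambda\d$. This verifies that $\R\d$ is a finite highest weight category with the claimed distinguished objects, and Theorem~\ref{ca} concludes the argument.

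Positive level (upper finite): Here $\Lambda$ has a unique maximum $\la_{\operatorname{max}}$ and each $[\la,\infty)\cap\Lambda$ is finite. The plan is to exhibit $\mathcal O_\Lambda$ as a Schurian category by constructing, for each $\la\in\Lambda$, an indecomposable projective $P(\la)\in\mathcal O_\Lambda$ with head $L(\la)$ and a finite $\Delta$-flag whose top section is $\Delta(\la)$ and whose other sections $\Delta(\mu)$ satisfy $\mu>\la$. Such $P(\la)$ is built by an inductive procedure inside the finite upper interval $[\la,\la_{\operatorname{max}}]$: starting from $\Delta(\la)$ and iteratively resolving non-trivial $\Ext^1(\Delta(\mu),-)$ with $\mu>\la$, a process which terminates because only finitely many such $\mu$ exist. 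The resulting locally unital algebra $A=\bigl(\bigoplus_{\la,\mu\in\Lambda}\Hom_{\mathcal O_\Lambda}(P(\mu),P(\la))\bigr)^{\op}$ is locally finite-dimensional by weight considerations, and the equivalence $\mathcal O_\Lambda\simeq A\lfdlmod$ is forced by the covering property. The axiom $(\widehat{P\Delta})$ of Definition~\ref{ufc} is then built directly into the construction.

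The main obstacle in each case is the existence of the truncated projective covers with the asserted $\Delta$-flag structure; this is technical but essentially classical, drawing on the Rocha-Caridi--Wallach truncation technique for symmetrizable Kac-Moody algebras and its affine refinements. A more streamlined route for the positive level case would be to invoke the semi-infinite Ringel duality of Theorem~\ref{rt2}: the involution $\la\mapsto\tilde\la=-\la-2\rho$ exchanges integral blocks of positive and negative level while reversing the partial order, and a suitable functor (built from restricted duals of Verma modules or from semi-infinite cohomology, cf.\ \cite{SoergelKac}) identifies the positive level block $\mathcal O_\Lambda$ with the Ringel dual of the negative level block $\mathcal O_{\tilde\Lambda}$. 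Once the negative level case is established, Theorem~\ref{rt2} delivers the upper finite highest weight structure of the positive level block without further construction, and automatically identifies the standards as Verma modules (matching dual Verma modules on the Ringel dual side under the duality functor $\Fhi$).
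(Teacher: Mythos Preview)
Your negative level argument matches the paper's (truncate to finite lower sets and invoke Theorem~\ref{ca}), though with a gap: Lemma~\ref{char} requires every object of $\mathcal O_\Lambda$ to have finite length, and ``direct limit of finite-length submodules'' does not give this. The paper's Claim~2 closes this via \cite[2.1.11(1)]{Kumarbook}, exploiting the existence of $\la_{\operatorname{min}}$.

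For positive level there are two genuine gaps. First, your iterative construction has the $\Ext$ direction reversed: to build a projective you must kill $\Ext^1(-,L(\mu))$ for $\mu>\la$, not $\Ext^1(\Delta(\mu),-)$ (the latter would be the criterion for a $\nabla$-flag). The paper avoids this altogether by observing that $\Delta(\la_{\operatorname{max}})$ is projective in $\widehat{\mathcal O}_\Lambda$ and then applying translation functors through walls (\cite{Neidhardt}, \cite{Fiebigtransl}), which preserve projectivity. Second, and more seriously, constructing the $P(\la)$ and forming $A$ only yields $\widehat{\mathcal O}_\Lambda\simeq A\lmod$; to identify $\mathcal O_\Lambda$ with the Schurian category $A\lfdlmod$ you need the paper's Claim~1, namely that $\mathcal O_\Lambda$ coincides with the finite-multiplicity objects in $\widehat{\mathcal O}_\Lambda$. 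The non-trivial direction uses upper-finiteness of $\Lambda$: for each weight $\gamma\le\la_{\operatorname{max}}$ only finitely many $\mu\in\Lambda$ satisfy $L(\mu)_\gamma\neq 0$, so finite composition multiplicities force finite-dimensional weight spaces. Your phrase ``forced by the covering property'' does not address this. As for the Ringel-duality alternative: you need Theorem~\ref{rt1} (lower $\Rightarrow$ upper dual), not Theorem~\ref{rt2}; and even then, the abstract Ringel dual is only identified with $\mathcal O_\Lambda$ once you know the Arkhipov--Soergel images of tiltings are projective generators of $\widehat{\mathcal O}_\Lambda$ and once Claim~1 is in hand. That is precisely Corollary~\ref{KacMoody}, which the paper derives \emph{from} Theorem~\ref{dennies} rather than using it as an alternative proof.
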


\begin{proof}
First, we prove the result for
an integral block $\Ocat_\Lambda$ of positive
level. As explained above, the poset $\Lambda$ is upper finite in this case.
Let $\lambda_{\operatorname{max}}$ be its unique maximal weight.
\vspace{1.5mm}

\noindent{Claim 1:}
{\em In the positive level case,
$\Ocat_\Lambda$ is the full
subcategory of $\widehat{\Ocat}_\Lambda$ consisting of
all modules $M$ such that $[M:L(\lambda)] < \infty$ for all $\lambda \in
\Lambda$.}
To prove this, 
given $M \in \Ocat_\Lambda$, it is obvious that all of its
composition multiplicities are finite since $M$ has finite-dimensional
weight spaces. Conversely, suppose that all of the composition
multiplicities of $M \in \widehat{\Ocat}_\Lambda$
are finite. 
All weights of $M$ lie in the lower set generated by
$\lambda_{\operatorname{max}}$. Moreover, for 
$\lambda \leq \lambda_{\operatorname{max}}$, 
the dimension of the $\la$-weight space of $M$ is
$$
\dim M_\la=
\sum_{\mu \in \Lambda} [M:L(\mu)] \dim L(\mu)_\lambda.
$$ 
Since the poset is upper finite, there are only
finitely many
$\mu \in \Lambda$ such that the $\lambda$-weight space 
$L(\mu)_\lambda$ is non-zero, and these weight spaces are
finite-dimensional, so we deduce that $\dim M_\lambda < \infty$.
This proves the claim.

\vspace{1.5mm}

\noindent
Now we observe that the Verma module $M(\la_{\operatorname{max}})$ with  maximal possible highest weight is projective in $\widehat{\Ocat}_\Lambda$. From this and a standard argument involving translation functors through walls (see e.g. \cite{Neidhardt}) and the combinatorics
from \cite[$\S$4]{Fiebigtransl} (see also the introduction of \cite{Fie3}), it follows that there are projective modules $P_\lambda \in \widehat{\Ocat}_\Lambda$ with (finite)
$\Delta$-flags 
as in the axiom ($\widehat{P\Delta}$). Since each $\Delta(\lambda)$ belongs to $\Ocat_\Lambda$, we actually have that $P_\lambda \in \Ocat_\Lambda$.
All that is left to complete the proof of the
theorem in the positive level case
is to show that $\Ocat_\Lambda$ is a Schurian category. 
Let $A := \left(\bigoplus_{\lambda,\mu \in \Lambda}
\Hom_{\mg}(P_\lambda,P_\mu)\right)^\op$. 
Since the multiplicities
$[P_\mu:L(\la)]$ are finite, $A$ is a locally
finite-dimensional locally unital algebra.
Using Lemma~\ref{mor}, we deduce that
$\widehat{\Ocat}_\Lambda$ is equivalent to the category $A\lmod$
of all left $A$-modules. As explained in the discussion after
(\ref{compmults}), $A\lfdlmod$ is the full 
subcategory of $A\lmod$ consisting of all modules with finite
composition multiplicities.
Combining this with Claim 1, we deduce that the equivalence between
$\widehat{\Ocat}_\Lambda$ and $A\lmod$ restricts to an
equivalence between $\Ocat_\Lambda$ and $A\lfdlmod$.
Hence, $\Ocat_\Lambda$ is a Schurian category.

We turn our attention to an integral block $\Ocat_\Lambda$
of negative level. In this case, we know already that the poset
$\Lambda$ is lower finite with a unique minimal element $\lambda_{\operatorname{min}}$.

\vspace{1.5mm}
\noindent{Claim 2:} {\em In the negative level case, the category $\Ocat_\Lambda$ is the full
subcategory of $\widehat{\Ocat}_\Lambda$ consisting of all
modules of finite length.}
For this, it is obvious that any module in $\widehat{\Ocat}_\Lambda$ of
finite length belongs to $\Ocat_\Lambda$.
Conversely,
any object in $\Ocat_\Lambda$ is of finite length thanks to the
formula \cite[2.1.11(1)]{Kumarbook}, taking $\la$ therein to be 
$\lambda_{\operatorname{min}}$.

\vspace{1.5mm}
\noindent
From Claim 2 and Lemma~\ref{char}, it follows that
$\R := \Ocat_\Lambda$ is a locally finite Abelian category. By \cite[Th.~2.7]{Fiebigtransl} the Serre subcategory $\R\d$ of $\R$ associated to 
$\Lambda\d$ is a finite highest weight category for each finite lower
set $\Lambda\d$ of $\Lambda$. We deduce that $\R$ is a lower finite highest weight
category according to Definition~\ref{newlfd}.
\end{proof}

Let $\Ocat_\Lambda$ be an integral block of non-critical level.
The following assertions about projective and injective modules follow
from Theorem~\ref{dennies} and the general theory from
$\S\S$\ref{pct}--\ref{errorhere};
see also \cite[Rem.~6.5]{SoergelKac}.
\begin{itemize}
\item
In the positive level case, when $\Ocat_\Lambda$ is a Schurian
category, $\widehat{\Ocat}_\Lambda$ has enough projectives and injectives.
Moreover, the projective covers of the irreducible modules are
the modules $\{P(\la)\:|\:\la \in \Lambda\}$
constructed in the proof of Theorem~\ref{dennies}, and these belong to $\Ocat_\Lambda$.
Their restricted duals $I(\la) := P(\la)^\#$ are the indecomposable
injective modules in $\widehat{\Ocat}_\Lambda$, 
and also belong to $\Ocat_\Lambda$.
\item
The situation is completely different 
in the negative level case, as 
we need to pass to 
$\widehat{\Ocat}_\Lambda$, which is the ind-completion of
the finite Abelian category 
$\Ocat_\Lambda$, before we can talk about injective modules.
In $\widehat{\Ocat}_\Lambda$, the irreducible module
$L(\la)\:(\la\in\Lambda)$ 
has an injective hull $I(\la)$ in
$\widehat{\Ocat}_\Lambda$, which possesses a (possibly infinite) ascending
$\nabla$-flag in the sense of Definition~\ref{good}.
However,
$\widehat{\Ocat}_\Lambda$ usually does not have any
projectives at all (although one could construct such modules in the
pro-completion of $\Ocat_\Lambda$ as done e.g. in \cite{Fie3}).
\end{itemize}
The following results about tilting modules 
are consequences of the general
theory developed in $\S$\ref{sbugs} and $\S$\ref{sbags}. They
already appeared in an equivalent form 
in \cite{SoergelKac}.
\begin{itemize}
\item In the negative level case, tilting modules are objects in
  $\Ocat_\Lambda$ admitting
  both a (finite) $\Delta$-flag and a (finite) $\nabla$-flag. The isomorphism classes
  of 
  indecomposable tilting modules in $\Ocat_\Lambda$ are
  parametrized by their highest weights.
They
may also be
constructed by applying translation functors to the Verma module $\Delta(\la_{\operatorname{min}})$. 
\item
In the positive level case, tilting 
modules are objects in $\Ocat_\Lambda$ which admit both a (possibly
infinite) ascending $\Delta$-flag 
and 
a (possibly infinite) descending $\nabla$-flag in the sense of
Definition~\ref{tpc}. Again, the isomorphism classes of indecomposable tilting modules are
parametrized by their highest weights.
\end{itemize}
In both cases, our characterization of the
indecomposable 
tilting module 
$T(\lambda)$ of highest weight $\lambda$
is slightly different from the one given in \cite[Def.~6.3]{SoergelKac}. 
From our definition, one sees immediately that $T(\lambda)^\# \cong T(\lambda)$.

\begin{remark}
In the literature dealing with positive level,
it is common to pass to a different category of
modules,  e.g., to the Whittaker category in \cite{Bez} or to
truncated versions of $\Ocat$ in  \cite[$\S$3]{SVV}, before
contemplating tilting modules.
\end{remark}

Our next result is concerned with the Ringel duality between integral
blocks of positive and negative level.
This depends crucially on a special case of 
the {Arkhipov-Soergel equivalence} from 
\cite{Ark}, \cite{SoergelKac}.
Let $S$ be Arkhipov's 
semi-regular bimodule, which is the bimodule $S_\gamma$ 
of \cite{SoergelKac}
with $\gamma := 2\rho$ as in 
\cite[Lem.~7.1]{SoergelKac}.
For $\la \in \mathfrak{h}^*$,
let $T(\la)$ be the indecomposable tilting module from
\cite[Def.~6.3]{SoergelKac} (which is the same as in the previous paragraph for integral
$\lambda$ of positive or negative level).
Also let $P(\la)$ be a projective cover of $L(\la)$ in
$\widehat{\Ocat}$ whenever such an object exists;
cf. \cite[Rem.~6.5(2)]{SoergelKac}.

\begin{theorem}[Arkhipov-Soergel equivalence]\label{as}
Tensoring with the semi-regular bimodule defines an equivalence 
$S \otimes_{U(\mathfrak{g})} ?:
\Delta(\Ocat) \rightarrow
\nabla(\Ocat)$
between the exact 
subcategories of $\Ocat$ consisting of objects
with (finite) $\Delta$- and $\nabla$-flags, respectively. Moreover
(assuming for the second isomorphism that $P(\la)$ exists):
\begin{enumerate}
\item
$S \otimes_{U(\mathfrak{g})}\Delta(\la)\cong\nabla(\lambda')$.
\item
$S \otimes_{U(\mathfrak{g})}P(\la)\cong T(\lambda')$ (assuming
$P(\la)$ exists).
\end{enumerate}
\end{theorem}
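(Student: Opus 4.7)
The plan is to follow the strategy of Arkhipov and Soergel by first constructing the semi-regular bimodule $S$ explicitly, then verifying the key isomorphism on Verma modules, and finally extending by exactness and dévissage to all $\Delta$-filtered objects. The assertion about tilting modules in part (2) will then follow by combining the equivalence with the characterization of indecomposable tilting objects from Theorem~\ref{gin} (or its upper finite counterpart Theorem~\ref{moregin}), applied in the positive level case where projective covers exist.

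First I would recall the construction of Arkhipov's semi-regular bimodule $S = S_\gamma$ for $\gamma = 2\rho$. It is built as a two-step twist: one starts from the restricted dual $U(\mathfrak{n})^\circledast$ of the enveloping algebra of the positive part, which carries commuting left and right $U(\mathfrak{n})$-actions, then one induces up to obtain a $(U(\mathfrak{g}),U(\mathfrak{g}))$-bimodule using the Harish-Chandra-type tensor products, and finally one twists one of the actions by the automorphism of $U(\mathfrak{h})$ corresponding to the shift by $-2\rho$. The essential features I need are: (a) as a right $U(\mathfrak{g})$-module $S$ is flat on the subcategory of Verma-flag objects, and (b) there is a canonical isomorphism $S \otimes_{U(\mathfrak{g})} \Delta(\lambda) \cong \nabla(-\lambda - 2\rho)$, where the shift $\lambda \mapsto \tilde\lambda = -\lambda - 2\rho$ is exactly what is forced by the choice $\gamma = 2\rho$ together with the Chevalley involution appearing in the passage from $\Delta$ to $\nabla$.

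Assuming (a) and (b), I would then extend the functor to all of $\Delta(\mathcal{O})$ by induction on the length of a $\Delta$-flag. Given a short exact sequence $0 \to \Delta(\mu) \to V \to V' \to 0$ with $V' \in \Delta(\mathcal{O})$, the exactness from (a) yields $0 \to \nabla(\tilde\mu) \to S\otimes_{U(\mathfrak{g})} V \to S\otimes_{U(\mathfrak{g})} V' \to 0$, and by induction both ends carry $\nabla$-flags; hence so does the middle by Corollary~\ref{hanks}. This produces a functor $\Delta(\mathcal O) \to \nabla(\mathcal O)$. To see that it is an equivalence, I would construct a quasi-inverse using the dual semi-regular bimodule $S^\vee$ (with $\gamma$ replaced by $-2\rho$), using the Arkhipov isomorphism $S \otimes_{U(\mathfrak{g})} S^\vee \cong U(\mathfrak{g})$ after suitable completion, or alternatively verify full faithfulness by checking the natural map on $\Hom$-spaces is an isomorphism on the generating objects $\Delta(\lambda)$ using (\ref{onedimensionality}) and (b).

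For part (2), restrict attention to a block $\mathcal{O}_\Lambda$ of positive level, where $P(\lambda)$ exists and has a finite $\Delta$-flag with top section $\Delta(\lambda)$ and other sections $\Delta(\mu)$ for $\mu > \lambda$ by Theorem~\ref{dennies}. Applying $S\otimes_{U(\mathfrak{g})} -$ produces an object with a finite $\nabla$-flag, whose top is $\nabla(\tilde\lambda)$ and whose remaining sections are of the form $\nabla(\tilde\mu)$ with $\tilde\mu < \tilde\lambda$ in the dominance order of the negative level block containing $\tilde\lambda$. Indecomposability is preserved because the functor is fully faithful (hence induces an isomorphism of endomorphism algebras), so $\End_{\mathfrak{g}}(S\otimes_{U(\mathfrak{g})} P(\lambda)) \cong \End_{\mathfrak{g}}(P(\lambda))$ is local. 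To show the resulting object is also $\Delta$-filtered—which combined with the $\nabla$-flag will identify it with $T(\tilde\lambda)$ via Theorem~\ref{gin}(ii) (applied in the negative level block)—I would use Soergel's self-duality argument: $P(\lambda)^\# \cong I(\lambda)$ and a compatibility between $S\otimes_{U(\mathfrak{g})}-$ and the duality $\#$ shows that the image carries both types of flag. The main obstacle is not the combinatorics of the flags (which is straightforward once (b) and exactness are in place) but rather the careful homological-algebraic construction of $S$ and verification of (b); this requires a precise understanding of how the twist by $2\rho$ interacts with the Chevalley involution implicit in the definition of dual Verma modules.
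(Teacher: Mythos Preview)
The paper does not prove this theorem at all: it is stated with attribution to Arkhipov and Soergel and with citations to \cite{Ark} and \cite{SoergelKac}, and is then used as a black box in the proof of Corollary~\ref{KacMoody}. There is therefore nothing in the paper to compare your argument against.

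Your outline is a reasonable summary of the shape of the argument in those references, and you correctly identify that the substantive work lies in the construction of $S$ and the verification of your property~(b); everything you write after that point (the d\'evissage, the preservation of indecomposability via full faithfulness, the identification of the image of $P(\lambda)$ with a tilting module via its flag structure) is routine once (a) and (b) are in hand. One small point: your appeal to Corollary~\ref{hanks} for the induction step is not quite right, since that corollary concerns the \emph{third} term of a short exact sequence given the first two, whereas here you already know the middle term has a $\nabla$-flag simply because the sequence is exact and both ends do. Also, for part~(2) your proposed route to the $\Delta$-flag on $S\otimes_{U(\mathfrak{g})}P(\lambda)$ via compatibility of $S\otimes-$ with duality~$\#$ is delicate and not obviously correct as stated; Soergel's actual argument for this is somewhat different. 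But since the paper itself defers entirely to the cited references, these are issues with your reconstruction of the literature rather than discrepancies with the paper.
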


\begin{corollary}\label{KacMoody}
Assume that ${\Ocat}_\Lambda$ is an integral block of negative level.
Let
$\Ocat_\Lambda'$ be the Ringel dual of
$\Ocat_\Lambda$
relative to some choice of $T = \bigoplus_{i \in I} T_i$
as in 
Definition~\ref{rd1},
and let $F$
be the
Ringel duality functor from (\ref{rdf1}).
Also let $\Lambda' := \{\la'\:|\:\la\in\Lambda\}$.
Then there is an equivalence of categories
$E:
\Ocat_\Lambda'
\rightarrow
\Ocat_{\Lambda'}
$  
such that $E \circ F
: \nabla(\Ocat_\Lambda)\rightarrow \Delta(\Ocat_{\Lambda'})$ is a quasi-inverse to
the Arkhipov-Soergel equivalence 
$S\otimes_{U(\mathfrak{g})} ?:
\Delta(\Ocat_{\Lambda'})\rightarrow \nabla(\Ocat_\Lambda)$.
\end{corollary}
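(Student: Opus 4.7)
The plan is to combine the Arkhipov–Soergel equivalence (Theorem~\ref{as}) with the construction of the semi-infinite Ringel dual from Definition~\ref{rd1} and Theorem~\ref{rt1}.

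First, I would observe that $\la \mapsto \tilde\la = -\la - 2\rho$ is an order-reversing bijection $\Lambda \to \widetilde\Lambda$, since $\tilde\mu - \tilde\la = -(\mu-\la)$. Because $\mathcal{O}_\Lambda$ has negative level, $\mathcal{O}_{\widetilde\Lambda}$ has positive level, and by Theorem~\ref{dennies} it is a Schurian category which is upper finite highest weight with weight poset $(\widetilde\Lambda,\leq) \cong \Lambda^{\op}$. This matches the weight poset of $\widetilde{\mathcal R}$ given by Theorem~\ref{rt1}.

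Next, I would apply the quasi-inverse of Arkhipov–Soergel to transport the given tilting generating family $(T_i)_{i\in I}$ in $\mathcal{O}_\Lambda$ to a family $(P_i)_{i \in I}$ in $\mathcal{O}_{\widetilde\Lambda}$. By Theorem~\ref{as}(2), each indecomposable summand $T(\la)$ of $T_i$ pulls back to the indecomposable projective $P(\tilde\la)$, so each $P_i$ is projective and every $P(\tilde\la)$ appears as a summand of some $P_i$. Thus $(P_i)_{i \in I}$ is a projective generating family in $\mathcal{O}_{\widetilde\Lambda}$. Since Arkhipov–Soergel is an equivalence between $\Delta$-filtered subcategories and every projective is $\Delta$-filtered, this yields a locally unital algebra isomorphism
$$
A := \Bigl(\bigoplus_{i,j \in I} \Hom_{\mathcal{O}_\Lambda}(T_i,T_j)\Bigr)^{\op}
\;\cong\;
\Bigl(\bigoplus_{i,j \in I} \Hom_{\mathcal{O}_{\widetilde\Lambda}}(P_i,P_j)\Bigr)^{\op}.
$$
Applying Lemma~\ref{mor} to the right-hand algebra (using that $\mathcal{O}_{\widetilde\Lambda}$ is Schurian with enough projectives) produces the desired equivalence $E : \widetilde{\mathcal R} = A\lfdlmod \stackrel{\sim}{\rightarrow} \mathcal{O}_{\widetilde\Lambda}$, characterized by $Ae_i \mapsto P_i$.

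Finally, to check the compatibility with $F$, I would unwind the adjunctions. The composite $E \circ F : \mathcal{O}_\Lambda \to \mathcal{O}_{\widetilde\Lambda}$ is determined by the natural isomorphism
$$
\Hom_{\mathcal{O}_{\widetilde\Lambda}}\bigl(P_i,\, (E\circ F)(V)\bigr) \;\cong\; \Hom_{\mathcal{O}_\Lambda}(T_i, V)
$$
for $V \in \mathcal{O}_\Lambda$ and $i \in I$. On the other hand, for $X \in \Delta(\mathcal{O}_{\widetilde\Lambda})$, Arkhipov–Soergel provides
$$
\Hom_{\mathcal{O}_{\widetilde\Lambda}}(P_i, X) \;\cong\; \Hom_{\mathcal{O}_\Lambda}\bigl(T_i,\; S \otimes_{U(\mg)} X\bigr).
$$
Comparing these formulas for $V \in \nabla(\mathcal{O}_\Lambda)$, we conclude that $(E \circ F)(V)$ is the unique object in $\Delta(\mathcal{O}_{\widetilde\Lambda})$ sent to $V$ by $S \otimes_{U(\mg)} -$, so $E \circ F$ and the Arkhipov–Soergel equivalence $S \otimes_{U(\mg)} -$ are mutually quasi-inverse on the relevant subcategories. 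The main technical obstacle is the second step, namely pinning down $A$ as an algebra realization of $\mathcal{O}_{\widetilde\Lambda}$ directly (not merely up to Morita equivalence) and keeping track of the functoriality so that the final compatibility with $S \otimes_{U(\mg)} -$ drops out of how $E$ was defined; everything else is a matter of organized bookkeeping through the Arkhipov–Soergel isomorphism within the machinery of $\S$\ref{sird}.
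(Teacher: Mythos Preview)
Your proposal is correct and follows essentially the same route as the paper: transport $(T_i)$ to a projective generating family $(P_i)$ in $\mathcal{O}_{\widetilde\Lambda}$ via a quasi-inverse $D$ of Arkhipov--Soergel, identify $A$ with the endomorphism algebra of $(P_i)$, and take $E$ to be a quasi-inverse of $H := \bigoplus_i \Hom_{\mathcal{O}_{\widetilde\Lambda}}(P_i,-)$. One small imprecision: Lemma~\ref{mor} yields an equivalence with $A\lmod$, not $A\lfdlmod$; the paper instead invokes the proof of Theorem~\ref{dennies} (Claim~1 there) to pass to $A\lfdlmod$, and for the compatibility it simply computes $H\circ D \cong \bigoplus_i \Hom(P_i, D(-)) \cong \bigoplus_i \Hom(S\otimes P_i,-) \cong \bigoplus_i \Hom(T_i,-) = F$, which is your Hom-space comparison packaged a bit more directly.
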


\begin{proof}
Note to start with
that
$\Ocat_{\Lambda'}$ is an integral block of positive
level.
Moreover,
the map $(\Lambda,\geq) \rightarrow (\Lambda', \leq), \lambda \mapsto \lambda'$ 
is an order isomorphism.

Choose a quasi-inverse $D$ to 
$S\otimes_{U(\mathfrak{g})} ?:
\Delta(\Ocat_{\Lambda'})\rightarrow \nabla(\Ocat_\Lambda)$, 
and set $P_i :=  D T_i$.
By Theorem~\ref{as}(2), $(P_i)_{i \in I}$ is a projective generating
family for 
$\Ocat_{\Lambda'}$.
Moreover, recalling that $\Ocat_\Lambda'$ is the category $A\lfdlmod$ where $A
:=\left(\bigoplus_{i,j \in I}
 \Hom_{\Ocat_{\Lambda}}(T_i, T_j)\right)^\op$, the functor
$D$ induces an isomorphism via which we can identify $A$ with
$\left(\bigoplus_{i,j \in I}
 \Hom_{\Ocat_{\Lambda'}}(P_i, P_j)\right)^\op$.

As explained in the proof of Theorem~\ref{dennies},
the functor $$
H:=\bigoplus_{i \in I} \Hom_{\Ocat_{\Lambda'}}(P_i,?):\Ocat_{\Lambda'}
\rightarrow A\lfdlmod
$$ 
is an equivalence of categories.
Moreover, we have that
$$
H \circ D = \bigoplus_{i \in I} \Hom_{\Ocat_{\Lambda'}}(P_i,
D ?)
\cong
\bigoplus_{i \in I} \Hom_{\Ocat_{\Lambda}}(S
\otimes_{U(\mathfrak{g})} P_i,?)
\cong
\bigoplus_{i \in I} \Hom_{\Ocat_{\Lambda}}(T_i,
?) = F.
$$
Letting $E$ be a quasi-inverse equivalence to $H$, 
it follows that $E \circ F \cong D$.
\end{proof}

\begin{remark}
In the setup of Corollary~\ref{KacMoody}, the
Arkhipov-Soergel equivalence extends to 
an equivalence
$S\otimes_{U(\mathfrak{g})} ?:
\Delta^{\asc}(\Ocat_{\Lambda'})\rightarrow \nabla^{\asc}(\Ocat_\Lambda)$,
which 
is a quasi-inverse to
$E\circ F:
 \nabla^\asc(\Ocat_\Lambda)\rightarrow \Delta^\asc(\Ocat_{\Lambda'})$.
These functors interchange the indecomposable injectives in
$\widehat{\Ocat}_\Lambda$ with the indecomposable tiltings in
$\Ocat_{\Lambda'}$.
\end{remark}

Finally we discuss the situation for an
integral critical block $\Ocat_\Lambda$. 
As we have already explained,
in this case the poset $\Lambda$ is neither upper nor lower finite.
In fact, these blocks do not fit 
into the framework of this article at all,
since the Verma modules
have infinite length and there are no projectives.
One sees this already for the Verma module $\Delta(-\rho)$ for $\mg =
\widehat{\mathfrak{sl}}_2$, which has composition factors
  $L(-\rho-m\delta)$ for $m \geq 0$, each appearing with multiplicity
  equal to the number of partitions of $m$; see
  e.g. 
\cite[Th.~4.9(1)]{AF}.
However, there is an autoequivalence
$\Sigma:=L(\delta)\otimes ?:
\widehat{\Ocat}_\Lambda \rightarrow \widehat{\Ocat}_\Lambda$, which makes it
possible to pass to 
the {\em restricted category}
$\widehat{\Ocat}_\Lambda^{\res}$, which we define next.

Let $A_n$ be the vector space of natural transformations 
$\Sigma^n \rightarrow \operatorname{Id}$.
This gives rise to a graded algebra
$A:=\bigoplus_{n\in\mathbb{Z}} A_n$. 
Then the restricted category
$\widehat{\Ocat}_\Lambda^{\res}$ is the full
subcategory of $\widehat{\Ocat}_\Lambda$ consisting of all modules 
which are
annihilated by the induced action of $A_n$ for $n\neq 0$;
cf. \cite[$\S$4.3]{AF}. 
The irreducible modules in the restricted category are the same as in
$\widehat{\Ocat}_\Lambda$ itself. There are also the {\em restricted Verma modules} 
\begin{equation}
\Delta(\la)^\res:=\Delta(\la)\bigg/ 
\sum_{\eta \in A_{\neq 0}}
\im \left(\eta_{\Delta(\la)}:\Sigma^n\Delta(\la)\rightarrow \Delta(\la)\right)
\end{equation}
from \cite[$\S$4.4]{AF}.
In other words, $\Delta(\la)^\res$ 
is the largest quotient of $\Delta(\la)$ that belongs to the
restricted category.
Similarly, the {\it restricted dual Verma module}
${\nabla}(\la)^\res$
is the largest submodule of $\nabla(\la)$ that belongs to the
restricted category.

The restricted category $\widehat{\Ocat}_\Lambda^{\res}$
is no longer indecomposable: by \cite[Th.~5.1]{AFlinkage} it
decomposes further as
\begin{equation}
\widehat{\Ocat}_\Lambda^{\res}
= \prod_{\overline{\Lambda} \in \Lambda / W} \widehat{\Ocat}_{\overline{\Lambda}}^{\res}
\end{equation}
where $\Lambda / W$ denotes the orbits of $W$ under the dot action.
For instance, the poset $\Lambda$ for the critical level displayed in
Example~\ref{exsKM} splits into two orbits
$W\cdot(\alpha_0-\rho)$
and $W\cdot (\alpha_1-\rho)$ (i.e., one removes the edges 
labelled by $\delta$). In the most singular case,
$\widehat{\Ocat}_{-\rho}^{\res}$ is
a product of simple blocks; in particular,
$\Delta^\res(-\rho)=L(-\rho)= {\nabla}^\res(-\rho)$.

\begin{conjecture}[Critical block conjecture]\label{cong} Let $\widehat{\Ocat}_{\overline\Lambda}^{\res}$ be a regular integral critical block in the sense of \cite{AFlinkage}.
Let $\Ocat_{\overline\Lambda}^{\res} := \operatorname{Fin}\left(\widehat{\Ocat}_{\overline\Lambda}^{\res}\right)$ 
be the full subcategory consisting of all modules of finite length.
Then 
$\Ocat_{\overline\Lambda}^{\res}$ is 
an essentially finite highest weight category with 
standard and costandard objects 
$\Delta(\la)^{\res}$ and ${\nabla}(\la)^{\res}$ for $\la\in {\overline\Lambda}$. 
Moreover,
the indecomposable projective modules in 
$\Ocat_{\overline\Lambda}^{\res}$
 are also its indecomposable tilting modules, and therefore $\Ocat_{\overline\Lambda}^{\res}$ is tilting-bounded and Ringel self-dual.
\end{conjecture}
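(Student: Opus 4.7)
The plan proceeds in three stages. First, I would establish that $\mathcal{O}^{\res}_{\overline\Lambda}$ is an essentially finite Abelian category with standard and costandard objects $\Delta^{\res}(\la)$ and $\nabla^{\res}(\la)$. By Corollary~\ref{vera}, it suffices to show that it is locally finite Abelian with enough projectives and injectives. The locally finite Abelian property follows from Lemma~\ref{char}: the category inherits Abelianness and finite-dimensional Hom spaces from $\widehat{\mathcal{O}}_\Lambda$, and all objects have finite length by definition of $\Fin$. The poset $(\overline\Lambda, \leq)$ under dominance is interval finite, since regularity in the sense of \cite{AFlinkage} confines the orbit to a fixed coset modulo $\mathbb{Z}\delta$. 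For enough projectives, I would invoke the construction of restricted projective covers $P^{\res}(\la)$ from \cite{AF} and use regularity to verify that they lie in $\Fin(\widehat{\mathcal{O}}^{\res}_{\overline\Lambda})$. The identification of $\Delta^{\res}(\la)$ and $\nabla^{\res}(\la)$ as standard and costandard in the sense of (\ref{rumble}) follows from their universal characterization as the largest quotient of $\Delta(\la)$, respectively the largest subobject of $\nabla(\la)$, lying in the restricted block.

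Second, I would verify the highest weight axiom $(\widehat{P\Delta})$: each $P^{\res}(\la)$ carries a $\Delta^{\res}$-flag with $\Delta^{\res}(\la)$ on top and other sections $\Delta^{\res}(\mu)$ for $\mu > \la$. This is a restricted BGG reciprocity, which I would prove by truncating to finite lower sets and applying translation functors to restricted Vermas of maximal weight, adapting the strategy used in \cite{AFlinkage}. At this point Theorem~\ref{fund} applies, and $\mathcal{O}^{\res}_{\overline\Lambda}$ is an essentially finite highest weight category, settling the first assertion of the conjecture.

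The main obstacle is the third stage: showing $P^{\res}(\la) \cong T^{\res}(\la)$. My strategy would be to exploit a self-duality of the restricted block. The Chevalley antiautomorphism induces the usual duality $\#$ on $\mathcal{O}$ satisfying $L(\la)^\# \cong L(\la)$, and I would aim to show that $\#$ descends to a contravariant self-equivalence of $\mathcal{O}^{\res}_{\overline\Lambda}$ fixing simples. This reduces to checking that the two-sided ideal defining the restricted category is $\#$-stable, or equivalently that the graded algebra $A = \bigoplus_n A_n$ of natural transformations between powers of $\Sigma$ admits a compatible antiinvolution. Granted this, $P^{\res}(\la)^\# \cong I^{\res}(\la)$ would be of finite length and $P^{\res}(\la)$ simultaneously projective and injective; since in a highest weight category every injective has a $\nabla$-flag by $(I\nabla)$, $P^{\res}(\la)$ then admits both a $\Delta^{\res}$- and a $\nabla^{\res}$-flag, making it tilting, and its indecomposability together with highest weight $\la$ identify it with $T^{\res}(\la)$ via Theorem~\ref{gin}. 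Verifying this $\#$-stability, which requires a precise understanding of how Chevalley duality interacts with the Feigin--Frenkel center, is the genuinely hard part and the reason the statement is presented as a conjecture.

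Once $P^{\res}(\la) \cong T^{\res}(\la)$ is known, Ringel self-duality follows immediately from the framework of $\S$\ref{ifc}: the tilting generating family $(T^{\res}(\la))_{\la \in \overline\Lambda}$ coincides with the projective generating family, so its opposite endomorphism algebra is Morita equivalent to any algebra realization of $\mathcal{O}^{\res}_{\overline\Lambda}$, yielding an equivalence between the Ringel dual and the original block. The tilting-bounded hypothesis of $\S$\ref{ifc} is automatic since each tilting object, being a projective cover in an essentially finite Abelian category, is of finite length.
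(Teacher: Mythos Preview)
The statement you are addressing is presented in the paper as a \emph{Conjecture}, not a theorem; the paper offers no proof. Immediately after stating it, the authors record only that it holds in the subgeneric $\widehat{\mathfrak{sl}}_2$ case by \cite[Theorem~6.6]{Fiebigsubgeneric}, that the same category arises as the principal block for $\mathfrak{gl}_{1|1}(\mathbb{C})$, and that the conjecture is consistent with the Feigin--Frenkel conjecture on restricted Verma multiplicities. There is therefore nothing in the paper against which to compare your proposal; you have correctly recognized this in your third paragraph.

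Your outline is a reasonable strategy, and you have correctly isolated the crux: whether the Chevalley duality $\#$ descends to $\mathcal{O}^{\res}_{\overline\Lambda}$, which comes down to its compatibility with the action of the graded algebra $A$ (equivalently, the Feigin--Frenkel center). One correction to your third stage: even granting that each $P^{\res}(\la)$ is projective and injective, hence tilting, Theorem~\ref{gin} does \emph{not} identify it with $T^{\res}(\la)$. The label $\la$ on $P^{\res}(\la)$ records its head, while the label on $T^{\res}(\nu)$ records the \emph{maximal} weight in its $\Delta$-flag; these differ in general (compare the $U_q(\mathfrak{sl}_2)$ example in \S\ref{eg3}, where $T(n)=P(n-1)$). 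Your argument yields only a bijection $\la \mapsto \nu(\la)$ with $P^{\res}(\la)\cong T^{\res}(\nu(\la))$. This is enough for the conjecture as stated---the indecomposable projectives and tiltings coincide as sets, and Ringel self-duality follows---but your phrase ``highest weight $\la$'' claims more than is warranted.
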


This conjecture is true 
for the basic example of a critical
block from Example~\ref{exsKM}
thanks to \cite[Th.~6.6]{Fiebigsubgeneric}; the same category
arises as the principal block of category $\Ocat$ 
for $\mathfrak{gl}_{1|1}(\mathbb{C})$ discussed in $\S$\ref{eg5}
below. 
The conjecture is also
consistent with the so-called {\it Feigin-Frenkel conjecture}
\cite[Conj.~4.7]{AF}, which says 
that composition multiplicities of restricted Verma modules 
are related to the periodic Kazhdan-Lusztig polynomials from
\cite{Lusztigper2} (and Jantzen's
generic decomposition patterns from \cite{Jandec}).
These polynomials depend on the relative position of the given
pair of weights and, 
when not too close to walls, they
vanish for weights that are far apart.
This is
consistent with the conjectured
existence of indecomposable projectives of finite
length in regular blocks of the restricted category.

\begin{remark}
It seems to us 
that the Feigin-Frenkel conjecture might have an explanation in
terms of a sequence of equivalences of categories 
similar to  \cite[(7)]{FG}. Ultimately this should 
connect
$\Ocat_{\overline\Lambda}^{\res}$ with 
representations of the quantum group
analog  
of Jantzen's thickened Frobenius kernel
$G_1 T$. Assuming that $\ell$ (the order of the root of unity) is odd and bigger than or equal to the Coxeter number, the latter are
known by \cite[\S17]{AJS} to be essentially finite highest weight
categories controlled by the periodic
Kazhdan-Lusztig polynomials. Also, in these categories, 
tilting modules are projective, hence, the
Ringel self-duality would be an obvious consequence.
\end{remark}

\subsection{Rational representations}\label{eg3}
As we noted in Remark~\ref{cpsref}, the definition of
lower finite highest weight category originated in the work of Cline,
Parshall and Scott \cite{CPS}. As well as the BGG category $\Ocat$ already mentioned, their work was motivated by
the representation theory of a reductive algebraic group $G$ in
positive characteristic, as developed for example in \cite{J}:
the symmetric tensor\footnote{Locally finite Abelian, monoidal, rigid, $\End(\unit)=\k$.}
category $\Rep(G)$ of finite-dimensional
rational representations of $G$ is a lower finite highest weight category. 
Tilting modules for $G$ were studied in \cite{Dtilt}, although
our formulation of semi-infinite Ringel duality
from $\S$\ref{sird} is not mentioned explicitly there:
Donkin instead took the approach pioneered in \cite{Dschur} of 
truncating to a finite lower set before taking Ringel duals.
In fact, now, there is monoidal structure in play and the story is even richer.

To give more details, we fix a maximal torus $T$ contained in an opposite pair of Borel
subgroups $B^+$ and $B^-$ of $G$.
Then the weight poset $\Lambda$ is the set $X^+(T)$ of dominant
characters of $T$ with respect to $B^+$.
We denote the natural duality on $\Rep(G)$ by 
$V \mapsto V^*$ (with action defined via
$g \mapsto g^{-1}$).
The costandard objects are the induced modules 
$H^0(\lambda) := 
H^0(G/B^-, \mathcal{L}_\lambda)$ and the standard objects are the Weyl modules
$V(\lambda) := H^0(G/B^+, \mathcal{L}_\lambda^*)^*$.
For the partial order $\leq$, one can use the usual dominance
ordering on $X^+(T)$, or the more refined Bruhat order of \cite[$\S$II.6.4]{J}.
This makes $\Rep(G)$ into 
a lower finite highest weight category by \cite[Prop.~II.4.18]{J} and \cite[Prop.~II.6.13]{J}.
In fact, in the case of $\Rep(G)$, all of the general results about ascending
$\nabla$-flags found in $\S\ref{lfsc}$ were known already before
the time of \cite{CPS}, e.g., they are discussed 
in Donkin's book
\cite{Dbook} (and called there {\em good filtrations}). 

Let $\Tilt(G)$ be the full subcategory of $\Rep(G)$ consisting of
all tilting modules.
A key theorem in this setting is that tensor products of tilting modules are
tilting; this is the Donkin-Mathieu-Wang theorem \cite{Dbook}, \cite{Mat}, \cite{Wang}.
Thus, $\Tilt(G)$ is a symmetric pseudo-tensor\footnote{Additive Karoubian, monoidal, rigid, 
$\End(\unit)=\k$.} category.
Let $(T_i)_{i \in I}$ be a monoidal generator for $\Tilt(G)$, 
i.e., each $T_i$ is a tilting module and every
indecomposable tilting module is isomorphic to a summand of a tensor
product $T_{\bi} := T_{i_1}\otimes\cdots \otimes T_{i_n}$
for some $n \geq 0$ and $\bi = (i_1,\dots,i_n) \in I^n$.
Then define $\Acat$ to be the category with objects $\bI := \bigsqcup_{n \geq 0} I^n$ and
morphisms defined from $\Hom_{\Acat}(\bj, \bi) :=\Hom_G(T_{\bi},
T_{\bj})$, composition being induced by the opposite of
composition in $\Rep(G)$.
The category $\Acat$ is naturally a 
strict symmetric monoidal category, with
the tensor product of objects being
by concatenation of sequences. 
The evident monoidal functor $\Acat \rightarrow \Tilt(G)^\op$ 
extends to the Karoubi envelope of
$\Acat$, and the resulting functor
$\operatorname{Kar}(\Acat) \rightarrow \Tilt(G)^\op$ is a 
symmetric monoidal equivalence.

Forgetting the monoidal structure, one can think instead
in terms of the locally finite-dimensional locally unital algebra 
$A = \bigoplus_{\bi,\bj \in \bI} e_\bi A e_\bj$ 
that is the path algebra
of $\Acat$ in the sense of Remark~\ref{dataof}.
It becomes convenient to identify $T = \bigoplus_{\bi \in \bI} T_\bi$ and $T^\circledast = \bigoplus_{\bi \in \bI} T_\bi^*$ 
with the tensor algebras
\begin{equation}\label{rainier}
T = T(V), \quad T^\circledast= T(V^*)\qquad\text{where}\qquad V := \bigoplus_{i \in I} T_i.
\end{equation}
Note that $T$ is naturally a right $A$-module
and $T^\circledast$ is a left $A$-module.
Since $T$ is a tilting generator for $\Rep(G)$
in the sense of Definition~\ref{rd1},
$A\lfdlmod$ is the Ringel dual of $\Rep(G)$ with respect to 
$T$. Theorem~\ref{rt1} implies that $A\lfdlmod$
is an upper finite highest weight category with 
 poset $(X^+(T), \geq)$.
 Moreover, by Corollary~\ref{mustard1},
$T^\circledast$ is a tilting generator for $A\lfdlmod$ with 
$\Coend_A(T^\circledast)\cong \k[G]$
as coalgebras.

At this point, the monoidal structure on the category $\Acat$ 
comes back into the picture
since the $A$-module $T$ comes from a 
faithful symmetric monoidal functor (``fiber functor")
$T:\Acat \rightarrow (\fVec)^\op$. 
Consequently, by classical arguments of Tannaka duality (e.g., see \cite[$\S$2]{DM} and \cite[$\S$5.4]{EGNO}), $\Coend_A(T^\circledast)$ can be endowed with the structure of a commutative 
Hopf algebra which reconstructs the coordinate algebra of $G$.
To explain this in more detail, we use the setup of (\ref{C}), so now
we are identifying the coalgebra $\Coend_A(T^\circledast)$ with 
\begin{equation}\label{thec}
C := T \otimes_A T^\circledast = T(V) \otimes_A T(V^*).
\end{equation}
Then the algebra structure on $C$ is induced by the natural multiplication on 
the tensor product of algebras $T(V) \otimes T(V^*)$, that is,
\begin{equation}
(v \otimes u)\cdot(v' \otimes u') := (v \otimes v') \otimes (u \otimes u')
\end{equation}
for $v,v' \in T(V)$ and $u,u' \in T(V^*)$.
If we pick a basis $v^{(i)}_1,\dots,v^{(i)}_{d(i)}$ for each $T_i$ and let $u^{(i)}_1,\dots,u^{(i)}_{d(i)}$ be the dual basis for $T_i^*$,
then the elements 
\begin{equation}\label{alggens}
\big\{c_{r,s}^{(i)} := v_s^{(i)} \otimes u_r^{(i)}
\:\big|\:i \in I, 1 \leq r, s \leq d(i)\big\}
\end{equation}
generate $C$ as an algebra. The coalgebra structure satisfies
\begin{align}
\delta(c_{r,s}^{(i)}) &= \sum_{t = 1}^{d(i)}
c_{r,t}^{(i)} \otimes c_{t,s}^{(i)},
&\eps(c_{r,s}^{(i)}) &= \delta_{r,s}.
\end{align}
Now the reconstruction theorem can be formulated as follows.

\begin{theorem}[Tannakian reconstruction]\label{rdt}
The above construction makes the coalgebra 
$C=\Coend_A(T^\circledast)$ into a commutative Hopf algebra which 
is isomorphic (as a Hopf algebra) to the coordinate algebra $\k[G]$
via the unique algebra homomorphism
sending
$c_{r,s}^{(i)}\in C$ to the matrix coefficient function
$\tilde c_{r,s}^{(i)} \in \k[G]$ defined by
$
g v^{(i)}_s = \sum_{r=1}^{d(i)}
\tilde c_{r,s}^{(i)}(g) v^{(i)}_r$
for $g \in G$.
\end{theorem}

\begin{proof}
For $\bi = (i_1,\dots,i_n) \in \bI^n$
and $\br = (r_1,\dots,r_n), \bs = (s_1,\dots,s_n) \in \Z^n$
with $1 \leq r_k,s_k \leq d(i_k)$ for each $k$, 
let $c_{\br,\bs}^{(\bi)} := 
(v_{r_1}^{(i_1)} \otimes\cdots\otimes v_{r_n}^{(i_n)})
\otimes (u_{r_1}^{(i_1)} \otimes \cdots \otimes u_{r_n}^{(i_n)})
\in C$.
These are the elements in the formula (\ref{bubbly}), and they span $C$.
The coalgebra isomorphism $C \stackrel{\sim}{\rightarrow} \k[G]$ from Corollary~\ref{mustard1}(i)
sends $c_{\br,\bs}^{(\bi)}\in C$
to $\tilde c_{r_1,s_1}^{(i_1)} \cdots \tilde c_{r_n,s_n}^{(i_n)} \in \k[G]$.
So to be an algebra isomorphism, we must have that
$c_{\br,\bs}^{(\bi)} = c_{r_1,s_1}^{(i_1)}\cdots c_{r_n,s_n}^{(i_n)}$,
which is exactly the definition of multiplication given above.
\end{proof}

Theorem~\ref{rdt} recovers a classical result: it is
a special case of \cite[Th.~2.11]{DM}, which implies that $\k[G]$ is isomorphic to $\Coend(F)$ where $F:\Rep(G) \rightarrow \fVec$ is the forgetful functor. To deduce Theorem~\ref{rdt} from this statement, one also needs to observe that $\Coend(F) \cong \Coend_{A}(T)$; this holds because the algebraic group $G$ is isomorphic to its image in its representation on
$V=\bigoplus_{i \in I} T_i$ by weight considerations. 

\begin{remark}\label{lostphone}
To get a full set of relations between the generators (\ref{alggens}) of $C$,
one just needs to take the equations
$v x \otimes u = v \otimes x u$ 
for $x:\bi\rightarrow \bj$ running over a system of monoidal generators 
for $\Acat$ and all $v \in T_\bi, u \in T_\bj^*$.
\end{remark}

\begin{remark}
Theorem~\ref{TBAnew} can often be applied in this context to 
give $A$ (or some idempotent expansion of $A$) the structure
of an upper finite (perhaps symmetrically) based quasi-hereditary algebra.
\end{remark}

\def\CUP{\:
\begin{tikzpicture}[baseline=.7mm]
	\draw[-,thick] (0.25,0.35) to[out=-90, in=0] (0.05,0);
	\draw[-,thick] (0.05,0) to[out = 180, in = -90] (-0.15,0.35);
\end{tikzpicture}\:}

\def\CAP{\:
\begin{tikzpicture}[baseline=.85mm]
	\draw[-,thick] (0.25,0) to[out=90, in=0] (0.05,0.35);
	\draw[-,thick] (0.05,0.35) to[out = 180, in = 90] (-0.15,0);
\end{tikzpicture}\:}

\def\BUBBLE{\begin{tikzpicture}[baseline=1mm]
  \draw[-,thick] (0.2,0.2) to[out=90,in=0] (0,.4);
  \draw[-,thick] (0,0.4) to[out=180,in=90] (-.2,0.2);
\draw[-,thick] (-.2,0.2) to[out=-90,in=180] (0,0);
  \draw[-,thick] (0,0) to[out=0,in=-90] (0.2,0.2);
\end{tikzpicture}}

\def\CUPCAP{
\begin{tikzpicture}[baseline=3mm]
	\draw[-,thick] (0.2,0) to[out=90, in=0] (0,0.35);
	\draw[-,thick] (0,0.35) to[out = 180, in = 90] (-0.2,0);
		\draw[-,thick] (0.2,0.85) to[out=-90, in=0] (0,0.5);
	\draw[-,thick] (0,0.5) to[out = 180, in = -90] (-0.2,0.85);
\end{tikzpicture}}

\def\CROSSING{
\begin{tikzpicture}[baseline=3mm]
	\draw[-,thick] (0.25,0) to (-0.25,.85);
	\draw[thick,-] (-0.25,0) to (0.25,.85);
\end{tikzpicture}}

\def\POSCROSSING{
\begin{tikzpicture}[baseline=3mm]
	\draw[-,thick] (0.25,0) to (-0.25,.85);
	\draw[line width=5pt,-,white] (-0.25,0) to (0.25,.85);
	\draw[thick,-] (-0.25,0) to (0.25,.85);
\end{tikzpicture}}

\def\NEGCROSSING{
\begin{tikzpicture}[baseline=3mm]
	\draw[thick,-] (-0.25,0) to (0.25,.85);
	\draw[-,line width=5pt,white] (0.25,0) to (-0.25,.85);
	\draw[-,thick] (0.25,0) to (-0.25,.85);
\end{tikzpicture}}

\def\IDID{
\begin{tikzpicture}[baseline=3mm]
\draw[-,thick] (0,0) to (0,.85);
\draw[-,thick] (0.3,0) to (0.3,.85);
\end{tikzpicture}}

The first example comes from $G = SL_2$.
For this, we may take
$I := \{|\}$ and $T_|$ to be the 
natural two-dimensional
representation $V$ of $G$ with its standard basis $v_1,v_2$;
we also use $u_1,u_2$ to denote the dual basis of $V^*$.
The module $V$ is a monoidal generator for $\Tilt(G)$ by weight considerations.
Note also that $V$ possesses an invariant symplectic form 
such that $(v_1,v_2) = 1$, hence, $V \cong V^*$.
The object 
set $\bI = \{|^{\otimes n}\:|\:n \in \N\}$ in the above setup may be identified with $\N$.
Hence, $T = \bigoplus_{n \geq 0} T_n$ 
is the tensor algebra $T(V) = \bigoplus_{n \geq 0} T^n(V)$
and $T^\circledast$ is $T(V^*)$.
As is well known, 
the monoidal category $\Acat$ in this case is the {\em Temperley-Lieb category} $\TL(-2)$; see e.g. \cite{GW}. 
It is easy to verify that 
$$
C= T(V) \otimes_A T(V^*)
\cong \k[c_{1,1},c_{1,2},c_{2,1},c_{2,2}] / (\det - 1)
$$
where $c_{r,s} = v_s \otimes u_r$ as above and 
$\det = c_{1,1}c_{2,2}-c_{2,1}c_{1,2}$.
Of course this is $\k[SL_2]$.

This example becomes more interesting if we 
replace the Temperley-Lieb category $\TL(-2)$ with its $q$-analog
 $\TL(-q-q^{-1})$ for $q \in \k^\times$. Recall that this is generated as a strictly pivotal monoidal 
 category by one object $|$ and two morphisms
$\CUP:0 \rightarrow 2$ and $\CAP:2 \rightarrow 0$
subject to  
$\BUBBLE = -q-q^{-1}$.
Assuming $q$ has a square root $q^{1/2} \in \k$, it is braided with braiding defined by
\begin{align}
\POSCROSSING&:=q^{1/2}\:\,\IDID\;+q^{-1/2}\CUPCAP,
&
\NEGCROSSING &= q^{-1/2}\:\IDID\;+q^{1/2}\CUPCAP.
\end{align}
As mentioned in Remark~\ref{tlex}, the natural diagram basis makes the 
path algebra $A$ of $\Acat := \TL(-q-q^{-1})$ into an upper finite based quasi-hereditary algebra with weight poset $(\N,\geq)$.
Hence, $A\lfdlmod$ is an upper finite highest weight category.

Next let $V$ be a two-dimensional vector space with basis $v_1,v_2$
and $(\cdot,\cdot):V \times V \rightarrow \k$ be the bilinear form with
$(v_1,v_2) = 1, (v_2,v_1) = -q^{-1}$ and $(v_1,v_1) = (v_2,v_2) = 0$.
A relation check shows that
there is a monoidal functor $T:\Acat \rightarrow (\fVec)^\op$ such that
$T(|)=V$ and
\begin{align}
T\left(\textstyle\CUP\right) &: V \otimes V \rightarrow \k, &v_i \otimes v_j &\mapsto (v_i,v_j),\\
T\left(\textstyle\CAP\right) &: 
\k \rightarrow V \otimes V, &1 &\mapsto v_2 \otimes v_1 - q v_1 \otimes v_2.
\end{align}
Equivalently, the tensor algebra 
$T = T(V)$ is a right $A$-module, and its dual
$T^\circledast = T(V^*)$ 
is a left $A$-module. 
Then we define $C$ as in (\ref{thec}).
The coend construction makes $C$ into a cobraided Hopf 
algebra, hence,
$\fdrcomod C$ is a braided tensor category.
Now one can check directly using the homological criterion for 
$\nabla$-flags from Theorem~\ref{gf7}
that $T^\circledast$ is a tilting generator for $A\lfdlmod$. Hence,
$\fdrcomod C$
is the Ringel dual of the upper finite highest weight category $A\lfdlmod$,
so it is a lower finite highest weight category thanks to Theorem~\ref{rt2}.

To obtain explicit generators and relations for $C$ in our setup, let $u_1,u_2$ be the basis for $V^*$ dual to $v_1,v_2$.
Then $C$ is generated by 
$\{c_{r,s} := v_s \otimes u_r\:|\:r,s=1,2\}$,
and the comultiplication and counit are defined by 
$\delta(c_{r,s}) = c_{r,1} \otimes c_{1,s} + c_{r,2} \otimes c_{2,s}$,
$\eps(c_{r,s}) = \delta_{r,s}$. 
By Remark~\ref{lostphone}, the following equations give a full set of relations for the algebra $C$:
\begin{align*}
(v_i \otimes v_j) \otimes \left(\CUP 1\right)&=
\left(v_i \otimes v_j \CUP\right) \otimes 1,\\
\left(1 \CAP\right)  \otimes (u_i \otimes u_j)&=
1 \otimes \left(\CAP u_i \otimes u_j \right).
\end{align*}
To expand these, note that the left 
$A$-module $T^\circledast = T(V^*)$ comes from the monoidal functor
$T^\circledast:\Acat \rightarrow \fVec$ defined by
$T^\circledast(|)=V^*$ and
\begin{align}
T^\circledast\left(\textstyle\CAP\right) &: V^* \otimes V^* \rightarrow \k, &u_i \otimes u_j &\mapsto (v_j,v_i)^{-1},\\
T^\circledast\left(\textstyle\CUP\right) &: 
\k \rightarrow V^* \otimes V^*, &1 &\mapsto u_1 \otimes u_2 - q^{-1} u_2 \otimes u_1.
\end{align}
Using this, the relations become
$c_{1,i} c_{2,j} - q^{-1} c_{2,i} c_{1,j} = (v_i,v_j)$
and $c_{i,2} c_{j,1} - q c_{i,1} c_{j,2} = (v_j,v_i)^{-1}$,
hence, we get
$$
\left(\begin{array}{cc}
c_{2,2} & -q c_{1,2}\\
-q^{-1} c_{2,1} & c_{1,1}
\end{array}\right)
\left(\begin{array}{cc}
c_{1,1}&c_{1,2}\\
c_{2,1}&c_{2,2}
\end{array}\right)
=\left(\begin{array}{cc}
c_{1,1}&c_{1,2}\\
c_{2,1}&c_{2,2}
\end{array}\right)
\left(\begin{array}{cc}
c_{2,2} & -q c_{1,2}\\
-q^{-1} c_{2,1} & c_{1,1}
\end{array}\right)
=I_2.
$$
So $C$ is generated by $c_{1,1},c_{1,2},c_{2,1},c_{2,2}$ subject to the relations needed to ensure that
\begin{equation}\label{neatest}
\left(\begin{array}{cc}
c_{1,1}&c_{1,2}\\
c_{2,1}&c_{2,2}
\end{array}\right)^{-1} =
\left(\begin{array}{cc}
c_{2,2} & -q c_{1,2}\\
-q^{-1} c_{2,1} & c_{1,1}
\end{array}\right).
\end{equation}
Equivalently, $C$
is generated by $c_{1,1},c_{1,2},c_{2,1},c_{2,2}$
subject to the relations
\begin{align*}
c_{i,2} c_{i,1} &= q c_{i,1} c_{i,2},&
c_{2,j} c_{1,j} &= q c_{1,j} c_{2,j},\\
c_{1,2} c_{2,1} &= c_{2,1} c_{1,2},&
c_{2,2} c_{1,1} &= c_{1,1} c_{2,2} + (q-q^{-1}) c_{1,2} c_{2,1},
\end{align*}
and 
$\det_q := c_{1,1}c_{2,2} - q^{-1} c_{1,2} c_{2,1} =1$.
Thus, we have reconstructed the well-known quantized coordinate algebra $\k_q[SL_2]$,
and $\fdrcomod C$ is the category of {\em rational representations of quantum $SL_2$}.

When at a root of unity over 
the ground field is $\mathbb{C}$, the indecomposable projectives and injectives 
in the category of rational representations of quantum $SL_2$ (or indeed 
the quantum group corresponding to a reductive group)
are all finite-dimensional, i.e., 
the category is essentially finite Abelian.
Tiltings are also finite-dimensional, indeed, the category is tilting-bounded
in the sense of Definition~\ref{tiltingboundeddef}.
The structure of the principal block can be worked out explicitly (e.g., see
\cite[Th.~3.12, Def.~3.3]{AT}): it is Morita equivalent
to the locally unital
algebra that is the path algebra of the quiver
$$
\xymatrix{
0
\ar@/^/[r]^{x_0}
&
\ar@/^/[l]^{y_0}{1}
\ar@/^/[r]^{x_1}
&
\ar@/^/[r]^{x_2}
\ar@/^/[l]^{y_1}{2}
&
\ar@/^/[l]^{y_2}
3\cdots}
\quad
\text{with relations }x_{i+1}x_i = y_i y_{i+1} = x_i y_i - y_{i+1}
x_{i+1}=0.
$$
The appropriate partial order on the weight poset $\N$ is the natural order
$0 < 1 < \cdots$.
The indecomposable projectives have the following structure:
\begin{align*}
P(0)&=
\begin{tikzpicture}[anchorbase]
\node at (0,0){{\xymatrix@C=0.9em@R=1.1em{
0\ar@{~}[d]|{x}\\
1\ar@{-}[d]|{y}\\
0
}}};\end{tikzpicture}\!,
&
\!P(1) &=
\begin{tikzpicture}[anchorbase]
\node at (0,0){{\xymatrix@C=0.9em@R=1.1em{
&1\ar@{-}[dl]|{y}\ar@{~}[dr]|{x}\\
0\ar@{~}[dr]|{x}&&2\ar@{-}[dl]|{y}\\
&1
}}};\end{tikzpicture}\!,
&
\!P(2) &=
\begin{tikzpicture}[anchorbase]
\node at (0,0){{\xymatrix@C=0.9em@R=1.1em{
&2\ar@{-}[dl]|{y}\ar@{~}[dr]|{x}\\
1\ar@{~}[dr]|{x}&&3\ar@{-}[dl]|{y}\\
&2
}}};\end{tikzpicture}\!,
&
\!P(3) &=
\begin{tikzpicture}[anchorbase]
\node at (0,0){{\xymatrix@C=0.9em@R=1.1em{
&3\ar@{-}[dl]|{y}\ar@{~}[dr]|{x}\\
2\ar@{~}[dr]|{x}&&4\ar@{-}[dl]|{y}\\
&3
}}};\end{tikzpicture}\!,\,
\dots
\end{align*}
The tilting objects are $T(0) := L(0)$ and $T(n) := P(n-1)$ for $n
\geq 1$.
From this, it is easy to see that the Ringel dual is described by the
same quiver with one additional relation,
namely, that $y_0 x_0 = 0$ (and of
course the partial order is reversed).

\subsection{Tensor product categorifications}\label{eg4}
Until quite recently, most of the
naturally-occurring examples were
highest weight categories (like the ones described in the previous two
subsections). But the work of Webster \cite{W1}, \cite{W2}
and Losev and Webster \cite{LW} has brought to prominence a very
general source of examples that are fully stratified 
but seldom highest weight.

Fundamental amongst these new examples are the categorifications of
tensor products of irreducible highest weight modules of symmetrizable
Kac-Moody Lie algebras.
Rather than attempting to repeat the definition of these here, we refer
the reader to \cite{LW}.
All of these examples
are finite fully stratified categories possessing a Chevalley duality.
They are also tilting-rigid; the proof of this depends on an argument involving
translation/projective functors.
Consequently, the Ringel dual is again a finite fully stratified category
that is tilting-rigid.
In fact, the Ringel dual category is always another tensor product
categorification\footnote{This was
noted in Remark 3.10 of the {\tt arxiv} version of \cite{LW} but the
authors removed this
remark in the published version.} 
(reverse the order of the tensor product). 
In the earlier article \cite{W2}, Webster also wrote down explicit
finite-dimensional algebras which give realization of these
categories. In view of Theorem~\ref{TBA3}, all of Webster's algebras 
admit bases making them into symmetrically based stratified algebras, although these bases are usually hard to construct explicitly.

In \cite{W1}, Webster also introduced some more general tensor product
categorifications, including ones which categorify the tensor product of an integrable lowest
weight module
tensored with an integrable highest weight module. 
The latter are particularly important since they may be
realized as {\em generalized cyclotomic quotients} of the Kac-Moody
2-category.
They are upper finite fully stratified categories.
In type A, they can also be realized 
as generalized cyclotomic quotients of the (degenerate or quantum) Heisenberg category; see \cite[Th.~B]{BSW}.
In the latter realization, they should possess explicit triangular bases, generalizing the ones for the cyclotomic quotients of central charge zero discussed in \cite{GRS}.

\subsection{Deligne categories}\label{jonjustadded}
Another source of upper finite highest weight categories comes from
various
Deligne categories.
The definition of these categories is diagrammatic in nature. For
example,
in characteristic zero,
the Deligne category
${\underline{\operatorname{Re}}\!\operatorname{p}}(GL_\delta)$
is the Karoubi envelope of the oriented Brauer category
$\OB(\delta)$.
This case was studied in the PhD thesis of Reynolds \cite{Reynolds} based
on the observation that it admits a symmetric split triangular decomposition; see also \cite{Bskein}
which 
treats the HOMFLY-PT skein category at the same time.
Rui and Song \cite{RS} have analysed the Brauer category and the Kauffman skein category by similar techniques.
Similar ideas have been developed independently by Sam and Snowden \cite{SS}, 
who also consider other types of Deligne category.

The category of locally finite-dimensional representations of the 
Deligne category
${\underline{\operatorname{Re}}\!\operatorname{p}}(GL_\delta)$
can also be interpreted as 
a special case of the lowest weight tensored highest weight
tensor product categorifications discussed in the previous subsection; 
see the introduction of \cite{Bskein}.
The Ringel dual in this example is equivalent to the Abelian envelope 
${\underline{\operatorname{Re}}\!\operatorname{p}^{ab}(GL_\delta)}$
of Deligne's category
constructed by Entova, Hinich and Serganova \cite{EHS}, which is a
monoidal 
lower finite highest weight category.
In \cite{E}, it is shown that
${\underline{\operatorname{Re}}\!\operatorname{p}^{ab}(GL_\delta)}$
categorifies a highest weight tensored lowest weight representation,
which is the dual result to the one from \cite{Bskein}.
This example will be discussed further in the sequel to this article, where we
give an explicit description of the blocks of 
${\underline{\operatorname{Re}}\!\operatorname{p}^{ab}}(GL_\delta)$ 
via Khovanov's arc coalgebra (an interesting explicit 
example of a based quasi-hereditary coalgebra), thereby proving a conjecture
formulated in the introduction of \cite{BS5}.

These and the other classical families of Deligne categories
${\underline{\operatorname{Re}}\!\operatorname{p}}(O_\delta)$,
${\underline{\operatorname{Re}}\!\operatorname{p}}(P)$
and 
${\underline{\operatorname{Re}}\!\operatorname{p}}(Q)$
are being investigated actively along similar lines by several
groups of authors
and there has been considerable recent progress; e.g., see \cite{Cenv}, \cite{SS}.
There are also many interesting connections here with rational
representations of the corresponding families of classical supergroups.

\subsection{Representations of Lie superalgebras}\label{eg5}
Finally, we mention briefly an interesting source of essentially finite
highest weight categories: the analogs of the BGG category $\Ocat$ for classical Lie superalgebras. 
A detailed account in the case of the Lie superalgebra
$\mathfrak{gl}_{m|n}(\mathbb{C})$ can be found in \cite{BLW}. Its category
$\Ocat$ gives an essentially finite highest weight category which is
neither lower finite nor upper finite.
Moreover, it is tilting-bounded as in Definition~\ref{tiltingboundeddef}, so that the Ringel
dual category is also an essentially finite highest weight category.

There is one very easy special case: the principal block of
category $\Ocat$ for $\mathfrak{gl}_{1|1}(\mathbb{C})$ is
equivalent to the category of finite-dimensional modules over the 
essentially finite-dimensional locally unital algebra
which is
the 
path
algebra of the following quiver:
$$
\xymatrix{
\cdots
-\!1
\ar@/^/[r]^{x_{-1}}
&
0
\ar@/^/[r]^{x_0}
\ar@/^/[l]^{y_{-1}}
&
\ar@/^/[l]^{y_0}{1}
\ar@/^/[r]^{x_1}
&
\ar@/^/[l]^{y_1}
2\cdots}
\:
\text{with relations }x_{i+1}x_i = y_i y_{i+1} = x_i y_i - y_{i+1}
x_{i+1}=0,
$$
see e.g. \cite[p. 380]{BS4}.
This is very similar to the $U_q(\mathfrak{sl}_2)$-example from
$\S$\ref{eg3}, but now the poset $\Z$ (ordered naturally)
is neither lower nor upper
finite. From the category $\Ocat$ perspective, this example is
rather misleading since its projective, injective 
and tilting objects coincide, 
hence, it is Ringel self-dual.

One gets similar examples from $\mathfrak{osp}_{m|2n}(\mathbb{C})$,  as
discussed for example in \cite{BW} and \cite{ES}.
The simplest non-trivial case of $\mathfrak{osp}_{3|2}(\mathbb{C})$
produces the path algebra of a
$D_\infty$ quiver (replacing than the $A_\infty$ quiver above); see \cite[$\S$II]{ES}.
The ``strange'' families
$\mathfrak{p}_n(\mathbb{C})$ and $\mathfrak{q}_n(\mathbb{C})$ also exhibit similar
structures.
The former has not yet been investigated systematically (although basic aspects of the finite-dimensional 
finite-dimensional representations and category $\Ocat$ were
recently studied in \cite{many} and \cite{CC}, respectively). It is an
interesting example of a naturally-occurring highest weight category
which does not admit a Chevalley duality.
For $\mathfrak{q}_n(\mathbb{C})$,
we refer to \cite{BD2} and the references therein. 
In fact, the integral blocks for
$\mathfrak{q}_n(\mathbb{C})$ are 
fibered highest weight 
categories; this observation is due to Frisk \cite{Fr}.

\end{document}